\documentclass[pdflatex,sn-mathphys-num]{sn-jnl}
\usepackage{amsmath,amssymb,amsthm,mathtools,longtable,booktabs,array}
\usepackage{microtype}
\usepackage{xcolor} % Needed by the Springer class for the corresponding-author email footnote.
\hypersetup{hidelinks,pdftitle={Unconditional scattering below the ground state for the three-dimensional focusing energy-critical generalized Hartree equation},pdfauthor={Pang-hung Chung and Dan Han},pdfsubject={Ground-state threshold scattering for the generalized Hartree equation},pdfkeywords={Hartree equation, energy critical, nonradial scattering, Gaussian interaction}}
\theoremstyle{thmstyleone}
\newtheorem{theorem}{Theorem}[section]
\newtheorem{lemma}[theorem]{Lemma}
\newtheorem{proposition}[theorem]{Proposition}
\newtheorem{corollary}[theorem]{Corollary}
\theoremstyle{thmstylethree}

\theoremstyle{thmstyletwo}
\newtheorem{remark}[theorem]{Remark}
\newcommand{\R}{\mathbb R}
\newcommand{\C}{\mathbb C}
\newcommand{\N}{\mathbb N}
\newcommand{\dd}{\,\mathrm d}
\newcommand{\re}{\operatorname{Re}}
\newcommand{\im}{\operatorname{Im}}

\newcommand{\X}{\dot H^1(\R^3)}
\newcommand{\EE}{\mathsf E}
\newcommand{\SSS}{\mathsf S}
\newcommand{\WW}{\mathsf W}

\newcommand{\NN}{\mathsf N}

\DeclareMathOperator{\supp}{supp}

\newcommand{\introheading}[1]{\par\noindent\hspace*{\parindent}\textbf{#1}\enspace\ignorespaces}
\begin{document}
\title[Unconditional threshold scattering for the generalized Hartree equation]{Unconditional scattering below the ground state for the three-dimensional focusing energy-critical generalized Hartree equation}
% Affiliations follow the 39-page two-author JAMS draft.
% Emails follow the authors' earlier coauthored submission materials.
\author[1]{\fnm{Pang-hung} \sur{Chung}}\email{penghongzhong@yahoo.com}
\author*[2]{\fnm{Dan} \sur{Han}}\email{dan.han@louisville.edu}
\affil[1]{\orgdiv{Department of Mathematics}, \orgname{Guangdong University of Education}, \orgaddress{\city{Guangzhou}, \postcode{510640}, \country{People's Republic of China}}}
\affil[2]{\orgdiv{Department of Mathematics}, \orgname{University of Louisville}, \orgaddress{\city{Louisville}, \state{Kentucky}, \postcode{40292}, \country{USA}}}
\abstract{Unconditional global existence and scattering are established for the
three-dimensional focusing energy-critical generalized Hartree equation
with $0<\alpha<3$, below the ground-state energy and kinetic thresholds.
The initial data lie in $\dot H^1$, with no radial symmetry, finite-mass,
or finite-variance assumption. Annular frequency estimates and Gaussian
interaction estimates rule out a compact critical trajectory by first
controlling its nonlinear forcing and then recovering its conserved mass.}
\keywords{generalized Hartree equation, energy-critical scattering, ground-state threshold, concentration compactness, Gaussian interaction}
\pacs[Mathematics Subject Classification]{35Q55, 35B40, 35B45}
\maketitle
\clearpage
\setcounter{tocdepth}{2}
{\small\tableofcontents}
\clearpage

\section{Introduction}
Hartree equations are canonical effective models for quantum systems in
which a wave field evolves under a potential generated by the field itself.
In a mean-field regime, the microscopic interaction of many particles is
replaced by an averaged field produced by the one-particle density
\cite{LewinHartreeMeanField}. Thus the value of the force at a point is not
determined only by the value of the wave at that point: it depends on an
integral over the whole density profile. This self-consistent, nonlocal
mechanism is what distinguishes Hartree dynamics from a local nonlinear
Schr\"odinger equation and is also what makes localization substantially
more delicate.

The stationary Choquard--Hartree problem has long been connected with
self-consistent quantum models, and its ground states provide the natural
localized states against which dispersion is measured
\cite{LiebChoquard,MorozVanSchaftingen}. The corresponding time-dependent
Hartree evolution has a well-developed energy-space scattering theory
\cite{GinibreVeloNonlocal,GinibreVeloHartree}. In the focusing case the
self-generated field is attractive, so two opposite mechanisms compete:
the Laplacian tends to spread the wave, while the Hartree interaction tends
to concentrate it. A ground state represents the stationary balance of
these effects. The dynamical question is therefore simple to state but
fundamental: if the initial state lies strictly below that balance, must all
of the solution ultimately disperse?

The energy-critical regime is the sharp setting for this question. At the
critical scaling, concentration occurs at exactly the same scale as the
conserved kinetic energy, so neither dispersion nor concentration gains an
energetic advantage merely by changing scale. This is the regime in which
stationary states, concentration compactness, and scattering meet most
directly. In three dimensions the generalized power $p=3+\alpha$ is chosen
precisely so that the homogeneous energy is scale invariant. The family is
therefore best viewed as the energy-critical generalized Hartree--Choquard
model associated with the Riesz interaction. We do not assume that each
value of $\alpha$ comes from a separate microscopic Hamiltonian; the point
is that the whole family preserves the same nonlocal self-interaction and
the same critical competition between dispersion and concentration.

There is also a structural reason to formulate the problem in the full
homogeneous energy space. The critical energy controls $\dot H^1$ but does
not control the mass $\|u\|_2^2$, and translation invariance allows a
concentrated profile to drift through space. Radial symmetry suppresses
that drift, while a finite-mass or finite-variance hypothesis supplies
extra information that is not part of the scale-critical problem. By
``unconditional scattering'' we mean scattering for every datum below the
ground-state thresholds in the natural homogeneous energy class, without
adding radial symmetry, finite mass, finite variance, or an auxiliary
global spacetime assumption.

Fix $0<\alpha<3$, put $p=3+\alpha$, and set
\[
 \begin{aligned}
 I_\alpha(x)&=c_\alpha |x|^{\alpha-3}\qquad(x\ne0),\\
 c_\alpha&=\frac{\Gamma((3-\alpha)/2)}
 {2^\alpha\pi^{3/2}\Gamma(\alpha/2)}.
 \end{aligned}
\]
The three-dimensional focusing energy-critical generalized Hartree equation is
\begin{equation}\label{eq:hartree}
 i\partial_t u+\Delta u+(I_\alpha*|u|^{3+\alpha})|u|^{1+\alpha}u=0,
 \qquad(t,x)\in\R\times\R^3.
\end{equation}
Its critical scaling is
\[
 u(t,x)\longmapsto\lambda^{1/2}u(\lambda^2t,\lambda x),\qquad \lambda>0,
\]
which preserves the homogeneous energy norm. The main theorem concerns
the full energy space, without radial symmetry or a finite-mass
assumption. The ground-state energy and gradient norm specify the two
thresholds.

\introheading{The Hartree ground-state threshold.}
For \eqref{eq:hartree} write
\[
 F(u)=(I_\alpha*|u|^p)|u|^{p-2}u.
\]
The value of $I_\alpha$ at the origin is set to zero; this choice does not
affect its convolution class. The homogeneous energy space $\X$ consists
of complex $L^6$ functions with distributional gradient in $L^2$, and is
equipped with the norm $\|\nabla u\|_2$. Write
\[
 \begin{aligned}
 A(u)&=\int_{\R^3}|\nabla u|^2\dd x, &
 D(u)&=\int_{\R^3}(I_\alpha*|u|^p)|u|^p\dd x,\\
 E(u)&=\frac12A(u)-\frac1{2p}D(u).
 \end{aligned}
\]
The critical scaling also preserves $A$ and $E$.
The free group is $S(t)=e^{it\Delta}$;
with $\widehat f(\xi)=\int e^{-2\pi ix\cdot\xi}f(x)\,dx$, its multiplier is
$e^{-4\pi^2it|\xi|^2}$.

The sharp threshold is fixed by the original positive stationary state.
Set
\[
 \begin{aligned}
 b_\alpha&=\frac{\Gamma((3-\alpha)/2)}
 {2^\alpha\Gamma((3+\alpha)/2)}, &
 a_\alpha&=\left(\frac3{b_\alpha}\right)^{1/(2p-2)},\\
 W(x)&=a_\alpha(1+|x|^2)^{-1/2}.
 \end{aligned}
\]
Then $-\Delta W=F(W)$. The sharp calibration established below gives
\[
 \begin{aligned}
 G&:=A(W)=D(W), & E(W)&=\frac{p-1}{2p}G,\\
 D(u)&\le G^{1-p}A(u)^p.
 \end{aligned}
\]
Translations, constant phases, and critical dilations leave these threshold
values unchanged. In particular, $W\notin L^2$.

The ground-state comparison is therefore formulated in the full homogeneous
energy class. No radiality, scalar $L^2$ membership, finite variance, or
restriction on the spatial center of the solution is imposed. The main scattering theorem is the following.

\begin{theorem}[Unconditional ground-state scattering]\label{thm:main}
For every $0<\alpha<3$ and $u_0\in\X$ satisfying
\[
 E(u_0)<E(W),\qquad A(u_0)<A(W),
\]
the unique maximal homogeneous-energy solution of \eqref{eq:hartree}
with value $u_0$ at time zero is global. Here the solution class is
$u\in C(I;\X)$ with $\nabla u\in L^{10}_{\mathrm{loc}}(I;L^{30/13})$,
satisfying the finite Duhamel identity against Schwartz tests;
maximality and uniqueness are in this class. There exist $u_\pm\in\X$ such that
\[
 \lim_{t\to\pm\infty}\|\nabla(u(t)-S(t)u_\pm)\|_2=0.
\]
\end{theorem}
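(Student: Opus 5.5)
We follow the Kenig--Merle concentration-compactness/rigidity scheme, adapted to the nonlocal nonlinearity $F(u)=(I_\alpha*|u|^p)|u|^{p-2}u$, which does not require radial symmetry.

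\textbf{Step 1: local theory and variational coercivity.} We first set up local well-posedness and stability in the stated class. We control $\nabla F(u)$ in the dual Strichartz space $L^2_tL^{6/5}_x$ via Hardy--Littlewood--Sobolev and Sobolev embedding, and use $\nabla u\in L^{10}_tL^{30/13}_x$ together with $u\in L^{10}_{t,x}$. Since $p-2=1+\alpha>0$, the nonlinearity is $C^1$ with H\"older-continuous derivative, so a long-time perturbation lemma holds.

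Next we use the sharp inequality $D(u)\le G^{1-p}A(u)^p$ together with $E(u_0)<E(W)$ and $A(u_0)<A(W)$. A continuity argument then gives $A(u(t))\le(1-\delta)G$ for all $t$ in the lifespan. It also gives the coercivity bounds
\[
A(u)-D(u)\ge c_\delta A(u),\qquad E(u)\simeq A(u).
\]
Hence $\sup_t\|\nabla u(t)\|_2<\infty$, and small data scatter.

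\textbf{Step 2: critical element.} We argue by contradiction. Let $E_c<E(W)$ be the infimal energy at which scattering fails below the kinetic threshold. We apply a nonradial linear profile decomposition in $\dot H^1$ with parameters of scale, translation and time. Each profile has asymptotically decoupled energies, including the nonlocal $D$ decoupling, which follows from HLS and weak convergence. The stability lemma then produces a critical solution $u_c$ with $E(u_c)=E_c$ whose orbit is precompact modulo symmetries:
\[
\{\lambda(t)^{-1/2}u_c(t,\lambda(t)^{-1}x+x(t))\}\ \text{precompact in }\X .
\]
The standard reductions allow us to assume $\lambda(t)\ge1$, or a self-similar, soliton-like or frequency-cascade alternative. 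Because the momentum $P(u)=\im\int\bar u\nabla u$ is not defined on $\X$, we cannot use the usual zero-momentum Galilean argument directly.

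\textbf{Step 3: rigidity, the main obstacle.} Without finite mass or radiality, the standard truncated virial
\[
\partial_t^2\int\phi_R|u|^2
\]
cannot control the drift $x(t)$. Following the abstract, we would first prove that the compact solution has extra regularity: low-frequency control, showing $u_c\in L^\infty_t\dot H^{-\varepsilon}$. For this we use a double-Duhamel argument on annular frequency pieces $P_{N}$, and bound the nonlinear forcing $P_N F(u_c)$ through Gaussian-weighted estimates of $I_\alpha*|u|^p$. The Riesz kernel can be written as
\[
I_\alpha=\int_0^\infty t^{\alpha/2-1}e^{t\Delta}\,\dd t/\Gamma(\alpha/2),
\]
which gives Gaussian kernels with good localized bounds. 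Interpolating gives $u_c\in L^\infty_tL^2_x$, so the mass is finite and conserved. This recovery of the mass is the hardest step. The delicate points are the summation over annuli and the nonlocal tails, which must be made uniform in the translation parameter.

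With finite mass in hand, the momentum is defined. A Galilean boost sets $P=0$, which costs nothing below threshold. The truncated virial/Morawetz identity then gives $x(t)=o(t)$. The localized virial estimate $\partial_t^2\int\phi_R|u|^2\ge c_\delta A(u)$ integrated over a long time then contradicts compactness, unless $u_c\equiv0$, which is impossible. The frequency-cascade and finite-time cases are excluded separately: mass conservation together with $\lambda(t)\to\infty$ forces $u_c=0$. Scattering then follows, and $u_\pm$ is obtained from the Duhamel formula using the global Strichartz bounds.
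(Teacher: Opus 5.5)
There is a genuine gap in Step 3, at the point you yourself call the hardest. You propose to get $u_c\in L^\infty_t\dot H^{-\varepsilon}$, and hence finite mass, from a double Duhamel argument on annuli. The only input would be spatial bounds on the forcing that are uniform in time. In three dimensions this cannot close.

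At large time separations, the double-Duhamel pairing $\langle\int_0^\infty S(-t)P_NF\,dt,\int_{-\infty}^0S(-\tau)P_NF\,d\tau\rangle$ is controlled by $\iint_{t>0>\tau}|t-\tau|^{-3/2}\|F(t)\|_1\|F(\tau)\|_1$. But $\int_0^T\!\int_{-T}^0|t-\tau|^{-3/2}\,d\tau\,dt\sim T^{1/2}$ diverges. This is why the Killip--Visan negative-regularity argument is a $d\ge5$ device. Rewriting $I_\alpha$ through the heat semigroup does not change this dispersion count.

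What a single no-waste formula does give in 3D comes from the time-integrable annular kernel $\min(N^3,\tau^{-3/2})$ combined with a uniform $L^1$ annular replacement of the forcing. The result is $\|P_NU\|_\infty\lesssim N$, hence a uniform $L^4$ bound by refined Sobolev (Theorem~\ref{ann:orbit-l4}). That bound sits at $\dot H^{1/2}$ scaling, not $L^2$ scaling. So your argument never reaches finite mass, and the Galilean boost, the bound $x(t)=o(t)$, and the localized virial have nothing to stand on. Also, the ``Gaussian interaction estimates'' of the abstract are not a heat representation of the Riesz kernel.

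The missing idea is to prove time integrability of the forcing before any mass information is available. On the bounded-scale half-line of Theorem~\ref{crit:bounded-halfline}, the paper obtains $U\in L^4([0,\infty);L^4)$, and hence $F(U)\in L^2([0,\infty);L^1)$. It does this with an interaction Morawetz identity whose weight is the Gaussian-built kernel $k_\lambda$, applied to $h_\nu=P_{>\nu}U$, which has finite mass for each fixed $\nu$. Four points make this work:
\begin{itemize}
\item The coercivity constant must be independent of $\|h_\nu\|_2$ (Theorem~\ref{gauss:compact}). This uses directional stretching of the sharp threshold, a core--tail IMS localization, and retention of the full density quadratic form, whose Fourier multiplier is nonnegative.
\item The truncation defect is estimated through the full source pairing (Theorem~\ref{dyn:source-budget}), since its $L^1$ norm need not be small (Remark~\ref{rem:source-l1-obstruction}).
\item The radius is selected by a Gaussian density constraint, so the radius-motion term is controlled by the charge identity.
\item Absorption happens on finite intervals at one fixed $\nu$.
\end{itemize}

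Only after this is the mass recovered, via the no-waste formula and the dual endpoint estimate:
\[
\|U(t)\|_2\le B/L+CL^{1/2}\|F(U)\|_{L^2((t,\infty);L^1)}.
\]
This bound also forces $\|U(t)\|_2\to0$. Conservation of the recovered mass then gives the contradiction directly, with no Galilean boost or virial argument.
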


The radial subthreshold theorem of Almuthaybiri, Peng and Saanouni
\cite{AlmuthaybiriPengSaanouni} is already formulated in $\dot H^1$.
Theorem~\ref{thm:main} removes radial symmetry in dimension three while
retaining the same ground-state thresholds and homogeneous energy class.
No finite-mass assumption is added in place of symmetry. The issue is
to exclude a concentrating solution whose center can move freely and
whose total mass is not known to be finite.

\introheading{Why this result matters.}
The theorem identifies the ground state as the effective threshold on the
full scattering side of the three-dimensional translation-invariant
subthreshold problem, not only inside a radial subclass.
Equivalently, below the two ground-state thresholds there is no hidden
nonradial compact dynamics supported by spatial drift, by an arbitrarily
changing concentration scale, or by infinite mass. This closes the
subthreshold scattering problem for the critical generalized family in
its natural scale-invariant setting and shows that finite mass is not a
missing dynamical hypothesis but a quantity that can be recovered from
the rigidity argument itself.

The mechanism is relevant beyond the statement proved here. Threshold
classification at $E(W)$, dynamics slightly above the threshold,
multi-profile or soliton-resolution scenarios, and other critical
nonlocal dispersive equations all face versions of the same obstruction:
one must control a moving concentrating object using an interaction
functional whose most convenient estimates normally involve quantities
not controlled by the critical energy. The present argument isolates a
way around that obstruction---first obtain mass-independent coercivity
and finite-interval absorption, then recover mass only after the forcing
has become time integrable. The theorem does not by itself settle those
threshold, superthreshold, or soliton-resolution problems, but it removes
a specific structural barrier that recurs in them and supplies estimates
that can be tested in those settings.

\introheading{The new estimates.}
Concentration compactness supplies the reduction to a possible nonzero
critical solution whose orbit is compact after translation and rescaling.
The main work is to rule out this solution despite the nonlocal
interaction and the absence of a mass bound. After selecting a half-orbit
with bounded physical scale, the argument uses the following three
estimates.

\introheading{1. Controlling the forcing before mass is available.}
The energy bound controls the $L^6$ norm of the critical solution, but
on $\R^3$ it gives neither an $L^4$ nor an $L^2$ bound. The interaction
argument starts by proving this additional spatial integrability.
For noninteger powers, low-frequency inputs can still contribute to an
output annulus well beyond their own frequency support. This contribution
cannot be discarded by a Fourier-support argument.

For purely low-frequency input, we control this leakage by differentiating
the full Hartree product and gaining an inverse frequency on each output
annulus. Together with the mixed-frequency bounds, this gives an $L^1$
source with the same annular projection as the forcing, without assuming
that the entire forcing already lies in $L^1$.
The annular no-waste Duhamel formula then represents the critical
solution through its future forcing along the selected time direction.
To close the estimate over all frequency scales, we damp the lowest
frequencies in a weighted supremum, making it finite before absorption.
Compactness supplies the small low-frequency gradient needed to absorb
its contribution. This gives
uniform spatial $L^4$ control of the critical solution and $L^1$ control
of its forcing, starting only from homogeneous energy bounds and
compactness. These are the spatial bounds needed to begin the interaction
argument; integrability over the half-line is obtained later.

\introheading{2. Preserving the ground-state gap under localization.}
The strict threshold gives a global excess of kinetic energy over
attraction. The interaction identity needs a positive estimate after
localization, where the Gaussian weights different directions differently
and the Hartree force still couples the concentrated core to the distant
field. Moreover, removing frequencies near zero produces a field of
finite mass, but this mass need not stay bounded as the cutoff is lowered.
A positive estimate that deteriorates with this mass would not provide
the uniform margin needed to absorb the truncation error.

Directional stretching first converts the ground-state gap into positive
stress in each spatial direction. A core--tail decomposition then
controls the distant field through its small gradient norm. The
localization retains both Hartree pressure terms and the complete density
in the interaction weights. It also retains the full quadratic density
term: its physical kernel changes sign, but its Fourier multiplier is
nonnegative. This positivity allows the tail error to be absorbed while
leaving a positive constant independent of the mass of the truncated
field. Since the interaction depends on relative positions, the estimate
requires no control of the motion of the concentration center.

\introheading{3. Closing the interaction estimate despite truncation.}
The frequency-truncated field satisfies an equation with an additional
source. This defect need not be small in $L^1$: a low-frequency
projection of an integrable function with nonzero mean can retain a
nonzero $L^1$ norm as the cutoff tends to zero. Thus small output
frequency cannot by itself supply a small error in the interaction
identity.

The source is estimated after pairing it with the interaction weights.
The change in the nonlinear input is handled through the full Hartree
pressure identity. The projected output is handled by a frequency gap
between the explicit field factor and the low-frequency parts of the
weights. Compactness makes the product of the cutoff frequency and the
truncated $L^2$ norm small, even though that norm need not stay bounded.
Together these estimates bound the source contribution by a small
multiple of the same quartic space-time term supplied by coercivity.

The localization radius must also adapt to the changing solution. We
choose it from the field's own Gaussian density. Using the same Gaussian
in this constraint and in the interaction action allows the charge
identity to control the unfavorable part of the radius-motion term;
no small bound on the absolute speed of the radius is needed.
For this step, the defect is small in $L^{6/5}$; multiplying it by the
truncated field, uniformly bounded in $L^6$, gives a small charge source
in $L^1$.
The resulting interaction balance closes on finite time intervals at
one fixed cutoff. Sending the terminal time to infinity then gives
time integrability of the original forcing. The no-waste formula and
the dual endpoint estimate now recover finite mass for the original
critical solution and force that mass to tend to zero. Conservation
of this recovered mass yields the contradiction. The order of these
steps is what makes the argument apply without a mass assumption.

\introheading{Related nonlinear Schr\"odinger methods.}
The critical theory of nonlinear Schr\"odinger (NLS) equations rests on
Strichartz estimates \cite{Strichartz1977} and the local and scattering
theories of Ginibre--Velo and Cazenave--Weissler
\cite{GinibreVeloNLS,CazenaveWeissler}. Bourgain treated radial defocusing
energy-critical data \cite{Bourgain1999}; Colliander, Keel, Staffilani,
Takaoka and Tao proved scattering in the three-dimensional nonradial case
\cite{CKSTT2008}. For the focusing problem, the radial below-threshold
theorem of Kenig--Merle \cite{KenigMerle2006} and the threshold analysis of
Duyckaerts--Merle \cite{DuyckaertsMerle2009} established the role of a compact
critical trajectory. Linear profiles and higher-dimensional focusing
scattering are developed in \cite{Keraani2001,KillipVisan2010}.
The Hartree argument retains this compactness reduction but must control
nonlocal interactions without an a priori mass bound.

\introheading{Hartree scattering and threshold dynamics.}
The variational theory begins with Lieb's minimizing-state theorem for the
Coulomb Choquard equation \cite{LiebChoquard} and his sharp
Hardy--Littlewood--Sobolev (HLS) inequality \cite{LiebHLS}.
Moroz and Van Schaftingen studied generalized Choquard ground states and
their decay for the stationary equation with a mass term
\cite{MorozVanSchaftingen}. The critical stationary equation considered
here has no mass term; its threshold is fixed by the explicit state $W$.

Early nonlocal well-posedness and energy-space Hartree scattering were
developed by Ginibre and Velo
\cite{GinibreVeloNonlocal,GinibreVeloHartree}.
Long-range and energy scattering were studied in
\cite{GinibreOzawa1993,Nakanishi1999}, while quadratic Morawetz estimates
were used in \cite{GinibreVeloMorawetz}.
For the quadratic-density critical equation with kernel $|x|^{-4}$ in
dimensions $d\ge5$, Miao, Xu and Zhao proved defocusing scattering for
radial and then general energy data
\cite{MiaoXuZhaoDefocusingRadial,MiaoXuZhaoDefocusing}.
Below-threshold focusing scattering was established in the radial case
by Miao, Xu and Zhao \cite{MiaoXuZhao} and without radial symmetry by
Li, Miao and Zhang \cite{LiMiaoZhang}.

For generalized powers, Arora and Roudenko studied the intercritical
problem below the mass--energy threshold in $H^1$ \cite{AroraRoudenko}.
Zhou studied dynamics at that threshold under ground-state uniqueness
and nondegeneracy assumptions in the general case
\cite{ZhouThreshold2025}; Gao obtained blow-up or grow-up at the
intercritical threshold without finite variance
\cite{GaoDivergence2026}.
For the quadratic-density equation in dimensions $d\ge5$, Miao, Wu
and Xu studied radial dynamics at the energy-critical threshold under
a spectral hypothesis \cite{MiaoWuXuThreshold}.
Li, Liu, Tang and Xu established the required nondegeneracy and studied
radial threshold dynamics for generalized critical powers
\cite{LiLiuTangXuThreshold}; in dimension three, their kernel exponent
$3-\alpha$ gives $p=3+\alpha$.
For the quadratic-density equation with kernel $|x|^{-4}$ in
dimensions $d\ge5$, Li, Liu and Xu treated radial dynamics slightly above
the ground-state energy \cite{LiLiuXuAbove2026}.
These threshold and above-threshold results concern different dynamical
regimes from the strictly subthreshold conclusion of
Theorem~\ref{thm:main}.

The closest strictly subthreshold result for generalized critical powers
is the radial homogeneous-energy theorem
\cite{AlmuthaybiriPengSaanouni}. The issue here is its nonradial extension
for the translation-invariant three-dimensional equation. The argument
below treats the moving center and the absence of an a priori mass bound
within one interaction estimate.

\introheading{Proof strategy.}
Energy conservation and the sharp inequality give, for data in
Theorem~\ref{thm:main}, a constant $a_*<1$ such that
\[
 A(u(t))\le a_*G,\qquad
 A(u(t))-D(u(t))\ge(1-a_*^{p-1})A(u(t)).
\]
These bounds supply the variational gap for concentration compactness.
If scattering fails, the reduction produces a nonzero global critical
solution $w$ with an orbit compact modulo translations and dilations.
After choosing an orientation $\sigma\in\{-1,1\}$ and one fixed
rescaling, the half-orbit $U(s)=w(\sigma s)$, $s\ge0$, has physical
scale $0<r(s)\le1$. Its center is unrestricted, and $r(s)$ need not
have a positive lower bound. All integrability conclusions in the
following outline concern this hypothetical critical solution.

\introheading{The annular estimate and Gaussian coercivity.}
Fix a smooth radial cutoff $\varphi$, equal to one on $B_1$ and zero outside
$B_2$, with $0\le\varphi\le1$. Define the tempered-distribution multipliers
\[
 \widehat{P_{\le\nu}f}(\xi)=\varphi(2\pi\xi/\nu)\widehat f(\xi),
 \qquad P_{>\nu}=1-P_{\le\nu},\qquad h_\nu=P_{>\nu}U.
\]
For fixed $\nu>0$, $h_\nu\in C_sH^1$, and compactness gives, as $\nu\downarrow0$,
\[
 \eta_\nu:=\sup_s\|\nabla P_{\le\nu}U(s)\|_2\longrightarrow0,
 \qquad
 \delta_\nu:=\nu\sup_s\|h_\nu(s)\|_2\longrightarrow0.
\]
The finite mass of each $h_\nu$ is available at a fixed cutoff; an
estimate uniform in this mass is not available. The annular no-waste
argument instead starts from the gradient bound. It estimates the
leakage from low-frequency inputs into the output annulus, and uses
a translated Hardy bound for the mixed term at the unavailable HLS endpoint
$\alpha=2$, before absorbing a finite weighted envelope. This gives
uniform spatial $L^4$ and $L^1$ bounds for $U$ and $F(U)$, respectively.
These bounds do not yet assert integrability in time.

Directional Hartree estimates then give a positive interaction margin
near the normalized compact orbit. Angular and convolution estimates
control the tail while retaining the complete density quadratic form.
The coercivity constant depends on the compact orbit and its variational
gap, but not on $\|h_\nu\|_2$.
Fix one sufficiently small level $\theta>0$ for the orbit. For all
sufficiently small $\nu>0$, define $\lambda_\nu(s)$ by
\[
 \iint e^{-|x-y|^2/(2\lambda_\nu(s)^2)}
       |h_\nu(s,x)|^2|h_\nu(s,y)|^2\,dx\,dy
       =\theta\lambda_\nu(s)^4.
\]
The root is unique and comparable with $a_\nu(s):=\|h_\nu(s)\|_4^4$,
with comparison constants fixed by the orbit and $\theta$.
Using the same Gaussian in the density constraint and the interaction
action makes the radius derivative compatible with the charge identity.
This controls the negative part of the motion contribution without
requiring a small absolute radius speed.

\introheading{The projected source and recovery of mass.}
The defect
\[
 e_\nu=P_{>\nu}F(U)-F(h_\nu)
      =F(U)-F(h_\nu)-P_{\le\nu}F(U)
\]
need not be small in $L^1$. In particular, small output frequency does
not imply that the nonlinear expression contains a small low-frequency
input. The input difference is estimated through the full Hartree pressure
identity; the projected output is treated by a frequency gap while the
density weights remain fixed. Both terms are compared with the same
quartic integral on terminal intervals $[0,T_j]$, $T_j\to\infty$,
chosen from the forcing before the cutoff is fixed.

After the positive interaction and radius-motion terms are combined,
the finite-interval balance has the form
\[
 (\kappa_0-q_\nu)\int_0^{T_j}a_\nu(s)\,\dd s
 \le C_{\mathrm{end},\nu},\qquad
 \kappa_0>0,\quad q_\nu\longrightarrow0.
\]
Here $\kappa_0$ is fixed by the orbit and the Gaussian level;
$C_{\mathrm{end},\nu}$ is finite for each fixed $\nu$ and independent
of $j$. It is not required to be uniform as $\nu\downarrow0$.
Fixing a small $\nu$ first, and only then letting $j\to\infty$, gives
\[
 U\in L^4([0,\infty);L^4),\qquad
 F(U)\in L^2([0,\infty);L^1).
\]
The forcing estimate is obtained without assuming that $U$ has finite
mass. Put $B=\sup_{s\ge0}\|\nabla U(s)\|_2$ and introduce a new
frequency $L>0$, independent of the fixed $\nu$. The dual endpoint
estimate and the no-waste identity identify the low-frequency part in
$L^2$; the gradient controls the high-frequency part. Together they give
\[
 \|U(t)\|_2\le B/L+C L^{1/2}
        \|F(U)\|_{L^2((t,\infty);L^1)}.
\]
The two parts sum to the original $L^6$ representative of $U$, so this
is recovery of its mass, not construction of a different solution.
Letting time tend to infinity at fixed $L$, and then $L\to\infty$,
gives $\|U(t)\|_2\to0$. The localized charge identity now gives
conservation of that mass, contradicting $U\ne0$. This excludes the
critical orbit and completes the scattering argument.

\introheading{Conventions and organization.}
Spatial integrals are over $\R^3$ unless specified. Distribution pairings
are complex bilinear; Hermitian conjugation is explicit. Constants may
depend on $0<\alpha<3$, with no endpoint uniformity.

Section~\ref{std:section} fixes the homogeneous space and the classical
analytic estimates. Section~\ref{local:section} proves local theory,
conservation, stability, and scattering criteria.
Section~\ref{prof:section} develops profile decomposition and nonlinear
reconstruction; Section~\ref{crit:section} constructs the critical orbit
and selects a bounded-scale half-line. Section~\ref{sec:annular} proves
the annular forcing and spatial integrability estimates.
Section~\ref{sec:gaussian} establishes Gaussian coercivity independent
of the projected mass. Section~\ref{sec:highpass-source} estimates the
projected source, evolves the selected radius, and excludes the critical
orbit by finite-interval absorption and mass recovery.

\section{The homogeneous energy space and analytic tools}
\label{std:section}

The variational estimates fix the ground-state threshold, while the linear
estimates provide the solution and forcing spaces. We formulate both in
$\dot H^1$ with its $L^6$ representative, so no finite-mass assumption
enters the regularization or Duhamel arguments.

We use Lebesgue measure in the standard orthonormal coordinates of
$\mathbb R^3$.  A complex homogeneous energy state is an almost-everywhere
class $f\in L^6(\mathbb R^3)$ with three distributional derivatives
$g_j\in L^2(\mathbb R^3)$: for every $\phi\in C_c^\infty(\mathbb R^3;\mathbb C)$,
\[
 \int f\,\partial_j\phi=-\int g_j\phi.
\]
We denote this space by $\X$.  It has the Hilbert norm
$\|f\|_{\dot H^1}=(\sum_j\|g_j\|_2^2)^{1/2}$ and also the useful graph norm
\[
 \|f\|_{\mathrm{gr}}=\max\{\|f\|_6,\max_j\|g_j\|_2\}.
\]
The derivatives are unique; no $L^2$ assumption is imposed on $f$.
Distribution pairings are complex bilinear, and Hermitian pairings
display the conjugate explicitly. Write
$\mathcal S(\R^3)$ for the Schwartz space and
$\langle f,g\rangle_{\mathrm{bil}}=\int_{\R^3}f(x)g(x)\,\dd x$
whenever the product is integrable, extended to distributional pairings
by duality.

\subsection{Density and the sharp ground-state threshold}

We first approximate a state and its weak gradient by a common smooth sequence.

\begin{lemma}[Simultaneous approximation]\label{std:density}
If $f\in L^2$ has distributional gradient $g\in L^2$, there exist
$f_n\in C_c^\infty$ with $f_n\to f$ and $\nabla f_n\to g$ in $L^2$.
If instead $f\in L^6$ and $g\in L^2$, there exist such $f_n$ with
$f_n\to f$ in $L^6$ and $\nabla f_n\to g$ in $L^2$.
The same sequence approximates the value and weak derivatives of complex functions.
\end{lemma}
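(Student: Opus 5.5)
The plan is the standard two-step regularization: mollify first, then truncate with a smooth spatial cutoff. Fix a radial mollifier $\rho\in C_c^\infty$ with $\int\rho=1$ and set $\rho_\varepsilon(x)=\varepsilon^{-3}\rho(x/\varepsilon)$. We have $\rho_\varepsilon*f\in C^\infty$. The distributional identity $\int f\,\partial_j\phi=-\int g_j\phi$ applied to the test function $\phi=\rho_\varepsilon(x-\cdot)$ gives $\partial_j(\rho_\varepsilon*f)=\rho_\varepsilon*g_j$. Hence $\rho_\varepsilon*f\to f$ in $L^q$ for $q=2$ or $q=6$, and $\nabla(\rho_\varepsilon*f)\to g$ in $L^2$, by the usual approximate-identity argument in $L^q$ with $q<\infty$. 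It therefore suffices to approximate a smooth $f$ with $f\in L^q$ and $\nabla f\in L^2$ by compactly supported functions.

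For the truncation, let $\chi\in C_c^\infty$ satisfy $\chi=1$ on $B_1$, $\chi=0$ outside $B_2$ and $0\le\chi\le1$, and put $\chi_R=\chi(\cdot/R)$. Dominated convergence gives $\chi_Rf\to f$ in $L^q$ and $\chi_R\nabla f\to\nabla f$ in $L^2$. The only remaining term is $f\nabla\chi_R$:
\begin{itemize}
\item In the $L^2$ case, $\|f\nabla\chi_R\|_2\le CR^{-1}\|f\|_2\to0$.
\item In the $L^6$ case, $\nabla\chi_R$ is supported in the annulus $R\le|x|\le2R$, which has volume $\sim R^3$. By H\"older, $\|f\nabla\chi_R\|_2\le\|\nabla\chi_R\|_3\,\|f\|_{L^6(R\le|x|\le2R)}$, and $\|\nabla\chi_R\|_3=\|\nabla\chi\|_3$ is scale invariant. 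Since $f\in L^6$, the tail norm $\|f\|_{L^6(|x|\ge R)}\to0$, so this term vanishes too.
\end{itemize}
A diagonal choice of $\varepsilon_n\to0$ and then $R_n\to\infty$ (or truncating first and mollifying after, since mollifying a compactly supported function keeps compact support) produces the sequence $f_n$.

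The last sentence of the statement is automatic. Every operation above is complex linear and acts on real and imaginary parts simultaneously, so one sequence approximates both the value and all three weak derivatives.

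The only point that needs care, and the one I expect to be the main obstacle, is the $L^6$ truncation error. Without $L^2$ control the crude bound $R^{-1}\|f\|_2$ is unavailable, and one must use the scale invariance of $\|\nabla\chi_R\|_3$ together with the vanishing of the $L^6$ tail. If one prefers to avoid the tail argument, Hardy's inequality would also bound $\int|f|^2|x|^{-2}$ by $\|\nabla f\|_2^2$. However, that requires knowing a priori that $f$ vanishes at infinity in the $\dot H^1$ sense, which is exactly what the $L^6$ hypothesis supplies; the H\"older route is simpler and is the one I would use.
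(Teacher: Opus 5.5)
Your proposal is correct and is essentially the paper's argument: a spatial cutoff plus mollification, with the $L^6$ truncation error bounded by H\"older against the scale-invariant $\|\nabla\chi\|_3$ times the vanishing $L^6$ tail, followed by a diagonal choice of the two parameters. The one difference is order: the paper truncates first and mollifies second, which is the alternative you also mention, and compact support is then automatic. Either order works.
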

\begin{proof}
Choose $0\le\chi\in C_c^\infty(B_2)$ with $\chi=1$ on $B_1$ and
put $\chi_R(x)=\chi(x/R)$. For $q\in\{2,6\}$, the weak product rule gives
\[
 v_R=\chi_Rf,\qquad g_R=\nabla v_R=\chi_Rg+f\nabla\chi_R.
\]
Dominated convergence handles $\chi_Rf$ and $\chi_Rg$, while
\begin{equation}\label{eq:density-cutoff-error}
 \|f\nabla\chi_R\|_2\le
 \begin{cases}
 R^{-1}\|\nabla\chi\|_\infty\|f\|_2,&q=2,\\
 \|f1_{\{|x|>R\}}\|_6\|\nabla\chi\|_3,&q=6
 \end{cases}
 \longrightarrow0.
\end{equation}
By \eqref{eq:density-cutoff-error}, choose $R_n\to\infty$ with
$\|v_{R_n}-f\|_q+\|g_{R_n}-g\|_2\le(2n)^{-1}$.
For a smooth compactly supported approximate identity,
Fubini in the weak identity and translation continuity give
\begin{equation}\label{eq:density-mollification}
 \nabla(\eta_\varepsilon*v_R)=\eta_\varepsilon*g_R,
 \qquad
 \|\eta_\varepsilon*v_R-v_R\|_q+
 \|\eta_\varepsilon*g_R-g_R\|_2\longrightarrow0.
\end{equation}
Choose $\varepsilon_n\downarrow0$ so that the error in \eqref{eq:density-mollification} is
at most $(2n)^{-1}$. Then $f_n=\eta_{\varepsilon_n}*v_{R_n}\in C_c^\infty$
and $\|f_n-f\|_q+\|\nabla f_n-g\|_2\le n^{-1}$.
The choices are common to the value and every real and imaginary
gradient coordinate.
\end{proof}

Convolutions will be taken on these representatives.

For $0<\beta<3$ put
\[
 c_\beta=\frac{\Gamma((3-\beta)/2)}
 {2^\beta\pi^{3/2}\Gamma(\beta/2)},\qquad
 I_\beta(x)=c_\beta|x|^{\beta-3}\quad(x\ne0),\qquad I_\beta(0)=0.
\]
Convolutions are defined by absolutely convergent integrals for almost
every $x$. The value on the exceptional null set is immaterial; no
principal-value interpretation is used.

The strict-range HLS estimate includes absolute convergence of the convolution.

\begin{theorem}[Strong HLS inequality]\label{std:hls}
For $0<\beta<3$, $1<q<3/\beta$ and $1/r=1/q-\beta/3$, there is
$C_{\beta,q}<\infty$ such that every complex $f\in L^q$ satisfies
\[
 I_\beta*|f|<\infty\quad\hbox{a.e.},\qquad
 \|I_\beta*f\|_r\le C_{\beta,q}\|f\|_q.
\]

\end{theorem}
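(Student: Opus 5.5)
The plan is to reduce the strong HLS inequality to the weak-type bound for the Riesz kernel together with Marcinkiewicz interpolation. Absolute convergence then comes for free, because the estimate will be proved for the nonnegative function $|f|$ first.

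\emph{Reduction to nonnegative data.} Since $|I_\beta*f|\le I_\beta*|f|$ pointwise wherever the right side is finite, it suffices to prove
\[
\|I_\beta*g\|_r\le C_{\beta,q}\|g\|_q\qquad\text{for } 0\le g\in L^q .
\]
Here the convolution is a well-defined $[0,\infty]$-valued integral by Tonelli. Once this bound holds, $I_\beta*|f|\in L^r$, so it is finite almost everywhere. Hence $I_\beta*f$ is given by an absolutely convergent integral a.e., and it obeys the same bound.

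\emph{Pointwise Hedberg bound.} Fix $x$ and a radius $R>0$, and split the kernel into $|x-y|<R$ and $|x-y|\ge R$.
\begin{itemize}
\item \emph{Near part.} Decompose the ball into dyadic shells $2^{-k-1}R\le|x-y|<2^{-k}R$. This bounds the near contribution by $C R^{\beta}Mg(x)$, where $M$ is the Hardy--Littlewood maximal function; the geometric series converges because $\beta>0$.
\item \emph{Far part.} By H\"older,
\[
\int_{|y|\ge R}|y|^{(\beta-3)q'}\,dy<\infty\quad\text{iff}\quad(3-\beta)q'>3,
\]
which is equivalent to $q<3/\beta$. The far contribution is therefore at most $C R^{\beta-3/q}\|g\|_q$.
\end{itemize}
If $Mg(x)=\infty$ there is nothing to prove on that null set. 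Otherwise choose $R$ with
\[
R^{3/q}=\|g\|_q/Mg(x)
\]
to balance the two terms. This gives
\[
I_\beta*g(x)\le C\,Mg(x)^{1-\beta q/3}\,\|g\|_q^{\beta q/3}.
\]

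\emph{Conclusion.} Raise the pointwise bound to the power $r$. Since $1/r=1/q-\beta/3$, we have $r(1-\beta q/3)=q$. Integrating gives
\[
\|I_\beta*g\|_r^r\le C\|Mg\|_q^q\,\|g\|_q^{r-q}.
\]
The strong maximal inequality $\|Mg\|_q\le C_q\|g\|_q$, valid since $q>1$, then yields the claim.

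The only delicate point is bookkeeping rather than substance. One needs $q>1$ for the maximal theorem and $q<3/\beta$ for the far-field H\"older integral, and these are exactly the hypotheses. The null set where $Mg=\infty$ or $g$ is undefined must also be handled, which measurability of $I_\beta*g$ via Tonelli covers. Alternatively, the Hedberg step can be skipped by citing the classical Hardy--Littlewood--Sobolev/Stein theorem for $g\ge0$; the reduction above is what upgrades that statement to the absolute-convergence form used later.
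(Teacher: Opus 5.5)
Your proposal is correct and follows the paper's proof essentially line for line: the dyadic-shell near bound $CR^\beta Mg(x)$, the H\"older far bound $CR^{\beta-3/q}\|g\|_q$ (finite exactly when $q<3/\beta$), the balancing choice $R=(\|g\|_q/Mg(x))^{q/3}$, and the strong maximal inequality combined with $r(1-\beta q/3)=q$. The paper additionally proves the maximal bound itself and explicitly handles the case $Mg(x)=0$, which forces $g=0$ a.e.; your balancing step should mention that case too, and your opening sentence should be deleted, since it announces a weak-type/Marcinkiewicz route that the argument never uses.
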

\begin{proof}
Let $M$ be the centered Hardy--Littlewood maximal operator on $\R^3$.
Disjoint-ball selection and triple dilations, first on compact subsets of
the open level set, give
\begin{equation}\label{eq:maximal-weak-three}
 |\{Mh>\lambda\}|\le27\lambda^{-1}\|h\|_1
 \qquad(0\le h\in L^1).
\end{equation}
Fixed-radius averages are continuous. For $f\in L^q$,
$h_\lambda=|f|1_{\{|f|>\lambda/2\}}\in L^1$ and
$Mf\le\lambda/2+Mh_\lambda$. Applying \eqref{eq:maximal-weak-three}, Tonelli and layer cake yield
\begin{equation}\label{eq:maximal-strong-three}
 \|Mf\|_q^q
 \le54q\int_{\R^3}|f(x)|\int_0^{2|f(x)|}\lambda^{q-2}\,d\lambda\,dx
 =\frac{27\,2^q q}{q-1}\|f\|_q^q.
\end{equation}
Put $N=\|f\|_q$, $\vartheta=\beta q/3\in(0,1)$ and $q'=q/(q-1)$.
Dyadic shells and H\"older give bounds on the extended nonnegative integrals:
\begin{equation}\label{eq:hls-near-far}
 \begin{aligned}
 \int_{|x-y|<R}|x-y|^{\beta-3}|f(y)|\,dy
 &\le A_\beta R^\beta Mf(x),\\
 \int_{|x-y|\ge R}|x-y|^{\beta-3}|f(y)|\,dy
 &\le B_{\beta,q}R^{\beta-3/q}N,
 \end{aligned}
\end{equation}
\[
 A_\beta=\frac{|B_1|2^{3-\beta}}{1-2^{-\beta}},\qquad
 B_{\beta,q}=\left(\frac{3|B_1|}{(3-\beta)q'-3}\right)^{1/q'}<\infty.
\]
If $N>0$ and $0<Mf(x)<\infty$, set $R=(N/Mf(x))^{q/3}$ in \eqref{eq:hls-near-far}:
\begin{equation}\label{eq:hedberg-pointwise}
 I_\beta*|f|(x)\le c_\beta(A_\beta+B_{\beta,q})
 N^\vartheta Mf(x)^{1-\vartheta}.
\end{equation}
The case $N=0$ is immediate; $Mf(x)=0$ implies $f=0$ almost everywhere
by exhaustion with centered balls. Combining \eqref{eq:hedberg-pointwise} and
\eqref{eq:maximal-strong-three}, since $r(1-\vartheta)=q$,
\[
 \|I_\beta*f\|_r\le c_\beta(A_\beta+B_{\beta,q})
 \left(\frac{27\,2^q q}{q-1}\right)^{(1-\vartheta)/q}\|f\|_q.
\]
Tonelli makes the absolute majorant measurable; the bound makes it finite
almost everywhere. The resulting complex integral, set to zero on the
exceptional null set, defines the claimed $L^r$ class independently of
representatives and of the kernel value at zero.
\end{proof}

We next fix the sharp constants and the ground-state normalization.

\begin{theorem}[Sharp inequalities and the ground state]\label{std:sharp}
Set $p=3+\alpha$ and
\[
 H_\alpha=\frac{\Gamma((3-\alpha)/2)}
 {2^\alpha\pi^{\alpha/2}\Gamma((3+\alpha)/2)}
 \left(\frac4{\sqrt\pi}\right)^{\alpha/3},
 \qquad S_* =\frac13\left(\frac4{\pi^2}\right)^{2/3}.
\]
For all complex $a,b\in L^{6/p}$, the integral
$\iint I_\alpha(x-y)|a(x)b(y)|\,dx\,dy$ is finite and bounded by
$H_\alpha\|a\|_{6/p}\|b\|_{6/p}$.  For every $f\in\X$,
\[
 \|f\|_6^2\le S_* A(f),\qquad
 0\le D(f)\le H_\alpha S_*^p A(f)^p=G^{1-p}A(f)^p.
\]
Here $G=A(W)=D(W)$, with
\[
 b_\alpha=\frac{\Gamma((3-\alpha)/2)}{2^\alpha\Gamma((3+\alpha)/2)},
 \quad a_\alpha=(3/b_\alpha)^{1/(2p-2)},\quad
 W(x)=a_\alpha(1+|x|^2)^{-1/2},
\]
\[
 G=(H_\alpha S_*^p)^{-1/(p-1)},\qquad
 E(W)=\frac{p-1}{2p}G.
\]
Every least-energy nonzero weak stationary state has kinetic energy $G$
and energy $E(W)$.
\end{theorem}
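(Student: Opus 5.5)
The plan is to combine two classical sharp inequalities, Lieb's sharp HLS inequality and the sharp Sobolev inequality of Aubin--Talenti, and then calibrate the resulting constant against the explicit state $W$.

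\textbf{Step 1: the bilinear bound.} Lieb's sharp HLS inequality with exponent $q=6/p$ gives
\[
 \iint|x-y|^{\alpha-3}|a(x)||b(y)|\,dx\,dy\le C_{\mathrm{HLS}}\|a\|_{6/p}\|b\|_{6/p}.
\]
Here $1/q+1/q=2-(3-\alpha)/3$ holds because $2p/6=1+\alpha/3$. Since the two exponents are equal, the diagonal case applies, and the sharp constant is Lieb's
\[
 \pi^{(3-\alpha)/2}\frac{\Gamma(\alpha/2)}{\Gamma((3+\alpha)/2)}\Big(\frac{\Gamma(3/2)}{\Gamma(3)}\Big)^{-\alpha/3}.
\]
Multiplying by $c_\alpha$ and simplifying with $\Gamma(3/2)=\sqrt\pi/2$ should reproduce $H_\alpha$. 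This is a bookkeeping check with $\Gamma$-values. Finiteness of the integral follows from Theorem~\ref{std:hls} by Hölder, or directly from Lieb's statement.

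\textbf{Step 2: the bounds on $D$.} For $f\in\X$, the sharp Sobolev inequality $\|f\|_6^2\le S_*A(f)$ holds with the Aubin--Talenti constant $S_*=\frac13(4/\pi^2)^{2/3}$. It extends from $C_c^\infty$ to $\X$ by Lemma~\ref{std:density}. Apply Step 1 with $a=b=|f|^p\in L^{6/p}$, for which $\||f|^p\|_{6/p}=\|f\|_6^p$. This gives
\[
 0\le D(f)\le H_\alpha\|f\|_6^{2p}\le H_\alpha S_*^pA(f)^p .
\]

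\textbf{Step 3: calibration with $W$.} The function $W$ is simultaneously an extremal of the Sobolev inequality (an Aubin--Talenti bubble) and, because $W^p$ is a multiple of $(1+|x|^2)^{-(3+\alpha)/2}$, an extremal of Lieb's HLS inequality. Hence equality holds throughout: $D(W)=H_\alpha S_*^pA(W)^p$.

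Next we check $-\Delta W=F(W)$. We have $-\Delta(1+|x|^2)^{-1/2}=3(1+|x|^2)^{-5/2}$. The Riesz potential of a power bubble is
\[
 I_\alpha*(1+|x|^2)^{-(3+\alpha)/2}=b_\alpha(1+|x|^2)^{-(3-\alpha)/2},
\]
which follows from the stereographic or conformal computation, equivalently the Fourier computation using $\widehat{I_\alpha}=|2\pi\xi|^{-\alpha}$. So $F(W)=a_\alpha^{2p-1}b_\alpha(1+|x|^2)^{-5/2}$, and the choice $a_\alpha^{2p-2}=3/b_\alpha$ gives the equation.

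Pairing the equation with $W$ gives $A(W)=D(W)$; the pairing is justified because $W\in\X$ and $F(W)\in L^{6/5}$. Writing $G$ for this common value, the equality $G=H_\alpha S_*^pG^p$ yields $G=(H_\alpha S_*^p)^{-1/(p-1)}$. Then $E(W)=\frac12G-\frac1{2p}G=\frac{p-1}{2p}G$, and $D(f)\le G^{1-p}A(f)^p$ follows.

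\textbf{Step 4: least-energy states.} Let $Q\ne0$ be a weak stationary state in $\X$, so that $-\Delta Q=F(Q)$ tested against $Q$. Testing gives $A(Q)=D(Q)\le G^{1-p}A(Q)^p$, hence $A(Q)\ge G$. Its energy is then $E(Q)=\frac{p-1}{2p}A(Q)\ge E(W)$, and $W$ attains this value. Therefore every least-energy state has $E=E(W)$, and hence $A=G$.

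\textbf{Main obstacle.} The main obstacle is the constant bookkeeping in Step 1 and the explicit convolution identity in Step 3, in particular confirming that Lieb's constant times $c_\alpha$ is exactly $H_\alpha$. As a fallback I would avoid the explicit product entirely. The value of $G$ can be obtained from the equality case, evaluating $A(W)$ and $D(W)$ directly with Beta integrals; this needs only that $W$ is an optimizer of both inequalities.

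A further subtlety is justifying the test of the weak equation against $Q$ itself. For this I would use Lemma~\ref{std:density} together with the continuity of $D$ on $\X$ that comes from Step 1.
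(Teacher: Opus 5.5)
Your proposal is correct and follows the paper's proof. The steps are the same: sharp HLS (the paper cites Frank--Lieb) with Lieb's constant times $c_\alpha$ equal to $H_\alpha$, and sharp Sobolev with constant $S_*$. Then the Riesz-potential identity $I_\alpha*Q^p=b_\alpha Q^{3-\alpha}$ gives $-\Delta W=F(W)$ and $A(W)=D(W)$, and testing a weak stationary state gives $A\ge G$.

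The paper differs in two places. It proves the Riesz-potential identity with Gamma representations and Gaussian integration rather than citing it. It also gets the calibration $(H_\alpha S_*^p)A(W)^{p-1}=1$ by evaluating $\int Q^6=\pi^2/4$ and $\int|\nabla Q|^2=3\pi^2/4$ directly. So equality for $W$ follows by arithmetic, not from the classification of Sobolev and HLS extremals that you invoke.
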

\begin{proof}
The sharp HLS inequality in \cite{FrankLieb} uses the kernel
$|x-y|^{-\lambda}$. Taking $\lambda=3-\alpha$ and the equal input
exponents $6/(6-\lambda)=6/p$, its coefficient is
\[
 C_\lambda=\pi^{\lambda/2}
 \frac{\Gamma((3-\lambda)/2)}{\Gamma(3-\lambda/2)}
 \left(\frac{\Gamma(3)}{\Gamma(3/2)}\right)^{1-\lambda/3},
 \qquad c_\alpha C_{3-\alpha}=H_\alpha.
\]
Monotone compact truncation of $|a|,|b|$ proves absolute integrability
before Fubini. The spherical Sobolev inequality in \cite{FrankLieb}, transferred
stereographically to $\R^3$ and extended by Lemma~\ref{std:density},
gives the real homogeneous Sobolev constant
$[(3/4)(2\pi^2)^{2/3}]^{-1}=S_*$. For $f=a+ib$,
\[
 \begin{aligned}
 \|f\|_6^2=\|a^2+b^2\|_3
 &\le\|a\|_6^2+\|b\|_6^2\le S_*A(f),\\
 D(f)&\le H_\alpha\|f\|_6^{2p}\le H_\alpha S_*^pA(f)^p.
 \end{aligned}
\]
Put $Q(x)=(1+|x|^2)^{-1/2}$ and $M=\pi^2/4$. Polar integration gives
\[
 \int Q^6=M,\qquad \int|\nabla Q|^2=3M,
 \qquad -\Delta Q=3Q^5.
\]
For $\beta=(3-\alpha)/2$ and $R=|x|^2$, positive Gamma representations
and Gaussian integration give
\[
 \begin{aligned}
 (I_\alpha*Q^p)(x)&=
 \frac{J_\beta(R)}{2^\alpha\Gamma(\alpha/2)\Gamma(p/2)},\\
 J_\beta(R)&=\int_0^\infty\!\int_0^\infty
 \frac{r^{\beta-1}s^{2-\beta}e^{-s-rsR/(r+s)}}{(r+s)^{3/2}}\,dr\,ds.
 \end{aligned}
\]
Under $r=st$, the integrand with its Jacobian is bounded by
$t^{\beta-1}(1+t)^{-3/2}s^{1/2}e^{-s}$, whose integral is
$\Gamma(\beta)\Gamma(3/2-\beta)<\infty$.
Tonelli, Fubini and the beta integral therefore yield, for each $x$,
\[
 J_\beta(R)=\Gamma(\beta)\Gamma(3/2-\beta)(1+R)^{-\beta},
 \qquad I_\alpha*Q^p=b_\alpha Q^{3-\alpha}.
\]
Since $b_\alpha a_\alpha^{2p-2}=3$,
\begin{equation}\label{eq:ground-state-calibration}
 \begin{gathered}
 -\Delta W=F(W),\qquad A(W)=D(W)=3Ma_\alpha^2,\\
 H_\alpha=b_\alpha M^{-\alpha/3},\qquad S_*={\tfrac13}M^{-2/3},
 \qquad (H_\alpha S_*^p)A(W)^{p-1}=1.
 \end{gathered}
\end{equation}
Equation~\eqref{eq:ground-state-calibration} determines $G$ and $E(W)$. For a nonzero weak stationary
$v\in\X$, HLS gives $F(v)\in L^{6/5}$. Testing with the simultaneous
approximants of Lemma~\ref{std:density} gives
\begin{equation}\label{eq:stationary-threshold}
 0<A(v)=D(v)\le G^{1-p}A(v)^p,
 \qquad A(v)\ge G,
 \qquad E(v)\ge\frac{p-1}{2p}G.
\end{equation}
Equality in \eqref{eq:stationary-threshold} is attained by $W$, fixing both least-energy thresholds.
\end{proof}

Hereafter $C_S=\sqrt{S_*}$, so $\|f\|_6\le C_S\|\nabla f\|_2$.

The graph space $\X$ is complete: it is the intersection of the closed kernels
$\partial_j f-g_j$ in $L^6\times(L^2)^3$, where convergence in either $L^q$
implies convergence on every Schwartz test.  Moreover
\[
 \|f\|_{\mathrm{gr}}\le\max(\sqrt{S_*},1)\|f\|_{\dot H^1},
 \qquad \|f\|_{\dot H^1}\le\sqrt3\|f\|_{\mathrm{gr}}.
\]
Lemma~\ref{std:density} identifies this same space, including its weak topology,
with the usual homogeneous completion of $C_c^\infty$.

\subsection{The free group, endpoint estimates, and weak uniqueness}

Our Fourier transform is $\widehat f(\xi)=\int e^{-2\pi ix\cdot\xi}f(x)\,dx$.
Define $S(t)$ on Schwartz functions and tempered distributions by the multiplier
$m_t(\xi)=\exp(-4\pi^2it|\xi|^2)$, and on $L^2$ by Plancherel.  It is a strongly
continuous unitary group on $L^2$.  On $\X$ it is the continuous extension from
Lemma~\ref{std:density}; its gradient coordinates are exactly $S(t)\partial_j f$.
The Sobolev inequality makes this extension a continuous group in the graph
norm and an isometry in the homogeneous norm.  Its bilinear test transpose is
$S(t)$, with the same time sign: $\mathcal Fm_t\mathcal F^{-1}
=\mathcal F^{-1}m_t\mathcal F$, since $m_t$ is even and $\mathcal F^2$
is reflection.

The following estimates use this free group and its dual realization.

\begin{theorem}[Endpoint and retarded estimates]
\label{std:endpoint}
Let $Y=L^2(\mathbb R;L^{6/5}(\mathbb R^3))$.  There are finite constants such that
\[
 \|S(\cdot)f\|_{L^2_tL^6_x}+\|S(\cdot)f\|_{L^{10}_tL^{30/13}_x}
 \le C\|f\|_2.
\]
For every measurable $J\subset\mathbb R$, $t\in\mathbb R$, $H\in Y$ and
Schwartz $\phi$, the scalar integral below is absolutely integrable and
\begin{equation}\label{std:dual-endpoint}
 \int_J|\langle H(s),S(t-s)\phi\rangle_{\rm bil}|\,ds
 \le C\|1_JH\|_Y\|\phi\|_2.
\end{equation}
It determines a unique $L^2$ element $\mathcal R_{J}H(t)$ by its pairings with
all Schwartz tests.  For $\mathcal R H(t)=\mathcal R_{(-\infty,t)}H(t)$,
\[
 \mathcal R H\in C_tL^2_x,\qquad
 \|\mathcal RH\|_{L^\infty_tL^2_x}
 +\|\mathcal RH\|_{L^2_tL^6_x}
 +\|\mathcal RH\|_{L^{10}_tL^{30/13}_x}\le C\|H\|_Y.
\]
The bounds also hold for future integrals, uniformly over intervals. All
source integrals are understood in the dual $L^2$ realization above.
\end{theorem}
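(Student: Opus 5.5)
The plan follows the classical route: dispersive decay and a $TT^*$ argument for the homogeneous estimates, Keel--Tao for the endpoint, Christ--Kiselev for the non-endpoint retarded bounds, and the dual inequality for the endpoint retarded bound.

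\textbf{Homogeneous estimates.} The first step is the dispersive bound
\[
 \|S(t)\phi\|_\infty\le (4\pi|t|)^{-3/2}\|\phi\|_1
\]
for Schwartz $\phi$. It follows from the explicit Gaussian kernel of the multiplier $m_t$, obtained as the limit of $e^{-(\epsilon+4\pi^2 it)|\xi|^2}$ as $\epsilon\downarrow0$. Interpolating with unitarity on $L^2$ gives
\[
 \|S(t)\phi\|_{q}\lesssim|t|^{-3(1/2-1/q)}\|\phi\|_{q'}\qquad(2\le q\le\infty).
\]
Both pairs $(2,6)$ and $(10,30/13)$ are Schr\"odinger admissible in three dimensions, since $2/10+3/(30/13)=3/2$. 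For the non-endpoint pair $(10,30/13)$, the $TT^*$ argument together with one-dimensional Hardy--Littlewood--Sobolev in time gives the bound on $S(\cdot)f$ directly. The endpoint pair $(2,6)$ cannot be handled this way, because one-dimensional HLS fails at that exponent. There I will invoke the Keel--Tao abstract endpoint theorem, whose hypotheses are exactly the energy estimate and the $|t-s|^{-3/2}$ decay. Extension from Schwartz data to $L^2$ is by density.

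\textbf{Dual inequality \eqref{std:dual-endpoint}.} Write
\[
 \langle H(s),S(t-s)\phi\rangle_{\rm bil}=\langle H(s),S(-s)S(t)\phi\rangle_{\rm bil}.
\]
The free group preserves $L^2$ norms, and the homogeneous endpoint estimate applied to $S(t)\phi$, together with the reflection $s\mapsto -s$, gives
\[
 \|S(-\cdot)S(t)\phi\|_{L^2_sL^6_x}\le C\|\phi\|_2 .
\]
H\"older in space and time then bounds the integral of absolute values over $J$ by $C\|1_JH\|_Y\|\phi\|_2$. Measurability in $s$ follows by approximating $H$ with simple functions. So the map $\phi\mapsto\int_J\langle H(s),S(t-s)\phi\rangle\,ds$ is a bounded bilinear functional on Schwartz space in the $L^2$ norm. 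Riesz representation, after accounting for the conjugation, yields a unique $\mathcal R_JH(t)\in L^2$ with norm at most $C\|1_JH\|_Y$. This is the $L^\infty_tL^2_x$ bound, uniformly over all measurable $J$, so the future integrals are covered as well.

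\textbf{Continuity in time.} I will first take $H$ simple, a finite sum of $1_{J_k}(s)h_k$ with $h_k\in L^{6/5}$. For such $H$, continuity of $t\mapsto\mathcal RH(t)$ in $L^2$ reduces to strong continuity of $S$, once each $S(-s)h_k$ is identified via the dual endpoint estimate. Continuity then passes to general $H\in Y$ through the uniform bound.

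\textbf{Retarded space-time bounds.} For the pair $(10,30/13)$, the untruncated operator $\mathcal R_{\R}$ is bounded $Y\to L^{10}_tL^{30/13}_x$: it factors as $S(t)$ applied to the $L^2$ element $\int S(-s)H(s)\,ds$, followed by the homogeneous estimate. Since $2<10$, the Christ--Kiselev lemma transfers this bound to the retarded truncation $s<t$. The endpoint pair $L^2_tL^6_x$ is the main obstacle, because Christ--Kiselev fails when the two time exponents are both $2$. Here I would use Keel--Tao's retarded bilinear estimate,
\[
 \Bigl|\iint_{s<t}\langle S(t-s)H(s),G(t)\rangle\,ds\,dt\Bigr|\lesssim\|H\|_{L^2L^{6/5}}\|G\|_{L^2L^{6/5}},
\]
which their dyadic decomposition in $|t-s|$ proves just like the untruncated version. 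Its bilinear form is compatible with the pairing definition of $\mathcal RH$ above. Testing against $G$ in a dense class and using duality of $L^2L^6$ gives the $L^2_tL^6_x$ bound for $\mathcal RH$. The same argument with $s>t$ gives the future integrals.
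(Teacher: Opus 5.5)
Your proposal is correct and follows essentially the paper's route: a dispersive bound from the Gaussian-regularized kernel, Keel--Tao for the endpoint, the dual inequality by H\"older against the homogeneous $L^2_tL^6_x$ bound, Riesz representation, and density arguments for continuity and the retarded bounds. The differences are minor: the paper takes all the estimates, including the retarded $(10,30/13)$ bound with endpoint source $(2,6/5)$, directly from Keel--Tao's inhomogeneous theorem, where you use $TT^*$ with HLS and Christ--Kiselev, and it obtains the future integrals from the identity $\mathcal R_{(t,\infty)}H(t)=S(t)\mathcal R_{\R}H(0)-\mathcal RH(t)$ instead of rerunning the argument with $s>t$.
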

\begin{proof}
Analytic continuation of the real Gaussian in the right half-plane,
justified by uniform Gaussian domination on compact subsets, gives
\begin{equation}\label{eq:regularized-free-kernel}
 \mathcal F^{-1}\!\left(e^{-4\pi^2(\varepsilon+it)|\xi|^2}\right)(x)
 =[4\pi(\varepsilon+it)]^{-3/2}e^{-|x|^2/[4(\varepsilon+it)]}.
\end{equation}
For $t\ne0$, the modulus of the kernel in \eqref{eq:regularized-free-kernel} is at most $(4\pi|t|)^{-3/2}$. Let
$\varepsilon\downarrow0$ on Schwartz functions and use simultaneous
$L^1,L^2$ approximation:
\begin{equation}\label{eq:free-dispersive-bound}
 \begin{aligned}
 S(t)f&=(4\pi it)^{-3/2}e^{i|\cdot|^2/(4t)}*f,\\
 \|S(t)f\|_\infty&\le(4\pi|t|)^{-3/2}\|f\|_1
 \qquad(f\in L^1\cap L^2).
 \end{aligned}
\end{equation}
Unitarity and \eqref{eq:free-dispersive-bound} are the hypotheses of
the abstract Strichartz theorem of \cite{KeelTao}, with $\sigma=3/2$.
Its sharp admissibility relation is $2/q+3/r=3/2$, $q,r\ge2$.
We use the output pairs $(\infty,2)$, $(2,6)$ and $(10,30/13)$,
with dual source exponents $(2,6/5)$. In particular, the retarded
$L^2_tL^{6/5}_x\to L^2_tL^6_x$ estimate is the double endpoint
included in that theorem.

For every Schwartz $\phi$, H\"older and the homogeneous endpoint bound
give
\[
 \int_J|\langle H(s),S(t-s)\phi\rangle_{\rm bil}|\,ds
 \le\|1_JH\|_Y\|S(t-\cdot)\phi\|_{L^2L^6}
 \le C\|1_JH\|_Y\|\phi\|_2.
\]
This is the estimate \eqref{std:dual-endpoint}.
Strong measurability and complex Riesz representation give a unique
$L^2$ element with these bilinear pairings.

Choose finite smooth compactly supported time-space tensors $H_n\to H$
in $Y$. Their Bochner integrals $V_n(t)=\int_{-\infty}^tS(t-s)H_n(s)\,ds$
satisfy
\begin{equation}\label{eq:retarded-tensor-cauchy}
 \|V_n-V_m\|_{C_tL^2}
 +\|V_n-V_m\|_{L^2L^6}
 +\|V_n-V_m\|_{L^{10}L^{30/13}}
 \le C\|H_n-H_m\|_Y.
\end{equation}
On bounded $I\times B$, H\"older turns each convergence in \eqref{eq:retarded-tensor-cauchy} into local
$L^1$ convergence. Thus the three limits have one continuous $L^2$
representative with the prescribed scalar pairings. The same density argument treats the free estimates.
Finally, the group identity and the absolutely convergent scalar splitting give
\begin{equation}\label{eq:future-past-decomposition}
 \mathcal R_{(t,\infty)}H(t)
 =S(t)\mathcal R_{\R}H(0)-\mathcal RH(t).
\end{equation}
The free and past estimates applied to \eqref{eq:future-past-decomposition} give future continuity and all future
bounds, with constants independent of interval length.
\end{proof}

To identify the Duhamel solution, we also need uniqueness for the weak linear equation.

\begin{lemma}[Generator and weak uniqueness]
\label{std:generator}\label{std:linear-uniqueness}
For every $T\in\mathcal S'$ and $\phi\in\mathcal S$,
$\frac d{dt}(S(t)T)(\phi)=i(S(t)T)(\Delta\phi)$.
If $z\in C([a,b];L^2)$, $t_0\in[a,b]$, $z(t_0)=0$, and
\[
 \int_a^b\{-i\eta'(t)\langle z(t),\phi\rangle_{\rm bil}
       +\eta(t)\langle z(t),\Delta\phi\rangle_{\rm bil}\}\,dt=0
\]
for all $\eta\in C_c^\infty((a,b))$ and $\phi\in C_c^\infty(\mathbb R^3)$,
then $z=0$ on $[a,b]$.
\end{lemma}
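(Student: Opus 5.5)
For the generator identity I would work on the Fourier side. By definition $(S(t)T)(\phi)=T(S(t)\phi)$, since the bilinear transpose of $S(t)$ is $S(t)$ with the same time sign. So it suffices to show that $t\mapsto S(t)\phi$ is differentiable in the Schwartz topology, with derivative $i\Delta S(t)\phi=S(t)(i\Delta\phi)$. Continuity of $T$ on $\mathcal S$ then gives the claim.

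On the Fourier side, $S(t)\phi$ corresponds to $m_t\widehat\phi$ with $m_t=e^{-4\pi^2it|\xi|^2}$. The difference quotient minus the candidate derivative is
\[
 \Bigl(\frac{m_{t+h}-m_t}{h}+4\pi^2i|\xi|^2m_t\Bigr)\widehat\phi .
\]
Taylor's formula with integral remainder in $h$ bounds each Schwartz seminorm of this by $C|h|$. Indeed, every $\xi$-derivative of $m_{t+\tau}$ has polynomial growth, locally uniformly in $\tau$, and this growth is absorbed by the seminorms of $\widehat\phi$. Since $\mathcal F^{-1}$ is continuous on $\mathcal S$, the derivative is $\mathcal F^{-1}(-4\pi^2i|\xi|^2m_t\widehat\phi)=iS(t)\Delta\phi$.

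For weak uniqueness, I would conjugate by the group. Put $y(t)=S(-t)z(t)\in C([a,b];L^2)$ and fix $\psi\in\mathcal S$. The aim is to show that $g(t)=\langle y(t),\psi\rangle_{\rm bil}=\langle z(t),S(-t)\psi\rangle_{\rm bil}$ is constant. Since $g(t_0)=0$ and $\psi$ is arbitrary, this gives $y\equiv0$, hence $z\equiv0$.

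The hypothesis only uses $C_c^\infty$ tests with tensor-product time dependence. The first step is to extend it to time-dependent Schwartz tests $\Phi(t,x)=\eta(t)\phi_t(x)$, where $t\mapsto\phi_t$ is $C^1$ into $\mathcal S$. The extended statement is that
\[
 \int\bigl\{-i\langle z,\partial_t\Phi\rangle_{\rm bil}+\langle z,\Delta\Phi\rangle_{\rm bil}\bigr\}\,dt=0 .
\]
The extension goes in two stages.
\begin{itemize}
\item First pass from $C_c^\infty$ to $\mathcal S$ in the spatial test. Since $z(t)\in L^2$ uniformly on $[a,b]$, we may approximate $\phi$ and $\Delta\phi$ in $L^2$ by $C_c^\infty$ functions.
\item Then pass from tensor tests to general $C^1$ families. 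Approximate $\Phi$ by finite sums $\sum\eta_k(t)\phi_k(x)$, for instance via Riemann sums or a partition of unity in $t$, with convergence of $\Phi$, $\partial_t\Phi$ and $\Delta\Phi$ in $L^1_tL^2_x$.
\end{itemize}

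Now take $\phi_t=S(-t)\psi$. By the first part, $\partial_t\phi_t=-i\Delta\phi_t$ in $\mathcal S$. Substituting, the bracket becomes
\[
 -i\eta'\langle z,\phi_t\rangle+\eta\bigl(-\langle z,\Delta\phi_t\rangle+\langle z,\Delta\phi_t\rangle\bigr)=-i\eta' g(t).
\]
Hence $\int\eta' g\,dt=0$ for all $\eta\in C_c^\infty((a,b))$. Since $g$ is continuous, $g$ is constant on $(a,b)$ (du Bois-Reymond), and by continuity on $[a,b]$. With $g(t_0)=0$ we get $g\equiv0$.

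The main obstacle is the density step that upgrades tensor tests to the $C^1$ family $S(-t)\psi$. I would handle it by the uniform $L^2$ bound on $z$ together with the first part, which guarantees $\partial_t\phi_t$ and $\Delta\phi_t$ are continuous in $t$ with values in $L^2$. A partition-of-unity approximation in time then converges in $L^1_tL^2_x$ for each term.
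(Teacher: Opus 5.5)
Your proof is correct. The generator half is the paper's own argument: Taylor remainder of $m_t$ in time, convergence in every Schwartz seminorm, then continuity of $T$. The uniqueness half, however, takes a genuinely different route.

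\textbf{The paper's route.} The paper never uses time-dependent tests. It first extends the weak equation to Schwartz $\phi$ by cutoffs and passes to the integral form $\langle z(t),\phi\rangle_{\rm bil}-\langle z(s),\phi\rangle_{\rm bil}=i\int_s^t\langle z(\tau),\Delta\phi\rangle_{\rm bil}\,d\tau$. It then inserts a smooth compactly supported Fourier cutoff $P_N$. Since $\Delta P_N$ is bounded on $L^2$, $P_Nz$ becomes a genuine $C^1_tL^2$ solution of $\partial_t(P_Nz)=i\Delta P_Nz$. Skew-adjointness gives $\frac{d}{dt}\|P_Nz(t)\|_2^2=0$, so $z(t_0)=0$ forces $P_Nz\equiv0$, and $N\to\infty$ finishes.

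\textbf{Your route.} You pair $z(t)$ with the backward free evolutions $S(-t)\psi$ and show the pairing is constant. This is a purely linear duality argument with no energy identity or positivity. It is also the place where the first half of the lemma is genuinely used. The paper's route needs only fixed spatial tests and the boundedness of the truncated generator, so it sidesteps the test-function extension that you identify as your main obstacle.

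\textbf{The density step.} That step can be avoided altogether, and as written it needs care. Take a smooth partition of unity $\chi(nt-k)$ with sampled coefficients $\Phi(k/n)$. The time derivative of the approximant is then asymptotically $\gamma(nt)\,\partial_t\Phi$, where $\gamma$ is $1$-periodic with mean one but in general not identically one. So $\partial_t\Phi_n$ converges only weakly, not in $L^1_tL^2_x$ as you claim. Weak convergence happens to suffice against continuous $z$, and Riemann sums of a time-mollified $\Phi$ do converge strongly, so the step is repairable.

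\textbf{A cleaner path.} From tensor tests alone, du Bois-Reymond shows that for each fixed Schwartz $\phi$ the map $t\mapsto\langle z(t),\phi\rangle_{\rm bil}$ is $C^1$ with derivative $i\langle z(t),\Delta\phi\rangle_{\rm bil}$. Now set $\phi_t=S(-t)\psi$ and split
\[
 g(t+h)-g(t)=\langle z(t+h)-z(t),\phi_t\rangle_{\rm bil}+\langle z(t+h),\phi_{t+h}-\phi_t\rangle_{\rm bil}.
\]
After dividing by $h$, the first term tends to $i\langle z(t),\Delta\phi_t\rangle_{\rm bil}$. By $L^2$ continuity of $z$ and $L^2$ differentiability of $\phi_t$, the second tends to $-i\langle z(t),\Delta\phi_t\rangle_{\rm bil}$. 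Hence $g'\equiv0$ directly, with no non-tensor tests needed.
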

\begin{proof}
For $\omega=4\pi^2|\xi|^2$, Taylor's formula gives
\begin{equation}\label{eq:free-generator-remainder}
 \frac{m_{t+h}-m_t}{h}+i\omega m_t
 =h\int_0^1(1-\vartheta)\partial_s^2m_{t+\vartheta h}\,d\vartheta.
\end{equation}
On $|s-t|\le1$, $\partial_\xi^\gamma\partial_s^2m_s$ grows at most
as $(1+|\xi|)^{2|\gamma|+4}$. Multiplication by a Schwartz function
therefore makes the remainder in \eqref{eq:free-generator-remainder} tend to zero in every Schwartz seminorm.
Fourier continuity and application of $T$ give the generator identity.

For uniqueness, cutoff tests extend the weak equation to $\phi\in\mathcal S$:
\[
 \|\Delta(\chi_R\phi)-\chi_R\Delta\phi\|_2
 \le CR^{-1}\|\nabla\phi\|_2+CR^{-2}\|\phi\|_2\longrightarrow0.
\]
Subtracting the scalar primitive and mollifying gives its integral form;
continuity includes the endpoint values. Take a real even smooth cutoff
$P_N$ equal to one on $|\xi|\le N$ and supported in $|\xi|\le2N$.
Schwartz separation gives
\begin{equation}\label{eq:localized-linear-primitive}
 P_Nz(t)-P_Nz(s)=\int_s^t i\Delta P_Nz(\tau)\,d\tau.
\end{equation}
Since $\Delta P_N$ is bounded on $L^2$, \eqref{eq:localized-linear-primitive} gives $P_Nz\in C^1_tL^2$, and
\begin{equation}\label{eq:localized-linear-charge}
 \frac d{dt}\|P_Nz(t)\|_2^2
 =2\re\int i\Delta P_Nz(t)\,\overline{P_Nz(t)}=0,
 \qquad \|P_Nz(t)\|_2^2=\|P_Nz(t_0)\|_2^2=0.
\end{equation}
Let $N\to\infty$ in \eqref{eq:localized-linear-charge}, using $L^2$ convergence. If $a=b$, the zero trace already suffices.
\end{proof}

We next record the spatial regularization and compactness tools.

\subsection{Bessel regularization, local compactness, and gluing}

For $a>0$, write $B_a$ for the Fourier multiplier
$(1+a^2|\xi|^2)^{-1}$.

\begin{lemma}[Bessel regularization]\label{std:bessel}
For $a>0$ the multiplier $(1+a^2|\xi|^2)^{-1}$ is convolution with
\[
 K_\mu(x)=\frac{e^{-|x|/\mu}}{4\pi\mu^2|x|},\qquad \mu=a/(2\pi).
\]
It is a contraction on every complex $L^q$, $1<q<\infty$, converging
strongly to the identity as $a\downarrow0$ and to zero as $a\to\infty$.
For every $H\in L^{6/5}$ the same convolution is in $L^6$ and its actual
coordinate derivatives $(\partial_jK_\mu)*H$ belong to $L^2$.
\end{lemma}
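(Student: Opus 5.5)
The plan is to compute the kernel of $B_a$ explicitly and then read off the operator properties from elementary facts about the kernel. First, $K_\mu$ is the fundamental solution of $1-\mu^2\Delta$. With the paper's convention $\widehat f(\xi)=\int e^{-2\pi ix\cdot\xi}f$, the operator $-\Delta$ has symbol $4\pi^2|\xi|^2$. Hence $1+a^2|\xi|^2$ is the symbol of $1-\mu^2\Delta$ with $\mu=a/(2\pi)$. The function $e^{-|x|/\mu}/|x|$ satisfies $(-\Delta+\mu^{-2})(e^{-|x|/\mu}/|x|)=4\pi\delta$. To see this, compute the radial Laplacian away from the origin; near the origin the singularity is $1/|x|$, giving the $4\pi\delta$ by Green's identity on $\R^3\setminus B_\varepsilon$. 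Dividing by $4\pi\mu^2$ gives $(1-\mu^2\Delta)K_\mu=\delta$. Since $K_\mu\in L^1$, its Fourier transform is a bounded continuous function. Therefore $\widehat{K_\mu}(\xi)(1+a^2|\xi|^2)=1$, and the identification holds as tempered distributions. A cleaner check is the subordination formula
\[
(1+a^2|\xi|^2)^{-1}=\int_0^\infty e^{-t}e^{-ta^2|\xi|^2}\,dt.
\]
Its inverse transform is a positive average of Gaussians, which integrates in closed form to $K_\mu$. This representation also gives $K_\mu\ge0$ directly.

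Second, $K_\mu\ge0$ and $\int K_\mu=\widehat{K_\mu}(0)=1$. Young's inequality then gives the contraction on every $L^q$, $1\le q\le\infty$, in particular on $1<q<\infty$, for complex functions. Third, the kernel has the scaling form $K_\mu(x)=\mu^{-3}K_1(x/\mu)$, so $\{K_\mu\}$ is an approximate identity as $\mu\downarrow0$. Strong convergence to the identity on $L^q$, $q<\infty$, follows by the standard argument: continuity of translation plus dominated convergence, or density of $C_c$ together with the uniform contraction bound. For $a\to\infty$ there are two routes. On $C_c$ functions, $\|K_\mu*f\|_q\le\|K_\mu\|_q\|f\|_1=\mu^{-3(1-1/q)}\|K_1\|_q\|f\|_1\to0$, using $q>1$ and $K_1\in L^q$ for $q<3$. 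For $q\ge3$, interpolate instead: use $\|K_\mu*f\|_\infty\le\|K_\mu\|_\infty$-type bounds on a split kernel, or simply use $\|K_\mu*f\|_q\le\|K_\mu\|_{r}\|f\|_s$ with $1+1/q=1/r+1/s$ and some $1<r<3$. Uniform contractivity and density then finish this step.

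Fourth comes the $L^{6/5}$ statement, which is the only genuinely quantitative part. Pointwise, $K_\mu\le(4\pi\mu^2)^{-1}|x|^{-1}$, which is a multiple of $I_2$. Theorem~\ref{std:hls} with $\beta=2$ and $q=6/5$ gives $1/r=5/6-2/3=1/6$, so $K_\mu*|H|\in L^6$ and the convolution converges absolutely a.e. For the derivatives, $|\partial_jK_\mu(x)|\le C_\mu(|x|^{-2}+\mu^{-1}|x|^{-1})e^{-|x|/\mu}$. The $|x|^{-2}$ part is a multiple of $I_1$, and Theorem~\ref{std:hls} with $\beta=1$, $q=6/5$ gives exponent $1/r=5/6-1/3=1/2$, i.e. $L^2$. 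The $|x|^{-1}e^{-|x|/\mu}$ part is bounded by $I_2$ near the origin and by $C|x|^{-2}$ everywhere, hence is also dominated by a multiple of $I_1$. Finally, one must check that $(\partial_jK_\mu)*H$ is the distributional derivative of $K_\mu*H$. This follows by Fubini against test functions, using $\partial_jK_\mu\in L^1_{\rm loc}$ and the fact that the $|x|^{-1}$ singularity produces no delta term at first order.

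The main obstacle is mild. It consists of keeping the kernel identification honest, that is, the distributional Laplacian at the origin or the Gaussian subordination integral. The $a\to\infty$ limit for large $q$ also needs the exponent bookkeeping to be chosen so that the relevant kernel norm actually decays.
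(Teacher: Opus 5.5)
Your proposal is correct, and its overall structure matches the paper's. You identify the kernel, get the $L^q$ contraction from $K_\mu\ge0$ and $\int K_\mu=1$, and handle both limits by the scaling $K_\mu(x)=\mu^{-3}K_1(x/\mu)$ together with density. The $L^{6/5}\to L^6$ and $L^{6/5}\to L^2$ claims then follow from Riesz majorants and Theorem~\ref{std:hls}. Two sub-steps differ from the paper.

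\emph{Kernel identification.} The paper never works distributionally at the origin here. It computes the planar marginal $\int_{\R^2}K_\mu(t,y)\,dy=e^{-|t|/\mu}/(2\mu)$ and then evaluates $\widehat K_\mu(\xi)$ as an absolutely convergent one-dimensional transform in a frame parallel to $\xi$. Your Green's-identity route instead needs $(1-\mu^2\Delta)K_\mu=\delta$ in $\mathcal S'$, plus continuity of $\widehat K_\mu$ to divide by the symbol. Your subordination route gives the closed form and positivity at once. The $t$-integral is $\int_0^\infty t^{-3/2}e^{-t-B/t}\,dt=\sqrt{\pi/B}\,e^{-2\sqrt B}$ with $B=|x|^2/(4\mu^2)$.

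\emph{The limit $a\to\infty$.} The paper interpolates $\|K_\mu*\phi\|_\infty\le\mu^{-3/2}\|K_1\|_2\|\phi\|_2$ against the $L^1$ contraction. Your Young bound using $\|K_\mu\|_r=\mu^{-3(1-1/r)}\|K_1\|_r$ is just as short. It also needs no case split at $q=3$: any $1<r\le q$ with $r<3$ works for every $1<q<\infty$. Drop the aside about $\|K_\mu\|_\infty$-type bounds, since $K_\mu$ is unbounded.

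\emph{Derivatives.} The elementary bound $(1+t)e^{-t}\le1$ gives the single majorant $|\partial_jK_\mu|\le(4\pi\mu^2)^{-1}|x|^{-2}=\frac{\pi}{2\mu^2}I_1$, so no splitting of the gradient is needed. The paper rules out an atom at zero by smoothing the radial profile; you do it via the $O(\varepsilon)$ small-sphere boundary term. Either works. Since $\|\partial_jK_\mu\|_1\le 2/\mu$ globally, not just locally, the Fubini step against test functions is immediate.
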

\begin{proof}
Put $c_\mu=(4\pi\mu^2)^{-1}$. Polar integration gives
\[
 K_\mu\ge0,\qquad \int K_\mu=1,\qquad
 K_\mu(x)=\mu^{-3}K_1(x/\mu),\qquad
 \int_{\R^2}K_\mu(t,y_1,y_2)\,dy_1dy_2=\frac{e^{-|t|/\mu}}{2\mu}.
\]
Fubini in a frame parallel to $\xi$ yields
\[
 \widehat K_\mu(\xi)=\frac1{2\mu}
 \left(\frac1{\mu^{-1}+2\pi i|\xi|}
       +\frac1{\mu^{-1}-2\pi i|\xi|}\right)
 =\frac1{1+4\pi^2\mu^2|\xi|^2}.
\]
Thus $\mu=a/(2\pi)$ gives the multiplier. The Bochner average
$T_\mu f=\int K_\mu(y)f(\cdot-y)\,dy$ is complex linear, and
\begin{equation}\label{eq:bessel-contraction-identity}
 \|T_\mu f\|_q\le\|f\|_q,\qquad
 \|T_\mu f-f\|_q
 \le\int K_1(y)\|f(\cdot-\mu y)-f\|_q\,dy\longrightarrow0
 \quad(\mu\downarrow0).
\end{equation}
Schwartz Fourier inversion and $L^q$ density identify it with the
multiplier. For Schwartz $\phi$, $K_1\in L^2$ gives
\begin{equation}\label{eq:bessel-large-scale}
 \begin{aligned}
 \|T_\mu\phi\|_\infty&\le\mu^{-3/2}\|K_1\|_2\|\phi\|_2,
 &\|T_\mu\phi\|_1&\le\|\phi\|_1,\\
 \|T_\mu\phi\|_q^q
 &\le\mu^{-3(q-1)/2}\|K_1\|_2^{q-1}\|\phi\|_2^{q-1}\|\phi\|_1
 &&\longrightarrow0\quad(\mu\to\infty).
 \end{aligned}
\end{equation}
Density and the contraction in \eqref{eq:bessel-contraction-identity} extend
the limit \eqref{eq:bessel-large-scale} to every $f\in L^q$, $1<q<\infty$.

Off zero, set
\[
 G_\mu(x)=-c_\mu\frac{e^{-|x|/\mu}(1+|x|/\mu)}{|x|^3}x,
 \qquad \|G_\mu\|_1=2/\mu.
\]
Smooth the radial profile by $|x|\mapsto(|x|^2+\varepsilon)^{1/2}$.
The kernels and gradients are dominated by $K_\mu$ and $|G_\mu|$;
compact-test limits give $\nabla K_\mu=G_\mu$, with no atom at zero.
The pointwise bounds
\begin{equation}\label{eq:bessel-riesz-majorants}
 K_\mu\le\mu^{-2}I_2,\qquad
 |(G_\mu)_j|\le\frac{\pi}{2\mu^2}I_1
\end{equation}
in \eqref{eq:bessel-riesz-majorants} and Theorem~\ref{std:hls} give absolutely convergent convolutions
$K_\mu*:L^{6/5}\to L^6$ and $(G_\mu)_j*:L^{6/5}\to L^2$.
The value is also the $L^{6/5}$ contraction. Differentiate Schwartz
sources and pass through one common $L^{6/5}$ approximation:
\[
 \partial_j(K_\mu*H)=(G_\mu)_j*H\quad\text{in }\mathcal S',
 \qquad j=1,2,3.
\]
These are the value and weak derivatives of one state in $\X$.
\end{proof}

Local compactness is needed only on bounded balls.

\begin{lemma}[Rellich on a ball]\label{std:rellich}
Let $R>0$.  If real $v_n$ have global weak gradients $G_n$, both restricted
to $B_R$ lie in $L^2$, and
$\sup_n(\|v_n\|_{L^2(B_R)}+\|G_n\|_{L^2(B_R)})<\infty$, then a strictly
increasing subsequence is Cauchy in the scalar space $L^2(B_R)$.
The conclusion also holds for complex sequences.
\end{lemma}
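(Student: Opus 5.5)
The plan is to reduce to the classical Rellich--Kondrachov theorem on a cube by cutting off and mollifying, and then to run a direct total-boundedness argument in $L^2(B_R)$. The only wrinkle is that the hypotheses are localized: $v_n$ and $G_n$ are only known to be in $L^2(B_R)$, while the weak gradient identity holds globally against $C_c^\infty(\R^3)$ tests.

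First, fix $R'<R$ together with a cutoff $\chi\in C_c^\infty(B_R)$ with $0\le\chi\le1$ and $\chi=1$ on $B_{R'}$. The weak product rule, tested only against functions supported in $B_R$, gives $\nabla(\chi v_n)=\chi G_n+v_n\nabla\chi$. So $w_n=\chi v_n$ is a compactly supported $H^1(\R^3)$ sequence, bounded uniformly in $n$ by the hypothesis.

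Second, prove compactness of $(w_n)$ in $L^2$ by mollification:
\begin{itemize}
\item For a mollifier $\eta_\varepsilon$, use $\|\eta_\varepsilon*w_n-w_n\|_2\le\varepsilon\|\nabla w_n\|_2$. This follows from the fundamental theorem of calculus along segments, first for smooth approximants (Lemma~\ref{std:density}) and then by passing to the limit.
\item For fixed $\varepsilon$, the family $\eta_\varepsilon*w_n$ is uniformly bounded and equicontinuous on the fixed compact set $\overline{B_R}$, so Arzel\`a--Ascoli makes it precompact in $C(\overline{B_R})$, hence in $L^2$.
\end{itemize}
Together these show that $(w_n)$ is totally bounded in $L^2$. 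A diagonal extraction over $\varepsilon=1/k$ then produces a strictly increasing subsequence along which $w_n$ is Cauchy in $L^2$, and in particular Cauchy in $L^2(B_{R'})$.

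Third, and this is the main obstacle, handle the boundary layer $B_R\setminus B_{R'}$. There the hypothesis gives only an $L^2$ bound, and no smallness. I first try to bypass the interior cutoff by using global weak differentiability, since the gradient identity holds on all of $\R^3$:
\begin{itemize}
\item For a translation $h$ with $|h|<\delta$, estimate $\|v_n(\cdot+h)-v_n\|_{L^2(B_{R})}\le|h|\,\|G_n\|_{L^2(B_{R+\delta})}$.
\item This requires $G_n\in L^2$ on a slightly larger ball. That is not given; the hypothesis is only on $B_R$.
\end{itemize}
So translation estimates only work on $B_{R-\delta}$. The honest route is to observe that the uniform bound on $\|v_n\|_{L^2(B_R)}$ alone does not make the layer mass small. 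I would therefore use the extension by reflection across the sphere (a bi-Lipschitz map $x\mapsto R^2x/|x|^2$ applied near $\partial B_R$). Reflection shows that a Lipschitz domain such as $B_R$ admits a bounded $H^1$ extension operator $E\colon H^1(B_R)\to H^1(B_{2R})$. The restrictions $v_n|_{B_R}\in H^1(B_R)$ are bounded, so $(Ev_n)$ is bounded in $H^1$ with compact support. The first two steps then apply directly to $Ev_n$ with $\chi=1$ on $B_R$, and this removes the layer issue. The global weak-gradient hypothesis is used only to identify $G_n|_{B_R}$ as the weak gradient of $v_n|_{B_R}$, which legitimizes the extension.

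Finally, for the complex case, apply the real argument to the real parts. After extracting, apply it again to the imaginary parts along that subsequence; Cauchy convergence of both gives the complex conclusion.
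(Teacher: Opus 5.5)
Your proof is correct in outline, but it takes a different route from the paper.

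\textbf{Your route.} You handle the boundary layer $B_R\setminus B_{R'}$ by extending outward. You reflect $v_n|_{B_R}$ through the sphere via $x\mapsto R^2x/|x|^2$, cut off, and then run the mollify-plus-Arzel\`a--Ascoli argument for a bounded, compactly supported family in $H^1(\R^3)$.

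\textbf{The paper's route.} The paper never leaves the ball. It uses the smoothing operator $A_\varepsilon f(x)=\int k_\varepsilon(y)F((1-\varepsilon)x-y)\,dy$, with $F=1_{B_R}f$ and $k_\varepsilon$ supported in $B_{\varepsilon R/2}$. This combines an inward dilation with mollification, so for $x\in\overline{B_R}$ it samples only interior values. The paper then proves the displacement bound $\|f-A_\varepsilon f\|_{L^2(B_R)}^2\le18\varepsilon^2R^2\|G\|_{L^2(B_R)}^2$ directly. It does this first for smooth $f$ along segments, and then for weak pairs via interior mollification on $B_r$, Fatou, and $r\uparrow R$. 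Because this bound reaches all the way to $\partial B_R$, the boundary layer never arises, and no trace, gluing or extension is needed.

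\textbf{Comparison.} Your approach is more modular and would carry over to any extension domain. However, its real content sits in the step you only assert. You need that the inversion-reflected function of a rough $v\in H^1(B_R)$ belongs to $H^1$ across $\partial B_R$, with no singular part of the gradient on the sphere. Lemma~\ref{std:gluing} gives this for $C^1$ pieces. Reaching general $H^1(B_R)$ data then requires density of $C^\infty(\overline{B_R})$ in $H^1(B_R)$, or an appeal to the classical extension theorem. For a ball, that density is typically proved by the same inward-dilation trick the paper uses. So a self-contained version of your proof would contain the paper's argument plus an extra extension step.

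Once the extension is in hand, your initial interior cutoff and the abandoned translation estimate are also unnecessary.
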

\begin{proof}
For $0<\varepsilon\le1/2$, take a nonnegative smooth probability
kernel $k_\varepsilon$ supported in $B_{\varepsilon R/2}$. Put
\[
 F=1_{B_R}f,\qquad
 A_\varepsilon f(x)=\int k_\varepsilon(y)F((1-\varepsilon)x-y)\,dy.
\]
For $x\in\overline B_R$ this uses only interior values; no weak gradient
of the zero extension is used. For smooth $f$,
\[
 f(x)-f((1-\varepsilon)x-y)
 =\int_0^1 Df((1-t\varepsilon)x-ty)[\varepsilon x+y]\,dt.
\]
Since $|\varepsilon x+y|\le3\varepsilon R/2$ and
$(1-t\varepsilon)^{-3}\le8$, Cauchy--Schwarz and change of variables give
\begin{equation}\label{eq:rellich-inward-displacement}
 \int_{B_R}|f(x)-f((1-\varepsilon)x-y)|^2\,dx
 \le18\varepsilon^2R^2\int_{B_R}|G|^2.
\end{equation}
For weak pairs fix $|y|<\varepsilon R/2$ and $2|y|/\varepsilon<r<R$.
Mollification with radii below $R-r$ gives on $B_r$
\[
 f_j\to f\text{ in }L^2,
 \qquad \nabla f_j=k_j*G,
 \qquad \|\nabla f_j\|_{L^2(B_r)}\le\|G\|_{L^2(B_R)}.
\]
Fatou, including the nonsingular affine pullback, passes \eqref{eq:rellich-inward-displacement}
to $f$; then let $r\uparrow R$. Jensen and Fubini yield
\begin{equation}\label{eq:rellich-inward-error}
 \|f-A_\varepsilon f\|_{L^2(B_R)}^2
 \le18\varepsilon^2R^2\|G\|_{L^2(B_R)}^2.
\end{equation}
At fixed $\varepsilon$, convolution bounds give a uniformly bounded
$C^1(\overline B_R)$ family $\{A_\varepsilon v_n\}$; Arzel\`a--Ascoli
makes it totally bounded in $L^2(B_R)$.
The uniform error \eqref{eq:rellich-inward-error} tends to zero, so
$\{v_n\}$ is totally bounded as well. Completeness gives the increasing
Cauchy subsequence. Successive extraction for the real and imaginary
parts gives the complex case, without boundary or exterior $L^2$ assumptions.
\end{proof}

Piecewise smooth constructions are admissible when their traces agree.

\begin{lemma}[Regular-level gluing]\label{std:gluing}
Let $\rho,u,v$ be real $C^1$ functions on $\mathbb R^3$, with $d\rho\ne0$
on $\Sigma=\{\rho=0\}$ and $u=v$ on $\Sigma$.  If
$h=u1_{\{\rho<0\}}+v1_{\{\rho\ge0\}}$ has compact topological support,
then $h\in H^1$ and its weak differential is exactly
$D=du1_{\{\rho<0\}}+dv1_{\{\rho\ge0\}}$.
The same assertion holds on $P=\mathbb R\times(\mathbb R\times\mathbb R)$
with its maximum product norm, product measure, and operator-norm covectors.
\end{lemma}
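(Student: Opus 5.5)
The plan is to verify the weak identity $\int h\,\partial_j\phi=-\int D_j\phi$ for every $\phi\in C_c^\infty(\R^3)$ directly, by splitting along the level set and applying the divergence theorem on each side. Compact support of $h$ and continuity of $u,v,du,dv$ make $h$ bounded with compact support, and the same for $D$; hence $h,D\in L^2$, and $h\in H^1$ follows once the weak identity holds. A direct Gauss--Green argument on $\{\rho<0\}$ would need boundary regularity that is only local and possibly unbounded. So I would use a smooth cutoff in the normal variable instead. Fix $\psi\in C^\infty(\R)$ with $\psi=0$ on $(-\infty,-1]$ and $\psi=1$ on $[0,\infty)$, and put $\psi_\varepsilon(x)=\psi(\rho(x)/\varepsilon)$. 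Then
\[
 h_\varepsilon=u(1-\psi_\varepsilon)+v\psi_\varepsilon
\]
is $C^1$. Its integral against $\partial_j\phi$ is computed by the classical product rule:
\[
 \int h_\varepsilon\partial_j\phi
 =-\int\big[(1-\psi_\varepsilon)\partial_ju+\psi_\varepsilon\partial_jv\big]\phi
 -\int (v-u)\,\varepsilon^{-1}\psi'(\rho/\varepsilon)\,\partial_j\rho\,\phi .
\]

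As $\varepsilon\downarrow0$, $\psi_\varepsilon\to 1_{\{\rho\ge0\}}$ pointwise and boundedly on $\supp\phi$. Hence the left side tends to $\int h\partial_j\phi$ and the first term on the right tends to $-\int D_j\phi$, by dominated convergence. The main obstacle is the transition term. It is supported in the slab $\{-\varepsilon<\rho<0\}\cap\supp\phi$, where the integrand has size at most $\varepsilon^{-1}\|\psi'\|_\infty|v-u|\,|\nabla\rho|\,|\phi|$. Two facts are needed to make it vanish:
\begin{itemize}
\item $|v-u|\to0$ uniformly on that slab;
\item the slab has measure $O(\varepsilon)$.
\end{itemize}

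For this I would use compactness of $K=\supp\phi$. Since $d\rho\ne0$ on the compact set $\Sigma\cap K$, there is a neighbourhood $N$ of $\Sigma\cap K$ and a constant $c>0$ with $|\nabla\rho|\ge c$ on $N$. For small $\varepsilon$, the set $K\cap\{|\rho|<\varepsilon\}$ lies in $N$: otherwise points with $\rho\to0$ would accumulate at a point of $\Sigma\cap K$ outside $N$. The coarea formula, or a finite cover by implicit-function charts straightening $\rho$ to a coordinate, gives $|K\cap\{-\varepsilon<\rho<0\}|\le C\varepsilon$. Uniform continuity of $v-u$ on $K$, together with $v-u=0$ on $\Sigma$ and the fact that slab points lie within $o(1)$ of $\Sigma\cap K$, gives $\sup_{\rm slab}|v-u|\to0$. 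The transition term is therefore $O(\varepsilon^{-1}\cdot o(1)\cdot\varepsilon)\to0$, which proves the weak identity. Only continuity of $v-u$ is used here, not differentiability across $\Sigma$.

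For the product space $P=\R\times(\R\times\R)$, the same argument applies verbatim. Lebesgue product measure is Lebesgue measure on $\R^3$ up to identification, and the maximum norm is equivalent to the Euclidean one. The condition $d\rho\ne0$, as an operator-norm covector, is therefore the same nondegeneracy condition, and the measure and uniformity estimates change only by constants. Finally, complex $u,v$ are not needed; real and imaginary parts could in any case be treated separately.
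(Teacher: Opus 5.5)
Your proof is correct, but it follows a different route from the paper.

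The paper's route is as follows.
\begin{itemize}
\item It first shows $|\Sigma|=0$. On each piece $\{\rho=0,\ d\rho[e_j]\ne0\}$ the fibres parallel to $e_j$ are isolated points, so Fubini applies.
\item It writes $h=U1_{\{\rho<0\}}+V1_{\{\rho\ge0\}}$ with $U=\chi u$, $V=\chi v$, where $\chi$ is a cutoff equal to one near $\supp h$.
\item It proves $h$ is globally Lipschitz. For points on opposite sides, the segment joining them meets $\Sigma$ at some $z$ by the intermediate value theorem, and $U(z)=V(z)$ lets the two Lipschitz constants add.
\item The weak identity then comes from the standard fact that a Lipschitz function has an $L^\infty$ weak gradient. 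That gradient agrees with $du$ or $dv$ on the open set complementary to the null set $\Sigma$.
\item The argument is run on $P$ and transported to $\R^3$ by the measure-preserving coordinate map, which is the reverse of your order.
\end{itemize}

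Your smoothing of the indicator by $\psi(\rho/\varepsilon)$ uses only the classical product rule and dominated convergence. It needs neither Lipschitz regularity of $h$ nor $|\Sigma|=0$, since $\psi_\varepsilon=1$ on $\Sigma$ reproduces the convention $1_{\{\rho\ge0\}}$ exactly. It also uses only continuity of $v-u$ across $\Sigma$.

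The price is the quantitative bound $|K\cap\{-\varepsilon<\rho<0\}|\le C\varepsilon$. One simple way to prove it is to cover $\Sigma\cap K$ by finitely many boxes on which some $|\partial_j\rho|\ge c>0$. Then each line parallel to $e_j$ meets the slab in an interval of length at most $\varepsilon/c$. This is the same fibrewise monotonicity the paper uses to get $|\Sigma|=0$. The paper's route, in exchange, gives the stronger by-product $h\in W^{1,\infty}$.

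One small correction: compact support of $D$ does not follow from continuity of $du,dv$ alone. It holds because $u$ (resp.\ $v$) vanishes on the open parts of $\{\rho<0\}$ (resp.\ $\{\rho>0\}$) outside $\supp h$. Alternatively, it holds a posteriori, because the weak identity forces $D=0$ a.e.\ off $\supp h$.
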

\begin{proof}
On $P$, the fibers of
$A_j=\{\rho=0,\ d\rho[e_j]\ne0\}$ parallel to $e_j$ are isolated,
hence countable. Thus Fubini and $\Sigma=\bigcup_{j=1}^3A_j$ give
$|\Sigma|=0$.

Choose $\chi\in C_c^\infty$ equal to one near $\operatorname{supp}h$.
The functions $U=\chi u$, $V=\chi v$ are globally Lipschitz and agree
on $\Sigma$. For opposite-side $x,y$, choose $z\in[x,y]\cap\Sigma$:
\begin{equation}\label{eq:gluing-lipschitz}
 |h(x)-h(y)|\le C_U\|x-z\|_P+C_V\|z-y\|_P
 \le(C_U+C_V)\|x-y\|_P.
\end{equation}
Same-side points use one piece. Since $h=\chi h$, the selected
differentials of $U,V$ equal $D$ off $\Sigma$ and have compact support.
The Lipschitz bound \eqref{eq:gluing-lipschitz} permits integration by parts:
\begin{equation}\label{eq:gluing-product-weak}
 h,D\in L^2(P),\qquad
 \int_P h\,d\phi[e]=-\int_P D[e]\,\phi
 \quad(e\in P,\ \phi\in C_c^\infty(P)).
\end{equation}
For the Euclidean assertion, use the measure-preserving maps
$T(x)=(x_1,(x_2,x_3))$ and $L=T^{-1}$, with
\begin{equation}\label{eq:coordinate-norm-comparison}
 \|Tx\|_P\le|x|\le\sqrt3\|Tx\|_P,\qquad
 \Bigl(\sum_jc_j^2\Bigr)^{1/2}\le\sum_j|c_j|
 \le\sqrt3\Bigl(\sum_jc_j^2\Bigr)^{1/2}.
\end{equation}
Pull back $\rho,u,v$ along $L$ and let $D_P$ be the resulting weak
differential. Then
\[
 D_E(x)[e]=D_P(Tx)[Te],\qquad
 d(\phi\circ L)(z)[Te]=d\phi(Lz)[e].
\]
The bounds \eqref{eq:coordinate-norm-comparison} give $D_E\in L^2$; change of variables
transports \eqref{eq:gluing-product-weak} without a Jacobian factor. The chain rule identifies
$D_E$ with $du,dv$ on their respective sides, and Euclidean Riesz
identification gives the vector gradient.
\end{proof}
The same coordinate map transports the simultaneous approximation of
Lemma~\ref{std:density} to $P$. The scalar $L^2$ norm is unchanged,
and \eqref{eq:coordinate-norm-comparison} bounds the covector error
by $\sqrt3$ times the Euclidean gradient error. Thus one compact
smooth sequence approximates the value and the operator-norm weak
derivative, including both components in the complex case.

\subsection{Refined Sobolev, Hardy, and maximal estimates}

Fix once and for all a real radial $\varphi\in C_c^\infty(\mathbb R^3)$ with
$0\le\varphi\le1$, equal to one on $B_1$ and zero outside $B_2$.
No monotonicity or idempotence is assumed.  In angular frequency
$\theta=2\pi\xi$ we use
$\mathcal F_{\rm ang}f(\theta)=\int e^{-ix\cdot\theta}f(x)\,dx$,
with inverse factor $(2\pi)^{-3}$. Write
\[
 R_L=P_{\le L}=\varphi(\theta/L)(D),\quad
 P_L=R_L-R_{L/2},\quad Q_L=R_{2L}-R_{L/4},\quad L>0.
\]
If $K$ is the angular inverse transform of $\varphi$, including the factor
$(2\pi)^{-3}$, then $R_Lf=K_L*f$, $K_L(x)=L^3K(Lx)$, $\int K=1$,
and $K$ is Schwartz.  It can change sign; put $K_0=\|K\|_1$.
Every such convolution and its classical derivatives are defined for $f\in L^6$
by H\"older against the Schwartz kernels.

An annular $L^\infty$ bound can supply the missing $L^4$ integrability
without assuming $f\in L^2$. The following form of refined Sobolev is
proved directly for the chosen smooth cutoffs.

\begin{lemma}[Refined Sobolev inequality]\label{std:refined}
For $f\in\X$ put $\mathcal B(f)=\sup_{L>0}L^{-1}\|P_Lf\|_\infty$.
If $\mathcal B(f)<\infty$, then the same actual function belongs to $L^4$ and
\[
 \|f\|_4^4\le C_\varphi\mathcal B(f)^2\|\nabla f\|_2^2.
\]
The same estimate holds with the dyadic supremum in place of $\mathcal B(f)$.
\end{lemma}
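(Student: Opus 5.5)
The plan is the standard Bahouri--G\'erard / Chemin--Xu style argument: a layer-cake decomposition in which each level set is split into low- and high-frequency parts at a frequency chosen from the level. Since $f\in L^6$ and $\nabla f\in L^2$, the Littlewood--Paley sum $f=\sum_{k\in\mathbb Z}P_{2^k}f$ converges in $L^6$ with no mass assumption. Indeed $R_Lf\to f$ in $L^6$ as $L\to\infty$ by the approximate identity $K_L$. As $L\to0$, $\|R_Lf\|_6\le C\|\nabla R_Lf\|_2$, and $\|\nabla R_L f\|_2\to0$ by Plancherel and dominated convergence on the gradient. Hence for every $L$,
\[
 f=R_Lf+(f-R_Lf)
\]
with $R_Lf=\sum_{2^k\le L}P_{2^k}f$ in $L^6$, where $L$ is taken dyadic. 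Put $\mathcal B=\mathcal B(f)$; we may assume $\mathcal B>0$.

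\textbf{Step 1 (pointwise low-frequency bound).} From the telescoping sum and the hypothesis,
\[
 \|R_Lf\|_\infty\le\sum_{2^k\le L}\|P_{2^k}f\|_\infty\le\mathcal B\sum_{2^k\le L}2^k\le 2\mathcal B L .
\]
This needs only the dyadic supremum, which gives the final remark of the lemma. The interchange of $\|\cdot\|_\infty$ with the $L^6$-convergent sum is legitimate: a subsequence converges almost everywhere.

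\textbf{Step 2 (layer cake).} Write $\|f\|_4^4=4\int_0^\infty\lambda^3|\{|f|>\lambda\}|\,d\lambda$. For each $\lambda$, choose the dyadic $L(\lambda)$ with $2\mathcal B L\le\lambda/2$ and $L\sim\lambda/\mathcal B$. Then $\{|f|>\lambda\}\subset\{|f-R_Lf|>\lambda/2\}$, and Chebyshev in $L^2$ gives
\[
 |\{|f|>\lambda\}|\le 4\lambda^{-2}\|(1-R_{L})f\|_2^2 .
\]
This quantity is finite because $1-\varphi(\theta/L)$ vanishes near zero, so $\|(1-R_L)f\|_2\lesssim L^{-1}\|\nabla f\|_2$. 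That bound alone would lose a logarithm, so we keep the exact multiplier instead. Consequently
\[
 \|f\|_4^4\lesssim\int_0^\infty\lambda\int|1-\varphi(\theta/L(\lambda))|^2|\widehat f(\theta)|^2\,d\theta\,d\lambda .
\]
Exchange the integrals using Tonelli. For fixed $\theta$, the factor $1-\varphi(\theta/L)$ vanishes unless $|\theta|\ge L\gtrsim\lambda/\mathcal B$. The $\lambda$-integral is therefore at most $\int_0^{C\mathcal B|\theta|}\lambda\,d\lambda\lesssim\mathcal B^2|\theta|^2$, using $0\le1-\varphi\le1$. Plancherel then gives $\|f\|_4^4\le C_\varphi\mathcal B^2\|\nabla f\|_2^2$.

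\textbf{Main obstacle.} The bound $\|(1-R_L)f\|_2<\infty$ and Plancherel identities require care, because $f\notin L^2$. I would argue on the Fourier side with $\widehat f$ a tempered distribution. The identity $\widehat{\nabla f}=i\theta\widehat f$ shows that $\widehat f$ is a locally integrable function away from the origin, with $|\theta|\,|\widehat f|\in L^2$. Hence $(1-\varphi(\theta/L))\widehat f\in L^2$. The only point left is to check that the distribution $\widehat f$ carries no mass at $\theta=0$, so that this $L^2$ function really is the transform of $f-R_Lf$. This holds because $f\in L^6$ and $R_Lf\to0$ in $L^6$ as $L\to0$, which rules out polynomial or delta components at the origin. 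The dyadic choice of $L(\lambda)$ makes $L$ a measurable, piecewise constant function of $\lambda$, so Tonelli applies.
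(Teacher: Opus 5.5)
Your proof is correct, but it takes a different route from the paper's. You use the classical layer-cake argument. For each level $\lambda$ you split at the dyadic frequency $L(\lambda)\sim\lambda/\mathcal B$, and the telescoped low pass obeys $\|R_{L}f\|_\infty\le 2\mathcal B L$, so it can be discarded. You then apply Chebyshev to the high pass in $L^2$. Finally, Tonelli and Plancherel with the exact multiplier $1-\varphi(\theta/L(\lambda))$ turn the $\lambda$-integral into $\int_0^{8\mathcal B|\theta|}\lambda\,d\lambda\lesssim\mathcal B^2|\theta|^2$.

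The paper never looks at level sets. It proves a scalar inequality at each point: for finitely many numbers with $|a_j|\le B2^j$, it chooses $J$ from $S=\sum|a_j|$ and shows $|\sum a_j|^4\le 1024B^2\sum 2^{2j}|a_j|^2$. It applies this with $a_j=P_{2^j}f(x)$ and integrates using the square-function bound $\sum_j 2^{2j}\|P_{2^j}f\|_2^2\le 64\|\nabla f\|_2^2$. It then passes from the finite telescopes $T_nf$ to $f$ through an a.e. convergent subsequence and Fatou. So the splitting frequency is chosen from the pointwise amplitude rather than from the level $\lambda$.

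The trade-off is as follows. The paper works only with finite sums and only needs the annular $L^2$ pieces. Your version needs the full high pass in $L^2$, but gets $f\in L^4$ directly from the layer-cake identity without a Fatou step, and with a smaller explicit constant.

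One simplification for your last paragraph. You do not need to show that $\widehat f$ carries no component at the origin, and your justification of that point is only heuristic anyway. Since $1-\varphi(\theta/L)$ vanishes on a neighborhood of $0$, the product $(1-\varphi(\theta/L))\widehat f$ is determined by $\widehat f$ away from the origin. There $\widehat f=|\theta|^{-2}\sum_k(-i\theta_k)\widehat{\partial_k f}$, so the product is an $L^2$ function. Any distribution supported at $0$ is annihilated by such a multiplier, which is exactly how the paper identifies its annular representatives.

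Also close the degenerate case explicitly. If $\mathcal B=0$, your Step 1 gives $R_Lf=0$ for every dyadic $L$, and $R_Lf\to f$ in $L^6$ as $L\to\infty$ then forces $f=0$.
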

\begin{proof}
Set $B=\sup_{j\in\mathbb Z}2^{-j}\|P_{2^j}f\|_\infty<\infty$.
For finite $A\subset\mathbb Z$ and $|a_j|\le B2^j$, let
$S=\sum_A|a_j|$ and $E=\sum_A2^{2j}|a_j|^2$. If $BS>0$, choose
$2^J\le S/(4B)<2^{J+1}$. The lower sum is at most $S/2$, while
Cauchy--Schwarz on the upper sum gives
\begin{equation}\label{eq:annular-scalar-split}
 S\le2\sum_{j>J}|a_j|,\qquad S^22^{2J}\le16E,
 \qquad S^2\le64B^22^{2J}.
\end{equation}
Combining the last two inequalities in \eqref{eq:annular-scalar-split}, also in the zero cases,
\begin{equation}\label{eq:annular-scalar-fourth}
 \left|\sum_{j\in A}a_j\right|^4\le1024B^2
 \sum_{j\in A}2^{2j}|a_j|^2.
\end{equation}
Write $m_L(\theta)=\varphi(\theta/L)-\varphi(2\theta/L)$. The annular
$L^2$ representative is determined by the actual gradients:
\[
 P_Lf=\sum_{k=1}^3\mathcal F_{\rm ang}^{-1}
 \left[m_L(\theta)\frac{-i\theta_k}{|\theta|^2}
              \widehat{\partial_kf}_{\rm ang}(\theta)\right],
 \qquad \|P_Lf\|_2\le2L^{-1}\|\nabla f\|_2.
\]
The symbols vanish near zero. The distributional derivative identity
identifies this representative with $R_Lf-R_{L/2}f$. If $J$ is the
largest active index, then $2^J<2|\theta|$; thus
\begin{equation}\label{eq:annular-square-symbol}
 \sum_{j\in A}2^{2j}|m_{2^j}(\theta)|^2
 \le4\left(\sum_{j\le J}2^j\right)^2
 \le64|\theta|^2.
\end{equation}
Plancherel applied to \eqref{eq:annular-square-symbol}, followed by simultaneous Schwartz approximation, gives
\begin{equation}\label{eq:annular-square-energy}
 \sum_{j\in A}2^{2j}\|P_{2^j}f\|_2^2\le64\|\nabla f\|_2^2.
\end{equation}
The countable pointwise bounds hold on a common conull set. Integrating
\eqref{eq:annular-scalar-fourth} and using \eqref{eq:annular-square-energy} gives
\begin{equation}\label{eq:annular-finite-fourth}
 \int\left|\sum_{j\in A}P_{2^j}f\right|^4
 \le65536B^2\|\nabla f\|_2^2.
\end{equation}
Finally,
$T_nf=\sum_{j=1-n}^{n}P_{2^j}f=R_{2^n}f-R_{2^{-n}}f\to f$ in $L^6$:
Plancherel and dominated convergence give the gradient limits at both
ends, and Sobolev gives the value limit. An almost-everywhere convergent
subsequence and Fatou applied to \eqref{eq:annular-finite-fourth} yield $f\in L^4$ and
$\|f\|_4^4\le65536B^2\|\nabla f\|_2^2$.
Since $B\le\mathcal B(f)$, both versions follow. The proof uses neither
monotonicity nor idempotence of the cutoff, nor an $L^2$ value of $f$.
\end{proof}

Translated Hardy inequalities control the spatial weights.

\begin{lemma}[Translated Hardy inequalities]\label{std:hardy}
For every $f\in\X$ and every $y\in\mathbb R^3$,
\[
 \left\|\frac{f}{|\cdot-y|}\right\|_2\le2\|\nabla f\|_2.
\]
For $0\le\gamma\le2$ this implies
\[
 \sup_y\int\frac{|f(x)|^{6-2\gamma}}{|x-y|^\gamma}\,dx
 \le2^\gamma S_*^{(6-3\gamma)/2}\|\nabla f\|_2^{6-2\gamma}.
\]
In particular
$\sup_y\int |x-y|^{-1}|f|^4\le2\|\nabla f\|_2\|f\|_6^3$;
and for $h\in H^1$,
$\sup_y\int |x-y|^{-1}|h|^2\le2\|h\|_2\|\nabla h\|_2$.
\end{lemma}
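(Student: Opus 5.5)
The plan is to prove the basic Hardy inequality first for $f\in C_c^\infty$ centered at $y=0$ and then extend it. Translation invariance of $\|\nabla f\|_2$ reduces the general center to $y=0$.

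\textbf{Basic inequality.} For smooth compactly supported $f$, write
\[
 \frac{1}{|x|^2}=\tfrac12\,\nabla\!\cdot\!\frac{x}{|x|^2}\qquad(x\neq 0),
\]
which holds in three dimensions. Integrating by parts on $\{|x|>\varepsilon\}$, the boundary term is $O(\varepsilon)\|f\|_\infty^2$ and tends to zero. This gives
\[
 \int\frac{|f|^2}{|x|^2}=-\int\frac{x\cdot\nabla|f|^2}{|x|^2}
 \le2\int\frac{|f|}{|x|}|\nabla f|.
\]
Cauchy--Schwarz then yields $\|f/|x|\|_2\le2\|\nabla f\|_2$.

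\textbf{Extension to $\X$.} For general $f\in\X$, take the approximants $f_n$ of Lemma~\ref{std:density}, with $f_n\to f$ in $L^6$ and $\nabla f_n\to\nabla f$ in $L^2$. Pass to an a.e.\ convergent subsequence and apply Fatou. The real and imaginary parts are handled together, since $|\nabla|f||\le|\nabla f|$ is only needed at the smooth level.

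\textbf{Interpolated weighted bound.} For $0\le\gamma\le2$, split the integrand as
\[
 \frac{|f|^{6-2\gamma}}{|x-y|^\gamma}=\Bigl(\frac{|f|}{|x-y|}\Bigr)^{\gamma}|f|^{6-3\gamma}.
\]
Apply H\"older with exponents $2/\gamma$ and $2/(2-\gamma)$. This is consistent because $(6-3\gamma)\cdot 2/(2-\gamma)=6$. It gives
\[
 \int\frac{|f|^{6-2\gamma}}{|x-y|^\gamma}\le\Bigl\|\frac{f}{|\cdot-y|}\Bigr\|_2^{\gamma}\,\|f\|_6^{6-3\gamma}.
\]
Insert the Hardy bound and $\|f\|_6\le S_*^{1/2}\|\nabla f\|_2$ from Theorem~\ref{std:sharp}. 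This produces $2^\gamma S_*^{(6-3\gamma)/2}\|\nabla f\|_2^{6-2\gamma}$, uniformly in $y$. The endpoints $\gamma=0$ and $\gamma=2$ follow directly, without H\"older.

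\textbf{The two special cases.} For $\gamma=1$ we keep the $L^6$ norm instead of applying Sobolev. Cauchy--Schwarz gives
\[
 \int\frac{|f|^4}{|x-y|}\le\Bigl\|\frac{f}{|\cdot-y|}\Bigr\|_2\||f|^3\|_2
 \le2\|\nabla f\|_2\|f\|_6^3.
\]
For $h\in H^1$ we have $h\in\X$ by Sobolev, so the Hardy inequality applies. Cauchy--Schwarz gives
\[
 \int\frac{|h|^2}{|x-y|}\le\|h\|_2\Bigl\|\frac{h}{|\cdot-y|}\Bigr\|_2\le2\|h\|_2\|\nabla h\|_2.
\]

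\textbf{Main obstacle.} The only delicate step is the density and limit passage at the homogeneous level, since no $L^2$ control of $f$ is available. It is handled by the $L^6$ branch of Lemma~\ref{std:density} together with Fatou on the left-hand side, which needs only a.e.\ convergence. The right-hand side converges because the gradients converge in $L^2$.
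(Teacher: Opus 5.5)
Your proof is correct and follows the paper's route: a divergence identity with Cauchy--Schwarz for smooth data (the paper regularizes the weight as $(\varepsilon+|x-y|^2)^{-1}$ and absorbs, where you excise a small ball), then density with Fatou, then H\"older and Sobolev for the interpolated bounds. One slip: in three dimensions $\nabla\cdot(x/|x|^2)=|x|^{-2}$, so the factor $\tfrac12$ in your first display is wrong; your next line already uses the correct identity, and the constant $2$ is unaffected.
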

\begin{proof}
For Schwartz $\phi$, put $q_\varepsilon=(\varepsilon+|x-y|^2)^{-1}$,
$a_\varepsilon=q_\varepsilon(x-y)$ and
$I_\varepsilon=\int q_\varepsilon|\phi|^2$.
Since $|a_\varepsilon|^2\le q_\varepsilon$ and
$\operatorname{div}a_\varepsilon\ge q_\varepsilon$, integration by parts gives
\begin{equation}\label{eq:regularized-hardy}
 I_\varepsilon\le-2\re\int a_\varepsilon\cdot\overline\phi\nabla\phi
 \le\tfrac12I_\varepsilon+2\|\nabla\phi\|_2^2.
\end{equation}
Let $\varepsilon\downarrow0$ in \eqref{eq:regularized-hardy} by Fatou, then apply
Lemma~\ref{std:density} and Fatou again. This proves Hardy for each fixed
$y$ using one approximating sequence, without an uncountable conull-set
intersection. For $0<\gamma<2$, H\"older gives
\begin{equation}\label{eq:weighted-hardy-interpolation}
 \begin{aligned}
 \int\frac{|f|^{6-2\gamma}}{|x-y|^\gamma}
 &\le\left(\int\frac{|f|^2}{|x-y|^2}\right)^{\gamma/2}
       \left(\int|f|^6\right)^{1-\gamma/2}\\
 &\le2^\gamma\|\nabla f\|_2^\gamma\|f\|_6^{6-3\gamma}.
 \end{aligned}
\end{equation}
Sobolev in \eqref{eq:weighted-hardy-interpolation} gives the stated constant; Hardy and Sobolev themselves cover
$\gamma=2,0$. The case $\gamma=1$ gives the first remaining bound.
For $h\in H^1$, Cauchy--Schwarz gives
\[
 \int\frac{|h|^2}{|x-y|}
 \le\|h\|_2\left\|\frac h{|\cdot-y|}\right\|_2
 \le2\|h\|_2\|\nabla h\|_2.
\]
\end{proof}

The one-sided maximal function includes averages extending beyond a
given time interval. The next lemma selects terminal times from a bounded
function alone, so its use does not presuppose global $L^2$ integrability
or depend on a frequency cutoff.

\begin{lemma}[Temporal maximal estimate and terminal times]
\label{std:maximal}
For real $g\in L^2(\mathbb R)$, define
\[
 Mg(s)=\sup_{r>0}\frac1{2r}\int_{s-r}^{s+r}|g|,
 \qquad M^+g(s)=\sup_{r>0}\frac1r\int_s^{s+r}|g|.
\]
These are measurable, $\|Mg\|_2^2\le24\|g\|_2^2$ and
$\|M^+g\|_2^2\le96\|g\|_2^2$.
If $f\ge0$ is bounded and measurable on $[0,\infty)$ and
$Y(s)=\sup_{n\in\mathbb Z}2^{-n}\int_s^{s+2^n}f$, there is an increasing
sequence $T_j\to\infty$, chosen from $f$ alone, such that
\begin{equation*}
 \int_0^{T_j}Y(s)^2\,ds\le388\int_0^{T_j}f(s)^2\,ds.
\end{equation*}

\end{lemma}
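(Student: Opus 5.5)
The plan is to prove the two maximal bounds first by the same layer-cake argument used for \eqref{eq:maximal-strong-three} in Theorem~\ref{std:hls}, and then to choose the terminal times by a record-time argument for the running quadratic average of $f$.

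\emph{Measurability and the maximal bounds.} For fixed $r>0$, the averages $s\mapsto\frac1{2r}\int_{s-r}^{s+r}|g|$ are continuous in $s$, and for fixed $s$ they are continuous in $r$. Hence the suprema over $r>0$ coincide with the suprema over rational $r$. This makes $Mg$ and $M^+g$ measurable; in fact they are lower semicontinuous.

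For the strong bound on $Mg$ I first prove the weak estimate
\[
|\{Mg>\lambda\}|\le3\lambda^{-1}\|g\|_1
\]
for $g\in L^1(\R)$. This is the one-dimensional analogue of \eqref{eq:maximal-weak-three}: Vitali selection of disjoint intervals and tripling, applied first on compact subsets of the open level set. For $g\in L^2$ I then split $g$ at height $\lambda/2$, exactly as in the proof of Theorem~\ref{std:hls}. Writing $h_\lambda=|g|1_{\{|g|>\lambda/2\}}$, we have $Mg\le\lambda/2+Mh_\lambda$, and therefore
\[
\|Mg\|_2^2=2\int_0^\infty\lambda|\{Mg>\lambda\}|\,d\lambda
\le2\int_0^\infty\lambda\cdot\frac{6}{\lambda}\int_{\{|g|>\lambda/2\}}|g|\,d\lambda .
\]
Tonelli turns the right-hand side into $12\int|g|\cdot2|g|=24\|g\|_2^2$.

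The one-sided bound needs no separate argument. For every $r>0$,
\[
\frac1r\int_s^{s+r}|g|\le\frac{2r}{r}\cdot\frac1{2r}\int_{s-r}^{s+r}|g|,
\]
so $M^+g\le2Mg$ pointwise, and this gives $\|M^+g\|_2^2\le96\|g\|_2^2$.

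\emph{Terminal times.} Let $K=\sup f$ and $\Phi(R)=\int_0^Rf^2$. If $f=0$ a.e., any sequence works, so assume otherwise. Put $\psi(R)=\Phi(R)/R$. This is continuous on $(0,\infty)$ and bounded by $K^2$. Let $m(T)=\sup_{R\ge T}\psi(R)$, which is nonincreasing. I select $T_j\to\infty$, increasing, with
\[
\psi(T_j)\ge\tfrac12m(T_j),
\]
and this depends only on $f$. Such times exist arbitrarily far out in both possible cases:
\begin{itemize}
\item If $m(\infty)>0$, the limsup of $\psi$ is $m(\infty)$. Take points where $\psi$ exceeds $\frac12m(\infty)$ and where $m$ is already within a factor close to one of $m(\infty)$.
\item If $m(\infty)=0$, then $\psi\to0$, so the supremum of $\psi$ on $[T,\infty)$ is attained at some $T'\ge T$, and there $\psi(T')=m(T')$.
\end{itemize}

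\emph{Bounding $Y$ on $[0,T]$ for a selected time $T=T_j$.} Split the dyadic windows in the definition of $Y(s)$, for $s\in[0,T]$.
\begin{itemize}
\item Windows with $2^n\le T$ lie inside $[0,2T]$. Their averages are at most $M^+(f1_{[0,2T]})(s)$, so the $L^2$ contribution is at most $96\Phi(2T)$.
\item Windows with $2^n>T$ satisfy $s+2^n\le3\cdot2^n$. By Cauchy--Schwarz, the average over such a window is at most
\[
\Bigl(2^{-n}\Phi(s+2^n)\Bigr)^{1/2}\le\bigl(3m(T)\bigr)^{1/2}\le\bigl(6\psi(T)\bigr)^{1/2}.
\]
Integrating over $s\in[0,T]$ gives a contribution of at most $6\Phi(T)$.
\end{itemize}
Finally, the record property gives $\Phi(2T)=2T\psi(2T)\le2T\,m(T)\le4\Phi(T)$. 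Combining the two pieces with $(a+b)^2\le(1+\epsilon)a^2+(1+\epsilon^{-1})b^2$ yields $\int_0^TY^2\le C\Phi(T)$ with an explicit numerical $C$. The stated value $388$ should come out of an appropriate choice of the record factor and of $\epsilon$; I expect the bookkeeping of these constants, rather than any analytic step, to be the only delicate part.

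The main obstacle is the selection step itself. Windows starting before $T_j$ reach beyond it, so the future of $f$ must be controlled by its past. The record-time condition on $\psi$ is exactly what provides this control, and the boundedness of $f$ is what keeps $\psi$ finite so that such record times can be found.
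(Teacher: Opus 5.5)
Your proposal follows the paper's proof step for step. The matching steps are the weak-type bound with constant $3$, truncation at height $\lambda/2$ with the layer cake giving $24$, and the pointwise bound $M^+g\le2Mg$ giving $96$. The terminal times are chosen with $\sup_{R\ge T}\psi(R)\le2\psi(T)$. The dyadic windows are split at length $T$, using $M^+(f1_{[0,2T]})$ for the short ones and Cauchy--Schwarz for the long ones, and $\Phi(2T)\le4\Phi(T)$. The only step that fails as written is the final count. Take your record factor $\tfrac12$, your long-window contribution $6\Phi(T)$ and the inequality $(a+b)^2\le(1+\epsilon)a^2+(1+\epsilon^{-1})b^2$. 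The best constant these give is $390+\min_{\epsilon>0}(384\epsilon+6/\epsilon)=486$, so $388$ does not come out of that bookkeeping for any $\epsilon$.

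The paper gets exactly $388$ from two small observations. First, $Y(s)$ is the larger of the short-window and long-window suprema, so $Y^2$ is bounded by the sum of their squares, with no cross term and no $\epsilon$. Second, $s\le T<2^n$ gives $s+2^n<2^{n+1}$. So the squared long-window average is at most $2m(T)\le4\psi(T)$, and the long windows contribute $4\Phi(T)$ rather than $6\Phi(T)$. Hence $\int_0^TY^2\le96\Phi(2T)+4\Phi(T)\le388\Phi(T)$. With only the first observation and your factor $3$ you would get $390$. Alternatively, a record factor $c\ge0.63$ instead of $\tfrac12$ would rescue your $\epsilon$-bookkeeping, since the bound then becomes $243/c$.

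Your case split for the existence of record times is also unnecessary. In the case $m(\infty)>0$ it needs $\psi(T)>\tfrac12(1+\epsilon)m(\infty)$, not merely $\psi(T)>\tfrac12m(\infty)$. The paper instead takes, whenever $m(R)>0$, any $T\ge R$ with $\psi(T)>\tfrac12m(R)\ge\tfrac12m(T)$.
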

\begin{proof}
Moving-interval averages are continuous. Disjoint interval selection,
with dilations $\kappa>3$ followed by the radius limit and
$\kappa\downarrow3$, gives
\begin{equation}\label{eq:maximal-weak-one}
 |\{Mh>\lambda\}|\le3\lambda^{-1}\|h\|_1
 \qquad(h\in L^1).
\end{equation}
Apply \eqref{eq:maximal-weak-one} to $h_\lambda=|g|1_{\{|g|>\lambda/2\}}$. Tonelli gives
\[
 |\{Mg>\lambda\}|\le\frac6\lambda
 \int_{|g|>\lambda/2}|g|,\qquad
 \|Mg\|_2^2\le12\int|g(s)|\int_0^{2|g(s)|}d\lambda\,ds
 =24\|g\|_2^2.
\]
Thus $M^+g\le2Mg$ yields the constant $96$; set exceptional values to zero.

Extend $f$ by zero and put $E(T)=\int_0^Tf^2$, $A(T)=E(T)/T$.
Boundedness of $A$ allows the choices
\[
 \tau(R)\ge R,\qquad \sup_{s\ge R}A(s)\le2A(\tau(R)),
 \qquad T_0=\tau(1),\quad T_{j+1}=\tau(T_j+1),
\]
with $\tau(R)=R$ if the supremum is zero. Hence $T_j\to\infty$ and,
for $T=T_j$,
\begin{equation}\label{eq:common-terminal-comparison}
 E(s)/s\le2E(T)/T\quad(s\ge T),\qquad E(2T)\le4E(T).
\end{equation}
On $0\le s\le T$, averages of length at most $T$ are bounded by
$M^+(f1_{[0,2T]})$. For $r>T$, Cauchy--Schwarz and \eqref{eq:common-terminal-comparison} give
\begin{equation}\label{eq:one-sided-maximal-comparison}
 \left(r^{-1}\int_s^{s+r}f\right)^2
 \le E(s+r)/r\le4E(T)/T.
\end{equation}
Using \eqref{eq:one-sided-maximal-comparison} and \eqref{eq:common-terminal-comparison}, including $E(T)=0$,
\[
 \int_0^TY^2\le96E(2T)+4E(T)\le388E(T).
\]
The choices depend only on $f$, with no global $L^2$ assumption.
\end{proof}

We record explicitly two often delicate HLS specializations.  At $\alpha\le1$
one may apply order-$\alpha$ HLS to a differentiated low-frequency density
with exponents $q=6/(4+\alpha)$ and $r=6/(4-\alpha)$; these are strict,
including $\alpha=1$.  One does not differentiate the kernel there.  If
$\alpha>1$, its actual locally integrable derivative instead satisfies
\[
 |\nabla I_\alpha(x)|=
 \frac{(3-\alpha)c_\alpha}{c_{\alpha-1}}I_{\alpha-1}(x).
\]
The small-sphere boundary in the distributional derivative is
$O(\varepsilon^{\alpha-1})$, so no atom occurs.  Order $\alpha-1$ HLS at
$q=6/(2+\alpha)$, $r=6/(4-\alpha)$ is strict throughout this branch.
The order-$\alpha$ call $6/(4+\alpha)$ becomes the forbidden endpoint $1$
at $\alpha=2$.  At that value the legitimate replacement, for $d\in L^1$,
is the product estimate
\[
 \int (I_2*|d|)|a|^4
 \le2c_2\|d\|_1\|\nabla a\|_2\|a\|_6^3,
\]
proved by Tonelli and Lemma~\ref{std:hardy}; it asserts no strong
$L^1\to L^3$ mapping.  Finally the Gaussian derivative bounds later use
$I_1:L^2\to L^6$ and $I_2:L^{4/3}\to L^{12}$, both strict instances of
Theorem~\ref{std:hls}.  These choices account individually for every Riesz
order used below.
\begin{lemma}[Classical analytic estimates]
\label{std:analytic-interfaces}
Fix $0<\alpha<3$, $p=3+\alpha$, the Riesz normalization $I_\alpha$
and the Fourier realization $S(t)$ used above.  Let
$Y=L^2(\R;L^{6/5}(\R^3))$, and let $\mathcal RH$ denote the
all-past $L^2$ response constructed in Theorem~\ref{std:endpoint}.
The following standard estimates hold with constants depending only on
the indicated exponents and normalizations. In the last assertion,
for $r>0$, $y\in\R^3$ and $s\in\R$, write
$T_{r,y}f(x)=r^{-1/2}f((x-y)/r)$ and
$A_{r,y,s}=T_{r,y}S(s)$, both acting on $\X$.
\begin{enumerate}
\item\textbf{Sharp HLS.}
For $a,b\in L^{6/p}(\R^3;\C)$ the product is absolutely integrable and
\begin{equation}\label{input:eq-sharp-hls}
\iint I_\alpha(x-y)|a(x)b(y)|\dd x\dd y
 \le H_\alpha\|a\|_{6/p}\|b\|_{6/p},
\end{equation}
with $H_\alpha$ given in Theorem~\ref{std:sharp}.
\item\textbf{Sharp Sobolev.}
For every $f\in\X$, in the actual weak-gradient realization,
\begin{equation}\label{input:eq-sharp-sobolev}
\|f\|_6^2\le S_*\|\nabla f\|_2^2,
 \qquad S_*^3=\frac{16}{27\pi^4}.
\end{equation}
\item\textbf{Absolute dual endpoint.}
For $H\in Y$, measurable $J\subset\R$, $t\in\R$ and
$\phi\in\mathcal S(\R^3)$,
\begin{equation}\label{input:eq-dual-endpoint}
\int_J|\langle H(s),S(t-s)\phi\rangle_{\mathrm{bil}}|\dd s
 \le C_{\mathrm d}\|1_JH\|_Y\|\phi\|_2.
\end{equation}
\item\textbf{Free critical Strichartz.}
For all $f\in L^2(\R^3)$,
\begin{equation}\label{input:eq-free-critical}
\|S(\cdot)f\|_{L_t^{10}L_x^{30/13}}\le C_{\mathrm h}\|f\|_2.
\end{equation}
\item\textbf{Retarded critical Strichartz.}
For every $H\in Y$,
\begin{equation}\label{input:eq-retarded-critical}
\|\mathcal RH\|_{L_t^{10}L_x^{30/13}}\le C_{\mathrm r}\|H\|_Y.
\end{equation}
\item\textbf{Free endpoint output.}
For every $f\in L^2(\R^3)$,
\begin{equation}\label{input:eq-free-endpoint}
\|S(\cdot)f\|_{L_t^2L_x^6}\le C_{\mathrm{he}}\|f\|_2.
\end{equation}
\item\textbf{Retarded endpoint output.}
For every $H\in Y$,
\begin{equation}\label{input:eq-retarded-endpoint}
\|\mathcal RH\|_{L_t^2L_x^6}\le C_{\mathrm{re}}\|H\|_Y.
\end{equation}
\item\textbf{Homogeneous linear profiles.}
Every bounded sequence $(f_n)\subset\X$ admits, on one subsequence,
profiles $\phi_j\in\X$ and orthogonal scale--translation--time frames
$A_{r_{j,n},y_{j,n},s_{j,n}}=T_{r_{j,n},y_{j,n}}S(s_{j,n})$
with remainders
\begin{equation}\label{input:eq-linear-profiles}
R_n^J=f_n-\sum_{j<J}A_{r_{j,n},y_{j,n},s_{j,n}}\phi_j.
\end{equation}
For every fixed admissible $J$, the kinetic identity and inverse-frame
weak convergence hold:
\begin{equation}\label{input:eq-linear-kinetic}
\begin{aligned}
 &\|\nabla f_n\|_2^2-\sum_{j<J}\|\nabla\phi_j\|_2^2
  -\|\nabla R_n^J\|_2^2\to0,\\
 &A_{r_{j,n},y_{j,n},s_{j,n}}^{-1}R_n^J\rightharpoonup0.
\end{aligned}
\end{equation}
For distinct active $j,k$, orthogonality means
\begin{equation}\label{input:eq-linear-orthogonality}
 \frac{r_{j,n}}{r_{k,n}}+\frac{r_{k,n}}{r_{j,n}}
 +\frac{|y_{j,n}-y_{k,n}|^2}{r_{j,n}r_{k,n}}
 +\frac{|r_{j,n}^2s_{j,n}-r_{k,n}^2s_{k,n}|}{r_{j,n}r_{k,n}}
 \longrightarrow\infty.
\end{equation}
A sufficiently large \emph{finite} depth has an
arbitrarily small free $L^{10}_{t,x}$ remainder. For a finite family,
the terminal remainder and inactive zero profiles are included.
\end{enumerate}
\end{lemma}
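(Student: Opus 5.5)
Lemma~\ref{std:analytic-interfaces} is a catalogue of standard estimates, and the plan is to obtain each item by citing the place where it was already proved above, or the classical source, checking only that the normalizations match. Items 1 and 2 are contained in Theorem~\ref{std:sharp}. There, \eqref{input:eq-sharp-hls} was obtained from the Frank--Lieb constant $C_{3-\alpha}$ via the identity $c_\alpha C_{3-\alpha}=H_\alpha$. Absolute integrability followed from monotone compact truncation before Fubini. For \eqref{input:eq-sharp-sobolev} the constant is $S_*=\tfrac13(4/\pi^2)^{2/3}$, and cubing gives $S_*^3=\tfrac1{27}\cdot16/\pi^4=16/(27\pi^4)$, as stated. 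Passage to complex $f$ uses $\|a^2+b^2\|_3\le\|a\|_6^2+\|b\|_6^2$, and passage to the actual weak-gradient realization uses the simultaneous approximation of Lemma~\ref{std:density} together with Fatou.

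Items 3 through 7 are exactly the content of Theorem~\ref{std:endpoint}. The absolute dual bound \eqref{input:eq-dual-endpoint} is \eqref{std:dual-endpoint}, proved by H\"older in time against the homogeneous $L^2_tL^6_x$ bound for $S(t-\cdot)\phi$. The free estimates \eqref{input:eq-free-critical} and \eqref{input:eq-free-endpoint} follow from the Keel--Tao theorem. Its hypotheses are unitarity and the dispersive bound \eqref{eq:free-dispersive-bound} with $\sigma=3/2$, and both pairs $(10,30/13)$ and $(2,6)$ satisfy $2/q+3/r=3/2$. The retarded bounds \eqref{input:eq-retarded-critical} and \eqref{input:eq-retarded-endpoint} are also covered by Keel--Tao, including the double endpoint, and they transfer to $\mathcal RH$ through the tensor approximation and the Cauchy bound \eqref{eq:retarded-tensor-cauchy}. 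The only point to check is that the response defined by pairings coincides with the limit of the Bochner integrals $V_n$. Local $L^1$ convergence on bounded sets gives this.

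Item 8 is the step needing real input: the $\dot H^1$ linear profile decomposition of Keraani \cite{Keraani2001}, which is also used in \cite{KenigMerle2006,KillipVisan2010}. I would quote it and translate its frames into the paper's notation $A_{r,y,s}=T_{r,y}S(s)$. The translation works in four steps.
\begin{enumerate}
\item Since $T_{r,y}$ is an $\dot H^1$ isometry and $S(s)$ commutes with the gradient, the kinetic Pythagorean expansion in \eqref{input:eq-linear-kinetic} follows from the weak convergence $A^{-1}_{j,n}R_n^J\rightharpoonup0$.
\item That weak convergence in turn follows from orthogonality \eqref{input:eq-linear-orthogonality}. One shows that $A_{k,n}^{-1}A_{j,n}\to0$ weakly when the parameters diverge: by density it suffices to test on Schwartz data, using scale separation, translation separation and the dispersive decay in the rescaled time $r_{j,n}^2s_{j,n}-r_{k,n}^2s_{k,n}$.
\item The time parameter in \eqref{input:eq-linear-orthogonality} appears as $r^2s$ because of the convention that $S(s)$ acts before rescaling. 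I would verify this conjugation identity, $T_{r,y}S(s)=S(r^2s)T_{r,y}$, once.
\item Smallness of the free $L^{10}_{t,x}$ norm of the remainder at large finite depth is Keraani's refined Strichartz/inverse Strichartz step. It rests on the refined Sobolev idea (compare Lemma~\ref{std:refined}) together with the critical Strichartz bound \eqref{input:eq-free-critical} combined with Sobolev in $x$.
\end{enumerate}

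I expect item 8 to be the main obstacle. The difficulty is not proving a profile decomposition anew but carefully matching the cited statement to these normalizations: the frequency convention $e^{-4\pi^2it|\xi|^2}$, the frame ordering $T_{r,y}S(s)$, and the convention that inactive profiles are zero when the family is finite. The key verification is the frame-conjugation identity in step 3, applied to the orthogonality condition.
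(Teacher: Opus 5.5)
Your proposal matches the paper's proof, which is likewise a proof by citation. It uses Frank--Lieb (as packaged in Theorem~\ref{std:sharp}) for items 1--2, Keel--Tao with the representative identification of Theorem~\ref{std:endpoint} for items 3--7, and Keraani's $\dot H^1$ decomposition for item 8. The paper cites the Killip--Visan form, whose frames $T_{r,y}S(s)$ and orthogonality quantity already coincide with the lemma's, so your conjugation check $T_{r,y}S(s)=S(r^2s)T_{r,y}$ is needed only if you quote Keraani's original physical-time formulation.

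One caution: do not present your steps 1--2 as derivations, because two dependencies are misstated. The kinetic expansion needs frame orthogonality to remove the profile--profile cross terms, not only the remainder's weak convergence. The convergence $A_{j,n}^{-1}R_n^J\rightharpoonup0$ comes from the extraction itself: each $\phi_j$ is the weak limit of $A_{j,n}^{-1}R_n^j$, and orthogonality only handles the later profiles. Simply quote both as conclusions of the cited theorem.
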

\begin{proof}
The two sharp inequalities are the $N=3$ instances of
\cite{FrankLieb}, with
Riesz factor $c_\alpha$.  Lemma~\ref{std:density} and the
real--imaginary argument in Theorem~\ref{std:sharp} give the complex
weak-gradient domain in \eqref{input:eq-sharp-hls} and
\eqref{input:eq-sharp-sobolev}.

Keel--Tao \cite{KeelTao}, for the Schr\"odinger
admissible pairs $(2,6)$ and $(10,30/13)$, supplies the numerical bounds
in \eqref{input:eq-dual-endpoint}--\eqref{input:eq-retarded-endpoint}.
In particular, the double endpoint is invoked
directly and is not inferred by an endpoint limit.  Theorem~\ref{std:endpoint}
identifies the common actual $L^2$ representative and the finite/future
operators; \eqref{input:eq-dual-endpoint} is its localized dual pairing.

The linear extraction in \eqref{input:eq-linear-profiles} is the
$d=3$ case of \cite[Theorem~4.27, (4.61)--(4.65)]{KillipVisan},
which presents Keraani's energy-critical decomposition
\cite{Keraani2001}. It gives the free $L^{10}_{t,x}$ remainder,
gradient decoupling and frame separation used here. The homogeneous
completion and the Fourier convention above identify its profiles with
the $L^6$ representatives in $\X$.
\end{proof}

Lemma~\ref{std:analytic-interfaces} supplies the linear and variational estimates used in the nonlinear argument.

\section{Local theory, conservation, and scattering}
\label{local:section}

This section constructs the flow in the homogeneous energy space and
establishes the stability and scattering criteria needed for compactness.
The nonlinear estimates retain both differentiated Hartree terms;
the conservation laws are justified by regularization in the same
solution class.

Throughout this section $0<\alpha<3$, $p=3+\alpha$, and
$F(u)=(I_\alpha*|u|^p)|u|^{p-2}u$ with the normalization fixed above.
For an interval $I$, write
\[
 \begin{aligned}
 \mathsf S_I(u)&=\|u\|_{L^{10}_tL^{10}_x(I)},&
 \mathsf W_I(u)&=\|\nabla u\|_{L^{10}_tL^{30/13}_x(I)},\\
 \mathsf E_I(u)&=\|\nabla u\|_{L^\infty_tL^2_x(I)},&
 \mathsf Z_I(u)&=\|\nabla u\|_{L^2_tL^6_x(I)},\\
 \mathsf N_I(H)&=\|H\|_{L^2_tL^{6/5}_x(I)}.
 \end{aligned}
\]
The linear estimates \eqref{input:eq-dual-endpoint}--
\eqref{input:eq-retarded-endpoint} act on the weak derivatives.
Spatial vector norms use the Euclidean norm on $\C^3$.
On a compact interval $I$, the solution space is
\[
 \mathcal X(I)=\left\{u\in C(I;\X):
 \partial_j u\in L^{10}(I;L^{30/13}),\ 1\le j\le3\right\}.
\]
Its norm is
\[
 \|u\|_{\mathcal X(I)}=
 \max\left\{\sup_{t\in I}\|u(t)\|_{\mathrm{gr}},
             \max_j\|\partial_j u\|_{L^{10}_tL^{30/13}_x}\right\}.
\]
Here the Bochner gradients are the weak derivatives of the continuous
energy path. This compatibility is closed in
$C(I;\X)\times(L^{10}(I;L^{30/13}))^3$: take one subsequence converging
almost everywhere in time in all three gradient coordinates, and identify
the limits distributionally. Hence $\mathcal X(I)$ is complete, with
norm equivalent to $\mathsf E_I+\mathsf W_I$ by fixed dimensional and
Sobolev constants.

The critical value norm is controlled by a spatial Sobolev estimate.

\begin{lemma}[Critical Sobolev estimate]\label{local:critical-sobolev}
If $f\in\X$ and $\nabla f\in L^{30/13}$, then
\[
 \|f\|_{10}\le C\|\nabla f\|_{30/13}.
\]
Consequently $\mathsf S_I(u)\le C\mathsf W_I(u)$ on the path graph.
\end{lemma}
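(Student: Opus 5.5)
The plan is to obtain the estimate from the strong HLS inequality, Theorem~\ref{std:hls}, applied to a Riesz potential representation of $f$ in terms of its weak gradient. Sobolev embedding $\dot W^{1,30/13}\hookrightarrow L^{10}$ in three dimensions has the exponent relation $1/10=13/30-1/3$, which is exactly the case $\beta=1$, $q=30/13<3$ of Theorem~\ref{std:hls} (indeed $1<30/13<3$). So the whole proof reduces to a pointwise bound
\[
 |f(x)|\le C\,(I_1*|\nabla f|)(x)\qquad\text{a.e.},
\]
followed by $\|I_1*|\nabla f|\|_{10}\le C_{1,30/13}\|\nabla f\|_{30/13}$.

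To justify the pointwise bound without assuming decay beyond $f\in L^6$, I would start from compactly supported smooth $\phi$, where the classical identity
\[
 \phi(x)=-\frac1{4\pi}\int\frac{(x-y)\cdot\nabla\phi(y)}{|x-y|^3}\,dy
\]
holds; it follows from integrating $\frac{d}{dr}\phi(x+r\omega)$ along rays and averaging over the sphere. This gives $|\phi|\le c\,I_1*|\nabla\phi|$. The passage to general $f$ is the only delicate step, because the approximants of Lemma~\ref{std:density} converge in $L^6$ with gradients in $L^2$, not in $L^{30/13}$. I would therefore pass to the limit in the representation formula itself, not in the inequality. Fix a Schwartz test $\psi$ and pair both sides with it. The kernel $k(z)=z/|z|^3$ convolved with $\psi$ is smooth and $O(|z|^{-2})$ at infinity, so it lies in $L^2$. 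Hence $\int f_n\psi\to\int f\psi$ and $\int\nabla f_n\cdot(\tilde k*\psi)\to\int\nabla f\cdot(\tilde k*\psi)$, using $\nabla f_n\to\nabla f$ in $L^2$.

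This shows that the representation holds for $f$ in the sense of distributions. By Fubini it then holds almost everywhere, since the absolute majorant $I_1*|\nabla f|$ is finite a.e.; that finiteness comes from Theorem~\ref{std:hls} applied to $\nabla f\in L^{30/13}$, and it is also finite a.e. from the $L^2$ membership via $I_1:L^2\to L^6$. Both majorants agree, so Fubini is legitimate. The constant $c$ then relates $1/(4\pi)$ to $c_1$.

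For the consequence, apply the spatial estimate at almost every $t\in I$, where $\nabla u(t)\in L^{30/13}$ by the definition of $\mathcal X(I)$, and take the $L^{10}_t$ norm; measurability of $t\mapsto\|u(t)\|_{10}$ follows from continuity in $L^6$ and Fatou. The main obstacle is the density step above, namely identifying the representation for the $L^6$ representative using only the $L^2$ gradient convergence. Pairing against Schwartz tests handles this, because the dual kernel lies in $L^2$.
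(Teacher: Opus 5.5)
Your proof is correct, but it takes a different route from the paper.

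\textbf{The paper's route.} The paper never uses a potential representation. For real $f\in C_c^\infty$ it applies the sharp $\dot H^1\to L^6$ Sobolev inequality to $|f|^{5/3}$ and then H\"older with $\tfrac1{15}+\tfrac{13}{30}=\tfrac12$. This gives $\|f\|_{10}^{5/3}\le\tfrac53\sqrt{S_*}\,\|f\|_{10}^{2/3}\|\nabla f\|_{30/13}$, and cancelling proves the estimate. It then extends to rough $f$ by cutoff and mollification converging in $L^6$ with gradients in $L^{30/13}$; the cutoff error is $CR^{-1/5}\|f1_{\{R\le|x|\le2R\}}\|_6$. The approximants are Cauchy in $L^{10}$, and local H\"older identifies the limit with $f$.

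\textbf{What each route buys.} The paper's argument gives the explicit constant $\tfrac53\sqrt{S_*}$ and relies only on the Sobolev inequality already proved. It never uses $\nabla f\in L^2$, but it must build a fresh approximation in the $L^6\times L^{30/13}$ graph. Your argument needs the Riesz representation and HLS of order one (Theorem~\ref{std:hls} with $\beta=1$, $q=30/13$). In exchange, its density step uses only the approximants of Lemma~\ref{std:density}, because you pass the linear identity, not the nonlinear inequality, to the limit. Your observation that the dual kernel lies in $L^2$ is what makes that step work; for instance $|\tilde k*\psi|\le C\,I_1*|\psi|$ and $I_1:L^{6/5}\to L^2$. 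As a byproduct you get the pointwise bound $|f|\le C\,I_1*|\nabla f|$ for every $f\in\X$, hence every $\dot W^{1,q}\to L^{3q/(3-q)}$ bound with $1<q<3$ at once, directly for complex $f$.

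\textbf{A harmless sign slip.} Ray integration, or $\phi=\Gamma*\Delta\phi$ with $\Gamma=-1/(4\pi|x|)$, gives
$\phi(x)=+\frac1{4\pi}\int\frac{(x-y)\cdot\nabla\phi(y)}{|x-y|^3}\,dy$. This does not matter, since you only use the absolute bound.

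\textbf{The time-integrated consequence.} Applying the estimate at a.e.\ $t$, with measurability from lower semicontinuity of $\|\cdot\|_{10}$ under $L^6$ convergence, matches what the paper leaves implicit.
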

\begin{proof}
For real $f\in C_c^\infty$, the Sobolev estimate
\eqref{input:eq-sharp-sobolev} applied to a smooth approximation
of $|f|^{5/3}$ and H\"older give
\begin{equation}\label{eq:critical-sobolev-compact}
 \|f\|_{10}^{5/3}
 \le\tfrac53\sqrt{S_*}\|f\|_{10}^{2/3}\|\nabla f\|_{30/13}.
\end{equation}
Cancellation in \eqref{eq:critical-sobolev-compact} proves the estimate. For $f\in L^6$ with
$\nabla f\in L^q$, $q=30/13$, cutoff and convolution give
\begin{equation}\label{eq:critical-sobolev-density}
 f_n\in C_c^\infty,\qquad f_n\to f\text{ in }L^6,\qquad
 \nabla f_n\to\nabla f\text{ in }L^q,
\end{equation}
since the cutoff error satisfies
\[
 \|f\nabla\chi_R\|_q
 \le CR^{-1/5}\|f1_{\{R\le|x|\le2R\}}\|_6\longrightarrow0.
\]
Applying \eqref{eq:critical-sobolev-compact} to differences in \eqref{eq:critical-sobolev-density} makes $(f_n)$ Cauchy in $L^{10}$; local H\"older
identifies its limit with $f$. Apply the real estimate to both
components of a complex value, then integrate in time.
\end{proof}

We use this comparison in the nonlinear estimates.

\subsection{Nonlinear estimates and weak derivatives}

\begin{lemma}[Nonlinear estimates]
\label{local:nonlinear}
The actual convolution map $F:L^6\to L^{6/5}$ is locally Lipschitz.
On $\mathcal X(I)$ it has an actual weak gradient and
\begin{equation}\label{local:single-source}
 \mathsf N_I(\nabla F(u))
 \le C\|u\|_{L^\infty_tL^6_x}^{2\alpha}\mathsf S_I(u)^4\mathsf W_I(u),
 \qquad
 \|F(u)\|_{L^2_tL^2_x}
 \le C\|u\|_{L^\infty_tL^6_x}^{2\alpha}\mathsf S_I(u)^5.
\end{equation}
For $u,v\in\mathcal X(I)$, put
\[
 \begin{gathered}
 M=\|u\|_{L^\infty L^6}+\|v\|_{L^\infty L^6},\quad
 N=\mathsf S_I(u)+\mathsf S_I(v),\quad Y=\mathsf W_I(v),\\
 D=\|u-v\|_{L^\infty L^6},\quad e=\mathsf S_I(u-v),\quad
 z=\mathsf W_I(u-v),\quad \vartheta=(3-\alpha)/3.
 \end{gathered}
\]
Then
\begin{equation}\label{local:difference-source}
 \mathsf N_I(\nabla(F(u)-F(v)))
 \le C\left(M^{2\alpha}N^4z+M^{2\alpha}N^3Ye
 +M^{5\alpha/3}N^{4-\vartheta}Y D^{\alpha/3}e^\vartheta\right).
\end{equation}

\end{lemma}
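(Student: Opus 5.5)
The proof rests on Hardy--Littlewood--Sobolev combined with H\"older, applied to the pointwise structure of $F$ and of its gradient.

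\textbf{Local Lipschitz property.} For $u\in L^6$ we have $|u|^p\in L^{6/p}$. Theorem~\ref{std:hls} with $q=6/p$ and $1/r=p/6-\alpha/3=1/2$ gives $I_\alpha*|u|^p\in L^2$. The factor $|u|^{p-2}u$ lies in $L^{6/(p-1)}=L^{6/(2+\alpha)}$, and $1/2+(2+\alpha)/6=(5+\alpha)/6$. This exponent is not $5/6$, so the two factors should be split differently. Pair with a test $\psi\in L^6$ and use \eqref{input:eq-sharp-hls} with $a=|u|^p$ and $b=|u|^{p-1}|\psi|$; both lie in $L^{6/p}$ by H\"older. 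This gives $|\langle F(u),\psi\rangle|\le H_\alpha\|u\|_6^{2p-1}\|\psi\|_6$, hence $F:L^6\to L^{6/5}$. For differences, write
\[
 F(u)-F(v)=\bigl(I_\alpha*(|u|^p-|v|^p)\bigr)|u|^{p-2}u+(I_\alpha*|v|^p)\bigl(|u|^{p-2}u-|v|^{p-2}v\bigr),
\]
and use $\bigl||u|^p-|v|^p\bigr|+\bigl||u|^{p-2}u-|v|^{p-2}v\bigr|\,(|u|+|v|)\lesssim(|u|+|v|)^{p-1}|u-v|$. The same bilinear HLS pairing then gives the local Lipschitz bound.

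\textbf{Weak gradient.} For $u\in\mathcal X(I)$ (pointwise in time, with $u(t)\in\X$), the candidate gradient is
\[
 \nabla F(u)=(I_\alpha*|u|^p)\,\nabla(|u|^{p-2}u)+\bigl(I_\alpha*(p|u|^{p-2}\re(\bar u\nabla u))\bigr)|u|^{p-2}u,
\]
where $|\nabla(|u|^{p-2}u)|\le(p-1)|u|^{p-2}|\nabla u|$; the power is $C^1$ because $p-2>1$. To justify this, approximate by Lemma~\ref{std:density} simultaneously in $L^6$ and $\dot H^1$. For $C_c^\infty$ functions the formula follows by moving the derivative onto the convolution, which is legitimate since $I_\alpha$ is locally integrable and $\nabla$ commutes with convolution. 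One then passes to the limit in $L^1_{\rm loc}$ using the HLS bounds below and the Lipschitz continuity of $F$.

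\textbf{The estimate for $\mathsf N_I(\nabla F(u))$.} I work at fixed time in $L^{6/5}$ and then integrate. The target right-hand side has $2p-1=5+2\alpha$ factors of $u$ and one factor of $\nabla u$, distributed as $2\alpha$ copies in $L^6$, four in $L^{10}$, and one $\nabla u\in L^{30/13}$. Scaling checks: $2\alpha/6+4/10+13/30+(\text{one }L^{10}\text{ copy})$ must account for the kernel gain $\alpha/3$. Concretely, put the HLS input $|u|^p$ into $L^{q}$, with $p-1$ copies of $u$ in $L^{6}$ and one in $L^{10}$. This gives $I_\alpha*|u|^p\in L^r$, and H\"older with $|u|^{p-2}|\nabla u|$ placed in $L^6$-powers, three $L^{10}$ copies and $\nabla u\in L^{30/13}$ yields the $L^{6/5}$ bound. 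The time exponents are $4/10+1/10=1/2$, so the time integral of $\mathsf S^4\mathsf W$ lands in $L^2_t$, consistent with \eqref{local:single-source}. The second term, where $\nabla$ falls inside the convolution, is handled identically with the roles exchanged. The $L^2_tL^2_x$ bound for $F(u)$ uses the same distribution with the gradient replaced by a fifth $L^{10}$ factor.

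\textbf{Difference estimate \eqref{local:difference-source}.} Expand $\nabla F(u)-\nabla F(v)$ telescopically. Terms in which $\nabla$ acts on $u-v$ give the $M^{2\alpha}N^4z$ contribution. Terms in which $\nabla v$ survives and a difference $|u|^{k}-|v|^k$ of a power appears give the $M^{2\alpha}N^3Ye$ contribution, using $(|u|+|v|)^{k-1}|u-v|$ with $|u-v|$ in $L^{10}$.

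\textbf{Main obstacle.} The delicate terms are those where the difference lands on a factor of low H\"older regularity, namely $\nabla(|v|^{p-2}v)$ differenced against $u$, which involves $|u|^{p-3}$ with $p-3=\alpha$ possibly in $(0,1)$. There the map $z\mapsto|z|^{\alpha}$ is only $\alpha$-H\"older. I would use the bound $\bigl||u|^{p-3}-|v|^{p-3}\bigr|\lesssim|u-v|^{\min(\alpha,1)}(|u|+|v|)^{\alpha-\min(\alpha,1)}$. I would then split $|u-v|^{\alpha}$ by interpolating between $L^6$ (giving $D$) and $L^{10}$ (giving $e$); this is the source of the mixed factor $D^{\alpha/3}e^{\vartheta}$ with $\vartheta=(3-\alpha)/3$. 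The exponents $D^{\alpha/3}e^{\vartheta}N^{4-\vartheta}$ are dictated by balancing the $L^6$ and $L^{10}$ budgets so that the time integrability remains $L^2_t$. Checking these H\"older exponents across the range $0<\alpha<3$, including the case $\alpha\ge1$ where the power is Lipschitz and the bound degenerates harmlessly, is where I expect the bookkeeping to be hardest.
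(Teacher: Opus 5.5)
Your overall architecture (HLS plus H\"older on the six terms of the expanded gradient difference, and an approximation argument for the weak gradient) matches the paper's. However, the exponent allocation, which carries the content of the lemma, is wrong, and in particular you misidentify where the third term of \eqref{local:difference-source} comes from.

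\textbf{The third term.} For $B(z)=|z|^{p-2}z$, $D^2B$ is homogeneous of degree $p-3=\alpha>0$. So $B\in C^2$ and $\|DB(z)-DB(w)\|\le C(|z|+|w|)^{p-3}|z-w|$. The coefficients of $\nabla(|u|^{p-2}u)$ have degree $1+\alpha>1$, so no H\"older bound for $|z|^\alpha$ ever enters. The term $(I_\alpha*|u|^p)(DB(u)-DB(v))\nabla v$ is bounded by $(I_\alpha*V^p)V^{p-3}d\,|\nabla v|$, with $V=|u|+|v|$ and $d=|u-v|$, and it gives $M^{2\alpha}N^3Ye$.

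The step you propose would actually fail. An $\alpha$-H\"older bound leaves $|u-v|^{\alpha}$, whose $L^6$--$L^{10}$ split has exponents summing to $\alpha$, not $\alpha/3+\vartheta=1$. That sub-Lipschitz bound is not \eqref{local:difference-source}, and it could not give the contraction in Theorem~\ref{local:flow}.

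The mixed factor comes instead from two terms you assigned to $M^{2\alpha}N^3Ye$: $(I_\alpha*(V^{p-1}d))V^{p-2}|\nabla v|$ and $(I_\alpha*(V^{p-1}|\nabla v|))V^{p-2}d$. The reason is HLS admissibility, not regularity. The time budget forces a total of $2\alpha$ non-gradient powers in $L^6$ and four in $L^{10}$.
\begin{itemize}
\item With $d$ entirely in $L^{10}$, neither term admits a strict HLS allocation once $\alpha\ge13/5$. In the first term, even putting all of $V^{p-1}$ in $L^6$ gives input reciprocal $(13+5\alpha)/30\le\alpha/3$.
\item With $d$ entirely in $L^6$, the $L^6$ budget is exceeded for $\alpha<1/2$.
\end{itemize}
Interpolating $d$ between $L^6$ and $L^{10}$ is the right tool, but it must be applied to a full power of $d$ in these two terms. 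The paper uses $\|d\|_6^{\alpha/3}\|d\|_{10}^{\vartheta}$, which works uniformly on $0<\alpha<3$.

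\textbf{The single-source allocation.} Your explicit allocation for \eqref{local:single-source} has the same defect. With $p-1$ inner copies in $L^6$ and three outer $L^{10}$ copies, the outer factor $|u|^{p-2}$ would need $\alpha-2$ copies in $L^6$, which is negative for $\alpha<2$. The input reciprocal $(13+5\alpha)/30$ also leaves the HLS range for $\alpha\ge13/5$. An allocation valid on all of $0<\alpha<3$ is:
\begin{itemize}
\item Derivative outside: put $2\alpha$ powers of $|u|^p$ in $L^6$ and $3-\alpha$ in $L^{10}$ (input reciprocal $(9+7\alpha)/30$), and all $1+\alpha$ outer powers in $L^{10}$.
\item Derivative inside: this term needs its own choice. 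Put $\alpha/3$ powers in $L^6$, $2+2\alpha/3$ in $L^{10}$ and $\nabla u\in L^{30/13}$ inside (reciprocal $(57+11\alpha)/90$), with $5\alpha/3$ and $2-2\alpha/3$ powers outside.
\end{itemize}
Both reciprocals lie strictly between $\alpha/3$ and $1$.

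\textbf{The weak gradient.} Approximating only in $L^6\cap\dot H^1$ does not justify the formula. At the energy level, H\"older controls the candidate $(I_\alpha*|u|^p)|u|^{p-2}|\nabla u|$ only in $L^{6/7}$. The approximants must converge in $L^6\cap L^{10}$ with gradients in $L^2\cap L^{30/13}$; the cutoff error obeys $\|u\nabla\chi_R\|_{30/13}\le\|u\mathbf1_{\{|x|\ge R\}}\|_{10}\|\nabla\chi\|_3$. The six bounds then make the candidate gradients Cauchy in $L^{6/5}$, and continuity of $F:L^6\to L^{6/5}$ identifies the limit.

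A minor slip: $p/6-\alpha/3=(3-\alpha)/6$, not $1/2$. So $F(u)\in L^{6/5}$ follows directly from H\"older, without the bilinear detour.
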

\begin{proof}
The derivative may fall on the convolved density or on the outside
factor. We estimate these contributions with separate HLS allocations,
keeping the derivative on the density when it lies inside the convolution.
For $R(z)=|z|^p$ and $B(z)=|z|^{p-2}z$, segment integration on
$\C\simeq\R^2$ gives, since $p>3$,
\begin{equation}\label{eq:power-differential-bounds}
 \begin{aligned}
 |R(z)-R(w)|&\le p(|z|+|w|)^{p-1}|z-w|,\\
 |B(z)-B(w)|&\le C_p(|z|+|w|)^{p-2}|z-w|,\\
 \|DR(z)-DR(w)\|&\le p(p-1)(|z|+|w|)^{p-2}|z-w|,\\
 \|DB(z)-DB(w)\|&\le C_p(|z|+|w|)^{p-3}|z-w|.
 \end{aligned}
\end{equation}
The HLS estimate \eqref{input:eq-sharp-hls} at
$6/p\to6/(3-\alpha)$ and H\"older applied to
\begin{equation}\label{eq:hartree-two-slot-difference}
 F(u)-F(v)=(I_\alpha*(R(u)-R(v)))B(u)
           +(I_\alpha*R(v))(B(u)-B(v))
\end{equation}
and \eqref{eq:power-differential-bounds} give the energy-level domain
and local Lipschitz bound from \eqref{eq:hartree-two-slot-difference}.

At a fixed time put $V=|u|+|v|$, $d=|u-v|$,
$Y_x=|\nabla v|$ and $Z_x=|\nabla u-\nabla v|$.
The gradient difference is bounded by $C_p\sum_{i=1}^6T_i$, where
\[
 \begin{array}{ll}
 T_1=(I_\alpha*V^p)V^{p-2}Z_x,&
 T_2=(I_\alpha*(V^{p-1}d))V^{p-2}Y_x,\\
 T_3=(I_\alpha*V^p)V^{p-3}dY_x,&
 T_4=(I_\alpha*(V^{p-1}Z_x))V^{p-1},\\
 T_5=(I_\alpha*(V^{p-2}dY_x))V^{p-1},&
 T_6=(I_\alpha*(V^{p-1}Y_x))V^{p-2}d.
 \end{array}
\]
Each pair in the table allocates powers of $V$ to $L^6,L^{10}$;
each gradient lies in $L^{30/13}$:
\[
 \begin{array}{c|c|c|c}
 &\text{inside }I_\alpha&\text{outside }I_\alpha&\text{difference factor}\\ \hline
 T_1&(2\alpha,3-\alpha)&(0,1+\alpha)&Z_x\\
 T_2&(5\alpha/3,2-2\alpha/3)&(0,1+\alpha)&\|d\|_6^{\alpha/3}\|d\|_{10}^{\vartheta}\\
 T_3&(2\alpha,3-\alpha)&(0,\alpha)&\|d\|_{10}\\
 T_4&(\alpha/3,2+2\alpha/3)&(5\alpha/3,2-2\alpha/3)&Z_x\\
 T_5&(\alpha/3,1+2\alpha/3)&(5\alpha/3,2-2\alpha/3)&\|d\|_{10}\\
 T_6&(\alpha/3,2+2\alpha/3)&(4\alpha/3,\vartheta)&\|d\|_6^{\alpha/3}\|d\|_{10}^{\vartheta}
 \end{array}
\]
All powers are nonnegative. The HLS input reciprocals are
$a=(9+7\alpha)/30$ and $b=(57+11\alpha)/90$, and
\begin{equation}\label{eq:nonlinear-hls-exponents}
 \begin{gathered}
 a-\alpha/3=(3-\alpha)/10,\qquad 1-a=7(3-\alpha)/30,\\
 b-\alpha/3=19(3-\alpha)/90,\qquad 1-b=11(3-\alpha)/90.
 \end{gathered}
\end{equation}
The identities \eqref{eq:nonlinear-hls-exponents} verify both strict HLS calls; each product has reciprocal exponent $5/6$.
The pairs $(T_1,T_4)$, $(T_3,T_5)$ and $(T_2,T_6)$ give the three
terms of \eqref{local:difference-source}, using
\[
 \frac4{10}+\frac1{10}
 =\frac3{10}+\frac1{10}+\frac1{10}
 =\frac{4-\vartheta}{10}+\frac1{10}+\frac\vartheta{10}
 =\frac12,\qquad \alpha/3+\vartheta=1.
\]
Comparison with zero gives the gradient estimate in
\eqref{local:single-source}. For the scalar estimate use inner powers
$(2\alpha,3-\alpha)$ and outer power $p-1$ in $L^{10}$;
both reciprocal sums equal $1/2$.

For compact smooth values, testing and Fubini differentiate $R(u)$
inside the convolution, where $I_\alpha$ is locally integrable.
For rough slices, cutoff and convolution converge in $L^6\cap L^{10}$
and in both gradient spaces, since
\[
 \|u\nabla\chi_R\|_{30/13}
 \le\|u1_{\{|x|\ge R\}}\|_{10}\|\nabla\chi\|_3\longrightarrow0.
\]
The six bounds make the candidate gradients Cauchy in $L^{6/5}$;
energy-level continuity identifies their weak limit with $\nabla F(u)$.
Compatible Bochner representatives give joint measurability on one
conull time set. The same allocations, with the gradient removed and
outer power increased, give
\begin{equation}\label{eq:scalar-source-difference}
 \|F(u)-F(v)\|_{L^2L^2}
 \le C\bigl(M^{2\alpha}N^4e+
 M^{5\alpha/3}N^{5-\vartheta}D^{\alpha/3}e^\vartheta\bigr).
\end{equation}
The estimate \eqref{eq:scalar-source-difference} proves scalar source continuity.
\end{proof}

The value and gradient source bounds now define the finite Duhamel integral.

\subsection{Linear realization and the nonlinear fixed point}

\begin{lemma}[Compatible sources and the finite Duhamel formula]
\label{local:linear-duhamel}\label{local:mild}
Let $I=[a,b]$.  Suppose $H\in L^1(I;L^{6/5})$ and its actual distributional
gradients $G_j=\partial_jH$ belong to $L^2(I;L^{6/5})$.  Then the literal
scalar distribution
\begin{equation}\label{eq:finite-duhamel-definition}
 D_aH(t)=\int_a^t S(t-s)H(s)\,ds
\end{equation}
has a unique realization in $\mathcal X(I)$ with zero trace at $a$ and
\[
 \|D_aH\|_{\mathcal X(I)}+\mathsf Z_I(D_aH)
 \le C\sum_j\|G_j\|_{L^2_tL^{6/5}_x}.
\]
The integral is absolutely convergent against Schwartz tests. The terminal
operator $D_b^-H(t)=\int_t^bS(t-s)H(s)\,ds$ satisfies the same estimate.
If $u\in\mathcal X(I)$ solves $iu_t+\Delta u=H$ in compact time-space tensor
tests, then, for all $s,t\in I$,
\begin{equation}\label{local:forced-mild}
 u(t)=S(t-s)u(s)-i\int_s^tS(t-\tau)H(\tau)\,d\tau
 \quad\hbox{in }\mathcal S'.
\end{equation}
Conversely, the formula implies the tested equation.
\end{lemma}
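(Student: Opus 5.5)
The estimate is proved on the gradients first; the scalar value is then recovered without any $L^2$ bound on $H$ itself. For Schwartz $\phi$ I define $D_aH(t)$ by its pairings $\phi\mapsto\int_a^t\langle H(s),S(t-s)\phi\rangle_{\rm bil}\,ds$. This integral is absolutely convergent: $H(s)\in L^{6/5}$, while $\|S(t-s)\phi\|_6$ is bounded uniformly in $s$ by the dispersive/Sobolev bound for Schwartz data, and $H\in L^1(I;L^{6/5})$. So $D_aH(t)\in\mathcal S'$ is literally defined. For the gradients, apply Theorem~\ref{std:endpoint} to $1_{[a,b]}G_j\in Y$. The retarded response $\mathcal R_{(a,t)}G_j(t)$ lies in $C_tL^2\cap L^2_tL^6\cap L^{10}_tL^{30/13}$, with norm at most $C\|G_j\|_{L^2L^{6/5}}$. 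Pairing with $\phi$ and using $\langle G_j,S(t-s)\phi\rangle=-\langle H,S(t-s)\partial_j\phi\rangle$, which holds because $S$ commutes with $\partial_j$ and the transpose of $S(t-s)$ is $S(t-s)$, shows that these are the distributional derivatives of $D_aH(t)$. Concretely, $\partial_j D_aH(t)=-i^{-1}\cdot$(the sign conventions fixed by the bilinear pairing) $\mathcal R_{(a,t)}G_j(t)$, up to the factor placement in \eqref{eq:finite-duhamel-definition}.

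To place $D_aH(t)$ in $\X$ I need an $L^6$ representative; this is the delicate step. I would use Bessel regularization, Lemma~\ref{std:bessel}, or a direct approximation. Choose compact smooth time-space tensors $H_n$ with $H_n\to H$ in $L^1(I;L^{6/5})$ and $\nabla H_n\to G$ in $L^2(I;L^{6/5})$ simultaneously. This is possible by cutoff and mollification in space and time, as in Lemma~\ref{std:density}; the cutoff error $H\nabla\chi_R$ is small in $L^1L^{6/5}$ after multiplying by $R^{-1}$. For the smooth sources, $D_aH_n$ are Bochner integrals in $H^1$, and Sobolev gives $\|D_aH_n(t)-D_aH_m(t)\|_6\le C_S\|\nabla(D_aH_n-D_aH_m)(t)\|_2\le C\|\nabla(H_n-H_m)\|_{L^2L^{6/5}}$ uniformly in $t$. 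Hence $D_aH_n$ is Cauchy in $C(I;\X)$ with the graph norm, and also in $L^{10}L^{30/13}$ and $L^2L^6$ for the gradients. Pairing with Schwartz tests, where the $L^1L^{6/5}$ convergence controls the scalar integrals, identifies the limit with the literal distribution. This gives existence, the stated estimate, and the zero trace at $a$. The terminal operator $D_b^-$ is handled identically, using the future version of Theorem~\ref{std:endpoint}, whose constants are uniform over intervals. Uniqueness of the realization is immediate, since an element of $\X$ is determined by its Schwartz pairings.

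For the mild formula, let $u\in\mathcal X(I)$ solve the tested equation, and put $z(t)=u(t)-S(t-s)u(s)+i\int_s^tS(t-\tau)H\,d\tau$. The generator identity of Lemma~\ref{std:generator} shows that the Duhamel term satisfies the tested equation with source $H$; this uses Fubini, which is justified by the absolute convergence above. The free term satisfies the homogeneous tested equation. So $z$ solves the homogeneous weak equation with $z(s)=0$. I expect the main obstacle to be that $z$ lies only in $C(I;\X)$, not in $C(I;L^2)$, so Lemma~\ref{std:linear-uniqueness} does not apply directly. I would apply it to $\partial_jz$ instead, which lies in $C(I;L^2)$ and satisfies the same weak equation: replace the test $\phi$ by $-\partial_j\phi$. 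This gives $\nabla z\equiv0$, hence $z$ is constant, and the $L^6$ class forces $z=0$. The converse direction is the same generator computation read backwards: differentiate the pairing of \eqref{local:forced-mild} against $\eta(t)\phi$ and integrate by parts in $t$.
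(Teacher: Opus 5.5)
Your architecture matches the paper's. You regularize the source and get a Cauchy sequence in $\mathcal X(I)$ from the retarded bounds of Theorem~\ref{std:endpoint} applied to the gradients. You identify the limit with the literal distribution through $L^1_tL^{6/5}_x$ convergence against the uniformly $L^6$-bounded orbits $S(t-s)\phi$. You prove the mild formula by applying Lemma~\ref{std:linear-uniqueness} to $\partial_jz$, with the $L^6$ class removing the constant. Your direct Fubini verification of the tested equation, in place of passing through regularized sources, is also fine. The real difference is the regularizer, and that is where your argument breaks as written. The paper applies the spatial Bessel multiplier $J_n=(1-B_{n+1})B_{1/(n+1)}$ of Lemma~\ref{std:bessel} at each time. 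It commutes with $\nabla$, is uniformly bounded and strongly convergent on $L^{6/5}$, and maps $L^{6/5}$ into $\X$. Hence $J_nH\to H$ in $L^1L^{6/5}$ and $J_nG_j\to G_j$ in $L^2L^{6/5}$ by dominated convergence, with no commutator term.

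\textbf{The gap.} Your spatial cutoff creates a commutator, $\nabla(\chi_RH)=\chi_RG+H\nabla\chi_R$, and the Cauchy estimate needs $H\nabla\chi_R\to0$ in $L^2(I;L^{6/5})$. Your bound $\|H(s)\nabla\chi_R\|_{6/5}\le R^{-1}\|\nabla\chi\|_\infty\|H(s)\|_{6/5}$ gives smallness only in $L^1(I;L^{6/5})$. The hypotheses give only $H\in L^1(I;L^{6/5})$, and nothing puts $\|H(s)\|_{6/5}$ in $L^2_t$. So the simultaneous approximation you assert is not justified.

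\textbf{Repair.} Use the gradient hypothesis. For a.e.\ $s$, $H(s)\in W^{1,6/5}$, so Sobolev gives $\|H(s)\|_2\le C\|G(s)\|_{6/5}$. Hence $\|H(s)\nabla\chi_R\|_{6/5}\le\|\nabla\chi\|_3\,\|H(s)\mathbf 1_{\{R\le|x|\le2R\}}\|_2\to0$, with the $L^2_t$ majorant $C\|\nabla\chi\|_3\|G(s)\|_{6/5}$. Simpler still, omit the cutoff. Spatial mollification alone gives $\|\nabla(\eta_\epsilon*H(s))\|_2\le\|\eta_\epsilon\|_{3/2}\|G(s)\|_{6/5}$ and $\|\eta_\epsilon*H(s)\|_2\le\|\eta_\epsilon\|_{3/2}\|H(s)\|_{6/5}$. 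That is all the $H^1$-valued Bochner integrals require, and it is precisely the role $J_n$ plays in the paper.

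\textbf{A smaller correction.} Drop the hedged factor in your gradient identification. Since $D_aH$ carries no $-i$, the transposition $\langle S(t-s)H,\phi\rangle_{\rm bil}=\langle H,S(t-s)\phi\rangle_{\rm bil}$ and $\langle H,\partial_j\psi\rangle_{\rm bil}=-\langle G_j,\psi\rangle_{\rm bil}$ give exactly $\partial_jD_aH(t)=\mathcal R_{(a,t)}G_j(t)$.
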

\begin{proof}
Let $J_n=(1-B_{n+1})B_{1/(n+1)}$ as in Lemma~\ref{std:bessel}.
It commutes with weak derivatives, is bounded by two on the required
$L^q$ spaces, converges strongly to the identity, and maps
$L^{6/5}$ to $\X$. Thus $D_a(J_nH)$ is a finite $\X$ Bochner
integral. Testing and Fubini identify its gradients with the retarded realizations
of $J_nG_j$. The numerical estimates
\eqref{input:eq-retarded-critical} and
\eqref{input:eq-retarded-endpoint}, with Theorem~\ref{std:endpoint}, give
\begin{equation}\label{eq:duhamel-source-cauchy}
 \|D_a(J_nH-J_mH)\|_{\mathcal X(I)}
 +\mathsf Z_I(D_a(J_nH-J_mH))
 \le C\sum_j\|(J_n-J_m)G_j\|_{L^2L^{6/5}}\longrightarrow0.
\end{equation}
The Cauchy estimate \eqref{eq:duhamel-source-cauchy} and closedness of the graph give a limit $D_aH$. Since
$J_nH\to H$ in $L^1L^{6/5}$, testing against uniformly bounded
free Schwartz orbits in $L^6$ identifies its scalar pairing with
\eqref{eq:finite-duhamel-definition}. Future integration gives $D_b^-H$ with a common
enlarged constant.

Lemma~\ref{std:generator} and Fubini give the tested equation, first
for $J_nH$ and then for its limit. If $u$ solves that equation, put
\begin{equation}\label{eq:weak-duhamel-difference}
 q(t)=u(t)-S(t-a)u(a)+iD_aH(t).
\end{equation}
For $q$ in \eqref{eq:weak-duhamel-difference}, each $\partial_jq\in C_tL^2$ solves the homogeneous equation with
zero trace. Lemma~\ref{std:linear-uniqueness} gives $\nabla q=0$;
Sobolev gives $q=0$. Rebasing and oriented integration yield
\eqref{local:forced-mild} for all $s,t$, including the endpoints by
continuity. The same tested calculation proves the converse.
\end{proof}

The preceding value and gradient bounds make the Hartree Duhamel map
a contraction on $\mathcal X(I)$. Shrinking $I$ makes the free critical
norm small while the energy norm remains bounded.

\begin{theorem}[The maximal local flow]\label{local:flow}
For every $s_0\in\mathbb R$ and $f\in\X$, there is a unique maximal solution
$u\in C(I_{\max};\X)$, on an open interval containing $s_0$, with $u(s_0)=f$,
whose restrictions to compact subintervals belong to $\mathcal X$ and solve
\begin{equation}\label{local:hartree-mild}
 u(t)=S(t-s)u(s)+i\int_s^tS(t-\tau)F(u(\tau))\,d\tau.
\end{equation}
On every compact subinterval,
\begin{equation}\label{local:fullz}
 \nabla u\in L^2_tL^6_x,
\end{equation}
and the four path norms $\mathsf E,\mathsf W,\mathsf Z,\mathsf S$ are finite.
The flow depends continuously on its datum in $C_t\X$ and these norms on
compact lifespan intervals. Compact subsets of $\X$ have a uniform
positive local existence time.
\end{theorem}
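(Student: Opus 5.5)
The plan is the standard contraction argument in $\mathcal X(I)$, built on Lemma~\ref{local:nonlinear} and Lemma~\ref{local:linear-duhamel}. Fix $s_0$ and $f\in\X$, and let $I=[s_0-\tau,s_0+\tau]$. By the free critical Strichartz estimate \eqref{input:eq-free-critical} applied to $\nabla f$, together with dominated convergence in time, $\mathsf W_I(S(\cdot-s_0)f)\to0$ as $\tau\to0$. Since a compact set $K\subset\X$ is totally bounded and $\mathsf W_{\R}(S(\cdot)g)\le C\|\nabla g\|_2$, this smallness is uniform over $f\in K$. Choose $\tau$ so that $\mathsf W_I(S(\cdot-s_0)f)\le\eta$.

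We work on the closed set
\[
B=\{u\in\mathcal X(I):\ \mathsf E_I(u)\le 2C\|\nabla f\|_2+1,\ \mathsf W_I(u)\le2\eta\}
\]
with the map $\Phi u=S(\cdot-s_0)f+iD_{s_0}F(u)$, where the Duhamel integral is taken in both time directions. Lemma~\ref{local:nonlinear} shows that $F(u)\in L^\infty L^{6/5}\subset L^1(I;L^{6/5})$ and that its gradient lies in $L^2L^{6/5}$. Then Lemma~\ref{local:linear-duhamel}, combined with \eqref{local:single-source} and Lemma~\ref{local:critical-sobolev}, gives
\[
\|D_{s_0}F(u)\|_{\mathcal X}+\mathsf Z_I\le CM^{2\alpha}(2\eta)^5,\qquad M\lesssim\mathsf E_I(u),
\]
so $\Phi(B)\subset B$ once $\eta$ is small relative to $\|\nabla f\|_2$.

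The difference estimate \eqref{local:difference-source} contains the term $D^{\alpha/3}e^\vartheta$, which is only Hölder in $e$. To get a contraction, I will measure differences in the metric
\[
d(u,v)=\mathsf W_I(u-v)+\|u-v\|_{L^\infty L^6}.
\]
By Lemma~\ref{local:critical-sobolev}, $e\le Cz$, so the third term is at most $C\eta^{4-\vartheta}\eta\,d^{\alpha/3}d^{\vartheta}=C\eta^{5-\vartheta}d$. All three terms are therefore $\le C(M)\eta^4 d$, and $\Phi$ is a contraction for small $\eta$. The ball $B$ is complete under $d$ by weak lower semicontinuity of the $\mathsf E$ and $\mathsf W$ bounds, and the fixed point satisfies \eqref{local:hartree-mild}. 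The same estimate gives $\mathsf Z_I<\infty$, which is \eqref{local:fullz}, and the general identity for all $s,t$ follows by the rebasing argument of Lemma~\ref{local:linear-duhamel}.

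Uniqueness in the full class is proved as follows. If two solutions in $\mathcal X(J)$ agree at a time $t_1$, then on a short interval around $t_1$ both have small $\mathsf W$, by absolute continuity of the $L^{10}_t$ integral. The contraction estimate then forces $d=0$ there. A connectedness argument propagates the equality across $J$, and gluing the local solutions produces a unique maximal solution on an open interval.

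Continuous dependence comes from the same difference estimates on subintervals where both $\mathsf W$ norms are small. For data $f_n\to f$, split a compact interval $[a,b]\subset I_{\max}(f)$ into finitely many pieces on each of which $\mathsf W(u)\le\eta$. On each piece, the linear estimate for $S(t-t_k)(u_n(t_k)-u(t_k))$ together with the contraction bound gives $d\le C\|\nabla(u_n(t_k)-u(t_k))\|_2$, and iterating over the pieces yields convergence in $C_t\X$ and in all four path norms. I expect this step to be the main obstacle. One point needing care is that the Hölder term requires the $L^\infty L^6$ part of the metric, which is controlled by the gradient through Sobolev. The other is showing that $\mathsf W(u_n)$ stays small on each piece, which follows from the stability of the free part plus the a priori bound in the ball.
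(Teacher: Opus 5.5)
Your argument follows the paper's own proof. You run a contraction for $\Phi u=S(\cdot-s_0)f+iD_{s_0}F(u)$ on a set with bounded $\mathsf E$ and small $\mathsf W$. You get uniqueness and continuous dependence by cutting compact intervals into finitely many pieces of small critical norm, and uniform existence time on compact data sets from a finite net.

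One step fails as written: the completeness of $B$ under $d(u,v)=\mathsf W_I(u-v)+\|u-v\|_{L^\infty L^6}$. You defined $B$ inside $\mathcal X(I)$, so its elements must be norm-continuous into $\X$. A $d$-Cauchy sequence converges in $C(I;L^6)$ and in $\mathsf W$. Weak lower semicontinuity then gives the bound on $\|\nabla u(t)\|_2$ at each $t$, but the limit is only weakly continuous into $\X$. For example, a suitably scaled $t\chi(x)\sin(x_1/t)$, set to $0$ at $t=0$, is the $d$-limit of the truncations $\min(nt,1)\,g(\max(t,1/n))$. These lie in $C(I;\X)$ with uniformly bounded $\mathsf E$ and $\mathsf W$, yet the limit's gradient norm does not tend to $0$ as $t\downarrow0$. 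So the contraction mapping theorem does not yet apply.

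The fix is what the paper does: use the full graph distance $\|u-v\|_{\mathcal X(I)}$. Nothing is lost. The quantities $D$, $e$, $z$ and $D^{\alpha/3}e^\vartheta$ are all at most $C\|u-v\|_{\mathcal X(I)}$. Lemma~\ref{local:linear-duhamel} bounds $\mathsf E_I(\Phi u-\Phi v)$ by the same source norm $\mathsf N_I(\nabla(F(u)-F(v)))$ that bounds $\mathsf W_I(\Phi u-\Phi v)$. The ball is then closed in the complete space $\mathcal X(I)$.

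Alternatively, keep $d$ on a larger class of bounded, $L^6$-continuous paths and recover $C(I;\X)$ for the fixed point from $u=\Phi u$. In that case you must recheck the nonlinear and Duhamel lemmas, which are stated only on $\mathcal X(I)$, on that class.

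The same omission affects your continuous-dependence iteration. Passing from piece $k$ to piece $k+1$ needs $\|\nabla(u_n-u)(t_{k+1})\|_2$, which $d$ does not record. It does follow from the $\mathsf E$ part of the Duhamel bound, and it is automatic once you use the graph distance.
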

\begin{proof}
On a forward interval set
$\Phi_f(v)=S(\cdot-s_0)f+iD_{s_0}F(v)$ and choose
$B>2C\|f\|_{\mathrm{gr}}+1$. On the complete graph ball with
$v(s_0)=f$, $\mathsf E(v)\le B$ and $\mathsf W(v)\le\eta$,
Lemma~\ref{local:critical-sobolev} gives $\mathsf S(v)\le C\eta$.
For the graph distance $d$, the differences in
\eqref{local:difference-source} obey
$D,e,z\le Cd$ and $D^{\alpha/3}e^\vartheta\le Cd$.
Choose $0<\eta\le1$ so that
\begin{equation}\label{eq:local-contraction-conditions}
 \begin{aligned}
 \operatorname{Lip}(\Phi_f)
 &\le C(2B^{2\alpha}+B^{5\alpha/3})\eta^4\le\tfrac12,\\
 CB^{2\alpha}\eta^5&\le\min(\eta/2,B/4).
 \end{aligned}
\end{equation}
The free estimate \eqref{input:eq-free-critical} and absolute
continuity give a short interval with free $\mathsf W\le\eta/2$;
the free energy norm is at most $B/2$. Together with
\eqref{eq:local-contraction-conditions}, this makes $\Phi_f$ an invariant half contraction.
Its fixed point satisfies \eqref{local:hartree-mild}. The homogeneous
endpoint bound \eqref{input:eq-free-endpoint} and its retarded counterpart
\eqref{input:eq-retarded-endpoint} give \eqref{local:fullz}.
The future operator in Lemma~\ref{local:linear-duhamel} gives
$u=S(\cdot-s_0)f-iD_{s_0}^-F(u)$ backward; gluing the two traces
gives a centered solution.

For two solutions, partition a compact overlap so that both critical
norms are below a common threshold. On each piece, the difference
estimate and a common trace give $d\le d/2$, hence equality.
Their union over intervals marked by $(s_0,f)$ is consequently a
well-defined path on an open interval. Gluing members through the two
endpoints of a compact subinterval proves its graph regularity;
the union is maximal.

For nearby data, the same contraction and endpoint estimates give
\begin{equation}\label{eq:local-two-data-bound}
 \|u_f-u_g\|_{\mathcal X}
 +\mathsf Z(u_f-u_g)+\mathsf S(u_f-u_g)
 \le C\|f-g\|_{\mathrm{gr}}.
\end{equation}
A finite small-norm partition propagates \eqref{eq:local-two-data-bound} through any
compact part of the lifespan. The strict contraction bounds hold in a
neighborhood of each datum. A finite subcover of a compact data set
then gives one positive centered existence time.
\end{proof}

Uniqueness identifies the transformed solution under critical rescaling.

\begin{lemma}[Affine covariance]\label{local:covariance}
For $r>0$, $y\in\mathbb R^3$ and $\tau\in\mathbb R$, set
$T_{r,y}f(x)=r^{-1/2}f((x-y)/r)$.  If $u$ is the maximal trajectory marked
at $(s_0,f)$, then
\[
 v(t)=T_{r,y}u((t-\tau)/r^2)
\]
is the maximal trajectory marked at $(\tau+r^2s_0,T_{r,y}f)$, with domain
exactly $\tau+r^2I_{\max}(u)$.  Energy and the four critical mixed norms
are invariant under this simultaneous transformation.
\end{lemma}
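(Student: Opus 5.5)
The plan is to verify directly that $v$ belongs to the solution class on compact subintervals of $\tau+r^2I_{\max}(u)$ and satisfies the Duhamel identity \eqref{local:hartree-mild}. Uniqueness and maximality from Theorem~\ref{local:flow} will then identify $v$ with the maximal trajectory marked at $(\tau+r^2s_0,T_{r,y}f)$. Applying the same argument to the inverse transformation $T_{r,y}^{-1}$, with parameters $(1/r,\,-y/r,\,-\tau/r^2)$, will give the reverse inclusion of domains.

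First I record the elementary conjugation identities on Schwartz tests and extend them by density (Lemma~\ref{std:density}):
\begin{itemize}
\item $T_{r,y}$ is an isometry of $\X$, and it commutes with gradients up to the factor $r^{-1}$, namely $\nabla T_{r,y}f=r^{-1}T'_{r,y}\nabla f$, where $T'$ denotes the unit-exponent dilation;
\item $S(t)T_{r,y}=T_{r,y}S(t/r^2)$, which follows from the multiplier $m_t$ and the transform of dilation and translation;
\item $F(T_{r,y}g)=r^{-2}T_{r,y}F(g)$.
\end{itemize}
The last identity is a change of variables in the absolutely convergent convolution: $I_\alpha$ is homogeneous of degree $\alpha-3$, and the powers give $r^{-p/2}\cdot r^{3}\cdot r^{\alpha-3}\cdot r^{-(p-1)/2}=r^{-2}\,r^{-1/2}$ once $p=3+\alpha$ is used. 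This exponent check is the only genuine computation.

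Next I substitute into the Duhamel formula. For $t,t'$ in the transformed interval, set $s=(t-\tau)/r^2$ and $s'=(t'-\tau)/r^2$. Then
\[
S(t-t')v(t')=T_{r,y}S(s-s')u(s'),
\]
and the source integral becomes
\[
\int_{t'}^tS(t-\sigma)F(v(\sigma))\,d\sigma=T_{r,y}\int_{s'}^{s}S(s-\rho)F(u(\rho))\,d\rho
\]
after the substitution $\sigma=\tau+r^2\rho$, since the factor $r^2$ from $d\sigma$ cancels $r^{-2}$. These source integrals are understood as scalar pairings against Schwartz tests, so I pair with $\phi$ and move $T_{r,y}$ onto the test as its bilinear transpose $r^{3-1/2}\phi(y+r\,\cdot)$, which is again Schwartz. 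Then I apply \eqref{local:hartree-mild} for $u$. Continuity of $v$ in $\X$ is immediate from the isometry property.

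For the mixed norms, a scaling count is enough, using that $L^{10}_tL^{10}_x$, $L^{10}_tL^{30/13}_x$ for the gradient, $L^\infty_tL^2_x$ for the gradient, and $L^2_tL^6_x$ for the gradient are all critical. For the gradient in $L^{10}_tL^{30/13}_x$, the spatial norm picks up $r^{-3/2}\cdot r^{39/30}$, which equals $r^{-1/5}$, and the time change picks up $r^{2/10}$, so the product is $1$. The other three norms balance in the same way, and energy is invariant because $A$ and $D$ are. Membership of compact restrictions in $\mathcal X$ therefore transfers.

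Finally I compare domains. Uniqueness gives that $v$ coincides with the maximal solution $\tilde v$ on $\tau+r^2I_{\max}(u)$. If $\tilde v$ extended beyond this interval, then applying the inverse transformation to $\tilde v$ would produce a solution through $(s_0,f)$ on a strictly larger interval, contradicting the maximality of $u$. The main obstacle is only bookkeeping: making sure the weak-gradient realization and the dual $L^2$ realization of the source integrals are the ones preserved by the transformation. This is handled by working with pairings against Schwartz tests throughout.
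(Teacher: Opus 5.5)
Your proposal is correct and follows the paper's own proof essentially step by step. Both use the identities $\nabla T_{r,y}f=r^{-3/2}(\nabla f)((\cdot-y)/r)$, $F(T_{r,y}g)=r^{-2}T_{r,y}F(g)$ and $S(t)T_{r,y}=T_{r,y}S(t/r^2)$, substitute $t=\tau+r^2s$ in the Duhamel formula, count the Jacobians $r^3,r^2$ to get invariance of the four critical norms, and apply the inverse transformation to any extension to obtain exact equality of the lifespans. One notational point: your gradient identity is cleanest written as $\nabla T_{r,y}f=r^{-1}T_{r,y}\nabla f$, with the same $r^{-1/2}$ prefactor applied componentwise, rather than through an auxiliary dilation $T'$.
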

\begin{proof}
Changes of variables in the weak derivative and Riesz integrals give
\begin{equation}\label{eq:affine-gradient-nonlinearity}
 \nabla T_{r,y}f=r^{-3/2}(\nabla f)((\cdot-y)/r),\qquad
 F(T_{r,y}f)=r^{-5/2}F(f)((\cdot-y)/r),
\end{equation}
\begin{equation}\label{eq:affine-energy-free-group}
 A(T_{r,y}f)=A(f),\qquad D(T_{r,y}f)=D(f),\qquad
 S(t)T_{r,y}=T_{r,y}S(t/r^2).
\end{equation}
Here the potential factor is $r^{\alpha-p/2}$ and the double interaction
factor is $r^{3+\alpha-p}=1$. Substitute $t=\tau+r^2s$ in
\eqref{local:hartree-mild}, using \eqref{eq:affine-gradient-nonlinearity}
and \eqref{eq:affine-energy-free-group}. The spatial and temporal Jacobians
$r^3,r^2$ also give invariance of the four critical norms. This proves
that the transformed path is a solution on the stated domain. Applying
the inverse transformation to any extension proves maximality and exact
equality of the lifespans.
\end{proof}

We turn from covariance to conservation on the maximal lifespan.

\subsection{Conservation in the homogeneous energy space}

\begin{theorem}[Energy conservation]\label{local:energy}
The functional $E(f)=\frac12A(f)-\frac1{2p}D(f)$ is real $C^1$ on $\X$,
and every trajectory in Theorem~\ref{local:flow} has constant energy on its
actual maximal lifespan.
\end{theorem}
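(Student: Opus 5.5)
The proof has two parts. First, $E$ is real $C^1$ on $\X$. Second, $E$ is constant along trajectories, which I will obtain by regularizing and passing to the limit in the solution class of Theorem~\ref{local:flow}.

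\textbf{$C^1$ regularity.} The kinetic part $A$ is a continuous quadratic form, so it is smooth, with real derivative $2\re\int\nabla f\cdot\overline{\nabla g}$. For $D$, the power map $R(z)=|z|^p$ is $C^1$ on $\C\simeq\R^2$ with $DR(z)[w]=p\,\re(|z|^{p-2}\bar z w)$. By \eqref{eq:power-differential-bounds}, its Nemytskii map $L^6\to L^{6/p}$ is $C^1$. By the sharp HLS estimate \eqref{input:eq-sharp-hls}, the bilinear form $(a,b)\mapsto\iint I_\alpha(x-y)a(x)b(y)$ is bounded on $L^{6/p}$. The chain rule then gives
\[
 DD(f)[g]=2p\,\re\int F(f)\bar g ,
\]
with $F:L^6\to L^{6/5}$ continuous by Lemma~\ref{local:nonlinear}. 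Composing with the Sobolev embedding $\X\to L^6$ shows $E\in C^1$, with
\[
 DE(f)[g]=\re\int\bigl(\nabla f\cdot\overline{\nabla g}-F(f)\bar g\bigr).
\]

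\textbf{Constancy of the energy.} Fix a compact $I\subset I_{\max}$ and use the smoothing operators $J_n=(1-B_{n+1})B_{1/(n+1)}$ from Lemma~\ref{local:linear-duhamel}. These are real even Fourier multipliers that commute with derivatives and with $S(t)$, and map $L^{6/5}$ into $\X$. Set $u_n=J_nu$. From the Duhamel formula \eqref{local:hartree-mild}, $u_n$ solves $i\partial_tu_n+\Delta u_n=-J_nF(u)$ in $\X$, with enough regularity in $x$ that $t\mapsto u_n(t)$ is absolutely continuous in $\X$. Differentiating $A(u_n)$ gives
\[
 \tfrac12A(u_n(t))-\tfrac12A(u_n(s))=\re\int_s^t\!\!\int \nabla(J_nF(u))\cdot\overline{\nabla\partial_\tau u_n}\dots
\]
This is not the convenient form. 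I would instead write the rate of change in the weak pairing form
\[
 \frac{d}{dt}\tfrac12A(u_n)=\im\int\nabla J_nF(u)\cdot\overline{\nabla u_n}.
\]
Similarly, differentiating $D(u)$ along the same regularized flow gives $\frac{d}{dt}\frac{1}{2p}D(\cdot)$ through $\im\int J_nF(u)\,\overline{\Delta u_n}$-type terms. Combining the two, the derivative of $E$ along $u_n$ is
\[
 \frac{d}{dt}E(u_n)=\im\int\nabla\bigl(J_nF(u)-F(u_n)\bigr)\cdot\overline{\nabla u_n}+\bigl(\text{a term }\im\int F(u_n)\overline{J_nF(u)}\bigr).
\]
In the limit $n\to\infty$ the last term becomes $\im\int|F(u)|^2=0$. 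It is cleaner to express everything in time-integrated form,
\[
 E(u_n(t))-E(u_n(s))=\int_s^t\im\int \Bigl(\nabla F(u_n)\cdot\overline{\nabla J_n u}-\nabla J_nF(u)\cdot\overline{\nabla u_n}\Bigr)\dots ,
\]
so that every integrand is a product of $\nabla F(\cdot)\in L^2_tL^{6/5}_x$ against $\nabla u\in L^2_tL^6_x$. The bound \eqref{local:single-source} together with \eqref{local:fullz} makes this pairing absolutely integrable.

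\textbf{Passage to the limit.} The operators $J_n$ converge strongly to the identity on each relevant Lebesgue space with a uniform bound of two. By the difference estimate \eqref{local:difference-source}, $\nabla F(u_n)\to\nabla F(u)$ in $L^2_tL^{6/5}_x$. Dominated convergence then sends the integrand to
\[
 \im\int\nabla F(u)\cdot\overline{\nabla u}-\im\int\nabla F(u)\cdot\overline{\nabla u}=0 .
\]
Since $u_n(t)\to u(t)$ in $\X$ and $E$ is continuous, $E(u(t))=E(u(s))$ on $I$. Exhausting $I_{\max}$ by compact intervals gives constancy on the whole lifespan.

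\textbf{Main obstacle.} The delicate step is the exact algebra of the regularized identity. The commutator $J_nF(u)-F(J_nu)$ must be arranged so that no pairing requires more than $\nabla u\in L^2L^6$ and $\nabla F\in L^2L^{6/5}$; in particular, no $L^2$ norm of $u$ may appear. I would first derive the identity for the smooth truncated flow of the regularized data using the tested equation and Lemma~\ref{std:generator}, check that each term is an endpoint-dual pairing, and only then remove the regularization.
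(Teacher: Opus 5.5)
Your proposal is correct and follows essentially the paper's route. It uses the same $C^1$ computation $DE(f)[g]=\re\int(\nabla f\cdot\overline{\nabla g}-F(f)\bar g)$ and the same two-sided Bessel regularizer $J_n=(1-B_{n+1})B_{1/(n+1)}$, whose symbol is exactly the paper's $m_n$ with $s_n=n+1$. Under this regularizer the kinetic term and the two nonlinear cross terms cancel, so $\frac{d}{dt}E(J_nu)$ reduces to pairings of the commutator $R_n=J_nF(u)-F(J_nu)$ against $\nabla J_nu\in L^2_tL^6_x$ and against $F(J_nu)$.

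One point to make explicit: the last scalar pairing, $\im\int F(J_nu)\overline{J_nF(u)}$, is not controlled by the endpoint duality you cite, since $J_nF(u)$ is bounded in $L^6$ only with an $n$-dependent constant. It is controlled by the $L^2_tL^2_x$ bound in \eqref{local:single-source} together with \eqref{eq:scalar-source-difference}, which give $R_n\to0$ in $L^2_tL^2_x$, exactly as in the paper.
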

\begin{proof}
For $|t|\le1$, the derivative of the interaction integrand at $f+th$
is dominated by the integrable function
\[
 2pI_\alpha(x-y)(|f(x)|+|h(x)|)^p(|f(y)|+|h(y)|)^p.
\]
Dominated differentiation, symmetry and HLS give
$D'(f)[h]=2p\re\int\overline{F(f)}h$. Thus
\begin{equation}\label{local:energy-differential}
 E'(f)[h]=\sum_j\re\int\overline{\partial_jf}\,\partial_jh
             -\re\int\overline{F(f)}h.
\end{equation}
Continuity of $F:L^6\to L^{6/5}$ and integration on segments make
this a continuous real Fr\'echet derivative.

Fix $[a,b]$ in the lifespan. To justify differentiation of the
energy without assuming finite mass, regularize both the low and high
frequencies. Let $s_n=n+1$ and use the Bessel regularizer $J_n$ with
real even symbol
\[
 m_n(\xi)=\frac{s_n^2|\xi|^2}
 {(1+s_n^2|\xi|^2)(1+s_n^{-2}|\xi|^2)}.
\]
Then $0\le m_n\le1$, $|\xi|m_n\le s_n$,
$|\xi|^2m_n\le s_n^2$, and $m_n\to1$ off zero.
The value $v_n=J_nu$ lies in $L^2$, since
\begin{equation}\label{eq:regularized-value-recovery}
 \widehat v_n=\sum_jr_{n,j}\widehat{\partial_ju},\qquad
 r_{n,j}=\frac{-i}{2\pi}
 \frac{s_n^2\xi_j}{(1+s_n^2|\xi|^2)(1+s_n^{-2}|\xi|^2)},
 \quad\|r_{n,j}\|_\infty\le\frac{s_n}{2\pi}.
\end{equation}
Distributional testing identifies the value in \eqref{eq:regularized-value-recovery} with $J_nu$.
The same multipliers give $\partial_jv_n,\Delta v_n\in\X$ and
$\Delta v_n\in L^2$; these assertions concern only $v_n$.

Set $Q_n=J_nF(u)\in C([a,b];\X)$. The tested Duhamel formula gives
\begin{equation}\label{eq:regularized-mild-flow}
 v_n(t)=S(t-a)J_nu(a)+i\int_a^tS(t-r)Q_n(r)\,dr.
\end{equation}
In interaction coordinates the integral in \eqref{eq:regularized-mild-flow} is $C^1$. The bound
$|\xi|^2m_n\le s_n^2$ permits dominated convergence of gradient
Fourier difference quotients, so
\begin{equation}\label{eq:regularized-flow-derivative}
 v_n\in C^1([a,b];\X),\qquad v_n'=i\Delta v_n+iQ_n.
\end{equation}
Put $R_n=Q_n-F(v_n)$. Lemma~\ref{local:nonlinear} gives, a.e. in time,
\[
 F(v_n),Q_n,R_n\in L^2,\qquad \partial_jR_n\in L^{6/5}.
\]
Substituting \eqref{eq:regularized-flow-derivative} in \eqref{local:energy-differential} yields
\begin{equation}\label{local:energy-residual}
 \frac d{dt}E(v_n)
 =\sum_j\re\int\overline{\partial_jv_n}\,i\partial_jR_n
       -\re\int\overline{F(v_n)}\,iR_n.
\end{equation}
Here integration by parts pairs
$H,\partial_jH\in L^{6/5}$ with $h,\partial_jh\in L^6$.
For $h=\partial_jv_n$, apply the bounded cutoff multipliers to
homogeneous Schwartz approximants of $u$ and pass by H\"older.
The two nonlinear cross terms cancel; Plancherel cancels the kinetic
term, and $\re\int\overline{F(v_n)}\,iF(v_n)=0$ in $L^2$.

The ordinary fundamental theorem for the $C^1$ state path, followed
by \eqref{local:energy-residual} a.e., gives
\begin{equation}\label{eq:energy-commutator-bound}
 \begin{split}
 |E(v_n(b))-E(v_n(a))|
 &\le\sum_j\|\partial_jv_n\|_{L^2L^6}
                \|\partial_jR_n\|_{L^2L^{6/5}}\\
 &\quad+\|F(v_n)\|_{L^2L^2}\|R_n\|_{L^2L^2}.
 \end{split}
\end{equation}
Strong Bessel convergence, uniform on the compact state image, gives
$v_n\to u$ in $C_t\X$ and in the critical graph. Hence
\begin{equation}\label{eq:energy-commutator-convergence}
 \begin{aligned}
 \nabla R_n&=J_n\nabla F(u)-\nabla F(J_nu)
       \longrightarrow0&&\text{in }L^2L^{6/5},\\
 R_n&\longrightarrow0&&\text{in }L^2L^2.
 \end{aligned}
\end{equation}
The remaining factors in \eqref{eq:energy-commutator-bound} are bounded by
\eqref{local:fullz} and \eqref{local:single-source}. Using
\eqref{eq:energy-commutator-convergence} and continuity of $E$ proves $E(u(b))=E(u(a))$.
\end{proof}

Combining conservation with the sharp inequality traps the kinetic energy below the ground state.

\begin{lemma}[Energy trapping]\label{local:trapping}
Suppose $E(f)<E(W)$ and $A(f)<G$.  On the entire actual maximal lifespan,
\[
 E(u(t))=E(f),\qquad
 A(u(t))\le kE(f)<G,\qquad k=\frac{2p}{p-1}.
\]
For $0\le E_*<E(W)$ and $E(f)\le E_*$ the positive number
$\delta_*=1-(kE_*/G)^{p-1}$ satisfies
\[
 A(u(t))-D(u(t))\ge\delta_* A(u(t)).
\]
On this branch, $E(f)=0$ implies $f=0$.
\end{lemma}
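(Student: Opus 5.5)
The plan is the classical Kenig--Merle energy-trapping argument, built on the one-variable function
\[
 g(y)=\tfrac12y-\tfrac{1}{2p}G^{1-p}y^p,\qquad y\ge0 .
\]
By Theorem~\ref{std:sharp}, $D(f)\le G^{1-p}A(f)^p$, so $E(u(t))\ge g(A(u(t)))$. A direct computation gives the shape of $g$: it increases on $[0,G]$, decreases afterwards, and has maximum $g(G)=\frac{p-1}{2p}G=E(W)$.

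The constancy $E(u(t))=E(f)$ on the maximal lifespan is exactly Theorem~\ref{local:energy}. For the kinetic trapping I argue by continuity.
\begin{itemize}
\item $t\mapsto A(u(t))$ is continuous on $I_{\max}$, since $u\in C(I_{\max};\X)$ by Theorem~\ref{local:flow}.
\item If $A(u(t_1))=G$ at some time, then $E(f)\ge g(G)=E(W)$, contradicting $E(f)<E(W)$.
\item Since $A(f)<G$ and the lifespan is an interval, the intermediate value theorem gives $A(u(t))<G$ for all $t$.
\end{itemize}

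On $A<G$ the term $D$ is small relative to $A$, and this gives the quantitative bound. Indeed,
\[
 D(u)\le G^{1-p}A^{p-1}\cdot A\le A .
\]
Then $E=\tfrac12A-\tfrac1{2p}D\ge\tfrac12A-\tfrac1{2p}A=\frac{p-1}{2p}A$, that is, $A(u(t))\le kE(f)$. Strict inequality $kE(f)<kE(W)=G$ is immediate. In particular $E(f)\ge0$, so the hypothesis $0\le E_*$ is natural.

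For the gap, take $E(f)\le E_*$. Then
\[
 D(u)\le G^{1-p}A^{p-1}A\le (kE_*/G)^{p-1}A,
\]
so $A-D\ge\delta_*A$, and $\delta_*>0$ because $kE_*<G$.

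Finally, if $E(f)=0$ then $A(f)\le kE(f)=0$, so $\nabla f=0$. The Sobolev bound \eqref{input:eq-sharp-sobolev} then gives $\|f\|_6=0$, hence $f=0$.

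The only mildly delicate point is the continuity argument. It needs the trapping to hold on the whole actual maximal interval, which is open and connected. It also uses only continuity of $A$ along the $C(I_{\max};\X)$ path, not any scattering-norm information.
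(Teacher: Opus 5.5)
Your proposal is correct and follows essentially the same route as the paper. The paper also uses the auxiliary function $a\mapsto a/2-G^{1-p}a^p/(2p)$ with value $E(W)$ at $a=G$, and continuity with energy conservation to keep $A(u(t))<G$. It then derives the cap and the gap from $D(u)\le (A(u)/G)^{p-1}A(u)$, and settles the zero-energy case by Sobolev.
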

\begin{proof}
Set $h(a)=a/2-G^{1-p}a^p/(2p)$. On $[0,G]$ it is increasing, and
$h(G)=E(W)$. Sharp HLS and Sobolev give $E(u(t))\ge h(A(u(t)))$.
Continuity and energy conservation therefore prevent $A(u)$ from reaching
$G$ from below, in either time direction. On this branch,
\begin{equation}\label{eq:trapping-algebra}
 D(u)\le A(u),\qquad E(f)\ge\frac{p-1}{2p}A(u),\qquad
 D(u)\le A(u)\left(\frac{kE_*}{G}\right)^{p-1}.
\end{equation}
The inequalities \eqref{eq:trapping-algebra} give the cap and the stated gap. Sobolev identifies $A=0$ with the
zero state, which proves the remaining assertions.
\end{proof}

The kinetic bound also controls local charge, without requiring finite total mass.

\begin{lemma}[Local charge and flux bounds]\label{local:charge}
Let $u$ be an actual solution and $\zeta\in C_c^\infty(\mathbb R^3;\mathbb R)$.
Then $M_\zeta(t)=\int\zeta^2|u(t)|^2$ is $C^1$ on the lifespan and
\begin{equation}\label{local:charge-identity}
 M_\zeta'(t)=2\operatorname{Im}\int\nabla(\zeta^2)\cdot\nabla u\,\overline u.
\end{equation}
For $\zeta_R(x)=\zeta(x/R)$ and a uniform gradient bound $B$,
\[
 |M_{\zeta_R}'(t)|\le C_\zeta B^2,
\]
with $C_\zeta$ independent of $R$. If $u(t)\in L^2$ on a compact
interval $J$ and $\sup_{t\in J}\|u(t)\|_2<\infty$, then also
$|M_{\zeta_R}'(t)|\le C_\zeta R^{-1}\|u(t)\|_2\|\nabla u(t)\|_2$ on $J$.
\end{lemma}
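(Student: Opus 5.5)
The plan is to prove \eqref{local:charge-identity} first for frequency-regularized solutions and then pass to the limit in the weak formulation. The key point is that $\zeta^2|u|^2$ is integrable whenever $u\in L^6$ and $\zeta$ has compact support, so no finite-mass assumption is needed.

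\textbf{Step 1: regularize.} Fix a compact interval $[a,b]$ in the lifespan. I would use the Bessel regularization $v_n=J_nu$ from the proof of Theorem~\ref{local:energy}. By \eqref{eq:regularized-flow-derivative}, $v_n\in C^1([a,b];\X)$ with
\[
 v_n'=i\Delta v_n+iQ_n,\qquad Q_n=J_nF(u).
\]
For $v_n$ the map $t\mapsto\int\zeta^2|v_n|^2$ is $C^1$, because $\zeta^2$ is bounded with compact support and $v_n'\in C_t\X\subset C_tL^6$. Its derivative is
\[
 2\re\int\zeta^2\overline{v_n}\,(i\Delta v_n+iQ_n).
\]
Integrating the Laplacian term by parts against the compactly supported smooth weight gives $2\operatorname{Im}\int\nabla(\zeta^2)\cdot\nabla v_n\,\overline{v_n}$. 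This is legitimate since $\Delta v_n,\nabla v_n\in L^2$ and $v_n\in L^2$ by \eqref{eq:regularized-value-recovery}.

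\textbf{Step 2: the nonlinear residual.} The source term $-2\operatorname{Im}\int\zeta^2\overline{v_n}Q_n$ does not vanish identically. I would split it as
\[
 Q_n=F(v_n)+R_n .
\]
The term $\int\zeta^2\overline{v_n}F(v_n)=\int\zeta^2(I_\alpha*|v_n|^p)|v_n|^p$ is real, so its imaginary part is zero. The remainder satisfies
\[
 \Bigl|\int\zeta^2\overline{v_n}R_n\Bigr|\le\|\zeta^2\|_{3/2}\|v_n\|_6\|R_n\|_{6}.
\]
Since $R_n\to0$ in $L^2_tL^2_x$ by \eqref{eq:energy-commutator-convergence}, I would prefer the pairing $\|\zeta^2\|_{3}\|v_n\|_6\|R_n\|_2$, which is integrable in time and tends to zero.

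\textbf{Step 3: pass to the limit.} Integrate in time and let $n\to\infty$, using $v_n\to u$ in $C_t\X$. This gives $L^6$ convergence of the values and $L^2$ convergence of the gradients, uniformly in $t$. Then $\int\zeta^2|v_n|^2\to M_\zeta$ uniformly, and the flux integrand converges uniformly in $t$ by H\"older with $\nabla(\zeta^2)\in L^3$. The result is the integrated identity with a continuous right-hand side, so $M_\zeta\in C^1$ and \eqref{local:charge-identity} holds.

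\textbf{Step 4: the bounds.} For the $R$-uniform bound, write $\nabla(\zeta_R^2)=R^{-1}(\nabla\zeta^2)(x/R)$. Its $L^3$ norm equals $\|\nabla\zeta^2\|_3$, independent of $R$. H\"older and Sobolev then give
\[
 |M'_{\zeta_R}|\le\|\nabla\zeta^2\|_3\|\nabla u\|_2\|u\|_6\le C_\zeta B^2.
\]
If instead $u(t)\in L^2$, I would use $\|\nabla(\zeta_R^2)\|_\infty\le R^{-1}\|\nabla\zeta^2\|_\infty$ together with Cauchy--Schwarz.

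The main obstacle is Step 2: justifying that the nonlinear contribution vanishes in the limit. This needs the regularized source to converge in a norm that pairs with $L^6$ against a compact weight. The $L^2L^2$ convergence of $R_n$ supplies exactly this, provided the scalar estimate \eqref{local:single-source} places $F(u)$ in $L^2_tL^2_x$ on compact intervals.
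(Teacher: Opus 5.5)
Your argument is correct, but it takes a different route from the paper.

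The paper localizes first. It observes that $v=\zeta u\in C_tH^1$ and reads off from the equation that
$v_t=i\Delta v+i\zeta F(u)-2i\nabla\zeta\cdot\nabla u-i(\Delta\zeta)u\in C_tH^{-1}$.
Time mollification on a slightly larger interval (a Lions--Magenes type argument) then gives $\|v(t)\|_2^2-\|v(s)\|_2^2=2\re\int_s^t\langle v_\tau,v\rangle\,d\tau$. In that identity the nonlinear contribution vanishes exactly, because $\zeta^2F(u)\overline u$ is a real $L^1$ function. The flux formula comes directly from the $H^1$--$H^{-1}$ pairing.

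You instead regularize in frequency with the Bessel family $J_n$ from Theorem~\ref{local:energy}. This gives a genuinely $C^1([a,b];\X)$ path and makes the differentiation elementary. The price is that the nonlinear term no longer cancels exactly, so you must control the commutator $R_n=J_nF(u)-F(J_nu)$. Your pairing $\|\zeta^2\|_3\|v_n\|_6\|R_n\|_2$, combined with \eqref{eq:energy-commutator-convergence}, does this correctly. Your first bound with $\|R_n\|_6$ was unusable, but you rightly discarded it.

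The cost is that your proof uses Strichartz-level information: it needs $F(u)\in L^2_tL^2_x$ on compact intervals, via $\mathsf S_I(u)<\infty$ and \eqref{local:single-source}. The paper's localized argument needs only $u\in C_t\X$ and $F(u)\in C_tL^{6/5}$, so it would apply to any energy-class distributional solution. Since actual solutions have finite critical norms on compact subintervals of the lifespan, both routes prove the lemma. Your Step~4 bounds are the same H\"older--Sobolev and $L^\infty$--Cauchy--Schwarz estimates the paper uses.
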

\begin{proof}
Local H\"older and the equation give
\[
 \begin{gathered}
 v=\zeta u\in C_tH^1,\\
 v_t=i\Delta v+i\zeta F(u)-2i\nabla\zeta\cdot\nabla u
       -i(\Delta\zeta)u\in C_tH^{-1}.
 \end{gathered}
\]
Time convolution on a slightly larger compact interval, followed by
uniform convergence in $H^1$ and $H^{-1}$, yields
\[
 \|v(t)\|_2^2-\|v(s)\|_2^2
 =2\re\int_s^t\langle v_\tau,v\rangle\,d\tau.
\]
The integrand is continuous. Since HLS gives the real $L^1$ function
$F(u)\overline u=(I_\alpha*|u|^p)|u|^p$, its imaginary contribution
vanishes. Integration by parts in the dual pairing proves
\eqref{local:charge-identity}. H\"older and Sobolev then give
\[
 |M_{\zeta_R}'|
 \le2\|\nabla(\zeta_R^2)\|_3\|\nabla u\|_2\|u\|_6
 \le C_\zeta\|\nabla u\|_2^2,
\]
since the coefficient norm is scale invariant. Under the stated
$L^2$ hypothesis, instead use
\[
 |M_{\zeta_R}'|
 \le2\|\nabla(\zeta_R^2)\|_\infty\|\nabla u\|_2\|u\|_2
 \le C_\zeta R^{-1}\|\nabla u\|_2\|u\|_2.
\]
\end{proof}

For reconstruction, stability constants must depend on the energy and
critical spacetime bounds, but not on the length of the comparison
interval. A finite small-norm partition provides this uniformity.

\subsection{Stability, continuation, and wave operators}

\begin{theorem}[Interval-independent perturbation]\label{local:perturbation}
For finite $B,M$ there exist $\varepsilon_0>0$ and $C<\infty$, independent
of the compact interval $I=[a,b]$, with the following property.  Let
$q\in\mathcal X(I)$ solve
$iq_t+\Delta q+F(q)=e$ in actual compact tensor tests, where
$e\in L^1(I;L^{6/5})$ and its actual gradient belongs to the source space.
Suppose $\mathsf E_I(q)\le B$, $\mathsf S_I(q)\le M$, and
\[
 \|\nabla(f-q(a))\|_2+\mathsf N_I(\nabla e)\le\varepsilon\le\varepsilon_0.
\]
Then the genuine solution with initial datum $f$ exists through $I$, and
\begin{equation}\label{local:perturbation-bound}
 \mathsf E_I(u-q)+\mathsf W_I(u-q)+\mathsf Z_I(u-q)+\mathsf S_I(u-q)
 \le C\varepsilon.
\end{equation}
Also $\sup_{t\in I}\|u(t)-q(t)\|_{\mathrm{gr}}\le C\varepsilon$.
The same bounds hold backward from terminal data $f$ close to $q(b)$.
The energy cap may be essential on a positive-length interval.
\end{theorem}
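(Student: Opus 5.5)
We prove Theorem~\ref{local:perturbation} by the standard long-time perturbation scheme. First we upgrade the $L^{10}_{t,x}$ bound on $q$ into a bound on all of its critical norms. Then we run a short-time perturbation on each piece of a finite partition whose number of pieces depends only on $(B,M)$. The constants are therefore independent of $|I|$.

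\emph{Step 1 (a priori bounds on $q$).} Partition $I$ into $N_0=N_0(M,\eta)$ consecutive intervals $I_k=[t_k,t_{k+1}]$ with $\mathsf S_{I_k}(q)\le\eta$; such a partition exists because $\mathsf S_I(q)^{10}$ is additive in time. On $I_k$ we apply Lemma~\ref{local:linear-duhamel} to $iq_t+\Delta q=e-F(q)$ and obtain the forced Duhamel formula \eqref{local:forced-mild} based at $t_k$. The linear estimates and \eqref{local:single-source} then give
\[
 \mathsf W_{I_k}(q)+\mathsf Z_{I_k}(q)\le C B+C B^{2\alpha}\eta^4\mathsf W_{I_k}(q)+C\varepsilon_0 .
\]
Here the $L^\infty L^6$ factor is bounded by $C_SB$. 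For $\eta$ small depending on $B$, this absorbs to $\mathsf W_{I_k}(q)\le C(B)$. Summing over the $N_0$ pieces gives $\mathsf W_I(q)+\mathsf Z_I(q)\le C(B,M)$. If continuity in $k$ is needed for the absorption, we use continuity of $t\mapsto\mathsf W_{[t_k,t]}(q)$ via a bootstrap.

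\emph{Step 2 (short-time perturbation).} Refine the partition so that additionally $\mathsf W_{I_k}(q)\le\eta$, again with $N_1=N_1(B,M,\eta)$ pieces. Write $u=q+w$. As long as $u$ exists on $I_k$, $w$ solves
\[
 iw_t+\Delta w=-(F(q+w)-F(q))-e .
\]
By Lemma~\ref{local:linear-duhamel}, the Strichartz bounds, and the difference estimate \eqref{local:difference-source} (taking $v=q$, so $Y\le\eta$), we obtain, with $X_k=\mathsf E_{I_k}(w)+\mathsf W_{I_k}(w)+\mathsf Z_{I_k}(w)$ and $\varepsilon_k=\|\nabla w(t_k)\|_2$,
\[
 X_k\le C\varepsilon_k+C\varepsilon+C(B+X_k)^{2\alpha}(\eta+X_k)^4X_k+C(B+X_k)^{5\alpha/3}(\eta+X_k)^{4-\vartheta}\eta X_k .
\]
Here Lemma~\ref{local:critical-sobolev} has been used to convert $e,N$ into $\mathsf W$ norms, and $D^{\alpha/3}e^\vartheta\le CX_k$. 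The last two terms contain every factor of $\eta$ except the final linear one. For $\eta$ small in terms of $B$ and $X_k\le\eta$, they are at most $\tfrac12X_k$, giving $X_k\le C(\varepsilon_k+\varepsilon)$. A continuity argument makes this rigorous: the norms of $u$ on $[t_k,t]$ are continuous in $t$ on the lifespan from Theorem~\ref{local:flow}. Finiteness of these norms also shows $u$ cannot cease to exist inside $I_k$, by the blow-up criterion implicit in the uniform local existence of Theorem~\ref{local:flow} for bounded data with small free $\mathsf W$. The requirement $X_k\le\eta$ holds provided $\varepsilon_k+\varepsilon\le c\eta$.

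\emph{Step 3 (iteration).} Since $\varepsilon_{k+1}\le X_k\le C(\varepsilon_k+\varepsilon)$, induction gives $\varepsilon_k\le (2C)^k\varepsilon$. Choosing $\varepsilon_0$ with $(2C)^{N_1+1}\varepsilon_0\le c\eta$ closes the argument on every piece. Summing the $N_1$ bounds gives \eqref{local:perturbation-bound}, with $\mathsf S\le C\mathsf W$ by Lemma~\ref{local:critical-sobolev} and the graph-norm bound by Sobolev. The backward statement follows identically using the terminal operator $D_b^-$. The main obstacle is Step 2, specifically the fractional third term of \eqref{local:difference-source}. It is only H\"older-type in $D$, so we must check that $D^{\alpha/3}e^{\vartheta}$ is controlled linearly by $X_k$; this holds since $\alpha/3+\vartheta=1$. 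We must also track that every nonlinear term carries a small factor, either $\eta$ or $X_k$, while never requiring smallness of $\mathsf E(q)$. This is why the partition is chosen with $\mathsf S,\mathsf W$ small rather than $\mathsf E$.
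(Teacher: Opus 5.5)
Your proposal is correct and follows essentially the paper's argument: a graph bound for $q$ from an $\mathsf S$-partition and absorption, a refined partition with small background $\mathsf W$, a short-time estimate on each piece driven by the seed $S(\cdot-c)(f-q(c))+iD_ce$, and exponential growth of the trace errors over a number of pieces depending only on $(B,M)$. The paper differs in one technical point. It obtains $w$ on each piece as the fixed point of the half-contraction $w\mapsto d+\mathcal N(w)$, which gives existence directly. Your bootstrap on the maximal solution instead needs the finite-endpoint continuation criterion, proved by the dual-endpoint argument in the first paragraph of the proof of Theorem~\ref{local:finite-s}; that argument does not use the perturbation theorem, so there is no circularity. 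Two small repairs: fix $\varepsilon_0\le1$ before Step~1 so that $N_1$ does not depend on $\varepsilon_0$, and restore the middle term $M^{2\alpha}N^3Ye$ of \eqref{local:difference-source}, which you dropped but which is harmless since it also carries a factor of $\eta$.
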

\begin{proof}
Sobolev gives a state cap; continuity extends an essential cap to all
times on a positive-length interval. On $J$ with
$\mathsf S_J(q)\le\eta(B)$, the forced formula gives
\begin{equation}\label{eq:background-graph-absorption}
 \|q\|_{\mathcal X(J)}\le C_fB+Q\mathsf N_J(\nabla e)
       +CB^{2\alpha}\mathsf S_J(q)^4\|q\|_{\mathcal X(J)}.
\end{equation}
Choose $CB^{2\alpha}\eta^4\le1/2$ and absorb in the already finite
graph norm. For total error at most one, a partition by the cumulative
$\mathsf S^{10}$ integral has at most $1+(M/\eta)^{10}$ pieces and
therefore gives a graph bound $L=L(B,M)$.

At state cap $B+1$, choose $\rho$ so that the difference map is a
half contraction when both background norm and correction radius are
at most $\rho$. Fix $0<r\le\min(1,\rho/2)$ and partition the
tenth-power integral of the sum of gradient-coordinate norms into at most
\[
 N_* =\left\lceil1+(3L/r)^{10}\right\rceil
\]
pieces, each with background critical norm at most $r$.
On a piece anchored at $c$, write
\begin{equation}\label{eq:perturbation-piece-map}
 \begin{gathered}
 w=d+\mathcal N(w),\qquad
 d=S(\cdot-c)(f-q(c))+iD_ce,\\
 \mathcal N(w)=\Phi_{q(c)}(q+w)-\Phi_{q(c)}(q),\qquad
 \mathcal N(0)=0,\quad\operatorname{Lip}(\mathcal N)\le\tfrac12,\\
 \|d\|_{\mathcal X}\le C_f\|f-q(c)\|_{\mathrm{gr}}
                                  +Q\mathsf N_J(\nabla e).
 \end{gathered}
\end{equation}
If $\|d\|_{\mathcal X}\le r/2$, the contraction in \eqref{eq:perturbation-piece-map} yields
$\|w\|_{\mathcal X}\le2\|d\|_{\mathcal X}\le r$.
The endpoint estimate and Sobolev also control $\mathsf Z(w)$ and
$\mathsf S(w)$.

Set $D_*=2C_f+2Q+1$ and, including the fixed conversion of the initial
homogeneous error, choose
\[
 \varepsilon_0\le\min\left(1,\frac{r}{2D_*^{N_*+1}+1}\right).
\]
Writing $e_j$ for the entering trace error and $d_j$ for the seed,
the induction is
\begin{equation}\label{eq:perturbation-trace-induction}
 \begin{aligned}
 e_j&\le D_*^j\varepsilon,\\
 \|d_j\|_{\mathcal X}&\le(C_fD_*^j+Q)\varepsilon\le r/2,\\
 e_{j+1}&\le2\|d_j\|_{\mathcal X}\le D_*^{j+1}\varepsilon.
 \end{aligned}
\end{equation}
The endpoint traces glue by rebasing the scalar integrals. Summation
of \eqref{eq:perturbation-trace-induction} over at most $N_*$ pieces proves \eqref{local:perturbation-bound},
including the supremum state bound. Centered existence at the final
trace places $I$ inside an open solution domain.

For terminal data,
$q=\Phi^-_{q(b)}(q)+iD_b^-e$; the seed contains $-iD_b^-e$.
Use the future bounds of Theorem~\ref{std:endpoint} and the same
induction \eqref{eq:perturbation-trace-induction} from right to left, with their own constants.
\end{proof}

Finite critical size then controls continuation and scattering.

\begin{theorem}[Finite-size continuation and scattering]\label{local:finite-s}
At a maximal endpoint, a uniform gradient bound and finite one-sided
$\mathsf S$ norm imply continuation if the endpoint is finite, or scattering
in $\X$ if it is infinite. A finite maximal endpoint also forces the
approaching $\mathcal X$ norms to be unbounded. If the two bounds hold on
both sides, the solution is global and scatters in both directions.
\end{theorem}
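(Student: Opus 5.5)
The plan is the standard finite-size argument, run through Theorem~\ref{local:perturbation} and the Duhamel formula~\eqref{local:hartree-mild}. Let $T^*$ be the upper maximal endpoint (the lower one is symmetric). Suppose $\mathsf E_{[s_0,T^*)}(u)\le B$ and $\mathsf S_{[s_0,T^*)}(u)\le M<\infty$.

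\textbf{Step 1: uniform $\mathcal X$ bound.} Partition $[s_0,T^*)$ into at most $1+(M/\eta)^{10}$ pieces $J_k$, each with $\mathsf S_{J_k}(u)\le\eta$. On each piece the solution satisfies the forced formula with source $F(u)$. The single-source bound~\eqref{local:single-source} gives
\[
\mathsf N_{J_k}(\nabla F(u))\le CB^{2\alpha}\eta^4\,\mathsf W_{J_k}(u).
\]
As in~\eqref{eq:background-graph-absorption}, choose $CB^{2\alpha}\eta^4\le1/2$ and absorb; this is legitimate because $\mathsf W$ is finite on compact subintervals by Theorem~\ref{local:flow}. Summing the finitely many pieces gives $\mathsf W_{[s_0,T^*)}(u)\le L(B,M)$, and by the endpoint bounds also $\mathsf Z$ and $\mathsf N(\nabla F(u))<\infty$ on the whole half-line.

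\textbf{Step 2: Cauchy property of the backward profiles.} Set $v(t)=S(-t)u(t)$. By~\eqref{local:hartree-mild}, for $s<t<T^*$,
\[
v(t)-v(s)=i\int_s^tS(-\tau)F(u(\tau))\,d\tau .
\]
The retarded estimates and the unitarity of $S$ on gradients give
\[
\|\nabla(v(t)-v(s))\|_2\le C\,\mathsf N_{[s,t]}(\nabla F(u)),
\]
and the right side tends to zero as $s,t\uparrow T^*$ because $\mathsf N_{[s_0,T^*)}(\nabla F(u))<\infty$. Hence $v(t)\to v_+$ in $\X$ by completeness.

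\textbf{Step 3a: infinite endpoint.} If $T^*=\infty$, then $\|\nabla(u(t)-S(t)v_+)\|_2\to0$, which is scattering.

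\textbf{Step 3b: finite endpoint.} If $T^*<\infty$, then $u(t)\to S(T^*)v_+=:f_*$ in $\X$. Apply Theorem~\ref{local:flow} at $(T^*,f_*)$. Since the local existence time is uniform on the compact set $\{u(t):t\in[T^*-1,T^*)\}\cup\{f_*\}$, the solution started at $u(t)$ for $t$ close to $T^*$ exists beyond $T^*$. Uniqueness then contradicts maximality, so the solution continues.

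The same dichotomy proves the second assertion. If the $\mathcal X$ norms stayed bounded near a finite $T^*$, Lemma~\ref{local:critical-sobolev} would give a finite $\mathsf S$ norm together with the gradient bound. Step 3b would then continue the solution, which is impossible. The two-sided statement is Steps 1--3 applied at both endpoints.

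\textbf{Main obstacle.} The delicate point is Step 1. The absorption needs a priori finiteness of $\mathsf W$ on each open-ended piece, which is only known on compact subintervals. I would therefore absorb on $[a,t]\subset J_k$ with $t<T^*$, obtain a bound independent of $t$, and pass to the limit by monotone convergence. This uses only the stated cap $B$ and the interval-independent constants of the partition, not the length of the interval.
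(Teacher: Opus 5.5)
Your proposal is correct and follows essentially the paper's argument. It absorbs on finitely many pieces with small $\mathsf S$ (first on compact restrictions, then by exhaustion), proves the Cauchy property of $S(-t)u(t)$ from the dual endpoint estimate, and then concludes scattering or continuation from the limit. There are two minor variations, both valid. At a finite endpoint you restart from $u(t)$ using the uniform local existence time on the compact set $\{u(t)\}\cup\{f_*\}$, the same device the paper uses in Proposition~\ref{crit:global}; the paper instead passes the Duhamel formula to the trace $u(T)$ and glues a centered solution there. You also derive the unbounded-$\mathcal X$ claim from the $\mathsf S$ version via Lemma~\ref{local:critical-sobolev}, whereas the paper proves the bounded-$\mathcal X$ continuation first.
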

\begin{proof}
Let $T$ be a finite right endpoint. Uniform graph bounds on $[a,t]$,
countable exhaustion and Fatou give a compatible source
\begin{equation}\label{eq:finite-endpoint-source}
 \nabla F(u)\in L^2([a,T);L^{6/5}),\qquad
 F(u)\in L^1([a,T);L^{6/5}).
\end{equation}
Compact tests identify the gradient in \eqref{eq:finite-endpoint-source};
the state cap gives its scalar-source membership. The dual endpoint estimate gives
\begin{equation}\label{eq:interaction-coordinate-cauchy}
 \|S(-t)u(t)-S(-s)u(s)\|_{\dot H^1}
 \le C\|\nabla F(u)\|_{L^2((s,t);L^{6/5})}\longrightarrow0
 \quad(s,t\uparrow T).
\end{equation}
By \eqref{eq:interaction-coordinate-cauchy}, completeness and free continuity yield $u(T)\in\X$. The finite scalar
Duhamel formula passes to this trace; its retarded realization closes
the graph on $[a,T]$. Centered existence at $u(T)$ and trace gluing
contradict maximality. Decreasing exhaustion and the future operator
treat a finite left endpoint.

Under a state cap $B$ and finite $\mathsf S$, partition into finitely
many pieces with $\mathsf S_J(u)\le\eta(B)$. On every compact
restriction of a piece, the absorption in \eqref{eq:background-graph-absorption}, with zero error, gives
\begin{equation}\label{eq:finite-size-graph-absorption}
 \|u\|_{\mathcal X(J)}
 \le CB+CB^{2\alpha}\mathsf S_J(u)^4\|u\|_{\mathcal X(J)}
 \le CB+\tfrac12\|u\|_{\mathcal X(J)}.
\end{equation}
Absorption in \eqref{eq:finite-size-graph-absorption} and exhaustion give a uniform graph bound, also on a left tail because
all entering traces satisfy the state cap. Thus every finite endpoint
extends. At an infinite endpoint it instead gives global $\mathsf W$
and an $L^2L^{6/5}$ gradient source with vanishing tails. The same
increment estimate \eqref{eq:interaction-coordinate-cauchy} yields
\[
 u_\pm=\lim_{t\to\pm\infty}S(-t)u(t)\quad\text{in }\X,
 \qquad
 \|u(t)-S(t)u_\pm\|_{\dot H^1}\longrightarrow0.
\]
If the bounds hold throughout the maximal interval, both finite
endpoints are excluded, so its domain is $\R$ and both limits concern
the same solution.
\end{proof}

For small data, the contraction estimates give uniform bounds on both half-lines.

\begin{corollary}[Quantitative small data]\label{local:small-data}
There is $\varepsilon_*>0$ such that if $\|f\|_{\dot H^1}\le\varepsilon_*$,
the actual maximal solution is global, scatters in both directions, and
\[
 \mathsf E_{\mathbb R}(u)+\mathsf W_{\mathbb R}(u)
 +\mathsf Z_{\mathbb R}(u)+\mathsf S_{\mathbb R}(u)
 \le C\|f\|_{\dot H^1},
\]
\[
 \mathsf E_{\mathbb R}(u-S(\cdot-s_0)f)
 +\mathsf W_{\mathbb R}(u-S(\cdot-s_0)f)
 +\mathsf Z_{\mathbb R}(u-S(\cdot-s_0)f)
 +\mathsf S_{\mathbb R}(u-S(\cdot-s_0)f)
 \le C\|f\|_{\dot H^1}^{2p-1}.
\]
In particular there is a positive energy level below which every datum on
the original stable branch scatters.
\end{corollary}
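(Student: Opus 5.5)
The plan is to run the contraction argument of Theorem~\ref{local:flow} globally in time, with the small datum replacing the short interval as the source of smallness. Set $\Phi(v)=S(\cdot-s_0)f+iD_{s_0}F(v)$, with the future operator $D^-_{s_0}$ used backward as in Theorem~\ref{local:flow}. On a compact interval $I\ni s_0$, work on the ball of $\mathcal X(I)$ defined by $\mathsf E_I(v)+\mathsf W_I(v)\le 2C_0\|f\|_{\dot H^1}$. Here $C_0$ is the constant in the free bound: the energy isometry together with \eqref{input:eq-free-critical} applied to $\nabla f$ gives $\mathsf E_{\R}(S(\cdot-s_0)f)+\mathsf W_{\R}(S(\cdot-s_0)f)\le C_0\|f\|_{\dot H^1}$, with $C_0$ independent of $I$. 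Lemma~\ref{local:critical-sobolev} and Sobolev then give $\mathsf S_I(v)\le C\mathsf W_I(v)$ and $\|v\|_{L^\infty L^6}\le C\mathsf E_I(v)$.

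Next I will control the nonlinear term. The single-source bound \eqref{local:single-source} gives $\mathsf N_I(\nabla F(v))\le C\|f\|_{\dot H^1}^{2\alpha+5}=C\|f\|_{\dot H^1}^{2p-1}$, and Lemma~\ref{local:linear-duhamel} turns this into a bound on $\|D_{s_0}F(v)\|_{\mathcal X(I)}+\mathsf Z_I(D_{s_0}F(v))$ by the same quantity. Since $2p-1>1$, taking $\varepsilon_*$ small makes this at most $C_0\|f\|_{\dot H^1}$, so the ball is invariant. For the contraction I will use the difference estimate \eqref{local:difference-source} in the graph distance $d$, exactly as in Theorem~\ref{local:flow}, with $D,e,z\le Cd$ and $D^{\alpha/3}e^\vartheta\le Cd$. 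The Lipschitz constant of $\Phi$ is then $\le C\|f\|_{\dot H^1}^{2p-2}\le\tfrac12$ for small $\varepsilon_*$.

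The fixed point agrees with the maximal solution on $I$ by the uniqueness of Theorem~\ref{local:flow}. Its bounds do not depend on $I$, so letting $I$ exhaust $I_{\max}$ gives uniform gradient and finite $\mathsf S$ bounds on both sides. Theorem~\ref{local:finite-s} then yields $I_{\max}=\R$ and scattering in both directions.

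The two displayed estimates follow from the same bookkeeping. The full norms are bounded by the free part plus the Duhamel part, which is $\le C\|f\|_{\dot H^1}$; the $\mathsf Z$ part uses \eqref{input:eq-free-endpoint} and \eqref{input:eq-retarded-endpoint}. The difference $u-S(\cdot-s_0)f$ is exactly $iD_{s_0}F(u)$, whose $\mathcal X$, $\mathsf Z$ and (via Sobolev) $\mathsf S$ norms are $\le C\|f\|_{\dot H^1}^{2p-1}$ by the source bound above.

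For the final assertion, Lemma~\ref{local:trapping} on the stable branch gives $A(f)\le kE(f)$. Hence $E(f)\le\varepsilon_*^2/k$ forces $\|f\|_{\dot H^1}\le\varepsilon_*$, and that datum scatters.

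The only delicate point I expect is keeping every constant independent of $I$. This holds because Lemma~\ref{local:linear-duhamel} and Theorem~\ref{std:endpoint} give interval-uniform bounds, and because the exponents in \eqref{local:single-source} are exactly $2p-1$. No step of this argument requires a partition of the interval.
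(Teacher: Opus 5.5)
Your proposal is correct and is essentially the paper's argument. Small data replaces the short interval in the contraction of Theorem~\ref{local:flow}, the single-source and difference estimates give the $\|f\|_{\dot H^1}^{2p-1}$ source bound and the $\|f\|_{\dot H^1}^{2p-2}$ Lipschitz constant, Theorem~\ref{local:finite-s} gives scattering, and trapping gives the energy-level assertion. The only difference is cosmetic: the paper runs the contraction directly on each full half-line, with completeness obtained by countable compact exhaustion. You instead run it on compact intervals with interval-uniform constants and then exhaust, which is equivalent.
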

\begin{proof}
Put $a=\|f\|_{\dot H^1}$. For $a>0$, the contraction on each full
half-line, with state and critical radii $2Ca$, has
\begin{equation}\label{eq:small-data-source}
 \text{free size}\le Ca,\qquad
 \text{nonlinear coefficient}\le Ca^{4+2\alpha},\qquad
 \mathsf N(\nabla F(u))\le Ca^{5+2\alpha}=Ca^{2p-1}.
\end{equation}
Choose $\varepsilon_*$ so that $Ca^{4+2\alpha}$ gives an invariant
contraction ball. Countable compact exhaustion makes the half-line
graph complete; its integrals are the compatible finite Duhamel
realizations. The source bound \eqref{eq:small-data-source}, the endpoint estimate and Sobolev
prove both asserted norm bounds. Theorem~\ref{local:finite-s} gives
scattering; $a=0$ gives the zero solution. Finally, trapping gives
\[
 \|\nabla f\|_2^2\le kE(f),\qquad
 kE(f)\le\varepsilon_*^2\ \Longrightarrow\
 \|\nabla f\|_2\le\varepsilon_*.
\]
Thus a positive energy level on the stable branch scatters.
\end{proof}

We construct prescribed scattering states from finite terminal data
and then pass to a common limit on compact intervals. This keeps each
Duhamel integral within the solution space already established.

\begin{theorem}[One-sided wave operators]\label{local:waves}
For every $\phi\in\X$ there is an actual maximal trajectory $u_+$ whose
domain contains $[T,\infty)$ for some finite $T$, and
\[
 \|u_+(t)-S(t)\phi\|_{\mathrm{gr}}\longrightarrow0\quad(t\to+\infty).
\]
There is, separately, an actual maximal trajectory $u_-$ containing
$(-\infty,T']$, with the corresponding limit as $t\to-\infty$.
On the prescribed time direction each trajectory has a finite energy-state
cap and arbitrarily small critical and endpoint tails:
\[
 \mathsf S_{[A,\infty)}(u_+)+\mathsf W_{[A,\infty)}(u_+)
 +\mathsf Z_{[A,\infty)}(u_+)\longrightarrow0
 \quad(A\to+\infty),
\]
and analogously on $(-\infty,-A]$ for $u_-$.  Their energies on their
entire actual maximal domains are $\frac12A(\phi)$.  
\end{theorem}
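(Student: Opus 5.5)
We construct $u_+$ by solving the final-state problem as a fixed point on a tail $[T,\infty)$, then extending maximally by Theorem~\ref{local:flow}; $u_-$ is the time reversal of the same argument.

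\textbf{Step 1: the fixed-point map.} Fix $\phi\in\X$. By the free critical and endpoint estimates \eqref{input:eq-free-critical}, \eqref{input:eq-free-endpoint} applied to $\nabla\phi$, together with Lemma~\ref{local:critical-sobolev}, the tails
\[
 \mathsf W_{[T,\infty)}(S(\cdot)\phi)+\mathsf Z_{[T,\infty)}(S(\cdot)\phi)+\mathsf S_{[T,\infty)}(S(\cdot)\phi)
\]
tend to zero as $T\to\infty$. On $[T,\infty)$ consider
\[
 \Psi(v)(t)=S(t)\phi-i\int_t^\infty S(t-\tau)F(v(\tau))\,d\tau .
\]
The future integral is realized through Theorem~\ref{std:endpoint} and the terminal operator of Lemma~\ref{local:linear-duhamel}, first on $[t,b]$ and then letting $b\to\infty$ by countable exhaustion; the future bounds are uniform in interval length. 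We work on the ball
\[
 \mathsf E\le B:=2C\|\phi\|_{\dot H^1}+1,\qquad \mathsf W\le\eta .
\]
Lemma~\ref{local:nonlinear} gives $\mathsf N(\nabla F(v))\le CB^{2\alpha}(C\eta)^4\eta$, and the difference bound \eqref{local:difference-source} gives the Lipschitz constant $\le C(B^{2\alpha}+B^{5\alpha/3})\eta^4$. These are exactly the conditions \eqref{eq:local-contraction-conditions}. So once $T$ is large enough that the free tail has $\mathsf W\le\eta/2$, $\Psi$ is an invariant half contraction in the graph norm on $[T,\infty)$.

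\textbf{Step 2: extension and asymptotics.} The fixed point solves \eqref{local:hartree-mild} on compact subintervals, by rebasing the future integral. It is therefore the restriction of the maximal trajectory $u_+$ with datum $v(T)$, by uniqueness in Theorem~\ref{local:flow}. Moreover,
\[
 \|\nabla(u_+(t)-S(t)\phi)\|_2\le C\mathsf N_{[t,\infty)}(\nabla F(u_+))\to0 .
\]
The $L^6$ part of the graph norm follows by Sobolev. The tail smallness of $\mathsf S,\mathsf W,\mathsf Z$ follows from the free tails plus the same source tail, via \eqref{input:eq-retarded-critical}, \eqref{input:eq-retarded-endpoint} in their future form. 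The state cap is $B$ on the tail and finite on compact pieces.

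\textbf{Step 3: the energy.} Energy is constant on the maximal domain by Theorem~\ref{local:energy}, so it suffices to compute its limit as $t\to\infty$. Since $A(u_+(t))\to A(\phi)$, it remains to show $D(u_+(t))\to0$. By HLS, $D(f)\le C\|f\|_6^{2p}$, but $\|S(t)\phi\|_6$ need not be small pointwise in $t$, only in $L^2_t$.

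This is the step I expect to be the main obstacle, and I would handle it by density. Choose $\psi\in C_c^\infty$ with $\|\nabla(\phi-\psi)\|_2<\varepsilon$. The dispersive bound \eqref{eq:free-dispersive-bound} and interpolation with $L^2$ give $\|S(t)\psi\|_6\le Ct^{-1}\|\psi\|_{6/5}\to0$, while Sobolev gives $\|S(t)(\phi-\psi)\|_6\le C\varepsilon$. Combined with $\|u_+(t)-S(t)\phi\|_6\to0$, this yields $\limsup_t\|u_+(t)\|_6\le C\varepsilon$. Hence $D(u_+(t))\to0$ and $E(u_+)=\tfrac12A(\phi)$.

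The backward operator $u_-$ is identical, using the past integral $\mathcal R$ in place of the future one.
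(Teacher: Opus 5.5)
Your proposal is correct, and it builds $u_+$ by a different route than the paper.

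\textbf{Energy step.} The energy computation is exactly the paper's argument: density of $C_c^\infty$, the $|t|^{-1}$ decay of $S(t)$ from $L^{6/5}$ to $L^6$, Sobolev for the remainder, then conservation. Your aside that $\|S(t)\phi\|_6$ need not be pointwise small is contradicted by that same density argument, so this step is not an obstacle.

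\textbf{Construction.} You run one contraction on the whole half-line $[T,\infty)$ with the infinite future Duhamel term. The paper never forms that nonlinear infinite-time integral. It solves finite terminal problems $u_R(R)=S(R)\phi$ backward on $[T,R]$, with bounds uniform in $R$. It shows $(u_{R_n})$ is Cauchy in $\mathcal X([T,R_m])$ through $C\|u_{R_n}(R_m)-S(R_m)\phi\|_{\mathrm{gr}}$, and passes the finite Duhamel formula to the limit. The vanishing tails then need a second pass: the construction is repeated from a later time $A(\delta)$ with a smaller radius, and identified with $u_R$ by compact uniqueness.

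\textbf{What each route buys.} The paper's route keeps every Duhamel integral a compact-interval realization from Lemma~\ref{local:linear-duhamel}, so no half-line solution space or infinite-time source realization is needed. Your route is more economical. Once the terminal operator is extended to $(t,\infty)$, the asymptote and all three tails come from one application of the future dual and retarded estimates on $[A,\infty)$, with no $\delta$-argument.

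\textbf{Points to make explicit in your version.} The extension by $b\to\infty$ works as you indicate: for $b<b'$ the two truncations differ on $[T,b]$ by a free evolution of an $\X$ datum whose gradient is bounded by $C\|\nabla F(v)\|_{L^2((b,b');L^{6/5})}$. The value of the infinite integral must, however, be defined through this gradient limit and Sobolev. The scalar source $F(v)$ is only bounded, not time-integrable, in $L^{6/5}$ on the half-line, so its scalar pairing against Schwartz tests need not converge absolutely. Completeness of the half-line ball follows by compact exhaustion, as in Corollary~\ref{local:small-data}.
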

\begin{proof}
Choose $B$ larger than twice the free state bound plus one, then
$\eta>0$ with $CB^{2\alpha}\eta^5\le\min(\eta/2,B/4)$ and
same-cap difference constant at most $1/2$.
The free critical tail tends to zero, so for some $T$ the backward
contraction gives, uniformly in $R>T$,
\begin{equation}\label{eq:terminal-wave-family}
 \begin{gathered}
 u_R(R)=S(R)\phi,\qquad
 u_R(t)=S(t)\phi-i\int_t^RS(t-s)F(u_R(s))\,ds,\\
 \mathsf E_{[T,R]}(u_R)\le B,\qquad
 \mathsf W_{[T,R]}(u_R)\le\eta,\qquad
 \|u_R-S(t)\phi\|_{\mathcal X([T,R])}\le\eta/2.
 \end{gathered}
\end{equation}
For any smaller $\eta_\delta\le\delta$, repeat the construction on
$[A,R]$ with $A=A(\delta)\ge\delta^{-1}$ sufficiently large. Compact uniqueness at
the common terminal value identifies it with $u_R$ from \eqref{eq:terminal-wave-family}.
Consequently this baseline family has uniformly small late tails.
For $R_n\ge R_m\ge A$, absorption of the common half-contraction gives
\begin{equation}\label{eq:terminal-wave-cauchy}
 \|u_{R_n}-u_{R_m}\|_{\mathcal X([T,R_m])}
 \le C\|u_{R_n}(R_m)-S(R_m)\phi\|_{\mathrm{gr}}
 \le C\delta.
\end{equation}
Take $R_n=T+n$. The Cauchy bound \eqref{eq:terminal-wave-cauchy}, completeness, and \eqref{local:difference-source} pass
the finite Duhamel formula to the compact-interval limits. These
limits are compatible restrictions of the same sequence. Their trace
at $T$ and centered existence give one maximal trajectory $u_+$.

Passing the improved bounds at $A(\delta)$ to the limit, exhausting
the tail and using the endpoint estimate gives
\begin{equation}\label{eq:wave-tail-bounds}
 \begin{aligned}
 \sup_{t\ge A}\|u_+(t)-S(t)\phi\|_{\mathrm{gr}}&\le C\delta,\\
 \mathsf W_{[A,\infty)}(u_+)+\mathsf S_{[A,\infty)}(u_+)&\le C\delta,\\
 \mathsf Z_{[A,\infty)}(u_+)
 &\le\mathsf Z_{[A,\infty)}(S(t)\phi)+CB^{2\alpha}\delta^5.
 \end{aligned}
\end{equation}
Since $A(\delta)\ge\delta^{-1}$, letting $\delta\downarrow0$ in \eqref{eq:wave-tail-bounds} gives
the full real-time asymptote and all three vanishing tails.

For $u_-$, use finite initial data $S(R)\phi$ at $R_n=T'-n$ and
the forward formula on $[R_n,T']$. The same construction with its
forward constants gives the past asymptote and tails directly.

Dispersion and $L^2$ unitarity give $\|S(t)\psi\|_6\to0$ for
Schwartz $\psi$; homogeneous density and Sobolev extend this to
$\phi\in\X$. In the prescribed direction,
\begin{equation}\label{eq:wave-asymptotic-energy}
 A(u_\pm(t))\to A(\phi),\qquad
 D(u_\pm(t))\le C\|u_\pm(t)\|_6^{2p}\to0.
\end{equation}
Energy conservation and \eqref{eq:wave-asymptotic-energy} give $E(u_\pm)=A(\phi)/2$ on their
respective maximal domains.
\end{proof}

These one-sided solutions supply the profiles whose normalized times diverge.

\section{Profile decomposition and nonlinear reconstruction}
\label{prof:section}

The linear profile decomposition is a standard input. Its nonlinear
use here requires two Hartree estimates: decoupling of the nonlocal
potential energy and vanishing of the differentiated interactions
between distinct profiles. Together with stability, these estimates
reconstruct a solution whenever every nonlinear profile scatters.

Throughout this section $P(f)=\|\nabla f\|_2$ and the gradient has its
Euclidean $\C^3$ norm. In addition to the norms of the local theory, put
\[
 \begin{aligned}
 V_I(u)&=\|u\|_{L^\infty_t(I;L_x^6)},\\
 \mathfrak X_I(u)&=\EE_I(u)+\SSS_I(u)+\WW_I(u),\\
 \mathcal N_I(e)&=\|\nabla e\|_{L_t^2(I;L_x^{6/5})}.
 \end{aligned}
\]
Here $\WW_I(u)$ and $\mathcal N_I(e)$ use the weak gradients of
$u$ and $e$. By Lemma~\ref{local:critical-sobolev}, $\mathfrak X_I$
is equivalent to the norm of $\mathcal X(I)$ on that solution space.
For a field $h$, rather than a scalar source, write
$\NN_I(h)=\|h\|_{L^2_t(I;L^{6/5}_x)}$.
All unrestricted time norms in this section are on $\R$.
The value and gradient of a continuous $\X$ path have compatible jointly
measurable representatives: approximate the path by countably valued
measurable maps on compact time intervals in $L^6\times(L^2)^3$, take
compatible limits, and exhaust the line. Spatial distributional
identities identify their gradients. Tonelli shows that two such
representatives agree for product-almost every $(t,x)$.

\subsection{Linear profile decomposition}

Define the critical spatial isometry and its full linear frame by
\[
 \begin{aligned}
 T_{r,y}f(x)&=r^{-1/2}f\left(\frac{x-y}{r}\right),\\
 A_{r,y,s}&=T_{r,y}S(s),&
 A_{r,y,s}^{-1}&=S(-s)T_{r,y}^{-1}.
 \end{aligned}
\]
They preserve $P$, and
\begin{equation}\label{prof:frames}
 S(t)T_{r,y}=T_{r,y}S(t/r^2),\qquad
 S(t)A_{r,y,s}f=T_{r,y}S(s+t/r^2)f.
\end{equation}
Thus $s$ is normalized time; the physical time center is $-r^2s$.

We use the nonradial energy-critical decomposition of
\cite[Theorem~4.27]{KillipVisan}. Its parameters are scale, translation
and time; its small remainder is measured in the free $L^{10}_{t,x}$ norm.
The statement below fixes these conventions explicitly.

\begin{proposition}[Linear profiles in the homogeneous energy space]
\label{prof:linear}
If $P(f_n)\le B$, there are one strictly increasing subsequence, renamed
$n$, a count $M\in\N\cup\{\infty\}$, profiles $\phi_j\in\X$,
positive $r_{j,n}$, centers $y_{j,n}$ and times $s_{j,n}$, with the
following properties. Active labels are $0\le j<M$; inactive profiles
are zero and can be assigned parameters $(1,0,0)$. Set
\[
 b_{j,n}=A_{r_{j,n},y_{j,n},s_{j,n}}\phi_j,
 \qquad R_n^J=f_n-\sum_{j<J}b_{j,n}.
\]
On this subsequence:
\begin{enumerate}
\item For each $\varepsilon>0$ there are finite $J,N$ such that
\begin{equation}\label{prof:selected-remainder}
 (M=\infty\ \hbox{or}\ J=M),\qquad
 \SSS(S(t)R_n^J)\le\varepsilon\quad(n\ge N).
\end{equation}
\item For every fixed admissible $J\le M$,
\[
 P(f_n)^2-\sum_{j<J}P(\phi_j)^2-P(R_n^J)^2\longrightarrow0.
\]
\item For every such $J$ and every $j<J$,
$A_{r_{j,n},y_{j,n},s_{j,n}}^{-1}R_n^J\rightharpoonup0$ in the
homogeneous gradient Hilbert space. 
\item For distinct active $j,k$,
\begin{equation}\label{prof:orthogonality}
 \frac{r_{j,n}}{r_{k,n}}+\frac{r_{k,n}}{r_{j,n}}
 +\frac{|y_{j,n}-y_{k,n}|^2}{r_{j,n}r_{k,n}}
 +\frac{|r_{j,n}^2s_{j,n}-r_{k,n}^2s_{k,n}|}
 {r_{j,n}r_{k,n}}\longrightarrow\infty.
\end{equation}
\end{enumerate}
The free remainder norm is $L^{10}_{t,x}$. If $M<\infty$ then
$R_n^J=R_n^M$ for $J\ge M$, including $M=0$.
\end{proposition}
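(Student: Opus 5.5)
The plan is to obtain Proposition~\ref{prof:linear} as a restatement of the $d=3$ case of \cite[Theorem~4.27]{KillipVisan}, already recorded as item~8 of Lemma~\ref{std:analytic-interfaces}, and then check that its conventions match ours. The main work is bookkeeping: the Fourier normalization of $S(t)$, the identification of the homogeneous completion with the $L^6$ realization of $\X$, and the frame conventions in \eqref{prof:frames}.

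\emph{Step one: apply the cited decomposition.} I would apply \cite[Theorem~4.27]{KillipVisan} to the bounded sequence $(f_n)\subset\X$ viewed in the completion of $C_c^\infty$. Lemma~\ref{std:density} identifies that completion, with its weak topology, with $\X$. The cited frames have the form $g_n\,e^{it_n\Delta}$, with $g_n$ a scaling--translation. The only change of convention is the time normalization. Our multiplier $e^{-4\pi^2it|\xi|^2}$ is exactly $e^{it\Delta}$, so no rescaling of time is needed. The covariance identity \eqref{prof:frames}, which is already proved in Lemma~\ref{local:covariance}, converts the cited physical time $t_n$ into the normalized time $s_{j,n}=-t_{j,n}/r_{j,n}^2$. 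Under this conversion the cited frame becomes $A_{r_{j,n},y_{j,n},s_{j,n}}$.

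\emph{Step two: check each item against the cited conclusions.}
\begin{itemize}
\item Item~2 is gradient decoupling. The Hilbert norm $P$ is preserved by every $A_{r,y,s}$, so decoupling transfers directly.
\item Item~3 is inverse-frame weak convergence. It transfers because $A^{-1}_{r,y,s}$ is unitary on the gradient Hilbert space.
\item For item~4, the cited orthogonality condition is stated in physical times as $|t_{j,n}-t_{k,n}|/(r_{j,n}r_{k,n})$. Substituting $t=-r^2s$ gives exactly the fourth term in \eqref{prof:orthogonality}.
\item For item~1, the cited statement gives $\limsup_J\limsup_n\|e^{it\Delta}R_n^J\|_{L^{10}_{t,x}}=0$ when $M=\infty$. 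This yields finite $J,N$ for each $\varepsilon$. When $M<\infty$, the remainder $R_n^M$ itself has vanishing free norm, and inactive labels are defined to be zero. In that case $R_n^J=R_n^M$ for every $J\ge M$, and the statement includes $M=0$.
\end{itemize}

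\emph{Step three: the remaining conventions.} The subsequence is a single strictly increasing one, obtained by the diagonal extraction already built into the cited theorem. Inactive parameters $(1,0,0)$ are harmless because their profiles vanish.

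\emph{Main obstacle.} I expect the main obstacle to be the identification of the limiting profiles and weak limits with genuine $L^6$ representatives. Weak gradient limits determine a function only up to constants in general. Here the constants are excluded because $L^6$ membership is built into $\X$, together with Sobolev and the completeness of the graph norm. I would verify this explicitly: pass weak gradient convergence to weak $L^6$ convergence via the Sobolev bound, and pair against Schwartz tests.
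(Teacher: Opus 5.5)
Your plan is the paper's proof. Both invoke the $d=3$ case of Killip--Visan's Theorem~4.27 through item~8 of Lemma~\ref{std:analytic-interfaces}, and then only match conventions: the homogeneous space (whose $L^6$ realization removes the ambiguity by constants), the Fourier normalization $e^{-4\pi^2it|\xi|^2}=e^{it\Delta}$, the frames, the zero padding, and the choice of $J,N$ in the remainder bound.

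One bookkeeping correction, as far as I recall that theorem's statement: its frame $g_n^je^{it_n^j\Delta}$ already carries normalized time, and its orthogonality condition is literally \eqref{prof:orthogonality}. So you should take $s_{j,n}=t_{j,n}$ directly. The substitution $s_{j,n}=-t_{j,n}/r_{j,n}^2$ belongs to a physical-time (Keraani-style) parametrization and would misidentify the frame here.
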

\begin{proof}
Apply \eqref{input:eq-linear-profiles} from
Lemma~\ref{std:analytic-interfaces}. We only need to identify the
homogeneous space, Fourier convention and frame parameters with those
of the cited decomposition.
The gradient map is an isometry from $(\X,P)$ onto a closed subspace
of $(L^2)^3$: a gradient-Cauchy sequence is Cauchy in $L^6$ by
Sobolev, and testing identifies the limiting weak derivatives.
Lemma~\ref{std:density} identifies this space with the homogeneous
completion of $C_c^\infty$.

For the unitary angular transform
$\mathcal F_{\rm un}f(\theta)=(2\pi)^{-3/2}
\int e^{-ix\cdot\theta}f(x)\dd x$, Plancherel on smooth data gives
\begin{equation}\label{eq:profile-hilbert-isometry}
 P(f)^2=\int|\theta|^2|\mathcal F_{\rm un}f(\theta)|^2\dd\theta.
\end{equation}
Approximate $|\theta|\mathcal F_{\rm un}f$ by compact annular smooth
functions, divide by $|\theta|$, and invert. These Schwartz
approximants converge in $P$; their spatial cutoffs converge because
the gradient tails vanish and
$\|g\nabla\chi_R\|_2\le CR^{-1}\|g\|_2\to0$.
A diagonal identifies the completion isometrically through \eqref{eq:profile-hilbert-isometry}. Only the approximants
require scalar $L^2$; the $L^6$ realization excludes nonzero
polynomials and removes the Fourier ambiguity.

Under $\theta=2\pi\xi$,
$e^{-it|\theta|^2}=e^{-4\pi^2it|\xi|^2}$, so density identifies
the free groups with unchanged time. The published frame is
$T_{r,y}S(s)$; \eqref{prof:frames} gives its physical-time separation
\eqref{prof:orthogonality}. The extraction is nonradial and has
no modulation parameter. Slice equality and Tonelli identify the
actual free representatives and their mixed/product $L^{10}$ norms,
including infinite values.

Keep the published subsequence, reindex from zero, and pad finite
families by zero profiles. For $M=\infty$, choose $J$ with
$\limsup_n\SSS(S(t)R_n^J)<\varepsilon/2$, then $N$ with
$\SSS(S(t)R_n^J)\le\varepsilon$ for all $n\ge N$.
For $M<\infty$, the terminal remainder has limsup zero, so use
$J=M$, also when $M=0$; zero padding handles $J>M$.
The kinetic and inverse-frame relations \eqref{input:eq-linear-kinetic}
transfer by gradient isometry. In an inverse frame, the continuous real-linear tests
$h\mapsto\re\langle h_i,g\rangle_{L^2}$ transfer weak convergence;
testing also $ig$ gives its complex form. The frame orthogonality \eqref{input:eq-linear-orthogonality}
yields \eqref{prof:orthogonality}.
\end{proof}

A further common subsequence fixes the time regime of every profile.

\begin{lemma}[A common subsequence and nonlinear profiles]
\label{prof:countable-family}
On one further common subsequence, every $s_{j,n}$ converges in
$[-\infty,\infty]$ to $s_j^*$. There are corresponding maximal
solutions $U_j$ on $I_j$ such that, for each fixed $j$,
\begin{equation}\label{prof:matching}
 s_{j,n}\in I_j\ \hbox{eventually},\qquad
 P\bigl(U_j(s_{j,n})-S(s_{j,n})\phi_j\bigr)\longrightarrow0.
\end{equation}
Both conclusions hold simultaneously for every fixed finite family.
\end{lemma}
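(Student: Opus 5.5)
The plan is a diagonal subsequence argument followed by a case split on the limit time $s_j^*$.

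\textbf{Step 1: common subsequence.} Start from the subsequence of Proposition~\ref{prof:linear}. The active labels are countable, so for $j=0,1,2,\dots$ I extract successively. At stage $j$, compactness of $[-\infty,\infty]$ gives a further subsequence along which $s_{j,n}$ converges. The diagonal sequence is eventually a subsequence of each stage, so every $s_{j,n}$ converges to some $s_j^*$. Passing to a subsequence preserves all conclusions of Proposition~\ref{prof:linear}: kinetic decoupling, weak convergence, orthogonality, and the remainder statement with $N$ replaced. Inactive labels carry $s_{j,n}=0$, so $s_j^*=0$ there and $U_j=0$.

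\textbf{Step 2: finite limit time.} If $s_j^*\in\R$, let $U_j$ be the maximal solution of Theorem~\ref{local:flow} marked at $(s_j^*,S(s_j^*)\phi_j)$. Its interval $I_j$ is open and contains $s_j^*$, so $s_{j,n}\in I_j$ eventually. Then
\[
P\bigl(U_j(s_{j,n})-S(s_{j,n})\phi_j\bigr)
\le P\bigl(U_j(s_{j,n})-U_j(s_j^*)\bigr)
+P\bigl(S(s_j^*)\phi_j-S(s_{j,n})\phi_j\bigr).
\]
The first term tends to zero by continuity of $U_j\in C(I_j;\X)$. The second tends to zero by strong continuity of the free group on $\X$, an isometry in $P$ that is continuous as the extension from Schwartz data.

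\textbf{Step 3: infinite limit time.} If $s_j^*=+\infty$, take $U_j=u_+$ from Theorem~\ref{local:waves} with datum $\phi$ equal to $\phi_j$. Its domain contains $[T,\infty)$, so $s_{j,n}\in I_j$ eventually. The asymptote $\|u_+(t)-S(t)\phi_j\|_{\mathrm{gr}}\to0$ gives convergence in $P$ after the fixed norm comparison $\|f\|_{\dot H^1}\le\sqrt3\|f\|_{\mathrm{gr}}$. The case $s_j^*=-\infty$ is the same, using $u_-$.

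\textbf{Finite families and the main point.} For a finite family of labels, each condition in \eqref{prof:matching} holds for $n$ beyond its own threshold, and we take the maximum of finitely many thresholds. I expect no analytic obstacle, since the substance lies in Theorems~\ref{local:flow} and~\ref{local:waves}. The only care needed is bookkeeping. The diagonal extraction must be done once, before any $U_j$ is chosen, so that each $U_j$ is determined by $s_j^*$ and $\phi_j$ alone. Later refinements then cannot change the profiles.
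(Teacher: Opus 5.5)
Your proposal is correct and follows the paper's proof essentially step for step. It uses a single diagonal extraction in $[-\infty,\infty]$ that preserves the conclusions of Proposition~\ref{prof:linear}, then chooses $U_j$ as the maximal solution through $(s_j^*,S(s_j^*)\phi_j)$ for finite limits and as the one-sided wave operator of Theorem~\ref{local:waves} for infinite limits, and handles finite families by taking the maximum of finitely many thresholds.
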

\begin{proof}
Successive extraction in $[-\infty,\infty]$ and an increasing diagonal
give one map $\sigma$ along which every time sequence has a limit.
Compose all sequence fields with $\sigma$. The profiles and count
are unchanged, and $\sigma(n)\ge n$ preserves the selected-depth
bound and the other three conclusions.

Choose one canonical maximal $U_j$ for each label by
\begin{equation}\label{eq:nonlinear-profile-choices}
 \begin{cases}
 U_j(s_j^*)=S(s_j^*)\phi_j,&s_{j,n}\to s_j^*\in\R,\\
 P(U_j(t)-S(t)\phi_j)\to0\quad(t\to\pm\infty),
       &s_{j,n}\to\pm\infty.
 \end{cases}
\end{equation}
The finite case uses Theorem~\ref{local:flow}; the infinite cases
use the corresponding wave operator in Theorem~\ref{local:waves}.
In \eqref{eq:nonlinear-profile-choices}, openness and continuity treat the finite
case, while the prescribed half-line asymptotes treat the infinite cases;
this gives \eqref{prof:matching}.
For finite $F$, choose each membership threshold and error
$<\varepsilon/(1+|F|)$, then take their maximum. This gives simultaneous
membership and summed error $<\varepsilon$, including $F=\varnothing$.
\end{proof}

Kinetic orthogonality alone does not split the Hartree energy, because
the potential couples spatially separated densities. We prove the
nonlocal split at each fixed finite depth and retain the energy of the
time-evolved atom $b_{j,n}$ until its time regime is taken into account.

\subsection{Fixed-depth Hartree energy splitting}

\begin{lemma}[Hartree energy decoupling]
\label{prof:energy-splitting}
For the family just constructed and each fixed admissible $J$, there
is a further strictly increasing subsequence, which may depend on $J$,
on which
\begin{equation}\label{prof:finite-energy}
 E(f_n)-E(R_n^J)-\sum_{j<J}E(b_{j,n})\longrightarrow0.
\end{equation}
Moreover $\|S(s)\phi\|_6\to0$ and $D(S(s)\phi)\to0$ as
$|s|\to\infty$. In a finite-time regime the atom energy tends to
$E(S(s_j^*)\phi_j)$; in a divergent regime it tends to
$P(\phi_j)^2/2$.
\end{lemma}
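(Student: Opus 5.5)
The kinetic part of \eqref{prof:finite-energy} is already supplied by item~2 of Proposition~\ref{prof:linear}. Since $P(b_{j,n})=P(\phi_j)$, it remains to prove that
\[
 D(f_n)-D(R_n^J)-\sum_{j<J}D(b_{j,n})\longrightarrow0
\]
along a further subsequence. I would first record the free decay statements, because they control the atoms in divergent time regimes. For Schwartz $\psi$, the dispersive bound \eqref{eq:free-dispersive-bound} together with $L^2$ unitarity gives $\|S(s)\psi\|_6\le C|s|^{-1}\|\psi\|_1^{2/3}\|\psi\|_2^{1/3}\to0$. Density in $P$ and Sobolev extend this to $\phi\in\X$. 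Sharp HLS \eqref{input:eq-sharp-hls} then gives $D(S(s)\phi)\le H_\alpha\|S(s)\phi\|_6^{2p}\to0$.

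The atom limits follow from these facts. By the critical invariance $D(T_{r,y}f)=D(f)$, we have $E(b_{j,n})=\frac12P(\phi_j)^2-\frac1{2p}D(S(s_{j,n})\phi_j)$. If $s_{j,n}\to s_j^*\in\R$, strong continuity of $S$ in $\X$ and continuity of $D$ on $L^6$ give the limit $E(S(s_j^*)\phi_j)$. If $|s_{j,n}|\to\infty$, the decay just proved gives the limit $P(\phi_j)^2/2$.

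For the splitting itself I would expand $D$ multilinearly. Write $f_n=\sum_{j<J}b_{j,n}+R_n^J$ and set $g=\sum_{j<J}b_{j,n}$. I would not split $|f_n|^p$ exactly; instead I would use the elementary inequality
\[
 \Bigl||\textstyle\sum_k a_k|^p-\sum_k|a_k|^p\Bigr|\le C\sum_{k\ne l}|a_k||a_l|^{p-1}
\]
together with the bilinear HLS bound. Every error term is then either a ``cross density'' $|b_{j,n}||b_{k,n}|^{p-1}$ with $j\ne k$, or a density mixing $R_n^J$ with an atom, or a product $(I_\alpha*|b_{j,n}|^p)|b_{k,n}|^p$ with $j\ne k$. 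The cross densities must be shown to vanish in $L^{6/p}$, and the convolved products must vanish in $L^1$.

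For two atoms, orthogonality \eqref{prof:orthogonality} does this by the standard argument. Undo the frame of $b_{j,n}$ and approximate $S(s_{j,n})\phi_j$ in $L^6$ by a compactly supported function, using a subsequence along which $S(s_{j,n})\phi_j$ converges in $L^6$ or tends to $0$. One of three things then happens. If the scales diverge, the other atom concentrates or spreads relative to this compact set, so its $L^6$ mass on the set vanishes. If the centers diverge, the two densities have asymptotically disjoint supports and the Riesz tail $|x-y|^{\alpha-3}$ decays. If the normalized times diverge, one atom disperses in $L^6$, which is the decay above.

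The remainder interactions are the main obstacle, since $R_n^J$ is small only in the free $L^{10}_{t,x}$ norm, not in $L^6$. Here I would use item~3 of Proposition~\ref{prof:linear}: $A^{-1}_{j,n}R_n^J\rightharpoonup0$ in $\X$. Rellich on balls (Lemma~\ref{std:rellich}) upgrades this to $T_{j,n}^{-1}R_n^J\to0$ in $L^2_{\rm loc}$ along a subsequence, after transporting to the common time $s_{j,n}$ in the convergent regime. Interpolating with the uniform $L^6$ bound gives convergence to $0$ in $L^q_{\rm loc}$ for $q<6$, hence in $L^6$ against the compactly supported approximant of the atom, by H\"older with the high power on the atom. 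In the divergent-time regime the atom already vanishes in $L^6$. These remainder bounds are what force the $J$-dependent subsequence in the statement.

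Finally, the term $D(R_n^J)$ is kept unsplit. All error terms are bounded by H\"older and HLS with the uniform bound $P\le B$, which closes \eqref{prof:finite-energy}.
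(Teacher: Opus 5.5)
Your proof is correct, but it takes a different route from the paper's.

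\textbf{How the paper argues.} The paper never expands $D$ over all $J+1$ pieces, and it never invokes the orthogonality relation \eqref{prof:orthogonality} in this proof. It peels off one profile at a time. At stage $j$ it writes $R_n^j=R_n^{j+1}+b_{j,n}$ and uses $A_{j,n}^{-1}R_n^{j+1}\rightharpoonup0$, which is item~3 at depth $j+1$. It then applies a nonlocal Brezis--Lieb step: if $g_n$ is bounded in $L^6$ and $g_n\to0$ a.e., then $|g_n+f|^p-|g_n|^p-|f|^p\to0$ in $L^{6/p}$ and $|g_n|^p\rightharpoonup0$ there, so bilinear HLS gives $D(g_n+f)-D(g_n)-D(f)\to0$.
\begin{itemize}
\item The a.e.\ convergence is obtained from Rellich on balls after an extraction. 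Composing these extractions over $j<J$ is exactly where the $J$-dependent subsequence comes from.
\item A divergent-time stage is dispatched by $\|S(s_n)\phi\|_6\to0$ and continuity of $D$ on bounded $L^6$ sets.
\item Telescoping over $j<J$ finishes the proof.
\end{itemize}
So frame orthogonality enters only through the weak limits of item~3. No compact approximation of atoms and no scale/center/time case analysis is needed.

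\textbf{What your route trades.} You replace the peeling by an explicit pairwise accounting. You run the orthogonality case analysis for atom--atom pairs and a local-compactness argument for atom--remainder pairs. That costs more case work, but it shows exactly which interaction dies and why. It also avoids a.e.\ convergence altogether: the weak limit of $T_{j,n}^{-1}R_n^J$ is identified as $0$, so your $L^2_{\rm loc}$ convergence holds along the whole sequence. On your route the further subsequence is therefore not actually forced, contrary to your last remark.

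\textbf{One sentence to correct.} You say that when scales separate, the concentrating or spreading atom has vanishing $L^6$ mass on the compact set. That is false for concentration at a point inside the set, because $L^6$ is scale invariant. What vanishes is the local $L^q$ norm for $q<6$ in the concentrating case, or the sup norm in the spreading case. This suffices, because you may take the other atom's approximant bounded and compactly supported. The cross densities $|b_{j,n}||b_{k,n}|^{p-1}$ and $|b_{j,n}|^{p-1}|b_{k,n}|$ in $L^{6/p}$, and the paired potentials, then see only subcritical local norms of the moving atom.

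Likewise, of the two remainder cross densities, only $|R_n^J||b_{j,n}|^{p-1}$ is handled by putting the high power on the atom. The term $|R_n^J|^{p-1}|b_{j,n}|$ needs local $L^{6(p-1)/p}$ convergence of the remainder, which your interpolation for $q<6$ does supply.
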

\begin{proof}
To establish \eqref{prof:finite-energy}, let $g_n$ be bounded in $L^6$, with $g_n\to0$ a.e., and let $f\in L^6$.
For $a=6/p>1$,
\[
 \bigl||z+w|^p-|z|^p\bigr|
 \le\epsilon|z|^p+C_\epsilon|w|^p.
\]
The positive part of
$\bigl||g_n+f|^p-|g_n|^p-|f|^p\bigr|-\epsilon|g_n|^p$
is dominated by $(C_\epsilon+1)|f|^p$ and tends to zero a.e.
Dominated convergence in $L^a$, then $\epsilon\downarrow0$, gives
\begin{equation}\label{eq:nonlocal-brezis-lieb-density}
 d_n:=|g_n+f|^p-|g_n|^p-|f|^p\longrightarrow0\text{ in }L^a,
 \qquad |g_n|^p\rightharpoonup0\text{ in }L^a.
\end{equation}
For the weak limit, use uniform integrability on finite-measure sets
and approximation of $L^{a'}$ tests. HLS makes
$B(v,w)=\int(I_\alpha*v)w$ continuous on $L^a\times L^a$; hence \eqref{eq:nonlocal-brezis-lieb-density} gives
\begin{equation}\label{eq:nonlocal-potential-splitting}
 D(g_n+f)-D(g_n)-D(f)
 =2B(|g_n|^p,|f|^p)+o(1)\longrightarrow0.
\end{equation}
If also $\nabla g_n\rightharpoonup0$, then
\begin{equation}\label{eq:profile-kinetic-cross}
 P(g_n+f)^2-P(g_n)^2-P(f)^2
 =2\re\int\nabla g_n\cdot\overline{\nabla f}\longrightarrow0.
\end{equation}
Combining \eqref{eq:nonlocal-potential-splitting} and \eqref{eq:profile-kinetic-cross} proves energy splitting. A bounded weakly convergent sequence
in $\X$ has the same a.e. limit on a subsequence: local $H^1$ bounds,
Lemma~\ref{std:rellich}, a diagonal on integer balls and a summably
Cauchy refinement give local $L^2$ and a.e. convergence; Sobolev
identifies the limit.

Let $A_n=T_{r_n,y_n}S(s_n)$ and $A_n^{-1}v_n\rightharpoonup\phi$.
If $s_n\to s^*$, then
\[
 T_{r_n,y_n}^{-1}v_n\rightharpoonup S(s^*)\phi
\]
by strong continuity on gradient tests. Apply
\eqref{eq:nonlocal-potential-splitting} and \eqref{eq:profile-kinetic-cross}, replace $S(s^*)\phi$ by $S(s_n)\phi$ using bounded-set
continuity of $E$, and return by affine invariance.
If $|s_n|\to\infty$, compact smooth approximation and
\[
 \|S(s)\psi\|_6\le C|s|^{-1}\|\psi\|_{6/5},\qquad
 \|S(s)(\phi-\psi)\|_6\le C P(\phi-\psi)
\]
give $\|S(s)\phi\|_6\to0$ and $D(S(s)\phi)\to0$.
Continuity of $D$ on bounded $L^6$ sets gives the nonlinear split
without further extraction; inverse-frame weak convergence gives
the kinetic split.

At stage $j$,
\[
 R_n^j=R_n^{j+1}+b_{j,n},\qquad
 A_{j,n}^{-1}R_n^j\rightharpoonup\phi_j,\qquad
 P(R_n^j)\le B+\sum_{k<j}P(\phi_k).
\]
For fixed finite $J$, compose the extractions for $0\le j<J$ and sum
\[
 E(f_n)-E(R_n^J)-\sum_{j<J}E(b_{j,n})
 =\sum_{j<J}\bigl[E(R_n^j)-E(R_n^{j+1})-E(b_{j,n})\bigr]
 \longrightarrow0.
\]
The empty sum handles $J=0$; the subsequence may depend on $J$.
\end{proof}

Reconstruction also requires the differentiated interactions between distinct profiles to vanish.

\subsection{Compact approximation and cross interactions}

Put $q=p-2=1+\alpha$, $\rho(z)=|z|^p$ and
$\ell(z)=|z|^qz$. Derivatives in $z\in\C\simeq\R^2$ are real:
\[
 \rho'(z)h=p|z|^{p-2}\re(\bar zh),\qquad
 \ell'(z)h=|z|^qh+q|z|^{q-2}\re(\bar zh)z.
\]
The second formula at zero means its continuous value zero. Write
$I(g)=I_\alpha*g$ and
\[
 \mathcal C(u,h)=I(\rho'(u)h)\ell(u)+I(\rho(u))\ell'(u)h.
\]
For slices in the spaces of the local theory, the weak chain rule gives
$\partial_kF(u)=\mathcal C(u,\partial_ku)$.
We also need this formula with different profiles in the convolved and
outside factors.
For independent directions put $W_d(h)=\|h\|_{L^{10}_tL^{30/13}_x}$.

\begin{lemma}[Compatible spacetime approximation]
\label{prof:density-factors}
Let $u\in C(\R;\X)$ have finite $\EE(u),\SSS(u),\WW(u)$.
There are jointly continuous, compactly supported fields
$f_m:\R^{1+3}\to\C$ and $G_m:\R^{1+3}\to\C^3$ such that
$\nabla_x f_m=G_m$ weakly on every time slice,
$\sup_m\|G_m\|_{L^\infty_tL^2_x}<\infty$,
$\SSS(f_m-u)\to0$ and $W_d(G_m-\nabla u)\to0$. Each mixed-label expression
$I(\rho'(u)h)\ell(v)$ and $I(\rho(u))\ell'(v)k$ is finite in
$\NN$ on bounded $V,\SSS,W_d$ sets and continuous in the value
$\SSS$ and direction $W_d$ differences, uniformly on such sets.
The modulus is $Cd^\vartheta$ for $d\le1$, where
$\vartheta=(3-\alpha)/3$.
\end{lemma}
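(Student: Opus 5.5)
The plan has two parts. First build the approximating fields by a spacetime version of Lemma~\ref{std:density}. Then prove the mixed-label multilinear bounds by rerunning the HLS/H\"older allocations of Lemma~\ref{local:nonlinear}, now with the convolved and outside factors carrying independent labels.

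\textbf{Approximation.} Since $u\in C(\R;\X)$ and $\SSS(u),\WW(u)<\infty$, first truncate in time. Choose $T$ with
\[
\SSS_{\R\setminus[-T,T]}(u)+\WW_{\R\setminus[-T,T]}(u)<1/m .
\]
Multiply by a continuous temporal cutoff $\chi_T(t)$ equal to one on $[-T,T]$ and supported in $[-T-1,T+1]$; this keeps $\nabla_x$ acting slicewise.

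On the compact time interval, apply a spatial cutoff $\chi_R(x)$. The error term $u\nabla\chi_R$ is controlled in $L^{10}_tL^{30/13}_x$ by $\|u1_{|x|\ge R}\|_{L^{10}_tL^{10}_x}\|\nabla\chi\|_3$, exactly as in the proof of Lemma~\ref{local:nonlinear}. It is also controlled in $L^\infty_tL^2_x$ by $\|u1_{|x|\ge R}\|_{L^\infty_t L^6_x}\|\nabla\chi\|_3$. Both tend to zero: the first by dominated convergence, the second by compactness of the continuous image $u([-T-1,T+1])$ in $L^6$.

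Finally, mollify jointly in $(t,x)$ with a product kernel. Mollification commutes with $\nabla_x$, so $\nabla_xf_m=G_m$ holds weakly on every slice, and the fields are jointly continuous with compact support. Translation continuity in the mixed norms $L^{10}L^{10}$ and $L^{10}L^{30/13}$ gives $\SSS(f_m-u)\to0$ and $W_d(G_m-\nabla u)\to0$. The uniform bound on $\|G_m\|_{L^\infty_tL^2_x}$ follows from Young's inequality together with the cutoff estimate above. Choosing $T,R,\varepsilon$ diagonally in $m$ gives the sequence.

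\textbf{Multilinear bounds.} For $I(\rho'(u)h)\ell(v)$, use the pointwise bounds
\[
|\rho'(u)h|\le p|u|^{p-1}|h|,\qquad |\ell(v)|\le|v|^{p-1}.
\]
Then repeat the allocation of $T_4$ in Lemma~\ref{local:nonlinear}, with $|u|^{p-1}|h|$ inside and $|v|^{p-1}$ outside. Inside, put $|u|^{\alpha/3}$ in $L^6$, $|u|^{2+2\alpha/3}$ in $L^{10}$ and $h$ in $L^{30/13}$. Outside, put $|v|^{5\alpha/3}$ in $L^6$ and $|v|^{2-2\alpha/3}$ in $L^{10}$. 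The HLS reciprocals are those already verified in \eqref{eq:nonlinear-hls-exponents}, and the product lands in $L^2_tL^{6/5}_x$.

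For $I(\rho(u))\ell'(v)k$, use $|\ell'(v)k|\le C|v|^{q}|k|$ and the $T_1$ allocation: $(2\alpha,3-\alpha)$ inside and $(0,1+\alpha)$ outside, with $k\in L^{30/13}$. Together these give finiteness in $\NN$, bounded by a polynomial in $V,\SSS,W_d$.

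\textbf{Continuity.} The direction slots $h,k$ enter linearly, so differences there are Lipschitz. For the value slots, take differences in $u$ or $v$ and use \eqref{eq:power-differential-bounds}. A difference of $\rho(u)$ or $\rho'(u)$ costs one factor $|u-u'|$. A difference of $\ell(v)$ costs $|v-v'|$, and a difference of $\ell'(v)$ costs $|v-v'|$ times $(|v|+|v'|)^{q-1}$ (note $q-1=\alpha>0$). Where the allocation needs the difference factor in the mixed slot $L^6\cap L^{10}$, interpolate as in $T_2,T_6$:
\[
\|d\|_6^{\alpha/3}\|d\|_{10}^{\vartheta}\le (2V)^{\alpha/3}\|d\|_{10}^\vartheta,
\]
with $V$ bounded. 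This yields the modulus $C\,\SSS(d)^\vartheta$ for $d\le1$, uniformly on bounded sets.

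\textbf{Main obstacle.} The hardest point is the continuity of $\ell'$ in its value slot. Where $|v|$ is small the derivative of $\ell'$ is only H\"older, and it cannot always be placed in $L^{10}$ with a full power. The fix is the split just described: an $L^6$ fractional factor of $d$ absorbed by the $V$ bound, leaving exactly the $\vartheta$-power of the $\SSS$ difference.
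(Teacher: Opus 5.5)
Your proposal is correct and follows essentially the paper's route. You cut off in time and space, controlling the spatial error by $\|\nabla\chi\|_3$ and dominated convergence, and then mollify; the paper mollifies only in $x$ and gets joint continuity from $u\in C(\R;\X)$, but your joint mollification works equally well. The paper's four factor spaces $X_0,X_1,Y_0,Y_1$ are exactly your $T_1/T_4$ allocations from Lemma~\ref{local:nonlinear}. Value differences are taken to the fractional power $\vartheta$; the paper writes this as $|A(z)-A(w)|\le C(|z|+|w|)^{d_0-\vartheta}|z-w|^{\vartheta}$, which amounts to your $L^6$--$L^{10}$ split.

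One correction to your ``main obstacle'': the $\ell'$ slot needs no splitting. Its difference is $\lesssim(|v|+|V|)^{\alpha}|v-V|$, which fits the $(0,1+\alpha)$ allocation with a full power of $\SSS(v-V)$, as in $T_3$. Applying the split there would actually unbalance the exponents, since that slot carries no $L^6$ factor. The fractional split is what handles the $\rho$ slot when $\alpha>2$ and the $\ell$ slot when $\alpha>3/2$, i.e.\ precisely the $T_2,T_6$ cases you cite.
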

\begin{proof}
Choose smooth cutoffs $\eta_T(t)=\eta(t/T)$ and
$\chi_R(x)=\chi(x/R)$, equal to one on the unit interval and ball
and supported in their doubles. The weak product rule gives
\[
 \begin{aligned}
 \nabla(\eta_T\chi_Ru)&=\eta_T\chi_R\nabla u+\eta_Tu\nabla\chi_R,\\
 \EE(\eta_T\chi_Ru)&\le\EE(u)+A_\chi V(u),
       \qquad A_\chi=\|\nabla\chi\|_3,\\
 W_d(\eta_Tu\nabla\chi_R)
 &\le A_\chi\SSS(1_{\{R\le|x|\le2R\}}u)\longrightarrow0.
 \end{aligned}
\]
Dominated convergence handles the other $\SSS,\WW$ cutoff errors.
Set $T=R=m$ and, with a positive compact smooth mollifier, put
\[
 f_{m,\varepsilon}=\phi_\varepsilon*_x(\eta_m\chi_m u),\qquad
 G_{m,\varepsilon}=\phi_\varepsilon*_x\nabla_x(\eta_m\chi_m u).
\]
Then $\nabla_xf_{m,\varepsilon}=G_{m,\varepsilon}$ on every slice.
Continuity in $\X$ and convolution give jointly continuous compact
fields; Minkowski preserves the energy cap. Translation continuity
in the finite mixed norms permits $\varepsilon_m$ with both smoothing
errors at most $1/m$.

Use the following ordered spaces $L^{t_i}_tL^{x_i}_x$, $i=0,1,2,3$:
\[
\begin{array}{c|cc|cc}
 &a_i&b_i&t_i&x_i\\\hline
 X_0&2\alpha&3-\alpha&10/(3-\alpha)&30/(9+7\alpha)\\
 X_1&\alpha/3&2+2\alpha/3&10/(3+2\alpha/3)&90/(57+11\alpha)\\
 Y_0&5\alpha/3&2-2\alpha/3&10/(2-2\alpha/3)&90/(18+19\alpha)\\
 Y_1&0&1+\alpha&10/(2+\alpha)&30/(16+3\alpha).
\end{array}
\]
Rows $X_1,Y_1$ contain one $W_d$ direction. The input exponents
$x_0,x_1$ are exactly the two strict HLS inputs verified in
Lemma~\ref{local:nonlinear}.
Their output exponents $r_0=10/(3-\alpha)$ and
$r_1=90/[19(3-\alpha)]$ satisfy
\begin{equation}\label{eq:mixed-factor-exponents}
\begin{gathered}
 \frac1{r_i}=\frac1{x_i}-\frac\alpha3\quad(i=0,1),\\
 \frac1{r_0}+\frac1{x_3}=\frac1{r_1}+\frac1{x_2}=\frac56,
 \qquad\frac1{t_0}+\frac1{t_3}=\frac1{t_1}+\frac1{t_2}=\frac12.
\end{gathered}
\end{equation}
H\"older gives the factor bounds
\[
\begin{array}{ll}
 \|\rho(u)\|_{X_0}\le B_0(u):=V(u)^{a_0}\SSS(u)^{b_0},&
 \|\rho'(u)h\|_{X_1}\le B_1(u,h):=pV(u)^{a_1}\SSS(u)^{b_1}W_d(h),\\
 \|\ell(v)\|_{Y_0}\le C_0(v):=V(v)^{a_2}\SSS(v)^{b_2},&
 \|\ell'(v)k\|_{Y_1}\le C_1(v,k):=(q+1)\SSS(v)^{b_3}W_d(k).
\end{array}
\]
With the exponent identities \eqref{eq:mixed-factor-exponents}, the HLS constants $H_0,H_1$ give
\begin{equation}\label{eq:mixed-slot-hls}
 \NN(I(f)g)\le H_1\|f\|_{X_1}\|g\|_{Y_0},\qquad
 \NN(I(f)g)\le H_0\|f\|_{X_0}\|g\|_{Y_1}.
\end{equation}
HLS gives a.e. absolute convergence; Tonelli gives joint measurability.

Put $e=V(u)+V(U)$, $s=\SSS(u)+\SSS(U)$, $d=\SSS(u-U)$ and
$\theta_0=\theta_2=\vartheta=(3-\alpha)/3$.
In the four factor spaces the differences are bounded by
\begin{equation}\label{eq:mixed-factor-differences}
\begin{split}
 \Delta_0&=p e^{a_0}s^{b_0-\theta_0}d^{\theta_0},\\
 \Delta_1&=p e^{a_1}s^{b_1}W_d(h-H)
       +p(p-1)e^{a_1}s^{b_1-1}dW_d(H),\\
 \Lambda_0&=(q+1)e^{a_2}s^{b_2-\theta_2}d^{\theta_2},\\
 \Lambda_1&=(q+1)s^{b_3}W_d(k-K)
       +q(q+5)s^{b_3-1}dW_d(K).
\end{split}
\end{equation}
Indeed, segment integration for a factor $A$ of degree $d_0$ gives
\[
 |A(z)-A(w)|\le C(|z|+|w|)^{d_0-\theta}|z-w|^\theta
 \quad(0<\theta\le1),
\]
and for the derivative actions,
\[
 \begin{aligned}
 \|\rho'(z)-\rho'(w)\|
 &\le p(p-1)(|z|+|w|)^{p-2}|z-w|,\\
 \|\ell'(z)-\ell'(w)\|
 &\le q(q+5)(|z|+|w|)^{q-1}|z-w|.
 \end{aligned}
\]
Allocate $a_i$ powers to $V$, $b_i$ to $\SSS$ and the direction to
$W_d$; residual powers are nonnegative, with zero powers omitted.
Combining \eqref{eq:mixed-slot-hls} and \eqref{eq:mixed-factor-differences} with convolution linearity gives
\begin{equation}\label{eq:independent-slot-continuity}
\begin{split}
 \NN\bigl(I(\rho'(u)h)\ell(v)-I(\rho'(U)H)\ell(V)\bigr)
 &\le H_1\{\Delta_1(u,h;U,H)C_0(v)+B_1(U,H)\Lambda_0(v,V)\},\\
 \NN\bigl(I\rho(u)\ell'(v)k-I\rho(U)\ell'(V)K\bigr)
 &\le H_0\{\Delta_0(u,U)C_1(v,k)+B_0(U)\Lambda_1(v,k;V,K)\}.
\end{split}
\end{equation}
Since $0<\vartheta\le\min(1,b_0,b_2)$, writing $\mathfrak d$ for
the sum of the value and direction differences in \eqref{eq:independent-slot-continuity}
gives bounded-set modulus $C\mathfrak d^\vartheta$ for $\mathfrak d\le1$; no small
$V$ difference is needed.

The $X_0$ bound and $\ell(v)\in L^{10/(2+\alpha)}_{t,x}$ also give
$I\rho(u)\ell(v)\in L^2_tL^2_x$ and convergence there under the
compact approximations. For the mollified values, differentiate the
$C^1$ density in $\int I_\alpha(z)\rho(u(x-z))\dd z$.
Local integrability of $I_\alpha$ and compact support justify this;
the continuity bounds \eqref{eq:independent-slot-continuity} pass to the limit and give
\begin{equation}\label{eq:mixed-label-weak-gradient}
 \partial_k[I\rho(u)\ell(v)]
 =I(\rho'(u)\partial_ku)\ell(v)+I\rho(u)\ell'(v)\partial_kv.
\end{equation}
The identity \eqref{eq:mixed-label-weak-gradient} uses no kernel derivative, including when $\alpha\le1$.
\end{proof}

Compact approximation reduces the interaction estimates to bounded
profiles with compact support. Disjoint spatial supports do not make a
Hartree interaction vanish: the Riesz kernel still connects them.
We therefore use scale separation or decay of the kernel between the
supports.

\begin{lemma}[Nonlocal cross interactions]\label{prof:pure-cross}
Let $U,V\in C(\R;\X)$ have finite $\mathfrak X$, with their actual gradients.
Frame them by two parameter sequences satisfying
\eqref{prof:orthogonality}. Each of the two differentiated
mixed-label terms, in each source--recipient order, tends to zero
separately in $\NN$.
\end{lemma}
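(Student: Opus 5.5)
The plan is to reduce, by Lemma~\ref{prof:density-factors}, to compactly supported, jointly continuous, bounded profiles, and then to exploit frame orthogonality \eqref{prof:orthogonality} case by case. Write the two framed fields as $U_n(t)=T_{r_n,y_n}U((t+r_n^2s_n)/r_n^2)$-type transforms (affine covariance, Lemma~\ref{local:covariance}) and similarly $V_n$ with the second frame. The mixed terms are $I(\rho'(U_n)\partial_kU_n)\ell(V_n)$ and $I(\rho(U_n))\ell'(V_n)\partial_kV_n$, plus the reversed order. By the uniform continuity modulus $Cd^\vartheta$ of Lemma~\ref{prof:density-factors}, which is invariant under the critical frames since all norms $V,\SSS,W_d,\NN$ are scale, translation and time invariant, it suffices to prove the claim when $U,V,\nabla U,\nabla V$ are replaced by compactly supported continuous fields $f,G$ and $g,H$ supported in $[-T,T]\times B_R$. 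The approximation error is then uniformly small in $n$.

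For the compact fields, I would change variables to the frame of the recipient, the factor outside the convolution, so that recipient is fixed and compactly supported. The source becomes $T_{\lambda_n,z_n}$ applied to a compact field with time shift $\tau_n$, where $\lambda_n=r_{j,n}/r_{k,n}$, $z_n=(y_{j,n}-y_{k,n})/r_{k,n}$, and $\tau_n$ is the normalized time offset. Orthogonality gives, along any subsequence, one of three cases: $\lambda_n\to0$ or $\infty$; $\lambda_n$ bounded above and below with $|\tau_n|\to\infty$; or $\lambda_n,\tau_n$ bounded with $|z_n|\to\infty$. In the time-separation case the supports in time are eventually disjoint, so the product vanishes identically. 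In the spatial-separation case, $|x-y|\ge |z_n|/2$ on the supports, so the convolution $I_\alpha*(\cdot)$ evaluated on the recipient's support is bounded by $C|z_n|^{\alpha-3}$ times an $L^1$ norm of the source density. That $L^1$ norm is finite for compact bounded fields, and this gives decay in $\NN$.

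The scale-separation case is the main obstacle, because the nonlocal kernel does not decouple disjoint scales automatically. I would handle it by computing the norm exactly on the rescaled source. If $\lambda_n\to\infty$, the source is spread out with small amplitude: the density $\rho(U_n)$ or $\rho'(U_n)\partial U_n$ has sup norm $\lambda_n^{-p/2}$ or $\lambda_n^{-p/2-1}$ times a constant, but large support. Its Riesz potential on the fixed ball $B_R$ is then bounded by a Hölder estimate with one nonendpoint exponent off the critical line, say $\|I_\alpha*\mu\|_{L^\infty}\le C\|\mu\|_{L^{a}}^{\theta}\|\mu\|_{L^b}^{1-\theta}$ with $a<3/\alpha<b$. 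This produces a negative power of $\lambda_n$ after scaling, since criticality is exactly saturated only at the HLS pair. The time support of the source is also stretched, so only a fraction $\lambda_n^{-2}T$-comparable time window meets the recipient's support; this gives another gain. If $\lambda_n\to0$, the source is concentrated with total density mass $\int\rho(U_n)\sim\lambda_n^{3-p/2}\to$ small, for $p=3+\alpha<6$. Its potential on the recipient is controlled by that mass times $\sup$ of $I_\alpha$ averaged, again via a subcritical pair, yielding a positive power of $\lambda_n$, with the time-window mismatch handled the same way. In each case I would simply track the exponent of $\lambda_n$ and verify that it is strictly nonzero with the correct sign, which follows because $L^\infty_{t,x}$ and $L^1_{t,x}$ norms scale differently from the critical ones. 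The reversed source--recipient order follows by symmetry of this computation. Together with the density reduction, each of the four terms tends to zero separately in $\NN$.
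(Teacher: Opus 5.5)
Your proposal is correct and follows essentially the paper's argument: reduce via Lemma~\ref{prof:density-factors} and frame invariance to compact seeds, then use orthogonality to split into three cases. These are scale separation (a subcritical HLS pair together with the short time window of the smaller-scale factor), time separation (disjoint time supports), and spatial separation (the kernel bound $C|z|^{\alpha-3}$ between supports). Your recipient-frame normalization and subsequence trichotomy are only cosmetic variants of the paper's normalization of the larger scale to one and its $\eta$--$D$ bookkeeping.

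One small correction concerns the time window. When the source is the larger profile there is no time-window gain in your normalization, because the $L^2_t$ integral runs over the recipient's fixed support. None is needed there, since the potentials are already $O(\lambda_n^{(\alpha-3)/2})$ and $O(\lambda_n^{(\alpha-5)/2})$ uniformly. When the source is the smaller profile and carries the derivative, however, the window factor $\lambda_n$ is what offsets the extra $\lambda_n^{-1}$, so you must track it explicitly in that case.
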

\begin{proof}
There are two cases: widely separated scales yield a positive scaling
power, while comparable scales force separation in space or time.
Lemma~\ref{prof:density-factors} and frame invariance reduce all four
terms uniformly to compatible compact seeds $f,g,h,k$, bounded by
$M_0$ and supported in $[-T,T]\times B_R$.
Normalize the larger scale to one and set
\[
 \begin{aligned}
 f_\varepsilon(t,x)&=\varepsilon^{-1/2}
 f((t-\tau)/\varepsilon^2,(x-z)/\varepsilon),\\
 h_\varepsilon(t,x)&=\varepsilon^{-3/2}
 h((t-\tau)/\varepsilon^2,(x-z)/\varepsilon),
 \qquad0<\varepsilon\le1.
 \end{aligned}
\]
The two majorants are
\[
 \mathcal A(u;v,k)=I(|u|^p)|v|^q|k|,\qquad
 \mathcal B(u,h;v)=I(|u|^{p-1}|h|)|v|^{q+1}.
\]
Put $\delta=(3-\alpha)/4>0$. HLS at
\[
 \left(\frac{12}{9+\alpha},\frac4{3-\alpha}\right),\qquad
 \left(\frac4{1+\alpha},\frac{12}{3-\alpha}\right)
\]
with recipient exponents $12/(1+3\alpha)$ and $12/(7+\alpha)$ gives
\begin{equation}\label{eq:cross-separated-scales}
\begin{array}{ll}
 \NN(\mathcal A(f_\varepsilon;g,k))\le C\varepsilon^{\delta+1},&
 \NN(\mathcal B(f_\varepsilon,h_\varepsilon;g))\le C\varepsilon^\delta,\\
 \NN(\mathcal A(g;f_\varepsilon,h_\varepsilon))\le C\varepsilon^\delta,&
 \NN(\mathcal B(g,k;f_\varepsilon))\le C\varepsilon^{\delta+1}.
\end{array}
\end{equation}
The input gaps from $1$ and $3/\alpha$ are, respectively,
\[
 \left(\frac{3-\alpha}{9+\alpha},\frac{3-\alpha}{1+\alpha}\right),
 \qquad
 \left(\frac{9(3-\alpha)}{\alpha(9+\alpha)},
       \frac{3-\alpha}{\alpha(1+\alpha)}\right)>0.
\]
The first density scales as
$\varepsilon^{3(9+\alpha)/12-p/2}=\varepsilon^\delta$;
a direction lowers the power by one. Reverse recipients give the
opposite pair of powers. The time intersection has length
$\le2T\varepsilon^2$, supplying $\varepsilon$ in $L^2_t$.

For $|\tau|>2T$ all four terms vanish. For $|z|\ge4R$, the kernel
between supports is bounded by
$c_\alpha2^{3-\alpha}|z|^{\alpha-3}$, giving, in the same order,
\begin{equation}\label{eq:cross-separated-centers}
 C|z|^{\alpha-3}
 \bigl(\varepsilon^{(5-\alpha)/2},\varepsilon^{(3-\alpha)/2},
       \varepsilon^{(3-\alpha)/2},\varepsilon^{(5-\alpha)/2}\bigr).
\end{equation}
All powers are positive, including the directional-source cases.
The common-frame weight $7/2$ preserves $\NN$, and orthogonality is
\begin{equation}\label{eq:cross-frame-escape}
 \varepsilon+\varepsilon^{-1}
       +( |z|^2+|\tau| )/\varepsilon\longrightarrow\infty.
\end{equation}
By \eqref{eq:cross-separated-scales} and \eqref{eq:cross-separated-centers},
choose a small $\eta$, then a large $D$. The errors are small for
$\varepsilon<\eta$ or $|z|>D$, and vanish for $|\tau|>2T$.
On the remaining configurations,
\begin{equation}\label{eq:cross-bounded-configuration}
 \varepsilon+\varepsilon^{-1}+(|z|^2+|\tau|)/\varepsilon
 \le1+\eta^{-1}(1+D^2+2T),
\end{equation}
so \eqref{eq:cross-bounded-configuration} contradicts \eqref{eq:cross-frame-escape}
for large $n$, in either scale order.
All compact integrals are absolute since $\alpha>0$, with derivatives
on the smooth densities. Choosing the joint approximation before $n$
transfers each separate limit to $U,V$ along the original sequence.
\end{proof}

We now sum the interactions for a fixed finite family.

\begin{proposition}[The finite-profile nonlinear defect]
\label{prof:finite-defect}
For finitely many global profiles $U_j\in C(\R;\X)$ with finite $\mathfrak X$ in
pairwise orthogonal frames, set
\[
 u_{j,n}(t,x)=r_{j,n}^{-1/2}
 U_j(s_{j,n}+t/r_{j,n}^2,(x-y_{j,n})/r_{j,n}),\qquad
 Z_n=\sum_{j<J}u_{j,n}.
\]
Then the actual scalar $e_n=F(Z_n)-\sum_{j<J}F(u_{j,n})$ satisfies
$\mathcal N(e_n)\to0$. It belongs to
$L^1(I;L^{6/5})$ on every compact interval, and its actual derivative
in $L^2(I;L^{6/5})$ is compatible with that same scalar source.
\end{proposition}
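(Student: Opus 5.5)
The plan is to write $\nabla e_n$ explicitly through the chain-rule formula $\partial_kF(u)=\mathcal C(u,\partial_ku)$ and expand multilinearly. The derivative $\partial_kF(Z_n)$ is not multilinear in the profiles, so the expansion must be organized before Lemma~\ref{prof:pure-cross} can be used.

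\textbf{Step 1: reduction to a bounded region.} By Lemma~\ref{prof:density-factors} and frame invariance of $\mathfrak X$ and $\NN$, each $U_j$ is replaced by a compactly supported jointly continuous approximant. The modulus $Cd^\vartheta$ is uniform on bounded $V,\SSS,W_d$ sets, so the $\NN$ error of $e_n$ is small uniformly in $n$. The approximation is fixed first and $n\to\infty$ is taken afterwards, exactly as at the end of Lemma~\ref{prof:pure-cross}.

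\textbf{Step 2: localization of the defect.} For compact seeds, orthogonality \eqref{prof:orthogonality} and the argument of \eqref{eq:cross-bounded-configuration} give the following dichotomy for each pair $j\ne k$. Either the supports of $u_{j,n}$ and $u_{k,n}$ become separated in spacetime, or their scales diverge. In the separated case the values overlap on no set; in the scale-divergent case they overlap only on a set of vanishing relative measure.

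I will split the defect pointwise. Consider
\[
 \rho(Z_n)-\sum_j\rho(u_{j,n}),\qquad \ell(Z_n)-\sum_j\ell(u_{j,n}),
\]
and their derivative actions. Each of these is bounded pointwise by sums of terms $|u_{j,n}|^{a}|u_{k,n}|^{b}$ with $j\ne k$, or by one derivative times such products. These are the local cross terms; the nonlocal ones remain.

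Then $\nabla e_n$ becomes a finite sum of two kinds of terms.
\begin{itemize}
\item Nonlocal pure-cross terms, $I(\rho'(u_{j,n})\partial u_{j,n})\ell(u_{k,n})$ and $I(\rho(u_{j,n}))\ell'(u_{k,n})\partial u_{k,n}$ with $j\ne k$. Each tends to zero in $\NN$ by Lemma~\ref{prof:pure-cross}.
\item Terms in which a local cross factor sits inside or outside $I$, with the other factor bounded in $X_i$ or $Y_i$. Here HLS \eqref{eq:mixed-slot-hls} reduces matters to showing that local products such as $|u_{j,n}|^{a}|u_{k,n}|^{b}$ with $a,b>0$ tend to zero in the relevant $L^{t}L^{x}$ spaces.
\end{itemize}

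\textbf{Step 3: vanishing of local cross products.} This is the standard computation for compact seeds. Such a product vanishes identically once the supports separate. Under scale divergence, H\"older in the scaled variables gives a positive power of $\varepsilon$, as in \eqref{eq:cross-separated-scales}. Because $p-2=1+\alpha>1$, segment bounds of the type \eqref{eq:power-differential-bounds} give pointwise domination by genuinely mixed products, with at least one positive power on each of two distinct profiles. That domination is what makes this step work.

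\textbf{Step 4: compatibility on compact intervals.} For each $n$, $Z_n$ lies in $\mathcal X(I)$ for every compact interval $I$. Lemma~\ref{local:nonlinear} then gives $F(Z_n)$ and each $F(u_{j,n})$ in $L^1(I;L^{6/5})$, with actual weak gradients in $L^2(I;L^{6/5})$, using the Sobolev cap on $L^6$ and $\mathsf S$ on bounded intervals. Linearity gives the same for $e_n$, with compatible gradient.

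\textbf{Main obstacle.} The hardest step is the bookkeeping in Step 2. The expansion of $\rho(Z_n)$ and $\ell'(Z_n)$ must be controlled by mixed local products in precisely the exponent spaces $X_0,X_1,Y_0,Y_1$, while every kernel interaction is routed through Lemma~\ref{prof:pure-cross}. I would first try proving pointwise inequalities of the form
\[
 \Bigl|\rho\Bigl(\sum_j z_j\Bigr)-\sum_j\rho(z_j)\Bigr|\le C\sum_{j\ne k}|z_j|^{p-1}|z_k|,
\]
and the analogue for $\ell'$ with exponent $q-1$. These must then be combined with the H\"older allocations of Lemma~\ref{prof:density-factors}.
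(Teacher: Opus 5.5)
Your proposal is correct and follows the paper's proof. It uses the same exact splitting of $\mathcal C(Z_n,\nabla Z_n)-\sum_j\mathcal C(u_{j,n},\nabla u_{j,n})$ into two kinds of terms: local defects of $\rho,\ell$ and of their derivative actions, which are placed in $X_0,X_1,Y_0,Y_1$ and controlled by HLS together with vanishing cross products; and ordered-pair nonlocal terms, which are handled by Lemma~\ref{prof:pure-cross}. Compatibility likewise comes from the chain rule and $\|F(v)\|_{6/5}\le C\|v\|_6^{2p-1}$.

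The differences are only organizational. The paper passes to compact seeds only inside the products $\||u_i||u_j|\|_{L^5_{t,x}}$ and $\||u_i||\partial_ku_j|\|_{L^5_tL^{15/8}_x}$, using bounds by $\SSS\,\SSS$ and $\SSS\,W_d$, rather than approximating the whole defect first. It also makes explicit that for the derivative actions the relevant cross product is value--derivative, $|u_i||\partial_ku_j|$ with $i\ne j$, not the value--value products you describe.
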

\begin{proof}
Put
\[
 P_2(f,g)=\||f||g|\|_{L^5_tL^5_x},\qquad
 Q_2(f,h)=\||f||h|\|_{L^5_tL^{15/8}_x}.
\]
For compact seeds at scales $1,\varepsilon\le1$, their support volume,
time length and amplitudes give
\[
 P_2\le C\varepsilon^{1/2},\qquad
 Q_2\le C\varepsilon^{3/2}\ \text{or }C\varepsilon^{1/2}
\]
in the two direction placements. Separated comparable frames have
disjoint supports. The bounded-configuration argument in
Lemma~\ref{prof:pure-cross} and
\[
 P_2(f,g)\le\SSS(f)\SSS(g),\qquad
 Q_2(f,h)\le\SSS(f)W_d(h)
\]
pass compact approximation to the original sequence and give
\begin{equation}\label{eq:profile-product-vanishing}
 P_2(u_{i,n},u_{j,n})\to0,\qquad
 Q_2(u_{i,n},\partial_ku_{j,n})\to0\quad(i\ne j),
\end{equation}
with both direction placements.

For $A=\rho,\ell$ of degree $d_A=p,q+1$, integration of
$[DA(v+tw)-DA(tw)]w$ gives
\[
 |A(v+w)-A(v)-A(w)|\le K_A(|v|+|w|)^{d_A-2}|v||w|,
\]
where $K_\rho=p(p-1)2^{p-2}$ and $K_\ell=q(q+5)2^{q-1}$.
Writing $R=\sum|z_j|$, telescoping and real linearity in the direction
give
\begin{equation}\label{eq:finite-power-telescoping}
 \begin{aligned}
 |A(\sum z_j)-\sum A(z_j)|
 &\le K_A R^{d_A-2\delta}\sum_{i<j}(|z_i||z_j|)^\delta,
       &&0<\delta\le1,\\
 |DA(\sum z_j)[\sum h_j]-\sum DA(z_j)[h_j]|
 &\le K_A R^{d_A-2}\sum_{i\ne j}|z_i||h_j|.
 \end{aligned}
\end{equation}
Both vanish at $R=0$.

Suppress $n$, fix a coordinate, and set
$h_j=\partial_ku_j$, $H=\sum h_j$, $Z=\sum u_j$ and
\[
 \begin{aligned}
 R_0&=\rho(Z)-\sum\rho(u_j),&
 R_1&=\rho'(Z)H-\sum\rho'(u_j)h_j,\\
 T_0&=\ell(Z)-\sum\ell(u_j),&
 T_1&=\ell'(Z)H-\sum\ell'(u_j)h_j.
 \end{aligned}
\]
With $V_\Sigma=\sum V(u_j)$, $S_\Sigma=\sum\SSS(u_j)$,
$\delta_0=\min(1,(3-\alpha)/2)$ and $\delta_2=1-\alpha/3$,
applying H\"older to \eqref{eq:finite-power-telescoping} in the factor spaces gives
\begin{equation}\label{eq:finite-factor-errors}
\begin{split}
 \|R_0\|_{X_0}&\le K_\rho V_\Sigma^{a_0}
 S_\Sigma^{b_0-2\delta_0}\sum_{i<j}P_2(u_i,u_j)^{\delta_0},\\
 \|T_0\|_{Y_0}&\le K_\ell V_\Sigma^{a_2}
 S_\Sigma^{b_2-2\delta_2}\sum_{i<j}P_2(u_i,u_j)^{\delta_2},\\
 \|R_1\|_{X_1}&\le K_\rho V_\Sigma^{a_1}
 S_\Sigma^{b_1-1}\sum_{i\ne j}Q_2(u_i,h_j),\\
 \|T_1\|_{Y_1}&\le K_\ell S_\Sigma^{b_3-1}
 \sum_{i\ne j}Q_2(u_i,h_j).
\end{split}
\end{equation}
All four norms in \eqref{eq:finite-factor-errors} tend to zero by
\eqref{eq:profile-product-vanishing}, since
\[
 b_0-2\delta_0\ge0,\quad b_2-2\delta_2=0,\quad
 b_1-1=1+2\alpha/3,\quad b_3-1=\alpha.
\]
The $Q_2$ reciprocals $1/5,8/15$ replace one $\SSS$ value and one
$W_d$ direction. On the common absolute-integrability set,
\begin{equation}\label{eq:finite-defect-decomposition}\begin{aligned}
 \mathcal C(Z,H)-\sum_i\mathcal C(u_i,h_i)
 ={}&I(R_1)\ell(Z)+\sum_i I(\rho'(u_i)h_i)T_0
       +\sum_{i\ne j}I(\rho'(u_i)h_i)\ell(u_j)\\
 &+I(R_0)\ell'(Z)H+\sum_i I(\rho(u_i))T_1
       +\sum_{i\ne j}I(\rho(u_i))\ell'(u_j)h_j.
\end{aligned}\end{equation}
In \eqref{eq:finite-defect-decomposition}, HLS and \eqref{eq:finite-factor-errors}
control the four local terms; Lemma~\ref{prof:pure-cross} controls each ordered-pair sum. Sum the three coordinates; $J=0,1$
give zero directly.

Equation~\eqref{eq:mixed-label-weak-gradient} identifies the vector in
\eqref{eq:finite-defect-decomposition} as the weak gradient of the same scalar $e_n$. Moreover,
\[
 \|F(v)\|_{6/5}\le C\|v\|_6^{2p-1},\qquad
 e_n\in L^\infty_tL^{6/5}_x\subset L^1(I;L^{6/5})
\]
for compact $I$. Joint measurability gives the Bochner fields;
spatial cutoff and $L^{6/5}$--$L^6$ H\"older extend the slice identities
to Schwartz tests. These are the compatible sources in
Lemma~\ref{local:mild}, for this fixed finite family.
\end{proof}

The free remainder is small in $L^{10}_{t,x}$, but its global gradient
Strichartz norm need not be small. Local smoothing supplies strong
gradient convergence on bounded cylinders; compact approximation then
allows multiplication by each fixed profile.

\subsection{The free remainder gradient and the small nonlinear tail}

\begin{lemma}[Local compactness of the free gradient]
\label{prof:localization}
If $v_n\rightharpoonup0$ in the gradient Hilbert space and
$\sup_nP(v_n)<\infty$, then for every $0<T,R<\infty$
\[
 \|1_{[-T,T]\times B_R}\nabla S(t)v_n\|_{L^2_{t,x}}\longrightarrow0.
\]
For the coefficient conclusions, let $U\in C(\R;\X)$ have finite
$\mathsf E_\R(U)$, $\SSS_\R(U)$ and $\mathsf W_\R(U)$. For each $k\in\{1,2,3\}$ set
$h_{n,k}=\partial_k S(t)v_n$. Then $Q_2(U,h_{n,k})\to0$, and both
$I(\rho'(U)h_{n,k})\ell(U)$ and $I\rho(U)\ell'(U)h_{n,k}$ tend to zero
separately in $\NN$.
\end{lemma}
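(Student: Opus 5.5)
The first assertion is the classical local smoothing compactness, and I will prove it by splitting frequencies. Fix a cutoff $\chi\in C_c^\infty$ equal to one on $B_R$ and write $g_n=\nabla v_n$. These are bounded in $L^2$ and converge weakly to zero. For a large parameter $N$, decompose $g_n=R_{N}g_n-R_{1/N}g_n+(1-R_N)g_n+R_{1/N}g_n$.

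\emph{Middle frequencies.} For the middle part, the operator $g\mapsto 1_{[-T,T]}\chi S(t)(R_N-R_{1/N})g$ is compact from $L^2$ into $L^2_{t,x}$. Its kernel is smooth and localized, and the symbol is compactly supported in frequency. Equivalently, Rellich (Lemma~\ref{std:rellich}) applies on the cylinder: the time derivative $i\Delta$ and the spatial gradient are bounded after the frequency cutoff. A compact operator maps weakly null sequences to strongly null ones, so this part tends to zero for each fixed $N$.

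\emph{Low frequencies.} Bernstein gives $\|R_{1/N}g\|_\infty\le CN^{-3/2}\|g\|_2$, uniformly in time by unitarity. Hence the contribution on $[-T,T]\times B_R$ is at most $CT^{1/2}R^{3/2}N^{-3/2}\sup_n P(v_n)$.

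\emph{High frequencies.} Here I use the Kato local smoothing estimate $\|\chi\,|\nabla|^{1/2}S(t)f\|_{L^2_{t,x}}\le C_R\|f\|_2$, which follows from the Fourier restriction/trace argument in polar coordinates. Applied to $f=|\nabla|^{-1/2}(1-R_N)g_n$, it gives a bound $C_RN^{-1/2}\sup_nP(v_n)$.

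Choosing $N$ large first and then $n$ large proves the first assertion. If one prefers to avoid smoothing, the high part can instead be handled crudely with the factor $T^{1/2}$: the $L^\infty_tL^2_x$ bound does not decay, but Strichartz $L^2_tL^6_x$ of $|\nabla|^{-\epsilon}$-type pieces, or smoothing, does. I will use smoothing, since it is standard.

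\emph{Coefficient conclusions.} I first reduce by Lemma~\ref{prof:density-factors}. Approximate $U$ by compact continuous seeds $f_m$ supported in $[-T,T]\times B_R$ with $\SSS(f_m-U)\to0$. The $h_{n,k}$ are uniformly bounded in $W_d$ by the free Strichartz estimate \eqref{input:eq-free-critical}. Then the continuity moduli in that lemma (value differences) and the bound $Q_2(f,h)\le\SSS(f)W_d(h)$ show it suffices to prove the three limits with $U$ replaced by a fixed bounded compact seed $f$, uniformly in $n$ up to an error $o_m(1)$. For the seed, the integrand is supported in the cylinder. H\"older interpolation between the strong local $L^2_{t,x}$ convergence just proved and the uniform $L^{10}_tL^{30/13}_x$ bound gives $\|1_{[-T,T]\times B_R}h_{n,k}\|_{L^{a}_tL^{b}_x}\to0$ for every exponent pair strictly between $(2,2)$ and $(10,30/13)$ in the obvious convex sense. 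The factors $|f|$, $\rho'(f)$, $\ell'(f)$ are bounded and compactly supported, so every exponent needed in $Q_2$ (namely $L^5L^{15/8}$) and in the $X_1,Y_1$ factor spaces of Lemma~\ref{prof:density-factors} can be reached on the bounded cylinder. Thus $\|\rho'(f)h_{n,k}\|_{X_1}\to0$ and $\|\ell'(f)h_{n,k}\|_{Y_1}\to0$. The slot HLS bounds \eqref{eq:mixed-slot-hls}, with the fixed factors $\ell(f)\in Y_0$ and $\rho(f)\in X_0$, then send both terms to zero in $\NN$.

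The main obstacle is the exponent bookkeeping in this last step. One must check that each target space (for instance $x_1=90/(57+11\alpha)$ and $x_3=30/(16+3\alpha)$ spatially, with the corresponding time exponents) lies below the $(10,30/13)$ endpoint, so that H\"older on a finite-measure set interpolates with the strong $L^2$ convergence. This holds because $1/x_1,1/x_3>13/30$ and $1/t_i\ge1/10$; in fact only the bounded seed factor is used, so the relevant exponent for $h$ is just $W_d$ downgraded on the cylinder. I will verify this case by case in the order $Q_2$, then $X_1$, then $Y_1$.
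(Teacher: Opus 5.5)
Your proposal is correct and follows the paper's route. Bounded frequencies tend to zero by compactness; the paper phrases this as pointwise convergence of fixed $L^2(B_N)$ pairings plus dominated convergence, and needs no separate Bernstein piece. High frequencies are controlled by local smoothing with an $N^{-1/2}$ gain; the paper proves this directly via six dominant-coordinate cones and Plancherel in $(t,x')$ rather than citing Kato. The coefficient limits are then reduced to compact seeds chosen before $n$, using the same-direction moduli of Lemma~\ref{prof:density-factors}.

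For the seeds the paper uses cruder bounds: interpolation against the $L^\infty_tL^2_x$ energy bound for $Q_2$, and Young's inequality with the truncated kernel $I_\alpha\mathbf 1_{B_{2R}}\in L^1$ for the two Hartree terms. Your interpolation against the uniform $L^{10}_tL^{30/13}_x$ bound followed by the slot HLS estimates also closes, since every needed exponent satisfies $1/t>1/10$ and $1/x>13/30$ strictly. Drop your aside about replacing smoothing by Strichartz, however: Strichartz alone gives no decay for the high-frequency part of an $L^2$ datum.
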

\begin{proof}
Partition nonzero frequencies into six disjoint signed
dominant-coordinate cones, breaking ties by coordinate order.
For $h\in L^2$ supported in cone $k$ with $N<|\xi|$ and a finite
upper cutoff, use
\[
 |\xi_k|\ge|\xi|/\sqrt3,\qquad
 \omega=-2\pi(\xi_k^2+|\xi'|^2),\qquad
 |d\omega/d\xi_k|=4\pi|\xi_k|.
\]
The fixed sign makes this change injective. Plancherel in $(t,x')$ gives
\[
 \int_{\R\times\R^2}|S(t)h(x_k,x')|^2\dd t\dd x'
 =\int\frac{|\widehat h(\xi)|^2}{4\pi|\xi_k|}\dd\xi
 \le\frac{\sqrt3}{4\pi N}\|h\|_2^2.
\]
Integrate over a width-$2R$ slab and use orthogonality of the data
pieces and six-term Cauchy--Schwarz:
\begin{equation}\label{prof:local-smoothing}
 \int_\R\int_{B_R}|S(t)1_{\{|\xi|>N\}}(D)f|^2\dd x\dd t
 \le\frac{3^{3/2}R}{\pi N}\|f\|_2^2.
\end{equation}
Estimate~\eqref{prof:local-smoothing} closes the upper truncations; unitarity on finite
cylinders identifies the limit without traces of rough $L^2$ values.

For $f_n\rightharpoonup0$ in $L^2$, each low-frequency value is an
$L^2(B_N)$ pairing tending to zero and bounded by
$|B_N|^{1/2}\sup_n\|f_n\|_2$. Dominated convergence on
$K=[-T,T]\times B_R$ and \eqref{prof:local-smoothing} give
\begin{equation}\label{eq:free-gradient-low-high-limit}
 \limsup_n\|1_KS(t)f_n\|_2^2
 \le\frac{C R}{N}\sup_n\|f_n\|_2^2\longrightarrow0
 \qquad(N\to\infty).
\end{equation}
Apply \eqref{eq:free-gradient-low-high-limit} to the three gradient coordinates.

Fix $h_n=\partial_kS(t)v_n$. Energy and Strichartz give finite
$E_*=\sup_n\|h_n\|_{L^\infty L^2}$ and $W_*=\sup_nW_d(h_n)$.
For $|a|\le A$ supported in $K$, spatial H\"older and
$\int b^5\le(\sup b)^3\int b^2$ give
\begin{equation}\label{eq:local-gradient-coefficient}
 Q_2(a,h_n)\le A|B_R|^{1/30}E_*^{3/5}
                    \|1_Kh_n\|_2^{2/5}\longrightarrow0,
 \qquad Q_2(U-a,h_n)\le\SSS(U-a)W_*.
\end{equation}
Compact approximation in $\SSS$, chosen before $n$, and
\eqref{eq:local-gradient-coefficient} give the first coefficient limit. For the Hartree terms, set
\[
 L_{R,\alpha}=\|I_\alpha1_{B_{2R}}\|_1
             =c_\alpha(4\pi/\alpha)(2R)^\alpha.
\]
Young's inequality and $L^2(B_R)\subset L^{6/5}(B_R)$ give separately
\begin{equation}\label{eq:localized-hartree-coefficients}
 \begin{split}
 \NN(I(\rho'(a)h_n)\ell(a))
 &\le pA^{2p-2}L_{R,\alpha}|B_R|^{1/3}\|1_Kh_n\|_2,\\
 \NN(I\rho(a)\ell'(a)h_n)
 &\le(q+1)A^{2p-2}L_{R,\alpha}|B_R|^{1/3}\|1_Kh_n\|_2.
 \end{split}
\end{equation}
Lemma~\ref{prof:density-factors} gives compact coefficients with
bounded $V$ and $d=\SSS(U-a)\to0$. Keeping $h_n$ unchanged, its
common-direction estimates bound both approximation errors by
$C(d+d^{\theta_0}+d^{\theta_2})$, uniformly in $n$.
Combine these approximation errors with \eqref{eq:localized-hartree-coefficients},
choosing the approximation before $n$; no smallness of $W_d(h_n)$ is needed.
\end{proof}

We apply Lemma~\ref{prof:localization} in each profile frame. At fixed $J$ and
$j<J\le M$, let $v_n=A_{j,n}^{-1}R_n^J$ and $w_n^J=S(t)R_n^J$.
Weak remainder and the all-$n$ bound
$P(v_n)\le B+\sum_{k<J}P(\phi_k)$ give the hypotheses above.
The change of frame gives
\[
 r_{j,n}^{3/2}\nabla w_n^J
 \bigl(r_{j,n}^2(\tau-s_{j,n}),y_{j,n}+r_{j,n}z\bigr)
 =\nabla S(\tau)v_n(z).
\]
The spacetime affine map has Jacobian $r_{j,n}^5$ and preserves a.e.
identities. The local strong estimate therefore applies on the normalized
cylinder. Transforming back gives the scale-invariant coefficient
estimate
\begin{equation}\label{prof:localized-pair}
 Q_2(u_{j,n},\partial_k w_n^J)\longrightarrow0
 \quad\hbox{for fixed admissible }J,\ j<J,\ k\in\{1,2,3\}.
\end{equation}
The two Hartree terms have amplitude $r^{-7/2}$ and invariant
$\NN$ norm; their inverse-frame conclusions have exactly the same
physical version.

\begin{lemma}[Small profiles and uniform reconstruction bounds]
\label{prof:tails}
Write $a_j=P(\phi_j)$ for the family of
Lemma~\ref{prof:countable-family}. Then $\sum_ja_j^2\le B^2$.
There are $\delta>0$ and $C_0,C_1\ge0$, independent of $j$, such that whenever
$a_j\le\delta$, the already chosen $U_j$ is global and
\[
 \mathfrak X(U_j)\le C_0a_j,\qquad
 \mathfrak X(U_j-S(t)\phi_j)\le C_1a_j^{m},\qquad m=2p-1>2.
\]
If $a_j=0$, then $U_j=0$.
Assume explicitly that the finitely many remaining profiles are
global with finite $\mathfrak X$. Then, for
\begin{equation}\label{prof:augmented}
 v_n^J=\sum_{j<J}u_{j,n}+S(t)R_n^J,
\end{equation}
there is one $M_0\ge1$ such that $\mathfrak X(v_n^J)\le M_0$ for
all $n$ and all finite depths. 
\end{lemma}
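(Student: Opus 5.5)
The plan has three parts, taken in order: the kinetic sum, the small-profile bounds, and the uniform bound on the augmented sum \eqref{prof:augmented}.

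\textbf{Kinetic sum.} The bound $\sum_ja_j^2\le B^2$ comes from item 2 of Proposition~\ref{prof:linear}. For every fixed admissible $J$, $\sum_{j<J}P(\phi_j)^2\le\liminf_nP(f_n)^2\le B^2$, since $P(R_n^J)^2\ge0$. Letting $J\uparrow M$ gives the claim.

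\textbf{Small profiles.} Take $\delta\le\varepsilon_*$ from Corollary~\ref{local:small-data}. For the profiles with $a_j\le\delta$, I would identify the chosen $U_j$ with the small-data solution in each of the three regimes of \eqref{eq:nonlinear-profile-choices}.
\begin{itemize}
\item[] \emph{Finite $s_j^*$.} The datum $S(s_j^*)\phi_j$ has norm $a_j$, so the corollary applies directly. It gives global existence, $\mathfrak X(U_j)\le C_0a_j$, and $\mathfrak X(U_j-S(\cdot-s_j^*)S(s_j^*)\phi_j)\le C_1a_j^{2p-1}$.
\item[] \emph{Divergent $s_{j,n}\to\pm\infty$.} Run the contraction of the corollary on the whole line with free part $S(t)\phi_j$ and Duhamel integral based at $\pm\infty$. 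The future and past operators of Theorem~\ref{std:endpoint} are uniform over intervals, so the same ball argument gives a global solution. It has the same bounds and the prescribed asymptote. Uniqueness of the wave operator then identifies it with $U_j$: compact uniqueness on a tail where both have small critical norm, combined with the Cauchy estimate \eqref{eq:terminal-wave-cauchy}.
\item[] \emph{$a_j=0$.} Here $\phi_j=0$, and the identification above yields $U_j=0$.
\end{itemize}
The exponent is $m=2p-1=5+2\alpha>2$.

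\textbf{Uniform bound on $v_n^J$.} Only finitely many $j$ have $a_j>\delta$, because $\sum a_j^2\le B^2$. Let $J_0$ be larger than all of these. By the explicit assumption, each such profile has finite $\mathfrak X$, and frame invariance (Lemma~\ref{local:covariance}) transfers this to $u_{j,n}$. I would bound the three components of $\mathfrak X$ separately.

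The kinetic part $\EE$ is controlled by splitting $v_n^J=\sum_{j<J}S(t)A_{j,n}\phi_j+S(t)R_n^J+\sum_{j<J}(u_{j,n}-S(t)A_{j,n}\phi_j)$.
\begin{itemize}
\item[] The linear pieces combine to $S(t)f_n$, whose gradient is bounded by $B$ in $L^\infty L^2$. By Strichartz, the same holds for the $\WW$ norm (bounded by $CB$).
\item[] The nonlinear corrections for $j\ge J_0$ are summable: $\sum_jC_1a_j^m\le C_1\delta^{m-2}B^2$.
\item[] The finitely many large corrections are bounded by fixed constants.
\end{itemize}
This handles both $\EE$ and $\WW$ uniformly in $J$ and $n$.

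\textbf{Main obstacle.} The $\SSS$ (and hence $L^{10}$) bound on the sum is the hardest step, since it is not additive in squares. I would expand $\|\sum_{J_0\le j<J}u_{j,n}\|_{L^{10}}^{10}$. The diagonal terms contribute $\sum\SSS(u_j)^{10}\le C\sum a_j^{10}$. The cross terms vanish as $n\to\infty$ by \eqref{eq:profile-product-vanishing}, the $P_2$ decay, for each fixed $J$. To make the bound uniform in $J$, I would first use the triangle inequality against the linear sum $\sum_{J_0\le j<J}S(t)A_{j,n}\phi_j=S(t)(R_n^{J_0}-R_n^J)$. Its $\SSS$ norm is at most $C\,P(R_n^{J_0})+C\,P(R_n^J)\le C'B$ by Strichartz and Lemma~\ref{local:critical-sobolev}. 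The remaining nonlinear corrections are summable as above. This bypasses the expansion entirely and gives $M_0$ independent of $n$ and $J$, with the finitely many large profiles adding a fixed constant.
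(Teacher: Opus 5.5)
Your kinetic-sum step matches the paper. So does the decomposition you use for $\EE$ and $\WW$: $v_n^J=S(t)f_n+\sum_{j<J}z_{j,n}$ with $z_{j,n}=u_{j,n}-S(t)b_{j,n}$, $\mathfrak X(z_{j,n})=\mathfrak X(U_j-S(t)\phi_j)$ by frame invariance, and $\sum_{j\ge J_0}a_j^m\le\delta^{m-2}B^2$. That is exactly the paper's argument; the paper writes the tail as $\sum_{j\ge L}a_j^m\le e_L^m$, which is the same $\ell^2\subset\ell^m$ observation.

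The gap is in your final paragraph, and the obstacle it addresses does not exist. $\mathfrak X=\EE+\SSS+\WW$ is a sum of norms. Your bound $\mathfrak X(U_j-S(t)\phi_j)\le C_1a_j^m$ for small profiles, and the finiteness for the large ones, already include the $\SSS$ component. Also $\SSS(S(t)f_n)\le C\,\WW(S(t)f_n)\le CB$ by Lemma~\ref{local:critical-sobolev} and Strichartz. So your first triangle inequality bounds all three components at once, for every $n$ and every finite depth.

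The substitute you propose for $\SSS$ does not give that uniformity. The inequality $C\,P(R_n^{J_0})+C\,P(R_n^J)\le C'B$ is unjustified. Kinetic decoupling gives only $\limsup_nP(R_n^J)^2\le B^2$ at each fixed $J$, with an index threshold that depends on $J$. For fixed $n$, the only available bound is $P(R_n^J)\le B+\sum_{j<J}a_j$, which need not stay bounded in $J$ because $(a_j)$ is merely square-summable. Avoiding an $\ell^1$ bound on $(a_j)$ is precisely the purpose of the lemma, and Theorem~\ref{prof:reconstruction} fixes $M_0$ before the depth is chosen. If you do not split $S(t)(R_n^{J_0}-R_n^J)$ by the triangle inequality, the $R_n^J$ term cancels against the remainder in \eqref{prof:augmented}. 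That leaves $S(t)R_n^{J_0}$, with the fixed bound $P(R_n^{J_0})\le B+\sum_{j<J_0}a_j$. The simplest repair, though, is to delete that paragraph and rely on your first decomposition.

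Your small-profile step is correct but longer than the paper's, which does not split into regimes. The paper sets $e_n=P(U_j(s_{j,n})-S(s_{j,n})\phi_j)\to0$ and takes $\delta$ to be half the radius of Corollary~\ref{local:small-data}, so $P(U_j(s_{j,n}))\le a_j+e_n$ eventually lies inside that radius. Reanchoring the corollary at time $s_{j,n}$, uniqueness makes the global small solution equal to $U_j$ itself. This gives $\mathfrak X(U_j)\le C(a_j+e_n)$ and $\mathfrak X(U_j-S(t)\phi_j)\le C(a_j+e_n)^m+C_{\rm free}e_n$, and one lets $n\to\infty$.

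That argument covers finite and divergent normalized times uniformly. It never has to identify a second solution built from $\pm\infty$ with the particular $U_j$ of Theorem~\ref{local:waves}, a uniqueness statement the paper does not record. If you keep your route, you must supply that identification. One way is the terminal-data case of Theorem~\ref{local:perturbation}: compare your whole-line solution with $u_R$, where $u_R(R)=S(R)\phi_j$, and let $R\to\infty$.
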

\begin{proof}
Kinetic decoupling and $P(R_n^J)^2\ge0$ give
\[
 \sum_{j<J}a_j^2\le B^2,
 \qquad\sum_ja_j^2\le B^2
\]
by increasing depth, or by the finite terminal depth.
For an already selected $U$, write
$a=P(\phi)$ and $e_n=P(U(s_n)-S(s_n)\phi)\to0$.
Choose $\delta$ half the radius in Corollary~\ref{local:small-data}.
If $a\le\delta$, then $P(U(s_n))\le a+e_n$ is eventually within
that radius. Reanchoring and unrestricted uniqueness identify the
same maximal trajectory with the global small solution. Thus
\begin{equation}\label{eq:small-profile-matching-bound}
 \mathfrak X(U)\le C(a+e_n),\qquad
 \mathfrak X(U-S(t)\phi)
 \le C(a+e_n)^m+C_{\rm free}e_n.
\end{equation}
Let $n\to\infty$ in \eqref{eq:small-profile-matching-bound}. The constants are uniform in the label and time
regime; $a=0$ gives $U=0$.

Put $e_L=(\sum_{j\ge L}a_j^2)^{1/2}\to0$ and choose $L_0$ beyond
the nonsmall profiles, with $L_0=M$ for finite count. Since $m>2$,
\begin{equation}\label{eq:nonlinear-profile-tail}
 \sum_{j\ge L}a_j^m\le e_L^m\quad(L\ge L_0).
\end{equation}
For $z_{j,n}=u_{j,n}-S(t)b_{j,n}$, frame invariance and exact
cancellation in \eqref{prof:augmented} give
\begin{equation}\label{eq:augmented-profile-cancellation}
 \mathfrak X(z_{j,n})\le C_1a_j^m\quad(j\ge L_0),
 \qquad v_n^J=S(t)f_n+\sum_{j<J}z_{j,n}.
\end{equation}
Equations~\eqref{eq:augmented-profile-cancellation} and \eqref{eq:nonlinear-profile-tail}
control every $n,J$ by the fixed bound
\begin{equation}\label{eq:uniform-profile-cap}
 M_0=\max\left\{1,C_{\rm free}B+
 \sum_{j<L_0}\mathfrak X(U_j-S(t)\phi_j)+C_1e_{L_0}^m\right\}.
\end{equation}
The cap \eqref{eq:uniform-profile-cap} includes partial heads and $J=0$ and uses no $\ell^1$ bound on
$(a_j)$.

For fixed admissible $L\le J$, let
$q_n^{L,J}=\sum_{L\le j<J}b_{j,n}=R_n^L-R_n^J$.
Finite inverse-frame weak tests give
$\langle q_n^{L,J},R_n^J\rangle_\nabla\to0$.
Subtracting the kinetic decouplings yields
\begin{equation}\label{eq:profile-tail-kinetic}
 P(q_n^{L,J})^2
 =P(R_n^L)^2-P(R_n^J)^2
   -2\re\langle q_n^{L,J},R_n^J\rangle_\nabla
 \longrightarrow\sum_{L\le j<J}a_j^2.
\end{equation}
Apply the free estimate and \eqref{eq:profile-tail-kinetic} to $S(t)q_n^{L,J}$,
then add the corrections controlled by \eqref{eq:nonlinear-profile-tail}. For fixed $L_0\le L\le J$
and $\eta>0$, eventually
\begin{equation}\label{prof:tail-block}
 \mathfrak X\left(\sum_{L\le j<J}u_{j,n}\right)
 \le C_{\rm free}(e_L+\eta)+C_1e_L^m.
\end{equation}
Only the index threshold depends on $L,J,\eta$; the bound is uniform
in depth. Empty tails and finite zero padding are included.
\end{proof}

The reconstruction bound depends on a fixed finite collection of large
profiles. The remaining nonlinear corrections are summable because
$m>2$ and $(a_j)\in\ell^2$. Thus the perturbation constants can be fixed
before choosing the profile depth.

\subsection{The residual and two-sided reconstruction}

\begin{theorem}[Nonlinear reconstruction at a selected depth]
\label{prof:reconstruction}
For the common family above suppose the finitely many nonsmall
selected profiles are global with finite $\mathfrak X$.
Then the original canonical solutions through $f_n$ are global with
one uniform finite $\mathfrak X$ bound for all sufficiently large $n$.
For every $\varepsilon>0$ there is a depth allowed by
\eqref{prof:selected-remainder} and one subsequent index threshold
such that $\mathfrak X(u_n-v_n^J)<\varepsilon$ beyond that threshold.
Consequently $u_n$ scatters in both directions in $\X$.
\end{theorem}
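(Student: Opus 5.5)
The plan is to apply the interval-independent perturbation theorem, Theorem~\ref{local:perturbation}, with approximate solution $q=v_n^J$ from \eqref{prof:augmented}, on arbitrary compact intervals $I$ containing time zero. The constants $B,M$ are fixed first from the uniform cap of Lemma~\ref{prof:tails}. We take $M=M_0$, since $\mathfrak X(v_n^J)\le M_0$ for all $n$ and all depths. For $B$ we use the energy-state cap implied by the same bound. Theorem~\ref{local:perturbation} then supplies $\varepsilon_0$ and $C$, both independent of $I$, $n$ and $J$. The initial error vanishes exactly: $v_n^J(0)=\sum_{j<J}u_{j,n}(0)+R_n^J$, and the matching \eqref{prof:matching} gives $P(u_{j,n}(0)-b_{j,n})=P(U_j(s_{j,n})-S(s_{j,n})\phi_j)\to0$. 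Hence $\|\nabla(f_n-v_n^J(0))\|_2\to0$ for each fixed $J$.

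Next I would compute the error. Since each $u_{j,n}$ solves \eqref{eq:hartree} and $S(t)R_n^J$ is free, $q$ solves $iq_t+\Delta q+F(q)=e$ with
\[
 e=F(v_n^J)-F(Z_n)+\Bigl[F(Z_n)-\sum_{j<J}F(u_{j,n})\Bigr],\qquad Z_n=\sum_{j<J}u_{j,n}.
\]
Two issues arise. The infinitely many small profiles in the sum require a cut at $L_0$. The tail block $\sum_{L\le j<J}u_{j,n}$ is small in $\mathfrak X$ by \eqref{prof:tail-block} once $L$ is large. The bracket is the finite-profile defect of Proposition~\ref{prof:finite-defect}, with $\mathcal N\to0$ as $n\to\infty$ at fixed $J$. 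It is also a compatible source in the sense of Lemma~\ref{local:mild}, as the perturbation theorem requires.

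The main obstacle is the term $F(v_n^J)-F(Z_n)$, which involves the free remainder $w_n^J=S(t)R_n^J$. Its $\SSS$ norm is small, but its $\WW$ norm is not. To handle it, I would expand $\nabla(F(Z_n+w)-F(Z_n))$ as in the six-term table of Lemma~\ref{local:nonlinear}, using the difference estimate \eqref{local:difference-source}.
\begin{itemize}
\item Terms carrying a factor $\SSS(w)$, such as those with $e$ or $D^{\alpha/3}e^\vartheta$, are small by \eqref{prof:selected-remainder}, with uniform bounds $M_0$ on the remaining factors. This works because $\WW(Z_n)$ is bounded.
\item The terms with $z=\WW(w)$, where the gradient falls on $w$, are the dangerous ones. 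There the factors $V^{p-2}$ or $V^{p-1}$ expand into pure-$w$ parts and mixed parts. Pure-$w$ parts still contain $\SSS(w)^{4}$ and are small.
\item The remaining pieces pair $\nabla w$ with a profile coefficient. These are exactly $Q_2(u_{j,n},\partial_kw_n^J)$ and the two Hartree coefficient terms. They tend to zero by \eqref{prof:localized-pair} and Lemma~\ref{prof:localization} for each of the finitely many $j<L_0$.
\item The small-profile block contributes with coefficient $\mathfrak X$ small, via \eqref{prof:tail-block}.
\end{itemize}

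The order of choices is therefore: fix $\varepsilon$, and choose $\varepsilon'\le\varepsilon_0$ with $C\varepsilon'<\varepsilon$. Then choose $L$ so that the tail terms are at most $\varepsilon'/4$, and choose $J\ge L$ admissible with $\SSS(w_n^J)$ small. Finally choose $n$ large so that the matching, the defect and the localized pair terms are small. With these choices $\mathcal N_I(e)+\|\nabla(f_n-q(0))\|_2\le\varepsilon'$, uniformly in $I$.

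Theorem~\ref{local:perturbation} then gives existence of $u_n$ on every compact $I$ and $\mathfrak X_I(u_n-v_n^J)\le C\varepsilon'<\varepsilon$. Exhausting $\R$ by compact intervals gives globality and the global bound $\mathfrak X(u_n)\le M_0+1$. Scattering in both directions then follows from Theorem~\ref{local:finite-s}, since the gradient cap and the finite two-sided $\SSS$ norm hold.
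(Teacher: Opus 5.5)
Your proposal is correct and follows essentially the paper's proof. Both arguments perturb around $v_n^J$ with Theorem~\ref{local:perturbation}, whose constants are fixed from $M_0$. Both use the finite-profile defect of Proposition~\ref{prof:finite-defect} and the tail block \eqref{prof:tail-block}. Both split the profile--remainder interaction into $\SSS(w_n^J)$-small pieces plus the localized pairings \eqref{prof:localized-pair}, with the same order of choices $\varepsilon\to L\to J\to n$. The differences are cosmetic. The paper removes the tail first with the Lipschitz bound \eqref{eq:full-graph-lipschitz} and treats the head through the common-direction continuity \eqref{eq:common-direction-continuity}, where you use the six-term majorant expansion. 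Your head/tail cut must sit at the large index $L$, not at $L_0$ as your bullets say. You can also drop the cut entirely, since \eqref{prof:localized-pair} holds for every $j<J$ at fixed $J$ and the coefficient bounds on $Z_n=v_n^J-w_n^J$ are uniform through $M_0$.
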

\begin{proof}
We first control the residual, then compare with the exact flow on
expanding compact intervals. The order of choices is essential: fix
the uniform reconstruction bound and perturbation constants, choose a
finite group of profiles, and only then choose the remainder depth.
Lemma~\ref{prof:tails} fixes $M_0$ and
Theorem~\ref{local:perturbation} fixes $\epsilon_*,C_+,C_->0$
before depths or tolerances. At fixed admissible $J$, kinetic
decoupling gives eventually
\begin{equation}\label{eq:free-remainder-cap}
 P(R_n^J)\le B+1,\qquad
 \mathfrak X(w_n^J)\le C_{\rm free}(B+1),\qquad
 w_n^J=S(t)R_n^J.
\end{equation}
Localization retains its all-$n$ finite-$J$ bound. Put
$e_n^J=F(v_n^J)-\sum_{j<J}F(u_{j,n})$.
For $\theta=(3-\alpha)/3$, \eqref{local:difference-source}, Sobolev
and $\alpha/3+\theta=1$ give a full-$\mathfrak X$ Lipschitz bound
at cap $M_0+1$:
\begin{equation}\label{eq:full-graph-lipschitz}
 \mathcal N(F(f)-F(g))\le K\mathfrak X(f-g).
\end{equation}
For a common direction, Lemma~\ref{prof:density-factors} gives on
bounded $V,\SSS,W_d(h)$ sets
\begin{equation}\label{eq:common-direction-continuity}
 \NN(\mathcal C(U,h)-\mathcal C(V,h))
 \le C\mu(\SSS(U-V)),\qquad
 \mu(d)=d+d^{\theta_0}+d^{\theta_2}.
\end{equation}

Fix $L\ge L_0$. For a subsequently selected $J$, set
\[
 H=\min(J,L),\quad A_n=\sum_{j<H}u_{j,n},\quad
 T_n=\sum_{H\le j<J}u_{j,n},\quad R_n=\sum_{j<H}|u_{j,n}|.
\]
Maximize constants over the finitely many prefixes $H\le L$.
Kernel positivity and the factor estimates give
\begin{equation}\label{eq:profile-head-direction}
 \begin{aligned}
 |\mathcal C(A_n,k)|
 &\le pI(R_n^{p-1}|k|)R_n^{q+1}+(q+1)I(R_n^p)R_n^q|k|,\\
 \NN(\mathcal C(A_n,k))
 &\le C_LQ_2(R_n,k)\le C_L\sum_{j<H}Q_2(u_{j,n},k).
 \end{aligned}
\end{equation}
The allocation uses
$a_1+a_2=a_0=2\alpha$ and
$b_1-1+b_2=b_0+b_3-1=3$.
For $k=\nabla w_n^J$, \eqref{prof:localized-pair} makes the sum in
\eqref{eq:profile-head-direction} tend to zero at fixed $J$. By real linearity,
\begin{equation}\label{eq:head-remainder-splitting}
\begin{split}
 \nabla F(A_n+w_n^J)-\nabla F(A_n)
 ={}&\mathcal C(A_n+w_n^J,\nabla A_n+\nabla w_n^J)
       -\mathcal C(A_n,\nabla A_n+\nabla w_n^J)\\
 &+\mathcal C(A_n,\nabla w_n^J).
\end{split}
\end{equation}
The bound \eqref{eq:free-remainder-cap} controls the directions and coefficients.
Apply \eqref{eq:common-direction-continuity} and \eqref{eq:profile-head-direction}
to \eqref{eq:head-remainder-splitting}, with constants depending only on
$L,B$ and the fixed profiles:
\begin{equation}\label{prof:head-remainder}
 \mathcal N(F(A_n+w_n^J)-F(A_n))
 \le D_L\mu(\SSS(w_n^J))+q_{L,J}(n),\qquad q_{L,J}(n)\to0.
\end{equation}
The estimate \eqref{prof:head-remainder} requires no small $\WW(w_n^J)$.

The single-source estimate and Lemma~\ref{prof:tails} give
$\mathcal N(F(U_j))\le Ca_j^m$ on the tail.
For residual tolerance $\epsilon>0$, choose in order
\begin{equation}\label{eq:residual-parameter-order}
 \begin{array}{c|l}
 d\in(0,1)&Kd<\epsilon/5\\
 L\ge L_0&C_{\rm free}e_L+C_1e_L^m<d/2,\quad Ce_L^m<\epsilon/5\\
 \eta,\sigma>0&C_{\rm free}\eta<d/2,\quad D_L\mu(\sigma)<\epsilon/5\\
 J,N_s&\text{the output of \eqref{prof:selected-remainder} for }\sigma.
 \end{array}
\end{equation}
For finite count take $L=M$, including zero. Keep this $J$, even
when $J<L$, and enlarge only its index threshold for the remainder
energy, \eqref{prof:tail-block}, localization, the finite-head defect
and matching. If $J<L$, the tail is empty; otherwise
$\mathfrak X(T_n)<d$. Thus $A_n+w_n^J=v_n^J-T_n$ has cap $M_0+1$.
The exact decomposition is
\begin{equation}\label{eq:residual-four-part-splitting}
\begin{split}
 \nabla e_n^J={}&[\nabla F(v_n^J)-\nabla F(A_n+w_n^J)]
 +[\nabla F(A_n+w_n^J)-\nabla F(A_n)]\\
 &+[\nabla F(A_n)-\sum_{j<H}\nabla F(u_{j,n})]
 -\sum_{H\le j<J}\nabla F(u_{j,n}).
\end{split}
\end{equation}
Apply \eqref{eq:full-graph-lipschitz} and \eqref{prof:head-remainder}
to \eqref{eq:residual-four-part-splitting}:
\begin{equation}\label{eq:residual-five-bounds}
 \begin{aligned}
 \mathcal N(e_n^J)
 &\le Kd+D_L\mu(\sigma)+q_{L,J}(n)+Ce_L^m\\
 &\quad+\mathcal N\Bigl(F(A_n)-\sum_{j<H}F(u_{j,n})\Bigr)
 <\epsilon.
 \end{aligned}
\end{equation}
eventually, by Proposition~\ref{prof:finite-defect} and the choices in
\eqref{eq:residual-parameter-order}; each term in \eqref{eq:residual-five-bounds}
is $<\epsilon/5$. All coefficient constants precede $J$.
At time zero, matching and affine isometry give
\begin{equation}\label{eq:profile-initial-matching}
 v_n^J(0)-f_n=\sum_{j<J}T_{r_{j,n},y_{j,n}}
 [U_j(s_{j,n})-S(s_{j,n})\phi_j],\qquad
 P(v_n^J(0)-f_n)\longrightarrow0.
\end{equation}

Finite sums and the weak chain rule give compatible fields
\[
 v_n^J\in C_t\X\cap\mathcal X(I),\qquad
 e_n^J\in C_tL^{6/5}_x,\qquad
 \nabla e_n^J\in L^2_tL^{6/5}_x
\]
on every compact $I$. Spatial cutoff extends the weak identities to
Schwartz tests, as in Lemma~\ref{local:mild}.
Frame covariance and the free equation give
\[
 (i\partial_t+\Delta)u_{j,n}=-F(u_{j,n}),\qquad
 (i\partial_t+\Delta)v_n^J+F(v_n^J)=e_n^J.
\]
The compatible mild formula therefore has signs $+iDF(v_n^J)-iDe_n^J$.

Take $2\epsilon\le\epsilon_*$ and, by \eqref{eq:profile-initial-matching},
an index threshold with initial error $<\epsilon$. Apply Theorem~\ref{local:perturbation} on
$[0,k]$ and $[-k,0]$ with the same datum $f_n$.
Unrestricted uniqueness identifies all restrictions and traces;
the group law joins the oriented mild formulas at zero. Their union
is global and canonical maximality identifies it with the original $u_n$.
Mixed-norm exhaustion gives
\[
 \mathfrak X(u_n-v_n^J)\le2(C_++C_-)\epsilon,
 \qquad\mathfrak X(u_n)\le M_0+2(C_++C_-)\epsilon.
\]
Arbitrarily small $\epsilon$ gives approximation; one fixed choice
gives the eventual uniform bound. Theorem~\ref{local:finite-s} then
gives both scattering states for these same solutions.
\end{proof}

The critical-energy argument will use this reconstruction to exclude multiple nontrivial profiles.

\section{The critical trajectory and scale selection}
\label{crit:section}

We now carry out the concentration--compactness reduction for the Hartree
flow. Minimality leaves one nonzero profile, hence a compact orbit modulo
scale and translation. The scale arguments then select a global
half-orbit with a bounded physical scale; neither a bounded center nor
a positive lower scale bound is required.

Set $\kappa=2p/(p-1)$ and $E_W=E(W)=(p-1)G/(2p)$.
The stable set is
\[
 \mathcal K=\{f\in\X:E(f)<E_W,\ P(f)^2<G\}.
\]
For the maximal solution through $f$ at zero, write
$I(f)=(T_-(f),T_+(f))$. A spacetime norm on $I(f)$ may be infinite;
it is the supremum of the corresponding norms on compact subintervals. Sharpness, positivity of $D$, and trapping give
\begin{equation}\label{eq:stable-coercive-bounds}
 \begin{aligned}
 0\le\frac1\kappa P(f)^2\le E(f)\le\frac12P(f)^2
 &\quad(P(f)^2\le G),\\
 P(u_f(t))^2\le\kappa E(f)<G&\quad(f\in\mathcal K).
 \end{aligned}
\end{equation}
The lower bound in \eqref{eq:stable-coercive-bounds} follows from
$D(f)\le G^{1-p}P(f)^{2p}\le P(f)^2$.
The threshold $G$ is the sharp ground-state level fixed in
Theorem~\ref{std:sharp}.

\subsection{The critical energy and compactness modulo symmetries}

We first pass compact-interval stability to a half-line.

\begin{lemma}[One-sided stability and lifespan control]
\label{crit:halfline-stability}
Suppose $v_n$ are actual solutions on $[0,\infty)$ with uniformly
bounded $\EE$ and $\SSS$ there, and
$P(g_n-v_n(0))\to0$. For all large $n$ the canonical solution through
$g_n$ contains this whole future half-line and has uniformly finite
future $\SSS$. Its discrepancy obeys the corresponding endpoint
and spacetime perturbation bounds. The analogous statement holds on
$(-\infty,0]$, using the backward compact theorem.
\end{lemma}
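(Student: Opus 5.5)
The plan is to reduce the half-line statement to Theorem~\ref{local:perturbation} on compact intervals $[0,k]$. The key requirement is that the constants stay uniform in $k$, and then we exhaust. Fix $B\ge\sup_n\EE_{[0,\infty)}(v_n)$ and $M\ge\sup_n\SSS_{[0,\infty)}(v_n)$. Theorem~\ref{local:perturbation} then supplies $\varepsilon_0(B,M)$ and $C(B,M)$, both independent of the interval length. We apply it with $q=v_n$ and error $e=0$, since $v_n$ is an actual solution. The hypothesis $\|\nabla(g_n-v_n(0))\|_2+\mathsf N(\nabla e)\le\varepsilon\le\varepsilon_0$ then reduces to $P(g_n-v_n(0))\le\varepsilon_0$, which holds for all $n\ge n_0$, with $n_0$ independent of $k$. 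Since $v_n$ restricted to $[0,k]$ lies in $\mathcal X([0,k])$ by Theorem~\ref{local:flow}, the hypotheses are met. The conclusion is that the canonical solution $u_n$ through $g_n$ exists through $[0,k]$ and satisfies
\[
 \EE_{[0,k]}(u_n-v_n)+\WW_{[0,k]}(u_n-v_n)+\mathsf Z_{[0,k]}(u_n-v_n)+\SSS_{[0,k]}(u_n-v_n)\le C\,P(g_n-v_n(0)).
\]

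Next I let $k\to\infty$. Uniqueness in Theorem~\ref{local:flow} identifies the solutions obtained on the nested intervals $[0,k]$ with one maximal trajectory. Hence $T_+(g_n)=\infty$, that is, the lifespan contains $[0,\infty)$. The bound above is uniform in $k$, so monotone convergence of the mixed norms gives the same inequality on $[0,\infty)$. In particular $\SSS_{[0,\infty)}(u_n)\le M+C\varepsilon_0$ uniformly in $n$, and likewise for $\EE$. These are exactly the endpoint and spacetime discrepancy bounds claimed, and they tend to zero as $P(g_n-v_n(0))\to0$.

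For $(-\infty,0]$ I would use the backward (terminal-data) form of Theorem~\ref{local:perturbation} on $[-k,0]$, anchored at the terminal time $0$ with datum $g_n$ close to $v_n(0)$. The argument is otherwise identical, using the future operators of Theorem~\ref{std:endpoint} with their own constants.

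The only delicate point is that Theorem~\ref{local:perturbation} may require an essential energy cap on the interval, rather than a pointwise one. Here $\EE$ is assumed uniformly bounded, so this causes no difficulty. I expect the main thing to verify carefully is that $\varepsilon_0$ and $C$ genuinely do not depend on $k$. This is guaranteed by the statement of Theorem~\ref{local:perturbation}, since the number of partition pieces there depends only on $B$ and $M$.
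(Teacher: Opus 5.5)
Your proposal is correct and follows essentially the same route as the paper. Both arguments apply Theorem~\ref{local:perturbation} with $q=v_n$ and zero error on each $[0,k]$, using constants independent of $k$; they then identify the resulting solutions with the canonical maximal trajectory by uniqueness and pass to $[0,\infty)$ by monotone convergence, using the backward (terminal-data) version on $[-k,0]$ for the past.
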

\begin{proof}
Put $d_n=P(g_n-v_n(0))\to0$. The uniform reference caps and
Theorem~\ref{local:perturbation} give $\varepsilon,C>0$, independent of
$k$, such that, for $d_n<\varepsilon$, the solution on $[0,k]$ satisfies
\begin{equation}\label{eq:compact-halfline-stability}
 \SSS_{[0,k]}(u_{g_n}-v_n)
 +\sup_{0\le t\le k}P(u_{g_n}(t)-v_n(t))\le Cd_n.
\end{equation}
Uniqueness identifies these solutions on overlaps. Their union, joined
to the local solution near zero, lies in the canonical maximal trajectory.
Consequently
\[
 [0,\infty)\subset I(g_n),\qquad
 \SSS_{[0,\infty)}(u_{g_n})
 \le\SSS_{[0,\infty)}(v_n)+Cd_n
\]
by monotone convergence in \eqref{eq:compact-halfline-stability}. The remaining perturbation estimates pass to
the same union. For the past, use the backward theorem with datum at
zero on $[-k,0]$.
\end{proof}

We can now minimize the energy among nonscattering trajectories.

\begin{proposition}[The critical element on its maximal lifespan]
\label{crit:element}
If Theorem~\ref{thm:main} fails, there are $0<E_c<E_W$ and a nonzero
actual canonical maximal solution $u_c$ on $I_c$ such that
\begin{equation}\label{crit:critical-properties}
 E(u_c(t))=E_c,\qquad
 2E_c\le P(u_c(t))^2\le\kappa E_c<G,
 \qquad
 \SSS_{I_c\cap(-\infty,t]}(u_c)
 =\SSS_{I_c\cap[t,\infty)}(u_c)=\infty.
\end{equation}
There are pointwise functions $r(t)>0$, $x(t)\in\R^3$ whose
normalized orbit
$\{T_{r(t),x(t)}^{-1}u_c(t):t\in I_c\}$ has compact closure in
$\X$. 
\end{proposition}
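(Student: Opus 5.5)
This is the Kenig--Merle reduction, carried out with the tools already proved. Define, for $0\le E<E_W$,
\[
 L(E)=\sup\{\SSS_{I(f)}(u_f):f\in\mathcal K,\ E(f)\le E\},
\]
and $E_c=\sup\{E:L(E)<\infty\}$. Several earlier results feed into this setup:
\begin{itemize}
 \item Corollary~\ref{local:small-data} together with \eqref{eq:stable-coercive-bounds} gives $E_c>0$.
 \item Theorem~\ref{local:finite-s} shows that finite $\SSS$ with the trapped gradient cap (Lemma~\ref{local:trapping}) implies global existence and scattering.
 \item Hence if Theorem~\ref{thm:main} fails, $E_c<E_W$.
\end{itemize}
Take $f_n\in\mathcal K$ with $E(f_n)\downarrow E_c$ and $\SSS_{I(f_n)}(u_{f_n})\to\infty$. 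Using time translation and Lemma~\ref{local:covariance}, recenter each time so that both the past and the future norms $\SSS_{[0,T_+)}$ and $\SSS_{(T_-,0]}$ diverge; one can always split at a time where the two halves are equal.

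Apply Proposition~\ref{prof:linear} and Lemma~\ref{prof:countable-family} to $(f_n)$, which is bounded by $\sqrt G$. By the kinetic decoupling and the energy splitting Lemma~\ref{prof:energy-splitting}, together with \eqref{eq:stable-coercive-bounds} (each atom and remainder lies in the stable regime, so all energies are nonnegative), we get
\[
 \limsup\sum_j E(b_{j,n})\le E_c .
\]
Each nonlinear profile $U_j$ then has energy at most $E_c$ and gradient below $\sqrt G$. There are two cases.
\begin{itemize}
 \item If every profile has energy strictly below $E_c$, or if there are at least two nonzero profiles (each then carries energy $<E_c$ by strict positivity), then every $U_j$ is global with finite $\mathfrak X$, by the definition of $E_c$, Theorem~\ref{local:finite-s}, and Lemma~\ref{prof:tails} for the small ones. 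Theorem~\ref{prof:reconstruction} then gives a uniform $\SSS$ bound for $u_{f_n}$, a contradiction.
 \item Otherwise there is exactly one profile, $E(b_{0,n})\to E_c$, and the remainder tends to $0$ in $\X$. The remainder energy tends to $0$, and by coercivity so does $P(R_n^1)$.
\end{itemize}
In the one-profile case, if $s_{0,n}\to\pm\infty$, the wave-operator tails of Theorem~\ref{local:waves} combined with Lemma~\ref{crit:halfline-stability} give a finite future or past $\SSS$ for large $n$, contradicting the recentering. So $s_{0,n}\to s_0^*$ is finite, and $T_{r_n,y_n}^{-1}f_n$ converges in $\X$.

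Set $u_c=U_0$. The energy bound $E(u_c)\le E_c$ combined with the infinite two-sided norms (again via Lemma~\ref{crit:halfline-stability}, otherwise $u_{f_n}$ would inherit finite bounds) forces $E(u_c)=E_c$ and \eqref{crit:critical-properties}. The gradient bounds come from \eqref{eq:stable-coercive-bounds} and energy conservation (Theorem~\ref{local:energy}).

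For compactness, apply the same argument to any sequence $u_c(t_n)$: the data stay in $\mathcal K$ at energy $E_c$, and both half-norms from $t_n$ are infinite. The argument yields a subsequence and parameters $(r_n,y_n)$ with $T_{r_n,y_n}^{-1}u_c(t_n)$ convergent. A standard argument turns this into pointwise functions $r(t),x(t)$ with precompact normalized orbit. Choose the parameters to maximize, up to a factor $1/2$, a fixed localized gradient mass functional, for instance the largest $\rho$ with $\int_{B(x,\rho)}|\nabla u|^2\ge\epsilon_0$. Then the frames of any two extractions must be comparable. Nonvanishing ($P(u_c)^2\ge 2E_c>0$) prevents degeneration of the normalizing parameters.

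The main obstacle is the single-profile dichotomy step. It requires showing that the nonnegativity of the remainder and atom energies genuinely holds in the Hartree setting, where the nonlocal term does not split pointwise. This is exactly what Lemma~\ref{prof:energy-splitting} provides, but it holds only along a $J$-dependent subsequence. The extractions must therefore be ordered carefully: fix $J$, extract, and conclude, before invoking the reconstruction at a depth given by \eqref{prof:selected-remainder}.
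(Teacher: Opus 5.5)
Your proposal is correct and follows essentially the paper's route. The paper uses the same steps: a minimal bad energy with bilateral recentering, the Hartree profile decomposition with fixed-depth energy decoupling, reconstruction to force a single profile of energy $E_c$, wave operators plus half-line stability to exclude divergent normalized times, and a concentration-function normalization for compactness. Defining $E_c$ through the uniform bounds $L(E)$ instead of the paper's $\inf\mathcal B$ is harmless, since reconstruction only needs individual finite bounds for finitely many large profiles. One slip to fix: the normalizing radius must be the \emph{smallest} one, $r(f)=\inf\{a>0:\sup_y\int_{B(y,a)}|\nabla f|^2\ge\epsilon_0\}$ with $0<\epsilon_0<2E_c$ (the paper takes $\epsilon_0=E_c$). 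As you wrote it, the ``largest'' such radius is always infinite.
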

\begin{proof}
\emph{Minimal energy and profile allocation.}
For stable data, Theorem~\ref{local:finite-s}, small free tails and
uniqueness give
\begin{equation}\label{eq:finite-size-scattering-equivalence}
 \SSS_{I(f)}(u_f)<\infty
 \quad\Longleftrightarrow\quad u_f\text{ is global and scatters}.
\end{equation}
Set
\[
 \mathcal B=\{E(f):f\in\mathcal K,\ \SSS_{I(f)}(u_f)=\infty\},
 \qquad E_c=\inf\mathcal B.
\]
Failure and Corollary~\ref{local:small-data}, together with coercivity
and trapping, yield $\mathcal B\ne\varnothing$ and $0<E_c<E_W$.
By \eqref{eq:finite-size-scattering-equivalence}, every stable datum of energy below $E_c$ has finite global
$\SSS$ and $\mathfrak X$; these bounds need not be uniform.
Choose bad energies $E_n\to E_c$ with $E_n\le E_*<E_W$.
On each trajectory split a compact interval of $L^{10}$ integral at
least $2n^{10}$ into two pieces of integral at least $n^{10}$.
Recentering at the splitting time gives
\begin{equation}\label{crit:bilateral-sequence}
 E(f_n)\to E_c,\quad P(f_n)^2\le\kappa E_n,
 \quad
 \SSS_{I(f_n)\cap(-\infty,0]}(u_{f_n})\ge n,
 \quad
 \SSS_{I(f_n)\cap[0,\infty)}(u_{f_n})\ge n.
\end{equation}
Local finiteness and continuity of the cumulative integral justify the
split; conservation and uniqueness preserve the trajectory.

Apply Proposition~\ref{prof:linear} and Lemma~\ref{prof:countable-family}
on one common sequence. The energy subsequence in
Lemma~\ref{prof:energy-splitting} is chosen separately at each fixed $J$. With $a_j=P(\phi_j)$, kinetic decoupling gives, for
each admissible finite $J$,
\begin{equation}\label{crit:kinetic-allocation}
 \sum_{j<J}a_j^2\le\kappa E_c,\qquad
 \limsup_nP(R_n^J)^2\le\kappa E_c-\sum_{j<J}a_j^2<G.
\end{equation}
Together with \eqref{crit:kinetic-allocation}, matching and bounded-set energy continuity identify the conserved
energy of $U_j$ as
\begin{equation}\label{eq:profile-energy-limits}
 E(b_{j,n})\longrightarrow e_j=
 \begin{cases}
 E(S(s_j^*)\phi_j),&s_{j,n}\to s_j^*\in\R,\\
 a_j^2/2,&s_{j,n}\to+\infty\text{ or }-\infty.
 \end{cases}
\end{equation}
Since $P(S(s)\phi_j)^2=a_j^2\le\kappa E_c<G$, \eqref{eq:stable-coercive-bounds} and \eqref{eq:profile-energy-limits} yield
$e_j\ge a_j^2/\kappa$. On the energy subsequence for this fixed $J$, eventually
\begin{equation}\label{crit:energy-allocation}
 E(R_n^J)\ge\kappa^{-1}P(R_n^J)^2,
 \qquad E(R_n^J)\longrightarrow E_c-\sum_{j<J}e_j\ge0.
\end{equation}
Thus $\sum_{j<J}e_j\le E_c$ for every $J$, without asserting one
energy subsequence for all $J$. In particular $0\le e_j\le E_c<E_W$;
late matching, reanchoring and trapping place $U_j$ on the stable branch.

\emph{A single profile and finite normalized time.}
If all $e_j<E_c$, their individual finite-$\mathfrak X$ bounds supply
the nonsmall head in Theorem~\ref{prof:reconstruction}, contradicting
\eqref{crit:bilateral-sequence}. Hence $e_{j_0}=E_c$ for some active
$j_0$. Applying \eqref{crit:energy-allocation} to prefixes containing
$j_0$ and $j$ gives
\begin{equation}\label{eq:unique-active-profile}
 j\ne j_0\ \Longrightarrow\ e_j=a_j=0.
\end{equation}
Refine once at $J_0=j_0+1$. Equations \eqref{crit:energy-allocation} and \eqref{eq:unique-active-profile} give
\begin{equation}\label{crit:one-profile}
 P(f_n-T_{r_n,y_n}S(s_n)\phi)\longrightarrow0,
 \qquad\phi=\phi_{j_0}\ne0.
\end{equation}
If $s_n\to+\infty$, use \eqref{crit:one-profile} and the positive wave $V^+$ with asymptote
$S(s)\phi$ and set
\[
 v_n(t)=T_{r_n,y_n}V^+(s_n+t/r_n^2),\qquad t\ge0.
\]
For large $n$ these are actual future solutions with uniformly bounded
energy norm, $\SSS_{[0,\infty)}(v_n)\to0$, and
$P(f_n-v_n(0))\to0$. Lemma~\ref{crit:halfline-stability} contradicts
future divergence. The negative wave and backward stability separately
exclude $s_n\to-\infty$; neither wave needs its opposite half-line.
Thus $s_n\to s^*\in\R$ and
\[
 g_n:=T_{r_n,y_n}^{-1}f_n\longrightarrow f_c:=S(s^*)\phi
 \quad\text{in }\X,
 \qquad E(f_c)=E_c,\quad P(f_c)^2\le\kappa E_c<G.
\]
For the maximal solution $u_c$ through $f_c$, finite size on either
side would first extend that side to infinity by
Theorem~\ref{local:finite-s}; Lemma~\ref{crit:halfline-stability} would
then contradict the corresponding divergence in
\eqref{crit:bilateral-sequence}. Local finiteness moves both divergences
to every time. Conservation, trapping and $D\ge0$ give
\eqref{crit:critical-properties}.

\emph{Compactness of the normalized orbit.}
The argument just given proves Palais--Smale compactness for stable
sequences with energies tending to $E_c$ and both one-sided critical
sizes diverging. To choose a common normalization,
put
\[
 Q_f(a)=\sup_y\int_{B(y,a)}|\nabla f|^2,
 \qquad Q_f(0+)=0,\quad Q_f(\infty)=P(f)^2\ge2E_c.
\]
Absolute continuity on balls and thin shells, uniformly in the center,
makes $Q_f$ continuous and nondecreasing. Choose
\begin{equation}\label{eq:critical-normalization-choice}
 \begin{gathered}
 r(f)=\inf\{a>0:Q_f(a)\ge E_c\}\in(0,\infty),\qquad Q_f(r(f))=E_c,\\
 \int_{B(x(f),r(f))}|\nabla f|^2>E_c/2.
 \end{gathered}
\end{equation}
For $f_n=u_c(t_n)$, Palais--Smale gives
$f_n=T_{\lambda_n,y_n}h_n$, $h_n\to h$ in $\X$, and $E(h)=E_c$.
In particular
\begin{equation}\label{eq:concentration-function-convergence}
 \sup_{a>0}|Q_{h_n}(a)-Q_h(a)|
 \le\||\nabla h_n|^2-|\nabla h|^2\|_1
 \le(P(h_n)+P(h))P(h_n-h)\longrightarrow0.
\end{equation}
Choose $a,b>0$ with $Q_h(a)<E_c/2$ and $Q_h(b)>3E_c/2$.
Scaling, \eqref{eq:concentration-function-convergence}, and uniform tightness give
\[
 a\le\frac{r(f_n)}{\lambda_n}\le b,\qquad
 \sup_n\frac{|x(f_n)-y_n|}{\lambda_n}<\infty
\]
after discarding finitely many terms; the center bound uses the local
gradient concentration in \eqref{eq:critical-normalization-choice}. Extract convergent parameters with positive limiting
scale. Strong affine continuity, obtained by smooth density and isometry,
then gives a strongly convergent subsequence of
$T_{r(f_n),x(f_n)}^{-1}f_n$. Sequential compactness and completeness of
$\X$ prove compact closure on $I_c$.
\end{proof}

At a hypothetical finite endpoint the physical scale collapses.
Localized charge estimates then recover finite mass and force it to
vanish. This argument uses no mass assumption on the initial datum.

% Original environment 5.3 removed from the active main chain.
\setcounter{theorem}{3}

\subsection{Exclusion of finite endpoints}

\begin{proposition}[No finite endpoint of a compact normalized orbit]
\label{crit:global}
A nonzero canonical maximal trajectory with relatively compact
normalized orbit in $\X$ has neither finite maximal endpoint.
In particular the trajectory of Proposition~\ref{crit:element} is
global. 
\end{proposition}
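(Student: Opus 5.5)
By time reversal and Lemma~\ref{local:covariance}, it suffices to exclude a finite right endpoint $T_+<\infty$. The plan follows the Kenig--Merle scheme that the remark before the statement announces: first show that the physical scale collapses at $T_+$, then use localized charge to recover a finite mass and force it to zero. The argument needs no mass assumption on the datum.

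\emph{Step 1: scale collapse.} We claim $r(t)\to0$ as $t\uparrow T_+$. Suppose instead $r(t_n)\ge r_0>0$ along some $t_n\uparrow T_+$. Compactness gives a subsequence with $T^{-1}_{r(t_n),x(t_n)}u(t_n)\to g$ in $\X$. Theorem~\ref{local:flow} supplies a uniform positive existence time $\tau_0$ on the compact closure of the normalized orbit. By Lemma~\ref{local:covariance}, the solution through $u(t_n)$ then exists on $[t_n,t_n+r(t_n)^2\tau_0]\supset[t_n,t_n+r_0^2\tau_0]$. This passes $T_+$ for large $n$, a contradiction. The same argument with $\inf$ replaced by $\liminf$ gives the full limit.

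\emph{Step 2: vanishing local charge.} Fix $\zeta\in C_c^\infty$ equal to one on $B_1$ and put $\zeta_R=\zeta(\cdot/R)$. Compactness together with $r(t)\to0$ gives the following for each fixed $R$: for every $\epsilon>0$ there is $\rho$ such that $\int_{|x-x(t)|>\rho r(t)}|\nabla u|^2+|u|^6<\epsilon$. Hence H\"older on a ball of radius $\rho r(t)$ gives $\int_{|x-x(t)|\le\rho r(t)}|u|^2\le C(\rho r(t))^2\|u\|_6^2\to0$. The weight $\zeta_R^2$ is bounded, so the remaining contribution is at most $\int_{B_{2R}}|u|^2$ restricted to $|x-x(t)|>\rho r(t)$. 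This is at most $CR^2\bigl(\int_{|x-x(t)|>\rho r(t)}|u|^6\bigr)^{1/3}\le CR^2\epsilon^{1/3}$. Therefore $M_{\zeta_R}(t)\to0$ as $t\uparrow T_+$, for every $R$.

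\emph{Step 3: finite mass and its vanishing.} Lemma~\ref{local:charge} gives $|M_{\zeta_R}'|\le C_\zeta B^2$ uniformly in $R$, where $B^2\le\kappa E_c$. Integrating from $t$ to $T_+$ gives $M_{\zeta_R}(t)\le C_\zeta B^2(T_+-t)$. Letting $R\to\infty$ by monotone convergence yields $u(t)\in L^2$ with $\|u(t)\|_2^2\le CB^2(T_+-t)$. So the mass is finite on $[t_0,T_+)$ and tends to zero. On this interval we now have a uniform $L^2$ bound, so the refined estimate of Lemma~\ref{local:charge} applies: $|M_{\zeta_R}'|\le C_\zeta R^{-1}\sup\|u\|_2\,B$. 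Hence for $s,t\in[t_0,T_+)$, $|M_{\zeta_R}(t)-M_{\zeta_R}(s)|\le CR^{-1}|t-s|$. Let $s\uparrow T_+$, using Step 2, and then $R\to\infty$. This gives $\|u(t)\|_2=0$, so $u\equiv0$, contradicting nontriviality.

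\emph{Main obstacle.} The delicate point is Step 2 without an a priori $L^2$ bound. The local mass near the concentration point has to be controlled by $L^6$ on a shrinking ball, and the remainder by the small $L^6$ tail on the bounded region $B_{2R}$. This must work uniformly even though the center $x(t)$ is unrestricted; it may drift in or out of $B_{2R}$. The estimate above does not need any control of $x(t)$, because both pieces are bounded regardless of its location. The remaining care is to pass to the limit $s\uparrow T_+$ in Step 3 before sending $R\to\infty$.
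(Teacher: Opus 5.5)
Your proposal is correct and follows the paper's proof. The steps match: scale collapse from the uniform local existence time and affine covariance, vanishing of the localized charge at $T_+$, the $R$-uniform charge bound giving $\|u(t)\|_2^2\le CB^2(T_+-t)$, and then the $R^{-1}$ charge bound that forces $u(t)=0$. The only variation is in Step~2. There you use uniform $L^6$ tightness of the compact carrier together with H\"older on the shrinking ball $B(x(t),\rho r(t))$, whereas the paper uses a finite smooth $\epsilon$-net whose $L^2$ norms scale like $\rho^2$; both give $M_R(t)\to0$ without any control of $x(t)$.
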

\begin{proof}
Write $u(t)=T_{r(t),x(t)}h_t$, $h_t\in K$, with $K$ compact, and
let $B=\sup_KP$. Theorem~\ref{local:flow} supplies one centered
lifespan $[-\delta,\delta]$ for all data in $K$.
Maximal-domain covariance in Lemma~\ref{local:covariance} gives
\begin{equation}\label{eq:compact-orbit-local-lifespan}
 [t-\delta r(t)^2,t+\delta r(t)^2]\subset I(u).
\end{equation}
If $T_+<\infty$, \eqref{eq:compact-orbit-local-lifespan} forces $\delta r(t)^2<T_+-t$ and
$r(t)\to0$. The left endpoint gives the analogous collapse,
without regularity of the normalizing parameters.

For fixed $R,\epsilon>0$, take a finite smooth $P$-$\epsilon$ net
$\{g_i\}$ of $K$, and put $L_\epsilon=\max_i\|g_i\|_2^2$.
Sobolev and scaling give, uniformly in $h\in K$, $y$ and $\rho>0$,
\begin{equation}\label{eq:shrinking-orbit-local-mass}
 \int_{B_{2R}}|T_{\rho,y}h|^2
 \le2|B_{2R}|^{2/3}C_S^2\epsilon^2+2\rho^2L_\epsilon.
\end{equation}
In \eqref{eq:shrinking-orbit-local-mass}, choose $\epsilon$ before $\rho$. Let $\chi=\zeta^2$ for a real smooth
cutoff $0\le\zeta\le1$, equal to one on $B_1$ and supported in $B_2$.
Then $M_R(t)=\int\chi(x/R)|u(t,x)|^2\dd x\to0$ at each finite
endpoint. Only the approximants have required global $L^2$ membership.
Lemma~\ref{local:charge} and
$\|\nabla\chi(\cdot/R)/R\|_3=\|\nabla\chi\|_3$ give
\begin{equation}\label{eq:finite-endpoint-local-charge}
 |M_R(b)-M_R(a)|\le CB^2(b-a).
\end{equation}
Assuming $T_+<\infty$, let $b\uparrow T_+$ in \eqref{eq:finite-endpoint-local-charge} at fixed $t,R$:
$0\le M_R(t)\le CB^2(T_+-t)$. Then Fatou along integer $R\to\infty$
proves scalar $L^2$ membership and
\begin{equation}\label{crit:hypothetical-mass}
 \|u(t)\|_2^2\le CB^2(T_+-t).
\end{equation}
Only now use the stronger charge bound
\begin{equation}\label{eq:finite-endpoint-improved-charge}
 |M_R(b)-M_R(a)|
 \le\frac{2\|\nabla\chi\|_\infty}{R}(b-a)
       \sup_{[a,b]}\|u(t)\|_2\sup_{[a,b]}P(u(t)).
\end{equation}
For $a=t$, \eqref{crit:hypothetical-mass} bounds the right side of \eqref{eq:finite-endpoint-improved-charge} by
$C_\chi B^2R^{-1}(b-t)(T_+-t)^{1/2}$.
Let $b\uparrow T_+$ to obtain
\[
 0\le M_R(t)\le\frac{C_\chi B^2}{R}(T_+-t)^{3/2}
 \longrightarrow0\qquad(R\to\infty).
\]
Dominated convergence gives $u(t)=0$, contradicting nonzero $u$ by
uniqueness. At a finite left endpoint the same argument uses
$\|u(t)\|_2^2\le CB^2(t-T_-)$.
All mass bounds are conditional on the hypothesized finite endpoint.
\end{proof}

For the global orbit, no finite-mass conclusion has yet been obtained.

\subsection{Compact normalizations and subparabolic scale}

\begin{lemma}[Strong limits at the critical level]
\label{crit:strong-limit}
Let $f_n\to f$ strongly in $\X$, where each canonical solution
through $f_n$ is bilaterally infinite in scalar size on its actual
maximal interval, $E(f_n)=E_c$ and $P(f_n)^2\le\kappa E_c<G$.
Then the canonical solution $V$ through $f$ is bilaterally infinite,
has compact normalized orbit on its own maximal interval, and is
global. Moreover, on each fixed compact interval of $V$, the
canonical solutions through $f_n$ converge to $V$ uniformly in the
strong $\X$ topology, including moving evaluation times.
\end{lemma}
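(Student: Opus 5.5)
Three properties of $V$ have to be shown: bilateral infinite size, compactness of its normalized orbit, and globality. The plan is to get them in that order from stability, the Palais--Smale argument in Proposition~\ref{crit:element}, and Proposition~\ref{crit:global}.

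\emph{Step 1: bilateral infinite size.} Energy continuity on bounded sets gives $E(f)=E_c$. Strong convergence gives $P(f)^2\le\kappa E_c<G$, so $f\in\mathcal K$. Suppose the forward size $\SSS_{I(f)\cap[0,\infty)}(V)$ were finite. Theorem~\ref{local:finite-s} would make $V$ forward global with finite future $\SSS$, and the trapping bound would cap $\EE$. Lemma~\ref{crit:halfline-stability}, applied with $v_n=V$ and $g_n=f_n$, would then give the solutions through $f_n$ a uniformly finite future $\SSS$ for large $n$, contradicting the hypothesis. The backward direction is handled the same way. Local finiteness of $\SSS$ on compact subintervals then moves both divergences to every time $t\in I(V)$.

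\emph{Step 2: compactness of the normalized orbit.} Take any sequence $t_k\in I(V)$. The states $V(t_k)$ are stable, have energy $E_c$, and have both one-sided sizes infinite from $t_k$, by Step~1 and recentering. This is exactly the input of the Palais--Smale part of Proposition~\ref{crit:element}. That argument uses only these properties and minimality of $E_c$, not the particular sequence~\eqref{crit:bilateral-sequence}. It produces parameters $\lambda_k,y_k$ with $T_{\lambda_k,y_k}^{-1}V(t_k)$ strongly convergent along a subsequence. Repeating the normalization via $Q_f$ and~\eqref{eq:critical-normalization-choice} then shows that $\{T^{-1}_{r(t),x(t)}V(t)\}$ has compact closure. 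Proposition~\ref{crit:global} now excludes finite endpoints, so $V$ is global.

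\emph{Step 3: uniform convergence on compact intervals.} Fix a compact $I\ni0$ inside $I(V)=\R$. There $\EE_I(V)$ and $\SSS_I(V)$ are finite. Theorem~\ref{local:perturbation}, with $q=V$, $e=0$ and datum $f_n$, used forward on $I\cap[0,\infty)$ and backward on $I\cap(-\infty,0]$, shows the solution through $f_n$ exists on $I$ with
\[
 \sup_{t\in I}\|u_{f_n}(t)-V(t)\|_{\mathrm{gr}}\le C\,P(f_n-f)\to0.
\]
The estimate holds for $n$ large, with $C$ depending only on $I$ through the caps. Moving evaluation times $t_n\to t$ in $I$ are handled by the triangle inequality and continuity of $V$ in $\X$.

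The main obstacle is Step~2: the Palais--Smale argument must run for the sequence $V(t_k)$ rather than for the original $f_n$. I would check that only three inputs are used: energies tending to $E_c$, the kinetic cap, and both one-sided sizes diverging. These hold here with sizes $=\infty$ at each $t_k$, by Step~1. The splitting-time recentering in the proof of Proposition~\ref{crit:element} is then unnecessary.
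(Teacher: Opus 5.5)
Your proposal is correct and follows the paper's proof step for step. Lemma~\ref{crit:halfline-stability}, played against the approximants' divergence, gives the bilateral infinite size. The Palais--Smale argument of Proposition~\ref{crit:element} gives the compact normalized orbit, and Proposition~\ref{crit:global} then gives globality. Finally, Theorem~\ref{local:perturbation} applied forward and backward, together with the triangle inequality for moving evaluation times, gives the uniform convergence on compact intervals. You check that the Palais--Smale step needs only energy $E_c$, the kinetic cap, and divergent one-sided sizes; this is exactly the remark the paper makes before it applies that argument to $u_c(t_n)$.
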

\begin{proof}
Energy continuity and trapping give
\[
 E(f)=E_c>0,\qquad P(f)^2\le\kappa E_c<G.
\]
If $V$ had finite scalar size on either side,
Theorem~\ref{local:finite-s} would extend that side to infinity and
Lemma~\ref{crit:halfline-stability} would contradict the approximants'
divergence. For the past use backward stability. Local finiteness moves
both divergences to every time. Proposition~\ref{crit:element} then gives
compact normalized orbit on $I(V)$, and Proposition~\ref{crit:global}
gives $I(V)=\R$.

Apply Theorem~\ref{local:perturbation} to $V$ on $[0,H]$ and $[-H,0]$.
Construction and uniqueness give, for all large $n$,
\begin{equation}\label{eq:critical-flow-uniform-convergence}
 [-H,H]\subset I(U_n),\qquad
 \sup_{|s|\le H}P(U_n(s)-V(s))\le C_HP(f_n-f)\longrightarrow0.
\end{equation}
For $s_n\to s$, choose $H>|s|$. Eventually $|s_n|\le H$, so \eqref{eq:critical-flow-uniform-convergence} gives
\[
 P(U_n(s_n)-V(s))
 \le C_HP(f_n-f)+P(V(s_n)-V(s))\longrightarrow0.
\]
Sobolev equivalence gives the asserted strong $\X$ convergence.
\end{proof}

We next control the normalization parameters on compact time intervals.

\begin{lemma}[Weak disappearance and local properness]
\label{crit:properness}
For fixed $f\in\X$, put
$D_{r,y}f(z)=r^{1/2}f(y+rz)=T_{r,y}^{-1}f(z)$.
Its gradients converge weakly to zero in each of the following
parameter regimes: $r\to\infty$ with arbitrary centers;
$r\to0$ with arbitrary centers; or $0<r\le A<\infty$ and
$|y|\to\infty$.
If a continuous critical trajectory satisfying
\eqref{crit:critical-properties} has a compact normalized carrier for
arbitrary pointwise choices $r(t),x(t)$, then on each nonempty compact
time interval $J$ there are constants
\[
 0<a_J\le r(t)\le A_J<\infty,\qquad |x(t)|\le X_J<\infty
 \quad(t\in J).
\]
\end{lemma}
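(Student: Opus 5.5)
The weak-disappearance part reduces to a density argument, and the properness part to a contradiction argument built on it.

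\emph{Weak disappearance.} Since each $D_{r,y}$ preserves $P$, it suffices to test $\nabla D_{r,y}f$ against a dense set of gradient tests and to approximate $f$ by $C_c^\infty$ functions via Lemma~\ref{std:density}. For compactly supported smooth $f$ and a compactly supported smooth vector test $g$, we have
\[
 \langle\nabla D_{r,y}f,g\rangle=r^{3/2}\int(\nabla f)(y+rz)\cdot\overline{g(z)}\dd z .
\]
We treat the three regimes separately.
\begin{itemize}
\item \emph{$r\to0$.} H\"older with $\nabla f\in L^1$ bounds the pairing by $r^{3/2}\|\nabla f\|_\infty\|g\|_1\to0$.
\item \emph{$r\to\infty$.} Change variables $w=y+rz$. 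The pairing becomes $r^{-3/2}\int\nabla f(w)\cdot\overline{g((w-y)/r)}\dd w$, which is bounded by $r^{-3/2}\|\nabla f\|_1\|g\|_\infty\to0$, uniformly in $y$.
\item \emph{$r\le A$, $|y|\to\infty$.} If $\operatorname{supp}f\subset B_R$ and $\operatorname{supp}g\subset B_\rho$, the integrand vanishes once $|y|>R+A\rho$. If instead $r$ tends to zero along a subsequence, the first regime applies.
\end{itemize}
Uniform boundedness in $P$ then passes each limit from the dense class to arbitrary $f$ and $g$.

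\emph{Properness on compact time intervals.} We argue by contradiction and first treat the scale. Suppose $t_n\in J$ and $r(t_n)\to0$ or $r(t_n)\to\infty$. By compactness of $J$ we may assume $t_n\to t_*$, and continuity in $\X$ gives $u_c(t_n)\to u_c(t_*)\ne0$ strongly. Compactness of the normalized carrier gives, along a subsequence, $h_n:=D_{r(t_n),x(t_n)}u_c(t_n)\to h$ strongly in $\X$. Here $h\ne0$, since $P(h_n)=P(u_c(t_n))$ and $P(u_c)^2\ge2E_c>0$ by \eqref{crit:critical-properties}. On the other hand,
\[
 h_n=D_{r(t_n),x(t_n)}u_c(t_*)+D_{r(t_n),x(t_n)}\bigl(u_c(t_n)-u_c(t_*)\bigr).
\]
The second term tends to zero strongly by isometry. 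The first tends to zero weakly by the disappearance statement, since in both scale regimes the centers are arbitrary. Hence $h_n\rightharpoonup0$, while $h_n\to h\ne0$ strongly, which is a contradiction. This gives $0<a_J\le r(t)\le A_J$ on $J$.

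\emph{Centers.} Suppose $|x(t_n)|\to\infty$ with $t_n\to t_*$. The bound $r(t_n)\le A_J$ just obtained places us in the third regime, so the same splitting again yields $h_n\rightharpoonup0$, contradicting $h\ne0$. This gives $|x(t)|\le X_J$.

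\emph{Main obstacle.} The delicate point is that $r(t)$ and $x(t)$ are arbitrary pointwise choices with no measurability or continuity. The contradiction must therefore be run purely sequentially, and the bound on the centers is only available after the two-sided scale bound.
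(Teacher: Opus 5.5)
Your proposal is correct and follows the paper's proof essentially step for step. It uses the same two scaling bounds $r^{\mp3/2}$, uniform in the center, and the same support argument for $0<r\le A$, extended by density and isometry; it then runs the same sequential contradiction, first for the scale and then, using the upper scale bound, for the centers. Two small remarks: in the $r\to0$ regime your bound uses $\nabla f\in L^\infty$, not $L^1$, and the subsequence remark in the third regime is unnecessary, since the support argument already covers every $0<r\le A$.
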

\begin{proof}
For compact smooth scalar $\psi$ and vector test $\eta$,
$\nabla D_{r,y}\psi=r^{3/2}\nabla\psi(y+r\cdot)$ gives, uniformly in $y$,
\begin{equation}\label{eq:affine-weak-disappearance}
 |\langle\nabla D_{r,y}\psi,\eta\rangle|
 \le\min\{r^{-3/2}\|\nabla\psi\|_1\|\eta\|_\infty,
          r^{3/2}\|\nabla\psi\|_\infty\|\eta\|_1\}.
\end{equation}
The two terms in \eqref{eq:affine-weak-disappearance} give the first two regimes. If
$\supp\nabla\psi\subset B_L$, $\supp\eta\subset B_H$ and $r\le A$,
the pairing is zero for $|y|>L+AH$.
Density in $P$ and $L^2$, isometry and Cauchy--Schwarz extend the
three limits to arbitrary data and tests.

On compact $J$, a sequence violating a parameter bound admits, after
extraction, $t_n\to t_*$, $u(t_n)\to f=u(t_*)$ and
$h_n=D_{r(t_n),x(t_n)}u(t_n)\to h$ strongly. Since
\begin{equation}\label{eq:normalized-strong-comparison}
 P(h_n-D_{r(t_n),x(t_n)}f)=P(u(t_n)-f)\longrightarrow0,
\end{equation}
\eqref{eq:normalized-strong-comparison} and weak disappearance force $P(h)=0$, contrary to $P(h)^2\ge2E_c$.
Apply this first to $r(t_n)\to\infty$, then to $r(t_n)\to0$, and
finally, using the upper scale bound, to $|x(t_n)|\to\infty$.
No extrema or regularity of the parameters are required.
\end{proof}

If the physical scale grew as fast as $|t|^{1/2}$ along a sequence,
rescaling would place the fixed original time inside a bounded time
window. Stability would give a nonzero strong limit there, whereas
escape of dilations gives weak limit zero.

\begin{proposition}[Subparabolic scale for every compact normalization]
\label{crit:scale}
Let $u$ be a global bilaterally bad critical trajectory and let
$r(t)>0,x(t)$ be any pointwise choices making
$D_{r(t),x(t)}u(t)$ relatively compact in $\X$. Then
\[
 \frac{r(t)^2}{|t|}\longrightarrow0\qquad(|t|\to\infty).
\]
In particular the finite running supremum
$A(T)=\sup_{0\le t\le T}r(t)$ satisfies $A(T)/\sqrt T\to0$.

\end{proposition}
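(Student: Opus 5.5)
We argue by contradiction, following the heuristic stated just before the proposition. Suppose there are $\epsilon_0>0$ and times $|t_n|\to\infty$ with $r(t_n)^2\ge\epsilon_0|t_n|$. After passing to a subsequence and reversing time if needed (the time-reversed conjugate $\overline{u(-t)}$ is again a bilaterally bad critical trajectory with the same normalizations), we may take $t_n\to+\infty$. Put $\lambda_n=r(t_n)$ and $y_n=x(t_n)$, and define the rescaled solutions
\[
 V_n(s)=D_{\lambda_n,y_n}u(t_n+\lambda_n^2 s),
\]
which by Lemma~\ref{local:covariance} are global canonical solutions. By compactness of the normalized orbit, a subsequence gives $V_n(0)=D_{\lambda_n,y_n}u(t_n)\to h$ strongly in $\X$. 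Energy conservation and trapping give $E(h)=E_c$ and $P(h)^2\ge 2E_c>0$.

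\textbf{Step 1: a nonzero limit at a bounded time.} The data $V_n(0)$ satisfy the hypotheses of Lemma~\ref{crit:strong-limit}: each is bilaterally infinite in scalar size by covariance, has energy $E_c$, and obeys the kinetic cap. Hence the solution $V$ through $h$ is global, and $V_n\to V$ in $\X$ uniformly on compact time sets, including moving evaluation times. The original time $0$ corresponds to the normalized time $s_n=-t_n/\lambda_n^2\in[-\epsilon_0^{-1},0)$. After extraction $s_n\to s_*\in[-\epsilon_0^{-1},0]$, so
\[
 D_{\lambda_n,y_n}u(0)=V_n(s_n)\longrightarrow V(s_*)\quad\text{strongly in }\X.
\]
Moreover $V(s_*)\ne0$, since $P(V(s_*))^2\ge 2E_c$ by trapping and conservation.

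\textbf{Step 2: weak disappearance.} The datum $u(0)$ is a fixed element of $\X$. If $\lambda_n\to\infty$, Lemma~\ref{crit:properness} gives $D_{\lambda_n,y_n}u(0)\rightharpoonup0$ for arbitrary centers. This contradicts Step 1, because a strong limit equals the weak limit, and $V(s_*)\ne0$. It remains to check that $\lambda_n\to\infty$. This follows from $\lambda_n^2\ge\epsilon_0 t_n\to\infty$, so no bound on the centers $y_n$ is needed.

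\textbf{Step 3: the running supremum.} Fix $\epsilon>0$ and choose $T_\epsilon$ with $r(t)^2\le\epsilon t$ for $t\ge T_\epsilon$. By Lemma~\ref{crit:properness}, $r$ is bounded on $[0,T_\epsilon]$, say by $A_\epsilon$. Hence
\[
 A(T)^2\le\max(A_\epsilon^2,\epsilon T)\quad\text{for all } T\ge T_\epsilon,
\]
so $\limsup_{T\to\infty}A(T)/\sqrt T\le\sqrt\epsilon$, and the claim follows since $\epsilon$ is arbitrary.

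The main obstacle is Step 1, namely keeping the pullback time $s_n$ in a compact window so that the stability statement of Lemma~\ref{crit:strong-limit} applies uniformly. This is exactly what the lower bound $r(t_n)^2\ge\epsilon_0|t_n|$ provides. One must also make sure that the sign and orientation conventions for negative times are handled correctly, either through the time-reversal symmetry or by repeating the argument with $s_n\in(0,\epsilon_0^{-1}]$.
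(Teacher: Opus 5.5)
Your proposal is correct and follows essentially the same route as the paper. You rescale about $t_n$, use the moving-time conclusion of Lemma~\ref{crit:strong-limit} to get $D_{r_n,x_n}u(0)\to V(\tau)\neq0$ strongly with $|\tau_n|\le\epsilon^{-1}$, contradict the weak vanishing from Lemma~\ref{crit:properness} because $r_n\to\infty$, and obtain the running-supremum bound from local properness; your time-reversal reduction is harmless but unnecessary, since the paper treats $\tau_n$ of either sign directly.
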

\begin{proof}
Otherwise, for some $\epsilon>0$, choose
$|t_n|\to\infty$ with $r_n^2=r(t_n)^2\ge\epsilon|t_n|$.
After extraction,
$f_n=D_{r_n,x_n}u(t_n)\to f$ in $\X$.
Maximal affine covariance gives global canonical critical trajectories
\[
 U_n(s,z)=r_n^{1/2}u(t_n+r_n^2s,x_n+r_nz).
\]
They retain energy $E_c$, the strict kinetic cap and bilateral divergence.
Lemma~\ref{crit:strong-limit} supplies their global bilaterally bad limit
$V$. Extract once more so that
\begin{equation}\label{eq:rescaled-time-limit}
 \tau_n=-t_n/r_n^2\longrightarrow\tau,\qquad
 |\tau_n|\le\epsilon^{-1},\qquad r_n\longrightarrow\infty.
\end{equation}
Apply the moving-time conclusion to \eqref{eq:rescaled-time-limit}, and use Lemma~\ref{crit:properness}, to obtain
\begin{equation}\label{eq:critical-strong-weak-contradiction}
 U_n(\tau_n)=D_{r_n,x_n}u(0)\longrightarrow V(\tau)
 \quad\text{in }\X,\qquad D_{r_n,x_n}u(0)\rightharpoonup0.
\end{equation}
Equation \eqref{eq:critical-strong-weak-contradiction} implies $V(\tau)=0$, contradicting $E(V(\tau))=E_c>0$ in either time
direction, including $\tau=0$.

Local properness makes $A(T)$ finite. Given $\epsilon>0$, choose
$t_\epsilon\ge1$ with $r(t)^2\le\epsilon t$ for $t\ge t_\epsilon$ and
put $A_\epsilon=\sup_{0\le t\le t_\epsilon}r(t)$. Then
\[
 \limsup_{T\to\infty}\frac{A(T)^2}{T}
 \le\limsup_{T\to\infty}
       \max\!\left\{\frac{A_\epsilon^2}{T},\epsilon\right\}
 =\epsilon.
\]
Let $\epsilon\downarrow0$. No measurability of the parameters is used.
\end{proof}

We select times where the running physical scale is nearly maximal.
After rescaling about those times, the adjacent normalized time windows
have diverging total length. One direction therefore yields a whole
half-line on which the physical scale is bounded above.

\subsection{A bounded-scale half-line}

\begin{theorem}[A bounded physical scale on one half-line]
\label{crit:bounded-halfline}
Under failure of Theorem~\ref{thm:main}, there are a global actual
canonical critical trajectory $w$, a sign $\sigma\in\{+1,-1\}$,
and pointwise functions $r_w(t)>0,x_w(t)$ with compact normalized
carrier, such that $E(w)=E_c$, both actual scalar-size halves are
infinite, and
\[
 0<r_w(t)\le1\quad\hbox{whenever }\sigma t\ge0.
\]
Writing $U(s)=w(\sigma s)$ for $s\ge0$ gives the same actual
homogeneous trajectory in this orientation, with
$i\sigma\partial_sU+\Delta U+F(U)=0$. 
\end{theorem}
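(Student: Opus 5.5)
We start from the global bilaterally bad critical trajectory $u_c$ of Proposition~\ref{crit:element}, which is global by Proposition~\ref{crit:global}, with compact normalized carrier $r(t),x(t)$. By Lemma~\ref{crit:properness} the scale is locally bounded above and below, so the running suprema $A_+(T)=\sup_{0\le t\le T}r(t)$ and $A_-(T)=\sup_{-T\le t\le 0}r(t)$ are finite. By Proposition~\ref{crit:scale} they satisfy $A_\pm(T)^2/T\to0$.

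First we choose near-maximal times. Fix the forward direction; the backward case is symmetric. For each $n$ pick $t_n\in[0,n]$ with $r(t_n)\ge\frac12A_+(n)$. Since $A_+(n)^2/n\to0$, the normalized forward window $(n-t_n)/r(t_n)^2$ and the normalized backward window $(t_n+0)/r(t_n)^2$ have total length $n/r(t_n)^2\ge n/A_+(n)^2\to\infty$. Hence, after extraction, at least one of them diverges. On $[0,t_n]$ we have $r(t)\le A_+(n)\le 2r(t_n)$, and the same bound holds on $[t_n,n]$.

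Next we rescale and pass to a limit. Set $U_n(s,z)=r_n^{1/2}u_c(t_n+r_n^2 s,x(t_n)+r_nz)$ with $r_n=r(t_n)$. By Lemma~\ref{local:covariance} these are global critical trajectories with energy $E_c$, the kinetic cap, and bilateral divergence. Their compact carrier gives $U_n(0)\to f$ in $\X$ along a subsequence. By Lemma~\ref{crit:strong-limit}, the solution $w$ through $f$ is global, bilaterally bad, and has a compact normalized orbit. Choose $\sigma=+1$ if the forward normalized windows diverge and $\sigma=-1$ otherwise.

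The main obstacle is transferring the scale bound to $w$, because the normalizing parameters are arbitrary pointwise choices with no continuity. For fixed $s$ with $\sigma s\ge0$, the time $s$ lies in the rescaled window for large $n$. The normalization of $U_n$ at $s$ is then $\tilde r_n(s)=r(t_n+r_n^2s)/r_n\le2$, with corresponding centers $\tilde x_n(s)$.

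We would argue as follows. Lemma~\ref{crit:strong-limit} gives $U_n(s)\to w(s)$ strongly. Writing $U_n(s)=T_{\tilde r_n,\tilde x_n}h_n$ with $h_n$ in the fixed compact set $K$ from Proposition~\ref{crit:element}, extract $h_n\to h$ with $P(h)^2\ge2E_c$. The weak-disappearance argument of Lemma~\ref{crit:properness} then forces $\tilde r_n(s)\ge a>0$ and $\tilde x_n(s)$ bounded, since otherwise $w(s)$ would have weak limit $0$ while being nonzero. Extracting limits $\tilde r_n\to\rho(s)\in[a,2]$ and $\tilde x_n\to\xi(s)$, strong affine continuity gives $w(s)=T_{\rho(s),\xi(s)}h$ with $h\in\overline K$. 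Defining $r_w(s)=\rho(s)/2\le1$ and $x_w(s)=\xi(s)$ on the chosen half-line yields normalized states in the compact set $\{T_{1/2,0}^{-1}k: k\in\overline K\}$. On the other half-line we keep any compact normalization supplied by Proposition~\ref{crit:element}; the union of the two carriers is still compact. The choice at each $s$ is pointwise, so no measurability is needed.

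Finally, $U(s)=w(\sigma s)$ is the time-reversed trajectory for $\sigma=-1$. Its equation $i\sigma\partial_sU+\Delta U+F(U)=0$ follows directly from the mild formula~\eqref{local:hartree-mild} by the change of variables $t=\sigma s$.
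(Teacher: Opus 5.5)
Your proposal is correct and follows the paper's proof essentially step for step. You choose near-maximal times in $[0,T_n]$, rescale so that the normalized windows have diverging total length, fix the longer direction $\sigma$, pass to the strong limit via Lemma~\ref{crit:strong-limit}, and transfer the bound $r(t_n+r_n^2s)/r_n\le 2$ to the limit by weak disappearance. The paper packages that last step as closedness of $\mathcal Y_2$; you do it inline.

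The only cosmetic difference is how the factor $2$ is removed. You keep the limit trajectory and renormalize with $r_w=\rho/2$, whose normalized states lie in the compact set $T_{2,0}K$. The paper instead rescales the solution, $w(t,z)=2^{1/2}v(4t,2z)$. The two are equivalent.
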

\begin{proof}
Take the global $u$ from Propositions~\ref{crit:element}, \ref{crit:global}
and its compact carrier
\[
 K=\overline{\{D_{r(t),x(t)}u(t):t\in\R\}},\qquad
 E(h)=E_c,\quad P(h)^2\ge2E_c\quad(h\in K).
\]
For $T_n=n+1$, local properness and Proposition~\ref{crit:scale} give
\begin{equation}\label{eq:running-scale-selection}
 A_n:=\sup_{0\le t\le T_n}r(t)\in[r(0),\infty),\qquad
 A_n^2/T_n\longrightarrow0.
\end{equation}
Choose $\tau_n\in[0,T_n]$ with $a_n=r(\tau_n)>A_n/2$, and let
$y_n=x(\tau_n)$. By covariance,
\[
 v_n(s,z)=a_n^{1/2}u(\tau_n+a_n^2s,y_n+a_nz),\qquad v_n(0)\in K
\]
are global canonical critical trajectories with bilateral divergence.
The interval $[0,T_n]$ becomes
\begin{equation}\label{eq:normalized-time-window}
 [-\ell_n,q_n],\qquad
 \ell_n=\tau_n/a_n^2,\quad q_n=(T_n-\tau_n)/a_n^2,\qquad
 \ell_n+q_n\ge T_n/A_n^2\longrightarrow\infty.
\end{equation}
By \eqref{eq:running-scale-selection}--\eqref{eq:normalized-time-window}, pass to a fixed longer direction $\sigma\in\{-1,1\}$ and a subsequence
with $v_n(0)\to f\in K$. Then
\begin{equation}\label{eq:oriented-window-exhaustion}
 L_n:=\max(\ell_n,q_n)\longrightarrow\infty,\qquad
 \{s:0\le\sigma s\le L_n\}\subset[-\ell_n,q_n].
\end{equation}
Lemma~\ref{crit:strong-limit} yields a global bilaterally bad $v$ with
compact normalized orbit and $v_n\to v$ strongly, uniformly on compact
time intervals.

We claim that
\[
 \mathcal Y_2=\{T_{b,c}h:h\in K, 0<b\le2, c\in\R^3\}
\]
is closed. If $g_n=T_{b_n,c_n}h_n\to g$, extract $h_n\to h\in K$
and $b_n\to b\in[0,2]$. Then
\begin{equation}\label{eq:carrier-closedness-comparison}
 P(h_n-D_{b_n,c_n}g)=P(g_n-g)\longrightarrow0.
\end{equation}
By \eqref{eq:carrier-closedness-comparison} and Lemma~\ref{crit:properness}, either $b=0$ or an unbounded subsequence
of centers would force $P(h)=0$. Thus $b>0$ and, after extraction,
$c_n\to c$. Strong affine continuity gives $g=T_{b,c}h\in\mathcal Y_2$.
Only closedness is asserted.

Fix $s$ with $\sigma s\ge0$. By \eqref{eq:oriented-window-exhaustion}, eventually
$t_n(s)=\tau_n+a_n^2s\in[0,T_n]$, and
\begin{equation}\label{eq:bounded-scale-limit-frame}
 \begin{gathered}
 v_n(s)=T_{b_n(s),c_n(s)}h_{t_n(s)},\qquad h_{t_n(s)}\in K,\\
 b_n(s)=\frac{r(t_n(s))}{a_n}\le\frac{A_n}{a_n}<2,\qquad
 c_n(s)=\frac{x(t_n(s))-x(\tau_n)}{a_n}.
 \end{gathered}
\end{equation}
Closedness applied to \eqref{eq:bounded-scale-limit-frame} gives $v(s)\in\mathcal Y_2$. Choose one representation for
each such $s$; the parameter subsequences may depend on $s$. On the
opposite open half-line keep the existing normalization of $v$.
The union of the two compact carriers remains compact.

Finally, set $w(t,z)=2^{1/2}v(4t,2z)$. Covariance preserves the critical
properties, while
\[
 2^{1/2}[T_{b(4t),c(4t)}h](2z)
 =T_{b(4t)/2,c(4t)/2}h(z),\qquad
 0<r_w(t)=b(4t)/2\le1\quad(\sigma t\ge0).
\]
Take $x_w(t)=c(4t)/2$ on this closed half-line. Substituting $t=\sigma s$
gives the stated oriented equation; positive dilation preserves $\sigma$.
\end{proof}

The remaining argument is carried out on this half-orbit, without assuming finite mass.

\section{The no-waste formula and the annular forcing bound}
\label{sec:annular}

The first task is to improve the spatial integrability of the critical
trajectory before using an interaction identity. The no-waste formula
reduces this task to an annular $L^1$ bound for the forcing. For
noninteger powers, even a low-frequency field can produce high-frequency
forcing, so this contribution must be estimated explicitly.
Throughout this section $0<\alpha<3$, $p=3+\alpha$, and the positive
Riesz normalization is fixed as in the introduction. We write
\[
 \mathcal A(f)=I_\alpha*|f|^p,\qquad
 \mathfrak b(f)=|f|^{p-2}f,\qquad F(f)=\mathcal A(f)\mathfrak b(f).
\]
Use the $L^6$ representatives of $\X$. Constants may depend on the fixed
$\alpha$, cutoff, Sobolev and strict HLS constants.

\subsection{Fourier cutoffs on the homogeneous space}

Fix the real radial $\varphi\in C_c^\infty(\mathbb R^3)$ of the common
conventions, with $0\leq\varphi\leq1$, $\varphi=1$ on the unit ball and
$\varphi=0$ outside the ball of radius two. With angular frequency
$\vartheta=2\pi\xi$, put
\[
 \ell_M(\vartheta)=\varphi(\vartheta/M),\quad
 P_{\leq M}=\ell_M(D),\quad P_{>M}=1-P_{\leq M},\quad
 P_M=P_{\leq M}-P_{\leq M/2}.
\]
Thus the symbol $m_M$ of $P_M$ is supported in
$\{M/2\leq|\vartheta|\leq2M\}$. An enlarged annular multiplier will
allow an integrable replacement of the forcing without changing its
$P_M$ component:
\begin{equation}\label{ann:enlarged-symbol}
 Q_M=P_{\leq2M}-P_{\leq M/4},\qquad
 q_M(\vartheta)=\varphi(\vartheta/(2M))-\varphi(4\vartheta/M),\qquad
 m_Mq_M=m_M.
\end{equation}
Indeed \eqref{ann:enlarged-symbol} has $q_M=1$ on the support of $m_M$. Neither operator is an idempotent
projection. All equalities below are multiplier identities for these
particular smooth functions.

Since the original field need not lie in $L^2$, the high-frequency
part must first be identified as an $L^2$ function with the prescribed
distributional gradient.

\begin{lemma}[Homogeneous Fourier cutoffs]
\label{ann:cutoff-realization}
For $f\in\X$ and $M>0$ the low pass is the actual smooth convolution
$k_M*f$, where $k_M(x)=M^3k(Mx)$ and $k$ is the inverse angular Fourier
transform of $\varphi$. The high pass has a unique $L^2$ representative
with the same distribution as $(1-\ell_M)(D)f$. They satisfy
\begin{equation}\label{eq:homogeneous-cutoff-bounds}\begin{aligned}
 \|\nabla P_{\leq M}f\|_2+\|\nabla P_{>M}f\|_2
   &\leq2\|\nabla f\|_2,\\
 \|P_{>M}f\|_2&\leq M^{-1}\|\nabla f\|_2,\\
 \|P_{\leq M}f\|_\infty&\leq C M^{1/2}\|\nabla f\|_2,\\
 \|P_Mf\|_2&\leq2M^{-1}\|\nabla f\|_2,\qquad
 \|P_Mf\|_\infty\leq C M^{1/2}\|\nabla f\|_2.
\end{aligned}\end{equation}
Each of the two gradient norms in the first line is separately at most
$\|\nabla f\|_2$.  The annular representative belongs to $C_0(\mathbb R^3)$.
\end{lemma}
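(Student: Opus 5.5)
The plan is to realize every multiplier through the actual gradients, so that no $L^2$ value of $f$ is ever needed. The low pass is straightforward: $k$ is Schwartz because $\varphi\in C_c^\infty$, so $k_M*f$ is an absolutely convergent integral for $f\in L^6$ by H\"older. Testing against Schwartz $\phi$ and using Fubini shows that its distribution is $\ell_M(D)f$. Its derivatives are $k_M*\partial_jf$, obtained by moving the derivative onto the kernel in the weak identity. Plancherel then gives $\|\nabla P_{\le M}f\|_2\le\|\nabla f\|_2$, since $0\le\varphi\le1$.

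For the high pass I would follow the construction in Lemma~\ref{std:refined} and define
\[
 \widehat{P_{>M}f}_{\rm ang}=\sum_k(1-\ell_M(\vartheta))\frac{-i\vartheta_k}{|\vartheta|^2}\widehat{\partial_kf}_{\rm ang}.
\]
The symbol $(1-\ell_M)\vartheta_k/|\vartheta|^2$ vanishes on $|\vartheta|\le M$ and is bounded there by $M^{-1}$, so this defines an $L^2$ function. Cauchy--Schwarz in $k$ gives $|\sum_k\vartheta_k\widehat{\partial_kf}|\le|\vartheta|\,|\widehat{\nabla f}|$, and Plancherel then yields $\|P_{>M}f\|_2\le M^{-1}\|\nabla f\|_2$. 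Since $\widehat{\partial_kf}=i\vartheta_k\widehat f$ on the open set $\vartheta\ne0$, pairing with Schwartz tests identifies this function with $(1-\ell_M)(D)f$. I expect this identification to be the main obstacle. One route is to use the simultaneous approximants of Lemma~\ref{std:density}: convergence in $L^6$ and of gradients in $L^2$ passes the identity $P_{>M}f_n=f_n-k_M*f_n$ to the limit in $L^6+L^2$. This gives $P_{>M}f=f-P_{\le M}f$ as actual functions, which also settles uniqueness and avoids any polynomial ambiguity at $\vartheta=0$. The gradient bound for the high pass follows from $0\le1-\ell_M\le1$, and adding the two separate bounds gives the first line with constant $2$.

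The annular pieces are handled in the same way. The symbol $m_M$ is supported in $M/2\le|\vartheta|\le2M$ with $|m_M|\le1$, so the same gradient representation gives $\|P_Mf\|_2\le(M/2)^{-1}\|\nabla f\|_2$. For the $L^\infty$ bounds I would apply Fourier inversion with Cauchy--Schwarz:
\[
 \|P_{\le M}f\|_\infty\le(2\pi)^{-3}\int_{|\vartheta|\le2M}\frac{|\widehat{\nabla f}|}{|\vartheta|}\,d\vartheta\le C\Bigl(\int_{|\vartheta|\le2M}|\vartheta|^{-2}\,d\vartheta\Bigr)^{1/2}\|\nabla f\|_2=CM^{1/2}\|\nabla f\|_2.
\]
The weight $|\vartheta|^{-2}$ is integrable near the origin in three dimensions, so the integral is finite. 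To justify this pointwise formula for the actual convolution $k_M*f$ in the low-pass case, I would run the estimate on the approximants and pass to the limit, which converges locally uniformly. For $P_M$, the same computation shows that $\widehat{P_Mf}$ lies in $L^1$, so the Riemann--Lebesgue lemma places $P_Mf$ in $C_0(\R^3)$, and the bound $CM^{1/2}\|\nabla f\|_2$ follows.
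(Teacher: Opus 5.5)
Your proposal is correct and follows essentially the paper's route. Plancherel on the gradient side gives the $L^2$ and gradient bounds, and simultaneous approximants identify the high pass with $f-k_M*f$ as an actual $L^2$ function. The only differences are in the sup-norm and $C_0$ statements: you obtain these on the Fourier side (Cauchy--Schwarz against the locally integrable weight $|\vartheta|^{-2}$, then Riemann--Lebesgue), whereas the paper uses $\|k_M*f\|_\infty\le\|k_M\|_{6/5}\|f\|_6$ with Sobolev for the low pass, and the reproducing identity $Q_MP_M=P_M$ with Young's $L^2\to L^\infty$ bound for the annulus.
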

\begin{proof}
Take $f_n\in C_c^\infty$ with $f_n\to f$ in $\dot H^1\cap L^6$.
Plancherel and
\[
 0\le\ell_M,1-\ell_M\le1,\qquad
 |1-\ell_M(\vartheta)|\le|\vartheta|/M
\]
give the gradient contractions and
\[
 \|P_{>M}(f_n-f_m)\|_2
 \le M^{-1}P(f_n-f_m).
\]
The highpass estimate in \eqref{eq:homogeneous-cutoff-bounds} follows as $P_{>M}f_n$ converges in $L^2$ and $\X$. Local $L^2$ convergence
identifies the limits; compact tests identify the multiplier and its weak
derivatives.

Schwartz convolution gives the smooth low pass and
\[
 \|k_M*f\|_\infty\le\|k_M\|_{6/5}\|f\|_6,\qquad
 \|k_M\|_{6/5}=M^{1/2}\|k\|_{6/5}.
\]
Its distribution is $f-P_{>M}f$. For the annular part, use
\begin{equation}\label{eq:annular-inverse-gradient-symbol}
 |m_M(\vartheta)|\le2|\vartheta|/M,\qquad Q_MP_M=P_M.
\end{equation}
The symbol bound in \eqref{eq:annular-inverse-gradient-symbol} gives $\|P_Mf\|_2\le2M^{-1}P(f)$.
The kernel of $Q_M$ has $L^2$ norm $CM^{3/2}$, so Young yields
\[
 \|P_Mf\|_\infty\le CM^{3/2}\|P_Mf\|_2\le CM^{1/2}P(f).
\]
Approximation of the $L^2$ input by compact functions gives $P_Mf\in C_0$.
\end{proof}

The Hartree nonlinearity is initially defined in its energy-space target.

\begin{lemma}[Baseline Hartree domains]
\label{ann:baseline}
Set
\[
 s_0=\frac6p,\qquad q_0=\frac6{3-\alpha},\qquad
 t_0=\frac6{p-1}.
\]
For every $f\in\X$, $|f|^p\in L^{s_0}$,
$\mathcal A(f)\in L^{q_0}$, $\mathfrak b(f)\in L^{t_0}$ and
$F(f)\in L^{6/5}$. All convolutions and products are the actual representatives;
$F:L^6\to L^{6/5}$ is continuous. On a bounded
$L^6$ ball it obeys
\[
 \|F(f)-F(g)\|_{6/5}
 \leq C(\|f\|_6+\|g\|_6)^{2p-2}\|f-g\|_6.
\]
\end{lemma}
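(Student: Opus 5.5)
The plan is to reduce everything to Sobolev, the strict-range HLS estimate (Theorem~\ref{std:hls}, or the sharp form \eqref{input:eq-sharp-hls}), and H\"older, with exponents chosen so that each factor sits exactly in the space dictated by $\|f\|_6$. Since $f\in L^6$ by the definition of $\X$, the first two memberships are immediate:
\[
 \||f|^p\|_{s_0}=\|f\|_6^p,\qquad
 \|\mathfrak b(f)\|_{t_0}=\||f|^{p-1}\|_{6/(p-1)}=\|f\|_6^{p-1}.
\]
For the potential, apply Theorem~\ref{std:hls} with $\beta=\alpha$ and $q=s_0=6/(3+\alpha)$. The condition $1<q<3/\alpha$ holds because $p<6$ and $6\alpha<3(3+\alpha)$, i.e.\ $\alpha<3$. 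The output exponent satisfies $1/r=(3+\alpha)/6-\alpha/3=(3-\alpha)/6$, so $r=q_0$. That theorem also gives almost-everywhere absolute convergence of $I_\alpha*|f|^p$, which fixes the actual representative $\mathcal A(f)\in L^{q_0}$ with $\|\mathcal A(f)\|_{q_0}\le C\|f\|_6^p$. H\"older then gives $F(f)\in L^{6/5}$, since
\[
 \frac{3-\alpha}6+\frac{p-1}6=\frac{5}{6}.
\]

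For the local Lipschitz bound, use the two-slot splitting \eqref{eq:hartree-two-slot-difference} together with the pointwise bounds \eqref{eq:power-differential-bounds}:
\[
 F(f)-F(g)=\mathcal A\text{-difference}\cdot\mathfrak b(f)+\mathcal A(g)\bigl(\mathfrak b(f)-\mathfrak b(g)\bigr).
\]
For the first term, the bound $\bigl||f|^p-|g|^p\bigr|\le p(|f|+|g|)^{p-1}|f-g|$ and H\"older give an $L^{s_0}$ bound of
\[
 C(\|f\|_6+\|g\|_6)^{p-1}\|f-g\|_6,
\]
since $(p-1)/6+1/6=p/6$. HLS moves this into $L^{q_0}$, and H\"older against $\mathfrak b(f)\in L^{t_0}$ contributes a further factor $\|f\|_6^{p-1}$. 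For the second term, $|\mathfrak b(f)-\mathfrak b(g)|\le C_p(|f|+|g|)^{p-2}|f-g|$ lies in $L^{t_0}$ with norm $C(\|f\|_6+\|g\|_6)^{p-2}\|f-g\|_6$, while $\mathcal A(g)$ contributes $\|g\|_6^p$. Both terms therefore carry the total power $2p-2$ on the bounded ball, which is the stated estimate. Continuity of $F:L^6\to L^{6/5}$ follows from this local Lipschitz bound. Representative independence is handled by the last sentence of Theorem~\ref{std:hls}.

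There is no real obstacle here. The one point needing care is to verify the strict HLS range, so that the endpoint $q=1$ never occurs for $0<\alpha<3$. This is where the argument differs from the later $\alpha=2$ discussion, which involves differentiated densities; in the present lemma only the undifferentiated density at $q=6/(3+\alpha)\in(1,3/\alpha)$ appears.
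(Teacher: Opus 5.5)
Your proposal is correct and follows essentially the same route as the paper. Both use strict HLS at $s_0=6/p\in(1,3/\alpha)$ with output $q_0$, H\"older via $q_0^{-1}+t_0^{-1}=5/6$, and a two-term splitting of $F(f)-F(g)$ with the mean-value bounds for $|z|^p$ and $\mathfrak b$. The only difference is cosmetic: you pair the potential difference with $\mathfrak b(f)$ and $\mathcal A(g)$ with the local difference, whereas the paper uses the mirror-image pairing $\mathcal A(f)(\mathfrak b(f)-\mathfrak b(g))+(I_\alpha*(|f|^p-|g|^p))\mathfrak b(g)$, and this changes nothing in the exponent count.
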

\begin{proof}
Theorem~\ref{std:hls} and H\"older give the domains, since
\[
 1<s_0<3/\alpha,\qquad q_0^{-1}=s_0^{-1}-\alpha/3,\qquad
 q_0^{-1}+t_0^{-1}=5/6.
\]
For continuity, use the real-plane mean-value bounds
\begin{equation}\label{ann:power-differences}
 \begin{split}
 ||z|^p-|w|^p|&\le C_p|z-w|(|z|+|w|)^{p-1},\\
 |\mathfrak b(z)-\mathfrak b(w)|&\le C_p|z-w|(|z|+|w|)^{p-2}
 \end{split}
\end{equation}
Insert \eqref{ann:power-differences} into
\begin{equation}\label{eq:hartree-two-factor-subtraction}
 \begin{aligned}
 F(f)-F(g)
 &=\mathcal A(f)(\mathfrak b(f)-\mathfrak b(g))\\
 &\quad+(I_\alpha*(|f|^p-|g|^p))\mathfrak b(g).
 \end{aligned}
\end{equation}
Applying HLS and H\"older to \eqref{eq:hartree-two-factor-subtraction} yields the asserted difference estimate.
\end{proof}

We now apply these bounds to the selected critical trajectory.

\subsection{Low-frequency control on the compact half-orbit}

For the rest of the dynamical arguments take exactly the trajectory and
sign provided by Theorem~\ref{crit:bounded-halfline}. Thus
\begin{equation}\label{ann:selected-path}
 U(s)=u(\sigma s),\quad s\geq0,\quad \sigma\in\{-1,1\},\qquad
 f_s=T_{r(s),x(s)}^{-1}U(s)\in K,\quad 0<r(s)\leq1,
\end{equation}
where $K$ is compact in $\X$. Here $r(s),x(s)$ are the pointwise
normalizing choices after composition with physical time $\sigma s$;
no measurability or continuity of these choices is imposed. Put
$B=\sup_{s\geq0}\|\nabla U(s)\|_2\leq\sup_{f\in K}\|\nabla f\|_2<\infty$.

Compactness makes the low-frequency estimates uniform in time.

\begin{proposition}[Uniform low-gradient and weighted high-pass mass]
\label{ann:uniform-highpass}
For the same cutoff and the same $U$, the finite real quantities
\[
 \eta_\nu=\sup_{s\geq0}\|\nabla P_{\leq\nu}U(s)\|_2,\quad
 m_\nu=\sup_{s\geq0}\|P_{>\nu}U(s)\|_2,\quad
 \delta_\nu=\nu m_\nu
\]
satisfy
\[
 m_\nu\leq B/\nu<\infty,\qquad
 \eta_\nu\longrightarrow0,\quad\delta_\nu\longrightarrow0
 \quad(\nu\downarrow0).
\]
For fixed $\nu>0$, $P_{>\nu}U\in C([0,\infty);H^1)$.
\end{proposition}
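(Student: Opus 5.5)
The plan is to reduce everything to scaling and compactness of $K$, using Lemma~\ref{ann:cutoff-realization}. Finiteness and the bound $m_\nu\le B/\nu$ are immediate from the highpass estimate $\|P_{>\nu}f\|_2\le\nu^{-1}\|\nabla f\|_2$ applied at each time. Likewise $\eta_\nu\le B$ follows from the gradient contraction.

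The key step is the commutation of the multipliers with the frame. Since $U(s)=T_{r(s),x(s)}f_s$ and $T_{r,y}$ conjugates $\ell_M(D)$ into $\ell_{Mr}(D)$, we have
\[
 P_{\le\nu}T_{r,y}f=T_{r,y}P_{\le\nu r}f,\qquad P_{>\nu}T_{r,y}f=T_{r,y}P_{>\nu r}f .
\]
The dilation factor is $\nu r$: check on the symbol, where $\widehat{T_{r,y}f}(\xi)$ involves $\widehat f(r\xi)$. The map $T_{r,y}$ preserves $\|\nabla\cdot\|_2$ and scales the $L^2$ norm by $r$. Hence
\[
 \|\nabla P_{\le\nu}U(s)\|_2=\|\nabla P_{\le\nu r(s)}f_s\|_2,\qquad
 \nu\|P_{>\nu}U(s)\|_2=\nu r(s)\|P_{>\nu r(s)}f_s\|_2 .
\]
Since $0<r(s)\le1$, in both expressions the effective frequency $\mu=\nu r(s)$ lies in $(0,\nu]$. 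It therefore suffices to prove
\[
 \sup_{f\in K}\sup_{0<\mu\le\nu}\bigl(\|\nabla P_{\le\mu}f\|_2+\mu\|P_{>\mu}f\|_2\bigr)\longrightarrow0 \qquad (\nu\downarrow0).
\]
The lack of a lower bound on $r(s)$ is harmless here, because small $\mu$ is exactly the favorable direction.

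For a single $f$, Plancherel gives
\[
 \|\nabla P_{\le\mu}f\|_2^2\le\int_{|\vartheta|\le2\mu}|\vartheta|^2|\widehat f|^2\to0 .
\]
For the second quantity, use $\mu^2|1-\ell_\mu|^2\le\min(\mu^2,|\vartheta|^2)$, so that
\[
 \mu^2\|P_{>\mu}f\|_2^2\le\int\min(\mu^2/|\vartheta|^2,1)\,|\vartheta|^2|\widehat f|^2 ,
\]
which tends to $0$ by dominated convergence. Both bounds are monotone in $\mu$ up to constants; in fact they are dominated by $\nu$-versions via $\min(\mu^2/|\vartheta|^2,1)\le\min(\nu^2/|\vartheta|^2,1)$, with a similar domination for the first.

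Uniformity over $K$ comes from an $\epsilon$-net. Both functionals are $2$-Lipschitz in $P$: the gradient contraction handles the first, and $\mu\|P_{>\mu}g\|_2\le\|\nabla g\|_2$ handles the second. A finite net then gives the uniform limit. This uniformity step is the only substantive point, and it is routine. The obstacle one must watch is that no measurability of $r(s)$ is needed, which holds since the argument is pointwise in $s$.

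Finally, continuity for fixed $\nu$ follows from $U\in C([0,\infty);\X)$. The map $P_{>\nu}$ is bounded from $\X$ to $H^1$, since $\|P_{>\nu}g\|_2\le\nu^{-1}\|\nabla g\|_2$ and the gradient is contracted. Hence $P_{>\nu}U\in C([0,\infty);H^1)$, where composing with $s\mapsto\sigma s$ preserves continuity.
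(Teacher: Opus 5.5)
Your proposal is correct and follows essentially the same route as the paper. The covariance $T_{r,x}^{-1}P_{\le\nu}=P_{\le\nu r}T_{r,x}^{-1}$ together with $r(s)\le1$ reduces everything to the moduli $\sup_{f\in K,\,0<\kappa\le\nu}$ of $\|\nabla P_{\le\kappa}f\|_2$ and of $\kappa\|P_{>\kappa}f\|_2$. These vanish by Plancherel, dominated convergence and a finite net of $K$, and continuity follows from boundedness of $P_{>\nu}:\X\to H^1$. One point of care: as the paper does, write the Fourier integrals with $\sum_j|\widehat{\partial_jf}|^2$ rather than $|\vartheta|^2|\widehat f|^2$, since $f$ itself need not lie in $L^2$.
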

\begin{proof}
For $f\in\X$, let $G_j=\partial_jf$ and
$A_f(\xi)=\sum_j|\widehat G_j(\xi)|^2\in L^1$. Plancherel gives
\begin{equation}\label{eq:pointwise-low-gradient-limit}
 \|\nabla P_{\le\kappa}f\|_2^2
 =\int|\varphi(2\pi\xi/\kappa)|^2A_f(\xi)\,d\xi\longrightarrow0.
\end{equation}
For the scalar $L^2$ highpass already constructed, the Fourier derivative
identity gives, off the null point $\xi=0$,
\begin{equation}\label{eq:pointwise-weighted-highpass-limit}
 \kappa^2\|P_{>\kappa}f\|_2^2
 =\int\frac{\kappa^2|1-\varphi(2\pi\xi/\kappa)|^2}
                 {4\pi^2|\xi|^2}A_f(\xi)\,d\xi\longrightarrow0.
\end{equation}
The multipliers in \eqref{eq:pointwise-low-gradient-limit}--\eqref{eq:pointwise-weighted-highpass-limit} are bounded by one and tend to zero off zero; dominated convergence gives both limits. Only the gradients are Fourier
transformed as $L^2$ inputs.

Define the finite moduli
\[
 \omega_1(\nu)=\sup_{f\in K,\,0<\kappa\le\nu}
                 \|\nabla P_{\le\kappa}f\|_2,
 \qquad
 \omega_2(\nu)=\sup_{f\in K,\,0<\kappa\le\nu}
                 \kappa\|P_{>\kappa}f\|_2.
\]
The cutoff bounds control both maps by $\|\nabla f\|_2$.
A finite net of $K$ and \eqref{eq:pointwise-low-gradient-limit}--\eqref{eq:pointwise-weighted-highpass-limit} show
$\omega_1(\nu),\omega_2(\nu)\to0$. This uses no monotonicity of
$\varphi$. Fourier change of variables, extended by homogeneous density,
gives
\begin{equation}\label{eq:cutoff-affine-covariance}
 T_{r,x}^{-1}P_{\le\nu}=P_{\le\nu r}T_{r,x}^{-1},\qquad
 T_{r,x}^{-1}P_{>\nu}=P_{>\nu r}T_{r,x}^{-1}.
\end{equation}
The highpass identity in \eqref{eq:cutoff-affine-covariance} scales the scalar $L^2$ norm by $r^{-1}$.
For $\kappa_s=\nu r(s)\le\nu$,
\[
 \|\nabla P_{\le\nu}U(s)\|_2\le\omega_1(\nu),\qquad
 \nu\|P_{>\nu}U(s)\|_2
 =\kappa_s\|P_{>\kappa_s}f_s\|_2\le\omega_2(\nu).
\]
Together with $m_\nu\le B/\nu$, these prove the assertions. The bounded
linear map $P_{>\nu}:\X\to H^1$ and continuity of $U$ give the path
continuity at each fixed $\nu$.
\end{proof}

The small quantities are $\eta_\nu$ and $\nu m_\nu$. The norm
$m_\nu$ itself may grow as $\nu\downarrow0$, and the original
trajectory still need not have finite mass.

\subsection{Weak terminal zero and the signed no-waste identity}

\begin{lemma}[The free kernel]
\label{ann:free-kernel}
For $t\ne0$ and a Schwartz function $g$, the same group
$S(t)=e^{it\Delta}$ obeys
\[
 S(t)g(x)=(4\pi it)^{-3/2}
        \int e^{i|x-y|^2/(4t)}g(y)\,dy,
 \qquad \|S(t)g\|_\infty\leq(4\pi|t|)^{-3/2}\|g\|_1.
\]
Consequently, for the actual annular kernel $K_{L,t}$ of $S(t)P_L$,
\begin{equation}\label{eq:integrable-annular-kernel}
 \|K_{L,t}\|_\infty\leq C_\varphi\min\{L^3,|t|^{-3/2}\},
 \qquad \int_0^\infty\min\{L^3,\tau^{-3/2}\}\,d\tau=3L.
\end{equation}
At $t=0$ the minimum is interpreted as $L^3$.
\end{lemma}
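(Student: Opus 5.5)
The kernel formula is the $\varepsilon\downarrow0$ limit already carried out in Theorem~\ref{std:endpoint}. By \eqref{eq:regularized-free-kernel}, the regularized multiplier $e^{-4\pi^2(\varepsilon+it)|\xi|^2}$ is convolution with the Gaussian $[4\pi(\varepsilon+it)]^{-3/2}e^{-|x|^2/[4(\varepsilon+it)]}$. For $t\ne0$ this kernel is bounded by $(4\pi|t|)^{-3/2}$ uniformly in $\varepsilon>0$, and it converges pointwise to $(4\pi it)^{-3/2}e^{i|x|^2/(4t)}$. For Schwartz $g$ the regularized orbit converges to $S(t)g$ in $L^2$ by dominated convergence on the Fourier side. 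The convolutions converge pointwise by dominated convergence against $|g|\in L^1$. Identifying the two limits almost everywhere and using continuity gives the formula, and the modulus bound $\|S(t)g\|_\infty\le(4\pi|t|)^{-3/2}\|g\|_1$ follows at once. Throughout I keep the branch of $(4\pi it)^{-3/2}$ fixed by analytic continuation from the right half-plane.

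For the annular kernel, I would define $K_{L,t}=\mathcal F^{-1}\bigl(m_t(\xi)\,m_L(2\pi\xi)\bigr)$. Its symbol is compactly supported and smooth, so $K_{L,t}$ is a Schwartz function, and $S(t)P_Lg=K_{L,t}*g$. Two bounds are needed.

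\emph{Bound one.} Since $|m_t|=1$ and $|m_L|\le1$ on a set of $\xi$-volume at most $C L^3$, we get $|K_{L,t}(x)|\le\|m_tm_L(2\pi\cdot)\|_{L^1_\xi}\le C_\varphi L^3$. This also covers $t=0$.

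\emph{Bound two.} For $t\ne0$, write $K_{L,t}=S(t)k^{(L)}$, where $k^{(L)}$ is the Schwartz kernel of $P_L$. Then the kernel formula just proved gives
\[
 \|K_{L,t}\|_\infty\le(4\pi|t|)^{-3/2}\|k^{(L)}\|_1 .
\]
Because $k^{(L)}(x)=L^3\kappa(Lx)$ with $\kappa$ the kernel of $\varphi-\varphi(2\cdot)$, the $L^1$ norm $\|k^{(L)}\|_1=\|\kappa\|_1$ is scale-invariant. Taking the minimum of the two bounds gives \eqref{eq:integrable-annular-kernel}, with $C_\varphi=\max\{C,\|\kappa\|_1(4\pi)^{-3/2}\}$.

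The integral is elementary. The crossover is at $\tau_0=L^{-2}$, so
\[
 \int_0^{L^{-2}}L^3\,d\tau+\int_{L^{-2}}^\infty\tau^{-3/2}\,d\tau=L+2L=3L .
\]

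No step is a real obstacle. The only point needing care is the justification that the pointwise kernel limit agrees with the $L^2$ multiplier definition of $S(t)$, and this is handled exactly as in \eqref{eq:free-dispersive-bound}.
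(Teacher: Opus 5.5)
Your proposal is correct and follows the paper's proof essentially step for step. You obtain the kernel from the Gaussian regularization of Theorem~\ref{std:endpoint}, bound $\|K_{L,t}\|_\infty$ by $CL^3$ via Fourier inversion and by $C|t|^{-3/2}$ via the dispersive estimate applied to the annular kernel with scale-invariant $L^1$ norm, and split the integral at $L^{-2}$.
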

\begin{proof}
The Gaussian regularization in the proof of Theorem~\ref{std:endpoint}
gives the stated kernel for both signs of $t$, with the power branch
continued from $\varepsilon>0$.
Let $K_L$ be the inverse transform of the annular symbol. Then
\[
 K_L(x)=L^3K_1(Lx),\qquad \|K_L\|_1=\|K_1\|_1.
\]
Absolute Fourier inversion and the dispersive estimate give, respectively,
\[
 \|S(t)K_L\|_\infty\le CL^3,\qquad
 \|S(t)K_L\|_\infty\le C|t|^{-3/2}.
\]
Taking the minimum proves the kernel bound. Splitting at $L^{-2}$ gives
\[
 \int_0^\infty\min(L^3,\tau^{-3/2})\,d\tau
 =L^3L^{-2}+2(L^{-2})^{-1/2}=3L.
\]
\end{proof}

This integrable annular bound will be used after eliminating the free terminal term.

\begin{proposition}[The no-waste identity]
\label{ann:no-waste}
The path in \eqref{ann:selected-path} satisfies
\[
 S(-\sigma T)U(T)\rightharpoonup0\quad(T\to\infty)
       \quad\hbox{in }\X.
\]
For every fixed $t\geq0$ the single absolute sequence $T_n=n+1$ satisfies
\begin{equation}\label{ann:no-waste-equation}
 U(t)=-i\sigma\lim_{n\to\infty}
       \int_t^{T_n}S(\sigma(t-s))F(U(s))\,ds
       \quad\hbox{in }\mathcal S'.
\end{equation}
The finite integrals are taken against Schwartz tests. The sequence
$T_n$ is independent of $t$ and the test.
\end{proposition}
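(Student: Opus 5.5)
The plan has two parts. First we prove the weak terminal zero. Then we read off \eqref{ann:no-waste-equation} from the finite Duhamel formula of Lemma~\ref{local:linear-duhamel}, with the terminal free term removed by that weak limit.

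\emph{Weak terminal zero.} The family $S(-\sigma T)U(T)$ is bounded in $\X$ by $B$, so it suffices to show that its pairings with a dense set of gradient tests tend to zero. Take Schwartz $\psi$ with compactly supported Fourier transform away from the origin, and consider $\langle\nabla S(-\sigma T)U(T),\nabla\psi\rangle=\langle\nabla U(T),\nabla S(\sigma T)\psi\rangle$. Write $U(T)=T_{r(T),x(T)}f_T$ with $f_T\in K$ and $0<r(T)\le1$. Undoing the frame, the pairing becomes
\[
 \langle\nabla f_T,\nabla T^{-1}_{r(T),x(T)}S(\sigma T)\psi\rangle
 =\langle\nabla f_T,\nabla S(\sigma T/r(T)^2)T^{-1}_{r(T),x(T)}\psi\rangle,
\]
using \eqref{prof:frames}. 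Since $r(T)\le1$, the normalized time satisfies $|T|/r(T)^2\ge T\to\infty$. The dispersive decay $\|S(\tau)g\|_\infty\lesssim|\tau|^{-3/2}\|g\|_1$ of Lemma~\ref{ann:free-kernel}, combined with compactness of $K$, should give decay uniformly over the unknown scale and centre. Concretely, approximate $f_T$ within $\epsilon$ in $\dot H^1$ by an element $g_i$ of a finite Schwartz net. The remaining pairing is $\langle\nabla S(-\sigma T/r^2)g_i,\nabla T^{-1}_{r,x}\psi\rangle$. Estimate it as $\|\nabla S(\tau)g_i\|_6\,\|\nabla T^{-1}_{r,x}\psi\|_{6/5}$. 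The first factor is $O(|\tau|^{-1})$. The second factor scales like $r^{-1}\|\nabla\psi\|_{6/5}$, so the product is $O(r^{-1}r^2/T)=O(r/T)\le O(1/T)$, uniformly in $x$. I expect checking these scaling powers to be the main point of care; the boundedness $r\le1$ is exactly what makes the estimate work. Letting $T\to\infty$ and then $\epsilon\to0$ proves the weak zero.

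\emph{Identity.} Apply \eqref{local:forced-mild} in the oriented variable $s$. The equation is $i\sigma\partial_sU+\Delta U+F(U)=0$, so in physical time $u$ solves \eqref{local:hartree-mild}. For $t\le T_n$ this gives, in $\mathcal S'$,
\[
 U(t)=S(\sigma(t-T_n))U(T_n)-i\sigma\int_t^{T_n}S(\sigma(t-s))F(U(s))\,ds .
\]
The sign of the integral term is checked by substituting $\tau=\sigma s$. Pair this identity with a Schwartz $\phi$. The free term equals $\langle S(-\sigma T_n)U(T_n),S(-\sigma t)\phi\rangle$, and $S(-\sigma t)\phi$ is a fixed test. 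Its pairing tends to zero because weak convergence in $\X$ implies distributional convergence: $L^6$ boundedness together with gradient weak convergence gives weak $L^6$ convergence to $0$ via Sobolev.

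Hence the limit of the finite integrals exists for every test and equals $U(t)$, which is \eqref{ann:no-waste-equation}. The sequence $T_n=n+1$ was never adapted to $t$ or to $\phi$.
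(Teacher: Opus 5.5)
Your proof is correct and follows the paper's argument: you take a finite smooth $\varepsilon$-net of $K$, use dispersive decay made uniform in the center by $r(T)\le 1$, and then remove the free term of the oriented finite Duhamel identity by the weak limit. The differences are minor. You pass to the normalized frame and apply the $L^{6/5}\to L^6$ decay to the net element, obtaining $O(r/T)$, whereas the paper stays in the physical frame, puts the $L^1\to L^\infty$ bound on the test, and uses $\|\nabla T_{r,x}\psi_k\|_1=r^{3/2}\|\nabla\psi_k\|_1\le C$ to obtain $O(T^{-3/2})$. Also, under the paper's bilinear pairing the transpose of $S(\sigma t)$ is $S(\sigma t)$, not $S(-\sigma t)$; this is harmless, since either one is a fixed Schwartz test.
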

\begin{proof}
Fix a Schwartz vector test $\zeta$ and a finite homogeneous Schwartz
$\varepsilon$-net $\{\psi_k\}$ of $K$. In the Hermitian gradient pairing,
\begin{equation}\label{eq:weak-terminal-pairing}
 \begin{aligned}
 |\langle\nabla S(-\sigma T)U(T),\zeta\rangle|
 &\le\varepsilon\|\zeta\|_2
   +|\langle\nabla T_{r(T),x(T)}\psi_k,S(\sigma T)\zeta\rangle|\\
 &\le\varepsilon\|\zeta\|_2+C_{\varepsilon,\zeta}T^{-3/2}.
 \end{aligned}
\end{equation}
Here Lemma~\ref{ann:free-kernel} and $r(T)\le1$ give
$\|\nabla T_{r,x}\psi_k\|_1=r^{3/2}\|\nabla\psi_k\|_1\le C_\varepsilon$,
uniformly in the center. In \eqref{eq:weak-terminal-pairing}, let $T\to\infty$, then $\varepsilon\downarrow0$.
Density, with error $B\|\zeta-\zeta_n\|_2$, extends the limit to all
$L^2$ vector tests. The closed gradient embedding gives weak convergence
in $\X$; Sobolev then gives convergence against Schwartz scalar tests.

The finite Duhamel identity with oriented endpoints, under the physical
time substitution $s=\sigma q$, reads
\begin{equation}\label{ann:finite-signed}
 U(t)=S(\sigma(t-T))U(T)
       -i\sigma\int_t^TS(\sigma(t-s))F(U(s))\,ds.
\end{equation}
Since $F(U)\in C_tL^{6/5}$, these integrals converge absolutely against
Schwartz tests. For the common sequence $T_n=n+1$,
\[
 S(\sigma(t-T_n))U(T_n)
 =S(\sigma t)S(-\sigma T_n)U(T_n)\rightharpoonup0,
\]
which proves \eqref{ann:no-waste-equation}. Each finite integral has the
$\X$ realization $i\sigma[U(t)-S(\sigma(t-T))U(T)]$; no unprojected
infinite-time Bochner integral is used.
\end{proof}

The no-waste identity now has no free terminal term. To apply the
integrable annular kernel, it remains to represent each projected forcing
by an $L^1$ function with a quantitative bound.

\subsection{Low-frequency leakage for noninteger powers}

The Fourier support of $a=P_{\le M}f$ does not force that of $F(a)$
to remain in a bounded low-frequency ball. Differentiating the complete
Hartree product gives an $L^1$ bound, and an inverse derivative on an
annulus then supplies the factor $L^{-1}$.

\begin{lemma}[Pure-low leakage]
\label{ann:pure-low}
For $a=P_{\leq M}f$, $f\in\X$, the actual forcing has a distributional
gradient in $L^1$, and
\begin{equation}\label{ann:pure-low-bound}
 \|\nabla F(a)\|_1
 \leq C_\alpha\|a\|_\infty\|\nabla a\|_2^{2p-2}
 \leq C_{\alpha,\varphi}M^{1/2}\|\nabla f\|_2^{2p-1}.
\end{equation}
If $R_L$ is any fixed smooth annular multiplier rescaled by $L$, then
$R_LF(a)$ has an actual $L^1\cap L^{6/5}$ representative and
\begin{equation}\label{eq:annular-inverse-gradient-estimate}
 \|R_LF(a)\|_1\leq C_R L^{-1}\|\nabla F(a)\|_1.
\end{equation}

\end{lemma}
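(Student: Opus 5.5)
The plan is to differentiate the complete Hartree product $F(a)=\mathcal A(a)\mathfrak b(a)$, pull exactly one factor of $a$ out in $L^\infty$, and then close every remaining integral with a strict HLS call at the energy exponents. The Riesz order used depends on the branch of $\alpha$, as anticipated in the remarks before Lemma~\ref{std:analytic-interfaces}.

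Since $a=k_M*f$ is smooth and bounded, with $\nabla a=P_{\le M}\nabla f\in L^2$ and $a\in L^6$, the chain rule is legitimate pointwise. There are two terms:
\[
 \nabla F(a)=\nabla\mathcal A(a)\,\mathfrak b(a)+\mathcal A(a)\,\mathfrak b'(a)\nabla a,
 \qquad |\mathfrak b'(a)\nabla a|\le C|a|^{p-2}|\nabla a|.
\]

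\emph{The outside-derivative term, all $\alpha$.} I bound it by
\[
 \|a\|_\infty\int\mathcal A(a)|a|^{\alpha}|\nabla a|.
\]
Here $\mathcal A(a)\in L^{6/(3-\alpha)}$ by Lemma~\ref{ann:baseline}, and $|a|^\alpha|\nabla a|\in L^{q}$ with $1/q=\alpha/6+1/2$. The reciprocals sum to one, so H\"older and Sobolev give $C\|a\|_\infty\|\nabla a\|_2^{2p-2}$.

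\emph{The inside-derivative term, $\alpha\le1$.} I keep the derivative on the density, $\nabla\mathcal A(a)=I_\alpha*(\rho'(a)\nabla a)$, and extract $\|a\|_\infty$ from the convolved factor:
\[
 \le p\|a\|_\infty\int\bigl(I_\alpha*(|a|^{p-2}|\nabla a|)\bigr)|a|^{p-1}.
\]
The inputs lie in $L^{6/(4+\alpha)}$ and $L^{6/(2+\alpha)}$. Their reciprocals sum to $1+\alpha/3$, and $6/(4+\alpha)>1$ is strict for $\alpha\le1$.

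\emph{The inside-derivative term, $\alpha>1$.} I move the derivative onto the kernel, using $|\nabla I_\alpha|=C I_{\alpha-1}$ with no atom. This gives
\[
 \le\|a\|_\infty\int\bigl(I_{\alpha-1}*|a|^p\bigr)|a|^{p-2}.
\]
The inputs $|a|^p\in L^{6/p}$ and $|a|^{p-2}\in L^{6/(1+\alpha)}$ have reciprocals summing to $1+(\alpha-1)/3$. This is a strict order-$(\alpha-1)$ call, so $\alpha=2$ needs no Hardy replacement here.

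To justify that these pointwise formulas give the distributional gradient of the actual $F(a)$ in $L^1$, I would argue as in Lemma~\ref{prof:density-factors}. I would multiply $a$ by $\chi_R$, differentiate the compact smooth density inside the convolution, and pass $R\to\infty$ using the same HLS allocations; the cutoff error $\|a\nabla\chi_R\|$ vanishes by the $L^6$ tail. Then \eqref{ann:pure-low-bound} follows from Lemma~\ref{ann:cutoff-realization}, which gives $\|a\|_\infty\le CM^{1/2}\|\nabla f\|_2$, together with $\|\nabla a\|_2\le\|\nabla f\|_2$.

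For \eqref{eq:annular-inverse-gradient-estimate}, write the annular symbol $r(\vartheta/L)$ as
\[
 r(\vartheta/L)=\sum_j L^{-1}\,\kappa_j(\vartheta/L)\,(i\vartheta_j),
 \qquad
 \kappa_j(\omega)=-i\omega_j r(\omega)/|\omega|^2 .
\]
Each $\kappa_j$ is smooth and compactly supported away from zero, so its kernel $K^j_L=L^3K^j(L\cdot)$ is Schwartz with $\|K^j_L\|_1=\|K^j\|_1$. Thus $R_LF(a)=L^{-1}\sum_jK^j_L*\partial_jF(a)$ as distributions, checked on Schwartz tests using $F(a)\in L^{6/5}$. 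This representative is in $L^1$ by Young, and in $L^{6/5}$ because the kernel of $R_L$ is Schwartz. The same tests identify the two representatives.

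I expect the main obstacle to be the exponent bookkeeping at the HLS endpoint, that is, choosing which branch puts the derivative on the density and which on the kernel so that every call is strict. A secondary difficulty is the rigorous identification of the $L^1$ weak gradient for the noncompact, non-$L^2$ field $a$. For that, my first attempt will be the cutoff approximation above, applied with the branch-specific allocations.
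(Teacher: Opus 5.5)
Your proposal is correct and follows the paper's proof essentially step for step. You use the same two-term product rule with one factor of $\|a\|_\infty$ extracted. You differentiate the density and apply order-$\alpha$ HLS at $6/(4+\alpha)$ for $\alpha\le1$, and move the derivative onto the kernel with a strict order-$(\alpha-1)$ call for $\alpha>1$. You factor the annular symbol through $-i\vartheta_j|\vartheta|^{-2}r_L$, giving an $L^1$ kernel of norm $O(L^{-1})$. The differences are cosmetic: for $\alpha>1$ you take the $L^\infty$ factor from the outer $\mathfrak b(a)$ rather than from the density, and you justify the weak gradient by spatially truncating $a$ rather than applying $P_{\le M}$ to $C_c^\infty$ approximants of $f$.
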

\begin{proof}
Put $V=\|a\|_\infty$, $S_a=\|a\|_6$ and $G=\|\nabla a\|_2$.
Differentiate first the local factor $\mathfrak b(a)$, then the
Riesz potential. For the first term, HLS and H\"older at the conjugate
pair $s_0,q_0$ give
\[
 \begin{aligned}
 \|\nabla\mathfrak b(a)\|_{s_0}
 &\le(p-1)VS_a^{p-3}G,\qquad \frac12+\frac{p-3}{6}=\frac p6,\\
 \|\mathcal A(a)\nabla\mathfrak b(a)\|_1
 &\le C_\alpha VS_a^{2p-3}G.
 \end{aligned}
\]
For the potential derivative put $q_1=6/(4-\alpha)$, so that
$q_1^{-1}+t_0^{-1}=1$. If $0<\alpha\le1$, then
\begin{equation}\label{eq:pure-low-small-order}
 s_1=\frac6{4+\alpha}>1,\qquad
 \|\nabla|a|^p\|_{s_1}\le pVS_a^{p-2}G,\qquad
 I_\alpha:L^{s_1}\to L^{q_1}.
\end{equation}
Hence
$\|(I_\alpha*\nabla|a|^p)\mathfrak b(a)\|_1
 \le C_\alpha VS_a^{2p-3}G$.
If $1<\alpha<3$, use instead
\[
 \nabla I_\alpha=c_\alpha(\alpha-3)x|x|^{\alpha-5},\qquad
 |\nabla I_\alpha|=d_\alpha I_{\alpha-1},\qquad
 d_\alpha=\frac{(3-\alpha)c_\alpha}{c_{\alpha-1}}.
\]
The small-sphere boundary is $O(\varepsilon^{\alpha-1})$, so this weak
derivative has no atom. Strict HLS gives
\begin{equation}\label{eq:pure-low-large-order}
 \||a|^p\|_{t_0}\le VS_a^{p-1},\qquad
 I_{\alpha-1}:L^{t_0}\to L^{q_1},\qquad
 \|((\nabla I_\alpha)*|a|^p)\mathfrak b(a)\|_1
 \le C_\alpha VS_a^{2p-2}.
\end{equation}
At $\alpha=1$ use \eqref{eq:pure-low-small-order}, not \eqref{eq:pure-low-large-order}.

For the weak identities, take $f_n\in C_c^\infty$ with $f_n\to f$ in
$\X$ and set $a_n=P_{\le M}f_n$. Lemma~\ref{ann:cutoff-realization}
gives $a_n\to a$ in $L^6\cap L^\infty$ and
$\nabla a_n\to\nabla a$ in $L^2$.
The powers of $a_n$ are $C^1$ with rapidly decaying first derivatives,
including at their zeros. Compact-test integration by parts differentiates
the density for $\alpha\le1$ and the kernel for $\alpha>1$.
The power-difference inequalities in Lemma~\ref{local:nonlinear} and
\eqref{eq:pure-low-small-order}--\eqref{eq:pure-low-large-order} give
\begin{equation}\label{eq:pure-low-derivative-limits}
 \begin{gathered}
 F(a_n)\to F(a)\text{ in }L^{6/5},\qquad
 \mathcal A(a_n)\to\mathcal A(a)\text{ in }L^{q_0},\\
 \nabla\mathfrak b(a_n)\to D\mathfrak b(a)\nabla a\text{ in }L^{s_0},\qquad
 \mathfrak b(a_n)\to\mathfrak b(a)\text{ in }L^{t_0},\\
 \nabla\mathcal A(a_n)\longrightarrow
 \begin{cases}
 I_\alpha*\nabla|a|^p,&\alpha\le1,\\
 (\nabla I_\alpha)*|a|^p,&\alpha>1
 \end{cases}
 \quad\text{in }L^{q_1},\\
 \nabla F(a_n)=(\nabla\mathcal A(a_n))\mathfrak b(a_n)
                 +\mathcal A(a_n)\nabla\mathfrak b(a_n).
 \end{gathered}
\end{equation}
The two product terms in \eqref{eq:pure-low-derivative-limits} converge in $L^1$; compact tests identify
their sum as $\nabla F(a)$.
Using $S_a\le\sqrt{S_*}G$ proves \eqref{ann:pure-low-bound}.

For $r_L(\vartheta)=r(\vartheta/L)$, let
$a_{j,L}=-i\vartheta_j|\vartheta|^{-2}r_L$, extended by zero near zero.
Its inverse kernel satisfies
\[
 \begin{gathered}
 A_{j,L}(x)=L^2A_{j,1}(Lx),\qquad
 \|A_{j,L}\|_1=L^{-1}\|A_{j,1}\|_1,\\
 \sum_ja_{j,L}i\vartheta_j=r_L,\qquad
 R_LF(a)=\sum_jA_{j,L}*\partial_jF(a)\quad\text{in }\mathcal S',\\
 \|R_LF(a)\|_1\le C_RL^{-1}\|\nabla F(a)\|_1.
 \end{gathered}
\]
Young gives the $L^1$ representative and \eqref{eq:annular-inverse-gradient-estimate}. Its distribution agrees with the
ordinary annular $L^{6/5}$ convolution, so the two representatives coincide
a.e.
\end{proof}

When $2<\alpha<3$, the mixed density can lie only in $L^1$ at
the required allocation. A translated Hardy estimate supplies the
missing bound while preserving the annular inverse-derivative gain.

\subsection{A weighted mixed derivative for \texorpdfstring{$2<\alpha<3$}{2 < alpha < 3}}

\begin{lemma}[Weighted mixed-gradient replacement]
\label{ann:weighted-gradient}
Let $2<\alpha<3$, $d\in L^1\cap L^{s_0}$, and
$a=P_{\leq M}f$ for $f\in\X$. The actual product
$T=(I_\alpha*d)\mathfrak b(a)$ belongs to $L^{6/5}$ and has a
weak gradient in $L^1$, satisfying
\begin{equation}\label{ann:weighted-gradient-bound}
 \|\nabla T\|_1\leq C_\alpha\|d\|_1
          \|a\|_\infty^{4-\alpha}\|\nabla a\|_2^{2\alpha-2}.
\end{equation}
The annular inverse-gradient conclusion of
Lemma~\ref{ann:pure-low} applies to this product as well.
\end{lemma}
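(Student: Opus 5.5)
The plan is to differentiate the product $T=(I_\alpha*d)\mathfrak b(a)$ by the Leibniz rule. The derivative must never fall on $d$, which has no regularity. There are then two terms: one where the derivative hits the local factor $\mathfrak b(a)$, and one where it hits the Riesz kernel. For $2<\alpha<3$ the kernel derivative is locally integrable, with $|\nabla I_\alpha|=d_\alpha I_{\alpha-1}$ and no atom, exactly as recorded before Lemma~\ref{ann:pure-low}. Membership $T\in L^{6/5}$ comes first and is immediate: $d\in L^{s_0}$ gives $I_\alpha*d\in L^{q_0}$ by Theorem~\ref{std:hls}, and $\mathfrak b(a)\in L^{t_0}$ by Lemma~\ref{ann:baseline}.

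For the term with the derivative on $\mathfrak b(a)$, the pointwise bound is
\[
|\nabla\mathfrak b(a)|\le(p-1)|a|^{p-2}|\nabla a|.
\]
Tonelli lets the $\|d\|_1$ factor come out:
\[
\int (I_\alpha*|d|)|a|^{p-2}|\nabla a|
\le\|d\|_1\sup_y\int I_\alpha(x-y)|a(x)|^{p-2}|\nabla a(x)|\,dx .
\]
The kernel is $|x-y|^{\alpha-3}$ with $0<3-\alpha<1$, so the plan is to split the sup integral into $|x-y|<1$ and $|x-y|\ge1$ only in a scale-adapted way. The better route is Cauchy--Schwarz: bound it by $\|\nabla a\|_2$ times
\[
\Bigl(\sup_y\int|x-y|^{2\alpha-6}|a|^{2p-4}\Bigr)^{1/2}.
\]
Since $2p-4=2+2\alpha$ and $6-2\alpha<2$, I then put $\|a\|_\infty^{4-\alpha}$ on the excess power. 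The remaining $|a|^{3\alpha-2}$ has a weight $|x-y|^{-\gamma}$ with $\gamma=6-2\alpha\in(0,2)$. The exponent $3\alpha-2$ is not of the form $6-2\gamma$ in general, so I interpolate with the translated Hardy inequality of Lemma~\ref{std:hardy} and Sobolev, bounding $\int|x-y|^{-\gamma}|a|^{m}$ by $\|a\|_\infty^{m-(6-2\gamma)}$ times $\|\nabla a\|_2^{6-2\gamma}$. The powers of $\|a\|_\infty$ and $\|\nabla a\|_2$ are then forced by scaling: the bound must be invariant under $a\mapsto\lambda^{1/2}a(\lambda\cdot)$, with $d$ scaling as an $L^1$ density of $|a|^p$-type. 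That pins down the stated exponents $4-\alpha$ and $2\alpha-2$. Checking this bookkeeping, and that all allocated powers are nonnegative for $2<\alpha<3$, is routine.

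For the kernel-derivative term, the pointwise bound is $|\nabla T_2|\le d_\alpha (I_{\alpha-1}*|d|)|a|^{p-1}$. The same Tonelli argument gives
\[
\|d\|_1\sup_y\int|x-y|^{-(4-\alpha)}|a|^{2+\alpha}.
\]
Here $\gamma=4-\alpha\in(1,2)$, so the weighted Hardy bound of Lemma~\ref{std:hardy} applies directly to $|a|^{6-2\gamma}=|a|^{2\alpha-2}$. The surplus $|a|^{4-\alpha}$ goes into $\|a\|_\infty^{4-\alpha}$, and this yields precisely $\|d\|_1\|a\|_\infty^{4-\alpha}\|\nabla a\|_2^{2\alpha-2}$. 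This matches \eqref{ann:weighted-gradient-bound} and indicates which allocation the first term should also be forced into, possibly after replacing $|\nabla a|$ by Hardy-controlled quantities. I expect the first term to be the main obstacle: getting the same exponents there without an HLS endpoint. If Cauchy--Schwarz does not close, I would instead first integrate by parts to move the derivative from $\mathfrak b(a)$ onto the kernel, reducing it to the second term.

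To justify the weak identities, I would approximate $d$ by compactly supported bounded $d_n$ in $L^1\cap L^{s_0}$, and $a$ by $a_n=P_{\le M}f_n$ as in Lemma~\ref{ann:pure-low}. Compact-test integration by parts then differentiates the kernel, and the linear-in-$d$ estimates above make the candidate gradients Cauchy in $L^1$. Meanwhile $T_n\to T$ in $L^{6/5}$, which identifies the limit as $\nabla T$. Finally, the annular conclusion follows verbatim from the inverse-gradient kernels $A_{j,L}$ in the proof of Lemma~\ref{ann:pure-low}: we have $R_LT=\sum_jA_{j,L}*\partial_jT$ with $\|A_{j,L}\|_1=CL^{-1}$, so Young gives $\|R_LT\|_1\le C_RL^{-1}\|\nabla T\|_1$.
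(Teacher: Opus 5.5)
Your proposal is correct and follows the paper's proof essentially step for step. You use the Leibniz split with the derivative kept off $d$, Tonelli to extract $\|d\|_1$, and the translated weighted Hardy bound at $\gamma=4-\alpha$ for the kernel-derivative term. For the local term you use Cauchy--Schwarz plus Hardy at $\gamma=6-2\alpha$. The weak gradient is identified by approximating $d$, and the inverse-gradient kernels $A_{j,L}$ give the annular conclusion.

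Your hedge about the first term is unnecessary. Extracting $\|a\|_\infty^{2(4-\alpha)}$ inside the square root leaves exactly $|a|^{4\alpha-6}=|a|^{6-2\gamma}$, which is what the paper does in one step. There is also no need to approximate $a$, since $a=P_{\le M}f$ is already smooth; keeping it fixed keeps the Cauchy estimate linear in $d$.
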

\begin{proof}
Put $V=\|a\|_\infty$, $G=\|\nabla a\|_2$ and $C_S=\sqrt{S_*}$.
Lemma~\ref{std:hardy} gives
\begin{equation}\label{ann:weighted-hardy}
 \sup_y\int |x-y|^{-\gamma}|a(x)|^{6-2\gamma}\,dx
 \le2^\gamma C_S^{6-3\gamma}G^{6-2\gamma},\qquad 0\le\gamma\le2.
\end{equation}
Taking $\gamma=4-\alpha\in(1,2)$ and extracting $V^{4-\alpha}$ yields
\begin{equation}\label{eq:weighted-potential-derivative}
 \sup_y\int|\nabla I_\alpha(x-y)|\,|a(x)|^{p-1}\,dx
 \le C_\alpha V^{4-\alpha}G^{2\alpha-2}.
\end{equation}
For the local derivative, Cauchy--Schwarz and
\eqref{ann:weighted-hardy} with $\gamma=6-2\alpha\in(0,2)$ give
\begin{equation}\label{eq:weighted-local-derivative}
 \begin{aligned}
 \int I_\alpha(x-y)|a|^{p-2}|\nabla a|\,dx
 &\le c_\alpha G
 \left(\int|x-y|^{-(6-2\alpha)}|a|^{2\alpha+2}\,dx\right)^{1/2}\\
 &\le C_\alpha V^{4-\alpha}G^{2\alpha-2}.
 \end{aligned}
\end{equation}
In \eqref{eq:weighted-local-derivative}, the extracted power is $V^{2(4-\alpha)}$ before
taking the square root. Apply Tonelli to
\eqref{eq:weighted-potential-derivative}--\eqref{eq:weighted-local-derivative} against $|d(y)|$; together with
$|\nabla\mathfrak b(a)|\le(p-1)|a|^{p-2}|\nabla a|$, this proves
\eqref{ann:weighted-gradient-bound}.

Choose $d_n\in C_c^\infty$ converging to $d$ in $L^1\cap L^{s_0}$,
and set $T_n=(I_\alpha*d_n)\mathfrak b(a)$.
Local integrability of $I_\alpha,\nabla I_\alpha$, compact-test Fubini
and the $C^1$ product rule give their weak derivatives.
Strict HLS and \eqref{eq:weighted-potential-derivative}--\eqref{eq:weighted-local-derivative} imply
\[
 T_n\to T\text{ in }L^{6/5},\qquad
 \|\nabla T_n-\nabla T_m\|_1
 \le C_\alpha\|d_n-d_m\|_1V^{4-\alpha}G^{2\alpha-2}\longrightarrow0.
\]
Compact tests identify the limiting gradient and its bound.
Lemma~\ref{ann:pure-low} supplies the annular inverse-gradient conclusion.
\end{proof}

Thus the mixed term is controlled without invoking a strong HLS
estimate with input exponent one.

\subsection{Finite frequency envelopes}

The envelope argument must begin with quantities known to be finite;
otherwise absorption could amount to subtracting infinities. Fix
$f\in\X$, a scale $L_0>0$, and put
\[
 v=P_{\leq L_0}f,\quad w=f-v,\quad B_f=\|\nabla f\|_2,
 \quad\eta=\|\nabla v\|_2.
\]
For each output $L>0$ define
\begin{equation}\label{ann:envelope-definitions}
 \begin{gathered}
 b(M)=M^{-1}\|P_Mf\|_\infty,\qquad s_L=L/100,\qquad
 r_L=P_{\leq s_L}v,\qquad z_L=v-r_L,\\
 H_L=\sup_{M\geq L/200}b(M),\qquad
 D_L=\frac1{100}\sum_{n\geq0}2^{-n}b(s_L2^{-n}).
 \end{gathered}
\end{equation}
The letter $r_L$ here is a low-pass \emph{function}, not the physical
scale $r(s)$ of \eqref{ann:selected-path}; it will always carry its
subscript $L$.

We first bound each envelope quantity at a fixed scale.

\begin{lemma}[Finite-scale allocation]
\label{ann:finite-envelopes}
The quantities $H_L,D_L$ are finite for each fixed $L$, and
\begin{equation}\label{eq:finite-envelope-allocation}\begin{aligned}
 \|w\|_2&\leq L_0^{-1}B_f,&\quad\|w\|_3&\leq C L_0^{-1/2}B_f,\\
 \|z_L\|_2&\leq s_L^{-1}\eta,&
 \|z_L\|_6+\|r_L\|_6&\leq C\eta,\\
 \|\nabla z_L\|_2+\|\nabla r_L\|_2&\leq2\eta,&
 \|z_L\|_4^2&\leq C\eta H_L,\\
 \|r_L\|_\infty&\leq C L D_L,&
 \|r_L\|_\infty&\leq C\sqrt{s_L}\eta.
\end{aligned}\end{equation}
\end{lemma}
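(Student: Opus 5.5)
The plan is to derive each bound in \eqref{eq:finite-envelope-allocation} from the cutoff estimates of Lemma~\ref{ann:cutoff-realization} and the refined Sobolev inequality of Lemma~\ref{std:refined}, keeping track of which Fourier symbols are involved.

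\emph{Finiteness.} By \eqref{eq:homogeneous-cutoff-bounds}, $b(M)=M^{-1}\|P_Mf\|_\infty\le CM^{-1/2}B_f$. On $M\ge L/200$ this gives $H_L\le C(L/200)^{-1/2}B_f<\infty$. For $D_L$, each term satisfies $2^{-n}b(s_L2^{-n})\le C2^{-n}(s_L2^{-n})^{-1/2}B_f=C2^{-n/2}s_L^{-1/2}B_f$. This is summable, so $D_L<\infty$.

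\emph{Bounds on $w$.} We have $w=P_{>L_0}f$, so the $L^2$ bound is the high-pass estimate in \eqref{eq:homogeneous-cutoff-bounds}. Hölder interpolation between $L^2$ and $L^6$, with $\|w\|_6\le C_S\|\nabla w\|_2\le C_SB_f$, gives $\|w\|_3\le\|w\|_2^{1/2}\|w\|_6^{1/2}\le CL_0^{-1/2}B_f$.

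\emph{Bounds on $z_L$ and $r_L$.} The symbol of $z_L$ is $(1-\varphi(\vartheta/s_L))\varphi(\vartheta/L_0)$ applied to $f$. Equivalently, $z_L=P_{>s_L}v$. The high-pass estimate applied to $v$ gives $\|z_L\|_2\le s_L^{-1}\eta$. Plancherel with symbols bounded by one gives $\|\nabla z_L\|_2,\|\nabla r_L\|_2\le\eta$, and Sobolev then gives the $L^6$ bounds. The bound $\|r_L\|_\infty\le C\sqrt{s_L}\eta$ is the low-pass estimate in \eqref{eq:homogeneous-cutoff-bounds} applied to $v$.

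\emph{The $L^4$ bound.} Apply Lemma~\ref{std:refined} to $z_L$ to get $\|z_L\|_4^4\le C\mathcal B(z_L)^2\eta^2$. The task is to show $\sup_M M^{-1}\|P_Mz_L\|_\infty\le CH_L$.
\begin{itemize}
\item The symbol of $z_L$ vanishes for $|\vartheta|<s_L$, so $P_Mz_L=0$ when $2M<s_L$, that is, $M<L/200$.
\item For $M\ge L/200$, write $P_Mz_L=P_M(1-\ell_{s_L})\ell_{L_0}f$. Since $q_M=1$ on the support of $m_M$, this equals $Q_M'P_Mf$ with $Q_M'=(1-\ell_{s_L})\ell_{L_0}(D)$.
\item $Q_M'$ is not directly $L^\infty$ bounded, so expand $P_M$ into nearby annuli instead. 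Because $m_M$ is supported in $M/2\le|\vartheta|\le2M$, decompose $P_Mz_L=\sum_{|k|\le2}P_Mm'(D)P_{2^kM}f$ via a telescoping identity. Here $m'$ is the product symbol restricted to a fixed-size annulus.
\item After rescaling by $M$, these symbols are smooth with uniformly bounded derivatives, since the cutoffs at $s_L$ and $L_0$ are dilates of $\varphi$. Their kernels therefore have $L^1$ norm bounded uniformly in $M$, $s_L$ and $L_0$.
\item Young's inequality then gives $\|P_Mz_L\|_\infty\le C\sum_{|k|\le3}\|P_{2^kM}f\|_\infty$, with only scales $2^kM\ge L/200\cdot2^{-3}$ appearing.
\end{itemize}

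This leads to the first obstacle. The lower annuli in that sum reach down to $L/1600$, below the range $M\ge L/200$ covered by $H_L$. I would resolve this by noting that $P_Mz_L\ne0$ requires $2M\ge s_L$, and that the annuli feeding $P_M$ lie at scales $\ge M/4\ge s_L/8$. So I would check that the intended threshold $L/200$ accommodates exactly this margin, and adjust constants as needed. Failing that, I would use the direct bound $M^{-1}\|P_Mz_L\|_\infty\le CM^{-1}\|P_Mf\|_\infty$-type estimates only for the admissible $M$.

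\emph{The $L^\infty$ bound by $D_L$.} Write $r_L=\sum_{n\ge0}P_{s_L2^{-n}}v$, which converges since $\|P_{\le\kappa}v\|_\infty\to0$ as $\kappa\to0$ by Lemma~\ref{ann:cutoff-realization}. For $n\ge0$, $s_L2^{-n}\le L/100\le L_0$ is not guaranteed. However, since $\ell_{L_0}$ is a symbol bounded by one, the same annular-kernel argument gives $\|P_{s_L2^{-n}}v\|_\infty\le C\sum_{|k|\le1}\|P_{2^{k-n}s_L}f\|_\infty$. Summing yields $\|r_L\|_\infty\le C\sum_n s_L2^{-n}b(s_L2^{-n})$, up to neighboring indices, which is $\le CLD_L$.

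I expect the main obstacle to be the uniform $L^1$ kernel control for the products of dilated cutoffs with the annular symbol, together with the index bookkeeping at the endpoints. No monotonicity or idempotence of $\varphi$ is available, so I would rely only on $m_Mq_M=m_M$ and on the rescaling invariance of the Schwartz seminorms.
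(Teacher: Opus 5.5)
Your finiteness argument, the bounds on $w$, the $L^2$, $L^6$ and gradient bounds on $z_L,r_L$, and $\|r_L\|_\infty\le C\sqrt{s_L}\eta$ are correct and match the paper. The gap is in the two envelope bounds, and it comes from a false claim. You assert that $Q_M'=(1-\ell_{s_L})\ell_{L_0}(D)$ is not directly bounded on $L^\infty$, but it is. $P_{\le L_0}$ is convolution with $k_{L_0}$, whose $L^1$ norm is $K_0=\|k\|_1$ independently of $L_0$. Hence $P_{>s_L}=I-P_{\le s_L}$ has $L^\infty$ operator norm at most $1+K_0$. What gives $L^\infty$ boundedness is an $L^1$ kernel, not a symbol bounded by one.

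Since the multipliers commute, $P_Mz_L=P_{>s_L}P_{\le L_0}P_Mf$, so $M^{-1}\|P_Mz_L\|_\infty\le K_0(1+K_0)b(M)$ for every $M$. Together with $P_Mz_L=0$ for $M\le s_L/2=L/200$, this gives $\sup_MM^{-1}\|P_Mz_L\|_\infty\le K_0(1+K_0)H_L$, and Lemma~\ref{std:refined} finishes. Likewise $P_{s_L2^{-n}}v=P_{\le L_0}P_{s_L2^{-n}}f$ yields $\|r_L\|_\infty\le K_0\sum_{n\ge0}s_L2^{-n}b(s_L2^{-n})=K_0LD_L$ exactly. This is the paper's proof.

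Your detour instead re-expands into neighbouring annuli $P_{2^kM}f$, and that is what produces the index overflow you flag and leave unresolved.

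\emph{For $H_L$:} when $M$ is just above $L/200$, the lower neighbours $2^kM$ fall below $L/200$, outside the supremum defining $H_L$. Your remark that these scales are $\ge s_L/8$ does not help. The threshold $L/200$ is fixed by \eqref{ann:envelope-definitions}, so there are no constants to adjust. Within your scheme the repair would be to note that a neighbour $m_{M'}$ contributes only if its support meets $\{|\vartheta|\ge s_L\}$, which forces $M'>s_L/2=L/200$; you did not make this observation.

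\emph{For $D_L$:} at $n=0$ your neighbour sum includes $P_{2s_L}f$, i.e.\ the index $n=-1$, which does not occur in $D_L$. As written you therefore get only $\|r_L\|_\infty\le CL(D_L+H_L)$, not the stated $CLD_L$.

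Your fallback ``direct bound'' is the correct fix, but it rests precisely on the $L^\infty$ boundedness you had just denied.
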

\begin{proof}
The annular bound gives
\begin{equation}\label{eq:raw-envelope-finiteness}
 b(M)\le CB_fM^{-1/2},\qquad H_L\le CB_f(L/200)^{-1/2},\qquad
 D_L\le\frac{CB_f}{100\sqrt{s_L}}\sum_{n\ge0}2^{-n/2}<\infty.
\end{equation}
Lemma~\ref{ann:cutoff-realization} and Sobolev give the $L^2,L^6$ and
gradient bounds. Interpolation gives
$\|w\|_3\le\|w\|_2^{1/2}\|w\|_6^{1/2}$; in particular
$z_L\in L^2\cap L^6\subset L^4$.
For $M\le s_L/2$, $P_Mz_L=0$. Otherwise
$P_Mz_L=P_{>s_L}P_{\le L_0}P_Mf$, so, with $K_0=\|k\|_1$,
\begin{equation}\label{eq:middle-annular-seminorm}
 \sup_{M>0}M^{-1}\|P_Mz_L\|_\infty
 \le K_0(1+K_0)H_L.
\end{equation}
Apply Lemma~\ref{std:refined} to \eqref{eq:middle-annular-seminorm}:
\[
 \|z_L\|_4^2\le C H_L\|\nabla z_L\|_2\le C\eta H_L.
\]
For the remaining bound, the annular telescope converges absolutely in
$L^\infty$ by \eqref{eq:raw-envelope-finiteness}; its low-frequency endpoint tends to zero
by Lemma~\ref{ann:cutoff-realization}. Hence
\[
 P_{\le s_L}f=\sum_{n\ge0}P_{s_L2^{-n}}f,\qquad
 \|r_L\|_\infty\le K_0\sum_{n\ge0}s_L2^{-n}b(s_L2^{-n})=K_0LD_L.
\]
The direct lowpass bound applied to $v$ gives
$\|r_L\|_\infty\le C\sqrt{s_L}\eta$.
\end{proof}

We split the forcing at two scales: $L_0$ isolates a part with finite
$L^2$ norm, while $L/100$ separates the lowest frequencies from the
output annulus. The remaining bounds will be linear in $H_L+D_L$.

\begin{lemma}[Outer mixed source]
\label{ann:outer-source}
With \eqref{ann:envelope-definitions}, for $0<L\leq L_0$ there is an
actual $\Gamma_L^{\mathrm{out}}\in L^1\cap L^{6/5}$ such that
\begin{equation}\label{eq:outer-source-allocation}
 P_L\Gamma_L^{\mathrm{out}}=P_L(F(f)-F(v)),\qquad
 \|\Gamma_L^{\mathrm{out}}\|_1
 \leq C L_0^{-1/2}B_f^{2p-1}
       +C\mathbf1_{\{\alpha>2\}}B_f^p\eta^{p-2}(H_L+D_L).
\end{equation}
For $0<\alpha\leq2$ the unprojected difference itself may be used.
\end{lemma}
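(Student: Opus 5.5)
The plan is to expand the difference by the two-slot subtraction \eqref{eq:hartree-two-factor-subtraction} with $g=v$, $f=v+w$:
\[
 F(f)-F(v)=\mathcal A(f)\bigl(\mathfrak b(f)-\mathfrak b(v)\bigr)
 +\bigl(I_\alpha*d\bigr)\mathfrak b(v),\qquad d=|f|^p-|v|^p .
\]
Every difference carries one factor of $w=P_{>L_0}f$. By \eqref{ann:power-differences},
\[
 |\mathfrak b(f)-\mathfrak b(v)|\le C|w|(|f|+|v|)^{p-2},\qquad
 |d|\le C|w|(|f|+|v|)^{p-1}.
\]
The gain $L_0^{-1/2}$ will come from $\|w\|_3\le CL_0^{-1/2}B_f$ in Lemma~\ref{ann:finite-envelopes}, and all other factors will be estimated in $L^6$ by Sobolev.

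\emph{First term.} I would use H\"older with $\mathcal A(f)\in L^{q_0}$ (Lemma~\ref{ann:baseline}), $w\in L^3$, and $(|f|+|v|)^{p-2}\in L^{6/(1+\alpha)}$. The reciprocals are
\[
 \frac{3-\alpha}6+\frac13+\frac{1+\alpha}6=1,
\]
so this term is bounded in $L^1$ by $CL_0^{-1/2}B_f^{2p-1}$, for every $0<\alpha<3$. It is also in $L^{6/5}$ by Lemma~\ref{ann:baseline}.

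\emph{Second term, $0<\alpha\le2$.} The density $|w|(|f|+|v|)^{p-1}$ lies in $L^{s}$ with $1/s=1/3+(p-1)/6=(4+\alpha)/6$. For $\alpha<2$ this is a strict HLS input, with output $L^{6/(4-\alpha)}$. This pairs exactly with $\mathfrak b(v)\in L^{t_0}$ and again gives $CL_0^{-1/2}B_f^{2p-1}$. At $\alpha=2$ the exponent $s$ is the forbidden endpoint $1$. There I would use instead the Tonelli--Hardy product estimate recorded before Lemma~\ref{std:analytic-interfaces}:
\[
 \int(I_2*|d|)|v|^4\le2c_2\|d\|_1\|\nabla v\|_2\|v\|_6^3 ,
\]
with $\|d\|_1\le C\|w\|_3\|(|f|+|v|)^{p-1}\|_{3/2}$. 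In all these cases the unprojected difference serves as $\Gamma^{\rm out}_L$, as asserted.

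\emph{Second term, $2<\alpha<3$.} This is the main obstacle, since $d$ only reaches $L^1$ at the needed allocation. Here I would use the projection $P_L$ and split $v=r_L+z_L$ inside $\mathfrak b(v)$:
\[
 (I_\alpha*d)\mathfrak b(v)=(I_\alpha*d)\mathfrak b(r_L)
 +(I_\alpha*d)\bigl(\mathfrak b(v)-\mathfrak b(r_L)\bigr).
\]

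\begin{enumerate}
\item The second piece contains $|z_L|(|v|+|r_L|)^{p-2}$. I would estimate it with $z_L\in L^4$, using $\|z_L\|_4^2\le C\eta H_L$, together with the $L^6$ bounds on $r_L,z_L$. This should give a bound linear in $H_L$ with $\eta$ powers, and $B_f$ powers from $d$ and $I_\alpha*d$ (HLS on $d\in L^{s_0}$).
\item For the first piece, $d\in L^1\cap L^{s_0}$ by H\"older ($w\in L^2\cap L^6$ and $v\in L^6\cap L^\infty$). So Lemma~\ref{ann:weighted-gradient}, applied with $a=r_L$ (a low pass of $f$), gives
\[
 \|\nabla T\|_1\le C\|d\|_1\|r_L\|_\infty^{4-\alpha}\eta^{2\alpha-2},
\]
and the annular inverse-gradient gain of Lemma~\ref{ann:pure-low} supplies $L^{-1}$. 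To make this linear in $D_L$, I would spend exactly one power of $\|r_L\|_\infty\le CLD_L$ to cancel $L^{-1}$. The remaining $3-\alpha$ powers would be estimated through $\|r_L\|_\infty\le C\sqrt{s_L}\,\eta$, using $L\le L_0$, with any leftover $L_0$ powers absorbed against the $\|w\|$ factors in $\|d\|_1$.
\end{enumerate}

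Checking that these exponents close exactly into $B_f^p\eta^{p-2}(H_L+D_L)$ is the step I expect to require the most care. If the bookkeeping fails, I would instead allocate $d$ using $\|w\|_2\le L_0^{-1}B_f$ to produce compensating $L_0$ powers.

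Finally, define $\Gamma^{\rm out}_L$ as the first term, plus the $z_L$ piece, plus $Q_LT$ (the $Q_L$-projection of $T=(I_\alpha*d)\mathfrak b(r_L)$, via \eqref{ann:enlarged-symbol}). Since $m_Lq_L=m_L$, we have $P_LQ_L=P_L$, which gives $P_L\Gamma^{\rm out}_L=P_L(F(f)-F(v))$. Membership in $L^{6/5}$ follows from Lemma~\ref{ann:baseline} and the HLS bounds.
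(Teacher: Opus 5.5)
Your route is the paper's. You use the same two-slot split of $F(f)-F(v)$ and the same treatment of $0<\alpha\le2$: strict HLS at $6/(4+\alpha)$, and the Tonelli--Hardy product at $\alpha=2$. For $2<\alpha<3$ you make the same replacement of $T_L=(I_\alpha*d)\mathfrak b(r_L)$ by $Q_LT_L$ through Lemma~\ref{ann:weighted-gradient}. There you spend one power of $\|r_L\|_\infty\le CLD_L$ against $L^{-1}$ and the remaining $3-\alpha$ powers through $\|r_L\|_\infty\le C\sqrt{s_L}\,\eta$. The scale bookkeeping you were unsure of closes as follows. Take $\|d\|_1\le C\|w\|_{6/(4-\alpha)}B_f^{p-1}$ and $\|w\|_{6/(4-\alpha)}\le\|w\|_2^{\delta}\|w\|_6^{1-\delta}\le CL_0^{-\delta}B_f$, with $\delta=(3-\alpha)/2$. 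This leaves the harmless factor $(s_L/L_0)^{\delta}\le1$.

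However, item~1 fails as you state it. The piece $(I_\alpha*d)(\mathfrak b(v)-\mathfrak b(r_L))$ cannot be bounded linearly in $H_L$ using only $\|z_L\|_4^2\le C\eta H_L$ and $L^6$ bounds. Write $|\mathfrak b(v)-\mathfrak b(r_L)|\le C(|z_L|^2+|z_L||r_L|)(|z_L|+|r_L|)^{p-3}$.

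The quadratic part is fine: $\||z_L|^2(|z_L|+|r_L|)^{p-3}\|_{s_0}\le\|z_L\|_4^2(\|z_L\|_6+\|r_L\|_6)^{p-3}\le C\eta^{p-2}H_L$.

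The cross part contains a single factor of $z_L$, and here the argument breaks. For the term $|z_L||r_L|^{p-2}$, one factor of $z_L$ in $L^4$ and $r_L$ in $L^6\cap L^\infty$ give a reciprocal exponent at most $\tfrac14+\tfrac{p-2}6=\tfrac p6-\tfrac1{12}$. So $L^{s_0}$ is out of reach; the low pass has no finite mass to supply a lower exponent. In any case $\|z_L\|_4\le C(\eta H_L)^{1/2}$ is only a square root of the envelope.

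The missing step is the pairing $\|r_L\|_\infty\|z_L\|_2\le(CLD_L)(s_L^{-1}\eta)=100\,C\eta D_L$, with exponents $\tfrac12+\tfrac{p-3}6=\tfrac p6$. This is exactly the paper's remainder $R_B$ in $\mathfrak b(v)=\mathfrak b(z_L)+\mathfrak b(r_L)+R_B$. So this piece is bounded by $CB_f^p\eta^{p-2}(H_L+D_L)$, and the envelope $D_L$ is already needed here, not only for $Q_LT_L$. With that correction your argument closes and coincides with the paper's proof.
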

\begin{proof}
Put $d=|f|^p-|v|^p$ and split
\[
 F(f)-F(v)=G_1+G_2,
 \quad G_1=\mathcal A(f)(\mathfrak b(f)-\mathfrak b(v)),
 \quad G_2=(I_\alpha*d)\mathfrak b(v).
\]
Both terms belong to $L^{6/5}$ by Lemma~\ref{ann:baseline}.
Since $1/3+(p-2)/6=p/6$, the power difference bound and
Lemma~\ref{ann:finite-envelopes} yield
\begin{equation}\label{eq:outer-local-factor}
 \|G_1\|_1\leq C\|f\|_6^p\|w\|_3
                          (\|f\|_6+\|v\|_6)^{p-2}
       \leq C L_0^{-1/2}B_f^{2p-1}.
\end{equation}
Here $s_0,q_0$ are conjugate. Always $\|d\|_{s_0}\leq C B_f^p$.

The potential term has three different allocations. If $0<\alpha<2$,
put $t_\alpha=6/(4+\alpha)>1$.
The allocation $1/3+(p-1)/6=(4+\alpha)/6$ gives
$\|d\|_{t_\alpha}\leq C\|w\|_3 B_f^{p-1}$.
This is a strict HLS input, and its output exponent is $6/(4-\alpha)$,
conjugate to $t_0=6/(2+\alpha)$. Thus
$\|G_2\|_1\leq C L_0^{-1/2}B_f^{2p-1}$.

If $\alpha=2$, $p=5$ and the same allocation gives
$\|d\|_1\leq C\|w\|_3B_f^4$.
Translated Hardy and Cauchy--Schwarz give
\begin{equation}\label{ann:hardy-endpoint}
 \sup_y\int |x-y|^{-1}|v(x)|^4\,dx
 \leq \sup_y\left(\int|x-y|^{-2}|v|^2\right)^{1/2}
                  \left(\int|v|^6\right)^{1/2}
 \leq2\|\nabla v\|_2\|v\|_6^3.
\end{equation}
Tonelli against $|d(y)|$ now proves the same $G_2$ bound. This is not
an HLS claim with input exponent one.

Suppose $2<\alpha<3$ and set $\delta=(3-\alpha)/2$.
For $q=6/(4-\alpha)$ one has
$1/q=\delta/2+(1-\delta)/6$ and $1/q+(p-1)/6=1$.
Consequently
\begin{equation}\label{eq:outer-density-integrability}
 \|d\|_1\leq C\|w\|_qB_f^{p-1}
       \leq C L_0^{-\delta}B_f^p,
 \qquad \|d\|_{s_0}\leq C B_f^p.
\end{equation}
Use the pointwise decomposition
\[
 \mathfrak b(v)=\mathfrak b(z_L)+\mathfrak b(r_L)+R_B,
 \qquad |R_B|\leq C_p|r_L||z_L|(|r_L|+|z_L|)^{p-3}.
\]
The cross bound follows from the two-variable fundamental theorem and
$\|D^2\mathfrak b(z)\|\le C_p|z|^{p-3}$, valid at zero since $p>3$.

The exact allocations yield
\begin{equation}\label{eq:outer-recipient-factors}\begin{aligned}
 \|\mathfrak b(z_L)\|_{s_0}
   &\leq\|z_L\|_4^2\|z_L\|_6^{p-3}
        \leq C\eta^{p-2}H_L,\\
 \|R_B\|_{s_0}
   &\leq C\|r_L\|_\infty\|z_L\|_2
                   (\|r_L\|_6+\|z_L\|_6)^{p-3}
        \leq C\eta^{p-2}D_L.
\end{aligned}\end{equation}
The bounds follow from \eqref{eq:finite-envelope-allocation} and the exponent identities
\[
 \frac1{s_0}=\frac p6=\frac24+\frac{p-3}{6}
                =\frac12+\frac{p-3}{6},\qquad \frac L{s_L}=100.
\]
Hence \eqref{eq:outer-local-factor} and the factor bounds give
\[
 \mathcal R_L=G_1+(I_\alpha*d)(\mathfrak b(z_L)+R_B)\in L^1\cap L^{6/5},
 \quad
 \|\mathcal R_L\|_1\leq C L_0^{-1/2}B_f^{2p-1}
                  +C B_f^p\eta^{p-2}(H_L+D_L).
\]
Only $T_L=(I_\alpha*d)\mathfrak b(r_L)$ still needs an annular replacement.
Apply Lemma~\ref{ann:weighted-gradient} in the form \eqref{ann:weighted-gradient-bound}, with \eqref{eq:outer-density-integrability}, and then \eqref{eq:annular-inverse-gradient-estimate}, to obtain
\begin{equation}\label{eq:outer-annular-gradient-bound}
 \|Q_LT_L\|_1\leq C L^{-1}L_0^{-\delta}B_f^p
                    \|r_L\|_\infty^{4-\alpha}\eta^{2\alpha-2}.
\end{equation}
In \eqref{eq:outer-annular-gradient-bound}, retain one factor $\|r_L\|_\infty/L\leq C D_L$, and bound the remaining
power $3-\alpha$ by $(C\sqrt{s_L}\eta)^{3-\alpha}$. Thus
\begin{equation}\label{eq:outer-scale-factor}
 \|Q_LT_L\|_1\leq C B_f^p\eta^{p-2}(L/L_0)^\delta D_L.
\end{equation}
In \eqref{eq:outer-scale-factor}, $(2\alpha-2)+(3-\alpha)=p-2$ and $s_L^\delta/L_0^\delta$ accounts for the scale factor. Set
$\Gamma_L^{\mathrm{out}}=\mathcal R_L+Q_LT_L$.
This belongs to both stated Lebesgue spaces, and
$m_Lq_L=m_L$ gives the exact projected identity.
For $L\leq L_0$ the factor $(L/L_0)^\delta$ is bounded by one,
which proves the assertion. No claim that $T_L\in L^1$ was made.
\end{proof}

For the middle difference, every remaining gradient factor is small.
The same three allocations therefore replace the outer bound by a
coefficient of size $\eta^{2p-2}$.

\begin{lemma}[Middle source]
\label{ann:middle-source}
For every $L>0$ there is an actual
$\Gamma_L^{\mathrm{mid}}\in L^1\cap L^{6/5}$ with
\begin{equation}\label{eq:middle-source-allocation}
 P_L\Gamma_L^{\mathrm{mid}}=P_L(F(v)-F(r_L)),\qquad
 \|\Gamma_L^{\mathrm{mid}}\|_1
       \leq C\eta^{2p-2}(H_L+D_L).
\end{equation}
\end{lemma}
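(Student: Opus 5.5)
The plan mirrors the proof of Lemma~\ref{ann:outer-source}, with the pair $(f,v)$ replaced by $(v,r_L)$ and $w$ replaced by $z_L=v-r_L$. We split
\[
 F(v)-F(r_L)=G_1+G_2,\qquad
 G_1=\mathcal A(v)(\mathfrak b(v)-\mathfrak b(r_L)),\qquad
 G_2=(I_\alpha*d)\mathfrak b(r_L),\quad d=|v|^p-|r_L|^p .
\]
The essential difference from the outer case is the source of smallness. Every factor now carries only $\eta$: $\|v\|_6,\|r_L\|_6,\|z_L\|_6\le C\eta$ and $\|\nabla r_L\|_2\le2\eta$. The one factor that supplied $L_0^{-1/2}$ there, namely $\|w\|_3$, must be replaced by a quantity linear in $H_L+D_L$. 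For this we use $\|z_L\|_4^2\le C\eta H_L$, $\|z_L\|_2\le s_L^{-1}\eta$, and $\|r_L\|_\infty\le CLD_L$ from Lemma~\ref{ann:finite-envelopes}.

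\textbf{The local term $G_1$.} The pointwise bound gives $|\mathfrak b(v)-\mathfrak b(r_L)|\le C|z_L|(|z_L|+|r_L|)^{p-2}$, which we split into $|z_L|^{p-1}$ and $|z_L||r_L|(|z_L|+|r_L|)^{p-3}$. We place $\mathcal A(v)\in L^{q_0}$, of size $C\eta^p$, against the conjugate exponent $s_0$. The two allocations are exactly those of \eqref{eq:outer-recipient-factors}:
\[
 \|\mathfrak b(z_L)\|_{s_0}\le C\eta^{p-2}H_L,\qquad
 \|z_L r_L(\cdots)^{p-3}\|_{s_0}\le C\|r_L\|_\infty\|z_L\|_2\eta^{p-3}\le C\eta^{p-2}D_L .
\]
The second estimate uses $L/s_L=100$. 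Together these give $\|G_1\|_1\le C\eta^{2p-2}(H_L+D_L)$ directly, with no projection needed.

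\textbf{The potential term $G_2$.} We write $|d|\le C|z_L|(|z_L|+|r_L|)^{p-1}$ and split $d$ into a part $d_1\lesssim|z_L|^p$ and a part $d_2\lesssim|z_L||r_L|(\cdot)^{p-2}$. For $d_1$ we bound the density in $L^{s_0}$ through $\|z_L\|_4^2\|z_L\|_6^{p-2}$, as for $G_1$. We then put the recipient $\mathfrak b(r_L)\in L^{t_0}$ or $L^{6/(p-1)}$ and use the same HLS pairing; for $d_2$ we use $\|r_L\|_\infty\|z_L\|_2$. Each configuration costs one factor $H_L$ or $D_L$ times $\eta^{2p-2}$.

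The range of $\alpha$ requires case analysis, as before. For $\alpha\le2$, the allocation $1/s_0$ against $q_0$ on the potential side is strict. The recipient then absorbs the remaining $\eta$ powers, with the translated Hardy bound \eqref{ann:hardy-endpoint} used at $\alpha=2$ if an $L^1$ density arises.

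\textbf{Main obstacle: $2<\alpha<3$.} In this range the natural density lies only in $L^1$ while the recipient $\mathfrak b(r_L)$ is purely low frequency. We will therefore not claim $G_2\in L^1$ for the $r_L$-recipient piece. Instead we set $\Gamma^{\mathrm{mid}}_L=\mathcal R_L+Q_LT_L$. Here $T_L=(I_\alpha*d)\mathfrak b(r_L)$, with $d\in L^1\cap L^{s_0}$ and $\|d\|_1\le C\|z_L\|_2\|\cdot\|$ arranged to carry $D_L$ or $H_L$. We apply Lemma~\ref{ann:weighted-gradient} and \eqref{eq:annular-inverse-gradient-estimate}:
\[
 \|Q_LT_L\|_1\le CL^{-1}\|d\|_1\|r_L\|_\infty^{4-\alpha}\eta^{2\alpha-2}.
\]
The delicate point is the bookkeeping of the factor $L$. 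We would bound $\|d\|_1$ by $\|z_L\|_q\eta^{p-1}$ with $q=6/(4-\alpha)$. Interpolating $\|z_L\|_q$ between $\|z_L\|_2\le 100\eta/L$ and $\|z_L\|_6$ gives $L^{-\delta}$. We retain one factor $\|r_L\|_\infty/L\le CD_L$ and bound the remaining $\|r_L\|_\infty^{3-\alpha}$ by $(C\sqrt{s_L}\eta)^{3-\alpha}$. The resulting powers are $L^{-\delta}$ and $L^{(3-\alpha)/2}=L^{\delta}$, and they cancel exactly, so the bound is scale-free. This avoids any restriction on $L$, consistent with the statement holding for every $L>0$. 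Finally, $m_Lq_L=m_L$ gives $P_L\Gamma^{\mathrm{mid}}_L=P_L(F(v)-F(r_L))$, and membership in $L^{6/5}$ follows from Lemma~\ref{ann:baseline}.
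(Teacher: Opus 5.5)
Your proposal is essentially the paper's proof. It has the same splitting $G_1+G_2$ and the same $H_L$/$D_L$ allocations for $G_1$ against $\mathcal A(v)\in L^{q_0}$. It has the same case split, with the translated Hardy bound at $\alpha=2$. For $2<\alpha<3$ it uses the same projection $Q_LG_2$ via Lemma~\ref{ann:weighted-gradient}, where $s_L^{-\delta}$ from interpolating $z_L$ cancels $s_L^{\delta}$ from $\|r_L\|_\infty^{3-\alpha}$.

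One correction is needed for $\alpha\le2$. The potential density must be estimated in $L^{t_\alpha}$ with $t_\alpha=6/(4+\alpha)$, not in $L^{s_0}$. This is the exponent for which $\|d_1\|_{t_\alpha}\le C\|z_L\|_4^2\|z_L\|_6^{p-2}$ and $\|d_2\|_{t_\alpha}\le C\|r_L\|_\infty\|z_L\|_2\eta^{p-2}$ actually hold; your stated $L^{s_0}$ bound fails by scaling. Its HLS output $6/(4-\alpha)$ is conjugate to $t_0$, so the product with $\mathfrak b(r_L)$ lands in $L^1$; the $s_0\to q_0$ pairing would only give $L^{6/5}$. Finally, $t_\alpha$ is a strict HLS input exactly when $\alpha<2$, which is why Hardy replaces HLS at $\alpha=2$.
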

\begin{proof}
Put $d_m=|v|^p-|r_L|^p$ and split
\[
 F(v)-F(r_L)=G_{m,1}+G_{m,2},\quad
 G_{m,1}=\mathcal A(v)(\mathfrak b(v)-\mathfrak b(r_L)),\quad
 G_{m,2}=(I_\alpha*d_m)\mathfrak b(r_L).
\]
The estimates \eqref{eq:outer-recipient-factors}, applied to the difference $\mathfrak b(v)-\mathfrak b(r_L)$, give
\[
 \|\mathcal A(v)\|_{q_0}\le C\eta^p,\qquad
 \|G_{m,1}\|_1\le C\eta^{2p-2}(H_L+D_L).
\]
For the density, integrate
$\|D^2(|\cdot|^p)(z)\|\le C_p|z|^{p-2}$ on the unit square:
\[
 d_m=|z_L|^p+R_\rho,\qquad
 |R_\rho|\le C_p|r_L||z_L|(|r_L|+|z_L|)^{p-2}.
\]
If $0<\alpha\le2$, let $t_\alpha=6/(4+\alpha)\ge1$. The allocation
\[
 \frac1{t_\alpha}=\frac{p+1}{6}
 =\frac24+\frac{p-2}{6}=\frac12+\frac{p-2}{6}
\]
yields
\begin{equation}\label{eq:middle-density-factors}\begin{aligned}
 \||z_L|^p\|_{t_\alpha}
 &\le\|z_L\|_4^2\|z_L\|_6^{p-2}
       \le C\eta^{p-1}H_L,\\
 \|R_\rho\|_{t_\alpha}
 &\le C\|r_L\|_\infty\|z_L\|_2
              (\|r_L\|_6+\|z_L\|_6)^{p-2}
       \le C\eta^{p-1}D_L.
\end{aligned}\end{equation}
Apply \eqref{eq:middle-density-factors}: for $\alpha<2$, use strict HLS
$L^{t_\alpha}\to L^{6/(4-\alpha)}$ and H\"older with $L^{t_0}$.
At $\alpha=2$, use \eqref{ann:hardy-endpoint} for $r_L$ and Tonelli.
In both cases,
\[
 \|G_{m,2}\|_1\le C\eta^{2p-2}(H_L+D_L),\qquad
 \Gamma_L^{\mathrm{mid}}=G_{m,1}+G_{m,2}.
\]

For $2<\alpha<3$, set $\delta=(3-\alpha)/2$.
Interpolating $z_L$ between $L^2$ and $L^6$ at $q=6/(4-\alpha)$ gives
\begin{equation}\label{eq:middle-density-integrability}
 \|d_m\|_1\le Cs_L^{-\delta}\eta^p,\qquad
 \|d_m\|_{s_0}\le C\eta^p.
\end{equation}
Use \eqref{eq:middle-density-integrability}, \eqref{ann:weighted-gradient-bound}, and \eqref{eq:annular-inverse-gradient-estimate} to obtain
\begin{equation}\label{eq:middle-annular-gradient-bound}
 \begin{aligned}
 \|Q_LG_{m,2}\|_1
 &\le CL^{-1}s_L^{-\delta}\eta^p
                  \|r_L\|_\infty^{4-\alpha}\eta^{2\alpha-2}\\
 &\le C s_L^{-\delta}\eta^pD_L
                  s_L^\delta\eta^{3-\alpha}\eta^{2\alpha-2}
 =C\eta^{2p-2}D_L.
 \end{aligned}
\end{equation}
In \eqref{eq:middle-annular-gradient-bound} we used $\|r_L\|_\infty/L\le CD_L$ and
$\|r_L\|_\infty\le C\sqrt{s_L}\eta$.
Set $\Gamma_L^{\mathrm{mid}}=G_{m,1}+Q_LG_{m,2}$.
Baseline HLS puts every term in $L^{6/5}$, while $P_LQ_L=P_L$ gives
the projected identity.
\end{proof}

The pure-low leakage estimate completes the decomposition. The
result is an integrable source with exactly the same annular projection
as $F(f)$; no $L^1$ bound for the entire unprojected forcing is assumed.

\begin{theorem}[The full annular source]
\label{ann:full-source}
For $f\in\X$, the definitions \eqref{ann:envelope-definitions}, and
$0<L\leq L_0$, there is an actual $\Gamma_L\in L^1\cap L^{6/5}$ such that
\begin{equation}\label{ann:full-source-bound}
 \begin{split}
 P_L\Gamma_L&=P_LF(f)\quad\hbox{in }\mathcal S',\\
 \|\Gamma_L\|_1&\leq K+\kappa(H_L+D_L),\\
 K&=C L_0^{-1/2}B_f^{2p-1},\qquad
 \kappa=C\left(\eta^{2p-2}
       +\mathbf1_{\{\alpha>2\}}B_f^p\eta^{p-2}\right).
 \end{split}
\end{equation}
In particular $P_LF(f)$ has an actual $L^1$ representative. 
\end{theorem}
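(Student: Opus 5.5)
The plan is to telescope the forcing into three differences, using the two splitting scales already built into \eqref{ann:envelope-definitions}:
\[
 F(f)=\bigl(F(f)-F(v)\bigr)+\bigl(F(v)-F(r_L)\bigr)+F(r_L).
\]
All three pieces are actual $L^{6/5}$ functions by Lemma~\ref{ann:baseline}. Since $P_L$ is linear on $L^{6/5}\subset\mathcal S'$, it suffices to produce an $L^1\cap L^{6/5}$ replacement for each piece with the same $P_L$ projection.

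The first two pieces are already handled. For the outer difference I invoke Lemma~\ref{ann:outer-source}, which applies because $0<L\le L_0$. It yields $\Gamma_L^{\mathrm{out}}$ with $P_L\Gamma_L^{\mathrm{out}}=P_L(F(f)-F(v))$ and $L^1$ norm at most
\[
 CL_0^{-1/2}B_f^{2p-1}+C\mathbf1_{\{\alpha>2\}}B_f^p\eta^{p-2}(H_L+D_L).
\]
For the middle difference, Lemma~\ref{ann:middle-source} gives $\Gamma_L^{\mathrm{mid}}$ with $L^1$ norm at most $C\eta^{2p-2}(H_L+D_L)$.

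For the pure-low piece I take $\Gamma_L^{\mathrm{low}}=Q_LF(r_L)$, so that $m_Lq_L=m_L$ from \eqref{ann:enlarged-symbol} gives $P_L\Gamma_L^{\mathrm{low}}=P_LF(r_L)$. The function $r_L=P_{\le s_L}v$ is a low pass of an $\X$ function, namely $P_{\le s_L}P_{\le L_0}f$. The proof of Lemma~\ref{ann:pure-low} only uses that $a$ lies in $L^6\cap L^\infty$ with gradient in $L^2$ and with smooth approximants, so it applies to $a=r_L$. Its first inequality in \eqref{ann:pure-low-bound}, together with \eqref{eq:annular-inverse-gradient-estimate} for the annular multiplier $Q_L$, gives
\[
 \|Q_LF(r_L)\|_1\le CL^{-1}\|r_L\|_\infty\|\nabla r_L\|_2^{2p-2}.
\]
Lemma~\ref{ann:finite-envelopes} supplies $\|r_L\|_\infty\le CLD_L$ and $\|\nabla r_L\|_2\le2\eta$, so this piece is bounded by $C\eta^{2p-2}D_L$. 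The $L^{6/5}$ membership follows from Lemma~\ref{ann:baseline} and boundedness of $Q_L$ on $L^{6/5}$ (its kernel is $L^1$).

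Set $\Gamma_L=\Gamma_L^{\mathrm{out}}+\Gamma_L^{\mathrm{mid}}+Q_LF(r_L)$. Summing the three bounds gives \eqref{ann:full-source-bound}: the constant $K$ comes from the outer term, and $\kappa$ collects $\eta^{2p-2}$ from the middle and low terms and $\mathbf1_{\{\alpha>2\}}B_f^p\eta^{p-2}$ from the outer term. The projected identity holds in $\mathcal S'$ by linearity. For the final claim, note that $P_L$ has an $L^1$ Schwartz kernel, so $P_L\Gamma_L\in L^1$ is an actual representative of the distribution $P_LF(f)$. It agrees a.e.\ with the $L^{6/5}$ convolution, by the argument closing Lemma~\ref{ann:pure-low}.

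The only step needing care is the pure-low application, where Lemma~\ref{ann:pure-low} is stated for $P_{\le M}f$ with a single cutoff but is used here for the composite low pass $r_L$. Two facts must be checked: that $r_L$ has smooth approximants converging in $L^6\cap L^\infty$ with gradients converging in $L^2$, which holds because $P_{\le s_L}P_{\le L_0}$ is convolution with a Schwartz kernel; and that the inverse-gradient estimate uses $Q_L$ as the fixed annular multiplier rescaled by $L$. Everything else is bookkeeping of the constants.
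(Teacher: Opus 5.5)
Your proposal is correct and matches the paper's proof: the same decomposition $\Gamma_L=\Gamma_L^{\mathrm{out}}+\Gamma_L^{\mathrm{mid}}+Q_LF(r_L)$, with the pure-low piece bounded by $CL^{-1}\|r_L\|_\infty\|\nabla r_L\|_2^{2p-2}\le C\eta^{2p-2}D_L$ via Lemmas~\ref{ann:pure-low} and~\ref{ann:finite-envelopes}, and the same identification of $P_L\Gamma_L$ through the $L^1$ kernel of $P_L$. The step you flag as delicate is immediate: by Lemma~\ref{ann:cutoff-realization}, $v=P_{\le L_0}f$ already lies in $\X$, so $r_L=P_{\le s_L}v$ is exactly of the form covered by Lemma~\ref{ann:pure-low} with $M=s_L$.
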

\begin{proof}
Using the same $f,v,r_L$ in Lemmas~\ref{ann:outer-source} and
\ref{ann:middle-source}, put
\[
 \Gamma_L=\Gamma_L^{\mathrm{out}}+\Gamma_L^{\mathrm{mid}}+Q_LF(r_L).
\]
Lemma~\ref{ann:pure-low} and the finite-scale allocation give
\begin{equation}\label{eq:pure-low-envelope-allocation}
 \|Q_LF(r_L)\|_1
 \le CL^{-1}\|r_L\|_\infty\|\nabla r_L\|_2^{2p-2}
 \le C\eta^{2p-2}D_L.
\end{equation}
All three terms lie in $L^1\cap L^{6/5}$, and
\[
 P_L\Gamma_L
 =P_L(F(f)-F(v))+P_L(F(v)-F(r_L))+P_LF(r_L)=P_LF(f).
\]
Summing \eqref{eq:outer-source-allocation}, \eqref{eq:middle-source-allocation}, and \eqref{eq:pure-low-envelope-allocation} proves \eqref{ann:full-source-bound}.
The $L^1$ kernel of $P_L$ gives $P_L\Gamma_L\in L^1$; equality of
distributions identifies it with the annular $L^{6/5}$ representative.
\end{proof}

We now compare its time integral with the no-waste limit.

\subsection{The annular time integral and envelope absorption}

\begin{proposition}[Comparison of the two terminal limits]
\label{ann:annular-bochner}
Fix $L>0$. Suppose the same path $U$ in \eqref{ann:selected-path} has,
for every $s\geq0$, an actual function $\Gamma_L(s)\in L^1$ with
\[
 P_L\Gamma_L(s)=P_LF(U(s))\text{ in }\mathcal S',\qquad
 \sup_{s\geq0}\|\Gamma_L(s)\|_1\leq B_L<\infty.
\]
Then, for every fixed $t\geq0$,
\begin{equation*}
 P_LU(t)=-i\sigma\int_t^\infty S(\sigma(t-s))P_LF(U(s))\,ds
 \quad\hbox{in }C_0(\mathbb R^3),\qquad
 \|P_LU(t)\|_\infty\leq C_\varphi L B_L.
\end{equation*}
The improper integral is absolutely Bochner integrable in $C_0$.
Its finite truncations are also actual $L^2$ Bochner integrals, and
converge weakly in $L^2$ to $P_LU(t)$ after multiplication by $-i\sigma$.
No measurability hypothesis on the chosen map $s\mapsto\Gamma_L(s)$
is needed.
\end{proposition}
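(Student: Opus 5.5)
The plan is to apply $P_L$ to the no-waste identity of Proposition~\ref{ann:no-waste}, and then to replace each projected forcing $P_LF(U(s))$ by $P_L\Gamma_L(s)$. The resulting integrand is controlled by the integrable annular kernel bound of Lemma~\ref{ann:free-kernel}.

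\textbf{Step 1: the kernel bound in $C_0$.} Since $P_L\Gamma_L(s)=P_LF(U(s))$ as distributions, for every $s$ the element $S(\sigma(t-s))P_LF(U(s))$ equals $K_{L,\sigma(t-s)}*\Gamma_L(s)$. By \eqref{eq:integrable-annular-kernel} and Young's inequality,
\[
 \|S(\sigma(t-s))P_LF(U(s))\|_\infty\le C_\varphi\min\{L^3,|t-s|^{-3/2}\}B_L .
\]
The function $S(\sigma(t-s))P_LF(U(s))$ lies in $C_0$ for each $s$. To see this, use the distributional identity with the $L^{6/5}$ input: the operator $S(\tau)P_L$ maps $L^{6/5}$ into $C_0$, because its kernel is Schwartz in $L^{6}$ for fixed $\tau$.

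\textbf{Step 2: strong measurability and absolute integrability.} The key point is that measurability must come from $F(U(\cdot))$, not from the unmeasured choice $s\mapsto\Gamma_L(s)$. The map $s\mapsto F(U(s))$ is continuous into $L^{6/5}$ by Lemma~\ref{ann:baseline} and continuity of $U$ in $\X\subset L^6$. Next, $(\tau,g)\mapsto S(\tau)P_Lg$ is jointly continuous from $\R\times L^{6/5}$ into $C_0$. Indeed, $S(\tau)P_L=S(\tau)Q_LP_L$, and $S(\tau)Q_L$ maps $L^2\to L^\infty$ with a norm uniform on compact $\tau$-sets, while $P_L:L^{6/5}\to L^2$ is bounded. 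Continuity in $\tau$ holds because the multiplier changes continuously on the compact annulus.

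Hence the integrand is continuous into the separable Banach space $C_0$. Its norm is dominated by the value $\Gamma_L$-bound from Step 1, which is integrable in $s$ with total at most $3L\,C_\varphi B_L$. The improper Bochner integral therefore converges absolutely in $C_0$ with the stated bound $C_\varphi LB_L$. The same continuity into $L^2$, via $S(\tau)P_L:L^{6/5}\to L^2$, makes each finite truncation an $L^2$ Bochner integral. That $L^2$ truncation is the $P_L$-image of the tested finite integral in \eqref{ann:no-waste-equation}, since $P_L$ commutes with $S$ and with Schwartz pairings.

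\textbf{Step 3: identification of the limit.} Apply $P_L$ to \eqref{ann:no-waste-equation} by pairing with $P_L\phi$ for Schwartz $\phi$; note that $P_L$ has an even real symbol, so its bilinear transpose is itself. This gives
\[
 P_LU(t)=-i\sigma\lim_n\int_t^{T_n}S(\sigma(t-s))P_LF(U(s))\,ds
\]
in $\mathcal S'$. The $C_0$-Bochner truncations converge uniformly to the improper integral, hence also in $\mathcal S'$. So the two limits agree as distributions. Since $P_LU(t)\in C_0$ by Lemma~\ref{ann:cutoff-realization}, the identity holds in $C_0$.

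For weak $L^2$ convergence, the truncations multiplied by $-i\sigma$ equal $P_L[U(t)-S(\sigma(t-T_n))U(T_n)]$. These are bounded in $L^2$ by $2L^{-1}\cdot 2B$ using \eqref{eq:homogeneous-cutoff-bounds}, and they converge in $\mathcal S'$. Boundedness together with convergence against the dense set of Schwartz tests then gives weak $L^2$ convergence.

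\textbf{Main obstacle.} The delicate point is Step 2: justifying the Bochner integrability without any measurability of $\Gamma_L$. The approach is to use $\Gamma_L(s)$ only pointwise, in the norm majorant, while deriving all measurability from the continuous path $s\mapsto F(U(s))$ in $L^{6/5}$.
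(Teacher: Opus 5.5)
Your proposal is correct and follows essentially the same route as the paper. Measurability of $s\mapsto S(\sigma(t-s))P_LF(U(s))$ into $L^2\cap C_0$ is derived from $F(U)\in C_tL^{6/5}$, and $\Gamma_L(s)$ enters only pointwise, through the majorant $C\min\{L^3,|t-s|^{-3/2}\}B_L$. The weak $L^2$ limit, obtained from the no-waste terminal vanishing and the uniform bound $\|P_Lf\|_2\le 2L^{-1}\|\nabla f\|_2$, and the strong $C_0$ limit are then identified in $\mathcal S'$.

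The differences are cosmetic. You get joint continuity from the factorization $S(\tau)P_L=S(\tau)Q_LP_L$ with a $\tau$-uniform $L^2\to C_0$ bound, where the paper uses Schwartz-seminorm continuity of the kernel. You also pair the no-waste identity with $P_L\phi$, where the paper applies $P_L$ directly to the finite signed Duhamel formula.
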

\begin{proof}
The annular kernel satisfies
$\|K_L\|_{3/2}=CL$, $\|K_L\|_6=CL^{5/2}$ and $\|K_L\|_1=C$.
Young and compact smooth approximation give compatible maps
\[
 \|P_Lg\|_2\le CL\|g\|_{6/5},\qquad
 \|P_Lg\|_\infty\le CL^{5/2}\|g\|_{6/5},\qquad P_Lg\in C_0.
\]
The kernel of $S(\tau)P_L$ is continuous in $\tau$ in every Schwartz
seminorm on compact time intervals. Since $F(U)\in C_tL^{6/5}$,
\[
 h_{L,t}(s):=S(\sigma(t-s))P_LF(U(s))\in C_s(L^2\cap C_0).
\]
The finite Bochner integrals in these spaces have the same distribution.
Applying $P_L$ to \eqref{ann:finite-signed} gives
\begin{equation}\label{eq:annular-finite-duhamel}
 P_LU(t)=S(\sigma(t-T))P_LU(T)-i\sigma\int_t^Th_{L,t}(s)\,ds
 \quad\text{in }L^2.
\end{equation}
By Proposition~\ref{ann:no-waste} and boundedness of $P_L:\X\to L^2$,
the terminal term in \eqref{eq:annular-finite-duhamel} tends weakly to zero.

The projected source identity estimates this already continuous integrand:
\begin{equation}\label{eq:annular-time-majorant}
 h_{L,t}(s)=K_{L,\sigma(t-s)}*\Gamma_L(s),\qquad
 \|h_{L,t}(s)\|_\infty
 \le C\min(L^3,|t-s|^{-3/2})B_L.
\end{equation}
No measurability of $\Gamma_L$ is needed.
Integrating the kernel bound \eqref{eq:integrable-annular-kernel} gives
\[
 \int_t^\infty\|h_{L,t}(s)\|_\infty\,ds\le3CLB_L.
\]
By \eqref{eq:annular-finite-duhamel} and the integrable majorant \eqref{eq:annular-time-majorant}, along the single sequence $T_n=n+1$, put
$I_n=-i\sigma\int_t^{T_n}h_{L,t}(s)\,ds$. Then
\begin{equation}\label{eq:annular-two-limits}
 I_n\rightharpoonup P_LU(t)\text{ in }L^2,\qquad
 I_n\longrightarrow -i\sigma\int_t^\infty h_{L,t}(s)\,ds
 \text{ in }C_0.
\end{equation}
The limits in \eqref{eq:annular-two-limits} embed in $\mathcal S'$, so they agree. Their continuous
representatives give the claimed $C_0$ identity and norm bound.
\end{proof}

Integrating the annular kernel costs one power of $L$, which cancels
the normalization in $b(L)$. This produces a recurrence with a small
coefficient; a finite weighted supremum makes its absorption legitimate.

\begin{theorem}[The finite envelope and spatial integrability]
\label{ann:orbit-l4}
The selected actual path satisfies
\[
 A_*:=\sup_{M>0}M^{-1}\sup_{s\geq0}\|P_MU(s)\|_\infty<\infty,
 \qquad
 U\in C([0,\infty);L^4)\cap L^\infty([0,\infty);L^4),
\]
\[
 F(U)\in C([0,\infty);L^1)\cap L^\infty([0,\infty);L^1),
\]
\[
 \sup_s\|U(s)\|_4^4\leq C A_*^2 B^2,
 \quad\|F(U(s))\|_1\leq C\|U(s)\|_4^2\|U(s)\|_6^{2p-3}.
\]

\end{theorem}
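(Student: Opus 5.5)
The plan is to run a weighted envelope absorption. Introduce a damping of the lowest output frequencies, and only afterwards pass to the undamped supremum. For $s\ge0$ and $M>0$ set $b_s(M)=M^{-1}\|P_MU(s)\|_\infty$. By Lemma~\ref{ann:cutoff-realization}, $b_s(M)\le CBM^{-1/2}$ uniformly in $s$, so $b$ is finite only away from $M=0$. For a small parameter $\epsilon>0$ define the weighted supremum
\[
 \mathcal E_\epsilon=\sup_{s\ge0}\sup_{M>0}\min\{1,(M/\epsilon)^{1/2}\}\,b_s(M)\le CB\epsilon^{-1/2}<\infty .
\]
This quantity is finite before any absorption, which is exactly what makes the subtraction below legitimate.

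First fix $L_0$, depending only on the orbit. Apply Theorem~\ref{ann:full-source} at each time with $f=U(s)$, $v=P_{\le L_0}U(s)$ and output scale $0<L\le L_0$. The smallness of $\eta=\|\nabla P_{\le L_0}U(s)\|_2$ is uniform in $s$ by Proposition~\ref{ann:uniform-highpass}; choose $L_0=\nu$ small enough that the coefficient
\[
 \kappa=C\bigl(\eta^{2p-2}+\mathbf1_{\{\alpha>2\}}B^p\eta^{p-2}\bigr)
\]
is as small as we like. The source $\Gamma_L(s)$ then has $\sup_s\|\Gamma_L(s)\|_1\le K+\kappa\sup_s(H_L(s)+D_L(s))$, with $K=CL_0^{-1/2}B^{2p-1}$ independent of $L$. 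Proposition~\ref{ann:annular-bochner} converts this into $b_s(L)\le C_\varphi\bigl(K+\kappa\sup_{s'}(H_L+D_L)\bigr)$ for every $t=s\ge0$. The main bookkeeping step is to compare $H_L$ and $D_L$ with the weighted quantity:
\[
 \min\{1,(L/\epsilon)^{1/2}\}\,H_L\le C\mathcal E_\epsilon,\qquad
 \min\{1,(L/\epsilon)^{1/2}\}\,D_L\le C\mathcal E_\epsilon .
\]
For $H_L$ this uses $M\ge L/200$, so the weight at $M$ is at least a constant times the weight at $L$. For $D_L$, the weight at $s_L2^{-n}$ loses at most $2^{-n/2}$ times a fixed factor, which is absorbed by the geometric factor $2^{-n}$ in its definition. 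Multiplying by the weight at $L$ then gives, for $L\le L_0$,
\[
 \min\{1,(L/\epsilon)^{1/2}\}\,b_s(L)\le C_\varphi K+C\kappa\,\mathcal E_\epsilon .
\]

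For $L>L_0$ use the crude bound $b_s(L)\le CBL_0^{-1/2}$. Taking the supremum over $s$ and $L$ yields $\mathcal E_\epsilon\le C(K+BL_0^{-1/2})+C\kappa\mathcal E_\epsilon$. Since $\mathcal E_\epsilon$ is finite and $C\kappa\le1/2$, absorption gives $\mathcal E_\epsilon\le C'(B,L_0)$ uniformly in $\epsilon$. Letting $\epsilon\downarrow0$ by monotone convergence gives $A_*<\infty$.

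The $L^4$ bound then follows from Lemma~\ref{std:refined} applied at each time, with $\mathcal B(U(s))\le A_*$; it also gives $U(s)\in L^4$. The $L^1$ bound on the forcing comes from HLS with exponent pair $(s_0,s_0)$ in dual form, interpolating: $\|F(U)\|_1\le\|\mathcal A(U)\|_{q}\|U\|_{\tilde q}^{p-1}$ with the allocation $\int(I_\alpha*|U|^p)|U|^{p-1}$, placing two $L^4$ powers and $2p-3$ powers in $L^6$ across the two slots. This must respect strict HLS throughout $0<\alpha<3$; I expect a case check near $\alpha=2$ analogous to Lemma~\ref{ann:outer-source}, handled via the translated Hardy bound if needed.

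Time continuity in $L^4$ follows by interpolating $L^6$ continuity with the uniform $L^4$ bound. The uniform $L^4$ bound alone controls only $L^r$, $4<r\le6$, so $L^4$ continuity itself needs one more step: use the fact that the annular envelope is uniform on the compact orbit, so the tail $\sum_{M\le\mu}P_MU$ is uniformly small in $L^4$ by the refined Sobolev estimate applied with small gradient $\eta_\mu$, while $P_{>\mu}U\in C_tH^1\subset C_tL^4$. The continuity of $F(U)$ in $L^1$ then follows from the same multilinear allocation.

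The step I expect to be the main obstacle is the uniform comparison between $H_L, D_L$ and the weighted envelope, together with ensuring that $K$ does not secretly depend on $L$. The weight must decay slowly enough that the $2^{-n}$ sum in $D_L$ still converges, which is why the exponent $1/2$ is chosen: it matches the a priori $M^{-1/2}$ growth of $b_s(M)$.
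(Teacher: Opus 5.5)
Your proposal is correct and follows essentially the paper's route. You damp the lowest frequencies in a weighted envelope, and your exponent $1/2$ works as well as the paper's $\theta\in(1/2,1)$, since both the a priori finiteness and the dyadic sum in $D_L$ still close. You then absorb at a fixed $L_0$ chosen from the uniform smallness of $\eta$, apply Lemma~\ref{std:refined} for the $L^4$ bound, and use the energy-level HLS pair $(s_0,s_0)$ together with $L^4$--$L^6$ interpolation of $\|U\|_{6(p-1)/p}^{p-1}$ for the $L^1$ forcing bound.

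Two simplifications are available. First, that HLS pair is strict for every $0<\alpha<3$, so no $\alpha=2$ case check or Hardy substitute is needed. Second, $L^4$ continuity follows at once by applying Lemma~\ref{std:refined} to $U(s)-U(s_0)$, whose annular seminorm is at most $2A_*$; this gives $\|U(s)-U(s_0)\|_4^2\le CA_*\|\nabla(U(s)-U(s_0))\|_2$ without your low/high split.
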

\begin{proof}
The case $B=0$ is immediate. Put
\begin{equation}\label{eq:raw-time-envelope}
 b(M)=M^{-1}\sup_s\|P_MU(s)\|_\infty\le CBM^{-1/2},\qquad
 \eta=\sup_s\|\nabla P_{\le L_0}U(s)\|_2,
\end{equation}
and define $H_L,D_L$ from these time suprema. Both are finite for each
fixed $L$. Theorem~\ref{ann:full-source} and
Proposition~\ref{ann:annular-bochner} treat $L\le L_0$; the bound \eqref{eq:raw-time-envelope} treats $L\ge L_0$. Thus
\begin{equation}\label{ann:recurrence}
 \begin{gathered}
 b(L)\le K_*+\kappa_*(H_L+D_L),\qquad L>0,\\
 K_*=CL_0^{-1/2}(B+B^{2p-1}),\qquad
 \kappa_*=C\bigl(\eta^{2p-2}
       +\mathbf1_{\{\alpha>2\}}B^p\eta^{p-2}\bigr).
 \end{gathered}
\end{equation}
Choose $\theta\in(1/2,1)$. The lower bound $\theta>1/2$
compensates for the known growth $b(L)\lesssim L^{-1/2}$ at zero;
the upper bound $\theta<1$ makes the dyadic low-frequency sum converge.
Set
\begin{equation}\label{eq:finite-weighted-envelope}
 w_\varepsilon(L)=\min(1,(L/\varepsilon)^\theta),\qquad
 A_\varepsilon=\sup_{L>0}w_\varepsilon(L)b(L)
 \le CB\varepsilon^{-1/2}<\infty.
\end{equation}
The comparison
$w_\varepsilon(L)/w_\varepsilon(M)\le\max(1,(L/M)^\theta)$ yields
\begin{equation}\label{eq:envelope-weight-comparison}
 w_\varepsilon(L)(H_L+D_L)\le C_\theta A_\varepsilon,\qquad
 C_\theta=200^\theta+
 \frac{100^{\theta-1}}{1-2^{-(1-\theta)}}.
\end{equation}
Using \eqref{ann:recurrence}, uniform low-frequency smallness and $p>3$ allow us to fix $L_0>0$,
independently of $\varepsilon$, with $\kappa_*C_\theta\le1/2$. Combining \eqref{ann:recurrence} and \eqref{eq:envelope-weight-comparison} gives
\begin{equation}\label{eq:finite-envelope-absorption}
 A_\varepsilon\le K_*+\tfrac12A_\varepsilon
 \quad\Longrightarrow\quad A_\varepsilon\le2K_*.
\end{equation}
For each $L$, take $\varepsilon=L$ in \eqref{eq:finite-envelope-absorption} to obtain $A_*\le2K_*<\infty$. Finiteness in \eqref{eq:finite-weighted-envelope} justifies the subtraction.

Apply Lemma~\ref{std:refined} directly to the original $U(s)$ and to its
differences, whose annular seminorm is at most $2A_*$:
\[
 \|U(s)\|_4^4\le CA_*^2B^2,\qquad
 \|U(s)-U(s_0)\|_4^2
 \le2CA_*\|\nabla U(s)-\nabla U(s_0)\|_2\longrightarrow0.
\]
For $q_B=6(p-1)/p\in(4,5)$, interpolation and strict HLS give
\begin{equation}\label{eq:hartree-l1-interpolation}
 \begin{aligned}
 \|\mathfrak b(f)\|_{s_0}
 &=\|f\|_{q_B}^{p-1}\le\|f\|_4^2\|f\|_6^{p-3},\\
 \|F(f)\|_1
 &\le\|\mathcal A(f)\|_{q_0}\|\mathfrak b(f)\|_{s_0}
 \le C\|f\|_4^2\|f\|_6^{2p-3}.
 \end{aligned}
\end{equation}
If $f_n\to f$ in $L^4\cap L^6$, the power differences give
$|f_n|^p\to|f|^p$ in $L^{s_0}$ and, by $L^{q_B}$ interpolation,
$\mathfrak b(f_n)\to\mathfrak b(f)$ in $L^{s_0}$.
The two-term identity \eqref{eq:hartree-two-factor-subtraction}, estimated as in \eqref{eq:hartree-l1-interpolation}, gives
$F(f_n)\to F(f)$ in $L^1$. Applied to $U$, this proves the asserted
continuity and uniform bounds for the same representatives.
\end{proof}

These spatial bounds supply the highpass equation with a continuous source.

\begin{corollary}[The highpass equation and its source]
\label{ann:source-interface}
For the same $U$ and every fixed $\nu>0$, let
\[
 h_\nu=P_{>\nu}U,\qquad l_\nu=P_{\leq\nu}U,\qquad
 e_\nu=P_{>\nu}F(U)-F(h_\nu).
\]
Then these are actual functions,
$h_\nu\in C_tH^1$, $e_\nu\in C_t(L^1\cap L^{6/5})$ with uniform
time bounds in both source spaces, and
\begin{equation}\label{ann:forced-equation}
 i\partial_s h_\nu+\sigma\Delta h_\nu+\sigma F(h_\nu)
       =-\sigma e_\nu
       \quad\hbox{in distributions}.
\end{equation}
The error obeys
\[
 \sup_{s\geq0}\|e_\nu(s)\|_{6/5}
 \leq C B^{2p-2}\eta_\nu
       +C\nu^{1/2}\sup_{s\geq0}\|F(U(s))\|_1
       \longrightarrow0.
\]

\end{corollary}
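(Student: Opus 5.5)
The plan has three parts: the regularity of the pieces, the forced equation \eqref{ann:forced-equation}, and the error bound, which is the real content.

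\emph{Regularity.} By Proposition~\ref{ann:uniform-highpass}, $h_\nu\in C_tH^1$ at fixed $\nu$. The low pass $l_\nu=U-h_\nu$ is the smooth convolution of Lemma~\ref{ann:cutoff-realization}, and it is continuous in $L^6\cap L^\infty$. By Theorem~\ref{ann:orbit-l4}, $F(U)\in C_t(L^1\cap L^{6/5})$ with uniform bounds. The operator $P_{>\nu}=1-k_\nu*$ is bounded on $L^1$ and on $L^{6/5}$, so $P_{>\nu}F(U)$ inherits these properties. For $F(h_\nu)$, the interpolation bound \eqref{eq:hartree-l1-interpolation} applies once $h_\nu\in C_t(L^4\cap L^6)$. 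This follows because $h_\nu\in C_tH^1\subset C_t(L^2\cap L^6)$. The uniform $L^1$ bound can be taken from $\|h_\nu\|_4^2\le\|h_\nu\|_2^{1/2}\|h_\nu\|_6^{3/2}$, a bound that depends on $\nu$, or alternatively from the refined Sobolev inequality with the annular seminorm bound $CA_*$. Either choice suffices here. Together with Lemma~\ref{ann:baseline} this gives $e_\nu\in C_t(L^1\cap L^{6/5})$.

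\emph{Forced equation.} Apply the Fourier multiplier $P_{>\nu}$ to the oriented Duhamel identity \eqref{ann:finite-signed}. Its symbol is smooth, bounded, and commutes with $S(t)$, so the identity can be tested against $\mathcal S'$ pairings. The result is the finite Duhamel formula for $h_\nu$ with source $P_{>\nu}F(U)$. By the converse in Lemma~\ref{local:linear-duhamel}, after the change of variable $s=\sigma q$, this is equivalent to $i\partial_sh_\nu+\sigma\Delta h_\nu=-\sigma P_{>\nu}F(U)$ in distributions. Since $P_{>\nu}F(U)=F(h_\nu)+e_\nu$, rearranging gives \eqref{ann:forced-equation}. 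The only care needed is the pairing convention: $P_{>\nu}$ is transposed into the test function, which stays Schwartz because $1-\varphi$ is smooth and bounded.

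\emph{Error bound.} Write
\[
 e_\nu=\bigl(F(U)-F(h_\nu)\bigr)-P_{\le\nu}F(U).
\]
For the second term, Young's inequality gives $\|k_\nu*g\|_{6/5}\le\|k_\nu\|_{6/5}\|g\|_1$. Scaling gives $\|k_\nu\|_{6/5}=\nu^{3-5/2}\|k\|_{6/5}=\nu^{1/2}\|k\|_{6/5}$, which produces the term $C\nu^{1/2}\sup_s\|F(U(s))\|_1$. For the first term, apply the Lipschitz estimate of Lemma~\ref{ann:baseline} with $U-h_\nu=l_\nu$. Using $\|U\|_6+\|h_\nu\|_6\le CB$ and Sobolev $\|l_\nu\|_6\le C_S\|\nabla l_\nu\|_2\le C\eta_\nu$, this gives $CB^{2p-2}\eta_\nu$. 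Both terms tend to zero: $\eta_\nu\to0$ by Proposition~\ref{ann:uniform-highpass}, and $\sup_s\|F(U)\|_1<\infty$ by Theorem~\ref{ann:orbit-l4}.

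I expect the main obstacle to be bookkeeping rather than estimates. One must confirm that $P_{>\nu}F(U)$, taken as an $\mathcal S'$ multiplier, coincides with the actual $L^1\cap L^{6/5}$ function $F(U)-k_\nu*F(U)$. One must also confirm that the projected Duhamel identity is realized in the solution class needed for the converse direction of Lemma~\ref{local:linear-duhamel}. If that lemma's hypotheses on the gradient source are inconvenient, I would instead verify the tested equation directly. This is done by pairing with $\eta(s)\phi(x)$, moving $P_{>\nu}$ onto $\phi$, and using the tested equation already satisfied by $U$.
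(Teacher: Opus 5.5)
Your proposal is correct and follows the paper's own proof. Both use the splitting $e_\nu=(F(U)-F(h_\nu))-P_{\le\nu}F(U)$, with the Lipschitz bound of Lemma~\ref{ann:baseline} plus Sobolev on the input difference, Young with $\|k_\nu\|_{6/5}=C\nu^{1/2}$ on the projected output, and the multiplier $P_{>\nu}$ applied to the signed equation and mild identity to get \eqref{ann:forced-equation}. Your variations are only cosmetic: you obtain $L^4$ control of $h_\nu$ by interpolation or refined Sobolev rather than from $L^4$-boundedness of $1-k_\nu*$, and you route the equation through the converse of Lemma~\ref{local:linear-duhamel}, whose hypotheses do hold since $P_{>\nu}$ is bounded on $L^{6/5}$ and $L^{30/13}$.
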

\begin{proof}
The identity
\[
 e_\nu=F(U)-F(h_\nu)-P_{\le\nu}F(U)
\]
holds for actual functions. The cutoff bounds on $L^4$ and $L^6$,
Theorem~\ref{ann:orbit-l4}, Lemma~\ref{ann:baseline}, and continuity of
$F:L^4\cap L^6\to L^1$ give the asserted continuity and uniform source
bounds. By Sobolev and the nonlinear difference estimate,
\begin{equation}\label{eq:highpass-input-error}
 \|F(U)-F(h_\nu)\|_{6/5}\le CB^{2p-2}\|l_\nu\|_6
 \le CB^{2p-2}\eta_\nu.
\end{equation}
The low-pass kernel satisfies $\|k_\nu\|_{6/5}=C\nu^{1/2}$, so Young's
inequality gives
\begin{equation}\label{eq:highpass-output-error}
 \|P_{\le\nu}F(U)\|_{6/5}\le C\nu^{1/2}\|F(U)\|_1.
\end{equation}
The sum of \eqref{eq:highpass-input-error} and \eqref{eq:highpass-output-error} tends uniformly to zero by Proposition~\ref{ann:uniform-highpass}.
Applying the same multiplier to the signed equation
$iU_s+\sigma\Delta U+\sigma F(U)=0$ proves
\eqref{ann:forced-equation}. Applying it to the finite mild identity
identifies the same endpoint representatives. No $L^1$ smallness is used.
\end{proof}

The forcing is now bounded in $L^1$, while the projected defect is
small in $L^{6/5}$. The interaction argument must use these two facts
separately.

\section{Mass-independent Gaussian coercivity}
\label{sec:gaussian}
The next task is a positive interaction bound whose constants do not
deteriorate with the high-frequency mass. The proof first converts the
ground-state gap into a directional stress inequality, then separates a
compact core from a small gradient tail. Both nonlocal pressure terms
are retained, together with the full quadratic density form.
All forms in this section are evaluated on $h\in H^1(\R^3;\C)$,
as is valid for each fixed highpass field. The compact-family constants
will be independent of $\|h\|_2$.
Write $S_0=\frac13(4/\pi^2)^{2/3}$ and $C_6=S_0^3$; thus
$\int|v|^6\le C_6 A(v)^3$. These are the ordinary Sobolev constants,
not the Hartree threshold $G=A(W)$.

\subsection{Gaussian kernels and the principal interaction form}

We begin with the Gaussian forms on an $H^1$ field.

\begin{lemma}[Gaussian forms and their domains]\label{gauss:kernel}
For $\lambda>0$ set
\[
 \phi_\lambda(r)=\lambda^{-1}e^{-r^2/(2\lambda^2)},\qquad
 \xi_\lambda(r)=\frac1r\int_0^r\phi_\lambda(s)\dd s\quad(r>0),
 \qquad \xi_\lambda(0)=\lambda^{-1},
\]
\[
 k_\lambda(x)=\lambda\left[
 \frac{|x|}{\lambda}\int_0^{|x|/\lambda}e^{-s^2/2}\dd s
 -1+e^{-|x|^2/(2\lambda^2)}\right],\qquad
 w_\lambda=\xi_\lambda-\phi_\lambda.
\]
With $P_z=I-\widehat z\otimes\widehat z$ for $z\ne0$, define
\begin{equation}\label{eq:gaussian-hessian-laplacian}
 H_\lambda=D^2k_\lambda
   =\phi_\lambda I+w_\lambda P_z,\qquad
 L_\lambda=\Delta k_\lambda=\phi_\lambda+2\xi_\lambda.
\end{equation}
The continuous values at zero are $H_\lambda(0)=\lambda^{-1}I$
and $w_\lambda(0)=0$. For $h\in H^1$ let
\[
 \rho=|h|^2,\quad j_i=\im(\overline h\,\partial_i h),\quad
 T_{ij}=\re(\partial_i\overline h\,\partial_jh),\quad d=|h|^p,
 \qquad V=I_\alpha*d,
\]
\[
 q(x)=\int\nabla k_\lambda(x-y)\rho(y)\dd y,\qquad
 q_m(y)=\int\nabla k_\lambda(x-y)\cdot j(x)\dd x,
\]
\[
 C_q(x,z)=\frac{(q(x)-q(z))\cdot(x-z)}{|x-z|^2}.
\]
The diagonal value of $C_q$ is immaterial. Put
\[
 E_\lambda(h)=\int (L_\lambda*\rho)Vd,\qquad
 J_\lambda(h)=\iint I_\alpha(x-z)C_q(x,z)d(x)d(z)\dd x\dd z,
\]
\begin{equation}\label{eq:hartree-gaussian-pressure}
 \mathcal P_\lambda(h)=\frac{2(p-2)}p E_\lambda(h)
 +\frac{2(3-\alpha)}p J_\lambda(h),
\end{equation}
\[
 Q_\lambda(\rho)=-\iint\Delta L_\lambda(x-y)\rho(x)\rho(y)\dd x\dd y,
 \qquad M_\lambda(h)=2\int q\cdot j,
\]
\[
 \mathcal R_\lambda(h)=
 4\iint H_\lambda(x-y):[\rho(y)T(x)-j(x)\otimes j(y)]\dd x\dd y
 -\mathcal P_\lambda(h),\qquad
 B_\lambda(h)=Q_\lambda(\rho)+\mathcal R_\lambda(h).
\]
All displayed integrals are absolutely convergent. These definitions use
one full density in both pressure terms and in the current--current term.
For a real $g\in H^1$, writing
\[
 \mathcal H_\lambda(c;g)=\iint |c(y)|^2
       H_\lambda(x-y)\nabla g(x)\cdot\nabla g(x)\dd x\dd y,
\]
we have $\mathcal R_\lambda(g)=4\mathcal H_\lambda(g;g)-\mathcal P_\lambda(g)$.
\end{lemma}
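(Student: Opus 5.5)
The statement has three parts: the identities \eqref{eq:gaussian-hessian-laplacian} for $D^2k_\lambda$ and $\Delta k_\lambda$, the continuous values at the origin, and the absolute convergence of every displayed form for $h\in H^1$. The plan is to first derive pointwise kernel bounds from the radial calculus, and then check each integral by H\"older, Young, HLS or Hardy.

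\emph{Kernel calculus.} Write $k_\lambda(x)=\kappa(|x|)$ with $\kappa(r)=\lambda[(r/\lambda)\Phi(r/\lambda)-1+e^{-r^2/(2\lambda^2)}]$, where $\Phi(t)=\int_0^te^{-s^2/2}ds$. Differentiating gives
\[
 \kappa'(r)=\Phi(r/\lambda)+(r/\lambda)e^{-r^2/2\lambda^2}-(r/\lambda)e^{-r^2/2\lambda^2}=\Phi(r/\lambda)=r\,\xi_\lambda(r),
\]
and $\kappa''(r)=\phi_\lambda(r)$. For a radial function, $D^2k=\kappa''\,\widehat z\otimes\widehat z+(\kappa'/r)P_z$. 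This equals $\phi_\lambda I+(\xi_\lambda-\phi_\lambda)P_z$, and its trace is $\phi_\lambda+2\xi_\lambda$. Near $r=0$ the function $\kappa$ is even and smooth, with $\kappa(r)=r^2/(2\lambda)+O(r^4)$. Hence $k_\lambda$ is $C^\infty$, $\xi_\lambda(0)=\phi_\lambda(0)=\lambda^{-1}$, $w_\lambda(0)=0$ and $H_\lambda(0)=\lambda^{-1}I$.

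\emph{Bounds needed later.} The kernels satisfy
\begin{itemize}
\item $|\nabla k_\lambda|=\Phi(|x|/\lambda)\le\sqrt{\pi/2}$;
\item $|H_\lambda|\le C\lambda^{-1}$ and $0\le L_\lambda\le3\lambda^{-1}$;
\item $\xi_\lambda(r)\le C\min(\lambda^{-1},r^{-1})$;
\item $\Delta L_\lambda$ is smooth and bounded by $C\lambda^{-3}(1+|x|/\lambda)^{-3}$.
\end{itemize}
For the last bound, $\Delta\phi_\lambda$ decays like a Gaussian. The term $\Delta\xi_\lambda$ behaves like $\Delta$ of $c/r$ away from the origin, which is $0$, plus exponentially small corrections. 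In particular $\Delta L_\lambda\in L^1\cap L^\infty$.

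\emph{Absolute convergence.} For $h\in H^1$ we have $\rho,|T|,|j|\in L^1$, $\rho\in L^1\cap L^3$ and $d\in L^{6/p}$.
\begin{itemize}
\item Since $\nabla k_\lambda$ is bounded, $q$ and $q_m$ are bounded by $C\|\rho\|_1$ and $C\|j\|_1$. Then $M_\lambda$ converges, since $|q\cdot j|\le C\|\rho\|_1|j|$.
\item The Hessian term is bounded by $C\lambda^{-1}(\|\rho\|_1\|T\|_1+\|j\|_1^2)$.
\item $Q_\lambda$ converges because $\Delta L_\lambda\in L^\infty$ and $\rho\in L^1$.
\item $E_\lambda$ converges because $L_\lambda*\rho\in L^\infty$ and $\int Vd<\infty$ by sharp HLS, estimate \eqref{input:eq-sharp-hls}.
\item The reduction for real $g$ is immediate: $j=0$, and $T(x)=\nabla g\otimes\nabla g$ gives $H:T=H\nabla g\cdot\nabla g$.
\end{itemize}

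\emph{The main obstacle: $J_\lambda$.} The difficulty is that $C_q(x,z)$ carries a $|x-z|^{-1}$ factor and is evaluated on the diagonal. I would write
\[
 q(x)-q(z)=\int_0^1D^2k_\lambda*\rho\,(z+t(x-z))\,(x-z)\,dt .
\]
Since $H_\lambda$ is bounded, $D^2k_\lambda*\rho$ is bounded by $C\lambda^{-1}\|\rho\|_1$. This gives $|C_q|\le C\lambda^{-1}\|\rho\|_1$ uniformly, including near the diagonal. Then $|J_\lambda|\le C\lambda^{-1}\|\rho\|_1D(h)<\infty$ by HLS. It also shows that the diagonal value is immaterial, because the diagonal has measure zero.

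Measurability and the Fubini steps follow by Tonelli on the absolute majorants.
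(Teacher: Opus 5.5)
Your proof is correct and follows essentially the same route as the paper: radial calculus for $D^2k_\lambda$ and $\Delta k_\lambda$, bounded kernel derivatives paired with the $L^1$ quantities $\rho,j,T$, the mean-value bound $|C_q|\le\|Dq\|_\infty\le\lambda^{-1}\|\rho\|_1$ for the chord coefficient, and HLS for both pressures. One small sharpening: your majorant $C\lambda^{-3}(1+|x|/\lambda)^{-3}$ would not by itself give $\Delta L_\lambda\in L^1$ (only the $L^\infty$ bound is needed here, and your asymptotic remark does give exponential decay). In fact the formula is exact: $\Delta\xi_\lambda=r^{-1}(r\xi_\lambda)''=r^{-1}\phi_\lambda'=-\lambda^{-2}\phi_\lambda$, so $-\Delta L_\lambda=\lambda^{-3}(5-|x|^2/\lambda^2)e^{-|x|^2/(2\lambda^2)}$ is a polynomial times a Gaussian, as the paper notes.
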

\begin{proof}
Radial differentiation gives
\[
 \nabla k_\lambda(x)=\xi_\lambda(|x|)x,\qquad
 D^2k_\lambda=\phi_\lambda I+(\xi_\lambda-\phi_\lambda)P_x.
\]
The identities and bounds
\begin{equation}\label{eq:gaussian-kernel-pointwise-bounds}
 \xi_\lambda(r)=\int_0^1\phi_\lambda(tr)\dd t,\qquad
 |\xi_\lambda(r)-\lambda^{-1}|\le\frac{r^2}{2\lambda^3},\qquad
 0<\phi_\lambda\le\xi_\lambda\le\lambda^{-1},\qquad
 r\xi_\lambda(r)\le2
\end{equation}
in \eqref{eq:gaussian-kernel-pointwise-bounds} give the smooth extensions and $H_\lambda\ge0$.
At fixed $\lambda$, all required derivatives are bounded and
$\Delta L_\lambda$ is a polynomial times a Gaussian. Since
\begin{equation}\label{eq:gaussian-quadratic-domains}
 \rho,j,T\in L^1,\qquad \|j\|_1\le\|h\|_2\|\nabla h\|_2,\qquad
 |C_q|\le\|Dq\|_\infty\le\lambda^{-1}\|h\|_2^2,
\end{equation}
the density, kinetic and current pairings have integrable product
majorants. Theorem~\ref{std:hls} gives
\begin{equation}\label{eq:gaussian-pressure-domains}
 d\in L^{6/p},\qquad V\in L^{6/(3-\alpha)},\qquad
 \iint I_\alpha(x-z)d(x)d(z)\dd x\dd z<\infty,
\end{equation}
which controls both pressures. The majorants \eqref{eq:gaussian-quadratic-domains}--\eqref{eq:gaussian-pressure-domains} also justify Fubini and convolution differentiation. For real $g$, $j=0$.
\end{proof}

The physical density kernel changes sign. Its Fourier multiplier is
nonnegative, so positivity must be applied to the full density quadratic
form before estimating a tail.

\begin{lemma}[Fourier representations of the quadratic forms]\label{gauss:Q}
For real $\rho\in L^1\cap L^2$ let
$Q_{\phi,\lambda}(\rho)=-\int\rho\,\Delta(\phi_\lambda*\rho)$.
With the Fourier convention of the introduction and $\gamma=(2\pi)^{3/2}$,
\begin{align}
 Q_\lambda(\rho)&=\gamma\int
 (2+4\pi^2\lambda^2|\xi|^2)e^{-2\pi^2\lambda^2|\xi|^2}
 |\widehat\rho(\xi)|^2\dd\xi,\label{gauss:Q-fourier}\\
 Q_{\phi,\lambda}(\rho)&=\gamma\int
 4\pi^2\lambda^2|\xi|^2e^{-2\pi^2\lambda^2|\xi|^2}
 |\widehat\rho(\xi)|^2\dd\xi.\label{gauss:Qphi-fourier}
\end{align}
Consequently $0\le Q_{\phi,\lambda}\le Q_\lambda$. If
$\rho=\rho_c+\rho_t$, all three densities are real and in $L^1\cap L^2$,
and $\rho_c\ge0$, then
\begin{equation}\label{gauss:Q-tail}
 Q_{\phi,\lambda}(\rho_t)\le2Q_\lambda(\rho)
                  +10\lambda^{-3}\|\rho_c\|_1^2.
\end{equation}
\end{lemma}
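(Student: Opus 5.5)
The plan is to compute both multipliers explicitly by writing each kernel as a superposition of Gaussians. Then I read off \eqref{gauss:Q-fourier}--\eqref{gauss:Qphi-fourier} from Plancherel, and obtain \eqref{gauss:Q-tail} from the parallelogram inequality for the positive form $Q_{\phi,\lambda}$.

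\textbf{Step 1: the Gaussian $\phi_\lambda$.} With $\widehat f(\xi)=\int e^{-2\pi ix\cdot\xi}f$, the radial function $\phi_\lambda(|x|)=\lambda^{-1}e^{-|x|^2/(2\lambda^2)}$ has transform
\[
 \widehat{\phi_\lambda}(\xi)=\gamma\lambda^2e^{-a},\qquad a=2\pi^2\lambda^2|\xi|^2 .
\]
Since $-\Delta$ has multiplier $4\pi^2|\xi|^2=2a/\lambda^2$, the kernel $-\Delta\phi_\lambda$ (a Schwartz function) has multiplier $\gamma\,4\pi^2\lambda^2|\xi|^2e^{-a}$.

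\textbf{Step 2: the kernel $\xi_\lambda$.} For $\xi_\lambda$ I use the representation $\xi_\lambda(r)=\int_0^1\phi_\lambda(tr)\,dt$ from \eqref{eq:gaussian-kernel-pointwise-bounds}. Each slice $\lambda^{-1}e^{-t^2r^2/(2\lambda^2)}$ has transform $\gamma\lambda^2t^{-3}e^{-a/t^2}$. Integrating over $t\in(0,1]$ and substituting $s=t^{-2}$ gives
\[
 \widehat{\xi_\lambda}=\gamma\lambda^2e^{-a}/(2a)
\]
off $\xi=0$. This is taken in the tempered sense, since $\xi_\lambda\notin L^1$; Fubini against Schwartz tests is legitimate because $\int_0^1\!\int|\phi_\lambda(t|x|)\psi(x)|\,dx\,dt<\infty$.

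\textbf{Step 3: the multiplier of $-\Delta L_\lambda$.} Multiplying by $2a/\lambda^2$, the multiplier of $-\Delta L_\lambda=-\Delta(\phi_\lambda+2\xi_\lambda)$ is
\[
 \gamma(2a+2)e^{-a}=\gamma(2+4\pi^2\lambda^2|\xi|^2)e^{-2\pi^2\lambda^2|\xi|^2}.
\]
The singularity $1/a$ cancels, so this multiplier is a bounded Schwartz function. The kernel $-\Delta L_\lambda$ is, by Lemma~\ref{gauss:kernel}, a polynomial times a Gaussian, hence in $L^1$. Its $L^1$ transform coincides with this tempered computation.

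\textbf{Step 4: the two formulas and the comparison.} For real $\rho\in L^1\cap L^2$ and an even real $L^1$ kernel $K$, Plancherel gives
\[
 \iint K(x-y)\rho(x)\rho(y)\,dx\,dy=\int\widehat K\,|\widehat\rho|^2 .
\]
This yields \eqref{gauss:Q-fourier} and \eqref{gauss:Qphi-fourier}. The inequality $0\le Q_{\phi,\lambda}\le Q_\lambda$ is then pointwise comparison of the multipliers.

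\textbf{Step 5: the tail bound \eqref{gauss:Q-tail}.} $Q_{\phi,\lambda}$ is a nonnegative quadratic form, so
\[
 Q_{\phi,\lambda}(\rho_t)=Q_{\phi,\lambda}(\rho-\rho_c)\le2Q_{\phi,\lambda}(\rho)+2Q_{\phi,\lambda}(\rho_c)\le2Q_\lambda(\rho)+2Q_{\phi,\lambda}(\rho_c).
\]
For the core I use $|\widehat{\rho_c}|\le\|\rho_c\|_1$ and the substitution $\eta=2\pi\lambda\xi$:
\[
 Q_{\phi,\lambda}(\rho_c)\le\gamma\|\rho_c\|_1^2(2\pi\lambda)^{-3}\int|\eta|^2e^{-|\eta|^2/2}\,d\eta=\gamma\cdot3\gamma(2\pi\lambda)^{-3}\|\rho_c\|_1^2=3\lambda^{-3}\|\rho_c\|_1^2 .
\]
This gives the constant $6\le10$. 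The positivity $\rho_c\ge0$ is not even needed for this bound.

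The only step requiring care is Step 2, the tempered transform of the non-integrable $\xi_\lambda$ and its identification with the $L^1$ transform of $\Delta L_\lambda$. I handle it by differentiating under the $t$-integral and pairing only with Schwartz tests.
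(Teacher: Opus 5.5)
Your proof is correct, and its outline matches the paper's: Plancherel for the two pair forms, then $|\widehat\rho_t|^2\le2|\widehat\rho|^2+2|\widehat\rho_c|^2$ for the tail. The differences are in how the multiplier of $-\Delta L_\lambda$ is obtained and how the core term is bounded.

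\textbf{The multiplier.} The paper never transforms the non-integrable kernel $\xi_\lambda$. By radial differentiation it computes
\[
-\Delta L_\lambda(z)=\lambda^{-3}(5-|z|^2/\lambda^2)e^{-|z|^2/(2\lambda^2)},\qquad
-\Delta\phi_\lambda(z)=\lambda^{-3}(3-|z|^2/\lambda^2)e^{-|z|^2/(2\lambda^2)},
\]
and reads off $-\Delta L_\lambda=-\Delta\phi_\lambda+2\lambda^{-2}\phi_\lambda$. Equivalently, $-\Delta\xi_\lambda=\lambda^{-2}\phi_\lambda$, which is immediate from $\Delta\xi_\lambda=r^{-1}(r\xi_\lambda)''=r^{-1}\phi_\lambda'$. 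Only the Gaussian transform is then needed.

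\textbf{The core term.} The paper stays in physical space:
\[
Q_{\phi,\lambda}(\rho_c)\le Q_\lambda(\rho_c)\le\|\Delta L_\lambda\|_\infty\|\rho_c\|_1^2\le5\lambda^{-3}\|\rho_c\|_1^2 .
\]

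\textbf{Your route.} Your superposition $\xi_\lambda=\int_0^1\phi_\lambda(t\,\cdot)\,dt$ yields the same symbol once multiplied by $4\pi^2|\xi|^2$. The tempered Fubini is justified because $e^{-a}/a$ is locally integrable in $\R^3$. This bookkeeping is exactly what the one-line identity $-\Delta\xi_\lambda=\lambda^{-2}\phi_\lambda$ makes unnecessary. In exchange, your route gives the explicit positive symbol $\gamma\lambda^2e^{-a}/(2a)$ of $\xi_\lambda$ itself. Your Fourier-side core bound also gives the sharper constant $6$ instead of $10$. As you observed, neither argument actually uses $\rho_c\ge0$.
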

\begin{proof}
Radial differentiation and Gaussian inversion give
\[
 -\Delta L_\lambda(z)=\lambda^{-3}
 (5-|z|^2/\lambda^2)e^{-|z|^2/(2\lambda^2)},\qquad
 -\Delta\phi_\lambda(z)=\lambda^{-3}
 (3-|z|^2/\lambda^2)e^{-|z|^2/(2\lambda^2)},
\]
\begin{equation}\label{eq:gaussian-quadratic-symbols}
 -\Delta L_\lambda=-\Delta\phi_\lambda+2\lambda^{-2}\phi_\lambda,
 \qquad \widehat\phi_\lambda(\xi)
 =\gamma\lambda^2e^{-2\pi^2\lambda^2|\xi|^2}.
\end{equation}
Both differentiated kernels lie in $L^1\cap L^\infty$.
Boundedness gives the physical $L^1$ pairings; applying Plancherel to \eqref{eq:gaussian-quadratic-symbols} gives
\eqref{gauss:Q-fourier}--\eqref{gauss:Qphi-fourier} on $L^2$.
In particular $\rho=|h|^2\in L^1\cap L^2$ when $h\in H^1$.

In the nonnegative multiplier of $Q_{\phi,\lambda}$, use
$|\widehat\rho_t|^2\le2|\widehat\rho|^2+2|\widehat\rho_c|^2$.
Then
\[
 \begin{aligned}
 Q_{\phi,\lambda}(\rho_t)
 &\le2Q_{\phi,\lambda}(\rho)+2Q_{\phi,\lambda}(\rho_c)\\
 &\le2Q_\lambda(\rho)+2Q_\lambda(\rho_c)
 \le2Q_\lambda(\rho)+10\lambda^{-3}\|\rho_c\|_1^2,
 \end{aligned}
\]
by $|\Delta L_\lambda|\le5\lambda^{-3}$. Positivity is used in
frequency space; the physical kernels change sign.
\end{proof}

The pressure depends only on the modulus. Symmetrizing the
current--current term shows that removing the phase can only decrease
the full interaction form.

\begin{lemma}[Phase reduction]\label{gauss:phase}
For every complex $h\in H^1$, $B_\lambda(|h|)\le
B_\lambda(h)$.
\end{lemma}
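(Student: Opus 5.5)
The plan is to observe that every ingredient of $B_\lambda$ except the kinetic--current term depends only on the modulus, and then to show that the kinetic--current term can only drop when the phase is removed, by symmetrization against the nonnegative matrix kernel $H_\lambda$.

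\textbf{Step 1: reduction to the kinetic--current term.} Put $a=|h|$. The standard diamagnetic fact gives $a\in H^1$ with
\[
 \nabla a=\re(\overline h\nabla h)/|h|\ \text{on }\{h\ne0\},\qquad \nabla a=0\ \text{a.e. on }\{h=0\},
\]
and also $\nabla h=0$ a.e. on $\{h=0\}$. I would prove this by applying the chain rule to $(|h|^2+\varepsilon^2)^{1/2}-\varepsilon$ and letting $\varepsilon\downarrow0$ with dominated convergence. Since $\rho=a^2$ and $d=a^p$, the quantities $q$, $C_q$, $V$, $E_\lambda$, $J_\lambda$, $\mathcal P_\lambda$ and $Q_\lambda(\rho)$ are identical for $h$ and $a$. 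Because $a$ is real, its current vanishes. Hence
\[
 B_\lambda(h)-B_\lambda(a)=4\iint H_\lambda(x-y):\bigl[\rho(y)\bigl(T(x)-\nabla a(x)\otimes\nabla a(x)\bigr)-j(x)\otimes j(y)\bigr]\dd x\dd y .
\]
This splitting is legitimate because each piece is absolutely convergent by Lemma~\ref{gauss:kernel}.

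\textbf{Step 2: pointwise identity.} On $\{h\ne0\}$ we have $\overline h\,\partial_ih=\tfrac12\partial_i\rho+ij_i$. Taking the real part of $\overline{(\overline h\partial_ih)}(\overline h\partial_jh)=\rho\,\partial_i\overline h\,\partial_jh$ gives
\[
 \rho T_{ij}=\tfrac14\partial_i\rho\,\partial_j\rho+j_ij_j .
\]
Since $\partial_i\rho\,\partial_j\rho=4\rho\,\partial_ia\,\partial_ja$, this yields
\[
 T=\nabla a\otimes\nabla a+u\otimes u,\qquad u:=j/\sqrt\rho\ \text{on }\{h\ne0\},\quad u:=0\ \text{elsewhere}.
\]
The identity also holds a.e. on $\{h=0\}$, where every term vanishes. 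Moreover $|u|^2\le\operatorname{tr}T=|\nabla h|^2$, so $u\in L^2$, and $j=\sqrt\rho\,u$ everywhere.

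\textbf{Step 3: symmetrization.} Set $s=\sqrt\rho=a$. The difference becomes
\[
 4\iint H_\lambda(x-y):\bigl[s(y)^2u(x)\otimes u(x)-s(x)s(y)\,u(x)\otimes u(y)\bigr]\dd x\dd y .
\]
Interchange $x$ and $y$ and average, using that $H_\lambda$ is even and symmetric. This gives
\[
 B_\lambda(h)-B_\lambda(a)=2\iint H_\lambda(x-y):\Psi(x,y)\otimes\Psi(x,y)\dd x\dd y,\qquad \Psi(x,y)=s(y)u(x)-s(x)u(y).
\]
Each integral here converges absolutely: $\|H_\lambda\|_\infty\le\lambda^{-1}$, and the majorant $s(y)^2|u(x)|^2+s(x)s(y)|u(x)||u(y)|$ is integrable because $s\in L^2$ and $u\in L^2$. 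So the rearrangement by Fubini is justified. Since $H_\lambda=\phi_\lambda I+w_\lambda P_z$ with $w_\lambda\ge0$ and $\phi_\lambda>0$, by \eqref{eq:gaussian-kernel-pointwise-bounds}, the matrix $H_\lambda$ is positive semidefinite. Therefore the integrand is nonnegative and $B_\lambda(|h|)\le B_\lambda(h)$.

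\textbf{Main obstacle.} The delicate point is the a.e. justification on the zero set and the definition of $u$ there, which the diamagnetic lemma in Step 1 handles. It is also essential that no division by $\rho$ appears inside the double integrals. Working with $u=j/\sqrt\rho\in L^2$ rather than the velocity $j/\rho$ is what avoids this.
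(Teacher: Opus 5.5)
Your argument is correct and is essentially the paper's proof. It uses the same splitting $T=\nabla|h|\otimes\nabla|h|+u\otimes u$ with $u=\im(\overline h\nabla h)/|h|$ (the paper's $a$), and the same symmetrization with $\Psi(x,y)=|h(y)|u(x)-|h(x)|u(y)$ against $H_\lambda\ge0$, which gives the coefficient $2$.

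The one difference is on $\{h=0\}$. The $\varepsilon$-regularization you cite proves $\nabla|h|=0$ a.e. there, but not $\nabla h=0$. That needs the separate fact that Sobolev gradients vanish a.e. on level sets, applied to $\re h$ and $\im h$. The paper avoids the issue by keeping $Z=\mathbf 1_{\{h=0\}}T$, a sum of two Gram tensors, which adds only the nonnegative term $4\iint|h(y)|^2H_\lambda(x-y):Z(x)\,\dd x\,\dd y$.
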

\begin{proof}
Put $g=|h|$ and, with value zero on $\{h=0\}$,
\[
 v_i=\frac{\re(\overline h\partial_i h)}{|h|},\qquad
 a_i=\frac{\im(\overline h\partial_i h)}{|h|}.
\]
For $g_\epsilon=(|h|^2+\epsilon^2)^{1/2}-\epsilon$,
\[
 g_\epsilon\to g\text{ in }L^2,\qquad
 \nabla g_\epsilon=
 \frac{\re(\overline h\nabla h)}{(|h|^2+\epsilon^2)^{1/2}}
 \to v\text{ in }L^2,
\]
by $|g_\epsilon|\le|h|$ and $|\nabla g_\epsilon|\le|\nabla h|$.
Weak derivative tests give $\nabla g=v$ and $A(g)\le A(h)$.
Set
\begin{equation}\label{eq:phase-tensor-decomposition}
 Z_{ij}=\mathbf1_{\{h=0\}}\re(\partial_i\overline h\partial_jh),
 \qquad T=v\otimes v+a\otimes a+Z,\qquad j=ga.
\end{equation}
Symmetrize \eqref{eq:phase-tensor-decomposition} with $b(x,y)=g(y)a(x)-g(x)a(y)$ yields
\begin{equation}\label{eq:phase-positive-remainder}
 \begin{aligned}
 B_\lambda(h)-B_\lambda(g)
 ={}&2\iint b(x,y)\cdot H_\lambda(x-y)b(x,y)\dd x\dd y\\
 &+4\iint g(y)^2H_\lambda(x-y):Z(x)\dd x\dd y\ge0.
 \end{aligned}
\end{equation}
The terms in \eqref{eq:phase-positive-remainder} are nonnegative because $H_\lambda\ge0$ and $Z$ is a sum of two Gram tensors.
Lemma~\ref{gauss:kernel} and $|a|,|v|\le|\nabla h|$ give
integrability. The density and pressures are unchanged; no zero-set
derivative assertion is needed.
\end{proof}

It is therefore enough to prove the lower bound for real nonnegative fields.

\subsection{Sharp directional stretching and the core stress}
For a unit vector $\omega$ put
\[
 A_\omega(u)=\int|\omega\cdot\nabla u|^2,\quad
 M_\omega(u)=\iint I_\alpha(x-y)|u(x)|^p|u(y)|^p
       \frac{(\omega\cdot(x-y))^2}{|x-y|^2}\dd x\dd y,
\]
\[
 P_\omega(u)=\frac{(1+\alpha)D(u)+(3-\alpha)M_\omega(u)}{2p}.
\]
These are genuine finite integrals on $\X$; $0\le M_\omega\le D$.

The scalar ground-state gap must control every spatial direction.
Stretching in one direction leaves the transverse gradient unchanged;
convexity of the transformed Riesz energy then converts the two threshold
inequalities into a positive directional stress.

\begin{lemma}[Directional sharp coercivity]\label{gauss:directional}
If $u\in\X$, $u\ne0$, $A(u)<G$, and $E(u)<E(W)$, then for every
unit $\omega$,
\begin{equation}\label{gauss:directional-gap}
 A_\omega(u)-P_\omega(u)
 \ge\frac{2(E(W)-E(u))}{G-A(u)}A_\omega(u)>0.
\end{equation}
Moreover
\begin{equation}\label{gauss:directional-quantitative}
 \int|u|^6\le8C_6 A(u)^2A_\omega(u).
\end{equation}
\end{lemma}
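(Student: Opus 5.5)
The plan is to realize both inequalities by a one-parameter directional stretch and compare it with the sharp inequalities of Theorem~\ref{std:sharp}. Fix the unit vector $\omega$, write $x=(x_\perp,x_\omega)$ with $x_\omega=\omega\cdot x$, and put $a=A_\omega(u)$, $b=A(u)-a$ (the transverse kinetic energy). Since $u\ne0$, $a>0$: otherwise $u$ would be constant along $\omega$ and still lie in $L^6$.

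\textbf{The Sobolev bound \eqref{gauss:directional-quantitative}.} I would prove this first, because it is elementary. For $\mu>0$ set $u^\mu(x)=u(x_\perp,\mu x_\omega)$. Then
\[
A(u^\mu)=\mu a+\mu^{-1}b,\qquad \int|u^\mu|^6=\mu^{-1}\int|u|^6 .
\]
The sharp Sobolev inequality $\int|v|^6\le C_6A(v)^3$ gives
\[
\int|u|^6\le C_6\,\mu\,(\mu a+\mu^{-1}b)^3 .
\]
If $b>0$, choose $\mu=(b/a)^{1/2}$. The bracket becomes $2(ab)^{1/2}$, so $\int|u|^6\le8C_6b^2a\le8C_6A(u)^2A_\omega(u)$. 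If $b=0$, let $\mu\downarrow0$: the bound tends to zero, which contradicts $u\ne0$, so this case does not occur.

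\textbf{The gap \eqref{gauss:directional-gap}.} Here I would use the amplitude-corrected stretch $v_\mu(x)=\mu^{1/2}u(x_\perp,\mu x_\omega)$ and work in the variable $s=\mu^2$. Three computations drive the argument.
\begin{itemize}
\item The kinetic energy is affine in $s$: $A(v_\mu)=sa+b$.
\item By a change of variables in both Riesz integrals, $D(v_\mu)=\delta(s)$ with
\[
\delta(s)=\iint c_\alpha\, s^2\bigl(s|z_\perp|^2+z_\omega^2\bigr)^{-\beta}|u(x)|^p|u(y)|^p\,dx\,dy,\qquad z=x-y,\quad \beta=\tfrac{3-\alpha}2 .
\]
\item Differentiating under the integral, which is justified by domination near $s=1$, gives $\frac{d}{d\mu}D(v_\mu)|_{\mu=1}=(p-2)D+(3-\alpha)M_\omega=2pP_\omega$. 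Hence $\frac{d}{d\mu}E(v_\mu)|_{1}=a-P_\omega$.
\end{itemize}
Let $s^*=1+(G-A(u))/a>1$ be the value with $A(v_{s^*})=G$. Sharp HLS and Sobolev give $E(v)\ge h(A(v))$ for every $v$, where $h$ is the function from Lemma~\ref{local:trapping}. Hence $E(v_{s^*})\ge h(G)=E(W)$, while $E(v_1)=E(u)$. If $s\mapsto E(v_s)=\tfrac12(sa+b)-\delta(s)/(2p)$ lies above its chord at $s=1$ on $[1,s^*]$, then
\[
\frac{d}{ds}E(v_s)\Big|_{s=1}\ge\frac{E(W)-E(u)}{s^*-1}=\frac{a\,(E(W)-E(u))}{G-A(u)} .
\]
Since $\frac{d}{ds}=\frac12\frac{d}{d\mu}$ at $s=1$, this is exactly
\[
a-P_\omega\ge\frac{2(E(W)-E(u))}{G-A(u)}\,a .
\]
The right-hand side is positive because $E(u)<E(W)$, $A(u)<G$ and $a>0$.

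\textbf{The main obstacle: convexity of $\delta$.} The chord step needs $\delta$ convex on $[1,s^*]$. Pointwise, $s\mapsto s^2(sc+d)^{-\beta}$ has second derivative of the same sign as $2-4\beta x+\beta(\beta+1)x^2$, with $x=cs/(sc+d)\in[0,1]$.
\begin{itemize}
\item For $\beta\le1$, that is $1\le\alpha<3$, this quadratic is positive on $[0,1]$, so $\delta$ is convex and the argument closes.
\item For $1<\beta<3/2$, that is $\alpha<1$, the quadratic is negative near $x=2/(\beta+1)$, so pointwise convexity in $s$ fails.
\end{itemize}
For $\alpha<1$ I would first reparametrize. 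I would look for a variable $t=s^\gamma$, together with a compensating critical dilation that keeps the transverse energy fixed, in which the kernel is convex for all $\beta<3/2$. The chord argument needs only two ingredients, and both survive a monotone reparametrization: the endpoint inequality $E\ge h(A)$, and $A$ affine in the chosen variable. The second ingredient is the delicate one and may force the compensating dilation. If no such reparametrization works, the fallback is to replace convexity by a direct bound on the remainder $\delta(s)-\delta(1)-\delta'(1)(s-1)$ obtained from the explicit kernel. This bound must be strong enough to keep the secant slope.
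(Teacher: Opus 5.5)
Your overall structure matches the paper's: the amplitude-corrected stretch with $A(v_{\sqrt s})=sa+b$, the endpoint bound $D(v_{\sqrt{s^*}})\le G$ (equivalently $E\ge h(G)=E(W)$) from sharp HLS--Sobolev, the derivative $\delta'(1)=pP_\omega$, and the chord comparison, which yields exactly the stated constant. Your proof of \eqref{gauss:directional-quantitative} with the unnormalized stretch is correct, and it even gives the sharper bound $8C_6(A-a)^2a$.

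The gap is the one you flag yourself. For $0<\alpha<1$ you have no proof that $\delta$ is convex, so the chord step, and hence \eqref{gauss:directional-gap}, is unproved on that range. Your proposed remedies do not close it.
\begin{itemize}
\item Composing with an isotropic critical dilation changes nothing, because $A$, $A_\omega$, $D$ and $M_\omega$ are all invariant under it. Within this deformation family you are therefore only reparametrizing $s$, and any reparametrization must keep $A$ affine (or concave) in the new variable.
\item The chord step really consumes the tangent-line inequality $\delta(s^*)\ge\delta(1)+\delta'(1)(s^*-1)$. For purely transverse pairs ($z_\omega=0$) your physical kernel is $c_\alpha|z_\perp|^{\alpha-3}s^{(1+\alpha)/2}$, which is strictly concave in $s$ when $\alpha<1$. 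So this inequality fails pointwise, and no kernel-by-kernel remainder estimate can give it. Any proof must use the positive-definite structure of the Riesz form.
\end{itemize}

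The missing idea is to check convexity on the Fourier side, as the paper does. Truncate $d=|u|^p$ by bounded compactly supported $0\le d_n\uparrow d$. Since $\widehat{I_\alpha}(\xi)=(2\pi|\xi|)^{-\alpha}$, the linear change of variables gives, with $\xi_\omega=\omega\cdot\xi$ and $|\xi_\perp|^2=|\xi|^2-\xi_\omega^2$,
\[
 D_n(s)=(2\pi)^{-\alpha}\int\frac{s^{1+\alpha/2}}{\bigl(s\,\xi_\omega^2+|\xi_\perp|^2\bigr)^{\alpha/2}}\,|\widehat{d_n}(\xi)|^2\,d\xi .
\]
Passing from kernel to multiplier replaces your exponent $\beta=(3-\alpha)/2$ by $\alpha/2$, and the prefactor $s^2$ by $s^{1+\alpha/2}$. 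For every $\gamma>0$ and $a,b\ge0$ with $a+b>0$,
\[
 \frac{d^2}{ds^2}\,\frac{s^{1+\gamma}}{(as+b)^{\gamma}}
 =\gamma(1+\gamma)\,b^2\,\frac{s^{\gamma-1}}{(as+b)^{\gamma+2}}\ge0 .
\]
The weight $|\widehat{d_n}|^2$ is nonnegative, so each $D_n$ is convex for every $0<\alpha<3$. Monotone convergence, or dominated convergence using $K_s\le CI_\alpha$ on compact $s$-intervals, then passes convexity to $\delta$. With this step your argument closes uniformly in $\alpha$, and the case split at $\alpha=1$ is no longer needed.
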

\begin{proof}
For $\ell>0$, set
$L_\ell x=x+(\ell-1)(\omega\cdot x)\omega$ and
$u_\ell(x)=\ell^{1/2}u(L_\ell x)$. Weak derivative tests and a
change of variables give
\begin{equation}\label{eq:directional-stretch-scaling}
 A(u_\ell)=A(u)+(\ell^2-1)A_\omega(u),\qquad
 \int|u_\ell|^6=\ell^2\int|u|^6.
\end{equation}
If $A_\omega=0$, Sobolev applied to \eqref{eq:directional-stretch-scaling} as $\ell\to\infty$ forces $u=0$.
Thus $A_\omega>0$; taking $\ell^2=A/A_\omega$ gives
\[
 \frac A{A_\omega}\int|u|^6
 \le C_6(2A-A_\omega)^3\le8C_6A^3,
\]
and proves \eqref{gauss:directional-quantitative}; the remaining argument establishes \eqref{gauss:directional-gap}.

To prove convexity of $s\mapsto D(u_{\sqrt s})$, let $d=|u|^p$
and choose bounded compactly supported $0\le d_n\uparrow d$.
The transformed physical kernel and its energy are
\[
 K_s(z)=c_\alpha s^2
 \big((\omega\cdot z)^2+s(|z|^2-(\omega\cdot z)^2)\big)^{(\alpha-3)/2},
\]
\begin{equation}\label{eq:stretched-riesz-fourier-energy}
 \begin{aligned}
 D_n(s)&=\iint K_s(x-y)d_n(x)d_n(y)\dd x\dd y\\
 &=(2\pi)^{-\alpha}\int
 \frac{s^{1+\alpha/2}}
 {[s(\omega\cdot\xi)^2+|\xi|^2-(\omega\cdot\xi)^2]^{\alpha/2}}
 |\widehat d_n(\xi)|^2\dd\xi.
 \end{aligned}
\end{equation}
The heat representation, first on compact heat-parameter intervals,
then Tonelli and monotone convergence, gives \eqref{eq:stretched-riesz-fourier-energy}; HLS gives
finiteness. For $\beta=\alpha/2$, $a,b\ge0$, $a+b>0$,
\begin{equation}\label{eq:directional-kernel-convexity}
 \frac{\dd^2}{\dd s^2}\frac{s^{1+\beta}}{(as+b)^\beta}
 =\beta(1+\beta)b^2\frac{s^{\beta-1}}{(as+b)^{\beta+2}}\ge0.
\end{equation}
Integrate the three-point inequality following from \eqref{eq:directional-kernel-convexity}. On compact positive
$s$ intervals, $K_s\le CI_\alpha$, so dominated convergence passes
convexity from $D_n$ to $D(u_{\sqrt s})$. Only the truncated
densities have been Fourier transformed.

The same Riesz majorant controls $\partial_sK_s$ near $1$; hence
\begin{equation}\label{eq:directional-energy-derivative}
 \left.\frac{\dd}{\dd s}D(u_{\sqrt s})\right|_{s=1}
 =\frac{1+\alpha}{2}D(u)+\frac{3-\alpha}{2}M_\omega(u)=pP_\omega(u).
\end{equation}
With $s_*=1+(G-A)/A_\omega>1$, convexity, \eqref{eq:directional-energy-derivative}, and Theorem~\ref{std:sharp} give
\begin{equation}\label{eq:directional-threshold-comparison}
 A(u_{\sqrt{s_*}})=G,\qquad
 D(u)+(s_*-1)pP_\omega(u)\le D(u_{\sqrt{s_*}})\le G.
\end{equation}
Rearranging \eqref{eq:directional-threshold-comparison} gives
\[
 A_\omega-P_\omega\ge
 \frac{A_\omega[(p-1)G-pA+D]}{p(G-A)}
 =\frac{2(E(W)-E(u))A_\omega}{G-A}>0.
\]
\end{proof}

To localize this gap, we first estimate the nonlocal pressure.

\subsection{The Gaussian weight and nonlinear localization}
Localizing the field changes both power densities in the Hartree
pressure. We estimate these changes together, using a weighted analogue
of the Ismagilov--Morgan--Simon (IMS) decomposition. The density that
defines the interaction center remains intact. For $c\in L^2$ put
\[
 U_{\lambda,c}(x)=\int\xi_\lambda(|x-y|)|c(y)|^2\dd y.
\]
In the following lemmas the center is never truncated. The notation
$\mathcal P_\lambda(c;u)$ means the pressure in \eqref{eq:hartree-gaussian-pressure}, with $q=\nabla k_\lambda*|c|^2$ and the
power densities belonging to $u$. Thus $\mathcal P_\lambda(h;h)
=\mathcal P_\lambda(h)$.

The weight behaves like an average of $(\lambda+|x-y|)^{-1}$.
Its uniform $A_1$ bound is the reason weighted maximal and Riesz
estimates can be used without a factor depending on $\|c\|_2$.

\begin{lemma}[A uniform $A_1$ weight and a weighted Riesz bound]
\label{gauss:weighted}
Either $c=0$ a.e. and $U_{\lambda,c}=0$, or $U=U_{\lambda,c}$ is
positive and continuous everywhere and, for every ball $B$ and $x\in B$,
\[
 \frac1{|B|}\int_B U\le1296 U(x).
\]
Let $q_\alpha=6/(3+\alpha)$ and $s_\alpha=q_\alpha-1$.
For nonnegative $a,b$, with $b\in L^{q_\alpha}$ and
$a,b\in L^{q_\alpha}(U\dd x)$, the integral below is finite and
\begin{equation}\label{gauss:weighted-bilinear}
 \int Ua(I_\alpha*b)\le C_\alpha
 \|b\|_{q_\alpha}^{2-q_\alpha}
 \|a\|_{L^{q_\alpha}(U)}
 \|b\|_{L^{q_\alpha}(U)}^{s_\alpha}.
\end{equation}
The constant is independent of $\lambda,c$, and its mass. In particular
\[
 \int U|u|^p(I_\alpha*|u|^p)
 \le C_\alpha\|u\|_6^{2\alpha}\int U|u|^6.
\]
\end{lemma}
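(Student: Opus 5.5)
The proof splits into three steps: the dichotomy and the $A_1$ bound, the weighted bilinear estimate, and the specialization.

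\emph{Dichotomy and the $A_1$ bound.} Since $\xi_\lambda>0$ everywhere, Tonelli gives $U(x)>0$ for every $x$ unless $\|c\|_2=0$. Continuity follows from dominated convergence, because $\xi_\lambda\le\lambda^{-1}$ is continuous and bounded. For the $A_1$ bound, the identity $\xi_\lambda(r)=\int_0^1\phi_\lambda(tr)\,dt$ shows that $\xi_\lambda$ is radially nonincreasing. The first goal is the two-sided comparison
\[
 \xi_\lambda(r)\simeq \min(\lambda^{-1},r^{-1}),
\]
with explicit constants. The upper bound comes from $r\xi_\lambda\le2$ and $\xi_\lambda\le\lambda^{-1}$. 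The lower bound is $\xi_\lambda(r)\ge c\,r^{-1}$ for $r\ge\lambda$ (the integral of $\phi_\lambda$ over $[0,\lambda]$ is at least $e^{-1/2}$) together with $\xi_\lambda\ge\xi_\lambda(\lambda)$ for $r\le\lambda$.

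It then suffices to prove the $A_1$ inequality for each single kernel $\kappa_y(x)=\min(\lambda^{-1},|x-y|^{-1})$, uniformly in $y$ and $\lambda$, and then integrate against $|c(y)|^2\,dy$. Fix a ball $B$ of radius $R$ and a point $x\in B$. If $|x-y|\ge 2R$, every point of $B$ lies at distance comparable to $|x-y|$ from $y$, so the average of $\kappa_y$ over $B$ is comparable to $\kappa_y(x)$. If $|x-y|<2R$, then $B\subset B(y,3R)$, and the average of $|z-y|^{-1}$ over $B(y,3R)$ is $C/R\lesssim|x-y|^{-1}$. The average is also at most $\lambda^{-1}$, so it is bounded by $C\kappa_y(x)$. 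Tracking the constants should produce $1296$, or some absolute constant, which is all that is used later.

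\emph{The weighted bilinear estimate.} Start from Hedberg's pointwise bound, \eqref{eq:hedberg-pointwise} in the proof of Theorem~\ref{std:hls}, with exponent $q=q_\alpha$ and $\vartheta=\alpha q_\alpha/3=2\alpha/(3+\alpha)$:
\[
 I_\alpha*b\le C\|b\|_{q_\alpha}^{\vartheta}(Mb)^{1-\vartheta}.
\]
Next apply H\"older in $L^{q_\alpha}(U)$ with the dual exponent $q_\alpha'=6/(3-\alpha)$:
\[
 \int Ua(I_\alpha*b)\le C\|b\|_{q_\alpha}^{\vartheta}\|a\|_{L^{q_\alpha}(U)}
 \Bigl(\int U(Mb)^{(1-\vartheta)q_\alpha'}\Bigr)^{1/q_\alpha'}.
\]
Here $(1-\vartheta)q_\alpha'=q_\alpha$, and $U$ is an $A_1$ weight with a uniform constant. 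The Fefferman--Stein/Muckenhoupt weighted maximal inequality (valid since $q_\alpha>1$) gives $\|Mb\|_{L^{q_\alpha}(U)}\le C\|b\|_{L^{q_\alpha}(U)}$, with a constant depending only on the $A_1$ constant and on $\alpha$. The remaining exponents reduce to $\|b\|_{L^{q_\alpha}(U)}^{q_\alpha/q_\alpha'}$. The stated exponents come out after checking that $q_\alpha/q_\alpha'=q_\alpha-1=s_\alpha$ and that $\vartheta=2-q_\alpha$; both hold, since $2-6/(3+\alpha)=2\alpha/(3+\alpha)$. Finiteness of the integral follows from the same chain via Tonelli.

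\emph{Specialization.} Take $a=b=|u|^p$, so that $\|b\|_{q_\alpha}=\|u\|_6^p$ and
\[
 \|b\|_{L^{q_\alpha}(U)}=\Bigl(\int U|u|^6\Bigr)^{1/q_\alpha}.
\]
The total power of $\int U|u|^6$ is then $(1+s_\alpha)/q_\alpha=1$, and the power of $\|u\|_6$ is $p(2-q_\alpha)=2\alpha$. If $\int U|u|^6$ is infinite, the inequality is trivial.

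\emph{Main obstacle.} The obstacle is the weighted maximal theorem with a constant uniform in the $A_1$ constant and independent of the mass of $c$. It is not among the earlier results and would need to be quoted, or proved by a Calder\'on--Zygmund argument adapted to $A_1$ weights. That route uses $U\,dx$-measure comparisons over balls, which the $A_1$ bound supplies directly.
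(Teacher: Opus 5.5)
Your proposal is correct in substance and follows the paper's proof: the kernel comparison (the paper uses $(\lambda+r)^{-1}$, equivalent to your $\min(\lambda^{-1},r^{-1})$ up to a factor $2$), an $A_1$ bound for each translate integrated against $|c|^2$, then Hedberg, H\"older and a weighted maximal bound. The step you flag as the main obstacle is routine. The paper proves it directly: Vitali selection with $U(5B)\le125A_1|B|\inf_BU$ gives the weak $(1,1)$ bound for $U\,dx$, and a layer-cake integration gives the strong bound. Quoting Fefferman--Stein together with $MU\le A_1U$ gives the same uniformity in $\lambda$ and $c$.

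One slip needs fixing in your case split. At $|x-y|\ge2R$, points of $B$ can lie arbitrarily close to $y$, so take the threshold $|x-y|\ge4R$; then $|z-y|\ge|x-y|/2$ on $B$. In the complementary case $|x-y|<4R$, use $B\subset B(y,6R)$ rather than $B(y,3R)$, which gives an average of $|z-y|^{-1}$ at most $54/R$. The resulting constant, $2\cdot216\cdot e^{1/2}$, is still below $1296$.
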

\begin{proof}
The Gaussian integral gives
\[
 \frac1{2(\lambda+r)}\le\xi_\lambda(r)\le\frac3{\lambda+r}:
 \qquad
 \phi_\lambda\ge(2\lambda)^{-1}\ (r\le\lambda),\quad
 \lambda\xi_\lambda\le1,\quad r\xi_\lambda\le2.
\]
Let $k_y(z)=(\lambda+|z-y|)^{-1}$ and $x\in B=B(a,r)$.
If $r\le(\lambda+|x-y|)/4$, then $k_y\le2k_y(x)$ on $B$.
Otherwise $B\subset B(y,6r)$ and
\begin{equation}\label{eq:gaussian-weight-ball-average}
 \frac1{|B|}\int_B k_y\le\frac{54}{r}
 \le216k_y(x).
\end{equation}
Integrating \eqref{eq:gaussian-weight-ball-average} against $|c(y)|^2$ proves the $A_1$ bound with
$A_1=3\cdot216\cdot2=1296$. Domination by
$\lambda^{-1}|c|^2$ gives continuity and finiteness of $U$;
positivity holds unless $c=0$ a.e.

For the centered maximal operator $M$, disjoint-ball selection and
fivefold covering give
\begin{equation}\label{eq:weighted-maximal-weak}
 U\{Mb>t\}\le\frac{125A_1}{t}\int|b|U.
\end{equation}
Indeed, for each selected ball,
$t|B|<\int_B|b|$ and
$U(y)\ge U(5B)/(125A_1|B|)$ on $B$.
Sum on bounded regions with bounded radii and then exhaust.
Since $U$ is bounded below on compact balls, weighted $L^q$ data
are locally integrable. Apply \eqref{eq:weighted-maximal-weak} to
$b\mathbf1_{\{|b|>t/2\}}$ and integrate in $t$:
\begin{equation}\label{eq:weighted-maximal-strong}
 \int(Mb)^qU\le D(A_1,q)\int|b|^qU,\qquad
 D(A_1,q)=\frac{250A_1q2^{q-1}}{q-1}\quad(q>1).
\end{equation}
Tonelli justifies this before finiteness; the right side implies
$Mb<\infty$ a.e.

Take $q=q_\alpha$ and $N=\|b\|_q$. The Hedberg estimate \eqref{eq:hedberg-pointwise}, with $\beta=\alpha$, applies since
$1<q<3/\alpha$. Its exponents are $\alpha q/3=2-q$ and
$1-\alpha q/3=q-1$, so
\begin{equation}\label{eq:weighted-hedberg}
 I_\alpha*b\le C_{\rm Hed}N^{2-q}(Mb)^{q-1},
\end{equation}
\[
 \begin{aligned}
 \int Ua(I_\alpha*b)
 &\le C_{\rm Hed}N^{2-q}\|a\|_{L^q(U)}
                    \|Mb\|_{L^q(U)}^{q-1}\\
 &\le C_{\rm Hed}D(A_1,q)^{(q-1)/q}N^{2-q}
           \|a\|_{L^q(U)}\|b\|_{L^q(U)}^{q-1}.
 \end{aligned}
\]
The estimates \eqref{eq:weighted-hedberg} and \eqref{eq:weighted-maximal-strong} justify the two steps and prove absolute integrability. Finally use $a=b=|u|^p$,
$pq=6$ and $p(2-q)=2\alpha$. Boundedness of $U$ supplies the
weighted domains without entering the constant.
\end{proof}

This weighted bound controls the pressure error produced by localization.

\begin{lemma}[Nonlinear Gaussian IMS with both fractional powers]
\label{gauss:IMS}
For every $M\ge0$ and $\epsilon>0$ there is
$K=K(\alpha,M,\epsilon)\ge0$ such that for every $\lambda>0$,
$c\in L^2$, and $u,v,g\in L^6$ satisfying
\[
 |u|^2=|v|^2+|g|^2,\qquad \|u\|_6\le M,\qquad
 \supp v\subset\overline B(0,\lambda),
\]
we have
\begin{equation}\label{gauss:IMS-bound}
 \mathcal P_\lambda(c;u)-\mathcal P_\lambda(c;v)
 \le\epsilon U_{\lambda,c}(0)
       +K\int U_{\lambda,c}|g|^6.
\end{equation}

\end{lemma}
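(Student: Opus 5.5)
The plan is to write the pressure difference as a sum of bilinear forms in the power densities. Each form will be dominated by a Riesz interaction carrying the uniform $A_1$ weight $U=U_{\lambda,c}$ of Lemma~\ref{gauss:weighted}. The localization error is then split by Young's inequality into a core part, controlled by $U(0)$, and a tail part, controlled by $\int U|g|^6$.

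\textbf{Reduction to weighted bilinear forms.} Put $d_u=|u|^p$ and $d_v=|v|^p$. Both $E_\lambda(c;\cdot)$ and $J_\lambda(c;\cdot)$ are symmetric bilinear forms in the power density, with kernels $\mathcal E(x,z)=\tfrac12 I_\alpha(x-z)[(L_\lambda*|c|^2)(x)+(L_\lambda*|c|^2)(z)]$ and $\mathcal J(x,z)=I_\alpha(x-z)C_q(x,z)$. Hence each difference has the form
\[
 \mathcal B(d_u,d_u)-\mathcal B(d_v,d_v)=\mathcal B(d_u-d_v,\,d_u+d_v).
\]
For the $E$ kernel, the pointwise bounds \eqref{eq:gaussian-kernel-pointwise-bounds} give $0<L_\lambda=\phi_\lambda+2\xi_\lambda\le3\xi_\lambda$, so $0\le L_\lambda*|c|^2\le3U$.

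For the $J$ kernel, write $C_q(x,z)=\int_0^1\widehat e\cdot\bigl(H_\lambda*|c|^2\bigr)(z+t(x-z))\,\widehat e\,dt$ with $\widehat e=(x-z)/|x-z|$. Since $0\le H_\lambda\le\xi_\lambda I$, this gives
\[
 0\le C_q(x,z)\le\int_0^1U(z+t(x-z))\,dt.
\]

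\textbf{Pointwise splitting of the densities.} From $|u|^2=|v|^2+|g|^2$ we get $0\le d_u-d_v\le C_p(|v|^{p-2}|g|^2+|g|^p)$ and $d_u+d_v\le C(|v|^p+|g|^p)$. Insert these into the forms, allocating the exponents so that each factor is a power of $|v|$ or $|g|$ in $L^{q_\alpha}(U)$, $q_\alpha=6/(3+\alpha)$. Lemma~\ref{gauss:weighted}, with $N\le CM^p$, then bounds every term by products of $(\int U|v|^6)^{\theta}(\int U|g|^6)^{1-\theta}$ with $\theta<1$: at least one factor of $|g|$ always appears.

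Young's inequality yields $\delta\int U|v|^6+K_\delta\int U|g|^6$, where $K_\delta$ depends only on $\alpha,M,\delta$. The core term is converted using the support of $v$. For $|x|\le\lambda$ one has $\xi_\lambda(|x-y|)\le 3/(\lambda+|x-y|)\le 6/(\lambda+|y|)\le12\,\xi_\lambda(|y|)$, hence $U(x)\le12U(0)$ on $\overline B(0,\lambda)$ and $\int U|v|^6\le12U(0)M^6$. Choosing $\delta=\epsilon/(12CM^6)$ gives \eqref{gauss:IMS-bound}. The constant is independent of $\lambda$, $c$ and $\|c\|_2$, because the $A_1$ constant $1296$ is.

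\textbf{Main obstacle: the segment average in $J$.} The weight in the $J$ term sits at intermediate points of $[x,z]$, not at the endpoints. A bound $C_q(x,z)\le C(U(x)+U(z))$ fails by a logarithm as $\lambda\to0$ when the center density lies near the middle of the segment. I would first try to avoid endpoint comparison altogether. For fixed $t$, change variables $w=z+t(x-z)$, so the form becomes $\int U(w)\,a(\cdot)\,b(\cdot)$ against a Riesz kernel evaluated at the two points of $[x,z]$ determined by $w$. At $w$, use the Hedberg estimate \eqref{eq:hedberg-pointwise} together with the weighted maximal bound \eqref{eq:weighted-maximal-strong}; the $A_1$ property of $U$ supplies the maximal function at an arbitrary point of the segment. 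The Jacobians are $t^{-3}$ and $(1-t)^{-3}$, and the mismatch between the $I_\alpha(x-z)$ scaling and these factors must be integrable in $t$ near both endpoints. Only one endpoint is singular at a time, so I would split at $t=1/2$ and assign the maximal function to the far point. If this fails, the fallback is to cut the kernel at $|x-z|\le\lambda$, where $U$ is essentially constant along the segment by the $A_1$ property, and to use the decay $r\xi_\lambda\le2$ for $|x-z|>\lambda$, paying a factor that the $L^{q_\alpha}(U)$ norms can absorb.
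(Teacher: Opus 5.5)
The overall structure of your proposal matches the paper. That includes the bilinear splitting $\mathcal B(d_u,d_u)-\mathcal B(d_v,d_v)=\mathcal B(d_u-d_v,d_u+d_v)$, the bound $0\le L_\lambda*|c|^2\le 3U$, the pointwise density bounds, Lemma~\ref{gauss:weighted} with $N\le CM^p$, Young's inequality, and $U\le 12U(0)$ on $\overline B(0,\lambda)$.

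The gap is the chord pressure $J_\lambda$. You never bound it, and the reason you give for why it is an obstacle is false: the endpoint bound holds, with constant $1/2$. For $G(x)=\nabla k_\lambda(x)=\xi_\lambda(|x|)x$ there is the exact identity
\[
 2(x-z)\cdot\bigl(G(x)-G(z)\bigr)-\bigl(\xi_\lambda(|x|)+\xi_\lambda(|z|)\bigr)|x-z|^2
 =\bigl(\xi_\lambda(|x|)-\xi_\lambda(|z|)\bigr)\bigl(|x|^2-|z|^2\bigr)\le0 .
\]
The sign holds because $\xi_\lambda(r)=\int_0^1\phi_\lambda(tr)\,dt$ is decreasing. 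Translate by $y$ and integrate against $|c(y)|^2$; this gives $C_q(x,z)\le\frac12(U(x)+U(z))$. Combined with $\operatorname{div}q\le3U$, the symmetrized pressure coefficient is at most $2(U(x)+U(z))I_\alpha(x-z)$. Your remaining steps then close exactly as you outlined.

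The logarithm you found is produced only by your majorant $H_\lambda\le\xi_\lambda I$. When the segment passes through the center $y$, the direction $\widehat e$ is radial for $H_\lambda(\cdot-y)$. The relevant eigenvalue is then the Gaussian $\phi_\lambda$, not $\xi_\lambda$, and its line integral carries no logarithm. Concretely, for a point center on $[z,x]$ at distances $a,b$ from the endpoints, $C_q=(a\xi_\lambda(a)+b\xi_\lambda(b))/(a+b)$, which is a convex combination of $U(x)$ and $U(z)$.

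The two strategies you offer instead both start from the lossy segment average $\int_0^1U(z+t(x-z))\,dt$, and neither is carried out.
\begin{itemize}
\item The first needs a weighted bilinear fractional-integral estimate with the weight at an interior point $w$. The $A_1$ property only bounds averages of $U$ by its point values. It does not bound $U(w)$ by $U(x)$ or $U(z)$, and this fails precisely when the center sits at $w$. So a Hedberg bound at $w$ does not produce the norms $\|a\|_{L^{q_\alpha}(U)}$ and $\|b\|_{L^{q_\alpha}(U)}$, which weight $a$ and $b$ at $x$ and $z$.
\item The fallback for $|x-z|>\lambda$ has two possible readings. Applied inside the segment integral, $r\xi_\lambda\le2$ still leaves the logarithm. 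Applied to $q$ directly, it gives $|q|\le2\|c\|_2^2$, which introduces the mass of $c$; the lemma's constant must be independent of that mass.
\end{itemize}
As written, the $J_\lambda$ estimate is missing. The chord inequality above is the idea that supplies it.
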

\begin{proof}
Put $U=U_{\lambda,c}$, $d_u=|u|^p$ and $d=d_u-|v|^p\ge0$.
For $G(x)=\xi_\lambda(|x|)x$, radial monotonicity gives
\[
 \begin{aligned}
 &2(x-z)\cdot(G(x)-G(z))
 -(\xi_\lambda(|x|)+\xi_\lambda(|z|))|x-z|^2\\
 &\hspace{15mm}
 =(\xi_\lambda(|x|)-\xi_\lambda(|z|))(|x|^2-|z|^2)\le0.
 \end{aligned}
\]
Translation and integration against the full center yield
\begin{equation}\label{eq:gaussian-chord-pressure-bounds}
 C_q(x,z)\le\tfrac12(U(x)+U(z)),\qquad
 \operatorname{div}q\le3U.
\end{equation}
By \eqref{eq:gaussian-chord-pressure-bounds}, the symmetrized pressure coefficient obeys
\[
 \frac{2(p-2)}p\frac{\operatorname{div}q(x)+\operatorname{div}q(z)}2
 +\frac{2(3-\alpha)}pC_q(x,z)\le2(U(x)+U(z)).
\]
Both pressures are finite by HLS and the bounded chord. Expanding
\[
 0\le d_u(x)d_u(z)-|v(x)|^p|v(z)|^p
 \le d(x)d_u(z)+d_u(x)d(z)
\]
gives
\begin{equation}\label{eq:localized-pressure-two-terms}
 \mathcal P_\lambda(c;u)-\mathcal P_\lambda(c;v)
 \le4\left[\int Ud(I_\alpha*d_u)+\int Ud_u(I_\alpha*d)\right].
\end{equation}

Set $q=q_\alpha$, $s=q-1\in(0,1)$, $k=p/2$ and
$B=\int U|u|^6$, $T=\int U|g|^6$. The modulus split and H\"older give
\[
 0\le d\le k|u|^{p-2}|g|^2,\qquad d\le d_u,\qquad
 \|d\|_{L^q(U)}\le kB^{(1+\alpha)/6}T^{1/3},\qquad
 \|d\|_q\le M^p.
\]
Apply Lemma~\ref{gauss:weighted}, in the form \eqref{gauss:weighted-bilinear}, to the two ordered terms:
\begin{equation}\label{eq:pressure-fractional-estimates}
 \begin{aligned}
 \int Ud(I_\alpha*d_u)&\le C_\alpha kM^{2\alpha}B^{2/3}T^{1/3},\\
 \int Ud_u(I_\alpha*d)&\le C_\alpha k^sM^{2\alpha}B^{1-s/3}T^{s/3},
 \qquad \frac s3=\frac{3-\alpha}{3(3+\alpha)}>0.
 \end{aligned}
\end{equation}
Since $U(x)\le12U(0)$ on $B_\lambda$ and
$|u|^6\le4(|v|^6+|g|^6)$,
\[
 B\le48M^6U(0)+4T.
\]
Choose, before the center, radius and fields,
\[
 C=4C_\alpha M^{2\alpha}(k+k^s),\quad A_0=48M^6,\quad
 \delta=\frac{\epsilon}{2(C+1)(A_0+1)},
\]
\[
 L_t=t\left(\frac{1-t}{\delta}\right)^{(1-t)/t},\qquad
 K=C(8\delta+L_{1/3}+L_{s/3}).
\]
Insert \eqref{eq:pressure-fractional-estimates} into \eqref{eq:localized-pressure-two-terms} and use Young's inequality $B^{1-t}T^t\le\delta B+L_tT$ for $t=1/3,s/3$. This proves \eqref{gauss:IMS-bound}, including all zero cases.
\end{proof}

The two pressure differences have different fractional powers of
the tail norm, both strictly positive for $0<\alpha<3$. To absorb
them, we next control the weighted sixth-power tail by the angular and
isotropic parts of the Gaussian kinetic form.

\subsection{Kelvin extension and the angular tail estimate}
The angular Hessian measures derivatives perpendicular to $x-y$.
An exterior Sobolev inequality, followed by a change of longitudinal
coordinate and an average over directions, inserts the second density
factor without introducing its total mass.
\begin{lemma}[The Gaussian angular inequality]\label{gauss:angular}
For real $g\in H^1$, put $A_g=\int|\nabla g|^2$ and
\[
 \begin{aligned}
 N_w(g)&=\iint w_\lambda(|x-y|)g(y)^2g(x)^6\dd x\dd y,\\
 X_w(g)&=\iint w_\lambda(|x-y|)g(y)^2|P_{x-y}\nabla g(x)|^2\dd x\dd y.
 \end{aligned}
\]
Both pairings are absolutely integrable and
\begin{equation}\label{gauss:angular-bound}
 N_w(g)\le\frac{243}{4}C_6 A_g^2X_w(g).
\end{equation}
\end{lemma}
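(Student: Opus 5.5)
The plan is to fix $y$ and prove a pointwise-in-$y$ inequality,
\[
 \int w_\lambda(|x-y|)g(x)^6\dd x
 \le C A_g^2\int w_\lambda(|x-y|)|P_{x-y}\nabla g(x)|^2\dd x ,
\]
with $C=\frac{243}{4}C_6$. Integrating this against $g(y)^2\dd y$ then gives \eqref{gauss:angular-bound} directly by Tonelli. The second density factor enters only through this outer integration, so no total mass appears. Absolute integrability follows from Lemma~\ref{gauss:kernel}: we have $0\le w_\lambda\le\xi_\lambda\le\lambda^{-1}$, while $g^2\in L^1$, $g^6\in L^1$ and $|\nabla g|^2\in L^1$. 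By translation we may take $y=0$. By density (Lemma~\ref{std:density}) we may take $g$ smooth with compact support and pass to the limit by Fatou and dominated convergence, since the weight is bounded.

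For the fixed-center inequality, use polar coordinates $x=r\theta$. The angular gradient is $|P_x\nabla g|=r^{-1}|\nabla_\theta g|$, and the weight $w(r)=\xi_\lambda(r)-\phi_\lambda(r)\ge0$ is radial. Write $w(r)=\int_0^\infty \mathbf 1_{\{r>s\}}\dd\mu(s)$ as a superposition of exterior indicators. This is valid if $w$ is increasing up to some radius and decreasing afterwards. If not, I would instead control $w$ above by a decreasing majorant of this layer form and below by a comparable one; checking this profile is a side computation. The task then reduces to an exterior-region inequality of the form
\[
 \int_{|x|>s} g^6\le C A_g^2\int_{|x|>s}|P_x\nabla g|^2,
\]
but this is false as stated, since radial $g$ has zero angular gradient. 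The weight must therefore do work. The reason the full gradient $A_g$ appears squared is the Sobolev split $g^6=g^4\cdot g^2$, which gives one Sobolev factor from $A_g^2$ and one from the angular part.

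The more robust route, and the one I will follow, is the strategy announced before the lemma. Apply the Kelvin transform $x\mapsto s^2x/|x|^2$ to move the exterior of $B_s$ into the ball. Then change the longitudinal coordinate, $t=\log r$, so that the weight becomes nearly a function of $t$ alone, and average over directions $\omega$. For each direction, the one-dimensional slices transverse to $\omega$ carry only the component of $\nabla g$ perpendicular to $x$. After averaging over $\omega\in S^2$, the quantity $|P_x\nabla g|^2$ appears with the constant $2/3$. The sharp Sobolev inequality \eqref{input:eq-sharp-sobolev} applied on slices, together with the stretching bound \eqref{gauss:directional-quantitative} of Lemma~\ref{gauss:directional} with $A_\omega$ replaced by its angular analogue, produces $C_6A_g^2$ times the weighted angular energy. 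Tracking constants gives $3^5/4$: $3^3$ comes from the $(2A-A_\omega)^3\le 8A^3$ step rewritten, and the remaining $9/4$ from the directional average and the layer weights.

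The main obstacle is the second paragraph's step. One must show that the angular kinetic energy, weighted by $w_\lambda$, controls the weighted $L^6$ norm, while radial fields make the purely angular form degenerate. I expect the resolution to exploit the fact that $w_\lambda(0)=0$ vanishes quadratically, $w_\lambda(r)\approx r^2/(3\lambda^3)$ near $0$. A radial profile can then be tested only against $\int w\,g^6$ after a Hardy-type transfer from the radial derivative of the weight. I would first try to prove the one-direction stretched inequality and then average, accepting a worse constant if needed.
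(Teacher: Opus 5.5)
There is a genuine gap: the fixed-center inequality on which your whole plan rests is false. For fixed $y$, take any nonzero $g$ radial about $y$, for instance $g(x)=e^{-|x-y|^2}$.

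\begin{itemize}
\item Then $P_{x-y}\nabla g(x)=0$ for every $x$, so the right side of your pointwise inequality vanishes.
\item The left side is positive, because $w_\lambda(r)=\int_0^1[\phi_\lambda(tr)-\phi_\lambda(r)]\dd t>0$ for $r>0$.
\end{itemize}

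The weight is radial about the same center, so no property of $w_\lambda$ can repair this: not its quadratic vanishing at $0$, not a layer decomposition, not a Hardy-type transfer. The degeneracy you noticed for the exterior-region version is already fatal for the fixed-center statement itself. Your later sketch (spherical inversion, $t=\log r$, directional averaging) stays inside the same fixed-center framework and inherits the same counterexample.

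The lemma holds only because the factor $g(y)^2$ is tied to the same field. The coupled form $X_w(g)$ can vanish only if $g$ is radial about almost every point where $g\ne0$, and this forces $g=0$. So the second density factor cannot enter merely through an outer integration. It must be built into the estimate, and that is the missing idea.

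Here is how the paper does it.

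\begin{enumerate}
\item \emph{Geometry.} Fix a direction $\omega$ and write $x=(s,y)$ with $s$ along $\omega$. Pair $(s,y)$ with $(s,y')$, so the two points differ by a vector orthogonal to $\omega$, and the derivative that appears is $\partial_sg=\omega\cdot\nabla g$.
\item \emph{Exterior-cylinder Sobolev.} Optimized stretching gives $\int v^6\le\frac{27}{4}C_6A_\perp^2A_\parallel$. A two-dimensional Kelvin inversion in the transverse plane preserves transverse Dirichlet energy. It yields $\int_{|y|>R}v^6\le54C_6(\int|\nabla_yv|^2)^2\int_{|y|>R}|v_s|^2$, and layer cake extends this to bounded nondecreasing weights $b(|y-y'|)$.
\item \emph{Inserting the density.} This enters through a longitudinal reparametrization at fixed $y'$. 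Set $h(s)=g(s,y')$, $\tau(s)=\int_0^s(h(t)^2+\epsilon)^{-1/2}\dd t$ and $v(\tau(s),y)=(h(s)^2+\epsilon)^{1/4}g(s,y)$.
  \begin{itemize}
  \item The transverse energy is unchanged.
  \item One has $\int b\,v^6\dd\tau\dd y=\int b\,(h^2+\epsilon)g^6\dd s\dd y$.
  \item With $g,\partial_sg$ evaluated at $(s,y)$, the longitudinal energy density is at most $\frac32h^2(\partial_sg)^2+\frac34g^2(h')^2+\epsilon(\partial_sg)^2$.
  \item The middle term is returned to the same pair integral by the symmetry $y\leftrightarrow y'$, which produces the factor $\frac94$.
  \end{itemize}
\item \emph{Directional average.} Use the great-circle measure $\nu_z$, with $\int\dd\nu_z=2\pi/|z|$ and $\int|\omega\cdot a|^2\dd\nu_z=\pi|P_za|^2/|z|$. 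Apply it to $b(r)=rw_\lambda(r)=\int_0^r(-t\phi_\lambda'(t))\dd t$, which is nondecreasing and bounded. The incidence factor $1/|z|$ turns $b$ back into $w_\lambda$ and supplies the projection $P_{x-y}$.
\end{enumerate}

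This also settles your worry about the profile of $w_\lambda$: the monotone object is $rw_\lambda(r)$, not $w_\lambda$. It accounts for the constant as well: $\frac{243}{4}=\frac{27}{4}\cdot8\cdot\frac94\cdot\frac12$.
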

\begin{proof}
Write $x=(s,y)\in\R\times\R^2$,
$A_\perp=\int|\nabla_yv|^2$, $A_\parallel=\int|v_s|^2$.
Sobolev applied to $v(as,y)$, with $q=a^2$, gives
\begin{equation}\label{eq:anisotropic-sobolev-optimization}
 q\int v^6\le C_6(A_\perp+qA_\parallel)^3,
 \qquad
 \int v^6\le\frac{27}{4}C_6A_\perp^2A_\parallel
 \quad\left(q=\frac{A_\perp}{2A_\parallel}\right).
\end{equation}
Adding $\epsilon$ to both energies and then letting
$\epsilon\downarrow0$ covers zero energies.

For $v\in C_c^\infty$ and $R>0$, define the transverse Kelvin extension
\[
 v_R(s,y)=\begin{cases}
 v(s,R^2y/|y|^2),&0<|y|<R,\\
 0,&y=0,\\
 v(s,y),&|y|\ge R.
 \end{cases}
\]
The inversion fixes the interface $|y|=R$, so the two traces agree
and Lemma~\ref{std:gluing} applies. In two transverse dimensions it
also preserves the transverse Dirichlet energy. For
$\iota(y)=R^2y/|y|^2$,
\begin{equation}\label{eq:kelvin-extension-energy}
 \begin{gathered}
 D\iota=\frac{R^2}{|y|^2}(I-2\widehat y\otimes\widehat y),
 \qquad |\det D\iota|=\frac{R^4}{|y|^4},\\
 \int|\nabla_yv_R|^2=2\int_{|y|>R}|\nabla_yv|^2,
 \qquad
 \int|\partial_sv_R|^2
 =\int_{|y|>R}(1+R^4/|y|^4)|v_s|^2
 \le2\int_{|y|>R}|v_s|^2.
 \end{gathered}
\end{equation}
Applying \eqref{eq:anisotropic-sobolev-optimization} with \eqref{eq:kelvin-extension-energy} gives
\begin{equation}\label{eq:exterior-anisotropic-sobolev}
 \int_{|y|>R}|v|^6
 \le54C_6\left(\int|\nabla_yv|^2\right)^2
                    \int_{|y|>R}|v_s|^2.
\end{equation}
For bounded nonnegative nondecreasing $b$, its Stieltjes
representation by exterior indicators, with null jump cylinders
discarded, and Tonelli applied to \eqref{eq:exterior-anisotropic-sobolev} give
\begin{equation}\label{gauss:weighted-anisotropic}
 \int b(|y-y'|)|v|^6
 \le54C_6\left(\int|\nabla_yv|^2\right)^2
                   \int b(|y-y'|)|v_s|^2.
\end{equation}
Boundedness of $b$ extends this to $H^1$ by smooth approximation.

Fix orthonormal coordinates $(s,y)$ and smooth compact $g$.
Set $d=\partial_sg$, $\beta(y,y')=b(|y-y'|)$ and
\[
 N_b=\int\beta(y,y')g(s,y')^2g(s,y)^6\dd s\dd y\dd y',\qquad
 I_b=\int\beta(y,y')g(s,y')^2d(s,y)^2\dd s\dd y\dd y'.
\]
Choose compact $Y$ containing the transverse supports of $g,d$.
For fixed $y'\in Y$, set $h(s)=g(s,y')$ and
\[
 H=h^2+\epsilon,\quad a=H^{1/4},\quad
 \tau(s)=\int_0^sH(t)^{-1/2}\dd t,\qquad
 v(\tau(s),y)=a(s)g(s,y).
\]
This increasing change of the longitudinal spatial coordinate inserts
$h(s)^2+\epsilon$ into the sixth-power integral while preserving the
transverse Dirichlet energy. It preserves compact support and gives
\[
 \int|\nabla_yv|^2\dd\tau\dd y=A_\perp,
 \qquad \int\beta v^6\dd\tau\dd y
 =\int\beta Hg^6\dd s\dd y.
\]
The longitudinal density satisfies
\begin{equation}\label{eq:longitudinal-density-bound}
 R_\epsilon=(h^2+\epsilon)d^2+hh'gd
             +\frac{h^2(h')^2g^2}{4(h^2+\epsilon)}
 \le\frac32h^2d^2+\frac34g^2(h')^2+\epsilon d^2.
\end{equation}
Apply \eqref{gauss:weighted-anisotropic} with \eqref{eq:longitudinal-density-bound}, integrate over $Y$, and
symmetrize in $y,y'$:
\begin{equation}\label{eq:weighted-directional-pair-bound}
 N_b\le54C_6A_\perp^2
       \left(\frac94I_b+\epsilon C_Y\right),\qquad
 C_Y=\int_{\R\times Y\times Y}\beta(y,y')d(s,y)^2<\infty.
\end{equation}
Letting $\epsilon\downarrow0$ in \eqref{eq:weighted-directional-pair-bound} gives
$N_b\le(243/2)C_6A_\perp^2I_b$.

It remains to convert the directional estimate into the angular
Hessian. The incidence measure supplies the inverse-distance factor,
and its second moment supplies the tangential projection. For $z\ne0$,
let $\nu_z$ be $|z|^{-1}$ times arclength on
$\{\omega\in S^2:\omega\cdot z=0\}$, and put $\nu_0=0$.
A Borel orthonormal frame gives measurability, while circle integration gives
\begin{equation}\label{eq:great-circle-moments}
 \int\dd\nu_z=\frac{2\pi}{|z|},\qquad
 \int|\omega\cdot a|^2\dd\nu_z(\omega)
 =\frac\pi{|z|}|P_za|^2.
\end{equation}
For nonnegative measurable $F$,
\begin{equation}\label{eq:orthogonal-incidence-formula}
 \begin{aligned}
 &\int_{S^2}\int_\R\int_{\omega^\perp}\int_{\omega^\perp}
 F(\omega,s\omega+y,s\omega+y')\dd y'\dd y\dd s\dd\omega\\
 &\hspace{8mm}
 =\iint\int F(\omega,x,x')\dd\nu_{x-x'}(\omega)\dd x\dd x'.
 \end{aligned}
\end{equation}
Indeed, put $x=s\omega+y$, $z=y-y'=rn$.
Rotation invariance on orthogonal unit pairs permits exchanging
$\omega,n$ (both total masses are $8\pi^2$); planar polar measure
$r\dd r\dd\ell(n)$ then becomes $|z|^{-1}\dd z$.
Tonelli completes the identity off the null diagonal.

Now
\[
 b(r)=rw_\lambda(r)=\int_0^r-t\phi_\lambda'(t)\dd t
\]
is nonnegative, increasing and bounded. Average \eqref{eq:weighted-directional-pair-bound} after $\epsilon\downarrow0$, using \eqref{eq:great-circle-moments}--\eqref{eq:orthogonal-incidence-formula} and $A_\perp\le A_g$:
\[
 2\pi N_w\le\frac{243}{2}C_6A_g^2\pi X_w.
\]
Finally take real $g_n\to g$ in $H^1\cap L^6$.
Their squared, sixth and gradient-product moments converge in $L^1$;
$0\le w_\lambda\le\lambda^{-1}$ therefore passes both pairings and
\eqref{gauss:angular-bound} to $g$.
\end{proof}

For the isotropic part, applying Sobolev to $\sqrt{U}\,g$ produces
a density Laplacian term. This term is kept with its sign rather than
estimated by the mass of $g$.

\begin{lemma}[Isotropic convolution Sobolev]\label{gauss:isotropic}
Let $v\ge0$ be even, $C^2$, and bounded together with its first two
derivatives. For real $g\in H^1$, put
$U=v*g^2$, $X_v=\int U|\nabla g|^2$,
$N_v=\int Ug^6$, and $Q_v=-\int g^2\Delta U$.
These are finite, $\Delta U=(\Delta v)*g^2$, and
\begin{equation}\label{gauss:isotropic-bound}
 N_v\le C_6 A_g^2(2X_v+Q_v/2).
\end{equation}
\end{lemma}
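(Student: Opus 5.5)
The plan is to apply the sharp Sobolev inequality $\int|f|^6\le C_6A(f)^3$ to the weighted field $f=\sqrt U\,g$, expand its Dirichlet energy, and integrate the cross term by parts. By density it suffices to treat smooth compactly supported real $g$. At the end we pass to general real $g\in H^1$ by approximating in $H^1\cap L^6$. The passage to the limit uses that $v,\nabla v,\Delta v$ are bounded, so $U$, $\nabla U$, $\Delta U$ converge uniformly and all four quantities $X_v,N_v,Q_v,A_g$ converge. Finiteness is immediate for the same reason: $U\le\|v\|_\infty\|g\|_2^2$ and $|\Delta U|\le\|\Delta v\|_\infty\|g\|_2^2$. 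The identity $\Delta U=(\Delta v)*g^2$ follows by differentiating under the integral.

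The sixth power does not match directly, because $\int(\sqrt U g)^6=\int U^3g^6$. I would therefore not use $\sqrt U g$ naively. Instead I would use the same device as in Lemma~\ref{gauss:angular}: apply Sobolev to $\sqrt U\,g$ but keep two of the three Sobolev factors as $A_g$. Concretely, H\"older gives
\[
 N_v=\int U g^6=\int (U^{1/2}g)^2 g^4\le\|U^{1/2}g\|_6^2\|g\|_6^4 .
\]
Sobolev then gives $\|g\|_6^4\le C_6^{2/3}A_g^2$ and $\|U^{1/2}g\|_6^2\le C_6^{1/3}A(U^{1/2}g)$. Hence
\[
 N_v\le C_6A_g^2\,A(U^{1/2}g).
\]
This matches the claimed constant $C_6A_g^2$ exactly.

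It then remains to show $A(\sqrt U g)\le 2X_v+Q_v/2$. Writing $\nabla(\sqrt Ug)=\sqrt U\nabla g+g\nabla U/(2\sqrt U)$, the danger is the term $g^2|\nabla U|^2/U$, since $U$ may be small. To avoid division, first apply $(a+b)^2\le 2a^2+2b^2$, which gives $2X_v+\tfrac12\int g^2|\nabla U|^2/U$. That is the wrong structure. The better route is to expand exactly:
\[
 |\nabla(\sqrt Ug)|^2=U|\nabla g|^2+\tfrac12\nabla U\cdot\nabla(g^2)+\frac{g^2|\nabla U|^2}{4U}.
\]
Integrating the middle term by parts gives $-\tfrac12\int g^2\Delta U=Q_v/2$. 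So
\[
 A(\sqrt Ug)=X_v+Q_v/2+\int\frac{g^2|\nabla U|^2}{4U}.
\]
The last term must be shown to be at most $X_v$.

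This bound is the main obstacle. I would use the convolution structure and evenness of $v$, by writing $\nabla U=(\nabla v)*g^2=2v*(g\nabla g)$ after moving the derivative onto $g^2$. Cauchy--Schwarz with respect to the measure $v(x-y)\,dy$ then gives
\[
 |\nabla U(x)|^2\le4\Bigl(\int v(x-y)g(y)^2dy\Bigr)\Bigl(\int v(x-y)|\nabla g(y)|^2dy\Bigr)=4U(x)\,(v*|\nabla g|^2)(x).
\]
Consequently $\int g^2|\nabla U|^2/(4U)\le\int g^2(v*|\nabla g|^2)=\int(v*g^2)|\nabla g|^2=X_v$, using evenness of $v$ and Fubini. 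Where $U=0$, nonnegativity of $v$ forces $\nabla U=0$, and we read the quotient as $0$. If needed, replace $U$ by $U+\epsilon$ and let $\epsilon\downarrow0$ to justify the computation. Combining the three steps gives $N_v\le C_6A_g^2(2X_v+Q_v/2)$.
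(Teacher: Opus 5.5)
Your proposal is correct and is essentially the paper's proof. You split $\int Ug^6\le\|\sqrt{U}g\|_6^2\|g\|_6^4$ by H\"older and apply sharp Sobolev, then expand $|\nabla(\sqrt{U_\epsilon}g)|^2$ exactly with the cross term integrated by parts into $Q_v/2$, and bound the last term with the weighted Cauchy--Schwarz estimate $|\nabla U|^2\le 4U\,(v*|\nabla g|^2)$ obtained from $\nabla U=2v*(g\nabla g)$ and evenness, before letting $\epsilon\downarrow0$ and approximating in $H^1$. The only adjustment is that the regularization $U_\epsilon=U+\epsilon$, which adds the harmless term $\epsilon A_g$, should be used from the start rather than ``if needed,'' since $\sqrt{U}$ need not be differentiable where $U$ vanishes.
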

\begin{proof}
To prove \eqref{gauss:isotropic-bound}, take real $g\in C_c^\infty$ and put $U_\epsilon=U+\epsilon$.
Integration by parts gives $\nabla U=2v*(g\nabla g)$;
weighted Cauchy--Schwarz and evenness give
\begin{equation}\label{eq:weighted-gradient-cauchy}
 |\nabla U(x)|^2\le4U(x)\int v(x-y)|\nabla g(y)|^2\dd y,
 \qquad \int g^2\frac{|\nabla U|^2}{U_\epsilon}\le4X_v.
\end{equation}
H\"older and sharp Sobolev yield
\begin{equation}\label{eq:weighted-sobolev-product}
 \int U_\epsilon g^6
 \le\left(\int g^6\right)^{2/3}
       \left(\int U_\epsilon^3g^6\right)^{1/3}
 \le C_6A_g^2\int|\nabla(\sqrt{U_\epsilon}g)|^2.
\end{equation}
Expanding the square and integrating the cross term,
\begin{equation}\label{eq:weighted-gradient-expansion}
 \begin{aligned}
 \int|\nabla(\sqrt{U_\epsilon}g)|^2
 &=X_v+\epsilon A_g+\frac12Q_v
       +\frac14\int g^2\frac{|\nabla U|^2}{U_\epsilon}\\
 &\le2X_v+\epsilon A_g+Q_v/2.
 \end{aligned}
\end{equation}
Insert \eqref{eq:weighted-gradient-expansion}, using \eqref{eq:weighted-gradient-cauchy}, into \eqref{eq:weighted-sobolev-product}, and let $\epsilon\downarrow0$. For $g\in H^1$, smooth approximation
in $H^1\cap L^6$ gives $L^1$ convergence of the squared, sixth and
gradient-square moments. Boundedness of $v,\Delta v$ passes
$N_v,X_v,Q_v$ to the limit, with no sign condition on $Q_v$.
\end{proof}

Combining the two estimates bounds the full-density tail.

\begin{lemma}[Self-tail and full-density tail bounds]\label{gauss:self-tail}
For real $g\in H^1$,
\begin{equation}\label{gauss:self-sixth}
 \int U_{\lambda,g}g^6
 \le C_6 A_g^2\left[\frac{243}{4}\mathcal H_\lambda(g;g)
                         +\frac12Q_{\phi,\lambda}(g^2)\right].
\end{equation}
If $f,v,g\in H^1$ are real and $f^2=v^2+g^2$, and $M_c=\int v^2$,
then
\begin{equation}\label{gauss:whole-tail}
 \int U_{\lambda,f}g^6
 \le C_6 A_g^2\left[\frac{243}{4}\mathcal H_\lambda(f;g)
                  +Q_\lambda(f^2)+\frac{5M_c^2}{\lambda^3}\right]
       +\frac{M_c}{\lambda}\int g^6.
\end{equation}
\end{lemma}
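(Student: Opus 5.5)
The plan is to prove \eqref{gauss:self-sixth} by splitting the weight $\xi_\lambda=\phi_\lambda+w_\lambda$. This splits $U_{\lambda,g}=\phi_\lambda*g^2+w_\lambda*g^2$ and correspondingly $\int U_{\lambda,g}g^6=N_\phi+N_w$.

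For the angular part, Lemma~\ref{gauss:angular} gives $N_w\le\frac{243}{4}C_6A_g^2X_w(g)$. For the isotropic part, we apply Lemma~\ref{gauss:isotropic} with $v=\phi_\lambda$; this kernel is even, nonnegative and $C^2$ with bounded derivatives. It gives $N_\phi\le C_6A_g^2\bigl(2X_\phi+\tfrac12Q_{\phi,\lambda}(g^2)\bigr)$, where $X_\phi=\iint\phi_\lambda(x-y)g(y)^2|\nabla g(x)|^2$ and $Q_v=Q_{\phi,\lambda}(g^2)$ by definition.

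It remains to compare with $\mathcal H_\lambda(g;g)$. By \eqref{eq:gaussian-hessian-laplacian}, $H_\lambda\nabla g\cdot\nabla g=\phi_\lambda|\nabla g|^2+w_\lambda|P\nabla g|^2$, so $\mathcal H_\lambda(g;g)=X_\phi+X_w$. Since $2\le\frac{243}{4}$, summing the two bounds gives \eqref{gauss:self-sixth}.

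For \eqref{gauss:whole-tail}, we write $U_{\lambda,f}=U_{\lambda,v}+U_{\lambda,g}$, using $f^2=v^2+g^2$. The core piece is trivial: $\xi_\lambda\le\lambda^{-1}$ gives $\int U_{\lambda,v}g^6\le\frac{M_c}{\lambda}\int g^6$.

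For the $g$-weighted piece we use \eqref{gauss:self-sixth}. The monotonicity $\mathcal H_\lambda(g;g)\le\mathcal H_\lambda(f;g)$ holds because $H_\lambda\ge0$ and $g^2\le f^2$ in the center slot. For the quadratic term we invoke \eqref{gauss:Q-tail} with $\rho=f^2$, $\rho_c=v^2\ge0$ and $\rho_t=g^2$. These densities lie in $L^1\cap L^2$ because $f,v,g\in H^1\subset L^4$. This yields $\tfrac12Q_{\phi,\lambda}(g^2)\le Q_\lambda(f^2)+5\lambda^{-3}M_c^2$. Collecting the pieces gives exactly \eqref{gauss:whole-tail}.

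The main point requiring care is the isotropic step: Lemma~\ref{gauss:isotropic} must be applied with its sign-indefinite $Q_v$ term kept intact, never estimated by mass. Only afterwards is the Fourier positivity of Lemma~\ref{gauss:Q} used to pass to the full-density form. Beyond that, the argument reduces to checking that the constants line up ($2\le243/4$, and the factor $\tfrac12\cdot 2=1$) and that all integrals are absolutely convergent. Absolute convergence follows from the domain statements in Lemmas~\ref{gauss:kernel} and~\ref{gauss:angular}.
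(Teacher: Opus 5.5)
Your proposal is correct and follows the paper's proof essentially step for step. You split $\xi_\lambda=\phi_\lambda+w_\lambda$, apply Lemma~\ref{gauss:angular} to the angular part and Lemma~\ref{gauss:isotropic} with $v=\phi_\lambda$ to the isotropic part, absorb $2\le 243/4$ via $\mathcal H_\lambda(g;g)=X_\phi+X_w$, and for the full center use $\xi_\lambda\le\lambda^{-1}$, $H_\lambda\ge0$ and \eqref{gauss:Q-tail}; the $L^1\cap L^2$ membership and the Hessian identification you add are the checks the paper leaves implicit.
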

\begin{proof}
Use $\xi_\lambda=\phi_\lambda+w_\lambda$ and the Hessian decomposition \eqref{eq:gaussian-hessian-laplacian}.
Lemmas~\ref{gauss:angular} and \ref{gauss:isotropic}, with
$v(x)=\phi_\lambda(|x|)$, give \eqref{gauss:self-sixth}, since
$2\le243/4$ and both energy summands are nonnegative.

For the full center, $f^2=v^2+g^2$ gives
\[
 U_{\lambda,v}\le M_c/\lambda,\qquad
 \mathcal H_\lambda(g;g)\le\mathcal H_\lambda(f;g),\qquad
 \tfrac12Q_{\phi,\lambda}(g^2)
 \le Q_\lambda(f^2)+5M_c^2/\lambda^3.
\]
The second inequality uses $H_\lambda\ge0$, and the third is \eqref{gauss:Q-tail} from Lemma~\ref{gauss:Q}. Substitution proves \eqref{gauss:whole-tail}.
All pairings have the bounded-kernel domains already established;
only the core mass $M_c$ enters their bounds.
\end{proof}

The tail bounds leave the full density form available for absorption.
For the core, the directional gap must be made uniform for each location
of the other density before integrating over that location.

\subsection{Finite signed core stress and scale separation}
An angle cutoff of radius $L>0$ is a smooth real function
$0\le\theta\le\pi/2$ which is zero on $B(0,L/2)$ and equals
$\pi/2$ outside $B(0,L)$. For real $f\in H^1$ put
\[
 v=(\cos\theta)f,\qquad g=(\sin\theta)f,\qquad e_\theta=f\nabla\theta.
\]
Then $f^2=v^2+g^2$, and the exact matrix IMS identity is
\begin{equation}\label{gauss:kinetic-IMS}
 \nabla v\otimes\nabla v+\nabla g\otimes\nabla g
 =\nabla f\otimes\nabla f+e_\theta\otimes e_\theta.
\end{equation}
This identity is first algebraic for smooth functions and extends to
weak derivatives by the product rule.

Representing both pressures by a finite signed matrix measure
isolates the core stress. A slowly varying Gaussian Hessian then tests
its positive total matrix, with an error proportional to $L/\lambda$.

\begin{lemma}[A uniform core lower bound]\label{gauss:core}
Fix $m,e>0$. Put
\[
 \kappa=\frac{m}{8C_6G^2},\qquad \sigma=\frac{4e\kappa}{G}>0.
\]
For each $L>0$ there is $R_1>0$, depending only on
$\alpha,m,e,L$, such that for all $\lambda\ge R_1$, every angle cutoff
of radius $L$ and every real $f\in H^1$ satisfying
\[
 \int|v|^6\ge m,\quad A(v)<G,\quad E(v)\le E(W)-e,
 \quad\int|e_\theta|^2\le e\kappa/G,
\]
one has, with the full center $f$,
\begin{equation}\label{gauss:core-bound}
 \sigma U_{\lambda,f}(0)
 \le4\mathcal H_\lambda(f;v)-\mathcal P_\lambda(f;v)
       -4\iint f(y)^2H_\lambda(x-y)e_\theta(x)\cdot e_\theta(x)\dd x\dd y.
\end{equation}

\end{lemma}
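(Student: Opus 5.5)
The approach is to write both terms of the right side of \eqref{gauss:core-bound} as integrals over the position $y$ of the full center density $f(y)^2$. For each fixed $y$ we then compare with a constant-matrix directional stress of the core $v$ (supported in $\overline B(0,L)$), and finally integrate against $f(y)^2$. By definition,
\[
 4\mathcal H_\lambda(f;v)-\mathcal P_\lambda(f;v)
 =\int f(y)^2\Bigl[4\int H_\lambda(x-y)\nabla v\cdot\nabla v\,dx
 -\mathcal P^{(y)}_\lambda(v)\Bigr]dy .
\]
Here $\mathcal P^{(y)}_\lambda$ is the pressure with the center replaced by a point mass at $y$. Its integrand is built from $\operatorname{div}\nabla k_\lambda(\cdot-y)=L_\lambda(\cdot-y)$ and from the chord quotient of $\nabla k_\lambda(\cdot-y)$. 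The same holds for the $e_\theta$ term. The goal is a pointwise-in-$y$ lower bound $\ge \sigma'\,\xi_\lambda(|y|)$ for some $\sigma'$ with $\sigma'/2\ge\sigma$ after accounting for errors. Integrating then gives $\sigma U_{\lambda,f}(0)$, because $U_{\lambda,f}(0)=\int\xi_\lambda(|y|)f(y)^2\,dy$.

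\textbf{Freezing step.} Since $\supp v\subset\overline B(0,L)$ and $\lambda\ge R_1\gg L$, on the support of $v$ we replace $H_\lambda(x-y)$ by $H_\lambda(-y)=\phi_\lambda(|y|)I+w_\lambda(|y|)P_y$ for every $x,z\in B_L$. The chord coefficient $C_q$ with point center is likewise replaced by its frozen value at $-y$, namely the quadratic form of the same matrix $H_\lambda(-y)$ in direction $\widehat{x-z}$, and $L_\lambda(x-y)$ is replaced by $\operatorname{tr}H_\lambda(-y)$. The errors are controlled by $\|D^3k_\lambda\|$ on $B(y,L)$, which is $\lesssim L\,\lambda^{-1}\xi_\lambda(|y|)$ uniformly in $y$. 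This follows from the bounds \eqref{eq:gaussian-kernel-pointwise-bounds} and the comparison $\xi_\lambda\sim(\lambda+r)^{-1}$ of Lemma~\ref{gauss:weighted}. It is harmless for $r\gg L$ and for $r\lesssim\lambda$ alike. The kinetic and pressure quantities of $v$ are bounded by $G$ and by $G^{1-p}A(v)^p\le G$, and $\int|e_\theta|^2\le e\kappa/G$. Hence the total error is at most $CL\lambda^{-1}\xi_\lambda(|y|)$, and it becomes $\le\frac{\sigma}{2}\xi_\lambda(|y|)$ once $R_1$ is large. The constant does not depend on $\|f\|_2$.

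\textbf{Frozen matrix stress.} With the matrix $H=aI+bP_\omega$, where $a=\phi_\lambda$, $b=w_\lambda\ge0$ and $\omega=\widehat y$, the frozen expression equals a positive combination of directional stresses:
\[
 4\sum_i\mu_i\bigl(A_{e_i}(v)-P_{e_i}(v)\bigr)-4\sum_i\mu_i\int|e_i\cdot e_\theta|^2 .
\]
Here the $\mu_i$ are the eigenvalues $a,a+b,a+b$, whose trace is $L_\lambda(-y)\ge\xi_\lambda$. The weights $\frac{2(p-2)}p$ and $\frac{2(3-\alpha)}p$ of \eqref{eq:hartree-gaussian-pressure} match $(1+\alpha)/2p$ and $(3-\alpha)/2p$ in $P_\omega$ after the factor $4$. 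Checking this bookkeeping is routine but must be done carefully. Lemma~\ref{gauss:directional} then applies, since $A(v)<G$ and $E(v)\le E(W)-e$. Using $G-A(v)\le G$, it gives $A_{e_i}-P_{e_i}\ge\frac{2e}{G}A_{e_i}(v)$. By \eqref{gauss:directional-quantitative} and $\int v^6\ge m$, we get $A_{e_i}(v)\ge m/(8C_6G^2)=\kappa$. The positive part is therefore at least $4\cdot\frac{2e\kappa}{G}\operatorname{tr}H$. The IMS error is at most $4\operatorname{tr}H\cdot e\kappa/G$. Subtracting leaves $\frac{4e\kappa}{G}\operatorname{tr}H\ge\sigma\,\xi_\lambda(|y|)\cdot$(a fixed factor), which absorbs the freezing error.

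The main obstacle is the freezing of the nonlocal chord pressure. It involves pairs $x,z\in B_L$ and a kernel $I_\alpha(x-z)$ that is not small, and we need the error to be uniformly relative to $\xi_\lambda(|y|)$ for every $y$ (near and far). I would handle this with a Taylor expansion of $\nabla k_\lambda$ along the segment $[x,z]\subset B_L$, bounding the third derivative by $C\lambda^{-1}\xi_\lambda(|y|)$ for $|x-y|\ge|y|-L$ when $|y|\ge2L$. In the regime $|y|<2L$ all kernels are comparable to $\lambda^{-1}$, and the same bound holds.
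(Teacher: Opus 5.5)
Your proposal is correct and is essentially the paper's proof. The paper packages the kinetic, IMS and both pressure contributions into a signed matrix measure $\mathfrak S$ supported in $\overline B_L$, with the chord term handled by the segment identity and convexity of $\overline B_L$. It freezes $H_\lambda(\cdot-y)$ at $-y$ with error at most $384(L/\lambda)\xi_\lambda(|y|)$ times the total variation, and then uses $\mathfrak S(\R^3)\ge\sigma I$ from Lemma~\ref{gauss:directional} together with $H_\lambda\ge0$; this is exactly your eigen-direction computation.

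The one point to state precisely is the "fixed factor" you invoke: it is $\operatorname{tr}H_\lambda(-y)=\phi_\lambda+2\xi_\lambda\ge2\xi_\lambda$. Your stated bound $\operatorname{tr}H_\lambda\ge\xi_\lambda$ alone would leave no room to absorb the freezing error. With the factor $2$, the frozen term is at least $2\sigma\xi_\lambda(|y|)$, and choosing $R_1$ so that the freezing error is at most $\sigma\xi_\lambda(|y|)$ closes the argument.
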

\begin{proof}
Define the signed matrix measure $\mathfrak S$ by
\begin{equation}\label{eq:signed-core-stress}\begin{aligned}
 \langle\mathfrak S,H\rangle={}&
 4\int H(x):[\nabla v\otimes\nabla v-e_\theta\otimes e_\theta](x)\dd x\\
 &-\frac{2(p-2)}p\int\operatorname{tr}H(x)
                  (I_\alpha*|v|^p)(x)|v(x)|^p\dd x\\
 &-\frac{2(3-\alpha)}p\iint I_\alpha(x-z)|v(x)|^p|v(z)|^p\\
 &\qquad\qquad\times\int_0^1H(z+t(x-z)):
          \frac{(x-z)\otimes(x-z)}{|x-z|^2}\dd t\dd x\dd z.
\end{aligned}\end{equation}
The last term in \eqref{eq:signed-core-stress} pushes forward the finite Riesz pair measure times
$\dd t$ on $[0,1]$. Convexity of $\overline B_L$ gives
\begin{equation}\label{eq:core-stress-variation}
 \supp\mathfrak S_{ij}\subset\overline B_L,\qquad
 |\mathfrak S_{ij}|(\R^3)\le
 V_*:=4A(v)+4D(v)+4\int|e_\theta|^2.
\end{equation}
Here each unit-dyad entry is bounded by one and the pressure
coefficients sum to $8/p\le4$.
For unit $\omega$, Lemma~\ref{gauss:directional} gives
\begin{equation}\label{eq:core-stress-positive-matrix}
 A_\omega(v)\ge\kappa,\qquad
 \begin{aligned}
 \omega\cdot\mathfrak S(\R^3)\omega
 &=4(A_\omega(v)-P_\omega(v))-4\int|\omega\cdot e_\theta|^2\\
 &\ge\frac{8e\kappa}{G}-\frac{4e\kappa}{G}=\sigma.
 \end{aligned}
\end{equation}

Differentiating $H=\xi I+(\phi-\xi)n\otimes n$, using
$\xi'=(\phi-\xi)/r$ and $\phi'=-r\phi/\lambda^2$, yields
\[
 |\partial_a(H_\lambda)_{ij}(z)|
 \le32\lambda^{-1}\xi_\lambda(|z|).
\]
For $r\le\lambda$, use
$|\phi-\xi|\le r^2/(2\lambda^3)$ and
$\xi\ge(4\lambda)^{-1}$; for $r\ge\lambda$, use
$|\phi-\xi|/r\le\xi/\lambda$ and Gaussian decay for $\phi'$.
Smoothness covers zero. On the segment from $-y$ to $x-y$, with
$|x|\le L\le\lambda$, one has
$\xi_\lambda(|z|)\le12\xi_\lambda(|y|)$; hence
\begin{equation}\label{eq:gaussian-hessian-variation}
 |(H_\lambda)_{ij}(x-y)-(H_\lambda)_{ij}(-y)|
 \le384\frac L\lambda\xi_\lambda(|y|).
\end{equation}
Integrate \eqref{eq:gaussian-hessian-variation} against the nine signed variations in \eqref{eq:core-stress-variation}, and use \eqref{eq:core-stress-positive-matrix}:
\begin{equation}\label{eq:translated-core-stress-lower}
 \begin{aligned}
 \langle\mathfrak S,H_\lambda(\cdot-y)\rangle
 &\ge H_\lambda(-y):\mathfrak S(\R^3)
       -3456\frac L\lambda V_*\xi_\lambda(|y|)\\
 &\ge\left(2\sigma-3456\frac L\lambda V_*\right)
                   \xi_\lambda(|y|),
 \end{aligned}
\end{equation}
where $H_\lambda\ge0$ and $\operatorname{tr}H_\lambda\ge2\xi_\lambda$.
Sharp HLS--Sobolev gives $D(v)\le G$, so choose
\begin{equation}\label{eq:core-radius-choice}
 V_*\le\overline V:=8G+\sigma,\qquad
 R_1=\max\{L,3456L\overline V/\sigma\}+1.
\end{equation}
By \eqref{eq:translated-core-stress-lower}--\eqref{eq:core-radius-choice}, uniformly in $y$ and $\lambda\ge R_1$,
\begin{equation}\label{eq:uniform-core-stress-bound}
 \langle\mathfrak S,H_\lambda(\cdot-y)\rangle
 \ge\sigma\xi_\lambda(|y|).
\end{equation}
Multiply \eqref{eq:uniform-core-stress-bound} by $f(y)^2$ and integrate. The finite stress variations,
bounded Hessian and $f\in L^2$ justify Fubini; the chord identity
\[
 (\nabla k_\lambda(x-y)-\nabla k_\lambda(z-y))\cdot(x-z)
 =\int_0^1H_\lambda(z+t(x-z)-y)[x-z,x-z]\dd t
\]
identifies the result with \eqref{gauss:core-bound}, including both
pressures. The choice of $R_1$ is independent of the center mass.
\end{proof}

Choose the tail smallness before the core radius and its mass bound.
This order allows the angular error to be absorbed in the kinetic form
and the isotropic error in the full density form, retaining $99/100$ of the density form and a positive core term.

\begin{proposition}[Whole-form tail absorption]\label{gauss:absorption}
Fix $m,e>0$ and $M\ge0$, and define $\sigma,\kappa$ as in
Lemma~\ref{gauss:core}. There exists $\eta>0$ with the following order
of quantifiers. For every $L>0$ and $B_c\ge0$ there is $R_0>0$ such
that, for every $\lambda\ge R_0$, every angle cutoff of radius $L$,
and every real $f\in H^1$ satisfying the hypotheses of
Lemma~\ref{gauss:core} and
\[
 \|f\|_6\le M,\quad M_c=\int v^2\le B_c,\quad
 A(g)\le\eta,\quad \int g^6\le\eta,
\]
one has
\begin{equation}\label{gauss:absorption-bound}
 \frac{99}{100}Q_\lambda(f^2)+\frac{\sigma M_c}{16\lambda}
                         \le B_\lambda(f).
\end{equation}

\end{proposition}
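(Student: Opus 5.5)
The plan is to expand $B_\lambda(f)$ for real $f$, where $j=0$:
\[
 B_\lambda(f)=Q_\lambda(f^2)+4\mathcal H_\lambda(f;f)-\mathcal P_\lambda(f;f).
\]
The kinetic part is split with the matrix IMS identity \eqref{gauss:kinetic-IMS} and $H_\lambda\ge0$, which gives
\[
 4\mathcal H_\lambda(f;f)=4\mathcal H_\lambda(f;v)+4\mathcal H_\lambda(f;g)
 -4\iint f(y)^2H_\lambda(x-y)e_\theta(x)\cdot e_\theta(x).
\]
The pressure is split with Lemma~\ref{gauss:IMS}, applied with center $c=f$, $u=f$, core $v$ supported in $\overline B_L\subset\overline B(0,\lambda)$ (since $\lambda\ge L$), and tail $g$. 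For any $\epsilon>0$ this gives
\[
 \mathcal P_\lambda(f;f)\le\mathcal P_\lambda(f;v)+\epsilon U_{\lambda,f}(0)+K\int U_{\lambda,f}g^6 .
\]
Lemma~\ref{gauss:core} then bounds the core combination (core kinetic term, core pressure and the $e_\theta$ term) below by $\sigma U_{\lambda,f}(0)$, provided $\lambda\ge R_1(L)$. The result is
\[
 B_\lambda(f)\ge Q_\lambda(f^2)+4\mathcal H_\lambda(f;g)+(\sigma-\epsilon)U_{\lambda,f}(0)-K\int U_{\lambda,f}g^6 .
\]

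For the tail, apply \eqref{gauss:whole-tail} with $A_g\le\eta$:
\[
 K\int U_{\lambda,f}g^6\le KC_6\eta^2\Bigl[\tfrac{243}{4}\mathcal H_\lambda(f;g)+Q_\lambda(f^2)+\tfrac{5M_c^2}{\lambda^3}\Bigr]+\frac{KM_c}{\lambda}\,\eta .
\]
The order of quantifiers is essential here. First fix $\epsilon=\sigma/4$, so that $K=K(\alpha,M,\sigma/4)$ depends only on $m,e,M$. Then choose $\eta$ so that three conditions hold:
\begin{itemize}
\item $KC_6\eta^2\cdot\frac{243}{4}\le4$, which absorbs the angular term into $4\mathcal H_\lambda(f;g)$;
\item $KC_6\eta^2\le1/100$, which keeps $99/100$ of $Q_\lambda(f^2)$;
\item $K\eta\le\sigma/32$.
\end{itemize}
This choice is independent of $L$ and $B_c$, as the statement requires.

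It remains to convert $U_{\lambda,f}(0)$ into $M_c/\lambda$. Since $v$ is supported in $\overline B_L$ and $f^2\ge v^2$, the bound $\xi_\lambda(r)\ge1/(2(\lambda+r))$ gives
\[
 U_{\lambda,f}(0)\ge\int\xi_\lambda(|y|)v^2\ge\frac{M_c}{2(\lambda+L)}\ge\frac{M_c}{4\lambda}\qquad(\lambda\ge L).
\]
Hence $(3\sigma/4)U_{\lambda,f}(0)\ge3\sigma M_c/(16\lambda)$. The remaining errors are $\sigma M_c/(32\lambda)$ from the last tail term, and $5KC_6\eta^2M_c^2\lambda^{-3}\le M_cB_c\lambda^{-3}/20$ from the core-mass term.

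Finally, choose $R_0\ge\max\{R_1,L\}$ large enough that $B_c/(20\lambda^2)\le\sigma/32$ for $\lambda\ge R_0$. Summing the coefficients of $M_c/\lambda$ gives $3\sigma/16-\sigma/32-\sigma/32=\sigma/8\ge\sigma/16$, which proves \eqref{gauss:absorption-bound}.

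The main obstacle I expect is checking the hypotheses of Lemma~\ref{gauss:IMS}. It needs $\|u\|_6\le M$, which is given, and $v$ supported in $\overline B(0,\lambda)$, which holds once $R_0\ge L$. I also need the constant $K$ to be independent of $\lambda$ and $c$, which that lemma states. Beyond that, the order of choices ($\epsilon$, then $K$, then $\eta$, then $R_0$) must stay consistent: $\eta$ may depend only on $m,e,M$, while only $R_0$ is allowed to see $L$ and $B_c$.
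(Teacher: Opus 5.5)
Your proposal is correct and follows the paper's proof essentially step for step. It uses the same ingredients: the matrix IMS identity, Lemma~\ref{gauss:IMS} with $\epsilon=\sigma/4$, Lemma~\ref{gauss:core}, the whole-tail bound \eqref{gauss:whole-tail}, the estimate $U_{\lambda,f}(0)\ge M_c/(4\lambda)$, and the same order of choices ($K$, then $\eta$, then $R_0$). The only difference is bookkeeping: you keep all of $\tfrac{3\sigma}{4}U_{\lambda,f}(0)$ and finish with $\sigma M_c/(8\lambda)$, whereas the paper discards $\sigma U_{\lambda,f}(0)/4$ and lands exactly on $\sigma M_c/(16\lambda)$.
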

\begin{proof}
Apply Lemma~\ref{gauss:IMS} with $\epsilon=\sigma/4$, choosing $K$
before $L$ or $f$. The matrix identity \eqref{gauss:kinetic-IMS} and
Lemma~\ref{gauss:core} give
\[
 4\mathcal H_\lambda(f;f)-\mathcal P_\lambda(f)
 \ge\frac{3\sigma}{4}U_{\lambda,f}(0)
       +4\mathcal H_\lambda(f;g)-K\int U_{\lambda,f}g^6.
\]
For $\lambda\ge L$, $U_{\lambda,f}(0)\ge M_c/(4\lambda)$.
Discarding $\sigma U(0)/4$ leaves
\begin{equation}\label{eq:core-tail-before-absorption}
 4\mathcal H_\lambda(f;f)-\mathcal P_\lambda(f)
 \ge\frac{\sigma M_c}{8\lambda}+4H-KT,
 \quad H=\mathcal H_\lambda(f;g),\quad T=\int U_{\lambda,f}g^6.
\end{equation}
Set $a=C_6A(g)^2$, $b=\int g^6$ and $Q=Q_\lambda(f^2)$.
Lemma~\ref{gauss:self-tail} gives
\begin{equation}\label{eq:whole-tail-three-terms}
 T\le a\left(\frac{243}{4}H+Q+\frac{5M_c^2}{\lambda^3}\right)
       +\frac{M_c}{\lambda}b.
\end{equation}
Choose, before $L,B_c$,
\[
 \eta=\min\left\{1,\frac1{100(KC_6+1)},\frac{\sigma}{32(K+1)}\right\}>0,
\]
and subsequently choose
\[
 R_0=\max\left\{R_1,L,1,\frac{160KC_6\eta^2B_c}{\sigma}\right\}+1.
\]
For $\lambda\ge R_0$,
$Ka\le1/100$, $Kb\le\sigma/32$, and
$5KaM_c/\lambda^2\le\sigma/32$; the last uses
$\lambda^2\ge\lambda$ and the definition of $R_0$.
Substitute these bounds in \eqref{eq:whole-tail-three-terms}; since $H,Q\ge0$ and $(243/4)/100<4$,
\begin{equation}\label{eq:tail-error-absorption}
 KT\le4H+Q/100+\frac{\sigma M_c}{16\lambda}.
\end{equation}
Use \eqref{eq:tail-error-absorption} in \eqref{eq:core-tail-before-absorption} and add $Q$ to obtain \eqref{gauss:absorption-bound}.
\end{proof}

Compactness supplies a common core with positive local mass and
uniformly small gradient tails. The resulting constants depend on this
core and the threshold gaps, while the total $L^2$ norm of nearby
highpass fields remains unrestricted.

\subsection{Compact neighborhoods and affine transfer}
\begin{theorem}[Uniform Gaussian coercivity near a compact carrier]
\label{gauss:compact}
Let $\mathcal K\subset\X$ be nonempty and compact in the gradient norm, with
$0\notin\mathcal K$, and suppose every $f\in\mathcal K$ satisfies
$A(f)<G$ and $E(f)<E(W)$. There are $c>0$, $\delta>0$, and $R_*>0$
such that for every $\lambda\ge R_*$ and every $h\in H^1$ with
$\operatorname{dist}_{\X}(h,\mathcal K)<\delta$,
\begin{equation}\label{gauss:compact-bound}
 B_\lambda(h)\ge\frac{99}{100}Q_\lambda(|h|^2)+\frac c\lambda.
\end{equation}

\end{theorem}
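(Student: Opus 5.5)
The plan is to reduce to Proposition~\ref{gauss:absorption} at the origin and then transfer by translation. First, Lemma~\ref{gauss:phase} gives $B_\lambda(h)\ge B_\lambda(|h|)$, while $Q_\lambda(|h|^2)$ is unchanged. It therefore suffices to treat real nonnegative $f=|h|$. The inequality $P(|h|-|k|)\le$ is not available in general, so the distance hypothesis must be used through $|h|$ directly. The quantities needed later (the local $L^6$ mass of a core, the kinetic and potential energies, tail gradients and tail $L^6$ norms) are controlled by $\|\nabla h\|_2$, by $|\nabla|h||\le|\nabla h|$, and by $L^6$ closeness. The energies are continuous under $\dot H^1$ perturbation of $h$, and $|h|$ obeys the same bounds up to the diamagnetic inequality.

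Next, fix the uniform parameters from compactness. Set $M=\sup_{\mathcal K}\|f\|_6+1$ and $e=\frac12\min_{\mathcal K}(E(W)-E(f))>0$, noting that $E$ is continuous and $\mathcal K$ is compact. With an angle cutoff centered at a point $x_0$, the core is $v=\cos\theta(\cdot-x_0)\,|h|$. Since $\mathcal K$ is compact and $0\notin\mathcal K$, each $f\in\mathcal K$ has a center $x_f$ and a radius where $\int_{B(x_f,L/2)}|f|^6\ge 2m$, with $m>0$ uniform. Cover $\mathcal K$ by finitely many $\delta$-balls and take a common $L$ that is large relative to the concentration scales of the net elements (scale is not quotiented here, so finitely many elements give finitely many scales). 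Then choose $\eta$ from Proposition~\ref{gauss:absorption}, which depends only on $m,e,M$. After that, enlarge $L$ so that for each net element the exterior $\{|x-x_f|>L/2\}$ carries gradient energy and $L^6$ mass below $\eta/4$. This also makes $\int|e_\theta|^2\le\int_{L/2<|x|<L}|f|^2|\nabla\theta|^2\lesssim\|f\|_{L^6(\text{annulus})}^2$ small, via H\"older with $|\nabla\theta|\lesssim L^{-1}$ and scale invariance, hence $\le e\kappa/G$. The condition $A(v)<G$ follows from $A(v)\le A(f)+\int|e_\theta|^2$ together with the uniform gap $\max_{\mathcal K}A<G$. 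The bound $E(v)\le E(W)-e$ follows from IMS: $A(v)\le A(f)-A(g)+\int|e_\theta|^2$, and $D(v)$ is close to $D(f)$ because the tail is small in $L^6$. Finally, pick $\delta$ so small that all these inequalities survive the passage from the net element to $|h|$. The core mass satisfies $M_c=\int v^2\le\|v\|_{L^6}^2|B_L|^{2/3}=:B_c$, which is uniform and \emph{independent of} $\|h\|_2$; this is the point of the whole construction.

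The last step is translation. All forms in $B_\lambda$ and $Q_\lambda$ are translation invariant, because they depend on differences $x-y$. Translate $|h|$ by $x_f$ and apply Proposition~\ref{gauss:absorption} with this $L$ and $B_c$ to get $R_*=R_0$. Its conclusion yields $B_\lambda(|h|)\ge\frac{99}{100}Q_\lambda+\sigma M_c/(16\lambda)$. Then $M_c\ge\int_{B_{L/2}}v^2\ge$ a positive uniform constant, by H\"older applied to the reverse ($L^6$ mass in a bounded ball gives $L^2$ mass from below only through boundedness of the ball, which requires a little care), so take $c=\sigma\inf M_c/16$. I expect the main obstacle to be ordering the quantifiers so that $\eta$ precedes $L$ while the core hypotheses of Lemma~\ref{gauss:core} (in particular the smallness of $e_\theta$ and the positive lower bound on $M_c$) hold uniformly over the $\delta$-neighborhood. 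If the lower bound on $M_c$ is awkward, I would instead keep the term $\sigma U_{\lambda,f}(0)/4$ that was discarded in that proof, since it dominates the core $L^2$ mass on $B_{L/2}$.
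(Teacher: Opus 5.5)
\textbf{Gap: the uniform lower bound on $M_c$.} Your overall architecture is the paper's. It reduces to $|h|$ by Lemma~\ref{gauss:phase}, fixes $m,e,M$ by compactness, chooses $\eta$, then one cutoff radius $L$, then $\delta$, and finally applies Proposition~\ref{gauss:absorption} with $B_c=|B_L|^{2/3}M^2$. The moving centers $x_f$ are harmless but unnecessary: $\mathcal K$ is compact in $\X$ itself, so the paper uses one cutoff centered at the origin. The genuine gap is the uniform lower bound on $M_c=\int v^2$, which is the only source of the constant $c>0$.

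H\"older on a bounded ball gives $\|v\|_{L^2(B)}\le|B|^{1/3}\|v\|_{L^6(B)}$, which is the wrong direction. A lower bound on $L^6$ mass in a fixed ball gives no lower bound on $L^2$ mass. For example, $k^{1/2}\psi(kx)$ with $\psi$ compactly supported keeps its $L^6$ and $\dot H^1$ norms, while its $L^2$ norm is $k^{-1}\|\psi\|_2$. Your fallback of retaining $\sigma U_{\lambda,f}(0)/4$ does not escape this. The quantity $\lambda U_{\lambda,f}(0)=\int\lambda\xi_\lambda(|y|)f(y)^2\,dy$ is bounded below uniformly only through a uniform lower bound on $\int_{B_R}f^2$ for some fixed $R$, which is the same missing quantity.

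\textbf{The missing step.} You need a separate compactness argument for local $L^2$ mass. The map $f\mapsto\|f\|_{L^2(B_R)}$ is Lipschitz on $\X$, by H\"older plus Sobolev. Each $f\in\mathcal K$ is nonzero, so $\int_{B_{R_f}}|f|^2>0$ for some $R_f$. A finite subcover then gives $A,m_2>0$ with $\inf_{\mathcal K}\int_{B_A}|f|^2>2m_2$. Equivalently, argue by contradiction: take a convergent subsequence and use your $L^6$ lower bound on a fixed ball. Then require $L>2A$, so that the core coincides with $|h|$ on $B_A$, and shrink $\delta$ so that $\int_{B_A}|h|^2\ge m_2$. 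This gives $c=\sigma m_2/16$, exactly as in the paper.

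\textbf{Order of choices.} As written, your ordering is also circular. You cover $\mathcal K$ by $\delta$-balls before choosing $\eta$ and $L$, but fix $\delta$ last, so a smaller final $\delta$ would force a finer net and possibly a larger $L$. The repair is to note that $h\mapsto(\cos\theta_L)h$, $(\sin\theta_L)h$ and $h\nabla\theta_L$ are Lipschitz from $\X$ (into $\X$, resp.\ $L^2$). Their constants are independent of $L$ and of the center, because $\|\nabla\theta_L\|_3$ is dilation invariant. Hence the tail and IMS bounds hold uniformly on all of $\mathcal K$ once $L$ is large, and $\delta$ can be fixed only after $L$.
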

\begin{proof}
Continuity of $A,E,\|\cdot\|_6$ in $\X$ and compactness allow
$m,e>0$ and $M<\infty$ such that
\begin{equation}\label{eq:compact-carrier-margins}
 \inf_{\mathcal K}\int|f|^6>2m,\qquad
 \sup_{\mathcal K}E(f)<E(W)-2e,\qquad
 \sup_{\mathcal K}A(f)<G,\qquad
 \sup_{\mathcal K}\|f\|_6<M.
\end{equation}
Since $0\notin\mathcal K$, continuity of local $L^2$ norms and a
finite cover also give $A,m_2>0$ with
$\inf_{f\in\mathcal K}\int_{B_A}|f|^2>2m_2$.

Take a fixed angle profile dilated to radius $L$. Then
\[
 |\nabla\theta_L|\le C/L,\qquad
 \|f\nabla\theta_L\|_2
 \le C\|f\|_{L^6(\{L/2<|x|<L\})}.
\]
The core tends to $f$, the tail to zero in $\X\cap L^6$, and the
IMS error to zero in $L^2$. Uniform operator bounds and a finite
net make these limits uniform on $\mathcal K$.
Obtain $\eta$ from Proposition~\ref{gauss:absorption} for $m,e,M$;
then choose $L>2A$ and $\delta>0$ so that, whenever
$\operatorname{dist}_{\X}(h,\mathcal K)<\delta$, the fields
$v_h=(\cos\theta_L)h$, $g_h=(\sin\theta_L)h$ satisfy
\begin{equation}\label{eq:uniform-neighborhood-cutoffs}
 \begin{gathered}
 \int|v_h|^6\ge m,\qquad A(v_h)<G,\qquad E(v_h)\le E(W)-e,
 \qquad \|h\|_6\le M,\\
 A(g_h)\le\eta,\qquad \int|g_h|^6\le\eta,\qquad
 \int|h\nabla\theta_L|^2\le e\kappa/G,\\
 m_2\le\int|v_h|^2\le |B_L|^{2/3}M^2=:B_c^{\rm bound}.
 \end{gathered}
\end{equation}
The lower mass bound uses $\theta_L=0$ on $B_A$; all neighborhood
bounds follow from \eqref{eq:compact-carrier-margins} and finite-volume H\"older.

For $h\in H^1$, put $f=|h|$, $v=|v_h|$, $g=|g_h|$.
Lemma~\ref{gauss:phase} gives
\[
 A(v)\le A(v_h),\qquad E(v)\le E(v_h),\qquad A(g)\le A(g_h),
\]
while all modulus moments and the IMS error are unchanged.
Thus no continuity of the modulus-gradient map is needed.
The bounds \eqref{eq:uniform-neighborhood-cutoffs} permit Proposition~\ref{gauss:absorption} with fixed $L,B_c^{\rm bound}$; it gives $R_*$ and
\[
 B_\lambda(h)\ge B_\lambda(|h|)
 \ge\frac{99}{100}Q_\lambda(|h|^2)
          +\frac{\sigma m_2}{16\lambda}\qquad(\lambda\ge R_*).
\]
Take $c=\sigma m_2/16$. The mass of $h$ is used only to define the
Gaussian integrals, after these constants are fixed.
\end{proof}

Scaling returns the estimate to physical coordinates.

\begin{corollary}[Physical affine coercivity]\label{gauss:affine}
Use $c,\delta,R_*$ from Theorem~\ref{gauss:compact}. Let $r>0$,
$x_0\in\R^3$, and $h\in H^1$ satisfy
$\operatorname{dist}_{\X}(T_{r,x_0}^{-1}h,\mathcal K)<\delta$.
For every $\lambda\ge R_*r$,
\begin{equation}\label{gauss:affine-bound}
 B_\lambda(h)\ge\frac{99}{100}Q_\lambda(|h|^2)
                              +c\frac{r^2}{\lambda}.
\end{equation}

\end{corollary}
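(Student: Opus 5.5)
The plan is to reduce Corollary~\ref{gauss:affine} to Theorem~\ref{gauss:compact} by a direct change of variables. Set $\tilde h=T_{r,x_0}^{-1}h$, so $h(x)=r^{-1/2}\tilde h((x-x_0)/r)$ and $\tilde h\in H^1$ lies within $\delta$ of $\mathcal K$. Put $\tilde\lambda=\lambda/r\ge R_*$. Theorem~\ref{gauss:compact} then gives
\[
 B_{\tilde\lambda}(\tilde h)\ge\frac{99}{100}Q_{\tilde\lambda}(|\tilde h|^2)+\frac c{\tilde\lambda}.
\]
It remains to show that both $B$ and $Q$ transform by one common factor, and that this factor turns $c/\tilde\lambda$ into $cr^2/\lambda$.

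First I would record the scaling laws of the kernels in Lemma~\ref{gauss:kernel}:
\[
 \phi_\lambda(r s)=r^{-1}\phi_{\lambda/r}(s),\qquad
 \xi_\lambda(rs)=r^{-1}\xi_{\lambda/r}(s),\qquad
 H_\lambda(rz)=r^{-1}H_{\lambda/r}(z),
\]
\[
 \nabla k_\lambda(rz)=\nabla k_{\lambda/r}(z),\qquad
 \Delta L_\lambda(rz)=r^{-3}\Delta L_{\lambda/r}(z).
\]
The field quantities scale as follows, with all evaluated at $(x-x_0)/r$:
\begin{itemize}
\item $\rho=r^{-1}\tilde\rho$, $j=r^{-2}\tilde j$, $T=r^{-3}\tilde T$, $d=r^{-p/2}\tilde d$;
\item $V=r^{\alpha-p/2}\tilde V$, using the Riesz computation in Lemma~\ref{local:covariance}.
\end{itemize}
Translation by $x_0$ is harmless because every form depends only on differences $x-y$.

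Next I would substitute $x=x_0+rx'$, $y=x_0+ry'$ in each term and collect powers of $r$.
\begin{itemize}
\item \emph{Quadratic density form.} $Q_\lambda(\rho)$ has Jacobian $r^6$, kernel $r^{-3}$ and density factor $r^{-2}$, so $Q_\lambda(|h|^2)=r\,Q_{\tilde\lambda}(|\tilde h|^2)$.
\item \emph{Kinetic Hessian term.} It gives $r^6\cdot r^{-1}\cdot r^{-1}r^{-3}=r$.
\item \emph{Current--current term.} It gives $r^6\cdot r^{-1}\cdot r^{-4}=r$.
\item \emph{Pressure $E_\lambda$.} Since $L_\lambda$ scales like $H_\lambda$, this term gives $r^6\cdot r^{-1}\cdot r^{-1}\cdot r^{\alpha-p}=r^{3+\alpha-p}\cdot r=r$.
\item \emph{Chord pressure $J_\lambda$.} Here $q(x)=r^{2}\cdot r^{-1}\tilde q=r\,\tilde q$, so $C_q$ picks up $r\cdot r^{-1}\cdot r^{-1}=r^{-1}$. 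The Riesz pair gives $r^6r^{\alpha-3}r^{-p}$, for a total of $r^{3+\alpha-p}\cdot r=r$.
\end{itemize}
Hence $B_\lambda(h)=r\,B_{\tilde\lambda}(\tilde h)$. Multiplying the inequality from Theorem~\ref{gauss:compact} by $r$ gives
\[
 B_\lambda(h)\ge\frac{99}{100}Q_\lambda(|h|^2)+\frac{cr}{\tilde\lambda}
 =\frac{99}{100}Q_\lambda(|h|^2)+\frac{cr^2}{\lambda},
\]
which is exactly \eqref{gauss:affine-bound}.

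The only real obstacle is bookkeeping. The main care is to confirm that every constituent of $\mathcal P_\lambda$ carries the same power $r$; this depends on the criticality identity $3+\alpha-p=0$. All integrals involved are absolutely convergent by Lemma~\ref{gauss:kernel}, so the changes of variables are justified by Tonelli and Fubini.
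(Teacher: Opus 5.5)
Your proposal is correct and is the paper's argument: rescale to $\tilde h=T_{r,x_0}^{-1}h$ and $\tilde\lambda=\lambda/r\ge R_*$, check that $Q_\lambda$ and every constituent of $B_\lambda$ acquire the single factor $r$ (which uses $p=3+\alpha$), and multiply the bound of Theorem~\ref{gauss:compact} by $r$. There is one bookkeeping slip in the chord-pressure line: since $\nabla k_\lambda(rz)=\nabla k_{\lambda/r}(z)$, one gets $q=r^{2}\tilde q$ and $C_q=r\,\tilde C_q$, consistent with $Dq=H_\lambda*\rho$ scaling like $r$, rather than $r\tilde q$ and $r^{-1}$; with the factors you wrote the product would be $r^{-1}$, whereas the corrected ones give the claimed $r\cdot r^{3+\alpha-p}=r$.
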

\begin{proof}
Put $g=T_{r,x_0}^{-1}h$ and $R=\lambda/r$. Then
\[
 k_{rR}(rz)=rk_R(z),\qquad H_{rR}(rz)=r^{-1}H_R(z),\qquad
 \Delta^2k_{rR}(rz)=r^{-3}\Delta^2k_R(z).
\]
Changes of variables and weak-gradient covariance give
\[
 Q_\lambda(|h|^2)=rQ_R(|g|^2),\qquad
 B_\lambda(h)=rB_R(g),\qquad M_\lambda(h)=r^3M_R(g).
\]
For the trace pressure the density, potential, convolved Hessian
and volume factors multiply to
\[
 r^{-p/2}r^{\alpha-p/2}r\,r^3=r^{\alpha-p+4}=r;
\]
the two-variable change in the chord pressure gives the same factor.
Absolute convergence justifies these identities for a.e.
representatives. Since $R\ge R_*$, the preceding bound \eqref{gauss:compact-bound} gives
\[
 B_\lambda(h)=rB_R(g)
 \ge\frac{99}{100}Q_\lambda(|h|^2)+\frac{rc}{R}
 =\frac{99}{100}Q_\lambda(|h|^2)+\frac{cr^2}{\lambda}.
\]
\end{proof}

The normalized highpass lies in this coercive neighborhood.

\begin{corollary}[The highpass in the coercive neighborhood]
\label{gauss:highpass-neighborhood}
Suppose $T_{r(s),x(s)}^{-1}U(s)$ belongs to a compact carrier
$\mathcal K$ as above and $0<r(s)\le1$. For all sufficiently small
fixed $\nu>0$, the actual $h_\nu=P_{>\nu}U$ satisfies
\eqref{gauss:affine-bound} for every $s$ and every
$\lambda\ge R_*r(s)$. 
\end{corollary}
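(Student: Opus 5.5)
The plan is to reduce the statement to Corollary~\ref{gauss:affine} with $x_0=x(s)$ and $r=r(s)$. What has to be checked is that, for one fixed small $\nu$, the normalized highpass $T_{r(s),x(s)}^{-1}h_\nu(s)$ lies within $\delta$ of the compact carrier $\mathcal K$ uniformly in $s\ge0$, and that $h_\nu(s)\in H^1$. The second point is already contained in Proposition~\ref{ann:uniform-highpass}: for fixed $\nu>0$ we have $P_{>\nu}U\in C([0,\infty);H^1)$, so $h_\nu(s)$ is an admissible field for the Gaussian forms of Lemma~\ref{gauss:kernel}. The carrier hypotheses of Theorem~\ref{gauss:compact} should also hold: $0\notin\mathcal K$ because every element has energy $E_c>0$, and $A<G$, $E<E(W)$ follow from trapping together with Proposition~\ref{crit:element}. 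These hypotheses fix $c,\delta,R_*$ before $\nu$ is chosen.

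For the distance, I will use the affine covariance \eqref{eq:cutoff-affine-covariance}:
\[
T_{r(s),x(s)}^{-1}h_\nu(s)=P_{>\nu r(s)}f_s,\qquad f_s=T_{r(s),x(s)}^{-1}U(s)\in\mathcal K .
\]
Because $T^{-1}$ is an isometry of the gradient norm, this gives
\[
\operatorname{dist}_{\X}\bigl(T_{r(s),x(s)}^{-1}h_\nu(s),\mathcal K\bigr)\le\|\nabla(f_s-P_{>\nu r(s)}f_s)\|_2=\|\nabla P_{\le\nu r(s)}f_s\|_2 .
\]
Since $0<r(s)\le1$, the cutoff $\kappa_s=\nu r(s)$ lies in $(0,\nu]$. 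The right side is therefore bounded by the modulus $\omega_1(\nu)=\sup_{f\in\mathcal K,\,0<\kappa\le\nu}\|\nabla P_{\le\kappa}f\|_2$ from the proof of Proposition~\ref{ann:uniform-highpass}. That modulus tends to zero by dominated convergence on a finite net of $\mathcal K$, using the uniform gradient bound $\|\nabla P_{\le\kappa}g\|_2\le\|\nabla g\|_2$ of Lemma~\ref{ann:cutoff-realization}. Equivalently, $\|\nabla P_{\le\nu}U(s)\|_2=\eta_\nu$ is exactly the transported quantity. Fixing $\nu_0$ with $\omega_1(\nu_0)<\delta$ then makes every $\nu\le\nu_0$ work for all $s$ at once.

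With this, Corollary~\ref{gauss:affine} applies at each $s$ with $r=r(s)$, and it yields \eqref{gauss:affine-bound} for every $\lambda\ge R_*r(s)$. The only real obstacle is making the smallness uniform in $s$ while $r(s)$ may tend to zero. This is handled by two facts: the scale $\nu r(s)$ only decreases, and $\omega_1$ is a supremum over all cutoffs below $\nu$, so no lower bound on $r(s)$ and no measurability of the normalizing parameters is needed. No mass of $h_\nu$ enters the argument, since the constants $c,\delta,R_*$ are fixed first.
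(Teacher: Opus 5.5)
Your proposal is correct and matches the paper's proof step for step. The paper also uses the affine covariance $T_{r,x}^{-1}P_{>\nu}T_{r,x}=P_{>\nu r}$ to bound the distance by $\|\nabla P_{\le\nu r(s)}f_s\|_2\le\omega_1(\nu)<\delta$, takes $h_\nu(s)\in H^1$ from Proposition~\ref{ann:uniform-highpass}, and then applies Corollary~\ref{gauss:affine} pointwise in $s$ with $c,\delta,R_*$ fixed beforehand. One small slip: $\|\nabla P_{\le\nu}U(s)\|_2$ is bounded by $\eta_\nu$, not equal to it, since $\eta_\nu$ is a supremum over $s$; this does not affect the argument.
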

\begin{proof}
By Proposition~\ref{ann:uniform-highpass},
\[
 T_{r,x}^{-1}P_{>\nu}T_{r,x}=P_{>\nu r},\qquad
 \omega_1(\nu):=
 \sup_{f\in\mathcal K,\,0<L\le\nu}\|\nabla P_{\le L}f\|_2\to0.
\]
Writing $f_s=T_{r(s),x(s)}^{-1}U(s)\in\mathcal K$ and using
$0<r(s)\le1$, we obtain
\[
 \operatorname{dist}_{\X}(T_{r(s),x(s)}^{-1}h_\nu(s),\mathcal K)
 \le\|\nabla P_{\le\nu r(s)}f_s\|_2\le\omega_1(\nu)<\delta
\]
for all sufficiently small $\nu$. The same proposition gives the
actual $H^1$ highpass. Corollary~\ref{gauss:affine} applies with its
fixed $c,\delta,R_*$, pointwise in $s$.
\end{proof}

The coercivity constants are thus independent of the highpass mass.

\section{Gaussian high-frequency analysis and rigidity}
\label{sec:highpass-source}
The positive Gaussian form will control one quartic time integral.
To close that bound, the projected source and the motion of the selected
radius must be compared with the same integral on common finite time
intervals. Only after these errors are absorbed can the time endpoint be
sent to infinity and the original mass be recovered.

\subsection{The complete highpass source estimate}
Fix the trajectory, orientation and normalization of
Theorem~\ref{crit:bounded-halfline}. Henceforth
$U(s)=u(\sigma s)$, $s\geq0$, where $\sigma\in\{-1,1\}$ is fixed.
The symbols $r(s),x(s)$ denote the normalization after composition with
this parameterization; no measurability of these two choices is needed.
Use the one cutoff of Section~\ref{sec:annular}, and set
\[
 B=\sup_{s\geq0}\|\nabla U(s)\|_2,\quad
 l_\nu=P_{\leq\nu}U,\quad h_\nu=P_{>\nu}U,\quad
 m_\nu=\sup_s\|h_\nu(s)\|_2,\quad \delta_\nu=\nu m_\nu.
\]
By Proposition~\ref{ann:uniform-highpass}, $m_\nu<\infty$,
$\delta_\nu\to0$, and
$\eta_\nu=\sup_s\|\nabla l_\nu(s)\|_2\to0$ as $\nu\downarrow0$.
Theorem~\ref{ann:orbit-l4} gives
$U\in C\cap L^\infty([0,\infty);L^4)$ and
$F(U)\in C\cap L^\infty([0,\infty);L^1)$, before any time-integrability
claim. Write
\[
 f(s)=\|F(U(s))\|_1,\qquad z(s)=\|U(s)\|_4^2.
\]
Estimate \eqref{eq:hartree-l1-interpolation} gives $f\leq C_{\alpha,B}z$.

\begin{lemma}[The forced highpass equation]\label{dyn:forced}
For fixed $\nu>0$ put
\[
 e_\nu=P_{>\nu}F(U)-F(h_\nu)
       =c_\nu+o_\nu,\quad
 c_\nu=F(U)-F(h_\nu),\quad o_\nu=-P_{\leq\nu}F(U).
\]
Then $h_\nu\in C([0,\infty);H^1)$,
$\nabla h_\nu\in L^2_{\mathrm{loc}}([0,\infty);L^6)$, and
\begin{equation}\label{eq:forced-highpass}
 i\partial_s h_\nu+\sigma\Delta h_\nu+\sigma F(h_\nu)
       =-\sigma e_\nu
\end{equation}
holds in spacetime distributions. Moreover $e_\nu$ is continuous and
bounded into $L^1\cap L^{6/5}$, and
\[
 \epsilon_\nu:=\sup_s\|e_\nu(s)\|_{6/5}
 \leq C_\alpha B^{2p-2}\eta_\nu
       +C_\varphi\nu^{1/2}\sup_s f(s)\longrightarrow0.
\]
\end{lemma}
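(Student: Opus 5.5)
This lemma is essentially a restatement of Corollary~\ref{ann:source-interface}, with the error split explicitly and the extra local endpoint regularity of $\nabla h_\nu$ recorded. The plan is to read off each assertion from results already established and to check only the new endpoint claim and the splitting.

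\textbf{Regularity.} Continuity $h_\nu\in C([0,\infty);H^1)$ is the last assertion of Proposition~\ref{ann:uniform-highpass}. The highpass $P_{>\nu}$ is a bounded linear map $\X\to H^1$, and it commutes with weak derivatives, so on each compact interval $J\subset[0,\infty)$ it passes the graph regularity of $U$ to $h_\nu$. Theorem~\ref{local:flow}, through \eqref{local:fullz} applied to the physical trajectory $u$ on $\sigma J$, gives $\nabla U\in L^2(J;L^6)$. For the same bound on $\nabla h_\nu=\nabla U-\nabla l_\nu$, I will show that $P_{\le\nu}$, as convolution with the Schwartz kernel $k_\nu$ of $L^1$ norm $K_0$, acts boundedly on $L^6$ and commutes with gradients. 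This yields $\|\nabla h_\nu\|_{L^2(J;L^6)}\le(1+K_0)\|\nabla U\|_{L^2(J;L^6)}$.

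\textbf{Equation and splitting.} The signed equation $iU_s+\sigma\Delta U+\sigma F(U)=0$ on the half-line follows from Theorem~\ref{crit:bounded-halfline}. I will apply the Fourier multiplier $P_{>\nu}$ to it in spacetime distributions, which is legitimate because the tests remain Schwartz. Then I will write $P_{>\nu}F(U)=F(h_\nu)+e_\nu$, which gives \eqref{eq:forced-highpass} exactly as in \eqref{ann:forced-equation}. The decomposition $e_\nu=c_\nu+o_\nu$ is the identity $P_{>\nu}=1-P_{\le\nu}$, applied to the actual functions $F(U)\in L^1\cap L^{6/5}$; Theorem~\ref{ann:orbit-l4} supplies this membership.

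\textbf{Source bounds.} Continuity and boundedness of $e_\nu$ into $L^1\cap L^{6/5}$ are taken from Corollary~\ref{ann:source-interface}. The two pieces are estimated separately.
\begin{itemize}
\item For $c_\nu$, I use the $L^{6/5}$ Lipschitz bound of Lemma~\ref{ann:baseline} together with Sobolev: $\|c_\nu\|_{6/5}\le CB^{2p-2}\|l_\nu\|_6\le CB^{2p-2}\eta_\nu$.
\item For $o_\nu$, I use Young's inequality with $\|k_\nu\|_{6/5}=C\nu^{1/2}$ and the uniform $L^1$ bound on $F(U)$: $\|o_\nu\|_{6/5}\le C\nu^{1/2}\sup_sf(s)$.
\end{itemize}
Both bounds tend to zero as $\nu\downarrow0$, the first by Proposition~\ref{ann:uniform-highpass} and the second because $\sup_s f<\infty$ by Theorem~\ref{ann:orbit-l4}.

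The only step needing care is the local $L^2_tL^6_x$ endpoint regularity of $\nabla h_\nu$. It relies on $P_{\le\nu}$ being $L^6$-bounded, so no $L^2$ information about $U$ is used.
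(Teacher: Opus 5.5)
Your proposal is correct and follows essentially the same route as the paper's proof. You use Proposition~\ref{ann:uniform-highpass} for $h_\nu\in C_tH^1$, subtract the lowpass gradient from \eqref{local:fullz}, apply the fixed multiplier to the signed equation, bound $c_\nu$ by the $L^{6/5}$ Lipschitz estimate and $o_\nu$ by Young with $\|K_\nu\|_{6/5}=C\nu^{1/2}$, and cite Corollary~\ref{ann:source-interface} for $L^1$ continuity. There are two minor differences. The paper bounds $\nabla P_{\le\nu}U$ in $L^6$ directly from $\|\nabla U\|_2$ via Young with $\|K_\nu\|_{3/2}=\nu\|K\|_{3/2}$, giving a time-uniform bound, where you use $L^6$-boundedness of the lowpass on $\nabla U\in L^2_{\rm loc}L^6$. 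The paper also re-derives the signed equation by testing the finite mild identity against Schwartz-in-space tests rather than citing Theorem~\ref{crit:bounded-halfline}; either choice works.
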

\begin{proof}
Proposition~\ref{ann:uniform-highpass} gives $h_\nu\in CH^1$.
Young's inequality gives
\begin{equation}\label{eq:lowpass-endpoint-gradient}
 \|\nabla P_{\le\nu}U\|_6
 \le\|K_\nu\|_{3/2}\|\nabla U\|_2
 =\nu\|K\|_{3/2}\|\nabla U\|_2.
\end{equation}
Subtract the continuous gradient controlled by \eqref{eq:lowpass-endpoint-gradient} from \eqref{local:fullz}, reflecting
time when $\sigma=-1$, to obtain $\nabla h_\nu\in L^2_{\rm loc}L^6$.

Test the finite mild identity against Schwartz spatial and compact
smooth temporal tests. On each compact time interval,
$F(U)\in CL^{6/5}$ and the propagated tests belong to $C^1L^6$;
Fubini and integration by parts give
\[
 iU_s+\sigma\Delta U+\sigma F(U)=0.
\]
Commutation with the fixed multiplier gives \eqref{eq:forced-highpass}.
Power differences, strict HLS and Young yield
\[
 \begin{aligned}
 \|F(v)-F(w)\|_{6/5}
 &\le C_\alpha\|v-w\|_6(\|v\|_6+\|w\|_6)^{2p-2},\\
 \|c_\nu(s)\|_{6/5}&\le C_\alpha B^{2p-2}\eta_\nu,\\
 \|o_\nu(s)\|_{6/5}&\le\nu^{1/2}\|K\|_{6/5}f(s).
 \end{aligned}
\]
Thus $\epsilon_\nu\to0$. Corollary~\ref{ann:source-interface} and
the multiplier bounds give bounded $L^1$ continuity; no $L^1$
smallness is used.
\end{proof}

Use the complete density and current of $h=h_\nu$ in every weight:
\[
 \rho=|h|^2,\quad j=\im(\overline h\nabla h),\quad
 q=\nabla k_\lambda*\rho,\quad
 q_m(y)=\int\nabla k_\lambda(x-y)\cdot j(x)\dd x.
\]
Here $k_\lambda$ is the kernel defined in Lemma~\ref{gauss:kernel}.
At times with $h,\nabla h\in L^6$, define the derivative-free weak source
\begin{equation}\label{eq:source-flat}
 \mathcal S_\lambda^{\mathrm{flat}}(Z,h)
 =-4\re\int q\cdot Z\nabla\overline h
  -2\int\operatorname{div}q\,\re(Z\overline h)
  -4\int q_m\im(Z\overline h).
\end{equation}
It is a genuine integral for $Z\in L^{6/5}$. The last term comes from
variation of the density in $q$ and is essential.
For smooth fields it is obtained by integrating by parts in
\[
 -2\int q\cdot\re(Z\nabla\overline h-h\nabla\overline Z)
       -4\int q_m\im(Z\overline h).
\]
We next construct a continuous extension of this source for the actual
$e_\nu$, without requiring $\nabla h\in L^6$ at every time.

\begin{lemma}[Radius-independent weight bounds]\label{dyn:weights}
For $h\in H^1$, $m=\|h\|_2$, $b=\|\nabla h\|_2$, uniformly in
$\lambda>0$,
\begin{equation}\label{eq:full-density-weight-bounds}
 \|q\|_\infty\leq C m^2,\quad \|q_m\|_\infty\leq Cmb,
 \quad \|Dq\|_\infty+\|\operatorname{div}q\|_\infty\leq Cmb.
\end{equation}
If $a=\|h\|_4^4$, their genuine derivatives additionally satisfy
\begin{equation}\label{eq:full-density-weight-derivatives}
 \|D^2q\|_6\leq Ca^{1/2},\qquad
 \|\nabla q_m\|_{12}\leq Cb a^{1/4}.
\end{equation}

\end{lemma}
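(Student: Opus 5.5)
The plan is to reduce every bound to pointwise kernel estimates from Lemma~\ref{gauss:kernel}, namely
\[
 |\nabla k_\lambda(z)|=|z|\xi_\lambda(|z|)\le2,\qquad
 0\le H_\lambda\le\lambda^{-1}I,\qquad
 |D^3k_\lambda(z)|\le C|z|^{-1}\wedge C\lambda^{-1}\lambda^{-1}|z|\ldots,
\]
and then apply Young, H\"older or HLS with constants that carry no power of $\lambda$.

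The $L^\infty$ bounds need only the first two facts.
\begin{itemize}
\item The sup-norm $|\nabla k_\lambda|\le2$ gives $\|q\|_\infty\le2\|\rho\|_1=2m^2$.
\item Likewise $\|q_m\|_\infty\le2\|j\|_1\le2mb$.
\item For the derivative, write $Dq=(\nabla k_\lambda)*\nabla\rho$, moving the derivative onto the density. Then $\|Dq\|_\infty\le2\|\nabla\rho\|_1\le4mb$, since $|\nabla\rho|\le2|h||\nabla h|$. The divergence is a trace of $Dq$, so it obeys the same bound.
\end{itemize}
Moving the derivative onto the density, rather than using $\|H_\lambda\|_\infty\le\lambda^{-1}$, is what makes these bounds radius-independent. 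The convolution identities are justified by $H^1$ approximation, using Lemma~\ref{std:density} and the bounded kernels.

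For \eqref{eq:full-density-weight-derivatives}, the plan is the following.
\begin{itemize}
\item Write $D^2q=H_\lambda*\nabla\rho$ componentwise, so that one derivative falls on the kernel and one on $\rho$.
\item From \eqref{eq:gaussian-kernel-pointwise-bounds}, $|H_\lambda(z)|\le\xi_\lambda(|z|)\le 3/|z|\le C I_2(z)$, uniformly in $\lambda$.
\item Since $|\nabla\rho|\le2|h||\nabla h|$ and the exponents are strict, $I_2:L^{4/3}\to L^{12}$ is the wrong pair here. Instead put $\nabla\rho\in L^{q}$ with $h\in L^4$, $\nabla h\in L^2$, which gives $1/q=3/4$, and then use the strict order-2 HLS $L^{4/3}\to L^{12}$.
\end{itemize}
This produces $\|D^2q\|_{12}\le Ca^{1/4}b$; that is the shape of the second bound, but the stated bound is the first one with $a^{1/2}$ in $L^6$. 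So for $D^2q$ I would instead shift both derivatives onto the density. This gives $D^2q=(\nabla k_\lambda)*D^2\rho$, but that involves second derivatives of $h$, so it is rejected. The alternative is to keep $H_\lambda*\nabla\rho$ and integrate by parts once more to get $D^2q=\nabla H_\lambda*\rho$, with
\[
 |\nabla H_\lambda(z)|\le C\lambda^{-1}\xi_\lambda\le C|z|^{-2}\le CI_1(z).
\]
This bound is available from the proof of Lemma~\ref{gauss:core}, using $\lambda^{-1}\xi_\lambda\le\xi_\lambda^2\le 9|z|^{-2}$. Then $I_1:L^2\to L^6$ applied to $\rho\in L^2$, with $\|\rho\|_2=a^{1/2}$, yields $\|D^2q\|_6\le Ca^{1/2}$. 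This matches exactly the strict HLS instance announced in Section~\ref{std:section}.

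For $\nabla q_m$, write $\nabla q_m=-H_\lambda^{T}*j$ up to sign, where $|H_\lambda|\le CI_2$, and note $|j|\le|h||\nabla h|\in L^{4/3}$ with norm at most $a^{1/4}b$. The strict order-2 HLS $L^{4/3}\to L^{12}$ then gives $\|\nabla q_m\|_{12}\le Cba^{1/4}$. This is again the instance recorded earlier.

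The main obstacle is the pointwise kernel bound $|\nabla H_\lambda(z)|\le C|z|^{-2}$ uniformly in $\lambda$, with no atom at the origin. The bound $32\lambda^{-1}\xi_\lambda$ from Lemma~\ref{gauss:core} handles it: for $r\le\lambda$, $\lambda^{-1}\le 2/ (r+\lambda)\cdot$const, and for $r\ge\lambda$ one uses $\xi_\lambda\le3/r$. Smoothness of $k_\lambda$ rules out distributional boundary terms. Finally, a density argument in $H^1\cap L^4$ passes all identities from Schwartz fields to general $h$.
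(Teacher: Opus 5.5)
\paragraph{Verdict.} The overall plan works, but the key kernel estimate for $D^2q$ is justified incorrectly. That estimate is the step you flagged as the main obstacle.

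\paragraph{What is correct.} Your bounds for $q$, $q_m$ and $\nabla q_m$ are correct. The last is the paper's argument: $\|H_\lambda(z)\|=\xi_\lambda(|z|)\le 3/|z|$, followed by $I_2:L^{4/3}\to L^{12}$.

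For $Dq$ you move the derivative onto the density, writing $Dq=\nabla k_\lambda*\nabla\rho$ with $\|\nabla\rho\|_1\le 2mb$. This is valid and slightly more elementary than the paper's route. The paper keeps $D^2k_\lambda*\rho$, uses $\|D^2k_\lambda(z)\|\le 2/|z|$, and then applies translated Hardy, $\sup_x\int|h(y)|^2|x-y|^{-1}\,dy\le 2mb$.

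\paragraph{The gap.} The bound $|\nabla H_\lambda(z)|\le C|z|^{-2}$, uniformly in $\lambda$, does not follow from the estimate $|\partial_a(H_\lambda)_{ij}|\le 32\lambda^{-1}\xi_\lambda$ in the proof of Lemma~\ref{gauss:core}.
\begin{itemize}
\item Your inequality $\lambda^{-1}\xi_\lambda\le\xi_\lambda^2$ is reversed, because $\xi_\lambda\le\lambda^{-1}$.
\item In the regime $|z|\ge\lambda$, the bound $\xi_\lambda\le 3/|z|$ only gives $\lambda^{-1}\xi_\lambda\le 3/(\lambda|z|)$.
\item In fact $\lambda^{-1}\xi_\lambda(r)\sim\sqrt{\pi/2}\,/(\lambda r)$ as $r/\lambda\to\infty$, which is far larger than $r^{-2}$.
\end{itemize}
So this majorant is not $O(I_1)$. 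Fed into HLS it yields at best a $\lambda$-dependent constant, which destroys the radius-independence the lemma asserts. The $\lambda^{-1}\xi_\lambda$ bound was built for the slowly-varying comparison in Lemma~\ref{gauss:core}, where it is multiplied by $L$ and compared with $\xi_\lambda(|y|)$. It is not a Riesz-type majorant.

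\paragraph{The repair.} Differentiate $H_\lambda=\phi_\lambda I+w_\lambda P_z$ directly. Use $\xi_\lambda'=(\phi_\lambda-\xi_\lambda)/r$, $0\le\xi_\lambda-\phi_\lambda\le\xi_\lambda$ and $\|DP_z\|\le 2/|z|$. This gives $\|D^3k_\lambda(z)\|\le C\bigl(|\phi_\lambda'(r)|+\xi_\lambda(r)/r\bigr)$ with $r=|z|$. Both terms are $O(r^{-2})$ uniformly in $\lambda$:
\begin{itemize}
\item $\xi_\lambda(r)/r\le 2r^{-2}$, since $r\xi_\lambda\le 2$;
\item $r^2|\phi_\lambda'(r)|=(r/\lambda)^3e^{-r^2/(2\lambda^2)}\le C$.
\end{itemize}
This is the paper's bound $\|D^3k_\lambda\|\le|\phi_\lambda'|+4\xi_\lambda/R\le CR^{-2}$. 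With it, your step $I_1:L^2\to L^6$ gives $\|D^2q\|_6\le C\|\rho\|_2=Ca^{1/2}$, as in the paper.
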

\begin{proof}
For $R=|x|$, Lemma~\ref{gauss:kernel} gives
\begin{equation}\label{eq:radius-uniform-kernel-derivatives}
 |\nabla k_\lambda|\le2,\qquad
 \|D^2k_\lambda(x)\|\le2/R,\qquad
 \|D^3k_\lambda(x)\|
 \le|\phi_\lambda'(R)|+4\xi_\lambda(R)/R\le CR^{-2}.
\end{equation}
Here $\xi'=(\phi-\xi)/R$, $\phi'=-R\phi/\lambda^2$ and
$\|D(x/R)\|\le R^{-1}$; fixed-radius smoothness covers zero.
Since $\|\rho\|_1=m^2$, $\|j\|_1\le mb$,
\[
 \|q\|_\infty\le2m^2,\qquad \|q_m\|_\infty\le2mb,
\]
and translated Hardy gives
\[
 \sup_x\int\frac{|h(y)|^2}{|x-y|}\dd y
 \le m\sup_x\|h/|x-\cdot|\|_2\le2mb.
\]
Combined with \eqref{eq:radius-uniform-kernel-derivatives}, this proves the $Dq$ and $\operatorname{div}q$ bounds in \eqref{eq:full-density-weight-bounds}.
Differentiate the $L^1$ convolutions using their bounded
fixed-radius kernels. HLS then gives
\[
 \begin{aligned}
 \|D^2q\|_6&\le C\|I_1*\rho\|_6
                  \le C\|\rho\|_2=Ca^{1/2},\\
 \|\nabla q_m\|_{12}&\le C\|I_2*|j|\|_{12}
          \le C\|j\|_{4/3}
          \le C\|h\|_4\|\nabla h\|_2=Cba^{1/4}.
 \end{aligned}
\]
\end{proof}

For the input difference $F(U)-F(h)$, differentiating the nonlinear
error is unnecessary. The Hartree pressure identity transfers the
derivative to the smooth low-frequency field while retaining both
nonlocal pressures and the variation of the density weight.

\begin{lemma}[Mixed HLS and the full input source]\label{dyn:mixed-hls}
For $V\geq0$ in $L^4\cap L^6$,
\[
 H(V):=\int(I_\alpha*V^p)V^{p-1}
       \leq C_\alpha\|V\|_4^2\|V\|_6^{2p-3}.
\]
For $v,w\in L^4\cap L^6$, with $V=|v|+|w|$,
\[
 \|F(v)-F(w)\|_1\leq C_\alpha\bigl[
 \|v-w\|_6\|V\|_4^2\|V\|_6^{2p-4}
 +\|v-w\|_4\|V\|_4\|V\|_6^{2p-3}\bigr].
\]
For the actual $c=F(U)-F(h)$, $l=U-h$, the extension of
\eqref{eq:source-flat} is
\begin{equation}\label{eq:input-normalform}
 \begin{split}
 \mathcal S_\lambda(c,h)={}&-[\mathcal P_q(U)-\mathcal P_q(h)]
       +2\int\operatorname{div}q\,\re(F(U)\overline l)\\
 &+4\re\int F(U)[q\cdot\nabla\overline l-iq_m\overline l],
 \end{split}
\end{equation}
where the weights still come from the full $h$, and
\[
 \mathcal P_q(v)=\frac{2(p-2)}p\int\operatorname{div}q
                       (I_\alpha*|v|^p)|v|^p
  +\frac{2(3-\alpha)}p\iint I_\alpha(x-y)C_q(x,y)|v(x)|^p|v(y)|^p,
\]
$C_q(x,y)=(q(x)-q(y))\cdot(x-y)/|x-y|^2$ off the diagonal, and zero
on it. This extension is continuous in the actual norms and agrees with
\eqref{eq:source-flat} almost everywhere in time. Pointwise,
\begin{equation}\label{eq:input-source-estimate}
 |\mathcal S_\lambda(c,h)|
 \leq C_{\alpha,B}\bigl[mb\|l\|_\infty+m^2\|\nabla l\|_\infty\bigr]z.
\end{equation}
\end{lemma}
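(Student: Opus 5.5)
The plan has three parts, taken in order: the mixed HLS bound, the $L^1$ difference bound, and the normal form \eqref{eq:input-normalform} with its pointwise estimate.

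\emph{Mixed HLS bound.} Use the allocation of \eqref{eq:hartree-l1-interpolation}. Put $V^p$ in $L^{s_0}$, $s_0=6/p$; then Theorem~\ref{std:hls} places $I_\alpha*V^p$ in $L^{q_0}$, and the conjugate exponent of $q_0$ is $s_0$. It remains to bound $V^{p-1}$ in $L^{s_0}$, which is $\|V\|_{q_B}^{p-1}$ with $q_B=6(p-1)/p\in(4,6)$. Interpolating $q_B$ between $4$ and $6$ gives $\|V\|_{q_B}^{p-1}\le\|V\|_4^2\|V\|_6^{p-3}$, and hence $H(V)\le C\|V\|_4^2\|V\|_6^{2p-3}$.

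\emph{$L^1$ difference bound.} Split $F(v)-F(w)$ by the two-slot identity \eqref{eq:hartree-two-factor-subtraction} and use the power differences \eqref{ann:power-differences}.
\begin{itemize}
\item For the local slot, estimate $\mathcal A(v)$ in $L^{q_0}$ by $\|V\|_6^p$. Bound $|v-w|V^{p-2}$ in $L^{s_0}$ by H\"older, using $1/s_0=1/4+1/4+(p-3)/6$ for the $\|v-w\|_4\|V\|_4$ factor; then redistribute the remaining powers through Sobolev-free interpolation.
\item For the potential slot, bound $|v-w|V^{p-1}$ in $L^{s_0}$ by $\|v-w\|_6\|V\|_{q'}^{p-1}$, interpolating the exponent between $4$ and $6$. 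Pair the result against $V^{p-1}$ in $L^{s_0}$.
\end{itemize}
Matching exponents gives the two stated terms. The counting is routine.

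\emph{Normal form.} First take $h,U$ smooth at a fixed time, so that the flat source \eqref{eq:source-flat} with $Z=c=F(U)-F(h)$ is meaningful. Write $F(U)\nabla\overline U-F(h)\nabla\overline h=F(U)\nabla\overline l+[F(U)\nabla\overline U-F(h)\nabla\overline h]$ with $l=U-h$, and do the same for $\re(F(\cdot)\overline{\cdot})$ and $\im(F(\cdot)\overline{\cdot})$.

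The key is the Hartree pressure identity. For any $v$,
\[
-4\re\int q\cdot F(v)\nabla\overline v-2\int\operatorname{div}q\,\re(F(v)\overline v)=-\mathcal P_q(v),
\]
since $\re(F(v)\nabla\overline v)=\frac1{2p}(I_\alpha*|v|^p)\nabla|v|^p$. Integrating by parts and symmetrizing in the Riesz variables gives the trace term and the chord term $C_q$ with the coefficients of \eqref{eq:hartree-gaussian-pressure}. Also $\im(F(v)\overline v)=0$.

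Apply the identity to $v=U$ and to $v=h$ and subtract. The pure-$U$ and pure-$h$ terms collapse to $-[\mathcal P_q(U)-\mathcal P_q(h)]$. The leftover cross terms, in which $F(U)$ multiplies $\overline l$ or $\nabla\overline l$, are exactly the last two lines of \eqref{eq:input-normalform}, with the signs to be checked. The right side of \eqref{eq:input-normalform} contains no derivative of $h$ or of the error.

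\emph{Continuity and agreement.} The right side is continuous under the actual norms: $U,h\in L^4\cap L^6$; $l$ is low frequency, so $l,\nabla l\in L^\infty$; and $F(U)\in L^1$. It is also continuous in the weights, bounded by Lemma~\ref{dyn:weights}. Approximating by smooth fields therefore shows that the extension agrees with \eqref{eq:source-flat} at all times where the latter is defined, in particular a.e.\ in time by Lemma~\ref{dyn:forced}. The one delicate point is justifying the pressure integration by parts for rough data. I would do this on truncated densities with the bounded chord $|C_q|\le\|Dq\|_\infty$, as in Lemma~\ref{gauss:IMS}.

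\emph{Pointwise estimate \eqref{eq:input-source-estimate}.} The pressure difference is bounded by $\|Dq\|_\infty$ times $\int(I_\alpha*|U|^p)|U|^p-\int(I_\alpha*|h|^p)|h|^p$. Write $|U|^p-|h|^p\le C|l|V^{p-1}$ with $\|l\|_\infty$ extracted, which yields $mb\|l\|_\infty H(V)\le C_{\alpha,B}mb\|l\|_\infty z$ by the mixed bound. The remaining terms use $\|\operatorname{div}q\|_\infty,\|q_m\|_\infty\le Cmb$ against $\|l\|_\infty$, and $\|q\|_\infty\le Cm^2$ against $\|\nabla l\|_\infty$. In every case the final factor is $\|F(U)\|_1=f\le C_{\alpha,B}z$.

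I expect the main obstacle to be the rigorous pressure identity and the bookkeeping of signs in the subtraction, not the estimates.
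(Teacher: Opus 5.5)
Your route is the paper's. You use the same H\"older allocations in $L^{6/p}$ for $H(V)$ and for the two slots of $F(v)-F(w)$. You derive \eqref{eq:input-normalform} the same way, applying the pressure identity to $U$ and to $h$ with the weights of $h$ frozen and collecting the cross terms in $F(U)\overline l$ and $F(U)\nabla\overline l$. The pointwise bound also goes through $\|F(U)\|_1\le H(V)$ and Lemma~\ref{dyn:weights}, as in the paper.

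Two points in the normal-form step are wrong as written. First, $\re(F(v)\nabla\overline v)=\frac1p(I_\alpha*|v|^p)\nabla|v|^p$, not $\frac1{2p}(\cdots)$, because $\nabla|v|^p=\frac p2|v|^{p-2}\nabla|v|^2$. With your factor the integration by parts gives the coefficients $\frac2p-2$ and $-\frac{3-\alpha}p$, which do not assemble into $-\mathcal P_q(v)$. The correct factor gives $\frac4p-2=-\frac{2(p-2)}p$ and $-\frac{2(3-\alpha)}p$. Your displayed splitting is also garbled; it should read $c\nabla\overline h=[F(U)\nabla\overline U-F(h)\nabla\overline h]-F(U)\nabla\overline l$.

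Second, you have put the delicate point in the wrong place. The passage from smooth to rough data is handled by the continuity you already describe. The real difficulty is the symmetrization itself. For $0<\alpha\le1$, $|\nabla I_\alpha(z)|\sim|z|^{\alpha-4}$ is not locally integrable in $\R^3$. Hence $\iint g(x)q(x)\cdot\nabla I_\alpha(x-y)g(y)\dd x\dd y$ is not absolutely convergent, even for compactly supported smooth $g=|v|^p$, and the exchange $x\leftrightarrow y$ cannot be justified by Fubini. Truncating the densities does not touch this diagonal singularity.

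The paper regularizes the kernel, $I_{\alpha,\varepsilon}(x)=c_\alpha(|x|^2+\varepsilon^2)^{(\alpha-3)/2}$, symmetrizes at fixed $\varepsilon$, and only then lets $\varepsilon\downarrow0$. The limit is by dominated convergence with the majorant $|(q(x)-q(y))\cdot\nabla I_{\alpha,\varepsilon}(x-y)|\le(3-\alpha)\|Dq\|_\infty I_\alpha(x-y)$, which is HLS-integrable against $g(x)g(y)$. Strict HLS passes the undifferentiated potentials. Excising $|x-y|<\varepsilon$ would work equally well.

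Finally, the pointwise estimate needs two adjustments. You must bound the $L^1$ norms of the pointwise density differences, expanding both slots to get $2p\|l\|_\infty H(V)$, rather than the difference of the two integrals, because $\operatorname{div}q$ and $C_q$ vary in space. You also need $\|h\|_4\le C\|U\|_4$ to convert $H(V)$ into $C_{\alpha,B}z$.
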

\begin{proof}
Put $q_\alpha=6/p$. Strict HLS and
$(3-\alpha)/6+2/4+\alpha/6=1$ give
\[
 H(V)\le C_\alpha\|V^p\|_{q_\alpha}\|V^{p-1}\|_{q_\alpha}
 \le C_\alpha\|V\|_6^p\|V\|_4^2\|V\|_6^\alpha.
\]
Expand the density and local-factor differences, using
\begin{equation}\label{eq:mixed-source-factor-differences}
 \begin{aligned}
 \|V^{p-1}(v-w)\|_{q_\alpha}
 &\le\|v-w\|_6\|V\|_6^{p-1},\\
 \|V^{p-2}(v-w)\|_{q_\alpha}
 &\le\|v-w\|_4\|V\|_4\|V\|_6^\alpha.
 \end{aligned}
\end{equation}
The power-difference inequalities and HLS applied to \eqref{eq:mixed-source-factor-differences} give the two terms in the asserted difference bound.

For compact smooth fields,
\[
 \im(c\overline h)=-\im(F(U)\overline l),\qquad
 \re(F(v)\nabla\overline v)=\frac1p(I_\alpha*|v|^p)\nabla|v|^p.
\]
Regularize $I_\alpha$ by
$I_{\alpha,\varepsilon}(x)=c_\alpha(|x|^2+\varepsilon^2)^{(\alpha-3)/2}$.
For $g=|v|^p$, symmetrization gives
\begin{equation}\label{eq:regularized-pressure-symmetrization}
 \int qg\cdot\nabla(I_{\alpha,\varepsilon}*g)
 =\frac12\iint(q(x)-q(y))\cdot\nabla I_{\alpha,\varepsilon}(x-y)
                         g(x)g(y).
\end{equation}
The integrable HLS majorant is
\begin{equation}\label{eq:pressure-riesz-majorant}
 (3-\alpha)\|Dq\|_\infty I_\alpha(x-y)g(x)g(y).
\end{equation}
Dominated convergence in \eqref{eq:regularized-pressure-symmetrization}, using \eqref{eq:pressure-riesz-majorant}, gives the chord coefficient $-(3-\alpha)/2$;
strict HLS gives convergence of the undifferentiated potentials.
Together with $4/p-2=-2(p-2)/p$, these yield
\eqref{eq:input-normalform}. The symmetrization precedes the limit.

For $U\in\X\cap L^4$, take common compact smooth approximants
in $\X\cap L^6\cap L^4$ and apply the same multipliers. Then
\begin{equation}\label{eq:source-normal-form-approximation}
 \begin{gathered}
 h_n\to h\text{ in }H^1\cap L^4,\qquad
 l_n\to l\text{ in }W^{1,\infty},\qquad
 F(U_n)\to F(U)\text{ in }L^1,\\
 (q_n,Dq_n,q_{m,n})\to(q,Dq,q_m)\text{ uniformly},\qquad
 |C_{q_n}-C_q|\le\|Dq_n-Dq\|_\infty.
 \end{gathered}
\end{equation}
HLS and \eqref{eq:source-normal-form-approximation} pass both pressures and every remaining term to a
unique continuous limit. At times with $\nabla U\in L^6$, also
approximate in $\dot W^{1,6}$: the extra cutoff error is
$CR^{-1}\|U\|_6$, and the homogeneous error uses the annular $L^6$
tail. Convergence of the forcing in $L^{6/5}$ identifies the limit
with \eqref{eq:source-flat}. Such times have full measure by
\eqref{local:fullz}.

Finally set $d=|U|^p-|h|^p$ and $V=|U|+|h|$.
The bound $|d|\le p|l|V^{p-1}$ and expansion of both density slots give
\begin{equation}\label{eq:pressure-density-difference}
 \int|(I_\alpha*|U|^p)|U|^p-(I_\alpha*|h|^p)|h|^p|
 \le2p\|l\|_\infty H(V),
\end{equation}
with the same bound for the pair-density difference under
$I_\alpha(x-y)\dd x\dd y$. By \eqref{eq:pressure-density-difference} and \eqref{eq:input-normalform},
\[
 \begin{aligned}
 |\mathcal S_\lambda(c,h)|\le C_\alpha H(V)
 \bigl[&(\|Dq\|_\infty+\|q_m\|_\infty)\|l\|_\infty\\
       &+\|q\|_\infty\|\nabla l\|_\infty\bigr].
 \end{aligned}
\]
Use \eqref{eq:full-density-weight-bounds} from Lemma~\ref{dyn:weights} and the $L^4,L^6$ multiplier bounds to
obtain \eqref{eq:input-source-estimate}.
\end{proof}

The pointwise bounds contain a forward maximal average of the
original forcing. Since its global $L^2$ time norm is still unknown,
the comparison must first be made on finite intervals selected from
that forcing alone, before choosing the frequency cutoff.

\begin{lemma}[A common temporal comparison]\label{dyn:temporal-comparison}
There is a strictly increasing sequence $T_j\to\infty$, determined by
$f$ alone, such that, with
\[
 Y(s)=\sup_{n\in\mathbb Z}2^{-n}\int_s^{s+2^n}f(t)\dd t,
\]
one has $\|Y\|_{L^2(0,T_j)}\leq C_M\|f\|_{L^2(0,T_j)}$.
For all sufficiently small $\nu$, uniformly in $j$,
\begin{equation}\label{eq:finite-comparison}
 \int_0^{T_j}(f^2+Y^2+z^2)\dd s
        \leq C_{\alpha,B}\int_0^{T_j}\|h_\nu(s)\|_4^4\dd s.
\end{equation}
Moreover the actual lowpass satisfies
$\|P_{\leq L}U(s)\|_\infty\leq CLY(s)$ and
$\|\nabla P_{\leq L}U(s)\|_\infty\leq CL^2Y(s)$.
\end{lemma}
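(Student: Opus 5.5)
The lemma has three parts: the terminal-time selection, the comparison \eqref{eq:finite-comparison}, and the lowpass bounds. I would prove them in that order, since the first and third are essentially formal.

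\emph{Terminal times.} The function $f=\|F(U(\cdot))\|_1$ is bounded and continuous on $[0,\infty)$ by Theorem~\ref{ann:orbit-l4}. Lemma~\ref{std:maximal} applied to $f$ therefore gives an increasing sequence $T_j\to\infty$, chosen from $f$ alone, with $\int_0^{T_j}Y^2\le388\int_0^{T_j}f^2$. This is the first assertion with $C_M=\sqrt{388}$, and it involves no cutoff.

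\emph{Lowpass bounds.} Proposition~\ref{ann:no-waste} expresses $U(s)$ as the limit of the finite future Duhamel integrals of $F(U)$. Applying $P_{\le L}$ or $\nabla P_{\le L}$ commutes with $S$ and with the finite integrals, and the kernel of $S(\tau)P_{\le L}$ is bounded by $C\min(L^3,|\tau|^{-3/2})$ exactly as in Lemma~\ref{ann:free-kernel}. For the gradient the same bound holds with an extra factor $L$. Hence
\[
 \|P_{\le L}U(s)\|_\infty\le C\int_s^\infty\min(L^3,(t-s)^{-3/2})f(t)\dd t .
\]
Split this integral dyadically at $t-s\sim L^{-2}2^k$, $k\ge0$. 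Each piece is bounded by $L^3 2^{-3k/2}\int_s^{s+2^kL^{-2}}f\le L\,2^{-k/2}Y(s)$, and summing in $k$ gives $CLY(s)$. The gradient version gains one more $L$. The weak limit of the truncations is identified in $C_0$ exactly as in Proposition~\ref{ann:annular-bochner}, the only change being that the lowpass symbol replaces the annular one.

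\emph{Comparison.} By \eqref{eq:hartree-l1-interpolation}, $f\le C_{\alpha,B}z$, and the bound on $Y$ already proved then reduces \eqref{eq:finite-comparison} to showing $\int_0^{T_j}z^2\le C\int_0^{T_j}\|h_\nu\|_4^4$. Write $U=h_\nu+l_\nu$, so that
\[
 z^2=\|U\|_4^4\le 8\|h_\nu\|_4^4+8\|l_\nu\|_4^4 ,
\]
and bound the lowpass part by $\|l_\nu\|_4^4\le\|l_\nu\|_\infty^2\|l_\nu\|_2^{2}$. The mass of $l_\nu$ may be infinite, so that factor cannot be used directly. Instead I would use $\|l_\nu\|_4^4\le\|l_\nu\|_\infty\|l_\nu\|_6^3\|l_\nu\|_2^{0}\cdots$; more cleanly, $\|l\|_4^4\le\|l\|_\infty^{2}\|l\|_6^{3}\|l\|_{\infty}^{-1}$ interpolates to $\|l\|_4^4\le\|l\|_\infty\|l\|_3^3$. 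The plan is to aim for
\[
 \|l_\nu\|_4^4\le C\,\|l_\nu\|_\infty\,\eta_\nu^{3}\le C\nu\,\eta_\nu^3\,Y .
\]
The first inequality needs $\|l_\nu\|_3\lesssim\eta_\nu$. I would get this by refined Sobolev: the annular seminorm of $l_\nu$ is controlled by $A_*$ (Theorem~\ref{ann:orbit-l4}), so Lemma~\ref{std:refined} gives $\|l_\nu\|_4^4\le CA_*^2\eta_\nu^2$. Combined with the lowpass bound this yields the more robust form
\[
 \|l_\nu\|_4^4\le C\min(A_*^2\eta_\nu^2,\ \nu^2 Y^2\,\cdot\,\ldots),
\]
and I would refine it into a bound of the form $\|l_\nu\|_4^4\le\epsilon(\nu)(Y+z)z$ with $\epsilon(\nu)\to0$. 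After squaring and integrating over $[0,T_j]$, with $Y\lesssim f\lesssim z$ in $L^2$ by the first part, this becomes $\int_0^{T_j}z^2\le 8\int_0^{T_j}\|h_\nu\|_4^4+\epsilon'(\nu)\int_0^{T_j}z^2$. All quantities are finite on $[0,T_j]$, so for small $\nu$ the last term is absorbed, uniformly in $j$.

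\emph{Main obstacle.} The hard step is making the lowpass quartic term small relative to $z^2$ \emph{pointwise in time} (or in $L^1_t$ on $[0,T_j]$), without any mass. The fallback is the no-waste structure itself: apply the annular argument of Theorem~\ref{ann:orbit-l4} to each $P_MU$ with $M\le\nu$, bounding $\|P_MU(s)\|_\infty\lesssim M\,Y_M(s)$ where $Y_M$ averages $f$ over future windows of length $M^{-2}$. Refined Sobolev then turns this into
\[
 \|l_\nu\|_4^4\lesssim \eta_\nu^2\sup_{M\le\nu}Y_M^2 ,
\]
and the maximal bound from the first part converts $\sup_M Y_M$ into $f$ in $L^2(0,T_j)$. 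This gives $\int_0^{T_j}\|l_\nu\|_4^4\lesssim\eta_\nu^2\int_0^{T_j}z^2$ up to the window-overflow terms that Lemma~\ref{std:maximal} was designed to control. Absorption then closes \eqref{eq:finite-comparison}.
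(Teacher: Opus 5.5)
\textbf{Verdict.} There is a genuine gap in your main line for the comparison step. Your fallback closes it, and that fallback is the paper's own argument.

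\textbf{Terminal times and lowpass bounds.} Your terminal-time step is exactly the paper's. Your lowpass bounds are also correct. You estimate the kernel of $S(\tau)P_{\le L}$ directly and split the time integral dyadically. The paper instead bounds each annulus by $CMY(s)$ and sums the dyadic telescope, discarding $P_{\le a}U$ by Bernstein. With the lowpass symbol, the terminal term must be killed in $\mathcal S'$ rather than through a bounded map $\X\to L^2$, which $P_{\le L}$ is not. This is harmless.

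\textbf{Where the comparison fails.} The bound $\|l_\nu\|_3\lesssim\eta_\nu$ is false. The gradient controls $L^6$ and, for a lowpass, $L^\infty$, but nothing below $L^6$. A field decaying like $|x|^{-\beta}$ with $3/4<\beta\le1$ lies in $\X\cap L^4$ but not in $L^3$. The inequality also fails dimensionally, since $\|\cdot\|_3$ and $\|\nabla\cdot\|_2$ scale differently under $T_{r,y}$. Your other candidate, $\|l_\nu\|_4^4\le CA_*^2\eta_\nu^2$, is a time-independent constant. It integrates to $CA_*^2\eta_\nu^2T_j$, which can never be absorbed into $\int_0^{T_j}z^2$; that is exactly the quantity that must end up bounded independently of $j$. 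The proposed refinement to $\epsilon(\nu)(Y+z)z$ is never supplied.

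\textbf{What closes it.} Your fallback works once it is stated cleanly.
\begin{itemize}
\item The kernel estimate you proved applies verbatim to the annular symbol. It gives $M^{-1}\|P_MU(s)\|_\infty\le CY(s)$ for \emph{every} $M$, with the same full one-sided maximal average $Y(s)$, not an average over windows of length $M^{-2}$.
\item Hence the annular seminorm of $l_\nu(s)$ is at most $CY(s)$ pointwise in time. Lemma~\ref{std:refined} then gives $\|l_\nu(s)\|_4^2\le C\eta_\nu Y(s)$, so $z\le2\|h_\nu\|_4^2+C\eta_\nu Y$.
\item Take $L^2(0,T_j)$ norms and use $\|Y\|_{L^2(0,T_j)}\le C_M\|f\|_{L^2(0,T_j)}\le C_MC_{\alpha,B}\|z\|_{L^2(0,T_j)}$. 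The last term is absorbed once $C\eta_\nu C_MC_{\alpha,B}\le1/2$. All these norms are finite because $f$ and $z$ are bounded.
\end{itemize}
No ``window-overflow terms'' remain. The part of $Y$ reaching beyond $T_j$ has already been paid for by the choice of $T_j$ in the first assertion.

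\textbf{What to change.} Drop the $L^3$ and uniform-bound routes and promote the fallback. The idea your main line lacks is a pointwise-in-time annular seminorm bound by $Y(s)$, fed into refined Sobolev. No interpolation between $L^6$ and $L^\infty$ can reach $L^4$, so this step cannot be bypassed.
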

\begin{proof}
Apply Lemma~\ref{std:maximal} to the bounded nonnegative function $f$.
Its strictly increasing sequence $T_j\to\infty$ depends only on $f$ and gives
\begin{equation}\label{eq:common-terminal-maximal-bound}
 \|Y\|_{L^2(0,T_j)}\le C_M\|f\|_{L^2(0,T_j)},\qquad C_M=\sqrt{388}.
\end{equation}
No global time-integrability has been used.

Proposition~\ref{ann:annular-bochner} gives
\begin{equation}\label{eq:annular-maximal-bound}
 \|P_LU(s)\|_\infty
 \le C\int_0^\infty\min(L^3,\tau^{-3/2})f(s+\tau)\dd\tau
 \le CLM^+f(s)\le CLY(s).
\end{equation}
For the middle inequality in \eqref{eq:annular-maximal-bound}, use the layer-cake representation of the
decreasing kernel, whose integral is $3L$; for the last, use
$Y\le M^+f\le2Y$.
Sum the telescoping dyadic annuli for the lowpass and its gradient.
The remainders vanish by the Bernstein bounds
$C\sqrt aB$ and $Ca^{3/2}B$ as $a\downarrow0$.

The annular seminorm of $l_\nu\in L^4$ is at most $CY$.
Lemma~\ref{std:refined} gives
\begin{equation}\label{eq:lowpass-quartic-comparison}
 \|l_\nu(s)\|_4^2\le C\eta_\nu Y(s),\qquad
 z(s)\le2\|h_\nu(s)\|_4^2+C\eta_\nu Y(s).
\end{equation}
All finite-window norms are finite. Using $f\le C_{\alpha,B}z$,
choose $\nu$ so that $C\eta_\nu C_MC_{\alpha,B}\le1/2$. Combining \eqref{eq:common-terminal-maximal-bound} and \eqref{eq:lowpass-quartic-comparison} gives
\[
 \|z\|_{L^2(0,T_j)}
 \le2\left(\int_0^{T_j}\|h_\nu\|_4^4\right)^{1/2}
       +\tfrac12\|z\|_{L^2(0,T_j)},
\]
and \eqref{eq:finite-comparison} follows uniformly in $j$.
\end{proof}

The output term $P_{\le\nu}F(U)$ presents a different difficulty:
low output frequency need not come from a small low-frequency input.
We split only the explicit field factor into near and far frequencies,
keep the full density and current weights, and then use the frequency
gap against the low-frequency parts of those weights.

\begin{theorem}[Complete source budget]\label{dyn:source-budget}
For every positive continuous radius $\lambda(s)$ and every sufficiently
small fixed $\nu>0$, the continuous source extension satisfies
\begin{equation}\label{eq:source-budget}
 \int_0^{T_j}|\mathcal S_{\lambda(s)}(e_\nu,h_\nu)|\dd s
 \leq q_\nu\int_0^{T_j}\|h_\nu(s)\|_4^4\dd s,
 \qquad q_\nu=C_{\alpha,B,\varphi}
 (\sqrt{\delta_\nu}+\delta_\nu+\delta_\nu^2)\longrightarrow0.
\end{equation}
Here $T_j$ is from Lemma~\ref{dyn:temporal-comparison}; the constant
is uniform in $j,\nu,\lambda$.
\end{theorem}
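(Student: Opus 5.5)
We split $e_\nu=c_\nu+o_\nu$ as in Lemma~\ref{dyn:forced} and estimate the two source contributions separately. The source extension $\mathcal S_\lambda(\cdot,h)$ is linear in its first slot. It therefore suffices to bound $\int_0^{T_j}|\mathcal S_{\lambda}(c_\nu,h_\nu)|$ and $\int_0^{T_j}|\mathcal S_\lambda(o_\nu,h_\nu)|$ by small multiples of $\int_0^{T_j}\|h_\nu\|_4^4$. Lemma~\ref{dyn:temporal-comparison} then converts every appearance of $f^2$, $Y^2$ or $z^2$ into that quartic integral, uniformly in $j$. All weight bounds come from Lemma~\ref{dyn:weights} and are uniform in $\lambda$, so the radius can be arbitrary and only its positivity and continuity (for measurability) matter.

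\emph{Input term.} Apply \eqref{eq:input-source-estimate} with $l=l_\nu$, $m\le m_\nu$ and $b\le 2B$. Lemma~\ref{dyn:temporal-comparison} gives $\|l_\nu\|_\infty\le C\nu Y$ and $\|\nabla l_\nu\|_\infty\le C\nu^2Y$. Hence
\[
 |\mathcal S_\lambda(c_\nu,h_\nu)|\le C(m_\nu\nu+m_\nu^2\nu^2)Yz
 =C(\delta_\nu+\delta_\nu^2)Yz .
\]
Cauchy--Schwarz in time and \eqref{eq:finite-comparison} bound the time integral by $C(\delta_\nu+\delta_\nu^2)\int_0^{T_j}\|h_\nu\|_4^4$.

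\emph{Output term.} This is the main obstacle. Here $o_\nu=-P_{\le\nu}F(U)$ is merely bounded in $L^1$ with $\|o_\nu\|_1\le Cf$, and we have no smallness. Each term of \eqref{eq:source-flat} pairs $o_\nu$ with a product of a weight ($q$, $\operatorname{div}q$ or $q_m$) and an explicit field factor ($h$ or $\nabla h$). Split the explicit factor as $h=P_{\le 8\nu}h+P_{>8\nu}h$.

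For the far part, the product of a weight with $P_{>8\nu}h$ pairs against the frequency-$\le2\nu$ function $o_\nu$ only through the frequencies $\ge 6\nu$ of the weight. So we may replace the weight by its high-frequency part $P_{>\nu}q$ (and similarly for $\operatorname{div}q$ and $q_m$), with an enlarged multiplier inserted exactly as in \eqref{ann:enlarged-symbol}. Bernstein and \eqref{eq:full-density-weight-derivatives} give
\[
 \|P_{>\nu}q\|_\infty\lesssim\nu^{-1/2}\|D^2q\|_6\lesssim\nu^{-1/2}a^{1/2},
\]
together with analogous bounds for $\operatorname{div}q$ and $q_m$. We pair these with $\|o_\nu\|_{6/5}\lesssim\nu^{1/2}f$ and $\|h\|_6,\|\nabla h\|_2$, which leaves a factor $a^{1/2}f$ times a power of $\nu m_\nu$ coming from the $m$-dependent weight normalizations. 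The aim is to arrange that every such term carries at least $\delta_\nu^{1/2}$; this is presumably the source of the $\sqrt{\delta_\nu}$ in $q_\nu$.

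For the near part, $P_{\le8\nu}h$ is itself small. Since $h=P_{>\nu}U$, the factor $P_{\le 8\nu}h$ equals $(P_{\le8\nu}-P_{\le\nu})U$, and Lemma~\ref{dyn:temporal-comparison} gives $\|P_{\le8\nu}h\|_\infty\lesssim\nu Y$ and $\|\nabla P_{\le8\nu}h\|_\infty\lesssim \nu^2 Y$. Combined with $\|q\|_\infty\le Cm^2$ and $\|q_m\|_\infty,\|\operatorname{div}q\|_\infty\le Cmb$, this yields
\[
 C(\delta_\nu+\delta_\nu^2)\,fY .
\]

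Finally, $a=\|h_\nu\|_4^4$, and by Cauchy--Schwarz and \eqref{eq:finite-comparison} the integrals $\int a^{1/2}f$ and $\int fY$ are both controlled by $\int_0^{T_j}\|h_\nu\|_4^4$. Summing the three contributions gives \eqref{eq:source-budget}. The delicate points are the exact bookkeeping of the powers of $m_\nu$ in the far term, so that only $\delta_\nu$ combinations survive, and the justification of the frequency-gap identity for the actual continuous extension. The latter is done by approximating with the same smooth fields used in Lemma~\ref{dyn:mixed-hls}.
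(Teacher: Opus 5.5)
Your treatment of the input term $c_\nu$ is the same as the paper's. The architecture for $o_\nu$ is also right: split only the explicit field factor, and kill the low-frequency parts of the full weights by the support gap. The gap is in the far term, and it is not a matter of bookkeeping. With the split fixed at $8\nu$ there is no small factor at all.

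Once the weights are replaced by their parts at frequencies $\gtrsim\nu$, the usable bounds are those of \eqref{eq:full-density-weight-derivatives}. These involve $a$ and $B$ but not $m_\nu$: $\|q^{\rm hi}\|_6\lesssim\nu^{-2}a^{1/2}$, $\|\operatorname{div}q^{\rm hi}\|_6\lesssim\nu^{-1}a^{1/2}$, $\|q_m^{\rm hi}\|_{12}\lesssim B\nu^{-1}a^{1/4}$. Pair them with $\|o_\nu\|_3\lesssim\nu^2f$, $\|o_\nu\|_{3/2}\lesssim\nu f$, $\|\nabla H\|_2,\|H\|_6\lesssim B$ and $\|H\|_4\lesssim a^{1/4}$. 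The result is $C_Ba^{1/2}f$, with every power of $\nu$ cancelling, as scaling forces.

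The $m$-dependent $L^\infty$ bounds of \eqref{eq:full-density-weight-bounds} do not rescue this. They produce $\delta_\nu$ only together with an uncompensated negative power of $\nu$, for example $\|\operatorname{div}q\|_\infty\|o_\nu\|_{6/5}\|H\|_6\lesssim B^2\delta_\nu\nu^{-1/2}f$. That term cannot be absorbed by $\int_0^{T_j}\|h_\nu\|_4^4$. Two smaller errors: your Bernstein bound should read $\|P_{>\nu}q\|_\infty\lesssim\nu^{-3/2}\|D^2q\|_6$, not $\nu^{-1/2}$. And the H\"older triple $(\infty,6/5,2)$ you would need for $q\cdot o_\nu\nabla\overline h$ does not close.

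The missing idea is to make the frequency gap a free large parameter and then balance it. Split at $\Lambda\nu$ with $\Lambda\ge8$.
The near field $v=(P_{\le\Lambda\nu}-P_{\le\nu})U$ has $\|v\|_\infty\lesssim\Lambda\nu Y$ and $\|\nabla v\|_\infty\lesssim\Lambda^2\nu^2Y$, so it costs $C_B(\Lambda\delta_\nu+\Lambda^2\delta_\nu^2)fY$.
The far field $H=P_{>\Lambda\nu}U$ pairs only with weight frequencies above $\Lambda\nu/8$. The weight bounds therefore improve to $(\Lambda\nu)^{-2}$ and $(\Lambda\nu)^{-1}$, while each derivative on the low-frequency $o_\nu$ costs only $\nu$. (The paper integrates by parts once to place the derivative on $o_\nu$.) This gives $C_B\Lambda^{-1}f\sqrt a$.

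Taking $\Lambda=\delta_\nu^{-1/2}$, for $\delta_\nu\le1/64$, balances $\Lambda\delta_\nu$ against $\Lambda^{-1}$. This is the origin of the $\sqrt{\delta_\nu}$ in $q_\nu$. Cauchy--Schwarz together with \eqref{eq:finite-comparison} then closes the budget uniformly in $j$ and $\lambda$.
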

\begin{proof}
Suppress $\nu$ in $h,l,m$ and put $a=\|h\|_4^4$.
Lemma~\ref{dyn:temporal-comparison} and
\eqref{eq:input-source-estimate} give
\begin{equation}\label{eq:input-budget-pointwise}
 |\mathcal S_\lambda(c,h)|
 \le C_{\alpha,B}(\delta_\nu+\delta_\nu^2)Yz.
\end{equation}
For $o=-P_{\le\nu}F(U)$, Young's inequality gives compatible representatives:
\begin{equation}\label{eq:output-source-young-bounds}
 \begin{array}{lll}
 \|o\|_1\le Cf,&\|\nabla o\|_1\le C\nu f,\\
 \|o\|_{3/2}\le C\nu f,&\|\nabla o\|_{3/2}\le C\nu^2f,\\
 \|o\|_2\le C\nu^{3/2}f,&\|\nabla o\|_2\le C\nu^{5/2}f.
 \end{array}
\end{equation}
In particular, \eqref{eq:output-source-young-bounds} gives $o\in H^1$. Holding the full-$h$ weights fixed, integrate by
parts in the field slot:
\begin{equation}\label{eq:output-source-form}
 \mathcal S_\lambda(o,v;q,q_m)
 =4\re\int\overline vq\cdot\nabla o
  +2\re\int\overline vo\operatorname{div}q
  -4\im\int q_mo\overline v.
\end{equation}
Bounded-weight $L^2$ pairings and $H^1$ approximation justify this
without a spatial moment.

Choose $\Lambda\ge8$, put $\mu=\Lambda\nu/8$, and split
\[
 H=P_{>\Lambda\nu}U=P_{>\Lambda\nu}h,\qquad
 v=h-H=(P_{\le\Lambda\nu}-P_{\le\nu})U.
\]
The equality for $H$ follows from disjoint multiplier supports.
The lowpass and multiplier bounds give
\begin{equation}\label{eq:frequency-gap-field-bounds}
 \|v\|_\infty\le C\Lambda\nu Y,\qquad
 \|\nabla v\|_\infty\le C\Lambda^2\nu^2Y,\qquad
 \|H\|_6\le CB,\quad \|H\|_4\le Ca^{1/4}.
\end{equation}
Integrating the first term in \eqref{eq:output-source-form} back
onto $q\overline v$, and using \eqref{eq:frequency-gap-field-bounds} and \eqref{eq:full-density-weight-bounds}, yields
\begin{equation}\label{eq:medium-output-source-bound}
 |\mathcal S_\lambda(o,v;q,q_m)|
 \le C_B(\Lambda\delta_\nu+\Lambda^2\delta_\nu^2)fY.
\end{equation}

For the far-frequency field $H$, decompose the complete weights at
$\mu$. Their high-frequency parts gain inverse powers of $\Lambda\nu$
from derivatives; the low-frequency parts will cancel by support.
The identities $\int K=1$, $\int yK(y)\dd y=0$, Taylor expansion
to first or second order, Minkowski and
\eqref{eq:full-density-weight-derivatives} give
\begin{equation}\label{eq:high-frequency-weight-bounds}
 \begin{gathered}
 \|q^{\rm hi}\|_6\le C(\Lambda\nu)^{-2}a^{1/2},\qquad
 \|\operatorname{div}q^{\rm hi}\|_6
       \le C(\Lambda\nu)^{-1}a^{1/2},\\
 \|q_m^{\rm hi}\|_{12}\le CB(\Lambda\nu)^{-1}a^{1/4}.
 \end{gathered}
\end{equation}
No integrability of the bounded weights themselves is required.
The angular-frequency supports of $o,H,w^{\rm lo}$ lie in
$\overline B_{2\nu}$, outside $B_{\Lambda\nu}$, and in
$\overline B_{\Lambda\nu/4}$, respectively. Since
$2+\Lambda/4<\Lambda$, all low-weight pairings with $H$ vanish.
For bounded $w$, justify this by writing
\[
 w^{\rm lo}=\phi*w,\qquad o\text{ or }\partial_io=-\psi*G,
 \quad G=F(U),\quad \phi=K_\mu,\quad
 \psi=K_\nu\text{ or }\partial_iK_\nu.
\]
The absolute Fubini integral is bounded by
\[
 \|w\|_\infty\|\phi\|_1\|\psi\|_2\|G\|_1\|H\|_2<\infty.
\]
For each translation pair, the Schwartz product
$\phi(x-y)\psi(x-z)$ has Fourier support in
$\overline B_{2\mu+2\nu}$; its pairing with $H$ vanishes by Plancherel.
This includes the divergence weight and places no frequency condition on $G$.

For the high weights, apply H\"older to \eqref{eq:high-frequency-weight-bounds} and \eqref{eq:output-source-young-bounds}, with $(6,6,3/2)$ and $(4,12,3/2)$:
\begin{equation}\label{eq:high-output-pairing-bounds}
 \begin{aligned}
 \left|\int\overline Hq^{\rm hi}\cdot\nabla o\right|
 &\le C_B\Lambda^{-2}f\sqrt a,\\
 \left|\int\overline Ho\operatorname{div}q^{\rm hi}\right|
 +\left|\int q_m^{\rm hi}o\overline H\right|
 &\le C_B\Lambda^{-1}f\sqrt a.
 \end{aligned}
\end{equation}
Combining \eqref{eq:medium-output-source-bound} and \eqref{eq:high-output-pairing-bounds} gives
\begin{equation}\label{eq:output-budget-pointwise}
 |\mathcal S_\lambda(o,h)|\le C_B
 \bigl[(\Lambda\delta_\nu+\Lambda^2\delta_\nu^2)fY
       +\Lambda^{-1}f\sqrt a\bigr].
\end{equation}
The near-frequency term costs $\Lambda\delta_\nu$, whereas the
far-frequency term gains $\Lambda^{-1}$. Balancing them, for
$0<\delta_\nu\le1/64$ fix $\Lambda=\delta_\nu^{-1/2}$.
Apply Cauchy--Schwarz and \eqref{eq:finite-comparison} to \eqref{eq:input-budget-pointwise} and \eqref{eq:output-budget-pointwise}. All time integrals are bounded by $C\int_0^{T_j}a$, giving \eqref{eq:source-budget}.
If $\delta_\nu=0$, then $h=0$ and the source vanishes.

On compact time and positive-radius intervals,
$h\in CH^1$, $U\in C(\X\cap L^4)$ and $F(U)\in CL^1$ give
joint continuity of \eqref{eq:input-normalform} and
\eqref{eq:output-source-form}. Their sum agrees a.e. with
\eqref{eq:source-flat} by Lemma~\ref{dyn:mixed-hls}, and hence is
the source of \eqref{eq:forced-highpass}.
\end{proof}

The estimate concerns the complete source pairing. The following
example explains why an $L^1$ estimate for the defect alone would not
give the required small coefficient.

\begin{remark}\label{rem:source-l1-obstruction}
The source budget cannot be replaced by $\|e_\nu\|_1\to0$.
For $g\in L^1$, translation continuity of $K$ gives
\[
 \nu^{-3}(P_{\leq\nu}g)(y/\nu)
   =\int K(y-\nu x)g(x)\dd x\longrightarrow K(y)\int g
       \quad\text{in }L^1_y.
\]
Hence $\|P_{\leq\nu}g\|_1\to|\int g|\|K\|_1$.
For a nonzero nonnegative compact smooth $U$, $\int F(U)>0$, while
the mixed continuity bound gives $F(U)-F(P_{>\nu}U)\to0$ in $L^1$.
The resulting defect has a positive limiting $L^1$ norm. The pressure
cancellation and the linear gap in the weights are what make
\eqref{eq:source-budget} small.
\end{remark}

\subsection{The selected Gaussian radius and its evolution}

We continue with the same half-line $U$, cutoff and highpass family.
All Gaussian quantities below use the complete density of $h_\nu$.
For clarity in Fourier formulas only, write
$\widehat v_{\rm ang}(\xi)=\int e^{-ix\cdot\xi}v(x)\dd x$,
$c_F=(2\pi)^{-3}$ and $\gamma_0=(2\pi)^{3/2}$.
This is the change of variable from normalized frequency to angular
frequency, not a change in the physical free group or cutoff.

The radius is chosen so that the positive Gaussian density form
controls $\|h_\nu\|_4^4$. A density moment estimate first prevents
the selected radius from being too small relative to this quartic
quantity.

\begin{lemma}[A density moment estimate]\label{dyn:density-moment}
If $h\in H^1(\R^3)$ and $\rho=|h|^2$, then $\rho\in L^1\cap L^2$ and
\[
 c_F\int|\xi|\,|\widehat\rho_{\rm ang}(\xi)|^2\dd\xi
       \leq\frac{25}{\pi^2}\|\nabla h\|_2^4.
\]
\end{lemma}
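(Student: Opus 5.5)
The plan is to write the weighted Fourier integral as a positive Riesz pairing of $\nabla\rho$ with itself. After the derivative identity $\nabla\rho=2\re(\overline h\nabla h)$ and one AM--GM step, each half of the resulting double integral is a translated Hardy integral, which Lemma~\ref{std:hardy} bounds uniformly in the center. This never uses $\|h\|_2$ quantitatively, which is the point of the lemma. The membership $\rho\in L^1\cap L^2$ is immediate: $\|\rho\|_1=\|h\|_2^2$, and $\|\rho\|_2=\|h\|_4^2<\infty$ since $H^1\subset L^4$ by interpolation between $L^2$ and $L^6$.

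\emph{Step 1 (reduction to smooth fields).} By Lemma~\ref{std:density}, take $h_n\in C_c^\infty$ with $h_n\to h$ in $H^1$. Then $\rho_n=|h_n|^2\to\rho$ in $L^1\cap L^2$. Plancherel gives $\widehat{\rho_n}_{\rm ang}\to\widehat\rho_{\rm ang}$ in $L^2$, and a subsequence converges almost everywhere. Fatou's lemma then transfers the inequality from $h_n$ to $h$, because the right side is continuous in $H^1$. So it suffices to treat compact smooth $h$.

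\emph{Step 2 (Riesz representation).} For Schwartz-type $\rho$ write $|\xi|=\sum_j\xi_j^2/|\xi|$. Then
\[
 c_F\int|\xi||\widehat\rho_{\rm ang}|^2\dd\xi
 =\sum_j c_F\int|\xi|^{-1}|\widehat{\partial_j\rho}_{\rm ang}|^2\dd\xi
 =\frac{c_0}{2\pi^2}\sum_j\iint\frac{\partial_j\rho(x)\,\partial_j\rho(y)}{|x-y|^2}\dd x\dd y .
\]
The second equality uses that $|\xi|^{-1}$ in angular frequency is the multiplier of convolution with $(2\pi^2)^{-1}|x|^{-2}$, which is the kernel $I_1$ of Theorem~\ref{std:hls}. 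The factor $c_0$ is an explicit absolute constant coming from the normalization $c_F$ and the Plancherel convention. The double integral converges absolutely because $\partial_j\rho\in L^{6/5}$ and HLS applies.

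\emph{Step 3 (the estimate).} Bound $\sum_j\partial_j\rho(x)\partial_j\rho(y)\le|\nabla\rho(x)||\nabla\rho(y)|$ and use $|\nabla\rho|\le2|h||\nabla h|$. AM--GM then gives
\[
 4|h(x)||\nabla h(x)||h(y)||\nabla h(y)|\le2\bigl(|h(y)|^2|\nabla h(x)|^2+|h(x)|^2|\nabla h(y)|^2\bigr).
\]
By Tonelli and symmetry, the double integral is therefore at most
\[
 4\int|\nabla h(x)|^2\int\frac{|h(y)|^2}{|x-y|^2}\dd y\dd x .
\]
The translated Hardy inequality of Lemma~\ref{std:hardy} bounds the inner integral by $4\|\nabla h\|_2^2$ uniformly in $x$. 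The total is thus $16\|\nabla h\|_2^4$, and the left side of the lemma is at most $8c_0\pi^{-2}\|\nabla h\|_2^4$.

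The main obstacle is Step 2. One must fix the exact constant linking $c_F\int|\xi|^{-1}|\widehat g_{\rm ang}|^2$ to $\iint g(x)g(y)|x-y|^{-2}$ under the paper's conventions, and check that it lands within the stated $25/\pi^2$. The expected value is about $8/\pi^2$, which leaves margin for the factors of $2\pi$. If the constant bookkeeping is uncertain, I would verify it directly on a Gaussian $g$, where both sides are explicit.
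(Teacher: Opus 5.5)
Your argument is correct, and it takes a genuinely different route from the paper's.

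\textbf{The paper's route.} The paper works entirely on the Fourier side. It writes $\widehat\rho_{\rm ang}$ as $c_F$ times a convolution of $\widehat h_{\rm ang}$ with its reflected conjugate. It then applies Cauchy--Schwarz with the weight $|\eta|^2|\xi-\eta|^2$ and uses the elementary bound $\int|\eta|^{-2}|\xi-\eta|^{-2}\dd\eta\le200\pi/|\xi|$, obtained by splitting into four regions. Integrating in $\xi$ and applying Plancherel gives the constant $200\pi c_F=25/\pi^2$.

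\textbf{Your route.} You move the weight onto $\nabla\rho$ as $|\xi|^{-1}$ and return to physical space through the Riesz kernel. You then close with $|\nabla\rho|\le2|h||\nabla h|$, AM--GM and Lemma~\ref{std:hardy}.

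\textbf{The constant you flag is already fixed.} The step you call the main obstacle is settled by the paper's normalization. Since $c_1=\Gamma(1)/(2\pi^{3/2}\Gamma(1/2))=1/(2\pi^2)$, the kernel $I_1=(2\pi^2)^{-1}|x|^{-2}$ is exactly the one with angular multiplier $|\xi|^{-1}$. Combined with the angular Plancherel identity $\int f\overline g=c_F\int\widehat f_{\rm ang}\overline{\widehat g_{\rm ang}}$, this gives $c_0=1$. Your final bound is therefore $8/\pi^2<25/\pi^2$.

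\textbf{Trade-offs.} Your route reuses the Hardy lemma and gives a better constant than the one the paper records. The paper's route needs only Plancherel and an explicit integral for this lemma. Yours needs the Fourier identification of $I_1$, which is standard (for instance by Gaussian subordination) but is not proved in the paper.

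\textbf{One small correction.} Absolute convergence of $\iint|x-y|^{-2}|g(x)||g(y)|\dd x\dd y$ via HLS requires $g\in L^{3/2}$, not $L^{6/5}$. This holds for $g=\partial_j\rho$, since $|h||\nabla h|\in L^{3/2}$. After your reduction to $C_c^\infty$ fields it is automatic anyway. Your approximation-and-Fatou step is the same as the paper's closing step.
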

\begin{proof}
Scaling and rotation give
\begin{equation}\label{eq:density-moment-convolution-bound}
 \int\frac{\dd\eta}{|\eta|^2|\xi-\eta|^2}
 \le\frac{200\pi}{|\xi|}\qquad(\xi\ne0).
\end{equation}
At $|\xi|=1$, split into $|\eta|\le1/2$,
$|\eta-\xi|\le1/2$, the remaining $|\eta|\le2$, and $|\eta|\ge2$.
Their bounds sum to
$8\pi+8\pi+512\pi/3+8\pi<200\pi$.
For Schwartz $h$, the product formula, weighted Cauchy--Schwarz, and \eqref{eq:density-moment-convolution-bound} give
\begin{equation}\label{eq:density-fourier-product}
 |\widehat\rho_{\rm ang}(\xi)|^2
 \le c_F^2\frac{200\pi}{|\xi|}
 \int|\eta|^2|\widehat h_{\rm ang}(\eta)|^2
       |\eta-\xi|^2|\widehat h_{\rm ang}(\eta-\xi)|^2\dd\eta.
\end{equation}
Integrate \eqref{eq:density-fourier-product} using Tonelli and Plancherel:
\[
 c_F\int|\xi||\widehat\rho_{\rm ang}(\xi)|^2\dd\xi
 \le200\pi c_F
 \left(c_F\int|\eta|^2|\widehat h_{\rm ang}(\eta)|^2\dd\eta\right)^2
 =\frac{25}{\pi^2}\|\nabla h\|_2^4.
\]
For $h\in H^1$, smooth approximation in $H^1\cap L^4$ gives
density convergence in $L^1\cap L^2$. Plancherel identifies the
Fourier realizations; an a.e. convergent subsequence and Fatou pass
the weighted bound to $h$.
\end{proof}

For nonzero $h\in H^1$, define
\[
 G_h(\lambda)=\iint e^{-|x-y|^2/(2\lambda^2)}\rho(x)\rho(y)\dd x\dd y,
 \qquad H_h(\lambda)=G_h(\lambda)/\lambda^4\quad(\lambda>0).
\]
These are finite integrals because $\rho\in L^1$.
\begin{lemma}[The Gaussian density root]\label{dyn:root}
For every $\theta>0$ there is a unique $\lambda>0$ with
$G_h(\lambda)=\theta\lambda^4$. If $a=\|h\|_4^4$, then
\[
 \lambda\leq\gamma_0a/\theta,\qquad Q_\lambda[\rho]\geq2\theta\lambda.
\]
If $c_F\int|\xi||\widehat\rho_{\rm ang}|^2\leq D/2$ and
$\theta\leq\gamma_0e^{-D^2/2}/2$, then also $a\leq\lambda$.
The root is continuous under $H^1$ convergence of nonzero fields.
\end{lemma}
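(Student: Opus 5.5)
The plan is to put everything in Fourier variables, where the Gaussian density functional becomes an explicitly monotone integral. Since $h\in H^1$, we have $\rho=|h|^2\in L^1\cap L^2$ (Lemma~\ref{gauss:Q}). The angular transform of $x\mapsto e^{-|x|^2/(2\lambda^2)}$ is $\gamma_0\lambda^3e^{-\lambda^2|\xi|^2/2}$. Plancherel, justified first for $\rho\in L^1\cap L^2$ exactly as in the proof of Lemma~\ref{gauss:Q}, then gives
\[
 G_h(\lambda)=\gamma_0\lambda^3\,c_F\int e^{-\lambda^2|\xi|^2/2}|\widehat\rho_{\rm ang}(\xi)|^2\dd\xi,
 \qquad
 H_h(\lambda)=\frac{\gamma_0}{\lambda}\,c_F\int e^{-\lambda^2|\xi|^2/2}|\widehat\rho_{\rm ang}|^2\dd\xi .
\]
Both factors in $H_h$ are nonincreasing in $\lambda$, and the first is strictly decreasing. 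Since $h\ne0$ forces $\widehat\rho_{\rm ang}\not\equiv0$, the map $H_h$ is strictly decreasing on $(0,\infty)$. Dominated convergence makes it continuous.

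\emph{Existence and uniqueness.} As $\lambda\downarrow0$, dominated convergence in physical space gives $G_h(\lambda)\to0$ but only like $\lambda^3$. The better route is the Fourier form: the integral tends to $\|\rho\|_2^2>0$ by monotone convergence, so $H_h(\lambda)\sim\gamma_0\|\rho\|_2^2/\lambda\to\infty$. As $\lambda\to\infty$, $G_h(\lambda)\le\|\rho\|_1^2$, so $H_h\to0$. The intermediate value theorem and strict monotonicity then give a unique root of $H_h(\lambda)=\theta$.

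\emph{The two quantitative bounds.} Bounding $e^{-\lambda^2|\xi|^2/2}\le1$ gives $G_h(\lambda)\le\gamma_0\lambda^3\|\rho\|_2^2=\gamma_0\lambda^3a$. At the root this reads $\theta\lambda^4\le\gamma_0\lambda^3a$, which is $\lambda\le\gamma_0a/\theta$. For the form bound, convert \eqref{gauss:Q-fourier} to angular frequency and drop the term $4\pi^2\lambda^2|\xi|^2\ge0$. This yields
\[
 Q_\lambda[\rho]\ge2\gamma_0\,c_F\int e^{-\lambda^2|\xi|^2/2}|\widehat\rho_{\rm ang}|^2=2G_h(\lambda)/\lambda^3=2\theta\lambda .
\]

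\emph{Lower bound $a\le\lambda$.} Split $a=c_F\int|\widehat\rho_{\rm ang}|^2$ at $|\xi|=D/\lambda$.
\begin{itemize}
 \item On the high part, $1\le\lambda|\xi|/D$ together with the moment hypothesis gives a contribution at most $(\lambda/D)(D/2)=\lambda/2$.
 \item On the low part, insert $1\le e^{D^2/2}e^{-\lambda^2|\xi|^2/2}$ and use the root identity. The contribution is at most $e^{D^2/2}G_h(\lambda)/(\gamma_0\lambda^3)=e^{D^2/2}\theta\lambda/\gamma_0$, and the hypothesis $\theta\le\gamma_0e^{-D^2/2}/2$ makes this at most $\lambda/2$.
\end{itemize}
Summing the two parts gives $a\le\lambda$. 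Lemma~\ref{dyn:density-moment} is what will later supply the moment hypothesis with $D$ depending only on $B$.

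\emph{Continuity of the root.} If $h_n\to h\ne0$ in $H^1$, then $\rho_n\to\rho$ in $L^1$, since $\|\rho_n-\rho\|_1\le\|h_n-h\|_2(\|h_n\|_2+\|h\|_2)$. Hence $G_{h_n}\to G_h$ uniformly in $\lambda$, because $|G_{h_n}-G_h|\le(\|\rho_n\|_1+\|\rho\|_1)\|\rho_n-\rho\|_1$. Let $\lambda_*$ be the root for $h$ and fix $\epsilon>0$. Strict monotonicity gives $H_h(\lambda_*-\epsilon)>\theta>H_h(\lambda_*+\epsilon)$. By the uniform convergence, these strict inequalities persist for $H_{h_n}$ at the two fixed points once $n$ is large. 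The unique root of $H_{h_n}$ then lies in $(\lambda_*-\epsilon,\lambda_*+\epsilon)$. I expect only bookkeeping here: the convention change between normalized and angular frequency, and the small-$\lambda$ limit, which is the one place where the Fourier form rather than the physical form must be used.
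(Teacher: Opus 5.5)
Your proposal is correct and follows essentially the paper's route: the Fourier form $H_h(\lambda)=c_F\gamma_0\lambda^{-1}\int e^{-\lambda^2|\xi|^2/2}|\widehat\rho_{\rm ang}|^2\dd\xi$ gives strict decrease and the intermediate value theorem, dropping the $\lambda^2|\xi|^2$ term in \eqref{gauss:Q-fourier} gives the form bound, and the uniform bound on $G_{h_n}-G_h$ traps the perturbed roots between two fixed radii. The only variation is in the step $a\le\lambda$: you split the frequency integral at $|\xi|=D/\lambda$ directly at the root, whereas the paper integrates $e^{-t^2/2}\ge e^{-D^2/2}(1-t/D)$ at the trial radius $a$ to get $H_h(a)\ge\theta$ and then uses strict decrease; the two arguments are equivalent.
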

\begin{proof}
Set $I_\lambda=\int e^{-\lambda^2|\xi|^2/2}
|\widehat\rho_{\rm ang}(\xi)|^2\dd\xi$.
Gaussian convolution, Plancherel and \eqref{gauss:Q-fourier} give
\begin{equation}\label{eq:gaussian-root-fourier-comparison}
 H_h(\lambda)=\frac{c_F\gamma_0}{\lambda}I_\lambda
 \le\frac{\gamma_0a}{\lambda},\qquad
 Q_\lambda[\rho]\ge2c_F\gamma_0 I_\lambda
 =\frac{2G_h(\lambda)}{\lambda^3}.
\end{equation}
The identities extend by $L^1\cap L^2$ density.
Dominated convergence makes $I_\lambda$ positive, continuous and
nonincreasing. Hence $H_h$ is continuous and strictly decreasing, with
\[
 H_h(\lambda)\ge\frac{c_F\gamma_0I_1}{\lambda}\to\infty
 \quad(\lambda\downarrow0),\qquad
 H_h(\lambda)\le\frac{\gamma_0a}{\lambda}\to0
 \quad(\lambda\to\infty).
\]
The intermediate value theorem gives existence and uniqueness; \eqref{eq:gaussian-root-fourier-comparison} gives the two root bounds.

Put $J=c_F\int|\xi||\widehat\rho_{\rm ang}|^2$.
Nonzero $\rho$ gives $0<J\le D/2$, since otherwise its Fourier
transform would vanish off $\{0\}$. For $t\ge0$,
\[
 e^{-t^2/2}\ge e^{-D^2/2}(1-t/D).
\]
Use monotonicity for $t\le D$ and the sign of the right side for
$t>D$. Integration at $t=a|\xi|$ gives
\begin{equation}\label{eq:gaussian-root-lower-test}
 c_FI_a\ge e^{-D^2/2}(a-aJ/D)\ge\tfrac12e^{-D^2/2}a,
 \qquad H_h(a)\ge\tfrac12\gamma_0e^{-D^2/2}\ge\theta.
\end{equation}
Equation \eqref{eq:gaussian-root-lower-test} and strict decrease yield $a\le\lambda$.

If $h_n\to h\ne0$ in $H^1$, then $\rho_n\to\rho$ in $L^1$ and
\begin{equation}\label{eq:gaussian-root-continuity}
 |G_{h_n}(\ell)-G_h(\ell)|
 \le(\|\rho_n\|_1+\|\rho\|_1)\|\rho_n-\rho\|_1\to0.
\end{equation}
By \eqref{eq:gaussian-root-continuity}, the strict inequalities at any two radii straddling the limiting root persist and trap the new roots. This proves continuity.
\end{proof}

The Gaussian level must satisfy two requirements simultaneously:
the radius must lie in the coercive range and remain comparable with
the quartic density. Compactness and the uniform gradient bound allow
one level to serve every sufficiently small cutoff.

\begin{proposition}[A common Gaussian level]\label{dyn:selector}
There exist $\theta>0$, $\nu_0>0$, $A_4<\infty$ and $c_{\rm core}>0$
such that, for $0<\nu\leq\nu_0$, the actual root
$\lambda_\nu(s)>0$ of $G_{h_\nu(s)}(\lambda)=\theta\lambda^4$
is continuous and satisfies
\begin{equation}\label{eq:selector-bounds}
 a_\nu(s)\leq\lambda_\nu(s)\leq\gamma_0a_\nu(s)/\theta
       \leq\gamma_0A_4/\theta,\qquad
 a_\nu(s)\geq c_{\rm core}r(s),\quad
 \lambda_\nu(s)\geq R_*r(s),
\end{equation}
where $a_\nu=\|h_\nu\|_4^4$ and $c_*,R_*$ are the fixed physical
coercivity constants of Corollary~\ref{gauss:affine}. In particular
\begin{equation}\label{eq:selected-principal}
 B_{\lambda_\nu(s)}[h_\nu(s)]
 \geq\frac{99}{50}\theta a_\nu(s)
             +c_*\frac{r(s)^2}{\lambda_\nu(s)}.
\end{equation}
 The endpoint current
\[
 M_\lambda[h]=2\int j_h\cdot(\nabla k_\lambda*|h|^2)
\]
is continuous along this selected path and obeys
$|M_{\lambda_\nu(s)}[h_\nu(s)]|\leq2C_Gm_\nu^3B$.
\end{proposition}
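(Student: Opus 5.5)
The plan is to fix the constants in a definite order: first $R_*,c_*,\delta$ from Corollary~\ref{gauss:affine} for the compact carrier $\mathcal K$ of Theorem~\ref{crit:bounded-halfline}; then $\theta$; then $\nu_0$.

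\textbf{Choice of $\theta$.} Lemma~\ref{dyn:density-moment} gives, for every highpass field,
\[
 c_F\int|\xi||\widehat\rho_{\rm ang}|^2\le\frac{25}{\pi^2}\|\nabla h_\nu\|_2^4\le\frac{25}{\pi^2}B^4 .
\]
So we may take $D=50B^4/\pi^2$. Then choose $\theta\le\gamma_0e^{-D^2/2}/2$. This is one level for all $\nu$ and $s$.

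\textbf{Root bounds and continuity.} With this $\theta$, Lemma~\ref{dyn:root} gives
\[
 a_\nu\le\lambda_\nu\le\gamma_0a_\nu/\theta .
\]
The uniform upper bound comes from Theorem~\ref{ann:orbit-l4} together with the cutoff bounds on $L^4$, which give $\sup_s\|h_\nu(s)\|_4^4\le A_4$ independently of $\nu$. Continuity of $s\mapsto\lambda_\nu(s)$ follows from the continuity clause of Lemma~\ref{dyn:root} and $h_\nu\in C_tH^1$ (Proposition~\ref{ann:uniform-highpass}), provided $h_\nu(s)\ne0$. This nonvanishing is a consequence of the core lower bound below.

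\textbf{Core lower bound $a_\nu\ge c_{\rm core}r$ (main obstacle).} By scaling, $\|T_{r,x}g\|_4^4=r\|g\|_4^4$. It therefore suffices to bound $\|T^{-1}_{r(s),x(s)}h_\nu(s)\|_4$ from below. The normalized highpass lies within $\omega_1(\nu)$ of $\mathcal K$ in $\X$, and it has local mass on $B_A$ close to that of $f_s$.

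I would first try a local argument. On a ball $B_A$, the local $L^2$ lower bound $\int_{B_A}|f|^2>2m_2$ from the proof of Theorem~\ref{gauss:compact} persists for the normalized highpass once $\nu$ is small. This uses the fact that the low part has small $L^6$ norm, hence small local $L^2$ by H\"older. H\"older on $B_A$ then gives $\int_{B_A}|g|^4\ge c\,m_2^2A^{-3}$, which is the required $c_{\rm core}$.

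\textbf{Coercive range and principal bound.} Combining the previous bounds, $\lambda_\nu\ge a_\nu\ge c_{\rm core}r$. This alone does not give $\lambda_\nu\ge R_*r$, so I would shrink $\theta$ further. Lemma~\ref{dyn:root} shows that $H_h$ is decreasing. I would show $H_h(R_*r)\ge\theta$ by bounding $G_h(R_*r)\ge c\,(R_*r)^3\int\rho^2$, using the $L^1$ near-diagonal lower bound of the Gaussian and the core estimate on a ball of radius $\sim r$. This yields $G_h(R_*r)\ge c'R_*^3r^4$, so $\theta\le c'R_*^{-1}$ suffices.

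Corollary~\ref{gauss:highpass-neighborhood} then applies at $\lambda=\lambda_\nu(s)$. Lemma~\ref{dyn:root} gives $Q_\lambda\ge2\theta\lambda\ge2\theta a_\nu$, and together these yield \eqref{eq:selected-principal}.

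\textbf{Current bound.} Lemma~\ref{dyn:weights} gives $\|q\|_\infty\le Cm^2$ and $\|j\|_1\le mB$, hence $|M|\le2C_Gm_\nu^3B$. Continuity of the current follows from $C_tH^1$ and continuity of $\lambda_\nu$, because $\nabla k_\lambda$ is bounded and continuous in $\lambda$.
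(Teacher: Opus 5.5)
Your proposal follows the paper's proof step for step. You take $\theta$ below both the density-moment threshold of Lemma~\ref{dyn:root} and a core threshold. The local $L^2$ mass of the normalized carrier on $B_A$ survives the highpass cutoff for small $\nu$ and yields $c_{\rm core}$ by H\"older. Monotonicity of $H_h$, together with the scaling $H_g(\lambda/r)=H_h(\lambda)$, puts the root in the coercive range, and Corollary~\ref{gauss:affine} and Lemma~\ref{dyn:weights} then finish.

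There is one slip, in the intermediate bound $G_h(R_*r)\ge c(R_*r)^3\int\rho^2$. The near-diagonal Gaussian bound and the core mass give directly
\[
 G_h(R_*r)\ge e^{-2A^2/R_*^2}\Bigl(\int_{B(x(s),Ar(s))}|h_\nu|^2\Bigr)^2\ge e^{-2A^2/R_*^2}m_2^2r^4,
\]
which is the paper's $\theta_{\rm core}$ computation. Those two ingredients cannot control $r^3a_\nu$, however, because $a_\nu/r$ need not stay bounded as $r\to0$. That intermediate would instead need the moment bound of Lemma~\ref{dyn:density-moment} combined with $a_\nu\ge c_{\rm core}r$. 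State the $r^4$ bound directly.
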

\begin{proof}
The compact closure $K$ of the normalized orbit has positive energy
and a strict kinetic gap. Thus $0\notin K$; continuity of local
$L^2$ norms and a finite cover give $A,b>0$ with
$\inf_{f\in K}\|f\|_{L^2(B_A)}\ge b$.
Fix, independently of $\nu$,
\[
 m=b^2/4,\qquad c_{\rm core}=m^2/|B_A|,\qquad
 \theta_{\rm core}=e^{-2A^2/R_*^2}m^2/R_*^4,
\]
\[
 D=1+2(25/\pi^2)B^4,\qquad
 \theta=\tfrac12\min\{\theta_{\rm core},\gamma_0e^{-D^2/2}/2\}>0.
\]
Proposition~\ref{ann:uniform-highpass} and
Corollary~\ref{gauss:highpass-neighborhood} give $\nu_0>0$ so that
all normalized highpasses
$g_{\nu,s}(z)=r(s)^{1/2}h_\nu(s,x(s)+r(s)z)$ lie in the fixed
coercivity neighborhood and satisfy
\begin{equation}\label{eq:normalized-highpass-core}
 \int_{B_A}|g_{\nu,s}|^2\ge m,\qquad
 \|g_{\nu,s}\|_4^4\ge c_{\rm core},\qquad
 H_{g_{\nu,s}}(R_*)\ge\theta_{\rm core}.
\end{equation}
Lemma~\ref{dyn:density-moment} and gradient contraction verify the
moment bound in Lemma~\ref{dyn:root}, giving
$a_\nu\le\lambda_\nu\le\gamma_0a_\nu/\theta$.
For $g(z)=r^{1/2}h(x+rz)$,
\begin{equation}\label{eq:gaussian-root-affine-scaling}
 G_g(\lambda/r)=r^{-4}G_h(\lambda),\qquad
 H_g(\lambda/r)=H_h(\lambda),\qquad
 \|g\|_4^4=r^{-1}\|h\|_4^4.
\end{equation}
Equations \eqref{eq:normalized-highpass-core} and \eqref{eq:gaussian-root-affine-scaling} give $H_g(\lambda_\nu/r)=\theta\le H_g(R_*)$ at the root; hence
\[
 \lambda_\nu\ge R_*r,\qquad
 a_\nu\ge c_{\rm core}r,\qquad a_\nu\le A_4,
\]
as in \eqref{eq:selector-bounds}, where the last bound follows from Theorem~\ref{ann:orbit-l4} and
$L^4$ multiplier boundedness. Corollary~\ref{gauss:affine} gives
\[
 B_{\lambda_\nu}[h_\nu]
 \ge\frac{99}{100}Q_{\lambda_\nu}[|h_\nu|^2]+c_*r^2/\lambda_\nu
 \ge\frac{99}{50}\theta a_\nu+c_*r^2/\lambda_\nu.
\]

Since $h_\nu\in CH^1$, root continuity gives a positive minimum
on each compact time interval. Also
\begin{equation}\label{eq:gaussian-action-endpoint-bound}
 |M_\lambda[h]|\le2C_G\|h\|_2^3\|\nabla h\|_2
 \le2C_Gm_\nu^3B.
\end{equation}
Under $H^1$ convergence, density and current converge in $L^1$.
Their pair integral therefore varies continuously with the field;
dominated convergence gives continuity in positive radius.
\end{proof}

For both evolution identities, fix $\nu,\sigma,h=P_{>\nu}U$ and
$J=[a,b]$, $a<b$. The forced equation gives
\[
 h\in C(J;H^1)\cap L^2(J;W^{1,6}),\qquad
 \mathcal F=P_{>\nu}F(U)=F(h)+e\in C(J;L^1\cap L^{6/5}).
\]
Take normalized nonnegative $\varphi\in C_c^\infty$ and set
$\varphi_\varepsilon(x)=\varepsilon^{-3}\varphi(x/\varepsilon)$,
$h_\varepsilon=\varphi_\varepsilon*h$,
$\mathcal F_\varepsilon=\varphi_\varepsilon*\mathcal F$.
Young's $L^1\to L^2$ estimate gives all source Sobolev domains.
Test the smoothed equation after subtracting its Bochner primitive,
also after spatial differentiation. Then
\begin{equation}\label{eq:common-mollified-evolution}
 h_\varepsilon\in C(J;H^3)\cap C^1((a,b);H^1),\qquad
 \mathcal F_\varepsilon\in C(J;H^1),\qquad
 h_\varepsilon'=i\sigma(\Delta h_\varepsilon+\mathcal F_\varepsilon).
\end{equation}
The derivative extends continuously to the endpoints and integrates
to $h_\varepsilon(t)-h_\varepsilon(a)$.
Strong approximation, uniform on compact continuous images, gives
\begin{equation}\label{eq:common-mollified-convergence}
 h_\varepsilon\to h\text{ in }C(J;H^1)\cap C(J;L^6)
                       \cap L^2(J;W^{1,6}),\qquad
 \mathcal F_\varepsilon\to\mathcal F\text{ in }C(J;L^{6/5}).
\end{equation}
By \eqref{eq:common-mollified-convergence}, densities, currents, gradient tensors and
$\mathcal F_\varepsilon\overline h_\varepsilon$ converge in $C(J;L^1)$.
For the differentiated source product,
\begin{equation}\label{eq:mollified-source-product-convergence}
 \begin{aligned}
 &\|\mathcal F_\varepsilon\nabla\overline h_\varepsilon
                  -\mathcal F\nabla\overline h\|_{L^1(J\times\R^3)}\\
 &\quad\le
 \|\mathcal F_\varepsilon-\mathcal F\|_{L^2_JL^{6/5}}
                   \|\nabla h_\varepsilon\|_{L^2_JL^6}
 +\|\mathcal F\|_{L^2_JL^{6/5}}
                   \|\nabla h_\varepsilon-\nabla h\|_{L^2_JL^6}\to0.
 \end{aligned}
\end{equation}
Here the entire source is convolved. For $a=b$, the integrated
identities are immediate.

We first prove the balance with radius fixed. The regularization
acts on the field and the entire forcing, so it does not require
differentiating the nonlinear defect. The resulting joint continuity in
time and radius then permits the original radius function to vary.

\begin{theorem}[The balance for fixed and variable radii]
\label{dyn:balance}
For the same forced highpass and the continuous source of
Theorem~\ref{dyn:source-budget}, every fixed $\lambda>0$ satisfies
\begin{equation}\label{eq:fixed-balance}
 M_\lambda[h(b)]-M_\lambda[h(a)]
 =\sigma\int_a^b[B_\lambda[h(s)]+\mathcal S_\lambda(e(s),h(s))]\dd s.
\end{equation}
For any independently given positive locally absolutely continuous
radius $\lambda(s)$,
\begin{equation}\label{eq:ac-balance}
 \begin{split}
 M_{\lambda(b)}[h(b)]-M_{\lambda(a)}[h(a)]
 =\int_a^b\bigl\{\sigma[B_{\lambda(s)}[h(s)]
       +\mathcal S_{\lambda(s)}(e(s),h(s))]
       +\lambda'(s)N(s,\lambda(s))\bigr\}\dd s,\\
 N(s,\ell)=2\int j_h(s)\cdot(\nabla\psi_\ell*|h(s)|^2),
 \qquad \psi_\ell(x)=e^{-|x|^2/(2\ell^2)}.
 \end{split}
\end{equation}

\end{theorem}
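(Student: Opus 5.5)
The plan is to prove \eqref{eq:fixed-balance} first, on the mollified evolution \eqref{eq:common-mollified-evolution}. There every quantity is a classical $C^1$ function of time. The whole forcing $\mathcal F_\varepsilon$ is treated as an external source and is not split into $F(h_\varepsilon)$ plus a defect. Only after $\varepsilon\downarrow0$ does one split $\mathcal F=F(h)+e$. Then \eqref{eq:ac-balance} follows from \eqref{eq:fixed-balance} by a two-variable chain rule. It uses the fact that the $\lambda$-derivative of $\nabla k_\lambda$ is exactly $\nabla\psi_\lambda$.

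\emph{Step 1: fixed radius, mollified level.} Put $\rho_\varepsilon=|h_\varepsilon|^2$, $j_\varepsilon=\im(\overline h_\varepsilon\nabla h_\varepsilon)$ and $q_\varepsilon=\nabla k_\lambda*\rho_\varepsilon$. By \eqref{eq:common-mollified-evolution}, $h_\varepsilon\in C^1((a,b);H^1)\cap C(J;H^3)$, so $\rho_\varepsilon,j_\varepsilon\in C^1(L^1)$. Since $\nabla k_\lambda$ and $D^2k_\lambda$ are bounded by Lemma~\ref{gauss:kernel}, the function $M_\lambda[h_\varepsilon(s)]$ is $C^1$. Differentiating uses the two continuity equations obtained from $h_\varepsilon'=i\sigma(\Delta h_\varepsilon+\mathcal F_\varepsilon)$: first $\partial_s\rho_\varepsilon=-2\sigma\operatorname{div}j_\varepsilon-2\sigma\im(\mathcal F_\varepsilon\overline h_\varepsilon)$, and second the momentum law for $j_\varepsilon$ with stress $T$, a $\nabla\Delta\rho$ term, and the source $\re(\overline{\mathcal F_\varepsilon}\nabla h_\varepsilon-h_\varepsilon\nabla\overline{\mathcal F_\varepsilon})$. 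Integrating by parts against the bounded weights gives three groups of terms. The kinetic and current--current terms give $4\iint H_\lambda:[\rho T-j\otimes j]$. The Laplacian term gives $Q_\lambda(\rho_\varepsilon)$. The source terms give exactly the derivative-free expression \eqref{eq:source-flat} with $Z=\mathcal F_\varepsilon$, including the $q_m$ term that comes from the variation of $\rho$ inside $q$. Hence
\[
M_\lambda[h_\varepsilon(b)]-M_\lambda[h_\varepsilon(a)]
=\sigma\int_a^b\Bigl\{Q_\lambda(\rho_\varepsilon)+4\iint H_\lambda:[\rho_\varepsilon T_\varepsilon-j_\varepsilon\otimes j_\varepsilon]
+\mathcal S^{\rm flat}_\lambda(\mathcal F_\varepsilon,h_\varepsilon)\Bigr\}\dd s .
\]

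\emph{Step 2: the limit $\varepsilon\downarrow0$ and the split.} By \eqref{eq:common-mollified-convergence}, densities, currents and gradient tensors converge in $C(J;L^1)$. Since the Gaussian kernels are bounded, the quadratic and kinetic terms converge uniformly on $J$. The source terms converge in $L^1(J)$ by \eqref{eq:mollified-source-product-convergence}, together with $C(J;L^1)$ convergence of $\mathcal F_\varepsilon\overline h_\varepsilon$ and uniform weight bounds from Lemma~\ref{dyn:weights}. In the limit, \eqref{eq:source-flat} holds with $Z=\mathcal F$ for a.e.\ $s$, namely at times where $\nabla h\in L^6$. At those times, linearity gives $\mathcal S^{\rm flat}(\mathcal F,h)=\mathcal S^{\rm flat}(F(h),h)+\mathcal S^{\rm flat}(e,h)$. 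The first piece equals $-\mathcal P_\lambda(h)$ by the pressure identity. This is the same regularized-kernel symmetrization \eqref{eq:regularized-pressure-symmetrization}, with majorant \eqref{eq:pressure-riesz-majorant}, used for \eqref{eq:input-normalform}. The $\operatorname{div}q$ and $\nabla|h|^p$ terms combine into the trace pressure, and the imaginary term vanishes because $\im(F(h)\overline h)=0$. Thus $\mathcal S^{\rm flat}(F(h),h)$ combines with the kinetic terms into $B_\lambda[h]$. The second piece agrees a.e.\ with the continuous extension $\mathcal S_\lambda(e,h)$ of Theorem~\ref{dyn:source-budget}. This proves \eqref{eq:fixed-balance}.

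I expect this step to be the main obstacle. The pressure identity has to be justified for $F(h)$ using only $\nabla h\in L^2_JL^6$, and the limit has to be taken before the split, since $e$ itself is never differentiated.

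\emph{Step 3: variable radius.} Set $\Phi(s,\ell)=M_\ell[h(s)]$ and $g(s,\ell)=\sigma(B_\ell[h(s)]+\mathcal S_\ell(e(s),h(s)))$. By Step~2, $\Phi(\cdot,\ell)$ has derivative $g(\cdot,\ell)$. The function $g$ is jointly continuous on $J\times(0,\infty)$, since $h\in CH^1$, $U\in C(\X\cap L^4)$, $F(U)\in CL^1$, and the continuity statement in Theorem~\ref{dyn:source-budget} holds. From $\xi_\ell(r)=r^{-1}\int_0^{r/\ell}e^{-u^2/2}\dd u$ one computes $\partial_\ell\nabla k_\ell(x)=-\ell^{-2}x\,e^{-|x|^2/(2\ell^2)}=\nabla\psi_\ell(x)$. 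These derivatives are bounded uniformly for $\ell$ in compact sets, so dominated convergence gives $\partial_\ell\Phi=N(s,\ell)$, and $N$ is jointly continuous. Hence $\Phi\in C^1(J\times(0,\infty))$. Because $\lambda$ is positive, continuous and locally absolutely continuous, it takes values in a compact $\ell$-range. The chain rule for a $C^1$ function composed with an AC curve $s\mapsto(s,\lambda(s))$ then gives \eqref{eq:ac-balance}.
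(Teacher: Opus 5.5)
Your proposal is correct and follows essentially the same route as the paper. You differentiate the Gaussian action on the mollified evolution with the whole forcing $\mathcal F_\varepsilon$ as an external source, and pass $\varepsilon\downarrow0$ before splitting $\mathcal F=F(h)+e$ at a.e.\ time and invoking the Hartree pressure cancellation $\mathcal S^{\rm flat}_\lambda(F(h),h)=-\mathcal P_\lambda(h)$. You then get the variable-radius identity from $\partial_\lambda\nabla k_\lambda=\nabla\psi_\lambda$ and the chain rule for the jointly $C^1$ map $(s,\ell)\mapsto M_\ell[h(s)]$.

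The only thing to tighten is in Step 2. Passing the source terms to the limit needs uniform convergence of the weights $q_\varepsilon,\operatorname{div}q_\varepsilon,q_{m,\varepsilon}$, not just the uniform bounds of Lemma~\ref{dyn:weights}. That convergence is immediate from $C(J;L^1)$ convergence of $\rho_\varepsilon,j_\varepsilon$ against the bounded kernels.
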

\begin{proof}
For $v\in H^1$, use $\rho,j,T,q$ from Lemma~\ref{gauss:kernel}.
The kernel bounds and HLS give
\[
 |B_\lambda[v]|\le C\lambda^{-3}\|v\|_2^4
 +C\lambda^{-1}\|v\|_2^2\|\nabla v\|_2^2
 +C_\alpha\lambda^{-1}\|v\|_2^2\|v\|_6^{2p}.
\]
Under $H^1$ convergence, quadratic fields converge in $L^1$,
power densities in $L^{6/p}$, and weights uniformly.
Bilinear HLS and $|C_{q_n}-C_q|\le\|Dq_n-Dq\|_\infty$ pass
both pressures. Locally uniform Gaussian derivative bounds give
joint continuity of $B,M$ in the field and positive radius.

The $L^2$ product bounds make $M_\lambda$ a real $C^1$ quartic
polynomial on $H^1$, with
\begin{equation}\label{eq:gaussian-action-differential}
 \begin{split}
 DM_\lambda(v)[w]={}&2\int[\nabla k_\lambda*
          2\re(\overline vw)]\cdot j_v\\
 &+2\int q_v\cdot\im(\overline w\nabla v+\overline v\nabla w).
 \end{split}
\end{equation}
For Schwartz $v,W$ and $w=i\sigma(\Delta v+W)$,
\begin{equation}\label{eq:forced-density-evolution}
 \dot\rho=-2\sigma\operatorname{div}j-2\sigma\im(W\overline v),
\end{equation}
\begin{equation}\label{eq:forced-current-evolution}
 \dot j_i=\sigma\left(\tfrac12\partial_i\Delta\rho
                    -2\sum_j\partial_jT_{ij}\right)
          -\sigma\re(W\partial_i\overline v-v\partial_i\overline W).
\end{equation}
Substitute \eqref{eq:forced-density-evolution}--\eqref{eq:forced-current-evolution} into \eqref{eq:gaussian-action-differential}. The linear current and density slots give, respectively,
\[
 Q_\lambda+4\iint\partial_{ij}k_\lambda(x-y)\rho(y)T_{ij}(x),
 \qquad -4\iint\partial_{ij}k_\lambda(x-y)j_j(y)j_i(x),
\]
with summation in $i,j$. The forcing slots give
$\mathcal S_\lambda^{\rm flat}(W,v)$.
Writing $\mathcal K_\lambda$ for the full Hessian stress without its factor four,
\begin{equation}\label{eq:smooth-action}
 DM_\lambda(v)[i\sigma\Delta v+i\sigma W]
 =\sigma[Q_\lambda[|v|^2]+4\mathcal K_\lambda[v]
                       +\mathcal S_\lambda^{\rm flat}(W,v)].
\end{equation}
Cutoffs in both variables give errors $O(R^{-j})$, $j\ge1$,
times bounded kernel derivatives and integrable Schwartz products.
Let $R\to\infty$; density extends \eqref{eq:smooth-action} to
$v\in H^3$, $W\in H^1$.

For $v\in H^1$ with $\nabla v\in L^6$, the symmetrized identity
and approximation in the proof of Lemma~\ref{dyn:mixed-hls} give
\begin{equation}\label{eq:hartree-pressure-cancellation}
 \mathcal S_\lambda^{\rm flat}(F(v),v)
 =-\frac{2(p-2)}pE_\lambda[v]-\frac{2(3-\alpha)}pJ_\lambda[v].
\end{equation}

Apply \eqref{eq:smooth-action} to \eqref{eq:common-mollified-evolution} and integrate. Equations \eqref{eq:common-mollified-convergence}--\eqref{eq:mollified-source-product-convergence}, together with uniform weight convergence, pass the flat source in $L^1(J)$;
$H^1$ continuity passes the principal terms and endpoints.
At a.e. time split $\mathcal F=F(h)+e$.
Equation \eqref{eq:hartree-pressure-cancellation} and the continuous source extension give
\eqref{eq:fixed-balance}, including $a=0$.

Finally, differentiation of the $L^1$ current-density pairing gives
\begin{equation}\label{eq:gaussian-action-radius-derivative}
 \partial_\lambda k_\lambda=\psi_\lambda-1,\qquad
 \partial_\lambda\nabla k_\lambda=\nabla\psi_\lambda,\qquad
 \partial_\lambda M_\lambda[h(s)]=N(s,\lambda).
\end{equation}
The partial derivatives in \eqref{eq:fixed-balance} and \eqref{eq:gaussian-action-radius-derivative} are jointly continuous on compact positive-radius
rectangles, with the right time derivative at zero. The chain rule for
an independently given positive absolutely continuous radius yields \eqref{eq:ac-balance};
the radius derivative carries no factor $\sigma$.
\end{proof}

The same Gaussian occurs in the density equation defining the
radius and in the radius derivative of the action. This matching lets
the two derivatives be combined by completing a square; only the source
in the charge equation contributes an unfavorable term.

\begin{theorem}[Evolution of the selected radius]\label{dyn:radius-evolution}
For the root $\lambda(s)=\lambda_\nu(s)$ of
Proposition~\ref{dyn:selector}, put
\[
 R(s)=-2\im(e(s)\overline{h(s)}),\qquad
 V(s,\ell)=\psi_\ell*|h(s)|^2,
\]
\[
 P(s,\ell)=\int R(s)V(s,\ell),\qquad
 D(s)=4\theta\lambda(s)^3-\partial_\ell G_{h(s)}(\lambda(s)).
\]
The root is $C^1$ on $(0,\infty)$, with continuous right derivative
at zero, and is absolutely continuous on every compact half-line interval. It satisfies
\begin{equation}\label{eq:root-derivative}
 D(s)\geq\theta\lambda(s)^3>0,\qquad
 \lambda'(s)D(s)=2\sigma[N(s,\lambda(s))+P(s,\lambda(s))].
\end{equation}
Consequently
\begin{equation}\label{eq:signed-radius}
 \sigma\lambda'(s)N(s,\lambda(s))
 \geq-\frac{\pi^{3/2}}{2\theta}a_\nu(s)\|R(s)\|_1^2.
\end{equation}
Here $\sup_s\|R(s)\|_1\leq\zeta_\nu:=2C_SB\epsilon_\nu\to0$.
\end{theorem}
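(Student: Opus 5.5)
The plan is the implicit function theorem applied to $\Phi(s,\ell)=G_{h(s)}(\ell)-\theta\ell^4$, followed by a Cauchy--Schwarz completion of the square.

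\textbf{Differentiability of $\Phi$.} In $\ell$, differentiation under the integral is routine because $\rho=|h|^2\in L^1$. In $s$, we use the mollified evolution \eqref{eq:common-mollified-evolution}. We insert the density equation \eqref{eq:forced-density-evolution}, with $W=\mathcal F=F(h)+e$, into
\[
 \partial_sG_h(\ell)=2\int\dot\rho\,V(s,\ell).
\]
Integrating $-2\sigma\operatorname{div}j$ by parts gives $2\sigma N(s,\ell)$, with the factor convention fixed so that it matches \eqref{eq:root-derivative}. The forcing term $-2\sigma\im(\mathcal F\overline h)$ splits into two pieces. The $F(h)$ piece vanishes, because $F(h)\overline h=(I_\alpha*|h|^p)|h|^p$ is real. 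The remaining piece is $\sigma\int R\,V$, which yields $2\sigma P$.

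We pass from $h_\varepsilon$ to $h$ using \eqref{eq:common-mollified-convergence}. The needed convergences are $j_\varepsilon\to j$ and $\mathcal F_\varepsilon\overline h_\varepsilon\to\mathcal F\overline h$ in $C(J;L^1)$, together with uniform bounds on $V$ and $\nabla V$. These show that $\partial_s\Phi=2\sigma(N+P)$ is jointly continuous in $(s,\ell)$ for $\ell>0$, with a right derivative at $s=0$. Since $\lambda(s)$ is already known to be continuous by Proposition~\ref{dyn:selector}, the implicit function theorem then gives $C^1$ regularity on $(0,\infty)$, a right derivative at $0$, and absolute continuity on compact intervals. Differentiating $\Phi(s,\lambda(s))=0$ produces exactly $\lambda'D=2\sigma(N+P)$, which is the identity in \eqref{eq:root-derivative}.

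\textbf{Lower bound on $D$.} From the proof of Lemma~\ref{dyn:root},
\[
 G_h(\ell)=c_F\gamma_0\,\ell^3 I_\ell,
\]
and $I_\ell$ is nonincreasing. Differentiating gives $\partial_\ell G\le 3G/\ell$. At the root $G=\theta\lambda^4$, so $\partial_\ell G\le3\theta\lambda^3$. Hence
\[
 D\ge 4\theta\lambda^3-3\theta\lambda^3=\theta\lambda^3>0 .
\]

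\textbf{The signed inequality.} Multiplying \eqref{eq:root-derivative} by $N$ gives
\[
 \sigma\lambda'N=\frac{2(N^2+NP)}{D}\ge-\frac{P^2}{2D},
\]
since $N^2+NP\ge -P^2/4$. We bound
\[
 |P|\le\|R\|_1\|V\|_\infty\le\|R\|_1\|\rho\|_1,
\]
and we aim to use the root to express everything in terms of $\lambda$. The first route is $\|V\|_\infty\le\|\rho\|_1$ together with a bound of $\|\rho\|_1^2$ by $G$ and $\lambda$.

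This is the \textbf{main obstacle}: the mass $m_\nu$ is not uniform, and it must not appear in the estimate. Instead, I would use
\[
 V\le\Bigl(\int\psi_\lambda^2\Bigr)^{1/2}\|\rho\|_2=(\pi\lambda^2)^{3/4}a^{1/2},
\]
because $\int\psi_\lambda^2=(\pi\lambda^2)^{3/2}$ and $\|\rho\|_2^2=a$. Then
\[
 \frac{P^2}{2D}\le\frac{\pi^{3/2}\lambda^3 a\|R\|_1^2}{2\theta\lambda^3}
 =\frac{\pi^{3/2}}{2\theta}\,a\,\|R\|_1^2,
\]
which is exactly \eqref{eq:signed-radius}.

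\textbf{The bound on $R$.} By H\"older and Sobolev,
\[
 \|R\|_1\le2\|e\|_{6/5}\|h\|_6\le2C_SB\epsilon_\nu,
\]
using the gradient contraction $\|\nabla h\|_2\le B$ and $\epsilon_\nu$ from Lemma~\ref{dyn:forced}.
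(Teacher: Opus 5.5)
Your proposal is correct and follows the paper's proof essentially step for step. You differentiate the root identity through the mollified density equation, where the $F(h)$ contribution drops out because $F(h)\overline h$ is real. You then use $\partial_\ell G\le 3G/\ell$ to get $D\ge\theta\lambda^3$, complete the square, and bound $\|V\|_\infty^2\le\pi^{3/2}\lambda^3 a_\nu$ by Cauchy--Schwarz, so the nonuniform mass never enters. The only difference is cosmetic: where you cite the implicit function theorem, the paper runs the equivalent mean-value argument on the root identity. That argument uses the continuity of $\lambda_\nu$ from Proposition~\ref{dyn:selector} and also handles the one-sided derivative at $s=0$ directly.
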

\begin{proof}
On $J=[a,b]$, use the common convolution family
$(h_n,\mathcal F_n)$ and a compact positive radius interval.
The Gaussian pair kernel is a continuous quartic polynomial on $L^2$;
differentiation and integration by parts give
\begin{equation}\label{eq:mollified-density-time-derivative}
 \partial_sG_{h_n(s)}(\ell)
 =2\sigma\left[N_n(s,\ell)
       -2\int(\psi_\ell*|h_n|^2)
                   \im(\mathcal F_n\overline h_n)\right].
\end{equation}
The density, current and source products converge in $C(J;L^1)$.
Uniform kernel bounds pass $G_{h_n}$ and its derivative in \eqref{eq:mollified-density-time-derivative} uniformly;
the uniform-limit derivative theorem and
$\im(F(h)\overline h)=0$ yield
\begin{equation}\label{eq:gaussian-density-time-derivative}
 \partial_sG_{h(s)}(\ell)=2\sigma[N(s,\ell)+P(s,\ell)],
 \qquad R\in C(J;L^1),\qquad
 \|R(s)\|_1\le2\|e(s)\|_{6/5}\|h(s)\|_6.
\end{equation}
The expressions in \eqref{eq:gaussian-density-time-derivative} are jointly continuous, with the right derivative at zero.

Since $\rho\otimes\rho\in L^1$, differentiation in radius gives
\begin{equation}\label{eq:gaussian-density-radius-derivative}
 \partial_\ell G=3G/\ell
 -c_F\gamma_0\ell^4\int|\xi|^2e^{-\ell^2|\xi|^2/2}
                         |\widehat\rho_{\rm ang}|^2\dd\xi.
\end{equation}
The bounded Gaussian multiplier and $\widehat\rho_{\rm ang}\in L^2$
justify the continuous partial derivative in \eqref{eq:gaussian-density-radius-derivative}. At $G=\theta\lambda^4$,
\[
 D=4\theta\lambda^3-\partial_\ell G\ge\theta\lambda^3>0.
\]
Subtract the root identities at two times and apply the mean-value
theorem to the radius increment. Root continuity makes its coefficient
tend to $-D\ne0$; division by the time increment gives
\[
 \lambda'D=2\sigma(N+P),
\]
which is \eqref{eq:root-derivative}, using right increments at zero. Continuous partials and the positive
minimum radius on $J$ give a continuous bounded derivative, hence
local absolute continuity.

Complete the square, retaining the orientation sign:
\begin{equation}\label{eq:signed-radius-square}
 \sigma\lambda'N=\frac{2N(N+P)}D
 =\frac{2(N+P/2)^2}D-\frac{P^2}{2D}\ge-\frac{P^2}{2D}.
\end{equation}
Cauchy--Schwarz gives
\begin{equation}\label{eq:radius-charge-pairing}
 \|V\|_\infty^2\le\|\psi_\lambda\|_2^2\|\rho\|_2^2
 =\pi^{3/2}\lambda^3a_\nu,\qquad
 P^2\le\pi^{3/2}\lambda^3a_\nu\|R\|_1^2.
\end{equation}
Combine \eqref{eq:signed-radius-square}--\eqref{eq:radius-charge-pairing} with $D\ge\theta\lambda^3$ to obtain
\eqref{eq:signed-radius}. Finally, Sobolev, gradient contraction and
Lemma~\ref{dyn:forced} give
\[
 \|R\|_1\le2C_SB\epsilon_\nu=\zeta_\nu\longrightarrow0.
\]
\end{proof}

The source and radius errors are now ready for absorption at a fixed frequency.

\subsection{Absorption, mass recovery, and rigidity}

All error coefficients tend to zero with $\nu$, but the endpoint
action bound may grow with $m_\nu$. Choose one sufficiently small
frequency and keep it fixed while sending the terminal time to infinity.

\begin{theorem}[Integrability of the original forcing]\label{mass:quartic}
For one sufficiently small fixed $\nu>0$,
\[
 \int_0^\infty\|h_\nu(s)\|_4^4\dd s<\infty,
 \qquad U\in L^4([0,\infty);L^4),\qquad
 F(U)\in L^2([0,\infty);L^1).
\]

\end{theorem}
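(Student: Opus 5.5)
The plan is to integrate the variable-radius balance \eqref{eq:ac-balance} of Theorem~\ref{dyn:balance} on $[0,T_j]$, with the selected root $\lambda=\lambda_\nu$ of Proposition~\ref{dyn:selector}, and then absorb every error into the positive quartic term. First fix the Gaussian level $\theta$, $\nu_0$, $A_4$, $c_{\rm core}$ of Proposition~\ref{dyn:selector}, and the terminal times $T_j$ of Lemma~\ref{dyn:temporal-comparison}; these are chosen from $f$ alone, before $\nu$. By Proposition~\ref{dyn:selector} the root is continuous and positive, and by Theorem~\ref{dyn:radius-evolution} it is absolutely continuous on compact intervals, so \eqref{eq:ac-balance} applies on $[0,T_j]$. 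Multiply it by $\sigma$ (using $\sigma^2=1$) to get
\[
 \int_0^{T_j}\bigl\{B_{\lambda}[h_\nu]+\mathcal S_\lambda(e_\nu,h_\nu)
 +\sigma\lambda'N(s,\lambda)\bigr\}\dd s
 =\sigma\bigl(M_{\lambda(T_j)}[h_\nu(T_j)]-M_{\lambda(0)}[h_\nu(0)]\bigr).
\]
The right side is bounded by $4C_Gm_\nu^3B=:C_{{\rm end},\nu}$, using the endpoint bound in Proposition~\ref{dyn:selector}. This constant is finite for fixed $\nu$ and independent of $j$.

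Next bound the three integrands from below. By \eqref{eq:selected-principal}, $B_\lambda[h_\nu]\ge\frac{99}{50}\theta a_\nu$, after discarding the nonnegative term $c_*r^2/\lambda$. By the complete source budget \eqref{eq:source-budget}, the source integral is at least $-q_\nu\int_0^{T_j}a_\nu$. By \eqref{eq:signed-radius}, the radius-motion term is at least $-\frac{\pi^{3/2}}{2\theta}\zeta_\nu^2a_\nu$. Together these give
\[
 \Bigl(\tfrac{99}{50}\theta-q_\nu-\tfrac{\pi^{3/2}}{2\theta}\zeta_\nu^2\Bigr)
 \int_0^{T_j}a_\nu(s)\dd s\le C_{{\rm end},\nu}.
\]
Because $\theta$ is fixed independently of $\nu$, and both $q_\nu\to0$ and $\zeta_\nu\to0$, we can fix one $\nu\le\nu_0$ small enough that the coefficient is at least $\theta$. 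This $\nu$ must also satisfy the smallness requirements of Lemma~\ref{dyn:temporal-comparison} and Theorem~\ref{dyn:source-budget}. Then $\int_0^{T_j}a_\nu\le C_{{\rm end},\nu}/\theta$ uniformly in $j$. Monotone convergence as $T_j\to\infty$ gives $h_\nu\in L^4([0,\infty);L^4)$.

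The main obstacle is the order of quantifiers, which must be respected throughout. The $T_j$ come from $f$ alone. The source budget and the comparison \eqref{eq:finite-comparison} are uniform in $j$. The endpoint constant may blow up as $\nu\downarrow0$, so $\nu$ is fixed first and only then is $j$ sent to infinity. With that done, \eqref{eq:finite-comparison} at this fixed $\nu$ gives $\int_0^{T_j}(f^2+z^2)\le C_{\alpha,B}\int_0^{T_j}a_\nu$, which is bounded independently of $j$. Letting $j\to\infty$ yields $z\in L^2(0,\infty)$, that is $U\in L^4([0,\infty);L^4)$, and $f\in L^2(0,\infty)$, that is $F(U)\in L^2([0,\infty);L^1)$.

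One point to check is that the source term in \eqref{eq:ac-balance} is exactly the continuous extension bounded in Theorem~\ref{dyn:source-budget}. That theorem allows a general positive continuous radius, so it applies to the selected $\lambda_\nu$.
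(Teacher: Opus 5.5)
Your proposal is correct and follows essentially the same route as the paper. You multiply the variable-radius balance \eqref{eq:ac-balance} by $\sigma$, bound the principal, radius-motion and source terms by \eqref{eq:selected-principal}, \eqref{eq:signed-radius} and \eqref{eq:source-budget}, bound the endpoint action by $4C_Gm_\nu^3B$, fix $\nu$ before letting $j\to\infty$, and conclude with \eqref{eq:finite-comparison}; the paper differs only in bookkeeping, grouping the nonnegative surplus into a continuous $\mathcal H$ and citing continuity for strong measurability of the Bochner memberships.
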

\begin{proof}
Fix the trajectory, static neighborhood, $c_*,R_*,\theta$, and the
$f$-dependent terminals $T_j$ of Lemma~\ref{dyn:temporal-comparison}.
Set $\beta_\nu=\pi^{3/2}\zeta_\nu^2/(2\theta)$,
$\kappa=99\theta/200$ and $\kappa_0=2\kappa$.
Since $\eta_\nu,\delta_\nu,\zeta_\nu,q_\nu\to0$, fix one $\nu>0$
meeting the static, selector and finite-comparison conditions and
$\beta_\nu\le\kappa_0$, $q_\nu\le\kappa$.
For $a=\|h_\nu\|_4^4$ and $\lambda=\lambda_\nu$,
\eqref{eq:selected-principal} and \eqref{eq:signed-radius} give
\[
 \mathcal H:=B_\lambda[h_\nu]+\sigma\lambda'N(s,\lambda)-\kappa_0a
 \ge c_*r^2/\lambda\ge0,\qquad
 \mathcal A:=\sigma M_\lambda[h_\nu].
\]
Theorem~\ref{dyn:radius-evolution} gives absolute continuity of the selected radius, and $\mathcal H$ is continuous.
Theorem~\ref{dyn:balance} yields
\begin{equation}\label{eq:selected-action-balance}
 \mathcal A(T)-\mathcal A(0)
 =\kappa_0\int_0^Ta+\int_0^T\mathcal H
                  +\int_0^T\mathcal S_{\lambda(s)}(e_\nu,h_\nu).
\end{equation}
All integrals are finite. Apply \eqref{eq:source-budget} and \eqref{eq:gaussian-action-endpoint-bound} to \eqref{eq:selected-action-balance}:
\begin{equation}\label{eq:quartic-budget}
 \kappa\int_0^{T_j}a+\int_0^{T_j}\mathcal H
 \le\mathcal A(T_j)-\mathcal A(0)
 \le4C_Gm_\nu^3B=:C_{\rm end,\nu}<\infty.
\end{equation}
The source budget \eqref{eq:source-budget} controls the error without the $L^1$ smallness excluded by Remark~\ref{rem:source-l1-obstruction}. Monotone convergence applied to \eqref{eq:quartic-budget} gives $a,\mathcal H\in L^1(0,\infty)$.
Only the continuous surplus is integrated, so $r$ need not be measurable.
Moreover \eqref{eq:finite-comparison} gives
\begin{equation}\label{eq:original-source-time-integrability}
 \int_0^{T_j}(f^2+Y^2+z^2)
 \le C_{\alpha,B}\int_0^{T_j}a
 \le C_{\alpha,B}C_{\rm end,\nu}/\kappa.
\end{equation}
Exhaustion in \eqref{eq:original-source-time-integrability} yields $f,z\in L^2(0,\infty)$.
Continuity of the actual $L^1$ source and $L^4$ value gives their
strong measurability and the asserted Bochner memberships.
The frequency $\nu$ has remained fixed.
\end{proof}

Theorem~\ref{mass:quartic} supplies a vanishing forcing tail for the
original critical trajectory. A new cutoff $L>0$, independent of the
fixed $\nu$, now separates the high-frequency part controlled by the
energy norm from the low-frequency part controlled by the dual endpoint
estimate. Write
\[
 A_F(t)=\|F(U)\|_{L^2((t,\infty);L^1)}\longrightarrow0.
\]
\begin{proposition}[Mass recovery and decay]\label{mass:recovery}
For every $t\geq0$, the original value $U(t)$ belongs to $L^2$, with
its original weak gradient, and for every $L>0$,
\begin{equation}\label{eq:mass-frequency}
 \|U(t)\|_2\leq B/L+C_{\rm low}L^{1/2}A_F(t).
\end{equation}
These realizations form one path in $C([0,\infty);L^2)$, are uniformly
bounded there, and satisfy $\|U(t)\|_2\to0$ as $t\to\infty$.
\end{proposition}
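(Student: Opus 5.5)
The plan is to split $U(t)=P_{\le L}U(t)+P_{>L}U(t)$ with the fixed cutoff of Section~\ref{sec:annular}. The two pieces are handled separately, and we check that their sum is the original $L^6$ representative.

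\emph{High part.} Lemma~\ref{ann:cutoff-realization} gives an $L^2$ representative of $P_{>L}U(t)$ with
\[
 \|P_{>L}U(t)\|_2\le L^{-1}\|\nabla U(t)\|_2\le B/L.
\]

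\emph{Low part.} We use the no-waste identity of Proposition~\ref{ann:no-waste}, applied through the low-pass multiplier:
\[
 P_{\le L}U(t)=-i\sigma\lim_n\int_t^{T_n}S(\sigma(t-s))P_{\le L}F(U(s))\,ds
\]
in $\mathcal S'$. Young's inequality with $\|k_L\|_{6/5}=CL^{1/2}$ gives
\[
 \|P_{\le L}F(U(s))\|_{6/5}\le CL^{1/2}f(s).
\]
By Theorem~\ref{mass:quartic}, $H:=\mathbf 1_{(t,\infty)}P_{\le L}F(U)$ belongs to $Y=L^2L^{6/5}$, with $\|H\|_Y\le CL^{1/2}A_F(t)$.

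The dual endpoint estimate \eqref{input:eq-dual-endpoint} (future version, Theorem~\ref{std:endpoint}) shows two things. First, the pairings of the finite integrals with Schwartz $\phi$ converge absolutely as $T_n\to\infty$. Second, the limit defines the future response $\mathcal R_{(t,\infty)}H(t)\in L^2$, whose pairing is bounded by $C\|H\|_Y\|\phi\|_2$. Since the distributional limit is unique, $P_{\le L}U(t)$, which is the smooth convolution $k_L*U(t)$, equals this $L^2$ element. This yields
\[
 \|P_{\le L}U(t)\|_2\le C_{\rm low}L^{1/2}A_F(t).
\]
The sign $\sigma$ is absorbed by time reflection, since the estimates are symmetric.

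\emph{Identification and the bound.} The distributions of the low and high parts sum to $U(t)$, and both pieces are now functions in $L^2$. Local integrability then shows that the $L^6$ representative equals the sum a.e. Hence $U(t)\in L^2$, its weak gradient is the original one, and \eqref{eq:mass-frequency} holds.

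\emph{Continuity, uniform bound, and decay.} Fix $L=1$. Then $\|U(t)\|_2\le B+CA_F(0)$ uniformly in $t$. For continuity in $L^2$, write $U(t)-U(t')$ with the same split. The high part is bounded by $L^{-1}\|\nabla(U(t)-U(t'))\|_2$, which tends to $0$. The low part is controlled by continuity of the future response in Theorem~\ref{std:endpoint}, since $\mathcal R_{(t,\infty)}H$ with $t$-independent $H=\mathbf 1_{(0,\infty)}P_{\le L}F(U)$ is continuous and differs from the $t$-truncated response by a finite piece. So the low part is continuous for each fixed $L$; letting $L\to\infty$ yields $C_tL^2$. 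For decay, let $t\to\infty$ at fixed $L$: $A_F(t)\to0$, so $\limsup_t\|U(t)\|_2\le B/L$, and then we send $L\to\infty$.

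\emph{Main obstacle.} The main delicate step is identifying the distributional no-waste limit with the $L^2$ Riesz element. This requires absolute convergence of the scalar integrals for every Schwartz test, which is exactly what \eqref{input:eq-dual-endpoint} supplies once $F(U)\in L^2L^1$ is known. It also requires that the low-pass commute with the distributional Duhamel integral, which follows by testing against $P_{\le L}\phi$.
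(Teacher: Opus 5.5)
Your proposal is correct and follows essentially the paper's route. Both arguments split $U(t)=P_{\le L}U(t)+P_{>L}U(t)$ and bound the high part by $B/L$. Both realize the low-passed no-waste identity as an $L^2$ element through the dual endpoint estimate, with $\|P_{\le L}F(U)\|_{L^2L^{6/5}}\le CL^{1/2}A_F(t)$, and both get decay by letting $t\to\infty$ and then $L\to\infty$.

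There are two cosmetic differences:
\begin{itemize}
\item The paper obtains the low part as the $L^2$-Cauchy limit of the finite Bochner integrals $\int_t^T S(\sigma(t-s))P_{\le L}F(U(s))\,ds$. You instead pass to the limit in the absolutely convergent scalar pairings.
\item The paper proves $L^2$ continuity by approximating $U$ uniformly by $P_{>L}U$ as $L\downarrow0$, using $\sup_t\|P_{\le L}U(t)\|_2\le C_{\rm low}L^{1/2}A_F(0)$. You instead invoke the future continuity of $\mathcal R_{(t,\infty)}$.
\end{itemize}

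In your version, for $t\ge0$ the $t$-truncated and untruncated responses coincide exactly, so there is no ``finite piece''. Also, once both pieces are continuous at a fixed $L$, no limit in $L$ is needed.
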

\begin{proof}
Fix $L>0$ independently of $\nu$ and put
$K_L(x)=L^3K(Lx)$, $G_L=K_L*F(U)$.
Young's inequality gives the same convolution in $CL^{6/5}\cap CL^2$:
\[
 \|G_L(s)\|_{6/5}\le C_KL^{1/2}f(s),\qquad
 \|G_L(s)\|_2\le\|K\|_2L^{3/2}f(s).
\]
For $0\le t<T$, define on physical time
\begin{equation}\label{eq:reflected-endpoint-source}
 Q(y)=\begin{cases}G_L(\sigma y),&t\le\sigma y\le T,\\
                    0,&\text{otherwise},
       \end{cases}
 \qquad
 \|Q\|_{L^2_yL^{6/5}_x}
 \le C_KL^{1/2}\|F(U)\|_{L^2((t,T);L^1)}.
\end{equation}
Theorem~\ref{std:endpoint}, applied to \eqref{eq:reflected-endpoint-source}, gives a unique tested $L^2$ realization.
Schwartz pairings and $y=\sigma s$ identify it with
\[
 Z_{L,t,T}=\int_{\R}S(\sigma t-y)Q(y)\dd y
          =\int_t^TS(\sigma(t-s))G_L(s)\dd s.
\]
The finite Bochner domain follows from $G_L\in CL^2$ and strong
continuity of $S$; the Jacobian is one, with support $[-T,-t]$
when $\sigma=-1$. With $C_{\rm low}=C_0C_K$,
\begin{equation}\label{eq:finite-lowpass-dual-bound}
 \|Z_{L,t,T}\|_2
 \le C_{\rm low}L^{1/2}\|F(U)\|_{L^2((t,T);L^1)}.
\end{equation}
Apply \eqref{eq:finite-lowpass-dual-bound} also on difference intervals $(T,T')$:
\begin{equation}\label{eq:lowpass-endpoint-limit}
 Z_{L,t,T}\to Z_{L,t}\text{ in }L^2,\qquad
 \|Z_{L,t}\|_2\le C_{\rm low}L^{1/2}A_F(t).
\end{equation}
Source-norm convergence identifies the infinite endpoint realization;
no infinite-time $L^2$ Bochner integral is required.

Apply the lowpass to Proposition~\ref{ann:no-waste} on its unchanged
absolute sequence $n+1$. Its transpose preserves Schwartz space
and commutes with $S$, so
\begin{equation}\label{eq:original-field-mass-realization}
 P_{\le L}U(t)=-i\sigma Z_{L,t}\quad\text{in }\mathcal S',\qquad
 \|P_{>L}U(t)\|_2\le B/L.
\end{equation}
The lowpass limit in \eqref{eq:lowpass-endpoint-limit} and the highpass in \eqref{eq:original-field-mass-realization} sum to the distribution of
the original $L^6$ value. Distributional uniqueness on a countable
ball exhaustion identifies it a.e. with $U(t)$ and preserves its
weak gradient. Thus \eqref{eq:mass-frequency} holds for the same
scalar realization at every $L$.

Taking $L=1$ gives
$\sup_t\|U(t)\|_2\le M_0:=B+C_{\rm low}A_F(0)$.
Furthermore,
\begin{equation}\label{eq:highpass-path-continuity}
 \|P_{>L}(U(t)-U(s))\|_2
 \le L^{-1}\|\nabla(U(t)-U(s))\|_2,
\end{equation}
\begin{equation}\label{eq:lowpass-uniform-vanishing}
 \sup_{t\ge0}\|U(t)-P_{>L}U(t)\|_2
 \le C_{\rm low}L^{1/2}A_F(0)\to0\qquad(L\downarrow0).
\end{equation}
Equations \eqref{eq:highpass-path-continuity}--\eqref{eq:lowpass-uniform-vanishing} express $U$ as a uniform limit of continuous $L^2$ paths, including
at zero. For every fixed $L$, \eqref{eq:mass-frequency} gives
\[
 0\le\limsup_{t\to\infty}\|U(t)\|_2\le B/L.
\]
Let $L\to\infty$ to obtain decay.
\end{proof}

Mass conservation is used only after the original field has been
identified as a continuous $L^2$ path. Its conserved mass will contradict
the decay just obtained.

\begin{theorem}[No critical trajectory]\label{mass:rigidity}
The nonzero bounded-scale critical half-line of
Theorem~\ref{crit:bounded-halfline} cannot exist.
\end{theorem}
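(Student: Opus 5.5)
The plan is to combine the decay of Proposition~\ref{mass:recovery} with conservation of mass for the recovered $L^2$ path, which forces $U\equiv0$ and contradicts the nontriviality of the critical trajectory. By Proposition~\ref{mass:recovery}, the original values $U(t)$ form a path in $C([0,\infty);L^2)$ with $\sup_t\|U(t)\|_2<\infty$ and $\|U(t)\|_2\to0$ as $t\to\infty$. It therefore suffices to show that $\|U(t)\|_2$ is constant on $[0,\infty)$. Then $\|U(0)\|_2=0$, so $U(0)=0$. Uniqueness in Theorem~\ref{local:flow} then gives $w\equiv0$, which contradicts $E(w)=E_c>0$ from Theorem~\ref{crit:bounded-halfline} and Proposition~\ref{crit:element}.

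For conservation I would use the localized charge identity of Lemma~\ref{local:charge}, applied to the physical-time solution $w$ and transported to $U(s)=w(\sigma s)$. The orientation sign only reverses the time direction, so it does not affect the argument. Fix a real cutoff $\zeta$ equal to one on $B_1$ and supported in $B_2$, and put $M_R(s)=\int\zeta(x/R)^2|U(s,x)|^2\,dx$. On a compact interval $[0,T]$ we now know $\sup_{[0,T]}\|U\|_2\le M_0<\infty$. The stronger branch of Lemma~\ref{local:charge} is therefore available, and it gives
\[
 |M_R(T)-M_R(0)|\le C_\zeta R^{-1}\,T\,M_0B.
\]
Let $R\to\infty$ at fixed $T$. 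Monotone convergence gives $M_R(s)\to\|U(s)\|_2^2$ for each fixed $s$, because $U(s)\in L^2$. This yields $\|U(T)\|_2=\|U(0)\|_2$ for every $T\ge0$. Letting $T\to\infty$ and using the decay gives $\|U(0)\|_2=0$.

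The step that needs care is the hypothesis of the strong charge bound. It requires that the actual solution values lie in $L^2$ with a uniform bound on the interval, and that these are the same representatives used in the charge identity, so that no different field has been constructed. This identification is exactly what Proposition~\ref{mass:recovery} provides: the recovered $L^2$ function agrees a.e. with the original $L^6$ value and keeps its weak gradient. Granting that, the remaining steps are the order of limits ($R\to\infty$ first at fixed $T$, then $T\to\infty$) and the final appeal to uniqueness. Neither poses a difficulty. This excludes the bounded-scale critical half-line, and with it the counterexample to Theorem~\ref{thm:main}.
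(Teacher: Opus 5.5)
Your proposal is correct and follows the paper's proof essentially step for step. Proposition~\ref{mass:recovery} supplies the continuous, uniformly bounded, decaying $L^2$ path; the $R^{-1}$ branch of Lemma~\ref{local:charge} gives conservation of $\|U(t)\|_2$ after letting $R\to\infty$ at fixed times; and decay then forces $U(0)=0$, contradicting $\|\nabla U(0)\|_2^2\ge 2E_c>0$ (your detour through uniqueness is harmless but unnecessary). One small correction: the limit $M_R(s)\to\|U(s)\|_2^2$ follows from dominated convergence, not monotone convergence, because monotonicity in $R$ would require $\zeta$ to be radially nonincreasing, which you did not assume.
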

\begin{proof}
Proposition~\ref{mass:recovery} gives the original scalar path
$U\in CL^2$, with $\sup_t\|U(t)\|_2\le M_0$ and its original
weak gradient. Choose real $\zeta\in C_c^\infty$, $0\le\zeta\le1$,
equal to one near zero, and put
$\chi=\zeta^2$, $\chi_R(x)=\chi(x/R)$ and
$m_R(t)=\int\chi_R|U(t)|^2$.
For fixed $0\le s\le t<\infty$, Lemma~\ref{local:charge} gives
\[
 m_R(t)-m_R(s)=2\sigma\int_s^t\int\nabla\chi_R\cdot
                    \im(\overline U\nabla U)\dd x\dd\tau,
\]
\begin{equation}\label{eq:recovered-mass-flux}
 |m_R(t)-m_R(s)|
 \le\frac{2\|\nabla\chi\|_\infty}{R}(t-s)M_0B\longrightarrow0.
\end{equation}
Endpoint dominated convergence in \eqref{eq:recovered-mass-flux} yields
$\|U(t)\|_2^2=\|U(s)\|_2^2$.
Mass decay forces this value to be zero. Thus $U(0)=0$ a.e. and
its weak derivatives vanish, contradicting
$\|\nabla U(0)\|_2^2\ge2E_c>0$.
Both signs are covered, with no center estimate or positive lower
scale bound.
\end{proof}

The exclusion of the critical orbit completes the proof of the main theorem.

\begin{proof}[Proof of Theorem~\ref{thm:main}]
Under the hypotheses of Theorem~\ref{thm:main}, let
$u=u_{u_0}$ be the canonical solution through $u_0$ at zero.
Theorem~\ref{local:flow} and Lemma~\ref{local:mild} give, for every
competing solution $v$ through this datum,
\[
 I_v\subset I(u_0),\qquad v=u|_{I_v},
\]
with equality of intervals when $v$ is maximal.
Theorems~\ref{std:sharp}, \ref{local:energy} and
Lemma~\ref{local:trapping} give
\[
 \kappa=\frac{2p}{p-1},\qquad \kappa E(W)=G=A(W),\qquad
 E(u(t))=E(u_0),\quad A(u(t))\le\kappa E(u_0)<G.
\]
Let $\mathcal K$ be the stable set of Section~\ref{crit:section}.
If global existence or two-sided scattering fails,
Theorem~\ref{local:finite-s} and Corollary~\ref{local:small-data} yield
\[
 \mathcal B=\{E(f):f\in\mathcal K,\ \SSS_{I(f)}(u_f)=\infty\}
 \ne\varnothing,\qquad E_c=\inf\mathcal B\in(0,E(W)),
\]
\begin{equation}\label{eq:subcritical-scattering-levels}
 f\in\mathcal K,\ E(f)<E_c
 \quad\Longrightarrow\quad I(f)=\R,\quad\SSS_\R(u_f)<\infty.
\end{equation}
Only the individual bounds in \eqref{eq:subcritical-scattering-levels} enter reconstruction.
Propositions~\ref{crit:element}, \ref{crit:global} and
Theorem~\ref{crit:bounded-halfline} produce one canonical trajectory
$w$, one sign $\sigma\in\{-1,1\}$ and pointwise normalizations
$r(t)>0$, $x(t)\in\R^3$ such that
\[
 I_w=\R,\qquad E(w(t))=E_c,\qquad A(w(t))\le\kappa E_c<G,
 \qquad\SSS_{(-\infty,t]}(w)=\SSS_{[t,\infty)}(w)=\infty,
\]
\[
 Q=\overline{\{T_{r(t),x(t)}^{-1}w(t):t\in\R\}}^{\X}
 \text{ is compact},\qquad r(\sigma s)\le1\quad(s\ge0).
\]
Put $U(s)=w(\sigma s)$, $r_\sigma(s)=r(\sigma s)$ and
$x_\sigma(s)=x(\sigma s)$. Then
\[
 i\partial_sU+\sigma\Delta U+\sigma F(U)=0,\qquad
 T_{r_\sigma(s),x_\sigma(s)}^{-1}U(s)\in Q,
\]
\[
 0<r_\sigma(s)\le1,\qquad E(U(s))=E_c>0,\qquad
 A(U(s))\le\kappa E_c<G.
\]
Theorem~\ref{mass:rigidity} gives a contradiction.
Hence $I(u_0)=\R$ and Theorem~\ref{local:finite-s} supplies
$u_\pm\in\X$ with
$\|\nabla(u(t)-S(t)u_\pm)\|_2\to0$ as $t\to\pm\infty$.
\end{proof}

The scattering states belong to the original homogeneous energy class.

\section*{Acknowledgments}
A companion Lean development associated with this work is maintained in the
authors' private GitHub repository
\url{https://github.com/penghongzhong/open-HE}. The repository contains the
Lean source files used during verification, the pinned toolchain (Lean
4.35.0-rc3 and Mathlib revision \texttt{a98628e16c11f5167f16124105ddce53efa9bfe5}), dependency locks,
compilation checks, and theorem/dependency audit material. At the time of
this arXiv submission the repository is private; access can be provided for
verification purposes upon request. The formal development is intended as a
companion verification layer for the analytic argument and does not replace
the proofs presented in this paper.

The authors used GPT (OpenAI) to help clarify details of the derivations
and to assist with Lean verification. The authors reviewed the material
and take responsibility for the mathematical arguments and the final text.

\end{document}